\DeclareMathAlphabet{\mathbbb}{U}{bbold}{m}{n}
\numberwithin{equation}{section}
\theoremstyle{definition}
\newtheorem{defi}{Definition}[section]
\newtheorem{cond}[defi]{Condition}
\newtheorem{ex}[defi]{Example}
\newtheorem{notn}[defi]{Notation}
\theoremstyle{plain}
\newtheorem{conj}[defi]{Conjecture}
\newtheorem{construction}[defi]{Construction}
\newtheorem{cor}[defi]{Corollary}
\newtheorem{cor-defi}[defi]{Corollary-Definition}
\newtheorem{lemma}[defi]{Lemma}
\newtheorem{prop}[defi]{Proposition}
\newtheorem{prop-defi}[defi]{Proposition-Definition}
\newtheorem{theo}[defi]{Theorem}
\theoremstyle{remark}
\newtheorem*{rmk}{Remark}
\title{Rational Hodge--Tate prismatic crystals of quasi-l.c.i algebras and non-abelian $p$-adic Hodge theory}
\author{Xiaoyu Qu\and Jiahong Yu}
\newcommand\dA{\mathbb{A}}
\newcommand\dB{\mathbb{B}}
\newcommand\dC{\mathbb{C}}
\newcommand\dE{\mathbb{E}}
\newcommand\dL{\mathbb{L}}
\newcommand\dQ{\mathbb{Q}}
\newcommand\dR{\mathbb{R}}
\newcommand\dZ{\mathbb{Z}}
\newcommand\calA{\mathcal{A}}
\newcommand\calB{\mathcal{B}}
\newcommand\calC{\mathcal{C}}
\newcommand\calD{\mathcal{D}}
\newcommand\calE{\mathcal{E}}
\newcommand\calF{\mathcal{F}}
\newcommand\calK{\mathcal{K}}
\newcommand\calJ{\mathcal{J}}
\newcommand\calO{{\mathcal{O}}}
\newcommand\calP{{\mathcal{P}}}
\newcommand\calS{{\mathcal{S}}}
\newcommand\calT{{\mathcal{T}}}
\newcommand\calV{{\mathcal{V}}}
\newcommand\bfC{\mathbf{C}}
\newcommand\bL{\mathbf{L}}
\newcommand\cbigoplus{\mathop{\widehat{\bigoplus}}}
\newcommand\cbigwedge{\mathop{\widehat{\bigwedge}}\nolimits}
\newcommand\cD{\widehat D}
\newcommand\cdL{\widehat\dL}
\newcommand\cGamma{\widehat\Gamma}
\newcommand\cOmega{\widehat\Omega}
\newcommand\cotimes{\mathbin{\widehat{\otimes}}}
\newcommand\ep{\epsilon}
\newcommand\hol{\text{\normalfont{H}}}
\newcommand\Prism{\mathbbb{\Delta}}
\newcommand\simp{\mathbf{\Delta}}
\DeclareMathOperator{\cofib}{cofib}
\DeclareMathOperator{\coker}{coker}
\DeclareMathOperator*{\colim}{colim}
\DeclareMathOperator*{\ccolim}{\widehat{\colim}}
\DeclareMathOperator{\cone}{cone}
\DeclareMathOperator{\End}{End}
\DeclareMathOperator{\Ext}{Ext}
\DeclareMathOperator{\fib}{fib}
\DeclareMathOperator{\Hom}{Hom}
\DeclareMathOperator{\im}{im}
\DeclareMathOperator{\Lie}{Lie}
\DeclareMathOperator{\sgn}{sgn}
\DeclareMathOperator{\Shv}{Shv}
\DeclareMathOperator{\Spa}{Spa}
\DeclareMathOperator{\Spd}{Spd}
\DeclareMathOperator{\Spec}{Spec}
\DeclareMathOperator{\Spf}{Spf}
\DeclareMathOperator{\Sym}{Sym}
\DeclareMathOperator{\Tor}{Tor}
\newcommand\ad{\text{\normalfont{ad}}}
\newcommand\an{\text{\normalfont{an}}}
\newcommand\Aut{\text{\normalfont{Aut}}}
\newcommand\bd{\text{\normalfont{bd}}}
\newcommand\CAfd{\text{\normalfont{CAfd}}}
\newcommand\CAlg{\text{\normalfont{CAlg}}}
\newcommand\Comp{\text{\normalfont{Comp}}}
\newcommand\cplt{\text{\normalfont{cplt}}}
\newcommand\crys{\text{\normalfont{crys}}}
\newcommand\cyc{\text{\normalfont{cyc}}}
\newcommand\derham{\text{\normalfont{dR}}}
\newcommand\dHodge{\text{\normalfont{dHodge}}}
\newcommand\discrete{\text{\normalfont{discrete}}}
\newcommand\DR{\text{\normalfont{DR}}}
\newcommand\Fil{\text{\normalfont{Fil}}}
\newcommand\fil{\text{\normalfont{fil}}}
\newcommand\Flat{\text{\normalfont{flat}}}
\newcommand\Frob{\text{\normalfont{Frob}}}
\newcommand\Fun{\text{\normalfont{Fun}}}
\newcommand\Gal{\text{\normalfont{Gal}}}
\newcommand\GCart{\text{\normalfont{GCart}}}
\newcommand\gr{\text{\normalfont{gr}}}
\newcommand\Higgs{\text{\normalfont{Higgs}}}
\newcommand\HT{\text{\normalfont{HT}}}
\newcommand\id{\text{\normalfont{id}}}
\newcommand\Kos{\text{\normalfont{Kos}}}
\newcommand\la{\text{\normalfont{la}}}
\newcommand\Map{\text{\normalfont{Map}}}
\newcommand\MIC{\text{\normalfont{MIC}}}
\newcommand\Mor{\text{\normalfont{Mor}}}
\newcommand\op{\text{\normalfont{op}}}
\newcommand\PD{\text{\normalfont{PD}}}
\newcommand\Perf{\text{\normalfont{Perf}}}
\newcommand\perf{\text{\normalfont{perf}}}
\newcommand\Res{\text{\normalfont{{Res}}}}
\newcommand\Sptr{\text{\normalfont{Sptr}}}
\newcommand\Strat{\text{\normalfont{Strat}}}
\newcommand\tn{\text{\normalfont{tn}}}
\newcommand\Tot{\text{\normalfont{Tot}}}
\newcommand\Tr{\text{\normalfont{Tr}}}
\newcommand\Vect{\text{\normalfont{Vect}}}
\newcommand\Alg{\mathsf{Alg}}
\newcommand\cAlg{\widehat{\mathsf{Alg}}}
\newcommand\Cat{\mathsf{Cat}}
\newcommand\FinPoly{\mathsf{FinPoly}}
\newcommand\Mod{\mathsf{Mod}}
\newcommand\RKE{\mathsf{RKE}}
\newcommand\SCR{\mathsf{SCR}}
\newcommand\AAinf{\dA_{\inf}}
\newcommand\Ainf{A_{\inf}}
\newcommand\Bdrp{\dB_\derham^+}
\newcommand\Fin{\calF\text{\normalfont{in}}}
\newcommand\grAlg{\gr\Alg}
\newcommand\Rep{\text{\normalfont{\textbf{Rep}}}}
\newcommand\RHom{\mathop{R\text{\normalfont{Hom}}}\nolimits}
\newcommand\SCRModcn{\mathsf{SCRMod}^{\text{\normalfont{cn}}}}
\newcommand\taulezRHom{\mathop{\tau^{\le 0}R\text{\normalfont{Hom}}}\nolimits}
\newcommand\clim[1]{\mathop{{\lim}^{#1}}}
\newcommand\clims[2]{\mathop{{\lim_{#2}}^{#1}}}
\newcommand\fJsm[1]{\text{\normalfont{fake-$#1$-small}}}
\newcommand\sm[1]{\text{\normalfont{$#1$-small}}}
\newcommand\smet[1]{\text{\normalfont{$#1$-small-\'et}}}
\newcommand\tors[1]{\text{\normalfont{$#1$-tors}}}
\renewcommand\dif{\text{\normalfont{d}}}
\begin{document}

\maketitle

\begin{abstract}
    Consider a bounded prism $(A,I)$ and a bounded quasi-l.c.i algebra $R$ over $\overline{A}$. In this paper, for any prism $S/A$ with a surjection $S\to R$ such that $\cdL_{\overline{S}/\overline{A}}$ is a $p$-completely flat module over $\overline{S}$, we establish an equivalence of categories between rational Hodge-Tate crystals on $(R/A)_{\Prism}$ and topologically nilpotent integrable connections on the Hodge--Tate cohomology ring $\overline{\Prism}_{R/S}$. As an application, for a non-zero divisor $a\in \overline{A}$, we introduce the concept of $a$-smallness for a rational Hodge-Tate prismatic crystal on $(R/A)_{\Prism}$. Finally, we focus on some special algebras $R$ over $\calO_{\dC_p}$ (or generally, the ring of integers of an algebraic closed and complete non-archimedean field) including all $p$-completely smooth algebras, $p$-complete algebras with semi-stable reductions and geometric valuation rings. By using our equivalence, we analyze the restriction functor from the category of $a$-small rational Hodge-Tate prismatic crystals to the category of $v$-vector bundles. This yields some new results in $p$-adic non-abelian Hodge Theory.
\end{abstract}

\tableofcontents

\section{Introduction}



\subsection{Background and overview}

Let $p$ be a prime number. One of the goals of $p$-adic Hodge theory is to establish, for any analytic space $X$ over $\mathbb{Q}_p$, a correspondence between vector bundles with coefficients in various period rings (such as $\widehat{\calO}$, $\Bdrp$, $\mathbf{B}_{\crys}^+$, etc.) on the $v$-topology of $X$ (denoted by $X_v$) and classical geometric objects on $X$ (such as connections, Higgs fields, etc.). In recent years, Bhatt--Scholze (cf.\cite{Bhatt_2022}) have developed prismatic theory, providing a clearer and more natural framework for the $p$-adic Hodge theory. Specifically, let $\mathfrak{X}$ be a $p$-adic formal scheme, and let $X$ be its generic fiber. Bhatt--Scholze defined the prismatic site $\mathfrak{X}_{\Prism}$ of $\mathfrak{X}$. This site includes all perfectoid objects in $X_v$, as well as all `deperfections' of these perfectoid objects (so-called prisms). Moreover, the definitions of various period rings can be extended to all objects in $\mathfrak{X}_{\Prism}$. Fixing a period ring $\mathbf{B}$, by restricting to the perfectoid objects in $X_v$, one can associate to a $\mathbf{B}$-crystals on the prismatic site $\mathfrak{X}_{\Prism}$ a $\mathbf{B}$-vector bundles on $X_v$. Correspondingly, by restricting to the `deperfections', one hopes to associate to a $\mathbf{B}$-crystals on $\mathfrak{X}_{\Prism}$ a classical geometric object. This picture is described by the following diagram:
\begin{equation}\label{eq: intro diagram of prismatic view of p adic hodge}
\begin{tikzcd}[column sep=1em]
& \text{$\mathbf B$-crystals on } \mathfrak{X}_{\Prism}\arrow[dl,"?" swap] \arrow[dr, "{\Res}_{\mathfrak{X}}"] &\\ 
\text{Classical geometric objects}\arrow[rr,dashrightarrow,"\text{$p$-adic Hodge}"] & & \text{$\mathbf{B}$-vector bundles on $X_v$}\ ,
\end{tikzcd}
\end{equation}
where the arrow `?' is generally constructed by restriction to `deperfections' but is mostly unknown, and the arrow `${\Res}_\mathfrak{X}$' is given by restriction. Thus, the study of $p$-adic Hodge theory with coefficients translates into the following questions:
\begin{enumerate}
    \item Understanding the construction of the arrow `?'.
    \item Understanding the properties of the functor ${\Res}_{\mathfrak{X}}$ (e.g., fully faithfulness, essential image, etc.).
\end{enumerate}

In this paper, we focus on the case where $\mathbf{B}$ is taken to be $\widehat{\mathcal{O}}$ in diagram (\ref{eq: intro diagram of prismatic view of p adic hodge}). In prismatic theory, this corresponds to rational Hodge--Tate prismatic crystals. To make our introduction clearer, let us quickly recall the definitions:
\begin{itemize}
    \item Given a bounded prism $(A,I)$ and a formal scheme $\mathfrak{X}/\overline{A}$ \footnote{We follow the notation in \cite{Bhatt_2022} that $\overline{A}=A/I$ for any prism $(A,I)$.}, define a rational Hodge--Tate prismatic crystal over $\mathfrak{X}$ as a crystal on $(\mathfrak{X}/A)_{\Prism}$ with coefficients in $\overline{\mathcal{O}}_{\Prism}[\frac{1}{p}]$. In the following, we will use $\Vect\bigl((\mathfrak{X}/A)_\Prism,\overline\calO[\frac{1}{p}]\bigr)$ to denote the category of rational Hodge--Tate prismatic crystals over $\mathfrak{X}$. Also, when $\mathfrak X=\Spf(R)$ is affine, we will also use the notation $\Vect\bigl((R/A)_\Prism,\overline\calO[\frac{1}{p}]\bigr)$.
\end{itemize}
At this point, the geometric objects in the diagram (\ref{eq: intro diagram of prismatic view of p adic hodge}) (cf. \cite{Min_2025} and \cite{anschütz2023smallpadicsimpsoncorrespondence}) should be taken to be Higgs bundles, and the bottom arrow is often known as the $p$-adic non-abelian Hodge theory (aka. $p$-adic Simpson theory). Correspondingly, the objects on $X_v$ then become $v$-vector bundles. Therefore, the diagram (\ref{eq: intro diagram of prismatic view of p adic hodge}) specializes to the following:
\begin{equation}\label{eq: intro diagram of prismatic view of Simpson}
\begin{tikzcd}[column sep=3em]
& \Vect\bigl((\mathfrak{X}/A)_\Prism,\overline\calO[\frac{1}{p}]\bigr)\arrow[dl,"?" swap] \arrow[dr, "{\Res}_{\mathfrak{X}}"] & \\
\text{Higgs bundles}\arrow[rr,dashrightarrow,"\text{$p$-adic non-abelian Hodge}"] & & \text{$v$-vector bundles on }X\ .
\end{tikzcd}
\end{equation}
Similarly, our work involves constructing the arrow `?' and understanding the properties of $\Res_{\mathfrak{X}}$.

We first recall previous constructions of the arrow `?' in some special cases. An important observation by Tian in \cite{Tian_2023} showed that if $\mathfrak{X}$ is a $(p,I)$-completely smooth $\delta$-formal scheme over a bounded prism $(A,I)$, then the Hodge--Tate crystals of $\mathfrak{X}:=\overline{A}\otimes_{A}\widetilde{\mathfrak{X}}$ are classified by topologically nilpotent Higgs bundles. Later, Ogus (\cite{ogus2024crystallineprismsreflectionsdiffractions}) generalized this method to locally complete intersection schemes in the setting of crystalline prisms. The method of Ogus also works for some other prisms (cf. \cite{liu2025stackyapproachprismaticcrystals}).

However, it can be seen that these results are only applicable to topologically finitely generated algebras. In further research, it was found that many results in \cite{Bhatt_2022} can be generalized to non-topologically finitely generated algebras. For example, in the further study of prismatic theory by Bhatt--Lurie (cf. \cite{bhatt2022absolute}, \cite{bhatt2022prismatization}), it was discovered that for a class of algebras satisfying only certain cohomological conditions, the prismatic cohomology is isomorphic to the derived prismatic cohomology. Another example is that Bouis in \cite{Bouis_2023} found that the computations in \cite{Bhatt_2022} can be generalized to Cartier smooth algebras (which is also a class of algebras satisfying only some cohomological conditions). Meanwhile, as an analogy to prismatic theory, Bhatt--de Jong in \cite{Bhatt2011CrystallineCA} discovered that the correspondence between crystalline crystals and integrable connections established by Berthelot can also be generalized to non-finitely generated algebras.

Based on these observations, we believe that Tian's classification method can be generalized to non-topologically finitely generated algebras. This constitutes the main content of our theory. Specifically, we generalize this classification to `cohomological' locally complete intersection algebras over $\overline{A}=A/I$ (cf. Condition \ref{cond: tor -1,0}). Unlike the previously mentioned results, the algebras to which our theory applies are not necessarily topologically finitely generated. Therefore, our theory applies to some big rings such as valuation rings.

Furthermore, we find that our construction is compatible with Faltings' local construction of $p$-adic Simpson correspondence in \cite{Faltings_2005} and \cite{Faltings_2011}. Using this result, we study the properties of ${\Res}_{\mathfrak{X}}$ for some $\mathfrak{X}$ that have not been considered in previous research. Specifically, assume that $\mathbf{C}$ is a $p$-adically complete non-Archimedean field that is itself algebraically closed, and let $\mathcal{O}_{\mathbf{C}}$ be its ring of integers. First, we consider the case where $\mathfrak{X}$ is a syntomic formal scheme over $\mathcal{O}_{\mathbf{C}}$ with smooth generic fiber $X$. Furthermore, we assume that the sheaf of differentials of $\mathfrak{X}$ is a flat $\mathcal{O}_{\mathbf{C}}$-module. For such $\mathfrak{X}$, we first prove a cohomological comparison theorem for Hodge--Tate crystals and generalize the results of \cite{Min_2025} to $\mathfrak{X}$. Second, we consider a geometric valuation ring $\mathcal{K}^+$ over $\mathcal{O}_{\mathbf{C}}$ (cf. Definition \ref{defi:geo val ring}), establish a Sen theory on it, and prove an analogue of the main theorem of \cite{gao2023hodgetateprismaticcrystalssen}.

\subsection{Main results}

Instead of some complex and abstract conditions, we hope to first explain an example and our main results on $p$-adic non-abelian Hodge theory to help readers better understand our theory.\vspace{0.3em}

\noindent\textbf{\scshape An example: irrational radius disks}: We first consider the rational Hodge--Tate prismatic crystals on the so-called \emph{disks of irrational radius}. Let $\mathbf{C}=\dC_p$. We consider the $p$-adic norm on $\mathbf{C}$ such that $|p|=p^{-1}$. Let $\calO_{\mathbf{C}}$ be the ring of integers of $\mathbf{C}$. Let $\Ainf$ be the prism $(\AAinf(\calO_{\mathbf{C}}),\ker\theta)$.

\begin{defi}
    Let $r\in \dR$ be a real number. Define the \emph{ring of integral valued analytic functions on the $p$-adic disk of radius $p^r$} as
    \[\calO_{\mathbf{C}}\{\tfrac{T}{p^r}\}=\{f=\sum_{n=0}^\infty a_nT^n\in\mathbf{C}[[T]]:p^{nr}|a_n|\leq1\text{ and }\lim_{n\to \infty}p^{nr}a_n=0\}.\]
\end{defi}

Note that if $p^r$ lies in the valuation group of $|-|$, we can choose a $t\in \mathbf{C}$ such that $|t|=p^{r}$ and then $\calO_{\mathbf{C}}\{\frac{T}{p^r}\}$ is isomorphic to the usual Tate algebra of variable $tT$. If such a $t$ does not exist, the algebra $\calO_{\mathbf{C}}\{\frac{T}{p^r}\}$ is not even topologically finitely generated. In this case, we call it \emph{the irrational radius disk}. Let $\mathbf{C}\{\frac{T}{p^r}\}=\calO_{\mathbf{C}}\{\frac{T}{p^r}\}[\frac{1}{p}]$.

Using our theory, we can give a classification of rational Hodge--Tate prismatic crystals on $(\calO_{\mathbf{C}}\{\frac{T}{p^r}\}/\Ainf)_{\Prism}$.

\begin{theo}[Example \ref{ex: dR realisation for irrational disks}]
    There exists an equivalence between categories
    \[\Vect\bigl((\calO_{\mathbf{C}}\{\tfrac{T}{p^r}\}/\Ainf)_\Prism,\overline{\calO}[\tfrac{1}{p}]\big)\cong \Higgs^{\tn}(\mathbf{C}\{\tfrac{T}{p^r}\}, \cOmega_{\calO_{\mathbf{C}}\{\frac{T}{p^r}\}/\calO_{\mathbf{C}}}),\]
    where:
    \begin{itemize}
        \item $\Vect\bigl((\calO_{\mathbf{C}}\{\frac{T}{p^r}\})_\Prism,\overline{\calO}[\frac{1}{p}]\big)$ is the the category of rational Hodge--Tate prismatic crystals on the prismatic site $(\calO_{\mathbf{C}}\{\frac{T}{p^r}\}/\Ainf)_\Prism$,
        \item $\Higgs^\tn(\mathbf{C}\{\frac{T}{p^r}\},\cOmega_{\calO_{\mathbf{C}}\{\frac{T}{p^r}\}/\calO_{\mathbf{C}}})$ is the category of Higgs bundles $(\calE,\theta:\calE\to \calE \dif T)$ such that $a\frac{\dif}{\dif T}$ acts topologically nilpotently for any $a\in\mathbf{C}$ such that $|a|\leq p^r$.
    \end{itemize}
\end{theo}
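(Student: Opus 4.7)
The plan is to apply the main equivalence theorem of the paper to $A=\Ainf$ (so $\overline{A}=\calO_{\mathbf{C}}$) and $R=\calO_{\mathbf{C}}\{T/p^r\}$. First I would verify the hypotheses: $R$ is bounded and $p$-completely flat over $\calO_{\mathbf{C}}$ (it embeds in $\mathbf{C}[[T]]$ and is torsion-free), and an elementary computation via the standard transitivity of the cotangent complex shows that $\cdL_{R/\calO_{\mathbf{C}}}$ is concentrated in degree zero and identified with the $p$-completion of $R\cdot\dif T$. In particular $R$ is quasi-l.c.i.\ and $\cOmega_{R/\calO_{\mathbf{C}}}\cong R\cdot\dif T$, which is precisely the module that appears in the statement.

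The core difficulty is constructing a prism $S/\Ainf$ with a surjection $S\twoheadrightarrow R$ satisfying the flatness condition on $\cdL_{\overline{S}/\calO_{\mathbf{C}}}$. For rational $r$ this is textbook: choose any $t\in\calO_{\mathbf{C}}$ with $|t|=p^{-r}$ and take $S=\Ainf\{U\}$ with $U\mapsto tT$, giving an isomorphism after reducing modulo $\ker\theta$. For irrational $r$ no such $t$ exists and $R$ is no longer topologically finitely generated, so I would work with a countable family of generators. Concretely, choose a sequence of rational exponents $r_n\searrow r$ together with elements $t_n\in\calO_{\mathbf{C}}$ of valuation $r_n$, and take $S$ to be the $(p,\ker\theta)$-completion of $\Ainf[U_1,U_2,\dots]$ equipped with $U_n\mapsto t_nT\in R$. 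Then $\overline{S}=\calO_{\mathbf{C}}\langle U_1,U_2,\dots\rangle$ surjects onto $R$ because the images $t_nT$ topologically generate $R$ as $r_n\to r$, and $\cdL_{\overline{S}/\calO_{\mathbf{C}}}$ is free on the $\dif U_n$, hence $p$-completely flat.

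Given such an $S$, the main equivalence theorem identifies $\Vect\bigl((R/\Ainf)_\Prism,\overline{\calO}[\tfrac{1}{p}]\bigr)$ with the category of topologically nilpotent integrable connections on the Hodge--Tate cohomology ring $\overline{\Prism}_{R/S}$. I would then make this explicit: applying the transitivity $\cdL_{R/\overline{S}}\simeq\fib\bigl(\cdL_{\overline{S}/\calO_{\mathbf{C}}}\otimes_{\overline{S}}R\to\cdL_{R/\calO_{\mathbf{C}}}\bigr)$ and using the rank-one structure of $\cdL_{R/\calO_{\mathbf{C}}}$, the derived Hodge--Tate complex computing $\overline{\Prism}_{R/S}$ collapses to a two-term complex controlled by the single derivation $\dif/\dif T$. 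Hence an integrable connection on $\overline{\Prism}_{R/S}$ is exactly a Higgs datum $(\calE,\theta\colon\calE\to\calE\cdot\dif T)$ over $R[\tfrac{1}{p}]=\mathbf{C}\{T/p^r\}$, which gives the target category in the statement.

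The final step is matching the two topological nilpotence conditions. The abstract nilpotence condition of the main theorem is phrased through the natural topology of $\overline{\Prism}_{R/S}$, which carries the Breuil--Kisin twist rescaled by the element $t\in\Ainf$ used to trivialise $\ker\theta$ along the prism. Unwinding this twist in terms of the $T$-coordinate identifies $\theta$ with $\alpha\cdot\dif/\dif T$ for a scaling $\alpha\in\mathbf{C}$ of absolute value $p^r$, so the nilpotence condition becomes exactly that $a\cdot\dif/\dif T$ acts topologically nilpotently for every $a\in\mathbf{C}$ with $|a|\le p^r$. The factor $p^r$ is precisely the rescaling built into the prism $S$. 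I expect the main obstacle to be the construction and verification of $S$ for irrational $r$: the countable-generator construction is forced by the fact that $R$ is not topologically finitely generated, and it is exactly to accommodate such algebras that the paper extends Tian's classification to the quasi-l.c.i.\ setting.
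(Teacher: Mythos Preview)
Your verification that $R=\calO_{\mathbf C}\{T/p^r\}$ is quasi-l.c.i.\ with $\cOmega_{R/\calO_{\mathbf C}}$ free of rank one is fine, and the strategy of applying the main theorem is correct. But there is a genuine gap in the middle step, and you have made the problem much harder than necessary.

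With your choice of $S$ (the completed polynomial ring on countably many variables $U_n$), the map $\overline S\to R$ has a large kernel, so by the paper's description $\overline\Prism_{R/S}\cong\cGamma^*_R\bigl(\cdL_{R/\overline S}[-1]\{-1\}\bigr)$ is a completed PD-polynomial ring on an \emph{infinite}-rank module. Your claim that ``the derived Hodge--Tate complex collapses to a two-term complex controlled by $\dif/\dif T$'' and that ``an integrable connection on $\overline\Prism_{R/S}$ is exactly a Higgs datum over $R[1/p]$'' is not justified: the main theorem only gives $\MIC^{\tn}(\overline\Prism_{R/S}[1/p],\dif)$, and identifying this with $\Higgs^{\tn}(R[1/p],\cOmega_{R/\calO_{\mathbf C}})$ would require the additional Simpson-type machinery developed later in the paper, not a simple collapse argument. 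The final paragraph about unwinding a twist by ``the element $t\in\Ainf$'' is similarly speculative.

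The paper avoids all of this by observing that $R$ admits a $\delta$-\emph{lifting}: set
\[
S=\ccolim_{s\in\mathbb Q,\,s\le r}\Ainf\bigl\langle\,T/[p^\flat]^s\,\bigr\rangle
\]
with $\delta\bigl(T/[p^\flat]^s\bigr)=0$. Then $\overline S=R$ on the nose (the Teichm\"uller lifts $[p^\flat]^s$ reduce to $p^s$), so $R/\overline S$ satisfies Condition~\ref{cond: tor -1} trivially with $\cdL_{R/\overline S}=0$, and hence $\overline D=\overline\Prism_{R/S}=R$. The main theorem then yields $\MIC^{\tn}(R[1/p],\dif)$ with $\dif=0$, which is literally $\Higgs^{\tn}(R[1/p],\cOmega_{R/\calO_{\mathbf C}}\{-1\})$. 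The point you missed is that $\Ainf$ contains elements $[p^\flat]^s$ for all rational $s$, so one can build the ``irrational'' colimit already at the level of prisms rather than only over $\calO_{\mathbf C}$.
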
\vspace{0.3em}

\noindent\textbf{\scshape Generalized representations and rational Hodge--Tate prismatic crystals of geometric valuation rings}: 
In \cite{Sen_1980}, Sen established a systematic framework for studying semi-linear continuous $\dC_p$-representations of the absolute Galois group of a local field $K/\dQ_p$. The core idea is the introduction of the Sen operator (also known as the Sen derivative), which essentially represents the infinitesimal action of the Galois group $\Gal_K$. Through this operator, semi-linear continuous $\dC_p$-representations of $\Gal_K$ can be `linearized', allowing for their classification and study using linear algebra tools (such as eigenvalues and eigenspaces).

For any local field $K/\dQ_p$ (more generally, any closed subfield of $\dC_p$), semilinear $\dC_p$ continuous representations of $K$ are naturally identified with $v$-vector bundles over $\Spa(K,\calO_K)$. Hence, we can view the functor ${\Res}_{\mathfrak X}$, where $\mathfrak X$ is $\Spf(\calO_{K})$, as a functor from $\Vect\bigl(\Spf(\calO_K)_{\Prism},\overline{\calO}[\frac{1}{p}]\bigr)$ to the category of semi-linear continuous $\dC_p$-representations of $\Gal_K$. In \cite{gao2023hodgetateprismaticcrystalssen}, Gao--Min--Wang studied this functor and found that:
\begin{itemize}
    \item ${\Res}_{\mathfrak X}$ is fully faithful;
    \item The essential image of ${\Res}_{\mathfrak X}$ is the so-called `nearly Hodge--Tate' representations, which is a condition of the eigenvalues of the Sen operator.
\end{itemize}

More recently, Tongmu He (cf. \cite{He_2025}, \cite{he2025padicgaloiscohomologyvaluation}) generalized Sen's method to some valuation rings other than $\calO_K$. In particular, combining his results and Camargo's method on constructing geometric Sen operators, we can define Sen operators for some other valuation rings. Our main result in this part is an analogue of Gao--Min--Wang's result for a new class of valuation rings, which are called `geometric valuation rings' by us.

Again, let $\mathbf{C}$ be an algebraic closed $p$-adic non-archimedean field, and let $\calO_{\mathbf{C}}$ be the ring of integers of $\mathbf{C}$. Put $\Ainf$ the prism $(\AAinf(\calO_{\mathbf{C}}),\ker\theta)$. Fix a norm on $\bfC$.

Geometric valuation rings are valuation rings that can appear as the valuation rings of complete residue fields of some rigid analytic spaces. Explicitly, we make the following definition.

\begin{defi}[Definition \ref{defi:geo val ring}]
    A \emph{geometric valuation ring} over $\calO_{\mathbf{C}}$ is a $p$-complete valuation ring $\calK^+$ over $\calO_{\mathbf{C}}$ such that for the complete cotangent complex $\cdL_{\calK^+/\calO_{\mathbf{C}}}$, the vector space (it is automatically concentrated on degree 0 by Theorem \ref{theo: valuation rings are cartier smooth})
    \[ \cdL_{\calK^+/\calO_{\mathbf{C}}}[\frac{1}{p}] \]
    is of finite dimension over $\calK :=\calK^+ \bigl[ \frac 1 p \bigr]$.
\end{defi}

\begin{rmk}
     We will see that $\calK$ is indeed the fractional field of $\calK^+$ by Theorem \ref{theo: almost finitely generated}.
\end{rmk}

Fix a geometric valuation ring $\calK^+$ with fractional field $\calK$. Put $\widehat{\overline\calK}$ the complete algebraic closure of $\calK$. This time, the $v$-vector bundles on $\Spa(\calK,\calK^+)$ are identified with the finite dimensional continuous semilinear representations of $\Gal_{\calK}$ with coefficients in $\widehat{\overline{\calK}}$ (aka. generalized representation). Denote by $\Rep_{\Gal(\calK)}(\widehat{\overline{\calK}})$ the category of finite dimensional continuous semilinear representations of $\Gal_{\calK}$ with coefficients in $\widehat{\overline{\calK}}$.

\begin{theo}[Special case of Proposition \ref{prop-defi: sen theory}]\label{intro:sen of val rins}
    Keep the notation, for any $(V,\rho)\in \Rep_{\Gal_\calK}(\widehat{\overline \calK})$. There exists a canonical $\Gal_\calK$-equivariant $\widehat{\overline \calK}$-linear operator \[\theta_V:\cOmega_{\calK^+/\calO_{\mathbf{C}}}[\frac{1}{p}]^\vee(1)\to \End_{\widehat{\overline{\calK}}}(V),\]
    such that $\theta_V=0$ if and only if $V$ is trivial.
\end{theo}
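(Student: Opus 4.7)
The plan is to mimic the classical construction of Sen's operator, adapted to the geometric valuation ring setting via Tongmu He's recent decompletion results for valuation rings. By the Cartier-smoothness of $\calK^+/\calO_\bfC$ (Theorem \ref{theo: valuation rings are cartier smooth}), the module $\cOmega_{\calK^+/\calO_\bfC}[\tfrac{1}{p}]$ is a finite-dimensional $\calK$-vector space, say of dimension $d$. First, I would fix elements $x_1,\dots,x_d\in\calK^+$ whose differentials form a $\calK$-basis, and adjoin a compatible system of $p$-power roots of each $x_i$ (and of $p$) to $\calK$, producing a perfectoid Kummer tower $\calK_\infty/\calK$ such that $\Gal(\calK_\infty/\calK)$ contains an open subgroup $\Gamma\cong\dZ_p(1)^d$. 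Under the resulting Kummer-theoretic identification $\Lie(\Gamma)\cong\cOmega_{\calK^+/\calO_\bfC}[\tfrac{1}{p}]^\vee(1)$, the operator $\theta_V$ should be the infinitesimal action of $\Gamma$ on a suitable space of locally analytic vectors.

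The key technical input, which Proposition \ref{prop-defi: sen theory} itself must supply, is a decompletion theorem in this setting: for any $V\in\Rep_{\Gal_\calK}(\widehat{\overline\calK})$, the $\Gamma$-locally analytic vectors inside $V$ form a finite projective module over an appropriate subring of $\widehat{\overline\calK}$, which recovers $V$ after completed base change, and on which $\Gamma$ acts smoothly enough that $\log\gamma$ converges for $\gamma$ close to the identity. Granting this, I would define
\[ \theta_V(\dif x_i^\vee\otimes\ep_i)\;:=\;\log\gamma_i\;\in\;\End_{\widehat{\overline\calK}}(V), \]
where $\gamma_i$ is the topological generator of the $i$-th $\dZ_p(1)$-factor of $\Gamma$ attached to the chosen $p$-power roots of $x_i$, and $\ep_i\in\widehat{\overline\calK}(1)$ is its image under $\Gamma\cong\dZ_p(1)^d$; extending $\widehat{\overline\calK}$-linearly yields the desired map. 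Canonicity (independence of the $x_i$'s) follows from the naturality of the logarithm: a $\calK^+$-linear change of coordinates rescales both $\gamma_i$ and $\dif x_i^\vee\otimes\ep_i$ by the same matrix, leaving $\theta_V$ unchanged.

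The $\Gal_\calK$-equivariance is then automatic, because the outer conjugation action of $\Gal_\calK$ on $\Gamma$ matches its action on $\cOmega_{\calK^+/\calO_\bfC}[\tfrac{1}{p}]^\vee(1)$ via the Kummer cocycle, and the logarithm intertwines conjugation with the adjoint action on $\End(V)$. For the vanishing criterion, if $\theta_V=0$ then $\log\gamma_i=0$ for each $i$, so $\Gamma$ acts trivially on the locally analytic vectors; by the decompletion, $V$ is isomorphic as a $\Gamma$-representation to the completed base change of its $\Gamma$-invariants, and the residual action of $\Gal_{\calK_\infty}$ on $\widehat{\overline\calK}$ is governed by the algebraically closed base $\bfC$ (which has no further arithmetic Galois action), forcing $V$ to be trivial.

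The hard part will be establishing the decompletion statement for geometric valuation rings. Since $\calK^+$ is in general neither Noetherian nor topologically finitely generated, the classical Tate--Sen formalism does not apply off the shelf. I expect the argument to proceed by an almost-\'etale analysis of $\widehat{\overline\calK}$ over $\calK_\infty$ adapted from He's recent work on valuation rings, combined with the finite-dimensionality of $\cOmega_{\calK^+/\calO_\bfC}[\tfrac{1}{p}]$ to produce the quantitative bounds on the continuous Galois cohomology of $\widehat{\overline\calK}$-modules that are needed to trivialize the higher locally analytic cohomology of $\Gamma$ and to run the usual overconvergence--decompletion machine.
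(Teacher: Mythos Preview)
Your outline is essentially correct and matches the paper's strategy: build a Kummer tower from coordinates, pass to locally analytic vectors via a decompletion theorem, and define $\theta_V$ as the Lie-algebra action of $\Gamma$. The paper packages this slightly differently—it sets up a general notion of \emph{senable} algebra (one admitting a toric chart $\overline A\langle T_1,\dots,T_d\rangle\to R$ whose Kummer tower is locally perfectoid), proves via He's computations that geometric valuation rings are senable, and then invokes Camargo's axiomatic $d$-dimensional Sen theory for the decompletion; chart-independence is deferred to Heuer's argument rather than your direct change-of-coordinates sketch.

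The one place your plan diverges from the paper, and where it would cost you extra work, is the last paragraph. You expect the hard step to be an almost-\'etale analysis of $\widehat{\overline\calK}$ over $\widehat{\calK_\infty}$ in order to run Tate--Sen for the full $\Gal_\calK$. The paper sidesteps this entirely: once He's results show $\widehat{\calK_\infty}$ is perfectoid, perfectoid $v$-descent already gives an equivalence $\Rep_{\Gal_\calK}(\widehat{\overline\calK})\cong\Rep_\Gamma(\widehat{\calK_\infty})$ of categories, so one never has to control $\Gal_{\calK_\infty}$ directly. This also cleans up your ``only if'' argument, which as written is imprecise (the claim that the residual $\Gal_{\calK_\infty}$-action is ``governed by the algebraically closed base $\bfC$'' is not a proof): via the equivalence, $\theta_V=0$ reduces to the statement that a $\Gamma$-representation on $\widehat{\calK_\infty}$ with vanishing Lie-algebra action descends to $\calK$ with trivial $\Gamma$-action, which is exactly the full-decompletion theorem in the paper.
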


Keep the notation in Theorem \ref{intro:sen of val rins}. The operator $\theta_V$ is called the \emph{Sen operator} of $V$. For any $\partial\in \cOmega_{\calK^+/\calO_{\mathbf{C}}}[\frac{1}{p}]^\vee(1)$, the linear operator $\theta_V(\partial)$ is called the \emph{Sen action} of $\partial$.

\begin{theo}[Theorem \ref{theo: prism simpson val ring}]\label{intro: prism non abelian hodge of geo val rings}
    The functor $\mathcal{R}:={\Res}_{\Spf(\calK^+)}$ from $\Vect\big((\calK^+/A)_{\Prism},\overline\calO[\frac{1}{p}]\big)$ to $\Rep_{\Gal_\calK} (\widehat{\overline\calK})$ satisfies the following:
    \begin{enumerate}[label=(\arabic*)]
        \item For any $a\in \mathbf{C}^\times$ such that $|a|<1$, there exists a full subcategory \[\Vect^{\sm a}\big((\calK^+/A)_{\Prism},\overline\calO[\frac{1}{p}]\big)\] of  $\Vect\big((\calK^+/A)_{\Prism},\overline\calO[\frac{1}{p}]\big)$, called the category of $a$-small crystals, on which $\cal{R}$ is fully faithful.
        \item The essential image of the full subcategory
        \[\bigcup_{a\in\bfC^\times,|a|<1}\Vect^{\sm a}\big((\calK^+/A)_{\Prism},\overline\calO[\frac{1}{p}]\big)\]
        is the representations $(M,\rho)$ such that there exists $a\in\bfC^\times$, $|a|<1$ satisfying that the Sen action of $a^{-1}(\zeta_{p}-1)^{-1}\cOmega_{\calK^+/\calO_{\mathbf{C}}}^\vee(1)$ on $M$ is topologically nilpotent.
    \end{enumerate}
\end{theo}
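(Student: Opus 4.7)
The plan is to deduce both parts from the paper's main equivalence together with a Faltings-type local $p$-adic Simpson correspondence and a Sen decompletion argument, all adapted to the valuation ring setting. As a first step, I would choose a prism $S$ over $\Ainf$ admitting a surjection $S\to\calK^+$ with $\cdL_{\overline{S}/\calO_{\mathbf{C}}}$ being $p$-completely flat; this is available because $\calK^+$ is quasi-l.c.i.\ and $\cOmega_{\calK^+/\calO_{\mathbf{C}}}[\frac{1}{p}]$ is a finite-dimensional $\calK$-vector space by definition of geometric valuation ring. The main equivalence of the paper then produces an identification
\[
\Vect\bigl((\calK^+/A)_\Prism, \overline{\calO}[\tfrac{1}{p}]\bigr) \;\simeq\; \MIC^\tn\bigl(\overline{\Prism}_{\calK^+/S}[\tfrac{1}{p}]\bigr),
\]
and I would \emph{define} $\Vect^{\sm a}$ as the preimage of those connections whose connection form, measured in any basis inherited from $\cOmega_{\calK^+/\calO_{\mathbf{C}}}$, is divisible by $a(\zeta_p-1)$.

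Next, I would compute $\mathcal{R}$ by evaluating a crystal on the perfectoid prism associated to the complete algebraic closure $\widehat{\overline{\calK^+}}$, which carries a canonical $\Gal_\calK$-action, so that $\mathcal{R}(\calE)$ reads off as the semilinear representation on the fibre. On the connection side this is the base change along $\overline{\Prism}_{\calK^+/S}[\frac{1}{p}]\to\widehat{\overline\calK}$. The local $p$-adic Simpson correspondence of Faltings, suitably extended using the finiteness of $\cOmega_{\calK^+/\calO_{\mathbf{C}}}[\frac{1}{p}]$, expresses the resulting Galois cocycle as the exponential of an integral of the connection form along a cyclotomic Kummer tower. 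Differentiating this cocycle recovers the Sen operator $\theta_V$ of Theorem~\ref{intro:sen of val rins} as $a^{-1}(\zeta_p-1)^{-1}$ times the connection form on $\Vect^{\sm a}$, which automatically establishes the forward direction of (2).

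For fully faithfulness in (1), I would reduce to checking that $H^0_\Prism(\calE^\vee\otimes\calF)\to\mathcal{R}(\calE^\vee\otimes\calF)^{\Gal_\calK}$ is an isomorphism for $\calE,\calF\in\Vect^{\sm a}$, using that $\Vect^{\sm a}$ is stable under dual and tensor product (the connection form of the internal Hom is controlled by the sum of the factors). Both sides compute as the kernel of the Sen operator on the associated $\widehat{\overline\calK}$-module, so their agreement amounts to a Sen-style decompletion for $a$-small semilinear $\widehat{\overline\calK}$-representations of $\Gal_\calK$, together with the vanishing of their higher $\Gal_\calK$-cohomology modulo $a$; the latter is a Tate--Sen style computation built from He's recent Galois cohomology results over valuation rings applied to the pro-étale perfectoid cover $\widehat{\overline{\calK^+}}\to\calK^+$.

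For the converse in (2), given $V$ with topologically nilpotent Sen action of $a^{-1}(\zeta_p-1)^{-1}\cOmega_{\calK^+/\calO_{\mathbf{C}}}^\vee(1)$, I would use Sen decompletion to produce a finite projective module over $\calK$ with a nilpotent Higgs-type operator, and then invert the Faltings functor of paragraph two to obtain a topologically nilpotent $a$-small connection on $\overline{\Prism}_{\calK^+/S}[\frac{1}{p}]$, hence via the main equivalence an $a$-small crystal realising $V$. The main obstacle is precisely this Sen decompletion over the valuation ring $\calK^+$: unlike in Gao--Min--Wang, no local-field arithmetic input is available, and one must substitute it with the finiteness of $\cdL_{\calK^+/\calO_{\mathbf{C}}}[\frac{1}{p}]$ plus Tate--Sen style vanishing over $\widehat{\overline\calK}$. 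Tracking the exact smallness threshold $|a|<1$ across the Faltings equivalence and the decompletion, so that the same $a$ governs both sides, is the most delicate technical step.
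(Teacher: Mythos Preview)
Your outline is in the right spirit but diverges from the paper's route at several structural points, and at least one of your steps has a genuine gap.

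First, your definition of $\Vect^{\sm a}$ as ``connections whose connection form is divisible by $a(\zeta_p-1)$'' is not the paper's. The paper builds $\Vect^{\sm a}$ via the \emph{small component} $\overline\Prism_{\calK^+/S}^a$ (Proposition-Definition~\ref{prop-defi: small component}) and then proves in Theorem~\ref{theo: fake J-small connection} that the resulting category $\MIC^\tn(\overline\Prism_{\calK^+/S}^a[\tfrac{1}{p}],a^{-1}\dif)$ is independent of the choice of $S$. Your divisibility condition is stated over $[\tfrac{1}{p}]$, where it is vacuous as written; making it precise would require an integral lattice and an argument that it does not depend on the basis or on $S$---exactly what the small-component formalism supplies.

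Second, you evaluate $\mathcal{R}$ on the full perfectoid $\widehat{\overline\calK}$ with its $\Gal_\calK$-action and attempt Sen decompletion there. The paper instead passes through an intermediate \emph{toric perfectoid} $R_\infty$ with $\Gamma\cong\mathbb{Z}_p^d$, using that a geometric valuation ring admits toric charts whose defect (the cokernel of $\bigoplus R\,\dif t_i\to\cOmega_{\calK^+/O}$) is annihilated by arbitrarily small $p^{1/n}$; this is Theorem~\ref{theo: senable val}, which is substantial and relies on He's computations. The $\Gamma$-picture makes Sen theory available via Camargo's axioms (Proposition~\ref{prop: strong decom sen of charts}, Theorems~\ref{theo: decompletion} and~\ref{theo: full decompletion}) and makes the restriction functor explicitly computable as an exponential (Proposition~\ref{prop: Higgs to perfect crystals}). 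Your proposal never establishes the existence of such charts with shrinking defect, and without that you cannot let $a$ range over all $|a|<1$ with matching thresholds on both sides.

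Third, for fully faithfulness the paper does not argue via a cohomology comparison $H^0_\Prism\to(-)^{\Gal_\calK}$. It factors $\mathcal R$ through the Higgs category: Corollary~\ref{cor: simpson correspondence etale} shows $\Higgs^\tn\hookrightarrow\Vect((R/A)_\Prism,\overline\calO[\tfrac{1}{p}])$ is fully faithful with essential image sandwiching $\Vect^{\sm a}$; Proposition~\ref{prop: Higgs to perfect crystals} plus full Sen decompletion (Theorem~\ref{theo: full decompletion}) show $\Higgs^\tn\hookrightarrow\Rep_\Gamma(R_\infty[\tfrac{1}{p}])$ is fully faithful. This packages into Corollary~\ref{cor: fully faithfulness on small crystals}, and the proof of Theorem~\ref{theo: prism simpson val ring} is then a one-liner citing that corollary together with Theorem~\ref{theo: senable val}. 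Your cohomological route would require a separate Tate--Sen vanishing argument over $\widehat{\overline\calK}$ that the paper avoids entirely.
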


We call a crystal in $\Vect\big((\calK^+/A_{\inf})_{\Prism},\overline\calO[\frac{1}{p}]\big)$ \emph{strongly topologically nilpotent} if it is $a$-small for some topologically nilpotent $a\in \calK^\times$. The above theorem then gives a classification of all strongly topologically nilpotent crystals by generalized representations.

We conjecture that the class of strongly topologically nilpotent crystals is so big that it contains all crystals from rigid analytic geometry.

\begin{conj}
    Let $R$ be a topologically finitely generated and $p$-complete $\calO_{\mathbf{C}}$-algebra, and \[f:R\to \calK^+\] be an $\calO_{\mathbf C}$-homomorphism. Then for any rational Hodge--Tate crystal $\calE$ on $(R/A)_\Prism$, 
    the base change of $\calE$ to $\calK^+$ is $a$-small for some topologically nilpotent $a\in \calK^\times$.
\end{conj}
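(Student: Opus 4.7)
The plan is to translate the conjecture via the paper's main equivalence into a concrete statement about Higgs bundles on $R[\tfrac 1 p]$, transfer it along $f$ to the geometric valuation ring $\calK^+$, and then verify the criterion of Theorem \ref{intro: prism non abelian hodge of geo val rings}(2) using the topological finite generation of $R$ together with integrality of the geometric Sen operator.

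\emph{Smooth case.} First treat the case when $R$ is $p$-completely smooth over $\calO_{\bfC}$. The paper's main equivalence, applied to a smooth prism $S\twoheadrightarrow R$ over $\Ainf$, identifies $\calE$ with a topologically nilpotent integrable Higgs bundle $\theta : E \to E\otimes_R \cOmega_{R/\calO_{\bfC}}[\tfrac 1 p]$. Since $R$ is tfg, pick generators $x_1,\ldots,x_n$ of $R$ over $\calO_{\bfC}$ so that $\dif x_1,\ldots,\dif x_n$ generate $\cOmega_{R/\calO_{\bfC}}$, and write $\theta = \sum_i \theta_i\,\dif x_i$ with commuting topologically nilpotent endomorphisms $\theta_i$ of $E$. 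By functoriality of the equivalence, $f^*\calE$ corresponds to the Higgs bundle $(E\otimes_R \calK,\, f^*\theta)$ with $f^*\theta = \sum_i (\theta_i)_{\calK}\,\dif f(x_i)$; contracting with any $\partial \in \cOmega_{\calK^+/\calO_{\bfC}}^\vee[\tfrac 1 p](1)$ gives the commuting sum $\partial\lrcorner f^*\theta = \sum_i \partial(f(x_i))(\theta_i)_\calK$. The crucial integrality input is that as $\partial$ varies over a $\calK^+$-lattice in the finite-dimensional (by Definition \ref{defi:geo val ring}) Tate-twisted dual cotangent, the coefficients $\partial(f(x_i))$ are uniformly bounded by $|\zeta_p - 1|$ --- this is essentially the integrality of the geometric Sen operator attached to the $v$-sheaf on $\Spa(R[\tfrac 1 p], R)$. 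Combining this with the topological nilpotence (hence spectral-radius bound $<1$) of each $(\theta_i)_\calK$, one picks $a\in\calK^\times$ topologically nilpotent in the range where $a^{-1}(\zeta_p-1)^{-1}$ rescales all coefficients into the maximal ideal of $\calK^+$; the scaled sums $a^{-1}(\zeta_p-1)^{-1}\partial\lrcorner f^*\theta$ are then commuting $\calK$-linear combinations of topologically nilpotent operators with sub-unit coefficients, hence topologically nilpotent. This verifies Theorem \ref{intro: prism non abelian hodge of geo val rings}(2) and gives $a$-smallness of $f^*\calE$.

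\emph{General tfg case; main obstacle.} For general topologically finitely generated $R$ the above classification is not directly available, since $R$ need not be quasi-l.c.i.\ and Condition \ref{cond: tor -1,0} may fail. The natural plan is to present $R = P/I$ with $P = \calO_{\bfC}\langle T_1,\ldots,T_n\rangle$ smooth, lift $f$ to $\tilde f: P\to\calK^+$, and reduce the base-change question to $P$ via an extension of $\calE$ along the closed immersion $\Spf R\hookrightarrow\Spf P$. The principal obstacle is this extension step: a rational Hodge--Tate crystal on $R$ is not in general extendable to a crystal on $P$. Resolving this likely requires either extending the paper's main equivalence beyond the quasi-l.c.i.\ setting --- perhaps via the derived prismatization framework of Bhatt--Lurie, which replaces the cotangent-flatness hypothesis by weaker cohomological conditions --- or, alternatively, directly analyzing the Sen operator of $f^*\calE$ as a generalized representation of $\Gal_{\calK}$ using the geometric Sen operator attached to the $v$-vector bundle on $\Spa(R[\tfrac 1 p], R)$, whose integrality (from the rigid-analytic geometry of $\Spf R$) then allows the scaling argument of the smooth case to conclude.
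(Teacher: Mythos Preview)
The statement is labeled a \textbf{Conjecture} in the paper and is left unproven there; the authors merely state it as motivation for the notion of strongly topologically nilpotent crystals. So there is no ``paper's own proof'' against which to compare your proposal --- you are attempting to prove something the authors explicitly leave open.

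Your sketch for the smooth case has the right shape but contains a genuine gap. The assertion that ``the coefficients $\partial(f(x_i))$ are uniformly bounded by $|\zeta_p-1|$'' is not justified and appears to be incorrect as stated: for $\partial$ in the integral dual lattice $T_{\calK^+/\calO_{\bfC}}$ one only gets $\partial(\dif f(x_i))\in\calK^+$, i.e.\ a bound by $1$, not by $|\zeta_p-1|$. What actually makes the smooth case work (if it does) is a different mechanism: the $(\zeta_p-1)^{-1}$ factor in Theorem~\ref{intro: prism non abelian hodge of geo val rings}(2) is already absorbed into the relationship between the Higgs field and the Sen operator (cf.\ the normalization $\nabla_i = (\zeta_p-1)^{-1}x_i^{-1}\theta_i$ in the proof of Corollary~\ref{cor: fully faithfulness on small crystals}), so one only needs $a^{-1}(\nabla_i)_{\calK}$ to be topologically nilpotent. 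Over the field $\calK$, the finitely many commuting topologically nilpotent operators $(\nabla_i)_{\calK}$ on the finite-dimensional space $E_{\calK}$ have a common spectral-radius bound $r<1$, and one can choose $a\in\bfC^\times$ with $r<|a|<1$ since the value group is dense. However, you have not established the functoriality of the Sen operator under the base change $R\to\calK^+$ (Proposition-Definition~\ref{prop-defi: sen theory} only shows chart-independence, not compatibility with maps between different senable algebras), nor have you carefully matched the two normalizations. These are the missing steps, not an ``integrality'' input.

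For the general topologically finitely generated case your diagnosis is correct: the main equivalence of the paper requires Condition~\ref{cond: tor -1,0}, which an arbitrary tfg $\calO_{\bfC}$-algebra need not satisfy, and the extension-along-a-closed-immersion strategy faces exactly the obstacle you name. This is presumably why the statement remains a conjecture.
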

\vspace{0.3em}

\noindent\textbf{\scshape A prismatic $p$-adic non-abelian Hodge theory for pre-smooth formal schemes: }
Let $\mathbf{C}$ be an algebraic closed $p$-adic non-archimedean field, and let $\calO_{\mathbf{C}}$ be the ring of integers of $\mathbf{C}$. Put $\Ainf$ the prism $(\AAinf(\calO_{\mathbf{C}}),\ker\theta)$. Fix a norm $|\cdot|$ on $\bfC$. In this subsection, all $p$-complete $\calO_{\mathbf{C}}$-algebras are assumed to be topologically finitely generated. For any smooth formal scheme $\mathfrak{X}$ over $\calO_{\mathbf{C}}$ with generic fiber $X$, Min--Wang studied the functor ${\Res}_{\mathfrak{X}}$ in \cite{Min_2025}\footnote{Although in the paper, they worked on the arithmetic setting, their method also works for the geometric setting. They pointed this out in \cite[Remark 1.5]{Min_2025}.}\footnote{Their results was later reproved in \cite{bhatt2022prismatization} and \cite{anschütz2023smallpadicsimpsoncorrespondence} using stacky approach. However, we will not use this approach here.}, and found that ${\Res}_{\mathfrak X}$ is fully faithful with essential image described by certain smallness conditions.

Using our theory, we can analyze the functor ${\Res}_{\mathfrak{X}}$ for some non-smooth formal schemes. Specifically,
we will define a new class of $\calO_{\mathbf{C}}$-algebras --- pre-smooth algebras --- containing all $p$-completely smooth algebras as well as those $p$-complete algebras with semi-stable reductions. The explicit definitions will be introduced later (cf. Definition \ref{intro: pre smooth}).

Our first main result is a Hodge--Tate type comparison theorem for pre-smooth algebras.

\begin{theo}[Corollary \ref{cor: HT comparison revisited}]
    Let $R/\calO_{\mathbf{C}}$ be a pre-smooth algebra. Then, the Hodge--Tate filtration on $\overline\Prism_{R/\calO_{\mathbf C}}$ is exhaustive in $D(R)$ (i.e. in the uncompleted sense), and it induces an isomorphism of graded commutative rings
    \[\bigwedge\nolimits^*_{R[\frac 1 p]} \bigl( \cdL_{R/\calO_{\mathbf{C}}} \{-1\}\bigl[ \frac 1 p \bigr] \bigr) \to \hol^*\bigl((\Spf(R)/\Ainf)_{\Prism},\overline\calO \bigl[ \frac 1 p \bigr] \bigr). \]
\end{theo}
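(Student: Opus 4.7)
The plan is to derive this from the paper's main equivalence applied to the trivial rational Hodge--Tate crystal $\overline{\calO}[\frac{1}{p}]$, combined with an explicit computation of the de Rham complex of the resulting topologically nilpotent connection.

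First, working locally on $\Spf R$ if necessary, I would choose a prism $(S,I)$ over $\Ainf$ with a surjection $S \twoheadrightarrow R$ such that $\cdL_{\overline{S}/\calO_{\bfC}}$ is $p$-completely flat over $\overline{S}$, which is the hypothesis under which the paper's main equivalence applies. For pre-smooth $R$ such an $S$ should exist essentially by construction: in the $p$-completely smooth case one takes a Breuil--Kisin--Fargues-type formal lift; in the semi-stable case one incorporates a toroidal coordinate choice.

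Second, under the main equivalence, the trivial crystal $\overline{\calO}[\frac{1}{p}]$ corresponds to $(\overline{\Prism}_{R/S}[\frac{1}{p}], \nabla_0)$, where $\nabla_0$ is the canonical topologically nilpotent connection. Since the equivalence identifies the prismatic cohomology of a crystal with the de Rham cohomology of the corresponding connection, we obtain
\[ R\Gamma\bigl((\Spf(R)/\Ainf)_{\Prism}, \overline{\calO}\bigr)\bigl[\tfrac{1}{p}\bigr] \simeq \DR\bigl( \overline{\Prism}_{R/S}[\tfrac{1}{p}], \nabla_0 \bigr). \]
The Hodge--Tate filtration on $\overline{\Prism}_{R/S}$ then induces a filtration on the right-hand side. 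By the derived Hodge--Tate comparison applied to the quasi-l.c.i. surjection $\overline{S} \twoheadrightarrow R$, the graded pieces of this Hodge--Tate filtration are the derived wedge powers $\bigwedge^{i} \cdL_{R/\overline{S}}\{-i\}$. Combining this with the transitivity triangle for $\calO_{\bfC} \to \overline{S} \to R$ (using $p$-complete flatness of $\cdL_{\overline{S}/\calO_{\bfC}}$), the cohomology of the filtered de Rham complex collapses after inverting $p$ to the exterior algebra $\bigwedge^{*}_{R[\frac{1}{p}]} \cdL_{R/\calO_{\bfC}}\{-*\}[\tfrac{1}{p}]$. Compatibility with the graded commutative ring structure is automatic, since the cup product on crystal cohomology matches the wedge product under the Koszul identification of the de Rham complex of the trivial connection.

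The main obstacle is the uncompleted exhaustiveness claim. A priori, the Hodge--Tate filtration gives only a completed wedge algebra. The crucial extra input is that for pre-smooth $R$, the rational cotangent complex $\cdL_{R/\calO_{\bfC}}[\tfrac{1}{p}]$ should be concentrated in degree $0$, finite projective over $R[\tfrac{1}{p}]$, and of bounded rank, matching the smooth and semi-stable cases; this should be built into the definition of pre-smoothness. Under this hypothesis the derived wedge powers agree with the classical wedge powers and vanish above a fixed degree, so the filtration is bounded and hence exhaustive in $D(R)$ without any completion. Verifying that pre-smoothness indeed implies this finiteness property on the cotangent complex, and that it is preserved under the local choices of $S$, is the technical heart of the argument.
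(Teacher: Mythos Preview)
Your outline for the cohomology ring identification is broadly aligned with the paper: Theorem~\ref{theo: main dR realization revisit} already records that for $\calE=\overline\calO$ the isomorphism $R\Gamma\bigl((R/A)_\Prism,\overline\calO\bigr)\cong\DR(\overline D,\dif)$ is a strict isomorphism of filtered $\dE_\infty$-$R$-algebras with graded pieces given by $\dHodge_{R/A}\cong\DR(\dHodge_{R/S},\dif)$ (via Corollary~\ref{cor: natural isomorphism of dR and dHodge}). Also, the existence of $S$ is never an issue---one simply takes a completed free $\delta$-polynomial ring surjecting onto $R$, so there is no need to work locally or to appeal to special features of the smooth or semistable cases.

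The genuine gap is in your exhaustiveness argument. The Hodge--Tate filtration on $\overline\Prism_{R/A}$ has graded pieces $\cbigwedge^n_R(\cdL_{R/\calO_{\mathbf C}})\{-n\}[-n]$, and these do \emph{not} vanish integrally for large $n$: already for $R=\calO_{\mathbf C}\langle x,y\rangle/(xy-p)$ one has $\bigwedge^2\cdL_{R/\calO_{\mathbf C}}\cong R/(x,y)\cong\calO_{\mathbf C}/p\neq 0$. So the filtration is not bounded. You might hope that it becomes bounded after inverting $p$, but even that does not suffice: knowing $\gr_n[\tfrac1p]=0$ for $n>d$ only tells you that each $\Fil_n/\Fil_d$ is (derived $p$-complete and) $p$-power torsion, with the torsion exponent growing in $n$; the colimit of such a tower need not be $p$-complete and its completion can be nonzero after inverting $p$ (think of $\colim_n\,\dZ/p^n=\dQ_p/\dZ_p$, whose derived completion is $\dZ_p[1]$). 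Thus ``rationally bounded'' does not imply $\colim\Fil_n\to\ccolim\Fil_n$ is an isomorphism.

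The paper's actual proof of exhaustiveness is quite different and does not proceed via boundedness. It first establishes (part~(1) of Corollary~\ref{cor: HT comparison revisited}) that the uncompleted derived exterior powers $\bigwedge^n_R(\cdL_{R/\overline A})$ are already derived $p$-complete, via a retraction through a finite free module (maps $s:\cdL\to F$, $q:F\to\cdL$ with $qs=p^k$). For exhaustiveness itself (part~(2)) it uses the factored differential $\dif_0:\overline D\to\overline D\cotimes_R N\{-1\}$ of Proposition~\ref{prop: factor-through connection} (where $N=\cdL_{R/\overline S}[-1]$): building splittings $f:\Omega\to N\oplus F$, $g:N\oplus F\to\Omega$ with $gf=p^{k+\ell}$, it exhibits the completion defect $T(\DR(\overline D,\dif))$ as a retract of $T(\DR(\overline D,f\dif))$, and the latter vanishes because $\DR(\overline D,\dif_0)$ is a resolution of $R$ (Proposition~\ref{prop: factor-through connection}(4)) and the remaining factor is a bounded de Rham complex of a finite free module. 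Your sketch never invokes $\dif_0$ or this resolution, which is the technical heart of the argument.
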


\begin{rmk}
    Indeed, the theorem also holds for a larger class of algebras, weakly pre-smooth algebras. (cf. Definition \ref{intro: weak pre smooth})
\end{rmk}

\begin{rmk}
    This theorem is non-trivial, because for this theorem to hold (for some $R$ satisfying Condition \ref{cond: tor -1,0} for which $\cdL_{R/\calO_{\mathbf{C}}}$ is of bounded $p^\infty$-torsion and that $\cdL_{R/\calO_{\mathbf{C}}} \bigl[ \frac 1 p \bigr]$ is finite projective over $R \bigl[ \frac 1 p \bigr]$), a necessary and sufficient condition is that the Hodge--Tate filtration on $\overline\Prism_{R/\calO_{\mathbf C}}$ is exhaustive, i.e. the map
    \[ \colim_n \Fil_n(\overline\Prism_{R/\calO_{\mathbf C}}) \to \overline\Prism_{R/\calO_{\mathbf C}} \]
    is an isomorphism in $D(R)$. In general, the Hodge-Tate filtration only guarantees that the above map is an isomorphism \emph{after taking the derived $p$-adic completion}.

    We give a short proof that the condition is necessary and sufficient. Note that the fibre of the map
    \[ \colim_n \Fil_n(\overline\Prism_{R/\calO_{\mathbf C}}) \to \overline\Prism_{R/\calO_{\mathbf C}} \]
    is identified with $\RHom_\dZ(\dZ[1/p], \colim_n \Fil_n(\overline\Prism_{R/\calO_{\mathbf C}}))$, so the map is an isomorphism if and only if it is an isomorphism after inverting $p$. Now, the cohomology ring of $\colim_n \Fil_n(\overline\Prism_{R/\calO_{\mathbf C}}) \bigl[ \frac 1 p \bigr]$ is (using techniques of Corollary \ref{cor: HT comparison revisited}) $\bigwedge\nolimits^*_{R[\frac 1 p]} \bigl( \cdL_{R/\calO_{\mathbf{C}}} \{-1\} \bigl[ \frac 1 p \bigr] \bigr)$, and the cohomology ring of $\overline\Prism_{R/\calO_{\mathbf C}} \bigl[ \frac 1 p \bigr]$ is $\hol^*\bigl((\Spf(R)/\Ainf)_{\Prism},\overline\calO \bigl[ \frac 1 p \bigr] \bigr)$.
\end{rmk}

\begin{theo}[Corollary \ref{cor: fully faithfulness on small crystal l.c.i case revisited}]\label{intro: main theorem p adic Simpson}
    Let $R/\calO_{\mathbf{C}}$ be a pre-smooth algebra. Then:
    \begin{enumerate}[label=(\arabic*)]
        \item There exists an $a\in \mathbf{C}^\times$ which is topologically nilpotent such that the essential image of ${\Res}_{\Spf(R)}$ contains all $v$-vector bundles such that the Sen action of $a^{-1}(\zeta_p-1)^{-1}\cdL_{R/\calO_{\mathbf{C}}}^\vee(1)$ is topologically nilpotent. (Here $\cdL_{R/\calO_{\mathbf{C}}} := \Hom_R(\cdL_{R/\calO_{\mathbf{C}}},R)$ is non-derived.)
        \item We may choose $a\in \bfC^\times$ in (1) such that there exists a full subcategory $\calC$ of $\Vect\bigl((R/\Ainf),\overline{\calO}[\frac{1}{p}]\bigr)$ on which ${\Res}_{\Spf(R)}$ is fully faithful, and the essential image of $\calC$ contains all $v$-vector bundles mentioned in (1).
    \end{enumerate}
\end{theo}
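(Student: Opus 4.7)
The plan is to deduce this corollary from the general equivalence between rational Hodge--Tate crystals and topologically nilpotent integrable connections (the main theorem announced in the abstract), together with the notion of $a$-smallness and the compatibility with Faltings's local $p$-adic Simpson correspondence mentioned in the introduction. Since $R/\calO_{\mathbf{C}}$ is pre-smooth, it admits (at least locally) a presentation as a quotient of a suitable prism $S/\Ainf$ with surjection $S \to R$ such that $\cdL_{\overline{S}/\calO_{\mathbf{C}}}$ is $p$-completely flat over $\overline{S}$; this is exactly the role played by pre-smoothness, and it is what powers the Hodge--Tate comparison of Corollary \ref{cor: HT comparison revisited}. We are then in the hypotheses of the main equivalence, which identifies $\Vect\bigl((R/\Ainf)_\Prism,\overline\calO[\tfrac{1}{p}]\bigr)$ with the category of topologically nilpotent integrable connections on $\overline\Prism_{R/\calO_{\mathbf{C}}}$.

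Under this equivalence, the $a$-small rational Hodge--Tate crystals defined earlier in the paper correspond to the integrable connections whose associated ``Higgs field'' satisfies an appropriate $a$-small condition. The key intermediate step is to identify the Sen operator on the $v$-vector bundle side with a precise multiple of this Higgs field. Concretely, I would use the compatibility with Faltings's local construction (one of the main inputs advertised in the introduction) to show that for a crystal $\calE$ with associated Higgs field $\theta\colon \calE \to \calE \otimes \cdL_{R/\calO_{\mathbf{C}}}[\tfrac{1}{p}]$, the generalized representation $\Res_{\Spf(R)}(\calE)$ carries a Sen operator which, viewed through the twist $\cdL_{R/\calO_{\mathbf{C}}}^\vee(1)$, equals $(\zeta_p-1)\theta$ up to natural identifications. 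This is what dictates the factor $a^{-1}(\zeta_p-1)^{-1}$ in the statement: $a$-smallness of the Higgs field translates into topological nilpotence of the Sen action of $a^{-1}(\zeta_p-1)^{-1}\cdL_{R/\calO_{\mathbf{C}}}^\vee(1)$.

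To assemble the corollary, I would set $\calC$ to be the category of $a$-small rational Hodge--Tate crystals for a suitably chosen topologically nilpotent $a$. Fully faithfulness of $\Res_{\Spf(R)}$ on $\calC$ is then automatic: under the equivalence, a morphism of crystals is the same as a morphism of underlying modules commuting with the Higgs field, which by the previous paragraph corresponds to a morphism of the $v$-vector bundles commuting with the Sen operator. For the essential image, given any $v$-vector bundle with the specified Sen smallness, one reverse-engineers the Higgs field using the same identification (here $a$-smallness is what guarantees that the Faltings-style exponential/logarithm construction converges and yields an integrable connection), and then the main equivalence produces an $a$-small crystal mapping to it. This simultaneously establishes (1) and the essential image assertion in (2), while the fully faithfulness gives the remainder of (2). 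The special case $R = \calK^+$ of Theorem \ref{intro: prism non abelian hodge of geo val rings} provides a useful template for this argument.

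The main obstacle will be pinning down the precise constants in the identification of the Sen operator with the Higgs field, so that $a$-smallness on the prismatic side matches the stated Sen-action smallness with exactly the factor $a^{-1}(\zeta_p-1)^{-1}$; this requires careful tracking of the Breuil--Kisin twist $(1)$ and of the comparison between the prismatic period ring and the cyclotomic period at $\zeta_p$. A secondary difficulty is that in the semi-stable case covered by ``pre-smooth'', the cotangent complex $\cdL_{R/\calO_{\mathbf{C}}}$ is only flat (after inverting $p$), not locally free in the naive sense, so one must handle the torsion and the non-derived dual $\Hom_R(\cdL_{R/\calO_{\mathbf{C}}},R)$ appearing in the statement with care; however, Condition \ref{cond: tor -1,0} together with the pre-smooth framework of the paper is tailored precisely to this situation, so no essentially new machinery should be needed.
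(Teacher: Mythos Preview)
Your overall architecture is right—pass through Higgs bundles via the main equivalence, use $a$-smallness, and match the Sen operator with the Higgs field—but the step you call ``automatic'' is exactly the substantive part of the argument and does not follow from what you wrote.

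The gap is in fully faithfulness. You say a morphism of crystals is a morphism of underlying modules commuting with the Higgs field, and that this ``corresponds to a morphism of the $v$-vector bundles commuting with the Sen operator.'' But morphisms of $v$-vector bundles are $\Gamma$-equivariant maps of $R_\infty[\tfrac 1 p]$-modules, whereas Higgs morphisms live over $R[\tfrac 1 p]$; commuting with the Sen operator (the Lie algebra action) is strictly weaker than commuting with the full $\Gamma$-action, and in any case you must first descend from $R_\infty[\tfrac 1 p]$ to $R[\tfrac 1 p]$. The paper closes this gap with Sen-style decompletion: one identifies $M$ with the $\Gamma$-analytic vectors of $M_\infty$ (Theorem \ref{theo: decompletion}, and crucially Theorem \ref{theo: full decompletion} to get down to level $0$ under the $a$-smallness hypothesis), and then the operators $\nabla_i$ are recovered as $(\zeta_p-1)^{-1}x_i^{-1}\theta_i$. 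Without this, you have no inverse functor and hence no fully faithfulness. Your essential image argument has the same missing ingredient: ``reverse-engineering the Higgs field'' from a $v$-bundle is precisely decompletion plus taking the Lie algebra action, not a direct logarithm.

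There is also a structural point you gloss over. Pre-smoothness is a Zariski-local condition (toric charts on a cover), so the argument in the paper proceeds locally via Corollary \ref{cor: fully faithfulness on small crystals} (which needs a toric chart to define $\Gamma$, $R_\infty$, and the Sen operators) and then glues using that the relevant categories—$\Higgs^\tn$, $\Vect^{\smet{J}}$, and $\Vect^{\sm{J'}}$ on the perfect side—are \'etale (or Zariski) stacks. Your sketch should make this local-to-global step explicit; in particular, the element $a$ is chosen globally so that on each piece of the cover the cokernel of $\bigoplus R\,\dif T_j \to \cdL_{R/\calO_{\mathbf C}}$ is killed by $a$, and then fully faithfulness and the essential image bounds are checked locally and descended.
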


To make it clearer, we provide a moduli description. In \cite{Heuer_2025}, Heuer defined for any smooth rigid space $X/\mathbf{C}$ (or more general, a smoothiod space), the moduli stack $\mathcal{B}un_n(X)$ of rank-$n$ $v$-vector bundles. Also, like the complex analytic geometry, he defined the Hitchin base $\calA_n(X)$ of $X$. He pointed out that there exists a canonical morphism
\[\widetilde H:\calB un_n(X)\to \calA_n(X)\]
called the Hitchin morphism on the Betti side (cf.\cite[Definition 8.12]{Heuer_2025}). The morphism $\widetilde H$ describes the eigenvalues of the Sen operators. From this point of view, Theorem \ref{intro: main theorem p adic Simpson} ensures that the essential image of ${\Res}_{\Spf(R)}$ contains $\widetilde H^{-1}(U)$ for some neighborhood of $0$ in the Hitchin base of $\Spa(R[\frac{1}{p}])$.

\vspace{0.3em}

\noindent\textbf{\scshape De rham realization of rational Hodge Tate crystals of quasi-l.c.i algebras: } Now we turn to introducing our main result on classifying rational Hodge--Tate crystals of quasi-l.c.i
algebras.

Fix a bounded prism $\left( A,I \right)$. We impose the following condition on an $\overline A:=A/I$-algebra $R$:
\begin{cond}[{\cite[Remark 4.1.18]{bhatt2022absolute}}]\label{cond: tor -1,0}
    The ring $R$ is of bounded $p^\infty$-torsion, $p$-adically complete, and the complete contangent complex $\cdL_{R/\overline A}$ is of $p$-complete $\Tor$-amplitude $[-1,0]$ over $R$ (see Proposition-Definition \ref{prop-defi: complete flatness} for complete flatness and complete $\Tor$-amplitudes). Such an algebra is is called \emph{quasi-l.c.i} over $\overline{A}$.
\end{cond}

Also, we have the following variant:
\begin{cond}[{\cite[Remark 4.1.19]{bhatt2022absolute}}]\label{cond: tor -1}
    The ring $R$ is of bounded $p^\infty$-torsion, $p$-adically complete, and $\cdL_{R/\overline A} \left[ -1 \right]$ is $p$-completely flat over $R$.
\end{cond}

Let $S$ be a bounded prism over $A$. We also consider the following condition:
\begin{cond}\label{cond: flat + tor 0}
    The ring $S$ is $\left( p,I \right)$-completely flat over $A$, and the complete contangent complex $\cdL_{S/A}$ is $p$-completely flat over $S$.
\end{cond}

Under the condition above, we have the following theorem:
\begin{theo}[Theorem \ref{theo: main dR realization revisit}]\label{theo: main dR realization}
    Let $R$ be an $\overline A$-algebra, and $S$ be a bounded prism over $A$, equipped with a morphism of $A$-algebras $S \to R.$ Assume that $S/A$ satisfies Condition \ref{cond: flat + tor 0} and $R/S$ satisfies Condition \ref{cond: tor -1}. Then, $D := \Prism_{R/S}$ is concentrated on degree 0, and a bounded prism over $S$.
    
    Define $\overline D = D/ID$. Then, we have an $R$-linear derivation on $\overline D$ (integrable as a Higgs field over $R$):
    \[ \dif: \overline D \to \overline D \cotimes_R \bigl( R \cotimes_{\overline S} \cOmega_{\overline S/\overline A} \bigr) \left\{ -1 \right\}, \]
    satisfying the Griffith transversality, with $\gr_*(\dif)$ as in Example \ref{ex: differential dHodge}, such that we have the symmetric monoidal correspondences below:
    \begin{align*}
        \Vect \bigl( \left( R/A \right)_\Prism, \overline\calO \bigr) & \cong \MIC^\tn \bigl( \overline D, \dif \bigr), \\
        \Vect \bigl( \left( R/A \right)_\Prism, \overline\calO \bigl[ \frac 1 p \bigr] \bigr) & \cong \MIC^\tn \bigl( \overline D \bigl[ \frac 1 p \bigr], \dif \bigr).
    \end{align*}
    Moreover, if $\mathcal E$ corresponds to $\left( M,\nabla \right)$ under the correspondence above, then we have a natural isomorphism of cohomologies (compatible with the symmetric monoidal structure)
    \[ R\Gamma \bigl( \left( R/A \right)_\Prism, \mathcal E \bigr) \cong \DR \left( M,\nabla \right). \]
    When $\calE=\overline\calO$ is trivial, then the above isomorphism can be made into a strict isomorphism of filtered complete $\dE_\infty$-$R$-algebras, and its graded pieces is identified (under Hodge-Tate comparison) with the map
    \[ \dHodge_{R/A} \to \DR(\dHodge_{R/S}, \dif) \]
    as in Corollary \ref{cor: natural isomorphism of dR and dHodge}.
\end{theo}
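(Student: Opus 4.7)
\bigskip

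\noindent\textbf{Proof plan for Theorem \ref{theo: main dR realization}.}

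The plan is to realize $D = \Prism_{R/S}$ as a weakly final object of the site $(R/A)_\Prism$ (i.e., its Čech nerve computes everything), and then to identify the descent datum along this cover with an integrable connection. The whole argument is modeled on the Čech--Alexander / stratification approach of Bhatt--Scholze, with the cotangent complex bookkeeping replacing the topologically finitely generated hypotheses used in \cite{Bhatt_2022} and \cite{Tian_2023}.

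First I would establish the existence and structural properties of $D$. The key technical input is that under Condition \ref{cond: tor -1}, the \emph{derived} prismatic envelope of $S \twoheadrightarrow R$ is concentrated in degree $0$ and is $(p,I)$-completely flat over $S$. This is the quasi-regular situation: since $\cdL_{R/S}[-1]$ is $p$-completely flat, a derived Koszul / conormal sequence argument (as in \cite[\S4]{bhatt2022absolute}) shows that the derived prismatic envelope coincides with the classical one. Combined with Condition \ref{cond: flat + tor 0} on $S/A$, this gives that $D$ is a bounded prism over $S$ (hence over $A$), with a canonical surjection $\overline D \twoheadrightarrow R$.

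Next I would show that $D$ is weakly final in $(R/A)_\Prism$. For any object $(T,J) \in (R/A)_\Prism$, the composition $S \to R \to \overline T$ admits many lifts $S \to T$ (using $(p,I)$-complete flatness of $S$), and any such lift factors through $D$ by the universal property. This gives a symmetric monoidal equivalence between $\Vect((R/A)_\Prism, \overline\calO)$ and the category of Hodge--Tate stratifications on $\overline D$, i.e., $\overline D$-modules $\overline M$ equipped with a compatible system of isomorphisms along the Čech nerve $D^{\bullet+1} := D \otimes_{(R/A)_\Prism}^\bullet D$. The main calculation is then to identify the self-product. One computes, as an object of $(R/A)_\Prism$, the prismatic envelope of $D \otimes_A D \twoheadrightarrow R$, and uses the exact triangle
\[ \cdL_{S/A} \cotimes_S R \to \cdL_{D/A} \cotimes_D R \to \cdL_{D/S} \cotimes_D R \]
together with the Breuil--Kisin twist convention (so that $I/I^2 \cong R\{1\}$) to show that $\overline{D \otimes_{(R/A)_\Prism} D}$ is the divided-power / prismatic envelope of $\overline D$ along the conormal $R \cotimes_{\overline S} \cOmega_{\overline S/\overline A}\{-1\}$. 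This step, together with the translation from a stratification to its associated infinitesimal datum, is where the main work lies.

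From here the remaining assertions follow formal patterns. Topologically nilpotent stratifications correspond exactly to topologically nilpotent integrable connections $(\overline D, \dif)$ with values in $\overline D \cotimes_R R\cotimes_{\overline S}\cOmega_{\overline S/\overline A}\{-1\}$; the Griffiths transversality and the identification of $\gr_*(\dif)$ with Example \ref{ex: differential dHodge} are forced by the construction. For the cohomology comparison, the Čech complex of $D^{\bullet+1}$ with coefficients in $\mathcal E$ degenerates to the de Rham complex of the associated $(M,\nabla)$, exactly because the $n$-th Čech term is the $n$-fold symmetric / divided-power envelope on the conormal. In the case $\mathcal E = \overline\calO$, upgrading this quasi-isomorphism to a strict filtered $\dE_\infty$-isomorphism uses the Hodge--Tate filtration on $\overline\Prism_{R/A}$ and its compatibility with Čech--Alexander; the graded pieces are then identified via Hodge--Tate comparison (applied to both $R/A$ and $R/S$) with the map in Corollary \ref{cor: natural isomorphism of dR and dHodge}. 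Finally, the rational version is obtained by inverting $p$ everywhere, which preserves both the flatness hypotheses and topological nilpotence. The hardest step, as noted, will be the self-product computation and the correct accounting of the Breuil--Kisin twist; everything else is a formal consequence of descent along $D$.
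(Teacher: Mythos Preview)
Your overall architecture is the same as the paper's: show $D=\Prism_{R/S}$ covers the final object of $(R/A)_\Prism$, pass to the \v Cech nerve, identify stratifications with topologically nilpotent integrable connections, and read off the cohomology comparison from the \v Cech--de Rham comparison. You also correctly flag the self-product computation as the crux.

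The substantive difference is in how that self-product is computed. You propose to take the prismatic envelope of $D \cotimes_A D \twoheadrightarrow R$ and analyze it via the cotangent triangle for $S \to D$. The paper instead never works with $D \cotimes_A D$: it sets $S(n) = S^{\cotimes(n+1)/A}$ and $D(n) = \Prism_{R/S(n)}$, observes (via the universal property of $D$) that $D(n)$ \emph{is} the $(n+1)$-fold self-product of $D$ in $(R/A)_\Prism$, and then computes $\overline{D(n)}$ in two steps: (i) for the auxiliary rings $E(n)=\Prism_{\overline S/S(n)}$, the surjection $S(n)\twoheadrightarrow S$ of prisms feeds into the key structural result Theorem \ref{theo: prism=pd derived construction} (proved via prismatization / Hodge--Tate stacks, Corollary \ref{cor: trivial prismatization}), which identifies $\overline{E(n)}$ with a completed PD-polynomial ring on the conormal; (ii) a base-change argument (Proposition \ref{prop: D(n) split PD}) then gives $\overline{D(n)} \cong \overline D \cotimes_R \cGamma^*_R\bigl((R\cotimes_{\overline S}\cOmega_{\overline S/\overline A})^{\oplus n}\{-1\}\bigr)$ as a \emph{filtered split PD} cosimplicial ring. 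The rest (stratifications $\leftrightarrow$ connections, cohomology, filtered upgrade) is the abstract machinery of Section~4.1.

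Your route via $D\cotimes_A D$ would in principle compute the same object, but the cotangent triangle you wrote is not enough on its own: it gives you the conormal module, but not the PD-algebra structure or the strict filtered isomorphism compatible with Hodge--Tate comparison. You would still need a result of the shape of Theorem \ref{theo: prism=pd derived construction} to close the gap, and that theorem is most naturally stated and proved for surjections \emph{from a prism}, which is exactly why the paper routes through $S(n)$ rather than $D\cotimes_A D$.
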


The proof of this theorem is based on the techniques of \cite{Bhatt_2023}. First, using Theorem \cite[Corollary 4.3.14]{bhatt2022absolute}, one can obtain that under condition \ref{cond: tor -1}, the prismatic cohomology $\Prism_{R/S}$ is concentrated in degree 0 and is the initial object of the category ${R/S}_{\Prism}$. Through the techniques of \cite{Bhatt_2023}, one can transform a rational Hodge--Tate crystal into a stratification on the cosimplicial ring $\{\Prism_{R/S^{\cotimes\bullet}}\}$ (cf. \cite[Subsection 7.3]{Bhatt_2023}). Here, the complete tensor product is taken over $A$. In order to further obtain a connection from the stratification, we need to concretely compute the structure of $\Prism_{R/S^{\cotimes n}}$. An important observation is as follows:

\begin{theo}[Theorem \ref{theo: prism=pd derived construction}]\label{intro: prism=pd derived construction}
    Let $(A,I)$ be a bounded prism. Let $(R,IR)$ be a bounded prism over $(A,I)$ such that $A \to R$ is surjective, and $\overline R/\overline A$ satisfies Condition \ref{cond: tor -1}. Denote $J = \fib(A \to R) = \ker(A \to R)$ and $\overline J = J \otimes_A^\bL \overline A$. Then $\overline J = \fib(\overline A \to \overline R)$ is concentrated on degree 0, and there is a canonical isomorphism compatible with the Hodge-Tate comparison
    \[\overline\Prism_{\overline{R}/A} \cong \cGamma_{\overline R}^*\bigl((\overline J/\overline J^2)^\land \{-1\}\bigr),\]
    such that the map $\bigl(\overline J/\overline J^2\bigr)^\land \left\{ -1 \right\} \to \overline \Prism_{\overline R/A}$ is given by the twist of the reduction of
    \[ J \to I\Prism_{\overline R/A} \]
    (note that by the universal property of $\Prism_{\overline R/A},$ the image of $J$ lies in $I\Prism_{\overline R/A}$).

    By naturality, the map $\overline\Prism_{\overline R/A} \to \overline\Prism_{\overline R/R} = \overline R$ is identified with $\cGamma_{\overline R}^*\bigl((\overline J/\overline J^2)^\land \{-1\}\bigr) \to \overline R$ (sending the positive degree part to zero). Therefore, the positive degree part of $\cGamma_{\overline R}^*\bigl((\overline J/\overline J^2)^\land \{-1\}\bigr) \cong \overline\Prism_{\overline R/A}$ agrees with the reduction of the kernel of $\Prism_{\overline R/A} \to R.$
\end{theo}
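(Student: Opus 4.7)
I would split the argument into three steps: (i) verifying $\overline J$ is concentrated on degree 0; (ii) computing the Hodge--Tate graded pieces of $\overline\Prism_{\overline R/A}$; and (iii) constructing and identifying the isomorphism via left Kan extension from transversely regular base cases. For (i), since $(R, IR)$ is a bounded prism with $IR$ generated by a distinguished element, $R$ is $I$-torsion-free, so $R\otimes_A^\bL \overline A = R/IR = \overline R$ is concentrated in degree 0. Applying $-\otimes_A^\bL \overline A$ to the cofibre sequence $J\to A\to R$ gives $\overline J\to \overline A\to \overline R$ with $\overline A\twoheadrightarrow \overline R$ surjective, so $\overline J$ sits in degree 0 and equals $\ker(\overline A \to \overline R)$.

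For (ii), Condition~\ref{cond: tor -1} places $\cdL_{\overline R/\overline A}$ in degree $-1$ with $p$-completely flat underlying $\overline R$-module. The conormal-to-cotangent map $(\overline J/\overline J^2)^\land \to \cdL_{\overline R/\overline A}[-1]$ is a $p$-completed isomorphism (reduce modulo $p$, where this is the standard identification for quasi-regular quotients; both sides are $p$-completely flat). The Hodge--Tate comparison then yields an exhaustive filtration $\Fil^\bullet$ on $\overline\Prism_{\overline R/A}$ with $\gr^i\cong (\bigwedge^i\cdL_{\overline R/\overline A})\{-i\}[-i]$. Combined with the derived décalage identity $\bigwedge^i_\bL(N[1])\cong \Gamma^i(N)[i]$ for flat $N$, this gives
\[
\gr^i\, \overline\Prism_{\overline R/A}\ \cong\ \Gamma^i_{\overline R}\bigl((\overline J/\overline J^2)^\land\bigr)\{-i\},
\]
concentrated in degree 0. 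In particular $\overline\Prism_{\overline R/A}$ itself lives in degree 0.

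For (iii), I would construct the isomorphism via a derived/animated Kan extension argument. Both functors $R\mapsto \overline\Prism_{\overline R/A}$ and $R\mapsto \cGamma^*_{\overline R}\bigl((\overline J/\overline J^2)^\land\{-1\}\bigr)$ are left-Kan-extended (up to $p$-completion) from the subcategory of $\overline A$-algebras of the form $\overline A/(\overline f_1,\ldots,\overline f_n)$ with $(f_i)$ a transversely regular sequence in $A$: on the prismatic side this is the content of the Bhatt--Lurie derived prismatic formalism (cf.~\cite{bhatt2022absolute}) together with Condition~\ref{cond: tor -1}; on the divided-power side it is the fact that each $\Gamma^i$ commutes with sifted colimits of flat modules. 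In the base case, the prismatic envelope $\Prism_{\overline R/A}$ is explicitly the $(p,I)$-completed prismatic PD envelope of $(f_i)\subset A$ as in \cite{Bhatt_2022}, and reducing modulo $I$ (and untwisting via the Breuil--Kisin twist $I/I^2\cong \calO_\Prism\{1\}$) yields precisely $\cGamma^*_{\overline R}\bigl((\overline J/\overline J^2)^\land\{-1\}\bigr)$, with the degree-1 component identified by construction with the twisted reduction of $J\hookrightarrow A\to \Prism_{\overline R/A}$ (the image lies in $I\Prism_{\overline R/A}$ because in the base case each $f_i$ is, up to a unit, the image of a generator of $I$). Left Kan extending propagates this identification, as well as the agreement of the augmentation $\overline\Prism_{\overline R/A}\to \overline R$ with the projection $\cGamma^*\to \cGamma^0 = \overline R$, to the general case. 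The main obstacle is precisely this Kan extension step: one must check carefully that both sides commute, compatibly with $p$-completion and the Breuil--Kisin twist, with left Kan extension from transversely regular quotients, and that the base-case envelope computation matches the divided power algebra under the stated identifications. Once this is in place, the remainder of the theorem (the explicit form of the structural map on $(\overline J/\overline J^2)^\land\{-1\}$ and the agreement of the augmentations) follows from naturality.
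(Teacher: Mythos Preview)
Your steps (i) and (ii) are correct and parallel the paper's setup (the conormal identification is Lemma~\ref{lemma: completed conormal}, and the Hodge--Tate graded computation is as you describe). Step (iii), however, departs from the paper's proof, and the Kan-extension claim as stated has a gap.

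The paper does not bootstrap from Tian's regular-sequence computation. Instead, it proves (Proposition~\ref{prop: trivial prismatization}) a canonical isomorphism $\dHodge_{\overline S/\overline A}\cong\overline\Prism_{\overline S/A}$ for any $(p,I)$-complete $\delta$-algebra $S/A$ with $\dL_{\overline S/\overline A}\otimes^\bL\overline S/p$ projective, via prismatization: the $\delta$-structure on $S$ furnishes a section $X\to X_A^\HT$, trivialising the $BT_X^\sharp$-torsor and identifying $R\Gamma(X_A^\HT,\calO)\cong R\Gamma(BT_X^\sharp,\calO)\cong\dHodge$. This is then left-Kan-extended over \emph{animated $\delta$-algebras over $A$} (Corollary~\ref{cor: trivial prismatization}; free $\delta$-algebras lie in the base category and generate under sifted colimits), and finally restricted to the quotient prism $R$. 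Tian's explicit construction is treated only afterward (Construction~\ref{cons: prismatic envelope=PD explicit construction} and the theorem following it) as a consistency check, not as the source of the isomorphism.

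Your proposed Kan extension over ``transversely regular quotients of $\overline A$'' faces two difficulties you have not resolved. First, classical regular-sequence quotients do not generate the category of animated surjections $\overline A\to\overline R$ under sifted colimits; the natural generators are the \emph{derived} Koszul quotients $\overline A/\!\!/(\overline f_1,\ldots,\overline f_n)$ for \emph{arbitrary} sequences. Second, to use those as your base, you would need the envelope identification at the animated level for non-regular sequences, which neither \cite[Theorem~3.13]{Bhatt_2022} nor Tian's argument provides---the introduction explicitly flags this as the reason Tian's method does not directly generalise. One can salvage your strategy by passing to the universal case (free $\delta$-polynomial rings, where the relevant sequences are automatically regular), but at that point you have essentially reconstructed the paper's Kan extension over animated $\delta$-algebras, with Tian's explicit computation substituted for the stacky argument in the base case. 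That is a legitimate alternative, but it is not the shortcut your sketch suggests, and the naturality of Tian's isomorphism (needed for the extension) still requires verification.
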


The notation $\widehat\Gamma^*_{\overline{R}}$ is a complete version of PD-polynomial (See Section \ref{sect: remarks on infty-cat}).
In fact, when $J$ is generated by a Koszul regular sequence, this theorem was already proven in \cite{Tian_2023}. Tian's proof is based on explicit computations on $\delta$-polynomial rings. However, his method does not directly work in our situation, primarily because we do not know the form of Theorem \cite[Theorem 3.13]{Bhatt_2022} in general. Here we will use a new (and more natural) method to prove this theorem. Moreover, it can be shown that our construction coincides with Tian's construction when $J$ is generated by a Koszul regular sequence.

Furthermore, applying Theorem \ref{intro: prism=pd derived construction}, we will prove that for any $n$,
\[
\overline\Prism_{R/S^{\cotimes{n+1}}} \cong \overline\Prism_{R/S}\cotimes_R \cGamma_R^* \bigl(R \cotimes_{\overline S}\cOmega_{\overline S/\overline A}^{\oplus n}\{-1\}\bigr).
\]
Finally, by generalizing Tian's proof to the case of general PD polynomials, we obtain the proof of Theorem \ref{theo: main dR realization}.

\begin{rmk}[Links with previous results]
    As we mentioned above, Theorem \ref{theo: main dR realization} is a generalization of many previous results:
    \begin{enumerate}[label=(\arabic*)]
        \item When $R$ is $p$-completely smooth and $S/IS=R$, this is proved in \cite{Tian_2023}.
        \item When $A$ is crystalline, $S$ is smooth and $\ker(S/p\to R)$ is generated by a regular sequence, this is proved in \cite{ogus2024crystallineprismsreflectionsdiffractions}.
        \item When $A$ is the $q$-crystalline prism (cf. Section \cite{Bhatt_2022}), $S$ is smooth and $\ker(\overline{S}\to R)$ is generated by a Koszul regular sequence, this is proved in \cite{tsuji2024prismaticcrystalsqhiggsfields} with a stacky approach in \cite{liu2025stackyapproachprismaticcrystals}.
        \item When $A=\AAinf(\calO_{\mathbb{C}_p})$ and $R$ is smooth, this is proved in \cite{Min_2025} with a stacky approach in \cite{anschütz2023smallpadicsimpsoncorrespondence}.
    \end{enumerate}
\end{rmk}

\begin{rmk}[Generalizing to other coefficients]
    It is natural to ask whether Theorem \ref{theo: main dR realization} can be generalized to other coefficients (such as $\Bdrp$). It seems to be hard for general prisms $A$. But for special $A$, this can be done, and we are working on this.
\end{rmk}
\vspace{0.3em}
\noindent\textbf{\scshape Local calculation of the functor ${\Res}$: }
In this subsection, we focus on the local algebraic framework necessary for the $p$-adic non-abelian Hodge theory. The primary technical obstacle in extending results like those of Min–Wang (\cite{Min_2025}) lies in performing reliable local computations when the integral model is not smooth. Our goal is to develop an alternative local method that accommodates controlled singularities.

Let $\mathbf{C}$ be an algebraic closed $p$-adic non-archimedean field, and let $\calO_{\mathbf{C}}$ be the ring of integers of $\mathbf{C}$. Put $\Ainf$ the prism $(\AAinf(\calO_{\mathbf{C}}),\ker\theta)$.

\begin{defi}\label{intro: weak pre smooth}
    Let $R/\calO_{\mathbf{C}}$ be a $p$-complete algebra. Call $R$ \emph{weakly pre-smooth} if it satisfies the following conditions:
    \begin{enumerate}[label=(\arabic*)]
        \item $R/\calO_{\mathbf{C}}$ is quasi-syntomic in the sense of \cite[Definition 1.1]{Ansch_tz_2023}.
        \item The complete cotangent complex $\widehat\dL_{R/\calO_{\mathbf{C}}}$ is concentrated a $p$-torsion free module in degree $0$.
        \item $\cdL_{R/\calO_{\mathbf{C}}}[\frac{1}{p}]$ is a finite-rank projective module over $R[\frac{1}{p}]$.
    \end{enumerate}
\end{defi}

\begin{defi}\label{intro: pre smooth}
    Let $R/\calO_{\mathbf{C}}$ be a $p$-complete weakly pre-smooth algebra. Call $R$ \emph{pre-smooth} if there exists $f_1,f_2,\dots,f_n\in R$ that generate the unit ideal, and for any $i$, there exists a homomorphism of $\calO_{\mathbf{C}}$-algebras
    \[ \calO_{\mathbf{C}}\langle T_1,T_2,\dots,T_{d_i}\rangle \to \widehat{R[f_i^{-1}]} \]
    such that the images of $\dif T_i$ in $\widehat{R[f_i^{-1}]} \cotimes_R \cdL_{R/\calO_{\mathbf{C}}}$ freely generate $\cdL_{\widehat{R[f_i^{-1}]}/\calO_{\mathbf C}} \bigl[ \frac 1 p \bigr] \cong \widehat{R[f_i^{-1}]} \cotimes_R \cdL_{R/\calO_{\mathbf{C}}} \bigl[\frac{1}{p}\bigr]$ as a $R \bigl[\frac{1}{p}\bigr]$-module.
\end{defi}

The pre-smoothness is significantly weaker than the smoothness assumption used in previous literature, yet it is arithmetically essential:
\begin{itemize}
    \item Quasi-syntomicness ensures that we can use our theory to classify rational Hodge--Tate crystals by connections.
    \item The other assumptions on cotangent complex ensures that the generic fiber $R[\frac{1}{p}]$ is `nearly' smooth. Hence, the classical calculations on toric charts (see, for example \cite[Section 5]{Min_2025}) still works.
\end{itemize}

\begin{ex}
    \begin{itemize}
        \item Any $p$-completely smooth algebra over $\calO_\mathbf{C}$ is pre-smooth.
        \item A $p$-complete $\calO_{\mathbf{C}}$-algebra with semi-stable reduction is pre-smooth (cf. Example \ref{ex: semistable are presmooth}).
        \item A geometric valuation ring over $\calO_{\mathbf{C}}$ is pre-smooth.
    \end{itemize}
\end{ex}

The local computations in the smooth setting (cf. \cite[Section 5]{Min_2025}) fundamentally rely on the existence of a $p$-adically complete étale map from a standard toric ring:
\[\calO_{\mathbf{C}}\langle T_1^{\pm1},T_{2}^{\pm1},\dots,T_{d}^{\pm1} \rangle\to R.\]
Under our pre-smooth assumption, such an étale map cannot exist unless $R$ is smooth. Consequently, the reliance on this strong local model must be abandoned.

Fix a pre-smooth algebra $R/\calO_{\mathbf{C}}$

\begin{defi}[Definition \ref{defi: toric chart}]\label{defi: intro, toric chart}
    A toric chart of $R$ is a homomorphism of $\calO_{\mathbf{C}}$-algebras
    \[f:\calO_{\mathbf{C}}\langle T_1,T_2,\dots,T_d\rangle\to R\]
    satisfying the following conditions:
    \begin{enumerate}[label=(\arabic*)]
        \item $\cdL_{R/\calO_{\mathbf{C}}}[\frac{1}{p}]=\bigoplus_{j=1}^dR\bigl[\frac{1}{p}\bigr]\dif f(T_j)$;
        \item The ring 
        \[\bigl(\calO_{\mathbf{C}}\langle T_1^{\frac{1}{p^\infty}},T_2^{\frac{1}{p^\infty}},\dots, T_d^{\frac{1}{p^\infty}} \rangle\cotimes_{\calO_{\mathbf{C}}\langle T_1,T_2,\dots, T_d\rangle}R\bigr)\bigl[\frac{1}{p}\bigr]\]
        is locally perfectoid (Definition \ref{defi: locally perfectoid huber pairs});
        \item For any $i$, $f(T_i)$ is invertible in $R$.
    \end{enumerate}
    For such a toric chart, define $R_\infty$ as the integral closure of
    \[ \calO_{\mathbf{C}}\langle T_1^{\frac{1}{p^\infty}},T_2^{\frac{1}{p^\infty}},\dots, T_d^{\frac{1}{p^\infty}} \rangle\cotimes_{\calO_{\mathbf{C}}\langle T_1,T_2,\dots, T_d\rangle}R\]
    inside
    \[\bigl(\calO_{\mathbf{C}}\langle T_1^{\frac{1}{p^\infty}},T_2^{\frac{1}{p^\infty}},\dots, T_d^{\frac{1}{p^\infty}} \rangle\cotimes_{\calO_{\mathbf{C}}\langle T_1,T_2,\dots, T_d\rangle}R\bigr)\bigl[\frac{1}{p}\bigr].\]
\end{defi}

From now on, fix for any $n\geq 1$ a primitive $p^n$-root of unity $\zeta_{p^n}$ such that $\zeta_{p^n}^p=\zeta_{p^{n-1}}$.

\begin{rmk}
    Keep the notation in Definition \ref{defi: intro, toric chart}. Obviously, there is a continuous action of $\Gamma:=\mathbb{Z}_p^d$ on $R_\infty[\frac{1}{p}]$. The `pro-finite \'etale descent' provides an equivalence between the $v$-vector bundles on $\Spd(R,R^+)$ and continuous semi-linear representation of $\Gamma$ over $R_{\infty}[\frac{1}{p}]$.    
\end{rmk}

Our main calculation is the following:

\begin{prop}[Special case of Proposition \ref{prop: Higgs to perfect crystals}]
Keep the notations and assume $f:\calO_{\mathbf{C}}\langle T_1,T_2,\dots,T_d\rangle\to R$ is a toric chart. Then we have the a commutative diagram as follows
\begin{equation}\label{eq: intro highs to perf}
    \xymatrix{
\Vect\bigl((R/A_{\inf})_\Prism, \overline\calO\bigl[\frac 1 p \bigr]\bigr) \ar[r]^-{{\Res}_{\Spf(R)}} & \Vect\bigl( \Spd(R,R^+)_v,\widehat\calO\bigr)\ar[d] \\
\Higgs^\tn(R[\frac{1}{p}],R \otimes_{\overline P}\cOmega_{\overline P/\calO_{\mathbf{C}}} \{-1\}) \ar[r] \ar[u] & \Rep_\Gamma \bigl( R_\infty \bigl[ \frac 1 p \bigr] \bigr)}
\end{equation}
where the arrow on the left hand side is a `natural base change' (see Proposition \ref{prop: Higgs to crystal}), and the arrow at the bottom is given by $(M,\nabla) \mapsto M_\infty = R_\infty \otimes_R M$, where $\gamma_i$ acts semilinearly on $M_\infty$, and acts on $M$ by $\exp((\zeta_p-1) f(T_i)\nabla_i)$, in which
\[ \nabla(m) = \sum_{i=1}^d \nabla_i(m) \otimes \frac{\dif x_i}{\xi}, \]
and $\xi := \dfrac{[\ep]-1}{[\ep]^{1/p}-1}$.
\end{prop}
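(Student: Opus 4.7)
The plan is to compare both composites in diagram~(\ref{eq: intro highs to perf}) by restricting $\calE$ to a perfect prism built from the toric chart, and then to use Theorem \ref{theo: main dR realization} to identify the resulting semilinear $\Gamma$-action with the exponential formula prescribed by $\nabla$.

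First, I would apply Theorem \ref{theo: main dR realization} with the prism $S$ obtained as a suitable $\delta$-lift to $\Ainf$ of $\overline S = \calO_{\mathbf C}\langle T_1,\ldots,T_d\rangle$ (localized to make $T_1 \cdots T_d$ invertible, in view of condition (3) of Definition \ref{defi: intro, toric chart}), with structure map $\overline S \to R$ given by the toric chart $f$. Conditions (1) and (2) of the toric chart together with pre-smoothness of $R$ yield both Condition \ref{cond: flat + tor 0} for $S/\Ainf$ and Condition \ref{cond: tor -1} for $R/S$, so Theorem \ref{theo: main dR realization} identifies $\calE$ with a topologically nilpotent integrable connection $(M,\nabla)$ on $\overline D = \overline\Prism_{R/S}$, and Theorem \ref{intro: prism=pd derived construction} identifies $\overline\Prism_{R/S\cotimes_\Ainf S}$ with a complete PD-polynomial algebra over $R$ whose generators $\tau_i$ are dual to $\{\dif f(T_i)/\xi\}$ in $R \cotimes_{\overline S} \cOmega_{\overline S/\calO_{\mathbf C}}\{-1\}$.

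Second, I would construct the perfect prism in $(R/\Ainf)_\Prism$ that computes the right-hand composite of (\ref{eq: intro highs to perf}). Condition (2) of the toric chart says precisely that $R_\infty[\frac{1}{p}]$ is locally perfectoid, so $R_\infty$ corresponds to a perfect prism $P_\infty$ over $\Ainf$ receiving a compatible map $S \to P_\infty$ (obtained by perfecting $S$ and base changing to $R_\infty$) and carrying a continuous action of $\Gamma = \dZ_p^d$ via $\gamma_i(T_j^{1/p^n}) = \zeta_{p^n}^{\delta_{ij}} T_j^{1/p^n}$. Evaluating $\calE$ on the \v{C}ech nerve of $\Spf(P_\infty) \to \Spf(R)$ yields $M_\infty = R_\infty \otimes_R M$ with a continuous semilinear $\Gamma$-representation, which (since $R \to R_\infty$ is pro-finite \'etale after inverting $p$) agrees with the image of $\calE$ under $\Vect(\Spd(R,R^+)_v, \widehat\calO) \to \Rep_\Gamma(R_\infty[\frac{1}{p}])$.

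Third, I would carry out the explicit match, which is the main technical step. The crystal property realizes $\gamma_i$ on $M_\infty$ via the stratification isomorphism $\varepsilon$ on $\overline\Prism_{R/S\cotimes_\Ainf S}$ pulled back along the pair $(\iota, \gamma_i \circ \iota): S \cotimes_\Ainf S \to P_\infty$, where $\iota: S \to P_\infty$ is the structural map. By Theorem \ref{theo: main dR realization}, this stratification is the canonical exponential $\varepsilon = \exp\bigl(\sum_j \tau_j \nabla_j\bigr)$. A direct computation, using the identification of $\tau_j$ with (the twist of) $T_j\otimes 1 - 1\otimes T_j$ from Theorem \ref{intro: prism=pd derived construction}, together with the fact that $\gamma_i$ acts on the perfectoid lift of $f(T_j)$ in $P_\infty$ by multiplication by $[\ep]^{\delta_{ij}}$ (with $\ep$ the usual element of the tilt), shows that the pullback of $\tau_j$ has Hodge--Tate reduction $\delta_{ij}(\zeta_p - 1)f(T_j)$; the definition $\xi = ([\ep]-1)/([\ep]^{1/p}-1)$ is precisely what ensures the cancellation that yields the factor $(\zeta_p - 1)$. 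Substituting in the exponential produces the asserted formula $\gamma_i = \exp\bigl((\zeta_p - 1) f(T_i) \nabla_i\bigr)$. The main obstacle is this final cyclotomic bookkeeping: one must align the conventions of the PD-polynomial identification of Theorem \ref{intro: prism=pd derived construction}, the Breuil--Kisin twist $\{-1\}$, and the explicit toric coordinates on $P_\infty$ so that the scalar $(\zeta_p - 1)$ (as opposed to another cyclotomic unit) appears cleanly in the exponent.
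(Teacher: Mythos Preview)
Your overall strategy—reduce to a stratification on a PD-envelope, pull back along two $\delta$-lifts differing by $\gamma_i$, and compute the image of the PD-generator as $(\zeta_p-1)f(T_i)$—matches the paper's endgame. But the setup has a genuine gap.

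You claim that with $S$ the $\delta$-lift of $\overline S=\calO_{\mathbf C}\langle T_1^{\pm1},\ldots,T_d^{\pm1}\rangle$, the map $\overline S\to R$ satisfies Condition~\ref{cond: tor -1}. This is false for general pre-smooth $R$. By condition~(2) of weak pre-smoothness, $\cdL_{R/\calO_{\mathbf C}}$ is a $p$-torsion-free module in degree~$0$; condition~(1) of the toric chart says the map $R^d=R\cotimes_{\overline S}\cOmega_{\overline S/\calO_{\mathbf C}}\to\cdL_{R/\calO_{\mathbf C}}$ is an isomorphism after inverting~$p$, hence injective. Its cofibre $\cdL_{R/\overline S}$ is therefore a bounded $p^\infty$-torsion module concentrated in degree~$0$, and $\cdL_{R/\overline S}[-1]$ is $p$-completely flat only when this cokernel vanishes, i.e.\ only in the smooth case. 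So Theorem~\ref{theo: main dR realization} does not apply with this~$S$, and you cannot identify crystals with connections on $\overline\Prism_{R/S}$ as you propose. This is exactly why the Remark following the statement insists on an \emph{auxiliary} $S$ with $P\to S\to R$ and $R/\overline S$ satisfying Condition~\ref{cond: tor -1}, not $S=P$.

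The paper avoids this entirely: its left vertical arrow is built from the sheaf of local coordinates $\mathscr C_{R/P/A}$ (Proposition~\ref{prop: Higgs to crystal}), which needs no hypothesis on $R/\overline P$. The transition between two lifts $g,g':P\to B$ is then governed by the stratification on $\overline{E(1)}=\overline\Prism_{\overline P/P(1)}$—the PD-envelope for the \emph{smooth} prism $P$, not for $R$—via Proposition~\ref{prop: Higgs to crystal computation}. The cyclotomic computation of $h\bigl(\frac{\dif T_i}{\xi}\bigr)\equiv n_i(\zeta_p-1)f(T_i)$ then proceeds essentially as you outline. A secondary point: $R_\infty\bigl[\tfrac1p\bigr]$ is only \emph{locally} perfectoid (Definition~\ref{defi: toric chart}(2)), so there is no single perfect prism $P_\infty$; the paper passes to a finite perfectoid cover $\{\overline B_i\}$ and checks the action there.
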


\begin{rmk}
    We explain the `natural base change' in Diagram (\ref{eq: intro highs to perf}). 
    
    View $\Ainf\langle T_1,T_2,\dots,T_d \rangle$ as a $\delta$-algebra over $A$ such that $\delta(T_i)=0$. Choose a $\delta$-algebra $S/A$ satisfying Condition \ref{cond: flat + tor 0} and $f:S\to R$ satisfying Condition \ref{cond: tor -1}. In addition, choose a $\delta$-homomorphism $i:\Ainf\langle T_1,T_2,\dots,T_d \rangle\to S$. By Theorem \ref{theo: main dR realization}, we have an equivalence
    \[\Vect \bigl( \left( R/A \right)_\Prism, \overline\calO \bigl[ \frac 1 p \bigr] \bigr)  \cong \MIC^\tn \bigl( \overline D \bigl[ \frac 1 p \bigr], \dif \bigr)\]
    where $D=\Prism_{R/S}$. Consider the $R$-linear homomorphism
    \[\iota:R\otimes_{\overline{P}}\cOmega_{\overline{P}/\calO_{\mathbf{C}}}\to R\cotimes_{\overline S}\cOmega_{\overline{S}/\calO_{\mathbf{C}}}\]
    sending $\dif T_i$ to $\dif(i(T_i))$ for all $i$. Then for any topologically nilpotent Higgs bundle $(M,\nabla)$ over $R[\frac{1}{p}]$, $\bigl(\overline{D}\otimes_{R}M,\dif_{\overline{D}}\otimes \id + \id \otimes (\iota\circ\nabla)\bigr)$ defines an object in $\MIC^\tn \bigl( \overline D \bigl[ \frac 1 p \bigr], \dif \bigr)$. This is the construction of the `natural base change'.

    We will see that this functor does not depend on the choice of $S$. Hence, if $R$ is $p$-completely smooth, we can choose $S=\Ainf\langle T_1,T_2,\dots,T_d \rangle$ and $f:\overline{S}\to R$ is $p$-completely \'etale. This time our construction coincides with \cite{Min_2025}.
\end{rmk}

\vspace{0.3em}
\noindent\textbf{\scshape{The smallness condition of a rational Hodge--Tate prismatic crystal: }}
Finally, we will describe the essential image of the natural base change in the diagram (\ref{eq: intro highs to perf}). This is provided by the so-called `smallness' of a rational Hodge--Tate prismatic crystal. Roughly speaking, keep the notation in Theorem \ref{theo: main dR realization} and assume $A=\Ainf(\calO_{\mathbb{C}_p})$. The ring $\overline{\Prism}_{R/S}$ has a canonical increasing filtration---the conjugate filtration. For any topologically nilpotent $a\in \dC_{p}^\times$, define a new ring 
$\overline{\Prism}_{R/S}^a$ by the completion of a suitable subring of $\colim \Fil_n (\overline{\Prism}_{R/S})$ (as rings, not $p$-complete rings). Similarly, there exists a differential $\dif:\overline{\Prism}_{R/S}^a \to \overline{\Prism}_{R/S}^a\cotimes_{R}(R\cotimes_{\overline{S}}\cOmega_{\overline{S}/\calO_{\mathbb{C}_p}})$. The rings $\overline{\Prism}_{R/S}^a$ satisfy the following properties:
\begin{itemize}
    \item There exists a canonical homomorphism $\overline{\Prism}_{R/S}^a\to \overline{\Prism}_{R/S}$.
    \item For any topologically nilpotent $a'\in \dC_p^\times$, there exists a canonical homomorphism $\overline{\Prism}_{R/S}^{aa'}\to \overline{\Prism}_{R/S}^a$.
    \item For any $a'\in \calO_{\dC_p}^\times$, there exists a canonical isomorphism
    $\overline{\Prism}_{R/S}^a\cong \overline{\Prism}_{R/S}^{aa'}$.
    \item All homomorphisms appearing in the above three properties are compatible with each other.
\end{itemize}
In particular, there exists a base change functor from topologically nilpotent connections on $\overline{\Prism}_{R/S}^a$ to $\overline{\Prism}_{R/S}$. We will define the $a$-small crystals as the essential image of this functor and prove that this does not depend on the choice of $S$.

\subsection{Acknowledgment}

We thank Mr. Tian Qiu for carefully reading and pointing out some typos of the draft of this paper. We also thank Mr. Lin Chen for his help on $\infty$-category theory. 

We sincerely thank Professor Ruochuan Liu for inviting us to give a talk of our article and for the valuable discussion. The idea for this paper was conceived during the Shenzhen Arithmetic Geometry Conference (2024), and we are grateful to the organizers of this conference as well as the speakers for their excellent talks. 

\newpage

\section*{Notations and conventions}\label{sect: notations}

\noindent\textbf{\scshape{Tate algebras, (pre-)adic spaces and diamonds:}}

All Tate algebras are assumed to be complete and over $\dZ_p$.

\begin{itemize}
    \item \fbox{$\Spa$}: adic spectrum. Given a Tate--Huber pair $(R,R^+)$, we will use $|\Spa(R,R^+)|$ to denote the associate adic spectrum (as a topological space). When the structure presheaf $\calO_{\Spa(R,R^+)}$ is a sheaf, we will use $\Spa(R,R^+)$ to denote the adic space. We will not use the notation $\Spa(R,R^+)$ unless $(R,R^+)$ is sheafy.

    \item \fbox{$X^\diamond$, $\Spd$}: associated diamond. In this paper, we will freely use the language of diamonds (cf. \cite[Chapter 8]{Scholze_2020}). For any pre-adic space $X$ (see \cite[Definition 3.4.2]{Scholze_2020} for the definition of pre-adic spaces), we will use $X^\diamond$ to denote its associated diamond (cf. \cite[Definition 10.1.1]{Scholze_2020}). For an affinoid pre-adic space (see \cite[Definition 3.4.1]{Scholze_2020}) $X=\Spa^{\mathrm{ind}}(R,R^+)$, denote by $\Spd(R,R^+)$ the diamond $X^\diamond$.

    \item \fbox{$\Spec (R), \Spf (R)$}: the spectrum and the formal spectrum of a ring. We will only use $\Spf(R)$ in the case when $R$ is a $p$-complete ring of bounded $p^\infty$-torsion.

    \item \fbox{Formal schemes}: We will only use bounded $p$-adic formal schemes, i.e. $\mathfrak X$ is covered by affine opens $\Spf(R)$ where $R$ is a $p$-complete ring of bounded $p^\infty$-torsion.

    \item \fbox{Affinoid perfectoid spaces}: An affinoid perfectoid space is an affinoid adic space $X=\Spa(R,R^+)$ such that $R^+$ integral perfectoid.

    \item \fbox{$\CAfd$}: category of Tate-Huber pairs.

    \item \fbox{$X_v$}: the $v$-topology. For any diamond $X$, denote by $X_v$ the ecategory of pairs $(T:=\Spa(R,R^+),f:T^\diamond\to X)$ where $T$ is an affinoid perfectoid space and $f$ is a morphism between diamonds. Topology on $X_v$ is the $v$-topology of perfectoid spaces.

    \item \fbox{$\mathrm{an}$}: analytic vectors

    \item \fbox{$\la$}: locally analytic vectors.
\end{itemize}
\vspace{0.3em}
\noindent\textbf{\scshape{Prisms and $p$-adic Hodge theory:}}

\begin{itemize}
\item \fbox{$(R/A)_\Prism,\calO,\overline\calO$}: Let $(A,I)$ be a bounded prism in the sense of \cite[Definition 3.2]{Bhatt_2022} and $R$ be a $p$-complete ring over $\overline A :=A/I$ of bounded $p^\infty$-torsion. We use $(R/A)_\Prism$ to denote the category of pairs $((B,IB),R \to \overline B)$, where $(B,IB)$ is a bounded prism over $A$, and $R \to \overline B := B/IB$ is a map of $\overline A$-algebras. Equip $(R/A)_\Prism$ with the $p$-complete fpqc topology. The structure sheaves $\calO, \overline\calO$ are defined by $(B,IB) \mapsto B$ and $(B,IB) \mapsto \overline B$, so $\calO$ is a sheaf of derived $(p,I)$-complete $A$-algebras, and $\overline\calO$ is a sheaf of derived $p$-complete $R$-algebras.

\item \fbox{$\Prism_{R/A}$}: The derived prismatic cohomology in \cite[Construction 7.6]{Bhatt_2022}; here, $(A,I)$ is a bounded prism and $R$ is a derived $p$-complete simplicial ring over $\overline A := A/I$.

\item \fbox{$W(R), W_n(R)$}: the Witt vectors and the truncated Witt vectors.

\item \fbox{$R^\flat, \AAinf(R), \theta: \AAinf(R) \to R, \Ainf$}: Let $R$ be integral perfectoid in the sense of \cite[Definition 3.5]{bhatt2018integral}. We use $\displaystyle R^\flat = \clims 0 \varphi R/p$ to denote the tilt of $R$, and $\AAinf (R) = W(R^\flat)$ and $\theta: \AAinf(R) \to R$ to denote the $\Ainf$-construction. By \cite[Theorem 3.10]{Bhatt_2022}, $(\AAinf(R),\ker \theta)$ is a perfect prism, and the functor $R \mapsto (\AAinf(R),\ker \theta)$ gives an equivalence between the category of integral perfectoid rings and the category perfect prisms. We usually use $\Ainf$ to denote the perfect prism $(\AAinf(K^+),\ker \theta)$ for some perfectoid field $(K,K^+)$.
\end{itemize}
\vspace{0.3em}
\noindent\textbf{\scshape{Homological algebras and $\infty$-categories:}}

All rings are commutative ($\dE_\infty$) unless otherwise stated.

The readers may refer to \cite{lurie2009higher}, \cite{lurie2017higher} and \cite{lurie2018spectral} for the theory of $\infty$-categories. In this article, all $\infty$-categories refer to $(\infty,1)$-categories. We use $\infty$-groupoids to refer to $(\infty,0)$-categories.

\begin{itemize}
\item \fbox{$\simp, \Delta^n, [n]$}: We use $\simp$ to denote the category of combinatorial simplices (see \cite[Section A.2.7]{lurie2009higher}), and $[n] = \{0,\ldots,n\}$ the objects in $\simp$. We use $\Delta^n$ to denote the presheaf on $\simp$ represented by $[n]$, so $\Delta^n$ are the standard simplices.


\item \fbox{$\calC^\op$}: the opposite ($\infty$-)category of $\calC$.

\item \fbox{$\Hom, \Mor, \Map$}: the set (space) of morphisms. Usually we use $\Hom$ and $\Mor$ in case of ordinary categories, and $\Map$ in case of $\infty$-categories.

\item \fbox{$\Fun(\calC,\calD), \Shv(\calC), \Shv_\calC(\calD)$}: ($\infty$-)categories of functors, sheaves on $\calC$, and $\calD$-valued sheaves on $\calC$, respectively.

\item \fbox{$\calC_{x/}, \calC_{/x}$}: slicing categories of $\calC$. More precisely, for a functor $F: \calC \to \calD$ of $\infty$-categories and $x \in \calD$, $\calC_{x/}$ is the $\infty$-category of pairs $(y \in \calC, f:x \to Fy)$, and $\calC_{/x}$ is the $\infty$-category of pairs $(y\in \calC, f: Fy \to x)$.

\item \fbox{$\Fil(\calC) = \Fun(\mathbb Z_{\ge 0}, \calC), \gr(\calC) = \Fun(\mathbb Z_{\ge 0, \discrete}, \calC)$}: $\infty$-category of filtered and graded objects. See Sect. \ref{sect: remarks on infty-cat} for definitions and details.

\item \fbox{$\pi_n, \hol^n, \tau^{\le n}, \tau^{\ge n}, \calC^{\le n}, \calC^{\ge n}$}: homotopy (homology) groups and truncations. We always use the cohomological convention except for $\pi_n$. Let $\calC$ be a stable $\infty$-category, an object $X \in \calC$ is called \emph{$n$-truncated} (resp. \emph{$n$-connective}) if $X \in \calC^{\ge -n}$ (resp. $\calC^{\le -n}$). In particular, $X\in\calC$ is called coconnective (resp. connective) if $X \in \calC^{\ge 0}$ (resp. $\calC^{\le 0}$).

\item \fbox{$X[n], X\{n\}$}: shiftings and Breuil-Kisin twists. For abelian groups $M$, sometimes we use $M[p^n]$ to denote the $p^n$-torsion part of $M$, and $M[p^\infty]$ to denote the union of all $M[p^n]$. Hopefully this would not cause any confusion.

\item \fbox{$\Sptr$}: stable $\infty$-category of spectra, equipped with the natural $t$-structure (both left complete and right complete).

\item \fbox{$\RHom, \taulezRHom, \underline{\Hom}, \underline{\RHom}, \underline{\taulezRHom}$}: inner $\Hom$'s in different categories. $\RHom$ and $\taulezRHom$ are inner $\Hom$s in $\Sptr$ and $\Sptr^{\le 0}$, $\underline{\Hom}$ denotes inner $\Hom$ of sheaves of spaces (which also works for abelian groups), $\underline{\RHom}$ and $\underline{\taulezRHom}$ denotes the inner $\Hom$ of sheaves of spectra and sheaves of connective spectra.

\item \fbox{$\SCR, \SCRModcn$}: $\SCR$ denotes the $\infty$-category of simplicial commutative rings; $\SCRModcn$ denotes the category of pairs $(A,M)$ where $A\in \SCR$ and $M$ is a \emph{connective} $A$-module.

\item \fbox{$\calP_\Sigma(\calC)$}: For a small $\infty$-category $\calC$ with finite coproducts, denote $\calP_\Sigma(\calC)$ the full subcategory of presheaves on $\calC$ consisting of all presheaves sending coproducts to products. See \cite[Definition 5.5.8.8]{lurie2009higher}, and the discussions in \cite[Section 5.5.8]{lurie2009higher}.

For example, if $\calC$ is the ordinary category of finite type polynomial rings over $\mathbb Z$, then $\calP_\Sigma(\calC) = \SCR$; if $\calC$ is the ordinary category of pairs $(P,F)$ where $P$ is a finite type polynomial ring over $\mathbb Z$ and $F$ is a finite free $P$-module, then $\calP_\Sigma(\calC) = \SCRModcn$. See \cite[Sect. 25.2]{lurie2018spectral} for a complete discussion.

\item \fbox{$\calC^\otimes, \CAlg(\calC), \Alg_\calO(\calC), \mathsf{Op}(\infty),\dE_n,\Fin_*$}: We use $\calC^\otimes$ to denote an $\infty$-operad (e.g. a symmetric monoidal $\infty$-category), $\CAlg(\calC)$ to denote the $\infty$-category of $\dE_\infty$-algebras in $\calC$, and $\Alg_\calO(\calC)$ to denote the $\infty$-category of algebras $\calO \to \calC$. We use $\mathsf{Op}(\infty)$ to denote the $(\infty,1)$-category of $\infty$-operads, and $\Fin_*$ to denote the ordinary category of pointed finite sets (i.e. $\Fin_* = \dE_\infty$). We use $\dE_n^\otimes / \Fin_*$ ($0 \le n \le \infty$) to denote the little cube $\infty$-operads defined in \cite[Definition 5.1.0.2]{lurie2017higher} and \cite[Definition 5.1.1.6]{lurie2017higher}, so $\dE_1$-algebras are the same as associative algebras, and $\dE_\infty$-algebras are the same as commutative algebras.

\item \fbox{$D(A), \cD(A)$}: $D(A)$ is the stable $\infty$-category of left $A$-modules, where $A$ is a connective $\dE_1$-ring, equipped with the natural $t$-structure (both left complete and right complete). See \cite[Proposition 7.1.1.13]{lurie2017higher} for details.

When $A$ is a simplicial commutative ring and we are given a finitely generated ideal $I \subseteq \pi_0(A)$, $\cD(A)$ will denote the $\infty$-category of derived $I$-adically complete modules over $A$ (usually $I = pA$). See Definition \ref{defi: derived complete}.

\item \fbox{$\Mod_A, \widehat\Mod_{A,\Flat}, \widehat\Mod_{A,\Flat,\fil}$}: When $A$ is a classical ring, $\Mod_A = D(A)^\heartsuit$ denotes the (ordinary) abelian category of $A$-modules. When $A$ is $p$-adically complete of bounded $p^\infty$-torsion, $\widehat\Mod_{A,\Flat}$ denote the ordinary category of $p$-complete and $p$-completely flat $A$-modules, and $\widehat\Mod_{A,\Flat,\fil}$ denotes the full subcategory of $\Fil(\cD(A))$ all of whose graded pieces lie in $\widehat\Mod_{A,\Flat}$.

\item \fbox{$\lim,\colim, \ccolim$}: limits and colimits. Unless other wise stated, we will always use them in the $\infty$-categorical sense (e.g. in the $\infty$-category of spaces, $\infty$-categories, or spectra). For clarity, we use $\ccolim$ to denote colimits in $\cD(A)$, and $\colim$ to denote colimits in $D(A)$.

\item \fbox{$\oplus,\fib,\cofib$}: direct sums, fibres and cofibres in a stable $\infty$-category.

\item \fbox{$\clim n, \ker, \coker$}: We reserve the notations $\ker, \coker$ for kernels and cokernels in abelian categories, and $\clim n = \hol^n(\lim)$ will denote the $n$th cohomology of the limit taken in the stable $\infty$-category. For example, for abelian groups $(M_i)$, $\clim 0 M_i$ will denote the classical limit of $M_i$.

\item \fbox{$\widehat{(-)}, (-)^\land$}: Unless otherwise stated, we will always use $\widehat{(-)}$ for derived completion and $(-)^\land$ for classical completion. The ideal of completion is usually clear from context.

\item \fbox{$\otimes, \cotimes, \otimes^\bL, \cotimes^\bL$}: classical and derived tensor products, uncompleted or completed. The operator $\cotimes$ is defined in Corollary \ref{cor: tensor conc on degree 0} and Proposition-Definition \ref{prop-defi: base change of adic modules}. We will not use it in any other case.

\item \fbox{$\Sym^*, \bigwedge^*, \Gamma^*, \widehat{\Sym}^*, \cbigwedge^*, \cGamma^*$}: (uncompleted and completed) symmetric powers, exterior powers, PD powers in the derived sense, defined on $\SCRModcn$ see Sect \ref{sect: remarks on infty-cat}. The readers should always keep in their minds of Lemma \ref{lemma: concentrated on degree zero}.

\item \fbox{$\dL_{-/-}, \cdL_{-/-}, \Omega_{-/-}, \cOmega_{-/-}$}: We use $\dL_{B/A}$ to denote the (uncompleted) cotangent complex of simplicial commutative rings, and $\cdL_{B/A}$ its completed version (see Definition \ref{defi: complete cotangent complex}). When and only when $\dL_{B/A}$ is flat over $B$ (resp. $B$ is of bounded $p^\infty$-torsion and $\cdL_{B/A}$ is $p$-completely flat over $B$), we write $\Omega_{B/A}$ for $\cdL_{B/A}$ (resp. $\cOmega_{B/A}$ for $\cdL_{B/A}$). See the discussions in Notation \ref{notn: cOmega}.
\end{itemize}
\vspace{0.3em}
\noindent\textbf{\scshape{Miscellanies}}

\begin{itemize}
\item \fbox{$\Sigma_n$}: the symmetric group on the finite set of $n$ elements.

\item \fbox{$\gamma_n$}: PD powers in a PD algebra, see for example \cite[\href{https://stacks.math.columbia.edu/tag/07GU}{Tag 07GU}]{stacks-project}.

\item \fbox{$X^G$}: Let $G$ be a group, $X$ be a $G$-set, and then $X^G$ denotes the subset of $X$ consisting of $G$-fixed points. This notation is always in the classical sense, even for abelian groups. In other words, for a $G$-module $M$, $M^G$ will always take the meaning of $\displaystyle \clims 0 G M$.

\item \fbox{$M[p^n],M[p^\infty]$}: For an abelian group $M$, we use $M[p^n]$ to denote the $p^n$-torsion part of $M$, and $M[p^\infty]$ to denote the union of all $M[p^n]$. Hopefully this would not cause any confusion with cohomological shiftings.

\item \fbox{$\DR(M,\nabla)$}: the de Rham complex. We will only use this notation in the case where $R$ is a classical ring (resp. $p$-complete classical ring of bounded $p^\infty$-torsion), $M$ is a classical module over $R$ (resp. $M$ is $p$-complete of bounded $p^\infty$-torsion over $R$, or $M$ is adic over $R \bigl[ \frac 1 p \bigr]$), and $\nabla: M \to M \otimes_R \Omega$ (resp. $\nabla: M \to M \cotimes_R \Omega$) is a Higgs field, where $\Omega$ is flat (resp. $p$-complete and $p$-completely flat) over $R$. See Definition \ref{defi: Higgs field} for the $p$-complete case (we won't define this for the uncompleted case, as it is classical, and the definition is virtually the same as Definition \ref{defi: Higgs field}).

Unless otherwise stated, we will always use $\DR(M,\nabla)$ for the $p$-complete case.

\item \fbox{$\Vect,\Strat,\MIC,\Higgs,\MIC^\tn,\Higgs^\tn$}: see Definition \ref{defi: crystal} and Notation \ref{notn: Strat and MIC}. For any (classical) ring $R$, we also use $\Vect(R)$ to denote the category of finite projective $R$-modules.

We will always use $\MIC,\Higgs, \MIC^\tn,\Higgs^\tn$ for the $p$-complete case. Note that $\Strat (S^\bullet)$ is the same as $\Vect(\simp,S^\bullet)$, so there is no need to differ between $p$-complete case or uncompleted case for $\Strat$.
\end{itemize}

\section*{Remarks on set theoretic issues}
We will use big categories such as $(R/A)_\Prism$ and `calculate' cohomologies on it. This is usually a trouble. However, in our case, this will cause no harm. Readers who care about this may constantly keep in their mind of a strong limit cardinal $\kappa$, and only consider $\kappa$-small rings. Since we only use countable limits and colimits in constructing rings (and we have the Hodge--Tate comparison theorem Proposition \ref{prop: HT comparison} to control the size of $\Prism_{R/A}$), our constructions will never exceed the bound of $\kappa$. Our theorems (e.g. Theorem \ref{theo: main dR realization revisit}) will show that the categories and cohomologies we are interested in also does not depend on $\kappa$, so taking a truncation by a cardinal causes no trouble.

\newpage

\section{Preliminaries on $\infty$-category theory}\label{sect: remarks on infty-cat}
In this article, we use \cite{lurie2009higher}, \cite{lurie2017higher} and \cite{lurie2018spectral} as our reference for $\infty$-category theory.

We will often speak of exactness and $t$-exactness of functors between stable $\infty$-categories. We give the definitions as follows:
\begin{defi}\label{defi: exactness and t-exactness}
Let $\calC, \calD$ be stable $\infty$-categories and $F: \calC \to \calD$ be a functor. Then,
\begin{enumerate}[label=(\arabic*)]
\item $F$ is called \emph{exact} if it preserves the zero object and exact triangles. By \cite[Proposition 1.1.4.1]{lurie2017higher}, this holds if and only if $F$ preserves finite limits, if and only if $F$ preserves finite colimits;

\item assume that $\calC$ and $\calD$ are equipped with $t$-structures and $F$ is exact. Then, we call $F$ \emph{right $t$-exact} if $F$ sends $\calC^{\le 0}$ to $\calD^{\le 0}$; we call \emph{left $t$-exact} if $F$ sends $\calC^{\ge 0}$ to $\calD^{\ge 0}$. We call $F$ \emph{$t$-exact} if it is both right $t$-exact and left $t$-exact.
\end{enumerate}
\end{defi}
\vspace{0.3em}
\noindent\textbf{\scshape{Filtered and graded objects and symmetric monoidal $\infty$-categories}}

In this article, unless otherwise stated, whenever we mention a `filtered object', we mean that the object has an increasing filtration by $\mathbb Z_{\ge 0}$, and the filtration is exhaustive. Similarly, a graded object is always graded by $\mathbb Z_{\ge 0}$ unless otherwise stated. More precisely, for any stable $\infty$-category $\calC$ with countable colimits, we define its filtered objects as
\[ \Fil(\calC) = \Fun(\mathbb Z_{\ge 0}, \calC), \quad \gr(\calC) = \Fun(\mathbb Z_{\ge 0, \discrete}, \calC). \]
We have functors of underlying objects:
\begin{align*}
\Fil(\calC) \to \calC, & \quad (X_i)_{i \ge 0} \mapsto \colim_{i \ge 0} X_i, \\
\gr(\calC) \to \calC,  &\quad (X_i)_{i \ge 0} \mapsto \bigsqcup_{i \ge 0} X_i.
\end{align*}
Whenever $\calC$ is symmetric monoidal, such that $- \otimes -$ commutes with countable colimits on each component, we equip $\Fil(\calC)$ and $\gr(\calC)$ with symmetric monoidal structures given by the Day convolution (see \cite[Section 2.2.6]{lurie2017higher}). The operations are (informally) given by
\begin{align*}
\Fil(\calC) \times \Fil(\calC) \to \Fil(\calC), & \quad ((X_i),(Y_i)) \mapsto \Bigl(\colim_{j+k \le i} X_j \otimes Y_k\Bigr), \\[3pt]
\gr(\calC) \times \gr(\calC) \to \gr(\calC), & \quad ((X_i),(Y_i)) \mapsto \left(\bigoplus_{j+k=i} X_i \otimes Y_k\right)
\end{align*}
We have the functor of taking graded pieces (where $X_{-1} :=0$):
\[ \gr_*: \Fil(\calC) \to \gr(\calC), \quad (X_i) \mapsto (\cofib(X_{i-1} \to X_i)), \]
and it can be upgraded to a symmetric monoidal functor\footnote{Let $\mathbf 1$ be the unit object of $\calC$. Let $\mathbf 1^\otimes: \Fin_* \to \calC^\otimes$ be the trivial algebra over $\calC$, and take the \emph{right} Kan extension relative to $\Fin_* \xrightarrow 0 \mathbb Z_{\ge 0}^\otimes \to \Fin_*$ to get an algebra $\mathbf 1_0: \mathbb Z_{\ge 0}^\otimes \to \calC^\otimes$. On object we have $\mathbf 1_0 \cong (\mathbf 1 \to 0 \to 0 \to \cdots)$. Now, $\gr_*^\otimes$ is the composition of the tensor product $\Fil(\calC)^\otimes \to \Mod_{\mathbf 1_0} (\Fil(\calC)^\otimes)$ and the forgetful functors $\Mod_{\mathbf 1_0} (\Fil(\calC)^\otimes) \to \Fil(\calC)^\otimes \to \gr(\calC)^\otimes$. It is lax monoidal, and can be verified to be symmetric monoidal by hand.}
\[ \gr_*^\otimes: \Fil(\calC)^\otimes \to \gr(\calC)^\otimes. \]
On objects, it coincides with $\gr_*$ as above, and its symmetric monoidal structure is given by
\[ \bigoplus_{j+k=i} \cofib(X_{i-1} \to X_i) \otimes \cofib(Y_{k-1} \to Y_k) \to \cofib\Bigl(\colim_{j+k\le i-1} X_j \otimes Y_k \to \colim_{j+k \le i} X_j \otimes Y_k\Bigr). \]
Also, it is easy to see that $\gr_*$ is conservative (i.e. detects isomorphisms), preserves and detects colimits. We will frequently use the three facts above without comments --- that $\gr_*$ is symmetric monoidal, is conservative, and preserves and detects colimits.

We would also like to mention that every graded object $\gr(\calC)$ can be viewed as a filtered object in $\Fil(\calC)$ via splitting:
\[ F: (X_i) \mapsto \left( \bigoplus_{j=0}^i X_j \right)_i, \]
more formally speaking is the left Kan extension of $\mathbb Z_{\ge 0,\discrete} \to \calC$ along $\mathbb Z_{\ge 0,\discrete} \to \mathbb Z_{\ge 0}$. If $\calC$ is the underlying category of a symmetric monoidal category $\calC^\otimes$ as above ($\calC$ admits countable colimits and $- \otimes -$ commutes with countable colimits on each component), then the map above can be upgraded to a symmetric monoidal functor
\[ F^\otimes: \gr(\calC)^\otimes \to \Fil(\calC)^\otimes \]
as the left adjoint of the forgetful map $\Fil(\calC)^\otimes \to \gr(\calC)^\otimes$. (One can use the formalism of free algebras \cite[Section 3.1.3]{lurie2017higher} to explicitly describe the left adjoint.) We have a natural isomorphism $\id \to \gr \circ F$, which can be upgraded to an isomorphism of $\infty$-operad maps\footnote{Let $U: \Fil(\calC)^\otimes \to \gr(\calC)^\otimes$ denote the forgetful functor, and we have a canonical isomorphism of algebras $\mathbf 1_{\gr(\calC)} \to U(\mathbf 1_0)$ (since $\mathbf 1_{\gr(\calC)}$ is the initial object in the algebras of $\gr(\calC)$), which extends to a map $X \to UF(X) \xrightarrow\cong U(\mathbf 1_0) \otimes UF(X) \to U(\mathbf 1_0 \otimes F(X)) = \gr_*(F(X))$ (note that all maps here can be upgraded to a map of $\infty$-operad maps).

Also, consider the map $V:\gr(\calC)^\otimes \to \Mod_{\mathbf 1_0}(\Fil(\calC)^\otimes), X \mapsto \mathbf 1_0 \otimes F(X)$ and the map $V': \Mod_{\mathbf 1_0}(\Fil(\calC)^\otimes) \to \gr(\calC)^\otimes, Y \mapsto U(Y)$. We already constructed an isomorphism $\id \xrightarrow\cong V' V$. An isomorphism $VV' \xrightarrow\cong \id$ may be given as $VV'(Y) = \mathbf 1_0 \otimes FU(Y) \to \mathbf 1_0 \otimes Y \to Y$, so we get an equivalence $\gr(\calC)^\otimes \cong \Mod_{\mathbf 1_0}(\Fil(\calC)^\otimes)$.}.

\begin{ex}[Relative tensor products]\label{ex: relative tensor products}
We will sometimes use the relative tensor products in symmetric monoidal $\infty$-categories. For its definition, see \cite[Section 4.4]{lurie2017higher} and \cite[Section 5.1.3]{lurie2017higher}.

We will frequently run into the following fact when using relative tensor products. Let $\calC^\otimes$ be a symmetric monoidal $\infty$-category such that the underlying category $\calC$ admits geometric realisations (i.e. colimits over $\simp^\op$), and $- \otimes -$ commutes with geometric realisations on each variable. Then, \cite[Theorem 4.4.2.8]{lurie2017higher} and \cite[Theorem 5.1.3.2]{lurie2017higher} shows that for any commutative algebra $A \in \CAlg(\calC)$ and $A$-modules $M,N$, the relative tensor product $M \otimes_A N$ is calculated as the geometric realisation of the diagram
\[ \begin{tikzcd} \cdots \ar[r] \ar[r,shift left=2] \ar[r,shift right=2] & M \otimes A \otimes N \ar[r,shift left=1] \ar[r,shift right=1] & M \otimes N. \end{tikzcd} \]
Therefore, for $\calC^\otimes$ and $\calD^\otimes$ as above, if $F: \calC^\otimes \to \calD^\otimes$ is symmetric monoidal and commutes with geometric realisations (on the underlying category), then $F$ preserves relative tensor products.

For example, if $\calC^\otimes$ is a symmetric monoidal category whose underlying category is a presentable stable $\infty$-category, and $- \otimes -$ commutes with colimits on each factors, then the functors
\[ \gr_*: \Fil(\calC)^\otimes \to \gr(\calC)^\otimes, \quad U: \Fil(\calC)^\otimes \to \calC^\otimes, \quad U': \gr(\calC)^\otimes \to \calC^\otimes \]
(where $U$ and $U'$ are the functors of taking underlying objects) all commute with relative tensor products. We will use this fact without further explanation, especially in case of $U$ and $U'$ --- the readers may view this as naturally equipping $M \otimes_A N$ with a filtered structure (resp. graded structure).
\end{ex}

\begin{ex}\label{ex: shearing}
Let $\calC^\otimes$ be $\mathbb Z$-linear (i.e. we are given a symmetric monoidal map $D^\perf(\mathbb Z)^\otimes \to \calC^\otimes$) and presentable. For any integer $n$, consider the map $\gr(\calC) \to \gr(\calC)$ given by
\[ (X_i) \mapsto (X_i[2ni]). \]
It can be upgraded to a symmetric monoidal equivalence in the following way: consider the symmetric monoidal functor $A_m: \mathbb Z_{\ge 0,\discrete} \to D(\mathbb Z)$ given by
\[ i \mapsto \mathbb Z[2ni]. \]
For any $\infty$-operad $\calO^\otimes$, consider the map $\Alg_{\calO \times \mathbb Z_{\ge 0, \discrete}}(\calC) \to \Alg_{\calO \times \mathbb Z_{\ge 0, \discrete}}(\calC)$ given by tensor product with $p_2^* A_m$ over $\mathbb Z$. This and the universal property of Day convolutions gives the desired map $F_n: (M_i) \mapsto (M_i [2ni])$. Take the universal case $\calO^\otimes = \gr(\calC)^\otimes$, using explicit descriptions of objects and morphisms in $\gr(\calC)^\otimes$ (see \cite[Propositions 2.2.6.4 and 2.2.6.6]{lurie2017higher}) and we see that the tensor product is given just as expected
\[ F_n((M_i) \otimes (N_i)) = \left( \bigoplus_{i_1+i_2=i} M_{i_1} \otimes N_{i_2} [2ni] \right)_i \cong F_n(M_i) \otimes F_n(N_i) \]
For $m,n \in \mathbb Z$, we have natural identifications $A_m \otimes A_n \cong A_{m+n}$. If we redefine $F_n$ as the $n$th power of $F_1$, then we get a $\dE_1$-monoidal functor $\mathbb Z^\otimes_\discrete \to \Aut_{\mathsf{Op}(\infty)}(\gr(\calC)^\otimes)^\otimes$ given by $n \mapsto F_n = F_1^n$ (induced by the functor $B\mathbb Z \to \mathsf{Op}(\infty)$ sending the only vertex to $\gr(\calC)^\otimes$ and the generating automorphism to $F_1$ --- note that the Kan complex $B\mathbb Z \cong S^1$ classifies automorphisms).

Similarly, if $\calC^\otimes$ is $A$-linear for some classical commtative ring $A$, and $J$ be an invertible $A$-module (concentrated on degree 0), then we have the symmetric monoidal functor $J^*$ on $\gr(D(A))^\otimes$ given by
\[ (M_n) \mapsto (J^n \otimes_A^\bL M_n). \]
Observe that $J^*$ is an automorphism of $\gr(D(A))^\otimes$ with inverse given by $J_* := (J^{-1})^*$, so it preserves colimits, and preserves relative tensor products by Example \ref{ex: relative tensor products}.
\end{ex}

\begin{ex}\label{ex: symmetric monoidal of truncations}
Let $A$ be a connective $\dE_\infty$-ring, then the $\infty$-category of $A$-module spectra $D(A)$ is equipped with a $t$-structure (see Proposition \ref{prop: t-structure on D(R)}). Using the explicit formula of Day convolutions, the full subcategory $\calC \subseteq \Fun(\mathbb Z, D(A))$ consisting of objects $(X_n)$ where $X_n$ is $(-n)$-connective is stable under colimits and tensor products. The inclusion functor $\calC \to \Fun(\mathbb Z, D(A))$ admits a right adjoint $(X_n) \mapsto (\tau^{\le n}X_n)$, and is automatically lax monoidal. We can also restrict ourselves to $\Fil(D^{\ge 0}(A)) = \Fun(\mathbb Z_{\ge 0}, D^{\ge 0}(A))$ with $\calC_0$ consisting of objects $(X_n)$ for which each $X_n$ is $(-n)$-connective. In particular, the composition
\[ D^{\ge 0}(A) \xrightarrow{\text{diagonal}} \Fun(\mathbb Z_{\ge 0}, D^{\ge 0}(A)) \to \calC_0 \]
is lax monoidal, and for each $X \in \CAlg(D^{\ge 0}(A))$, we get a filtered algebra $(\tau^{\le n} X)$. The graded pieces of $(\tau^{\le n} X)$ is exactly the shifting of the cohomology ring of $X$.

For example, whenever $(A,I)$ is a bounded prism and $R$ is $p$-complete and $p$-completely smooth over $\overline A = A/I$, then the Hodge--Tate filtration on $\overline\Prism_{R/A} = R\Gamma((R/A)_\Prism, \overline \calO)$ is given by $\bigl(\tau^{\le n} R\Gamma((R/A)_\Prism, \overline \calO)\bigr)$. See \cite[Theorem 4.11]{Bhatt_2022} and \cite[Construction 7.6]{Bhatt_2022}.
\end{ex}
\vspace{0.3em}
\noindent\textbf{\scshape{Simplicial rings and $\dE_\infty$-rings}}

Our notations will take a classical style. For any simplicial ring $A$, we use $D(A)$ to denote the symmetric monoidal $\infty$-category of $A$-modules, and $\cD(A)$ the full subcategory of complete modules (see Definition \ref{defi: derived complete}). If $A$ is a classical ring (i.e. $0$-truncated), we denote $\Mod_A = D(A)^\heartsuit$ the category of classical modules over $A$. For a $p$-complete classical ring $A$ of bounded $p^\infty$-torsion, we denote $\widehat\Mod_{A,\Flat}$ the category of $p$-complete and $p$-completely flat modules over $A$, and $\widehat\Mod_{A,\Flat,\fil}$ the full subcategory of $\Fil(\cD(A))$ all of whose graded pieces lie in $\widehat\Mod_{A,\Flat}$. The category $\widehat\Mod_{A,\Flat,\fil}$ admits a symmetric monoidal structure given by
\[ (X_i) \cotimes_A^\bL (Y_j) = \Bigl( \ccolim_{j+k \le i} X_j \cotimes_A^\bL Y_k \Bigr) = \Bigl( \colim_{j+k \le i} X_j \cotimes_A^\bL Y_k \Bigr), \]
inherited from $\cD(A)$ (this follows from Corollary \ref{cor: tensor conc on degree 0}).

We also discuss flatness conditions of modules over a connective $\dE_\infty$-ring. We will use the following propositions without further comments:

\begin{prop}[{\cite[Proposition 7.1.1.13]{lurie2017higher}}]\label{prop: t-structure on D(R)}
Let $R$ be a connective $\dE_1$-ring, then on the category $D(R)$ of left $R$-module spectra:
\begin{enumerate}[label=(\arabic*)]
\item the full subcategories $D^{\le 0}(R)$ and $D^{\ge 0}(R)$ of connective and coconnective module spectra give an accessible $t$-structure on the stable $\infty$-category $D(R)$;
\item the category $D^{\le 0}(R)$ is generated by $R$ under colimits and extensions;
\item the $t$-structure is both left complete and right complete, and taking $\pi_0$ induces an equivalence $D^\heartsuit(R) \to \Mod_{\pi_0(R)}$ from the category of left $R$-module spectra to classical left $\pi_0(R)$-modules;
\item the full subcategories $D^{\le 0}(R),D^{\ge 0}(R) \subseteq D(R)$ are stable under small products and small filtered colimits.
\end{enumerate}
\end{prop}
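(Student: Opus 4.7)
The plan is to transport the standard $t$-structure on the $\infty$-category of spectra to $D(R)$ via the free-forgetful adjunction $R \otimes - : \Sptr \rightleftarrows D(R) : U$. The key observation is that $R$ is connective, so the left adjoint $R \otimes -$ preserves connectivity, while the forgetful functor $U$ is $t$-exact essentially by definition (an $R$-module is declared connective iff its underlying spectrum is). A general adjunction argument (in the style of \cite[Proposition 1.4.4.11]{lurie2017higher}) then yields an accessible $t$-structure on $D(R)$ with $D^{\le 0}(R) = U^{-1}(\Sptr^{\le 0})$ and $D^{\ge 0}(R) = U^{-1}(\Sptr^{\ge 0})$.

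For part (2), given any connective $R$-module $M$, I would choose generators $\{x_\alpha\}$ of $\pi_0(M)$ as a $\pi_0(R)$-module. Each $x_\alpha$ corresponds to a map $R \to M$ in $D(R)$ (since $\pi_0 \Map_{D(R)}(R, M) = \pi_0(M)$ by the free-forgetful adjunction), and together they assemble into a morphism $\bigoplus_\alpha R \to M$ inducing a surjection on $\pi_0$. Its cofibre lies in $D^{\le -1}(R)$; shifting and iterating produces a tower whose colimit recovers $M$, exhibiting $M$ as built from shifts of $R$ via colimits and extensions. The accessibility in part (1) is then immediate, since $\{R[n] : n \ge 0\}$ is a small generating set of $D^{\le 0}(R)$.

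For part (3), both completeness statements descend from the corresponding facts on $\Sptr$ via $U$. Right completeness $\bigcap_n D^{\ge n}(R) = 0$ holds because $U$ detects zero objects and $\Sptr$ is right complete; left completeness follows because $U$ preserves limits and each Postnikov tower $\lim_n \tau^{\ge -n} X$ computes $X$ in $\Sptr$, and hence in $D(R)$. The heart equivalence $\pi_0: D^\heartsuit(R) \to \Mod_{\pi_0(R)}$ is essentially surjective via restriction of scalars along the Postnikov truncation $R \to \pi_0(R)$ (which realizes any classical $\pi_0(R)$-module as an Eilenberg--MacLane $R$-module), and is fully faithful by a Tor/Ext spectral sequence that degenerates on heart objects. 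Part (4) follows because products and filtered colimits in $D(R)$ are computed through $U$ in $\Sptr$, where $\pi_n$ commutes with both products and filtered colimits.

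The main technical obstacle is the first step: rigorously producing the lifted $t$-structure on $D(R)$, which requires verifying that $R \otimes -$ on spectra preserves connectivity. This in turn reduces to the assertion that the smash product of two connective spectra is connective --- a standard consequence of the K\"unneth-type Tor spectral sequence. Once this is in hand, the remaining parts are essentially bookkeeping on top of the corresponding statements for $\Sptr$.
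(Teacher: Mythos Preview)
The paper does not supply its own proof of this proposition: it is stated with a direct citation to \cite[Proposition 7.1.1.13]{lurie2017higher} and used as a black box. Your sketch is a reasonable outline of how Lurie proves it there (transporting the $t$-structure along the free--forgetful adjunction, using connectivity of $R$, and then reading off completeness and closure under products/filtered colimits from the underlying spectrum), so there is nothing to compare against in the paper itself.
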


\begin{defi}[{\cite[Definition 7.2.2.10]{lurie2017higher} and \cite[Definition B.6.1.1]{lurie2018spectral}}]\label{defi: flatness}
Let $R$ be a connective $\dE_\infty$-ring and $M$ be an $R$-module. We say that $M$ is flat (resp. faithfully flat) over $R$ if and only if
\begin{enumerate}[label=(\arabic*)]
\item $\pi_0(M)$ is flat (resp. faithfully flat) over $\pi_0(R)$,
\item for all $n$, the map $\pi_0(M) \otimes_{\pi_0(R)} \pi_n(R) \to \pi_n(M)$ are isomorphisms.
\end{enumerate}

It is clear that if $M$ is flat over $R$, then $M$ is also connective.
\end{defi}

\begin{prop}\label{prop: flatness as t-exactness}
For any connective $\dE_\infty$-ring $R$ and $R$-module $M$,
\begin{enumerate}[label=(\arabic*)]
\item $M$ is flat over $R$ if and only if $M \otimes_R^\bL -$ is $t$-exact, if and only if $M$ is connective and $M \otimes_R^\bL \pi_0(R)$ is flat over $\pi_0(R)$,
\item $M$ is faithfully flat over $R$ if and only if $M \otimes_R^\bL -$ is $t$-exact and conservative, if and only if $M$ is connective and $M \otimes_R^\bL \pi_0(R)$ is faithfully flat over $\pi_0(R)$.
\end{enumerate}
\end{prop}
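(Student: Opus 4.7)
The strategy is to establish the three equivalent characterisations of flatness in (1) --- that $M$ is flat, that $M \otimes_R^\bL -$ is $t$-exact, and that $M$ is connective with $M \otimes_R^\bL \pi_0(R)$ classically flat --- and then deduce (2) from (1); the statement is essentially \cite[Proposition 7.2.2.13]{lurie2017higher}, whose strategy I will follow.

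For $M$ flat $\Rightarrow$ (\emph{connective} $+$ \emph{flat mod higher homotopy}), the connectivity of $M$ follows from Definition \ref{defi: flatness}(ii) with $n<0$ (using that $R$ is connective), and the classical flatness of $M \otimes_R^\bL \pi_0(R)$ can be extracted from the Tor spectral sequence
\[ E^2_{s,t} = \Tor^{\pi_*(R)}_s(\pi_*(M), \pi_0(R))_t \Rightarrow \pi_{s+t}(M \otimes_R^\bL \pi_0(R)); \]
condition (ii) exhibits $\pi_*(M) \cong \pi_0(M) \otimes_{\pi_0(R)} \pi_*(R)$ as the base change of the flat classical module $\pi_0(M)$, so $\pi_*(M)$ is flat over $\pi_*(R)$, the spectral sequence collapses, and $M \otimes_R^\bL \pi_0(R) \cong \pi_0(M)$ lies in the heart and is classically flat.

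For (\emph{connective} $+$ \emph{flat mod higher homotopy}) $\Rightarrow$ $M \otimes_R^\bL -$ is $t$-exact: right $t$-exactness is immediate from connectivity of $M$. For left $t$-exactness, the standard argument reduces the claim to $N$ in the heart via the Postnikov tower and left completeness of the $t$-structure (Proposition \ref{prop: t-structure on D(R)}); for $N \in \Mod_{\pi_0(R)}$ the factorisation $M \otimes_R^\bL N \cong (M \otimes_R^\bL \pi_0(R)) \otimes_{\pi_0(R)}^\bL N$ combined with classical flatness of $M \otimes_R^\bL \pi_0(R)$ places the result in the heart. Conversely, for $t$-exact $\Rightarrow$ flat, $t$-exactness forces $M \otimes_R^\bL \pi_0(R) \cong \pi_0(M)$ to sit in the heart and to be classically flat by the same factorisation (yielding (i)), while tensoring the Postnikov fibre sequences of $R$ (with fibres $\pi_n(R)[n]$) with $M$ and commuting tensor past truncations gives $\pi_n(M) \cong M \otimes_R^\bL \pi_n(R) \cong \pi_0(M) \otimes_{\pi_0(R)} \pi_n(R)$, which is condition (ii).

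Part (2) is then formal: the identity $\pi_n(M \otimes_R^\bL N) \cong \pi_0(M) \otimes_{\pi_0(R)} \pi_n(N)$ (from $t$-exactness plus the factorisation, for any $N$) shows that conservativeness of $M \otimes_R^\bL -$ is equivalent to faithfulness of $\pi_0(M) \otimes_{\pi_0(R)} -$ on classical modules, i.e., classical faithful flatness of $\pi_0(M)$ over $\pi_0(R)$. The main obstacle will be the Postnikov argument for left $t$-exactness, which requires some care to handle unbounded coconnective objects; this is most cleanly done via Lurie's filtered-colimit-of-perfect-modules characterisation of flatness \cite[Theorem 7.2.2.15]{lurie2017higher}.
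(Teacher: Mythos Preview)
Your proposal is correct and ultimately rests on the same two references the paper uses, \cite[Theorem 7.2.2.15]{lurie2017higher} and \cite[Proposition 7.2.2.13]{lurie2017higher}. The difference is only one of presentation: the paper's proof is a two-line citation (after observing that right $t$-exactness is equivalent to connectivity of $M$, it simply invokes those two results), whereas you unpack the arguments behind them. One small wording issue: in the left $t$-exactness step you say ``Postnikov tower and left completeness'', but what is actually needed is to write a coconnective $N$ as the filtered colimit $\colim_n \tau^{\le n} N$ and use that $D^{\ge 0}(R)$ is closed under filtered colimits (Proposition \ref{prop: t-structure on D(R)}(4)); you correctly flag this subtlety at the end and point to \cite[Theorem 7.2.2.15]{lurie2017higher} as the clean fix.
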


\begin{proof}
It is clear that $M \otimes_R^\bL -$ is right $t$-exact (i.e. sends $D^{\le 0} (R)$ to $D^{\le 0} (R)$) if and only if $M$ is connective. Therefore, we may assume that $M$ is connective in the entire process. Now, (1) is a direct consequence of \cite[Theorem 7.2.2.15]{lurie2017higher}, and (2) is a direct consequence of (1) and \cite[Theorem 7.2.2.13]{lurie2017higher}.
\end{proof}

Therefore, we define $\Tor$-amplitude in a similar manner:

\begin{defi}\label{defi: Tor amplitude}
Let $R$ be a connective $\dE_\infty$-ring and $M$ be a $R$-module. For $-\infty \le a \le b \le \infty$ (we do not allow $a=b=\infty$ or $a=b=-\infty$), we say that \emph{$M$ is of $\Tor$-amplitude in $[a,b]$ over $R$} if and only if $M \otimes_R^\bL -$ sends $D^{\le 0}(R)$ into $D^{\le b}(R)$ and $D^{\ge 0}(R)$ into $D^{\ge a}(R)$. This holds if and only if $M \in D^{\le b} (R)$ and $M \otimes_R^\bL \pi_0(R)$ is of $\Tor$-amplitude in $[a,b]$ over $\pi_0(R)$ (Recall that $N \in D^{\ge 0} (R)$ is the filtered colimit of $\tau^{\le n} N$).
\end{defi}

It is clear that for $M \in D^{\le b}(R)$, $M$ is of $\Tor$-amplitude in $[a,b]$ if and only if for any $-\infty \le c \le d \le \infty$ and $N \in D^{[c,d]}(R)$, we have $M \otimes_R^\bL N \in D^{[c+a,d+b]}(R)$.

\begin{rmk}
In the definition above, if $R$ is $n$-truncated for some $n \in \mathbb Z_+$, then $D(R)$ is generated by perfect complexes under filtered colimits, hence $D(R)$ is generated by bounded objects under filtered colimits. Taking the truncations, we see that $D^{\le 0}(R)$ and $D^{\ge 0}(R)$ are generated by bounded objects under filtered colimits. Therefore, we see that $M$ is flat (resp. faithfully flat, of $\Tor$-amplitude in $[a,b]$) over $R$ if and only if $M \otimes_R^\bL \pi_0(R)$ is flat (resp. faithfully flat, of $\Tor$-amplitude in $[a,b]$) over $\pi_0(R)$.
\end{rmk}

We will also make use of the following homological algebra result:
\begin{prop}\label{prop: totalisation commute with filtered colimits for coconnective spectra}
In the category of coconnective spectra $\Sptr_{\le 0}$, filtered colimits commute with totalisations (i.e. limits over $\simp$).

As a consequence, for any simplicial ring $A$ and finitely generated ideal $I \subseteq \pi_0(A)$, if we are given a diagram $F: \mathcal I \times \simp \to \cD(A)$ (where $\mathcal I$ is a filtered $\infty$-category) whose image lies in some $\cD^{\ge b}(A)$ (where $b>-\infty$), then the natural morphism in $\cD(A)$
\[ \ccolim_{\mathcal I} \lim_\simp F\to \lim_\simp \ccolim_{\mathcal I} F \]
is an isomorphism. (Note that $\ccolim_{\mathcal I}$ may not be left exact, so the colimits and limits must be calculated in $\cD(A)$ instead of $\cD^{\ge b}(A)$.)
\end{prop}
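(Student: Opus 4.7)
The plan is to establish the first claim by analysing the Bousfield--Kan partial totalisation tower, then deduce the second claim by pushing the first through the derived $I$-completion functor.

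For the first claim, let $\{X^\bullet_{(\alpha)}\}_{\alpha \in \mathcal I}$ be a filtered diagram of cosimplicial objects in $\Sptr_{\le 0}$. Write $\Tot_n Y^\bullet := \lim_{\simp_{\le n}} Y^\bullet$, a finite limit, so that $\Tot Y^\bullet = \lim_n \Tot_n Y^\bullet$. The key input is the fibre estimate that for any $Y^\bullet \in \Fun(\simp, \Sptr_{\le 0})$ the successive fibre $\fib(\Tot_n Y^\bullet \to \Tot_{n-1} Y^\bullet)$ lies in $\Sptr_{\le -n}$. Under the Dold--Kan-type normalisation of cosimplicial spectra this fibre is identified with $\Omega^n N^n Y^\bullet$, where $N^n Y^\bullet \in \Sptr_{\le 0}$ is the normalised $n$-cochain, so looping $n$ times lands the result in $\Sptr_{\le -n}$. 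Passing to the inverse limit of this tower, one gets $\fib(\Tot Y^\bullet \to \Tot_n Y^\bullet) \in \Sptr_{\le -n-1}$, and hence $\Tot Y^\bullet \to \Tot_n Y^\bullet$ induces an isomorphism on $\pi_i$ for all $i \ge -n$.

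With this in hand, the first claim reduces to a degreewise calculation: for each $i \le 0$ set $n = -i$; then
\[
\pi_i\bigl(\Tot \colim_{\mathcal I} X^\bullet_{(\alpha)}\bigr)
= \pi_i\bigl(\Tot_n \colim_{\mathcal I} X^\bullet_{(\alpha)}\bigr)
= \pi_i\bigl(\colim_{\mathcal I} \Tot_n X^\bullet_{(\alpha)}\bigr)
= \colim_{\mathcal I} \pi_i\bigl(\Tot X^\bullet_{(\alpha)}\bigr)
= \pi_i\bigl(\colim_{\mathcal I} \Tot X^\bullet_{(\alpha)}\bigr),
\]
using successively the truncation estimate, that $\Tot_n$ is a finite limit and therefore commutes with filtered colimits, that filtered colimits of spectra commute with $\pi_i$, and the truncation estimate once more; for $i>0$ both sides vanish since $\Sptr_{\le 0}$ is closed under limits and filtered colimits.

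For the second claim the same tower argument runs verbatim in $D(A)$ with its right-complete $t$-structure (after shifting by $b$), yielding an isomorphism $\colim_{\mathcal I} \lim_\simp F \cong \lim_\simp \colim_{\mathcal I} F$ in $D(A)$. I then apply the derived $I$-completion $\widehat{(-)}: D(A) \to \cD(A)$ to both sides. The extra input is that $\widehat{(-)}$ commutes with $\lim_\simp$: choosing lifts $x_1,\ldots,x_k \in A$ of generators of $I$ and letting $K_n$ denote the Koszul complex on $(x_1^n,\ldots,x_k^n)$ over $A$, one has that $K_n$ is a perfect $A$-module and $\widehat M \cong \lim_n (M \otimes_A^\bL K_n)$; tensor product against a perfect module preserves all limits (being corepresentable by its dual), so
\[
\widehat{\lim_\simp G}
\cong \lim_n \lim_\simp (G \otimes_A^\bL K_n)
\cong \lim_\simp \lim_n (G \otimes_A^\bL K_n)
\cong \lim_\simp \widehat G.
\]
Putting everything together,
\[
\ccolim_{\mathcal I} \lim_\simp F
= \widehat{\colim_{\mathcal I} \lim_\simp F}
\cong \widehat{\lim_\simp \colim_{\mathcal I} F}
\cong \lim_\simp \widehat{\colim_{\mathcal I} F}
= \lim_\simp \ccolim_{\mathcal I} F.
\]
The main obstacle I expect is the fibre estimate $\fib(\Tot_n Y^\bullet \to \Tot_{n-1} Y^\bullet) \in \Sptr_{\le -n}$. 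While this is a standard statement about the Bousfield--Kan tower for cosimplicial spectra, deriving it cleanly requires either invoking the matching-object / Reedy machinery for cosimplicial spectra or constructing the cosimplicial normalisation explicitly and comparing it with the partial totalisations; the bookkeeping is notational but not entirely trivial.
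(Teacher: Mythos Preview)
Your proposal is correct and takes essentially the same approach as the paper. Both arguments reduce to the fact that $\pi_{-n}$ of the totalisation only depends on the partial totalisation $\Tot_n$ (a finite limit, which commutes with filtered colimits), then push through derived completion; the paper cites Lurie's Proposition 1.2.4.5 for the tower estimate and its own Lemma \ref{lem: derived completion presentability} for the fact that $\widehat{(-)}$ preserves limits, whereas you spell both out explicitly via the Bousfield--Kan fibre estimate and the Koszul-complex description of completion.
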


\begin{proof}
Denote $\calS = \Sptr^\op$. We use the $\infty$-categorical Dold-Kan correspondence for $\calS$ established in \cite[Sect. 1.2.4]{lurie2017higher}. We have a Dold-Kan correspondence
\[ \Fun(\simp,\Sptr) \cong \Fun(\mathbb Z_{\ge 0}^\op, \Sptr). \]
For $X^\bullet \in \Fun(\simp, \Sptr)$, by \cite[Proposition 1.2.4.5]{lurie2017higher}, for $X^\bullet \in \Fun(\simp,\Sptr_{\le 0})$ and the corresponding object
\[ \cdots \to D(3) \to D(2) \to D(1) \to D(0) \in \Fun(\mathbb Z_{\ge 0}^\op, \Sptr) \]
we have a natural isomorphism $\pi_{-n}(\lim_\simp X^\bullet) \cong \pi_{-n} (D(n))$. Note that $D(n)$ is the limit of the $n$th skeleton of $X^\bullet$, so $D(n)$ is a finite limit. Therefore the functors $D(n): \Fun(\simp,\Sptr_{\le 0}) \to \Sptr$ commute with filtered colimits. It follows that the functors $X^\bullet \mapsto \pi_{-n}(\lim_\simp X^\bullet)$ commute with filtered colimits. Since $\Sptr$ is right complete, we conclude that filtered colimits commute with totalisations in $\Sptr_{\le 0}$ (note that the full subcategory $\Sptr_{\le 0} \subseteq \Sptr$ is stable under arbitrary limits and filtered colimits).

For the second part, we have an isomorphism in $D(A)$
\[ \colim_{\mathcal I} \lim_\simp F\to \lim_\simp \colim_{\mathcal I} F. \]
Take the derived completion (note that derived completion commutes with all colimits and limits by Lemma \ref{lem: derived completion presentability}), and we obtain the desired isomorphism.
\end{proof}

\begin{defi}\label{defi: compatible with totalisation}
In a stable $\infty$-category $\calC$ admitting totalisations and sequential colimits (i.e. colimits over $\mathbb Z_{\ge 0}$), we say that a cosimplicial object $X^\bullet \in \gr(\calC)$ is compatible with totalisation, if the map
\[ F(\lim_\simp X_i) \to \lim_\simp F(X_i) \]
is an isomorphism, where $F$ is the functor of taking the underlying object. We say that $X^\bullet \in \Fil(\calC)$ is compatible with totalisation, if $\gr_* X^\bullet$ is compatible with totalisation, and
\[ G(\lim_\simp X_i) \to \lim_\simp G(X_i) \]
is an isomorphism, where $G$ is the functor of taking the underlying objects.

Therefore, for $X^\bullet \in \gr(\calC)$ compatible with totalisation, we may safely view $\lim_{\simp,\gr(\calC)} X^\bullet$ as giving a grading of $\lim_{\simp,\calC} X^\bullet$; for $X^\bullet \in \Fil(\calC)$ compatible with totalisation, we may safely view $\lim_{\simp,\Fil(\calC)} X^\bullet$ as giving a filtration of $\lim_{\simp,\calC} X^\bullet$, and its graded pieces are $\lim_{\simp,\gr(\calC)} \gr_* X^\bullet$ with underlying module $\lim_{\simp,\calC} \gr_* X^\bullet$. If $X^\bullet$ is compatible with totalisation, then we may safely talk about $\lim_\simp X^\bullet$ and $\lim_\simp \gr_* X^\bullet$ without ambiguity of the category in which the limit is taken.

Therefore, by Proposition \ref{prop: totalisation commute with filtered colimits for coconnective spectra}, in the case $\calC = D(A), \cD(A)$, we see that if there exists an integer $b$ with all terms of $X^\bullet$ being $b$-truncated, then $X^\bullet$ is compatible with totalisation.
\end{defi}

\vspace{0.3em}
\noindent\textbf{\scshape{Symmetric powers}}

We now turn to discuss derived symmetric powers. We refer the readers to \cite[Sect. 25.2]{lurie2018spectral} for a complete discussion. Denote $\SCR$ the category of simplicial rings. The category $\SCRModcn$ consists of pairs $(A,M)$ where $A$ is a simplicial commutative ring and $M$ is a connective $A$-module. Denote $\calC \subseteq \SCRModcn$ be the full subcategory spanned by $(A,M)$ where $A \cong \mathbb Z[x_1,\ldots,x_d]$ for some $d \ge 0$ and $M$ is finite free over $A$, then $\calC$ is an ordinary category and $\SCRModcn \cong \calP_\Sigma(\calC)$. We have the functors $\Sym^n, \bigwedge\nolimits^n, \Gamma^n : \SCRModcn \to \SCRModcn$ given by the left Kan extensions of the functors on $\calC$ given as follows:
\begin{align*}
(A,M) & \mapsto (A,\Sym^n_A M), \\
(A,M) & \mapsto \left(A,\bigwedge\nolimits^n_A M \right), \\
(A,M) & \mapsto (A,\Gamma^n_A M).
\end{align*}
These functors commute with sifted colimits, and it is easy to see that they commutes with base change (e.g. $\Sym_{A'}^n (A' \otimes_A^\bL M) = A' \otimes_A^\bL \Sym_A^n(M)$). Therefore, When $R$ is discrete and $M$ is finite free, we see that $\Sym^n_R(M), \bigwedge^n_R(M), \Gamma^n_R(M)$ coincides with the classical definition. Take the filtered colimit, and we see that they coincide with the classical definition whenever $M$ is flat over (the discrete ring) $R$.

Consider the fibred product
\[ \grAlg = \SCR \times_{\Fun(\{0\},\CAlg(\gr(\Sptr)))} \Fun(\Delta^1, \CAlg(\gr(\Sptr))) \]
consisting of pairs $(A,M)$ where $A$ is a simplicial commutative ring and $M$ is a graded $\dE_\infty$-$A$-algebra. Since $\SCR \to \CAlg(\Sptr) \to \CAlg(\gr(\Sptr))$ commute with colimits, the projections $\grAlg \to \SCR$ and $\grAlg \to \CAlg(\gr(\Sptr))$ all commute with colimits. Then we have the functors $\Sym^*, \Gamma^*, \bigwedge\nolimits^*[*]: \SCRModcn \to \grAlg$ as above. It is easy to see that their restrictions on $\calC$ commutes with finite coproducts, so $\Sym^*$ and $\Gamma^*$ are functors commuting with arbitrary colimits. By \cite[Proposition 25.2.4.2]{lurie2018spectral}, we have the isomorphisms
\begin{align*}
\Sym^n_A (M[1]) & \cong \left( \bigwedge\nolimits_A^nM \right) [n], \\
\bigwedge\nolimits^n_A(M[1]) & \cong \Gamma_A^n(M) [n],
\end{align*}
such that the case for $n=0$ and $n=1$ agrees with the isomorphisms
\[ \Sym^0 \cong \Gamma^0 \cong \bigwedge\nolimits^0 \cong A, \quad \Sym^1 \cong \Gamma^1 \cong \bigwedge\nolimits^1 \cong M. \]
Moreover, we modify the argument of \cite[Proposition 25.2.4.2]{lurie2018spectral} as follows: for any discrete commutative ring $R$ and an exact sequence of finite free $R$-modules
\[ 0 \to M' \to M \to M'' \to 0, \]
we have an isomorphism of differential graded algebras over the category $\gr(\Mod_R)$ (i.e. each term of the complex admits a grading): (the Koszul complex)
\[ \Kos(\Sym_R^*(M);\Sym_R^*(M) \otimes_R M') \xrightarrow\cong \Sym^*_R(M''), \]
and the dualized version (we write it in the form of the de Rham complex)
\[ \DR(\Gamma_R^*(M);\Gamma_R^*(M) \otimes_R M'') \xleftarrow\cong \Gamma^*_R(M'). \]
Note that we also have comparison morphisms (ring homomorphisms, but not necessarily isomorphisms)
\begin{align*}
\Kos(\Sym_R^*(M);\Sym_R^*(M) \otimes_R M') & \to \Kos(R; M') \cong \bigwedge\nolimits^*(M')[*], \\
\DR(\Gamma_R^*(M);\Gamma_R^*(M) \otimes_R M'') & \leftarrow \DR(R;M'') \cong \bigwedge\nolimits^*(M'')[-*].
\end{align*}
Take the left Kan extension, we obtain natural morphisms in $\grAlg$ (where $F,G$ are derived functors of the Koszul complex and the de Rham complex respectively)
\begin{align*}
\Sym^*_R(M'') \xleftarrow\cong F & \to \bigwedge\nolimits^*(M')[*], \\
\Gamma^*_R(M') \xrightarrow\cong G & \leftarrow \bigwedge\nolimits^*(M'')[-*].
\end{align*}
By the argument the same as in \cite[Proposition 25.2.4.2]{lurie2018spectral}, the graded pieces of $F$ and $G$ admit filtrations (the stupid filtration in the discrete case, with graded pieces $\bigwedge\nolimits^i M' \otimes^\bL \Sym^{n-i} M$ and $\bigwedge\nolimits^i M \otimes^\bL \Gamma^{n-i} M''$ respectively) compatible with the stupid filtration on the right-hand side, and we see that the arrows on the right-hand side are isomorphism whenever $M=0$. Take $M=0$, and we obtain natural isomorphisms \emph{as functors $\SCRModcn \to \grAlg$ over $\SCR$} (it is crucial that the isomorphisms are isomorphisms of rings, because we will explicitly use the PD ring structure of $\Gamma^*(M) \cong \bigwedge\nolimits^*(M[1])[-*]$ in the future):
\[ \Sym^* (M'[1])\cong \bigwedge\nolimits^*(M')[*], \quad \Gamma^*(M') \cong \bigwedge\nolimits^*(M'[1])[-*]. \]
When we are working over a base $A \in \SCR$ and a finitely generated ideal $I \subseteq \pi_0(A)$, denote $\widehat\Sym^*, \cbigwedge^*[*], \cGamma^* : \SCRModcn \times_{\SCR} \SCR_{A/} \to \grAlg_A$ their completed versions, where
\[ \grAlg_A := \SCR_{A/} \times_{\Fun(\{0\},\CAlg(\gr(D(A))))} \Fun(\Delta^1, \CAlg(\gr(D(A)))) \]
consists of pairs $(B,B')$ where $B$ is a simplicial ring over $A$ and $B'$ is a graded $\dE_\infty$-algebra over $B$.

Now, $\Sym^*_A(M)$ admits a simplicial ring structure as the left adjoint of the forgetful functor $\Fun(\Delta^1,\SCR) \to \SCRModcn$, and therefore $\Sym_A^*(M) : \SCRModcn \to \Fun(\Delta^1,\SCR)$ preserves colimits. Compose with the colimit-preserving functor $\SCR \to \CAlg(\Sptr)$, and then note that taking underlying modules $\CAlg(\gr(\Sptr)) \to \CAlg(\Sptr)$ is conservative (i.e. detects isomorphisms), we see that the functor $\Sym^*: \SCRModcn \to \grAlg$ preserves colimits. By Example \ref{ex: shearing}, all functors
\[ \Sym^*, \bigwedge\nolimits^*[*], \Gamma^*: \SCRModcn \to \grAlg \]
and their completed versions
\[ \widehat\Sym^*, \cbigwedge^*[*], \cGamma^* : \SCRModcn \times_{\SCR} \SCR_{A/} \to \grAlg_A \]
preserve arbitrary colimits.

At last, the reader should always keep in their mind of Lemma \ref{lemma: concentrated on degree zero}. We are going to use it without further comment.

\vspace{0.3em}
\noindent\textbf{\scshape{Cotangent complexes}}

We give a quick overview of cotangent complexes. We have the functor $F$ of taking the trivial square-zero extension (\cite[Construction 25.3.1.1]{lurie2018spectral})
\[\begin{tikzcd}[row sep=large]
\SCRModcn \ar[rr,"F"] \ar[rd] & & \Fun(\Delta^1, \SCR) \ar[ld,"\text{evaluate at $0$}"] \\
& \SCR
\end{tikzcd}\]
given by $F(A,M) = (A \to A \oplus M)$, and its left adjoint is the construction of cotangent complexes
\[ (A \to B) \mapsto (B, \dL_{B/A}). \]
In particular, the map
\[ \dL_{-/-}: \Fun(\Delta^1, \SCR) \to \SCRModcn \]
commutes with arbitrary colimits. For example, we may compose it with the colimit preserving functor (by Example \ref{ex: shearing})
\[ \bigwedge\nolimits^*[-*]: \SCRModcn \to \grAlg, \]
and we get a colimit preserving functor $\Fun(\Delta^1, \SCR) \to \grAlg$ over $\SCR$:
\[ (A \to B) \mapsto \bigoplus_{n \ge 0} \bigwedge\nolimits^n_A(\dL_{B/A}). \]

\section{Preliminaries on $p$-complete rings}
\subsection{Completely flat modules and adic modules}
We refer the readers to \cite{Bhatt_2022} for $\delta$-rings and prisms.

\begin{defi}[{cf. \cite[Section 7.3]{lurie2018spectral}, \cite[Definition 2.31]{mathew2021faithfullyflatdescentperfect}}]\label{defi: derived complete}
Let $A$ be a simplicial ring and $I \subseteq \pi_0 \left( A \right)$ be a finitely generated ideal. Then, an object $M \in D \left( A \right)$ is called \emph{derived $I$-adically complete}, if and only if for each $x \in I$ we have
\[ \lim (\cdots \xrightarrow x M \xrightarrow x M) =0 \]
in $D(A)$. An object $M \in D(A)$ is called \emph{$I$-torsion} (resp. \emph{$I^\infty$-torsion}) if all cohomology groups $\hol^i(M)$ are $I$-torsion (resp. $I^\infty$-torsion). We use $\cD(A)$ to denote the full subcategory of derived $I$-adically complete modules over $A$, and $D^{\tors I}(A)$ (resp. $D^{\tors{I^\infty}} (A)$) the full subcategory of $I$-torsion (resp. $I^\infty$-torsion) modules over $A$. The inclusion functor $i: D \left( A \right) \to \cD \left( A \right)$ has a left adjoint, called the \emph{derived $I$-adic completion}. If $I = (x_1,\ldots,x_n)$, then the completion functor sends $M \in D \left( A \right)$ to
\[ \widehat M := \lim M \otimes_{\mathbb Z [x_1,\ldots,x_n]}^\bL \mathbb Z [x_1,\ldots,x_n] / \bigl( x_1^k, \ldots, x_n^k \bigr). \]
Note that if $M=B$ is a simplicial algebra over $A$, then $\widehat B$ is equipped with a natural structure of a simplicial ring, which we also call the \emph{derived $I$-adic completion} of $B$. Also, it is clear that the functor $M \mapsto \widehat M$ is right $t$-exact, i.e. it sends $D^{\le 0}$ to $D^{\le 0}$.

For $M,N \in D \left( A \right),$ we will denote the completion of $M \otimes_A^\bL N$ by $M \cotimes_A^\bL N.$ By \cite[Corollary 7.3.5.2]{lurie2018spectral}, $\cD(A)$ is equipped with a symmetric monoidal structure inherited from $D(A)$, and is given by $(M,N) \mapsto M \cotimes_A^\bL N$ on objects.
\end{defi}

\begin{lemma}\label{lem: derived completion presentability}
For a simplicial ring $A$ and a finitely generated ideal $I \subseteq \pi_0(A)$, the adjoint pair of functors (where $i$ denotes the inclusion functor)
\[ \widehat{(-)}: D(A) \rightleftarrows \cD(A) : i \]
realises $\cD(A)$ as an accessible localisation (see \cite[Definition 5.2.7.2]{lurie2009higher}) of $D(A)$. Moreover, the functor $\widehat{(-)}$ commutes with all colimits and limits, and $i$ commutes with all $\omega_1$-filtered colimits and limits.
\end{lemma}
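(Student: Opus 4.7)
The plan is to reduce the statement to a combination of three ingredients: general facts about localisations of presentable $\infty$-categories, an explicit Koszul-style formula for the completion functor, and the interaction between countable limits and $\omega_1$-filtered colimits in the category of spectra.

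First, I would identify $\cD(A)$ as the category of $S$-local objects for an explicit small set $S$ of maps. Writing $I = (x_1,\ldots,x_n)$, the colimit $A[1/x_i] = \colim (A \xrightarrow{x_i} A \xrightarrow{x_i} \cdots)$ gives, for $M \in D(A)$, a natural identification
\[ \RHom_A(A[1/x_i], M) \cong \lim \bigl( \cdots \xrightarrow{x_i} M \xrightarrow{x_i} M \bigr). \]
Therefore $M$ being derived $I$-complete in the sense of Definition \ref{defi: derived complete} is equivalent to $M$ being local with respect to the finite set $S = \{A \to A[1/x_i]\}_{i=1}^n$. Since $D(A)$ is presentable (Proposition \ref{prop: t-structure on D(R)}) and $S$ is a small set of maps, \cite[Proposition 5.5.4.15]{lurie2009higher} provides that $\cD(A)$ is an accessible reflective localisation of $D(A)$; the adjoint pair $\widehat{(-)} \dashv i$ in the statement is the resulting localisation. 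In particular $\widehat{(-)}$, as a left adjoint, preserves all colimits, and $i$, as a right adjoint, preserves all limits.

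Next I would show that $\widehat{(-)}$ also preserves all limits. For this I use the explicit description
\[ \widehat{M} \cong \lim_k \, M \otimes_{\mathbb{Z}[x_1,\ldots,x_n]}^{\bL} \mathbb{Z}[x_1,\ldots,x_n]/(x_1^k,\ldots,x_n^k), \]
recorded in Definition \ref{defi: derived complete}. Each $\mathbb{Z}[x_1,\ldots,x_n]/(x_1^k,\ldots,x_n^k)$ is (derived) a perfect complex over $\mathbb{Z}[x_1,\ldots,x_n]$, given by a Koszul complex, and is therefore dualisable in $D(\mathbb{Z}[x_1,\ldots,x_n])$. Tensoring with a dualisable object preserves both limits and colimits, so each of the functors $M \mapsto M \otimes^{\bL} \mathbb{Z}[x_1,\ldots,x_n]/(x_1^k,\ldots,x_n^k)$ commutes with arbitrary limits in $D(A)$, and then $\widehat{(-)}$ does as well, since it is a further limit over $k$ of such functors and limits commute with limits.

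Finally, I would prove that $i$ preserves $\omega_1$-filtered colimits, which amounts to showing that $\cD(A)$ is closed under $\omega_1$-filtered colimits in $D(A)$. Given an $\omega_1$-filtered diagram $(M_j)$ of complete modules, for each generator $x_i$ the tower $\cdots \xrightarrow{x_i} \colim_j M_j \xrightarrow{x_i} \colim_j M_j$ has limit isomorphic to $\colim_j \lim \bigl(\cdots \xrightarrow{x_i} M_j \xrightarrow{x_i} M_j\bigr)$, using the classical fact that in the stable $\infty$-category of spectra (hence also in $D(A)$ via the conservative, limit- and $\omega_1$-filtered-colimit-preserving forgetful functor $D(A) \to \Sptr$), $\omega_1$-filtered colimits commute with countable indexed limits. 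Each term of the colimit is zero by completeness of $M_j$, so the whole expression vanishes, which is precisely the condition for $\colim_j M_j$ to be in $\cD(A)$. The main technical point is this last commutation; everything else is formal from the localisation framework and the Koszul description of $\widehat{(-)}$.
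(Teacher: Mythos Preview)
Your approach is essentially the same as the paper's: both use the Koszul formula with dualisable tensor factors to get limit-preservation of $\widehat{(-)}$, and both use commutation of countable limits with $\omega_1$-filtered colimits for the final claim about $i$. The paper packages things slightly differently by first proving that $L = i \circ \widehat{(-)}$ preserves all limits and $\omega_1$-filtered colimits, then deducing the properties of $i$ and $\widehat{(-)}$ from those of $L$; your version checks closure of $\cD(A)$ under $\omega_1$-filtered colimits directly via the tower definition. Both routes work.

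One small slip: your localising set $S = \{A \to A[1/x_i]\}$ is not correct. Being $S$-local for this $S$ means $\RHom_A(A[1/x_i],M) \to M$ is an equivalence, whereas derived $I$-completeness asks that $\RHom_A(A[1/x_i],M) = 0$. The right set is $S = \{A[1/x_i] \to 0\}_{i=1}^n$; with this correction your invocation of \cite[Proposition~5.5.4.15]{lurie2009higher} goes through as written. The paper sidesteps this by citing \cite[Theorem~5.5.1.1(5)]{lurie2009higher} once it knows $L$ is $\omega_1$-accessible, but your route via an explicit localising set is just as clean.
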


\begin{proof}
Denote $L = i \circ \widehat{(-)} : D(A) \to D(A)$. We first prove that $L$ commutes with $\omega_1$-filtered colimits and all limits. Write
\[ L(X) = \lim_n X \otimes_{\mathbb Z[x_1,\ldots,x_d]}^\bL \mathbb Z[x_1,\ldots,x_d] / (x_1^n,\ldots,x_d^n). \]
Now, 
\[ \mathbb Z[x_1,\ldots,x_d] / (x_1^n,\ldots,x_d^n) \cong \Kos(\mathbb Z[x_1,\ldots,x_d]; x_1^n,\ldots,x_d^n) \]
is a perfect complex over $\mathbb Z[x_1,\ldots,x_d]$, so the functor $- \otimes_{\mathbb Z[x_1,\ldots,x_d]}^\bL \mathbb \mathbb Z[x_1,\ldots,x_d] / (x_1^n,\ldots,x_d^n)$ commutes with all colimits and all limits. Now $\lim_n$ commutes with all $\omega_1$-filtered colimits and all limits, and we see that $L$ commutes with $\omega_1$-filtered colimits and all limits. By \cite[Theorem 5.5.1.1 (5)]{lurie2009higher}, we see that $\cD(A)$ is a presentable $\infty$-category.

By adjunction, we see that $\widehat{(-)}$ preserves all colimits and $i$ preserves all limits. By \cite[Proposition 5.5.1.2]{lurie2017higher}, we see that $\cD(A)$ is presentable. Now, since the fully faithful functor $i$ detects limits, we see that $\widehat{(-)}$ commutes with all limits. To show that $i$ commutes with $\omega_1$-filtered colimits, let $F: \mathcal I \to \cD(A)$ be a diagram where $\mathcal I$ is $\omega_1$-filtered. Now since $\widehat{(-)} \circ i \cong \id$, we have $F \cong \widehat{(-)} \circ i \circ F$. We only need to show that the natural map
\[ \colim i \circ F \to i ( \colim F ) \]
is an isomorphism. Write $F \cong (-) \circ G$ (with $G = i \circ F$), since $(-)$ commutes with colimits, we only need to prove that the natural map
\[ \colim L \circ G \to L(\colim G) \]
is an isomorphism. This follows from that $L$ commutes with $\omega_1$-filtered colimits.
\end{proof}

\begin{lemma}\label{lem: t-structure on complete modules}
Let $A$ be a simplicial ring and $I \subseteq \pi_0(A)$ be a finitely generated ideal. Then,
\begin{enumerate}[label=(\arabic*)]
\item for each $x \in \pi_0(A)$ and $M \in D(A)$, denote $T_x(M)$ the limit
\[ \lim (\cdots \xrightarrow x M \xrightarrow x M). \]
Then, $T_x$ has $t$-amplitude in $[0,1]$, and there exists exact sequences
\[ 0 \to \hol^1(T_x(\hol^{n-1}(M))) \to \hol^0(T_x(M)) \to \hol^0(T_x(\hol^n(M))) \to 0. \]
In particular, $M$ is derived $I$-adically complete if and only if each $\hol^i(M)$ is derived $I$-adically complete;
\item for $M \in \cD(A)$ and $n \in \mathbb Z$, the complexes $\tau^{\le n}M$ and $\tau^{\ge n}M$ also lie in $\cD(A)$. In particular, the full subcategories $\cD^{\le 0}(A) := D^{\le 0}(A) \cap \cD(A)$ and $\cD^{\ge 0}(A) := D^{\ge 0}(A) \cap \cD(A)$ defines an accessible $t$-structure on $\cD(A)$.
\end{enumerate}
\end{lemma}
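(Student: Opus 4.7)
The proof of (1) rests on the identification $T_x(M) \cong \RHom_{\dZ[x]}(\dZ[x, x^{-1}], M)$, where $M$ is regarded as a $\dZ[x]$-module via the self-map $x: M \to M$. Since $\dZ[x, x^{-1}] = \colim(\dZ[x] \xrightarrow{x} \dZ[x] \xrightarrow{x} \cdots)$ admits a two-term free resolution by its mapping telescope, we obtain $T_x(M) = \fib(\prod_n M \to \prod_n M)$. Products preserve coconnectivity, and the fibre of a map between objects of $D^{\le a}(A)$ lies in $D^{\le a+1}(A)$; hence $T_x$ has $t$-amplitude $[0,1]$.

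For the short exact sequence (whose middle term I read as $\hol^n(T_x(M))$, correcting the apparent ``$\hol^0$'' typo), I would first handle the bounded-below case. Applying $T_x$ to the Postnikov filtration of $M$ gives a spectral sequence
\[ E_2^{p,q} = \hol^p(T_x(\hol^q(M))) \Rightarrow \hol^{p+q}(T_x(M)) \]
concentrated in rows $p \in \{0,1\}$. All differentials $d_r: E_r^{p,q} \to E_r^{p+r, q-r+1}$ with $r \ge 2$ map to or from a zero column, so the sequence degenerates at $E_2$, and the two contributions to $\hol^n(T_x(M))$ are exactly $\hol^0(T_x(\hol^n(M)))$ and $\hol^1(T_x(\hol^{n-1}(M)))$. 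For unbounded $M$, left completeness gives $M = \lim_N \tau^{\ge -N}M$; the functor $T_x$ commutes with this limit, and in each cohomological degree the system $\hol^n(T_x(\tau^{\ge -N}M))$ stabilises once $N \ge -n+1$, so no $\clim{1}$ contributions appear and the bounded-below sequence passes to the limit. The ``in particular'' clause follows at once: $T_x(M) = 0$ iff the exact sequence forces every $T_x(\hol^q(M))$ to vanish, and applied to each generator $x$ of $I$ this yields the equivalence.

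Part (2) follows immediately from (1). For $M \in \cD(A)$, each $\hol^i(M)$ is derived $I$-complete, so $\tau^{\le n}M$ and $\tau^{\ge n}M$, whose non-zero cohomology groups are among the $\hol^i(M)$, are both derived $I$-complete. The fibre sequences $\tau^{\le 0}M \to M \to \tau^{\ge 1}M$ inherited from $D(A)$ therefore lie in $\cD(A)$, giving the $t$-structure. Accessibility follows from the presentability of $\cD(A)$ (Lemma \ref{lem: derived completion presentability}) together with the fact that $\cD^{\le 0}(A)$ is generated under colimits in $\cD(A)$ by $\widehat A$: the colimit-preserving completion $\widehat{(-)}$ is essentially surjective onto $\cD^{\le 0}(A)$, and $D^{\le 0}(A)$ is generated under colimits by $A$.

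The main obstacle is the spectral-sequence analysis in the unbounded case, where $\clim{1}$ terms could a priori obstruct passage to the limit; however, the $[0,1]$-amplitude of $T_x$ causes each Postnikov inverse system to stabilise degree-wise, rendering this concern harmless.
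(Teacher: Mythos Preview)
Your proof is correct, and for part (2) it matches the paper exactly. For the exact sequence in (1), however, the paper takes a shorter route than your spectral-sequence argument. Having established that $T_x$ has $t$-amplitude $[0,1]$, the paper simply observes that $\hol^n(T_x(M))$ depends only on the two-step truncation $\tau^{\le n}\tau^{\ge n-1}M$: the fibre $\tau^{\le n-2}M$ lands in $D^{\le n-1}$ after $T_x$, and the cofibre $\tau^{\ge n+1}M$ lands in $D^{\ge n+1}$, so neither contributes to $\hol^n$. After shifting to $n=0$ one is left with $M \in D^{[-1,0]}(A)$, and the single exact triangle $\hol^{-1}(M)[1] \to M \to \hol^0(M)$ yields the sequence directly. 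This bypasses both the Postnikov spectral sequence and your separate treatment of the unbounded case via left completeness and stabilisation---your observation that the inverse system stabilises degree-wise is really the same fact that $\hol^n(T_x(-))$ only sees a finite truncation, but the paper uses it upfront rather than at the end. Your approach is not wrong, just more elaborate than necessary; the spectral sequence degenerating at $E_2$ on two columns is the infinitesimal form of the paper's truncation reduction.
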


\begin{proof}
For (1), we may assume that $n=0$. Note that the limit over $\mathbb Z_{\ge0}^\op$ may be depicted as the following equaliser:
\[ \lim(\cdots \xrightarrow{f_3} X_2 \xrightarrow{f_2} X_1 \xrightarrow{f_1} X_0) = \lim \biggl( \prod_{i=0}^\infty X_i \rightrightarrows \prod_{i=0}^\infty X_i \biggr) \]
where the two maps are given by $\id$ and $(f_{i+1} \text{pr}_{i+1})$. Therefore, the exact functor
\[ M \mapsto T_x(M) = \lim (\cdots \xrightarrow x M \xrightarrow x M) \]
sends $D^{\le 0}$ to $D^{\le 1}$ and $D^{\ge 0}$ to $D^{\ge 0}$. Therefore, take the truncation $\tau^{\le 0} \tau^{\ge -1} M$, and we may assume that $M \in D^{[-1,0]}(A)$. Now, we have an exact triangle
\[ \hol^{-1}(M) [1] \to M \to \hol^0(M), \]
inducing an exact sequence
\[ 0 \to \hol^1(T_x(\hol^{-1}(M))) \to \hol^0(T_x(M)) \to \hol^0(T_x(\hol^0(M))) \to 0, \]
as desired. The other statements are direct consequences of (1).
\end{proof}

When $A$ is a classical ring and $I=dA$, we have the following proposition:
\begin{prop}\label{prop: derived completion is classical when bounded}
Let $A$ be a ring and $I=dA.$ Let $M$ be a (classical) module over $A$ with bounded $d^\infty$-torsion (i.e. there exists $n_0$ with $M \left[ d^{n_0} \right] = M \left[ d^\infty \right]$), then the derived completion of $M$ with respect to $dI$ agrees with the classical completion. Moreover, the map $M[d^n] \to \widehat M[d^n]$ is an isomorphism for any $n \in \mathbb Z_+$. In particular $\widehat M[d^{n_0}] = \widehat M[d^\infty]$.
\end{prop}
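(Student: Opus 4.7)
The plan is to break the proof into two stages: first show that the derived completion $\widehat M$ is concentrated in degree $0$ and equals the classical completion $M^\land$, then derive the torsion isomorphism $M[d^n] \xrightarrow\cong \widehat M[d^n]$ by reducing to the short exact sequence $0 \to M[d^\infty] \to M \to M/M[d^\infty] \to 0$.

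For the first stage, I would use the explicit description $\widehat M = \lim_n \Kos(M;d^n) = \lim_n [M \xrightarrow{d^n} M]$ together with the Milnor-type exact sequence for the cohomology of a limit of a tower (as in the proof of Lemma \ref{lem: t-structure on complete modules}). This yields
\[ \hol^{-1}(\widehat M) = \clims 0 n M[d^n], \qquad 0 \to \clims 1 n M[d^n] \to \hol^0(\widehat M) \to \clims 0 n M/d^n M \to 0. \]
The key point is to identify the transition maps: fitting $\Kos(A;d^{n+1}) \to \Kos(A;d^n)$ into a commutative square with identity in degree $0$ forces multiplication by $d$ in degree $-1$. Hence the maps $M[d^{n+1}] \to M[d^n]$ are multiplication by $d$, \emph{not} inclusions. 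For $n \ge n_0$ we have $M[d^n] = M[d^{n_0}]$, and composing $n_0$ such transition maps gives multiplication by $d^{n_0}$ on $M[d^{n_0}]$, which is zero. Therefore the tower $(M[d^n])_n$ is pro-zero, so both $\lim$ and $\lim^1$ vanish, giving $\widehat M \cong M^\land$ concentrated in degree $0$.

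For the second stage, set $N = M/M[d^\infty]$, which is $d$-torsion-free, and both $M[d^\infty] = M[d^{n_0}]$ and $N$ have bounded $d^\infty$-torsion. Derived completion is exact as a functor $D(A) \to D(A)$, so the short exact sequence $0 \to M[d^\infty] \to M \to N \to 0$ yields an exact triangle whose three terms are, by stage one, concentrated in degree $0$; in particular $\widehat{M[d^\infty]} = M[d^\infty]$ since it is already $d^{n_0}$-torsion. Extracting the resulting classical short exact sequence
\[ 0 \to M[d^\infty] \to \widehat M \to \widehat N \to 0, \]
I would then check by hand that $\widehat N$ is $d$-torsion-free (if $(x_k) \in \lim N/d^k N$ satisfies $d(x_k)=0$, then $dx_k \in d^k N$ and $d$-torsion-freeness of $N$ gives $x_k \in d^{k-1}N$). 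Taking $d^n$-torsion is left exact, so we obtain $M[d^n] = M[d^\infty][d^n] \xrightarrow\cong \widehat M[d^n]$, and the claim $\widehat M[d^{n_0}] = \widehat M[d^\infty]$ follows by taking the union over $n$.

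The delicate point I expect to have to pay attention to is the identification of the transition maps in the Koszul tower; the naive guess that $M[d^{n+1}] \to M[d^n]$ is the inclusion would give a non-zero inverse system and obscure the pro-zero vanishing. Everything else (exactness of derived completion, $d$-torsion-freeness of $\widehat N$, and the left exactness of $(-)[d^n]$) is routine once the first stage is in place.
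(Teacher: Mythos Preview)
Your proposal is correct. The first stage is essentially identical to the paper's argument: the paper phrases it as showing that the map of pro-objects $\{\cofib(M \xrightarrow{d^n} M)\} \to \{M/d^nM\}$ has pro-zero fibre $\{M[d^n][-1]\}$, using exactly the observation you make about the transition maps being multiplication by $d$ and the $n_0$-fold composite vanishing.

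The second stage genuinely differs. The paper argues by a direct element chase: for injectivity of $M[d] \to \widehat M[d]$, it tracks an $m \in M[d]$ with $m \in d^iM$ for all $i$, writes $m = d^{n_0}m'$, and concludes $m=0$; for surjectivity it takes a $d$-torsion element $(m_i) \in \clim 0 M/d^iM$, modifies it so that each $m_i$ is itself $d$-torsion, and then shows the sequence is eventually constant. Your route via the short exact sequence $0 \to M[d^\infty] \to M \to N \to 0$ is cleaner and more structural: it isolates the torsion-free piece once and for all, and the torsion statement drops out from left-exactness of $(-)[d^n]$ together with $\widehat N[d^n]=0$. The paper's approach has the minor advantage of not invoking exactness of derived completion on triangles, but that fact is already recorded in the paper (Lemma~\ref{lem: derived completion presentability}).

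One small gap to close when you write it up: your argument that $\widehat N$ is $d$-torsion-free stops at ``$x_k \in d^{k-1}N$'', which alone does not give $x_k = 0$ in $N/d^kN$. You need one more line: by compatibility, $x_k$ is the image of $x_{k+1}$ under $N/d^{k+1}N \to N/d^kN$, and since (by the same reasoning) a lift of $x_{k+1}$ lies in $d^kN$, its image $x_k$ vanishes in $N/d^kN$.
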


\begin{proof}
We only need to prove that there is an isomorphism between pro-objects
\[ \bigl\{ \cofib \bigl( M \xrightarrow{d^n} M \big) \bigr\} \to \{ M / d^n M \}, \]
and then taking the derived limit would give us the result. Now, the fibre of this morphism is the pro-object
\[ \{ M \left[ d^n \right] \left[ -1 \right] \} \]
with connecting morphisms $d: M \left[ d^n \right] \to M\left[ d^{n+1} \right].$ By our assumptions, the $n_0$-fold composition $d^{n_0}: M \left[ d^n \right] \to M \left[ d^{n+n_0} \right]$ is always zero, so the fibre is isomorphic to zero, as desired.

To prove the last part of the theorem, replace $d$ by $d^n$, and we may assume that $n=1$. For $m \in M[d]$, if its image in $\widehat M[d]$ is zero, then $m \in d^iM$ for any $i \in \mathbb Z_+$. In particular $m=d^{n_0}m'$ for some $m' \in M$. We have $d^{n_0+1}m' = dm =0$, so our assumption shows that $m' \in M[d^{n_0}]$ and $m=d^{n_0}m' =0$, so $M[d] \to \widehat M[d]$ is injective.

To show that it is surjective, assume that $\left( m_i \right) \in \clim 0 M / d^i M$ is $d$-torsion. It follows that $dm_i \in d^iM$ for all $i$. Take $m'_i \in M$ with $dm_i = d^i m'_i$, and we see that $d(m_i - d^{i-1}m'_i) =0$ for $i \ge 1$. Take $n_i = m_{i+1} - d^i m'_{i+1}$, and we see that $n_i - m_i \in d^iM$, and $n_i \in M[d]$. Replace $(m_i)$ by $(n_i)$, and we may assume that $dm_i =0$ for all $i$. Fix $i \ge n_0$, and we may write $m_i - m_{i+1} = d^i x$. We have $d^{i+1} x =0$, so $d^{n_0} x =0$. It follows that $d^i x =0$, and we get $m_i = m_{i+1}$. Therefore, $(m_i)$ is eventually constant, and we see that $M[d] \to \widehat M[d]$ is surjective.
\end{proof}

We will also introduce another class of modules that we would like to consider. For the sake of convenience, we will only consider $p$-adic completeness.

\begin{prop-defi}\label{prop-defi: adic complete}\mbox{}
\begin{enumerate}[label=(\arabic*)]
\item A \emph{Tate ring} over $\mathbb Q_p$ is a topological ring $A$ over $\mathbb Q_p$, such that there exists an open subring $A^+ \subseteq A$ such that $A = A^+ \bigl[ \frac 1 p \bigr]$ and the induced topology on $A^+$ is $p$-adic and complete. Such ring $A^+$ is called a ring of definition for $A^+$. For any $p$-complete ring $A_0$ of bounded $p^\infty$-torsion, it is easy to see that $A_0 \bigl[ \frac 1 p \bigr]$ is a Tate ring over $\mathbb Q_p$, and a ring of definition is given by the $p$-torsion-free quotient of $A_0$. For subsets $X,Y$ of $A$, denote $X \cdot Y$ the \emph{subgroup} of $A$ generated by $xy$ for $x \in X$ and $y\in Y$,

A subset $X \subseteq A$ is called \emph{bounded}, if for any open neighbourhood $U$ of $0$, there exists an open neighbourhood $V$ of $0$ such that $X \cdot V \subseteq U$. It is clear that for bounded subsets $X,Y \subseteq A$, the set $X \cdot Y$ is bounded. If $A^+$ is a ring of definition of $A$, then $X$ is bounded if and only if there exists $N \in \mathbb Z$ with $X \subseteq p^N A^+$. As a result, if subgroups $M \subseteq N \subseteq A$ are open and bounded, then $N/M$ is killed by a (finite) power of $p$.

An element $a \in A$ is called power-bounded (resp. topologically nilpotent), if the set $\{ a^n \}_{n \in \mathbb N}$ is bounded (resp. tends to zero). The set of power-bounded elements and topologically nilpotent elements are denoted $A^\circ$ and $A^{\circ\circ}$.

\item Fix a Tate ring $A$ over $\mathbb Q_p$. Then, a subring $A^+$ of $A$ is a ring of definition if and only if $A^+$ is open and bounded. If $A^+_1$ and $A^+_2$ are rings of definition, then $A^+_0:=A^+_1 \cdot A^+_2$ is a ring of definition. If $A^+ \subseteq A$ is a ring of definition and $a \in A^\circ$, then $A^+ [a]$ is a ring of definition. Therefore, $A^\circ$ is the union of all rings of definition of $A$, so $A^\circ$ is an open subring of $A$.

The set of topologically nilpotent elements $A^{\circ\circ}$ is an open ideal of $A^\circ$ contained in the Jacobson radical.

\item Let $A$ be a Tate ring over $\mathbb Q_p$ with a ring of definition $A^+$. A topological $A$-module $M$ is called \emph{adic}, if there exists an open $A^+$-submodule $M^+ \subseteq M$ such that $M = M^+ \bigl[ \frac 1 p \bigr]$, and that the induced topology on $M^+$ is $p$-adic and complete. Then, this definition does not depend on the choice of $A^+$. Such an $M^+$ is called \emph{an $A^+$-module of definition}. When $A^+$ is clear from context, we simply call it a module of definition of $M$.

A subset $X$ of $M$ is called \emph{bounded}, if for any open neighbourhood $U \subseteq M$ of $0$, there exists an open neighbourhood $V \subseteq A$ of $0$ such that $V \cdot X \subseteq U$. Then, for any ring of definition $A^+ \subseteq A$, $X$ is bounded if and only if it is contained in some $A^+$-module of definition of $M$. Also, a $A^+$-module of definition is the same as an open bounded $A^+$-submodule of $M^+$.

Assume that $A_0$ is a classical $p$-complete ring of bounded $p^\infty$-torsion and $M_0$ is a $p$-complete $A_0$-module with bounded $p^\infty$-torsion, then $M_0 \bigl[ \frac 1 p \bigr]$ is an adic module over $A_0 \bigl[ \frac 1 p \bigr]$, whose module of definition is the image of $M_0$.
\end{enumerate}
\end{prop-defi}

\begin{proof}
For (1), if $A_0$ is a $p$-complete ring with bounded $p^\infty$-torsion, then denote its $p$-torsion-free quotient by $A^+$, then $A^+$ is also $p$-adically complete, and we see that $A_0 \bigl[ \frac 1 p \bigr] \cong A^+ \bigl[ \frac 1 p \bigr]$ can be made into a Tate ring over $\mathbb Q_p$ with ring-of-definition $A^+$.

If $X \subseteq A$ is bounded, then take $U = A^+$, and we see that there exists $N \in \mathbb Z$ with $X \cdot (p^N A^+) \subseteq A^+$. This implies $X \subseteq p^{-N}A^+$. Conversely if $X \subseteq p^N A^+$, it is easy to see that $X$ is bounded.

For (2), it is clear that a ring of definition is an open and bounded subring. Conversely, if $A^+ \subseteq A$ is open and bounded, then $p^M A^+_1 \subseteq A^+ \subseteq p^N A^+_1$ for some ring of definition $A^+_1$ and $M,N \in \mathbb Z$. This implies that $p^N A^+$ also form a fundamental system of neighbourhood, so $A^+$ is a ring of definition.

If $A_1^+$ and $A_2^+$ are rings of definition, then we may find $N \in \mathbb Z_+$ with $A_2^+ \subseteq p^{-N} A_1^+$. It follows that $A_1^+ \cdot A_2^+ \subseteq A_1^+ \cdot p^{-N} A_1^+ = p^{-N} A_1^+$ is bounded. It is automatically open, so it is a ring of definition.

If $a \in A^\circ$ and $A^+$ is a ring of definition, then $X:=\{ a^n \}$ is bounded, so $A^+[a] = A^+ \cdot X$ is also bounded. It is an open subring, so it is a ring of definition.

It is clear that any ring of definition $A^+$ is contained in $A^\circ$, so our previous discussions shows that $A^\circ$ is the union of all rings of definitions of $A$.

It is clear that $A^{\circ\circ} \subseteq A^\circ$. Since the topology is non-archimedean, one easily shows that $A^{\circ\circ}$ is an ideal of $A^\circ$ We have $pA^\circ \subseteq A^{\circ\circ}$, so $A^{\circ\circ}$ is an open ideal. For $x \in A^{\circ\circ}$ we have
\[ y:=\sum_{n \in \mathbb Z} x^n \in A^\circ \]
and $(1-x)y =1$, so $A^{\circ\circ}$ is contained in the Jacobson radical of $A^\circ$.

For (3), let $A^+_1$ and $A^+_2$ be rings of definitions of $A$ with $A^+_1 \subseteq A^+_2$. Let $M$ be a topological $A$-module. It is clear that any $A^+_2$-module of definition is an $A^+_1$-module of definition. Conversely assume that $M^+_1$ is an $A^+_1$-module of definition, then define $M^+_2 = A^+_2 \cdot M^+_1$. There exists $N \in \mathbb Z$ with $A^+_2 \subseteq p^{-N} A^+_1$, and we see that $M^+_1 \subseteq M^+_2 \subseteq p^{-N}M^+_1$, so $p^n M^+_2$ ($n \in \mathbb Z$) is a fundamental system of neighbourhoods, and we see that $M^+_2$ is a $A^+_2$-module of definition. This proves that the definition does not depend on the choice of $A^+$. The discussion of boundedness is the same as in (1).

Assume that $M_0/A_0$ is as described, then denote $A^+$ and $M^+$ the $p$-torsion-free quotients of $A_0$ and $M_0$ respectively, so $M^+$ is a $p$-complete $p$-torsion-free $A^+$-module, and we see that $M_0 \bigl[ \frac 1 p \bigr] = M^+ \bigl[ \frac 1 p \bigr]$ is an adic module of $A_0 \bigl[ \frac 1 p \bigr] = A^+ \bigl[ \frac 1 p \bigr]$.
\end{proof}

\begin{lemma}\label{lem: closed submodule}\mbox{}
\begin{enumerate}[label=(\arabic*)]
\item Let $M$ be a classically $p$-complete abelian group and $N \subseteq M$ be a closed subgroup, then $N$ and $M/N$ are also classically $p$-complete. If $M/N$ is of bounded $p^\infty$-torsion, then the induced topology on $N$ agrees with the $p$-adic topology, and the converse holds whenever $M$ has bounded $p^\infty$-torsion.

\item Let $M$ be an adic module over a Tate ring $A$ over $\mathbb Q_p$ and $N \subseteq M$ be a closed $A$-submodule. Then $N$ and $M/N$ are also adic over $A$ with the induced and quotient topology. Let $A^+ \subseteq A$ be a ring of definition and $M^+ \subseteq M$ be a module of definition, then $M^+ \cap N$ and $(M^++N)/N$ are modules of definition for $N$ and $M/N$.
\end{enumerate}
\end{lemma}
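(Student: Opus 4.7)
For part (1), the natural topologies on $N$ and $M/N$ are the induced and quotient topologies. The quotient topology on $M/N$ has basis of zero-neighborhoods $(p^n M + N)/N = p^n(M/N)$, which is exactly the $p$-adic topology on $M/N$. To show classical $p$-completeness of $M/N$, I would verify that $M/N \to \lim_n M/(p^n M + N)$ is an isomorphism: injectivity comes from the identity $\bigcap_n (p^n M + N) = N$, which follows from $N$ being closed (any $x$ in the intersection is the $p$-adic limit in $M$ of a sequence $b_n \in N$, forcing $x \in N$); surjectivity comes from lifting a compatible tower $(\bar x_n)$ to a Cauchy sequence $y_n \in M$ by inductively adjusting each lift by an element of $N$ so that $y_{n+1} - y_n \in p^n M$, and then taking the $p$-adic limit in the complete ring $M$. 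Completeness of $N$ with the induced topology is automatic since $N$ is closed in the complete Hausdorff space $M$.

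\textbf{Topology comparison on $N$.} Always $p^n N \subseteq p^n M \cap N$. For the forward direction, if $(M/N)[p^{n_0}] = (M/N)[p^\infty]$, then for $y = p^{n+n_0} x \in N$ one has $p^{n+n_0}\bar x = 0$, hence $\bar x \in (M/N)[p^\infty] = (M/N)[p^{n_0}]$, so $p^{n_0} x \in N$ and $y = p^n(p^{n_0} x) \in p^n N$. For the converse, assuming $M[p^{n_1}] = M[p^\infty]$, I would pick $l = n_1 + 1$ and let $n_0 \ge l$ satisfy $p^{n_0} M \cap N \subseteq p^l N$. Given $\bar x \in M/N$ with minimal $m$ such that $p^m\bar x = 0$ and $m \geq n_0$, writing $p^m x = p^l z$ with $z \in N$ gives $p^{m-l}x - z \in M[p^l] = M[p^{n_1}]$, whence $p^{m-l+n_1} x = p^{n_1} z \in N$; since $l > n_1$, this says $p^{m-1}\bar x = 0$, contradicting minimality. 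Iterating shows $(M/N)[p^\infty] = (M/N)[p^{n_0}]$.

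\textbf{Part (2) and main obstacle.} The key observation is that $M^+/N^+$ is $p$-torsion-free: if $p\bar x = 0$ in $M^+/N^+$, then $px \in N^+ \subseteq N$, and since $p$ is a unit in $A$ and $N$ is an $A$-submodule, $x = p^{-1}(px) \in N$; combined with $x \in M^+$ this forces $x \in N^+$. In particular $M^+/N^+$ trivially has bounded $p^\infty$-torsion, so applying part (1) to $N^+ \subseteq M^+$ yields that the induced topology on $N^+$ coincides with the $p$-adic topology, and that both $N^+$ and $M^+/N^+$ are classically $p$-adically complete. Routine checks confirm $N^+$ is open and bounded in $N$, so $N^+$ is a module of definition for $N$; similarly $(M^+ + N)/N \cong M^+/N^+$ is a module of definition for $M/N$. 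The quotient topology on $M/N$ has basis $(p^n M^+ + N)/N = p^n(M^+ + N)/N$, using $pN = N$ (as $p$ is invertible in $A$), which is exactly the adic topology with respect to $(M^+ + N)/N$. The main technical obstacle in the whole argument is the converse direction in (1) --- extracting a uniform torsion bound from the topological agreement requires the careful iterative descent above; once (1) is in hand, (2) becomes essentially formal, with the $p$-torsion-freeness of $M^+/N^+$ being the pleasant consequence of working over a Tate ring.
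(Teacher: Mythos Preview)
Your approach matches the paper's closely, with one real gap in Part~(1). You assert that ``completeness of $N$ with the induced topology is automatic since $N$ is closed in the complete Hausdorff space $M$.'' But the claim is that $N$ is \emph{classically $p$-complete}, i.e.\ $N \to \varprojlim N/p^n N$ is an isomorphism, which is completeness for the $p$-adic topology on $N$ --- in general strictly finer than the induced topology (basis $p^n N$ versus $p^n M \cap N$). Your argument shows only that a sequence $(y_n)$ in $N$ with $y_{n+1} - y_n \in p^n N$ has a limit $y \in N$ for the induced topology; you still need $y - y_n \in p^n N$, and $p^n N$ need not be closed in $M$. The paper fills this by writing $y_{n+1} - y_n = p^n z_n$ with $z_n \in N$ and observing that the tails $R_n := \sum_{m \ge n} p^{m-n} z_m$ lie in $N$ (as limits in $M$ of elements of the closed subgroup $N$), whence $y - y_n = p^n R_n \in p^n N$. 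This is precisely the lifting/series idea you already used for $M/N$; you just need to apply it to $N$ as well rather than invoking the coarser induced topology.

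Otherwise your argument is essentially the paper's. In Part~(2) your observation that $M^+/N^+$ is $p$-torsion-free (because $p$ is a unit in $A$ and $N$ is an $A$-submodule, so $px \in N \Rightarrow x \in N$) is a clean way to invoke Part~(1) --- more explicit than the paper's ``a simple calculation shows.'' The converse direction in (1) is argued exactly as in the paper (your $n_1, l, n_0$ correspond to the paper's $d, d+1, d'$); note that a single application of your contradiction argument already gives $(M/N)[p^\infty] = (M/N)[p^{n_0-1}]$, so no iteration is needed.
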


\begin{proof}
For (1), it is clear that $\bigcap p^n N =0$. For $x \in M$, if the image of $x$ lies in $\bigcap p^n(M/N)$, then for each $n \in \mathbb Z_+$, the intersection $(x+p^n M) \cap N$ is non-empty. Since $N$ is closed, we see that $x \in N$, and $\bigcap p^n(M/N) =0$.

For any sequence $(x_n)_{n \ge 0}$ in $N$, denote
\[ S_n = \sum_{m<n} p^m x_m, \quad R_n = \sum_{m \ge n} p^{m-n} x_m, \]
so we have
\[ R_0 = S_n + p^nR_n. \]
Since $N$ is closed, we see that all $R_n$ lies in $N$, and $S_n \to R_0$. Therefore $N$ is $p$-adically complete. The completeness of $M/N$ is straightforward, and we omit it.

If $M/N$ is of bounded $p^\infty$-torsion, say $(M/N)[p^\infty] = (M/N)[p^d]$, then we see that for any $n$ we have $p^{n+d}M \cap N \subseteq p^n N$, so the induced topology agrees with the adic topology. Assume that $M$ has bounded $p^\infty$-torsion and the induced topology on $M/N$ agrees with the $p$-adic topology. Say $M[p^\infty] = M[p^d]$. Take $d' \in \mathbb Z_+, d' \ge d+1$ such that $p^{d'} M \cap N \subseteq p^{d+1} N$. For $\overline x \in (M/N)[p^{d'}]$, take a lifting $x \in M$ of $\overline x$, and we see that $p^{d'}x \in p^{d'}M \cap N \subseteq p^{d+1} N$. Write $p^{d'} x = p^{d+1} y$ for $y \in N$, and we see that $p^{d+1}(p^{d'-d-1}x-y) =0$. Since $M[p^\infty] = M[p^d]$, we get $p^d(p^{d'-d-1}x-y) =0$, and $p^{d'-1} x = p^d y \in N$. Therefore $\overline x \in (M/N)[p^{d'-1}]$, and
\[ (M/N)[p^{d'}] = (M/N)[p^{d'-1}]. \]
It follows that $(M/N)[p^\infty] = (M/N)[p^{d'-1}]$, so $M/N$ is of bounded $p^\infty$-torsion, as desired.

For (2), take $A^+$ and $M^+$ as in the description. Denote $N^+ = M^+ \cap N$ and $(M/N)^+ = (M^++N)/N$. A simple calculation shows that a fundamental neighbourhood system for $N^+$ and $(M/N)^+$ are given by $\{p^nN^+\}$ and $\{ p^n(M/N)^+ \}$, and we are done.
\end{proof}

\begin{lemma}[Maps between adic modules are continuous if and only if bounded]\label{lem: continuous equiv to bounded}
Let $A$ be a ring with bounded $p^\infty$-torsion, and $M,N$ be adic modules over $A\bigl[\frac 1 p \bigr]$. Then, for any $A$-module map $f: M \to N$, $f$ is continuous if and only if there exists modules of definition $M_0 \subseteq M, N_0 \subseteq N$ with $f(M_0) \subseteq N_0$.
\end{lemma}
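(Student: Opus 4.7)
The plan is to handle the two directions separately, with the harder one being essentially a change of module of definition.

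For the easy direction, assume that we are given a ring of definition $A^+\subseteq A$, an $A^+$-module of definition $M_0\subseteq M$, and an $A^+$-module of definition $N_0\subseteq N$, with $f(M_0)\subseteq N_0$. Since $f$ is $A$-linear (and hence $A\bigl[\tfrac{1}{p}\bigr]$-linear, as $p$ is invertible in $A\bigl[\tfrac 1 p\bigr]$), for every $n\ge 0$ we have $f(p^n M_0)=p^n f(M_0)\subseteq p^n N_0$. The collections $\{p^n M_0\}$ and $\{p^n N_0\}$ form fundamental systems of neighbourhoods of $0$ in $M$ and $N$ by Proposition-Definition \ref{prop-defi: adic complete}(3), so $f$ is continuous at $0$, and hence continuous everywhere since it is a group homomorphism.

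For the forward direction, suppose $f$ is continuous. The plan is to pick arbitrary rings and modules of definition, and then shrink the one on the source side. Fix any ring of definition $A^+\subseteq A$, an $A^+$-module of definition $M_0'\subseteq M$, and an $A^+$-module of definition $N_0\subseteq N$. Then $f^{-1}(N_0)$ is an open neighbourhood of $0$ in $M$, so since $\{p^n M_0'\}_{n\ge 0}$ is a fundamental system of neighbourhoods of $0$, there exists $n\ge 0$ with $p^n M_0'\subseteq f^{-1}(N_0)$, i.e.\ $f(p^n M_0')\subseteq N_0$. Set $M_0:=p^n M_0'$. Then $M_0$ is an $A^+$-submodule of $M$ which is open (since it contains $p^n M_0'$, hence is a basic neighbourhood of $0$) and bounded (since $M_0\subseteq M_0'$ and $M_0'$ is bounded), so by Proposition-Definition \ref{prop-defi: adic complete}(3) it is an $A^+$-module of definition. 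By construction, $f(M_0)\subseteq N_0$, as required.

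No single step constitutes a real obstacle here; the only thing to watch is the interpretation of \emph{module of definition}, namely that $p^n M_0'$ is itself a module of definition for every $n\ge 0$, which follows from the characterisation of modules of definition as open bounded $A^+$-submodules. One should also record that both $M_0$ and $N_0$ are chosen over the same ring of definition $A^+$, which is harmless since two modules of definition over different $A_1^+,A_2^+$ can always be replaced by modules of definition over the common larger ring of definition $A_1^+\cdot A_2^+$ as in Proposition-Definition \ref{prop-defi: adic complete}(2).
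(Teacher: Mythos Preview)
Your proof is correct and follows essentially the same approach as the paper's: for the backward direction you both use $f(p^n M_0)\subseteq p^n N_0$ and the fact that $\{p^n M_0\}$, $\{p^n N_0\}$ are neighbourhood bases; for the forward direction you both take the preimage $f^{-1}(N_0)$ and find a module of definition inside it. Your version is slightly more explicit in verifying that $p^n M_0'$ is itself a module of definition, while the paper simply invokes that modules of definition form a neighbourhood basis at $0$.
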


\begin{proof}
Assume first that $f$ is continuous. Let $N_0$ be any module of definition of $N$, and $f^{-1}(N_0)$ is open in $M$. By definition, modules of definition of $M$ form a neighbourhood basis at $0$, hence we may take a module of definition $M_0 \subseteq f^{-1}(N_0).$

Assume that there exists modules of definition $M_0 \subseteq M, N_0 \subseteq N$ with $f(M_0) \subseteq N_0$. It follows that $f^{-1}(p^n N_0) \supseteq p^n M_0$. As $f$ is linear, and $\left\{ p^n N_0 \right\}$ form a neighbourhood basis at $0$, we see that $f$ is continuous.
\end{proof}

We also have the Banach open mapping theorem.

\begin{theo}[Banach open mapping theorem]\label{theo: banach open mapping}
Let $A$ be a ring with bounded $p^\infty$-torsion, and $M,N$ be adic modules over $A \bigl[ \frac 1 p \bigr]$. If a homomorphism $f: M \to N$ is continuous and surjective, then it is an open mapping.
\end{theo}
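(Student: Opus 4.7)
The plan is to adapt the classical Banach open mapping theorem to this non-archimedean setting by combining the Baire category theorem, applied to a $p$-adically complete module of definition, with a telescoping successive-approximation argument that works cleanly in the ultrametric situation.

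First, by translation invariance and the description of neighbourhoods of $0$ via modules of definition in Proposition-Definition \ref{prop-defi: adic complete}, it suffices to prove that for some (equivalently, every) module of definition $M_0 \subseteq M$, the image $f(M_0)$ is a neighbourhood of $0$ in $N$. By Lemma \ref{lem: continuous equiv to bounded}, continuity supplies us with a module of definition $N_0 \subseteq N$ satisfying $f(M_0) \subseteq N_0$, and it is therefore enough to show that $f(M_0) \supseteq p^L N_0$ for some integer $L \geq 0$.

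The next step is to extract an open subgroup from the closure of $f(M_0)$ via Baire category. By surjectivity we have $N_0 = \bigcup_{n \geq 0}\bigl(N_0 \cap p^{-n}f(M_0)\bigr)$. Since $N_0$ is $p$-adically complete and separated, the ultrametric $d(x,y) = \inf\{2^{-n} : x-y \in p^n N_0\}$ makes it a complete metric space, hence a Baire space. Therefore some closed set $H := \overline{N_0 \cap p^{-n_0}f(M_0)}^{\,N_0}$ has non-empty interior in $N_0$. As the closure of a subgroup in a topological group is again a subgroup, $H$ is a closed subgroup of $N_0$ with non-empty interior, hence open, and so contains $p^{L_0}N_0$ for some $L_0 \geq 0$. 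Since the induced topology on $N_0$ from $N$ coincides with its $p$-adic topology, this yields $p^{L_0+n_0} N_0 \subseteq \overline{f(M_0)}$, the closure being taken in $N$.

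Finally I upgrade this to $p^L N_0 \subseteq f(M_0)$ (with $L = L_0 + n_0$) by the standard telescoping argument. Given $y \in p^L N_0$, I inductively construct $x_n \in p^{n-1} M_0$ such that $y - \sum_{i=1}^n f(x_i) \in p^{L+n} N_0$: at step $n$, the current residue lies in $p^{L+n-1}N_0 \subseteq \overline{f(p^{n-1}M_0)}$ (by multiplying the inclusion above by $p^{n-1}$), and approximating it modulo the open subgroup $p^{L+n}N_0$ produces the next $x_n$. Because $M_0$ is $p$-adically complete and $x_n \in p^{n-1}M_0$, the series $x = \sum_{i \geq 1} x_i$ converges in $M_0$; continuity of $f$ combined with the estimates forces $f(x) = y$, giving $y \in f(M_0)$.

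The main technical point is verifying that $N_0$ supports a complete metric compatible with its topology so that Baire applies; this is immediate from $N_0$ being $p$-adically separated and complete. The remaining non-archimedean telescoping is in fact smoother than its classical counterpart, because multiplication by $p$ already provides geometric decay of the correction terms and no fine tuning of the approximation rate is required.
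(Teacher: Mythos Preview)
Your proof is correct and follows essentially the same strategy as the paper's: Baire category to show the closure of $f(M_0)$ is open, followed by a telescoping successive-approximation argument to remove the closure. The paper simply outsources these two steps to \cite[\href{https://stacks.math.columbia.edu/tag/0CQV}{Tag 0CQV}]{stacks-project} and \cite[\href{https://stacks.math.columbia.edu/tag/0CQW}{Tag 0CQW}]{stacks-project} (applying Baire directly to $N$ rather than to $N_0$), whereas you spell out the details explicitly.
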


\begin{proof}
Pick a module of definition $M_0 \subseteq M$. Since $f$ is surjective, we see that
\[ N = \bigcup_{n \in \mathbb Z} \overline{p^n f(M_0)}. \]
Therefore, by \cite[\href{https://stacks.math.columbia.edu/tag/0CQV}{Tag 0CQV}]{stacks-project}, we see that $\overline{p^n f(M_0)}$ is open for \emph{some} $n \in \mathbb Z$. Since $p: N \to N$ is a homeomorphism, we see that for \emph{any} $n \in \mathbb Z$, the group
\[ \overline{p^n f(M_0)} = \overline{f(p^nM_0)} \]
is open in $N$. Since $p^n M_0$ form a fundamental system of neighbourhoods of $0$, we see that for any open subgroup $M' \subseteq M$, the group $\overline{f(M')}$ is open in $N$. Now the result follows from \cite[\href{https://stacks.math.columbia.edu/tag/0CQW}{Tag 0CQW}]{stacks-project}.
\end{proof}

\begin{cor}[Uniform boundedness theorem]\label{cor: uniform boundedness}
Let $M,N$ be adic modules over $\mathbb Q_p$ and $\left\{ T_i: M \to N \right\}_{i \in I}$ be a family of continuous homomorphisms. Assume that for each $m \in M$, the set $\left\{ T_i(m) \right\}$ is bounded in $N$ (i.e. contained in a module of of definition of $N$), then $\left\{ T_i \right\}$ is uniformly bounded (i.e. there exists modules of definitions $M_0 \subseteq M$ and $N_0 \subseteq N$ with $T_i(M_0) \subseteq N_0$ for all $i \in I$).
\end{cor}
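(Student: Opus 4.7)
The plan is to follow the classical Banach--Steinhaus proof via Baire category. Fix modules of definition $M_0 \subseteq M$ and $N_0 \subseteq N$. For each integer $n \ge 0$, define
\[ F_n = \bigcap_{i \in I} T_i^{-1}(p^{-n} N_0). \]
Each $T_i$ is continuous and each $p^{-n} N_0$ is an open (hence closed) subgroup of $N$, so every $F_n$ is a closed subgroup of $M$. The pointwise boundedness hypothesis says that for each $m \in M$ there exists $n$ with $T_i(m) \in p^{-n} N_0$ for all $i$, so $M = \bigcup_{n \ge 0} F_n$.

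Next, I argue that $M$ is a Baire space. Since $M_0$ is $p$-adically complete, it is a complete metrisable topological group (with metric $d(x,y) = 2^{-v}$, where $v$ is the largest integer such that $x-y \in p^v M_0$), hence a Baire space. The cover $M = \bigcup_{k \ge 0} p^{-k} M_0$ exhibits $M$ as an open cover by subspaces each homeomorphic to $M_0$, and a standard point-set argument (using that open subspaces of Baire spaces are Baire, and that meagre sets pull back to meagre sets under open inclusions between topological groups) shows that any topological space admitting an open cover by Baire subspaces is itself Baire. Therefore some $F_{n_0}$ has nonempty interior, so there exist $m \in M$ and $k \in \mathbb Z$ with $m + p^k M_0 \subseteq F_{n_0}$, i.e.\ $T_i(m + p^k M_0) \subseteq p^{-n_0} N_0$ for every $i \in I$.

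Applying the pointwise boundedness hypothesis to the single element $m$ yields an integer $n_1$ with $T_i(m) \in p^{-n_1} N_0$ for all $i$. Setting $n^* = \max(n_0, n_1)$ and subtracting the two inclusions (both containments land in the subgroup $p^{-n^*} N_0$) gives $T_i(p^k M_0) \subseteq p^{-n^*} N_0$ for every $i$, and then $\mathbb Q_p$-linearity of $T_i$ promotes this to
\[ T_i(M_0) \subseteq p^{-k-n^*} N_0 \quad \text{for all } i \in I. \]
Taking $N_0' := p^{-k-n^*} N_0$, which is still a module of definition of $N$, the pair $(M_0, N_0')$ witnesses the desired uniform boundedness.

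The only nontrivial point in the argument is the Baire property of $M$: $M$ is not itself a complete metric space, only locally so via the open cover $\{p^{-k} M_0\}$, so one must invoke the stability of the Baire property under open covers. Once this is granted, the rest is a direct transcription of the classical Banach--Steinhaus argument into the language of modules of definition.
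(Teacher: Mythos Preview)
Your proof is correct, but takes a different route from the paper. The paper does not invoke Baire category directly; instead it defines
\[ M'_0 = M_0 \cap \bigcap_{i \in I} T_i^{-1}(N_0), \]
observes that this is a closed (hence $p$-adically complete) subgroup of $M_0$ with $M'_0\bigl[\tfrac{1}{p}\bigr] = M$, and then applies the just-proved Banach open mapping theorem to the bijection $\widehat{M'_0}\bigl[\tfrac{1}{p}\bigr] \to M$ (where $\widehat{M'_0}$ denotes $M'_0$ with its own $p$-adic topology) to conclude that $M'_0$ is open in $M$, hence a module of definition. Your argument is the classical Banach--Steinhaus proof via Baire, applied directly; the paper instead packages the Baire step inside the open mapping theorem and then deduces uniform boundedness from it. Both are standard, and at the foundational level both rest on Baire, but the paper's route is shorter given that the open mapping theorem is already available. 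One minor simplification you could make: rather than arguing that $M$ itself is Baire via the open cover $\{p^{-k}M_0\}$, you can run Baire directly on $M_0$ (which is a complete metric space) using the closed subsets $F_n \cap M_0$, since $M_0 = \bigcup_n (F_n \cap M_0)$ already by pointwise boundedness.
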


\begin{proof}
Fix modules of definitions $M_0 \subseteq M$ and $N_0 \subseteq N$. Take
\[ M'_0 = M_0 \cap \bigcap_{i \in I} T_i^{-1}(N_0), \]
then $M'_0$ is closed in $M_0$. Also, by our condition, for each $m$, there exists $N \in \mathbb Z$ with $p^N m \in M_0$ and $p^N T_i(m) \in N_0$ for each $i \in I$, so $M'_0 \bigl[ \frac 1 p \bigr] = M$. By Lemma \ref{lem: closed submodule}, $M'_0$ is $p$-adically complete. Denote $M'_0$ equipped with the $p$-adic topology by $\widehat{M'_0}$, then the map $\widehat{M'_0} \bigl[ \frac 1 p \bigr] \to M_0 \bigl[ \frac 1 p \bigr]$ is an isomorphism, hence is a homeomorphism by Theorem \ref{theo: banach open mapping}. Therefore, $M'_0$ as the image of $\widehat{M'_0}$ in $M$, is open, and we conclude that $M'_0$ is a module of definition of $M$. It is clear that $T_i(M'_0) \subseteq N_0$ for all $i \in I$, and we are done.
\end{proof}

We now turn to investigate completed tensor product and flatness.

\begin{prop-defi}[{cf. \cite[Subsection 1.1]{Bhatt_2022}}]\label{prop-defi: complete flatness}
Let $A$ be a simplicial ring and $I \subseteq \pi_0(A)$ be a finitely generated ideal. Let $M$ be an $A$-module, and $\{x_1,\ldots,x_d\}$ be a generating set of $I$. Fix $-\infty \le a \le b \le \infty$ (we do not allow $a=b=\infty$ or $a=b=-\infty$). Then, the following conditions are equivalent:
\begin{enumerate}[label=(\arabic*)]
\item for any $N \in D^{\tors{I^\infty}}(A)$, if $N \in D^{\le 0}(A)$ then $M \otimes_A^\bL N \in D^{\le b}(A)$; if $N \in D^{\ge 0}(A)$ then $M \otimes_A^\bL N \in D^{\ge a}(A)$;
\item for any $N \in D^{\tors I}(A)$, if $N \in D^{\le 0}(A)$ then $M \otimes_A^\bL N \in D^{\le b}(A)$; if $N \in D^{\ge 0}(A)$ then $M \otimes_A^\bL N \in D^{\ge a}(A)$;
\item $M \otimes_{\mathbb Z[x_1,\ldots,x_n]}^\bL \mathbb Z$ is of $\Tor$-amplitude $[a,b]$ over $A \otimes_{\mathbb Z[x_1,\ldots,x_n]}^\bL \mathbb Z$;
\item the derived completion $\widehat M \in D^{\le b}(A)$, and $M \otimes_A^\bL \pi_0(A)/I$ is of $\Tor$-amplitude $[a,b]$ over $\pi_0(A)/I$.
\end{enumerate}
If any of the above conditions are satisfied, we say that $M$ is $I$-completely of $\Tor$-amplitude $[a,b]$ over $A$. If $a=b=0$, we say that $M$ is $I$-completely flat over $A$.

If $M$ is $I$-completely flat over $A$, then the following conditions are equivalent:
\begin{enumerate}[label=(\alph*)]
\item the map $D^{\tors{I^\infty}}(A) \to D^{\tors{I^\infty}}(A)$, $N \mapsto M \otimes_A^\bL N$ is conservative;
\item the map $D^{\tors I}(A) \to D^{\tors I}(A)$, $N \mapsto M \otimes_A^\bL N$ is conservative;
\item $M \otimes_{\mathbb Z[x_1,\ldots,x_n]}^\bL \mathbb Z$ is faithfully flat over $A \otimes_{\mathbb Z[x_1,\ldots,x_n]}^\bL \mathbb Z$;
\item $M \otimes_A^\bL \pi_0(A)/I$ is faithfully flat over $\pi_0(A)/I$.
\end{enumerate}
If any of the above conditions are satisfied, we say that $M$ is $I$-completely faithfully flat over $A$.

It is clear that the above conditions only depends on $\widehat M$, so we will usually talk about $I$-complete $\Tor$-amplitude only for $I$-complete modules.
\end{prop-defi}

\begin{proof}
The implications (1) $\Rightarrow$ (2) $\Rightarrow$ (3) is evident. Assume that (3) holds. Observe that the module
\[ \mathbb Z[x_1,\ldots,x_n] / (x_1^r,\ldots,x_n^r) \]
admits a finite filtration with graded pieces are isomorphic to $\mathbb Z = \mathbb Z[x_1,\ldots,x_n]/(x_1,\ldots,x_n)$. Therefore,
\[ A \otimes_{\mathbb Z[x_1,\ldots,x_n]}^\bL \mathbb Z[x_1,\ldots,x_n] / (x_1^r,\ldots,x_n^r) \]
admits a finite filtration whose graded pieces are isomorphic to $A \otimes_{\mathbb Z[x_1,\ldots,x_n]}^\bL \mathbb Z$. It follows that the modules
\[ M \otimes_{\mathbb Z[x_1,\ldots,x_n]}^\bL \mathbb Z[x_1,\ldots,x_n] / (x_1^r,\ldots,x_n^r) \in D^{\le b}(A). \]
Take the limit, note that the transition maps are surjective on $\hol^b$, and we see that $\widehat M \in D^{\le b}(A)$. That $M \otimes_A^\bL \pi_0(A)/I$ is of $\Tor$-amplitude $[a,b]$ is evident.

Now we prove (4) $\Rightarrow$ (1). Note that $M \to \widehat M$ is $I$-local (i.e. is a finite colimit of modules where at least one of $x_i$ is invertible), so $M \otimes_A^\bL N \to \widehat M \otimes_A^\bL N$ is an isomorphism. Therefore we may replace $M$ by $\widehat M$. The part `if $N \in D^{\le 0}(A)$' now becomes evident. Also, for $N \in D^{\ge 0} (A)$, we may write $N = \colim_i \tau^{\le i} N$, and all cofibres $\tau^{\le (i-1)} N \to \tau^{\le i} N$ are concentrated on degree $i$. Therefore, after a shift we may assume that $N$ is concentrated on degree 0. Now $N$ is a module over $\pi_0(A)$ which is $I^\infty$-torsion, so we may write $N = \colim_i N[I^i]$. Each cofibre $N[I^{i-1}] \to N[I^i]$ are concentrated on degree 0, and are modules over $\pi_0(A)/I$. Therefore, we may assume that $N \in D(\pi_0(A)/I)^\heartsuit$, and the conclusion follows from the assumption that $M \otimes_A^\bL \pi_0(A)/I$ is of $\Tor$-amplitude $[a,b]$ over $\pi_0(A)/I$.

Now assume that $M$ is $I$-completely flat over $A$, we prove the faithfully flat part. The implications (a) $\Rightarrow$ (b) $\Rightarrow$ (c) is evident, (c) $\Leftrightarrow$ (d) is a direct consequence of Proposition \ref{prop: flatness as t-exactness}, so we only need to show (c) $\Rightarrow$ (a).

Taking the cofibre, and we only need to prove that for $N \in D^{\tors{I^\infty}} (A)$, we have $M \otimes_A^\bL N =0 \Rightarrow N=0$. Consider the Koszul complexes
\[ N_i = \Kos(N; x_1^i,\ldots,x_n^i) [-n]. \]
View $N_i$ as `the annihilator of $I^i$ in $N$', and we have transition maps
\[ N_1 \to N_2 \to \cdots \to N. \]
Denote $P = \mathbb Z[x_1,\ldots,x_n]$, then the cofibre of $\colim N_i \to N$ is isomorphic to the complex
\[ N \otimes_P^\bL \left[ P[x_1^{-1}] \oplus \cdots \oplus P[x_n^{-1}] \to \bigoplus_{i<j} P[x_i^{-1}x_j^{-1}] \to \cdots \to P[x_1^{-1}\cdots x_n^{-1}] \right]. \]
As $N$ is $I^\infty$-torsion, we see that the complex above is zero, hence $N \cong \colim N_i$. Therefore, we only need to deal with the case $N=N_i$, hence assuming that $N$ is a module over
\[ A \otimes_P P/(x_1^i,\ldots,x_n^i). \]
As above, we may take a filtration
\[ 0 = Q_0 \subseteq Q_1 \subseteq \cdots \subseteq Q_s = P/(x_1^i,\ldots,x_n^i), \]
such that all $Q_j/Q_{j-1}$ are isomorphic to $\mathbb Z = P / (x_1,\ldots,x_n)$. Now, $N \otimes_P^\bL (Q_j/Q_{j-1})$ are modules over $A \otimes_P^\bL \mathbb Z$, and $M \otimes_A^\bL (N \otimes_P^\bL (Q_j/Q_{j-1})) =0$. By assumption, we conclude that $N \otimes_P^\bL (Q_j/Q_{j-1}) =0$. Induction now shows that $N \otimes_P^\bL Q_j =0,$ so $N \cong N \otimes_P^\bL Q_s =0$, as desired.
\end{proof}

\begin{rmk}
In \cite[Subsection 1.1]{Bhatt_2022}, Bhatt and Scholze directly define $M \in D(A)$ as $I$-completely flat over $A$ if and only if $M \otimes_A^\bL \pi_0(A)/I$ is flat over $\pi_0(A)/I$. However, when $A$ is not truncated, omitting the connectiveness assumption may cause some undesirable trouble. For example (this is the example in the remark before \cite[Corollary 5.6.6.5]{lurie2018spectral}), take the simplicial ring $A = \Sym^*_{\mathbb Q} (\mathbb Q[2])$ with $I = 0 \subseteq \pi_0(A)$. Since we are working over $\dQ$, the underlying $\dE_\infty$-ring of $A$ may be identified with the differential graded algebra $\dQ[x]$ with $x$ on (homological) degree 2. Define the $A$-module $M = \mathbb Q[x,x^{-1}]$ (as a differential graded module over $A$). By \cite[Proposition 7.2.1.19]{lurie2017higher}, we see that $\pi_*(M \otimes_A^\bL \pi_0(A)) \cong \mathbb Q[x,x^{-1}] \otimes_{\mathbb Q[x]} \mathbb Q =0$. However, $M$ is not connective, so it cannot be flat over $A$.

The good news is, if $A$ is a $n$-truncated simplicial ring for some $n \in \mathbb Z_+$, then their definition causes no trouble. By the remark following Definition \ref{defi: Tor amplitude}, $M \otimes_{\mathbb Z[x_1,\ldots,x_n]}^\bL \mathbb Z$ is of $\Tor$-amplitude $[a,b]$ over $A \otimes_{\mathbb Z[x_1,\ldots,x_n]}^\bL \mathbb Z$ if and only if $M \otimes_A^\bL \pi_0(A)/I$ is of $\Tor$-amplitude $[a,b]$ over $\pi_0(A)/I$, so we may use (3) to see that the two definitions agree. In particular, if $A$ is a classical commutative ring, then $M$ is of $I$-complete $\Tor$-amplitude $[a,b]$ over $A$ if and only if $M \otimes_A^\bL A/I$ is of $I$-complete $\Tor$-amplitude over $A$.
\end{rmk}

\begin{prop}[{\cite[Lemma 4.7]{bhatt2019topologicalhochschildhomologyintegral}}]\label{prop: p-completely flat}
Fix a (classical) ring $A$ with bounded $p^\infty$-torsion.

\begin{enumerate}[label=(\arabic*)]
\item If $M \in D \left( A \right)$ is derived $p$-complete and $p$-completely flat, then $M$ is a classically $p$-complete $A$-module concentrated in degree 0, with bounded $p^\infty$-torsion, such that $M/p^nM$ is flat over $A/p^nA$ for all $n \ge 1.$ Moreover, for all $n \ge 1,$ the map
\[ M \otimes_A (A \left[ p^n \right]) \to M \left[ p^n \right] \]
is an isomorphism.
\item Conversely, if $N$ is a classically $p$-complete $A$-module with bounded $p^\infty$-torsion such that $N/p^nN$ is flat over $A/p^nA$ for all $n \ge 1,$ then $N \in D \left( A \right)$ is $p$-completely flat.
\end{enumerate}
\end{prop}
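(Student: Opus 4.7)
The plan is to prove both parts by reducing to mod-$p^k$ computations, leveraging flatness at each level and derived completeness.

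For Part (1), I would first extract from Proposition-Definition \ref{prop-defi: complete flatness} that $M = \widehat M \in D^{\le 0}(A)$ is connective, and $M \otimes_A^\bL A/pA$ is concentrated in degree $0$ and flat over $A/pA$. Then, by induction on $k$, show that $M \otimes_A^\bL A/p^kA$ is concentrated in degree $0$ and flat over $A/p^kA$. The inductive step uses the short exact sequence $0 \to p^kA/p^{k+1}A \to A/p^{k+1}A \to A/p^kA \to 0$, observing that $p^kA/p^{k+1}A$ is killed by $p$, hence an $A/pA$-module. Tensoring with $M$ over $A$ yields a triangle whose outer terms lie in $D^{[0,0]}$: the left term is $(M \otimes_A^\bL A/pA) \otimes_{A/pA}^\bL (p^kA/p^{k+1}A)$, in degree $0$ by flatness of $M/p$ over $A/pA$; the right term is in degree $0$ by induction. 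The middle term is then forced to be in $D^{[0,0]}$ by extension. Flatness over $A/p^{k+1}A$ follows from a parallel filtration argument on $A/p^{k+1}A$-modules by powers of $p$.

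Next, identify $M = \widehat M$ with $\lim_k (M \otimes_A^\bL A/p^kA)$. By derived completeness, $M = \lim_k \cofib(M \xrightarrow{p^k} M)$. The four-term exact sequence $0 \to A[p^k] \to A \xrightarrow{p^k} A \to A/p^kA \to 0$ produces, after tensoring with $M$ over $A$, a fibre sequence
\[ M \otimes_A^\bL A[p^k][1] \to \cofib(M \xrightarrow{p^k} M) \to M \otimes_A^\bL A/p^kA, \]
comparing the two pro-systems. The discrepancy tower $\{M \otimes_A^\bL A[p^k][1]\}$ is pro-zero: the transition maps $A[p^{k+1}] \to A[p^k]$ are multiplication by $p$, and since $A[p^\infty] = A[p^{n_0}]$ is bounded, the $n_0$-fold composition of transition maps vanishes. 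Hence $M \cong \lim_k (M \otimes_A^\bL A/p^kA)$. The transitions in this system are surjective (the quotient $A/p^{k+1}A \to A/p^kA$ is surjective, and tensoring with a flat module preserves surjectivity), so the Mittag--Leffler condition gives $\lim^1 = 0$, forcing $M \in D^{[0,0]}$ and $M = \lim_k M/p^kM$ classically, establishing classical $p$-completeness.

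For the isomorphism $M \otimes_A A[p^n] \to M[p^n]$ and bounded $p^\infty$-torsion: for any $n \ge 1$ and $k \ge n + n_0$, the inclusion $A[p^n] \hookrightarrow A/p^kA$ holds (since $A[p^n] \cap p^kA = 0$ by a computation using bounded torsion), and there is a decomposition $(A/p^kA)[p^n] = A[p^n] \oplus p^{k-n}A/p^kA$. Flatness of $N_k := M/p^kM$ over $A/p^kA$ yields $N_k[p^n] \cong (M \otimes_A A[p^n]) \oplus (p^{k-n}A/p^kA \otimes_{A/p^kA} N_k)$; taking the classical limit over $k$, the first summand stabilises to $M \otimes_A A[p^n]$ while the second is pro-zero (a direct verification on the tower $\{p^{k-n}A/p^kA\}$), giving $M[p^n] = M \otimes_A A[p^n]$. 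For $n \ge n_0$, $A[p^n] = A[p^{n_0}]$, so $M[p^\infty] = M[p^{n_0}]$.

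For Part (2), $N$ is derived $p$-complete by Proposition \ref{prop: derived completion is classical when bounded}. To show $N \otimes_A^\bL A/pA$ is flat over $A/pA$, it suffices to establish that $N \otimes_A^\bL A/p^kA$ is concentrated in degree $0$ (and equal to $N/p^kN$) for each $k$, because then the factorisation $N \otimes_A^\bL A/pA \cong (N \otimes_A^\bL A/p^kA) \otimes_{A/p^kA}^\bL A/pA$ together with flatness of $N/p^kN$ over $A/p^kA$ gives the result. This in turn is established by a dual tower argument: using derived completeness of $N$ and the pro-isomorphism between $\{\cofib(N \xrightarrow{p^j} N)\}$ and $\{N \otimes_A^\bL A/p^jA\}$ (same pro-zero argument on the bounded $A[p^\infty]$-contribution as in Part (1)), together with Mittag--Leffler applied to the tower $\{N/p^jN\}$ (whose transitions are surjective), one identifies $N \otimes_A^\bL A/p^kA$ with the classical module $N/p^kN$. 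The main obstacle throughout is the pro-zero argument controlling the $A[p^\infty]$-discrepancy, which is what makes the boundedness of $A[p^\infty]$ and $N[p^\infty]$ essential.
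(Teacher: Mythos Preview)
The paper does not supply its own proof of this proposition; it simply cites \cite[Lemma 4.7]{bhatt2019topologicalhochschildhomologyintegral}. So I evaluate your proposal on its own merits.

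Your Part (1) is essentially correct. The induction on $k$ showing $M\otimes_A^\bL A/p^kA$ is discrete and flat is fine; the pro-zero argument identifying $M$ with $\lim_k M/p^kM$ is correct (and is essentially Proposition~\ref{prop: derived completion is classical when bounded}). The direct-sum decomposition $(A/p^kA)[p^n]=A[p^n]\oplus p^{k-n}A/p^kA$ for $k\ge n+n_0$ is valid, and one checks the transition maps respect it, so the torsion computation goes through.

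Part (2) has a genuine gap. The pro-isomorphism between $\{\cofib(N\xrightarrow{p^j}N)\}$ and $\{N\otimes_A^\bL A/p^jA\}$ only identifies their inverse \emph{limits}; it says nothing about $N\otimes_A^\bL A/p^kA$ at a fixed level. But that is exactly what you need (already for $k=1$): a pro-system can be pro-isomorphic to one of discrete objects without any individual term being discrete. Your sentence ``one identifies $N\otimes_A^\bL A/p^kA$ with the classical module $N/p^kN$'' is therefore unjustified.

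One way to close the gap: pick $n_0$ with $A[p^\infty]=A[p^{n_0}]$ and $N[p^\infty]=N[p^{n_0}]$. Tensoring the exact sequence
\[
0\to A[p^{n_0}]\to A/p^{n_0}A\xrightarrow{p^{n_0}}p^{n_0}A/p^{2n_0}A\to 0
\]
with the flat $A/p^{2n_0}A$-module $N/p^{2n_0}N$ shows directly that the natural map $N\otimes_A A[p^{n_0}]\to N[p^{n_0}]$ is an isomorphism (both sides sit inside $N/p^{n_0}N$ as the kernel of $p^{n_0}$). By flatness of $N/p^{n_0}N$ over $A/p^{n_0}A$ this restricts to $N\otimes_A A[p]\cong N[p]$. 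The long exact sequence of the triangle $N\otimes_A^\bL A[p][1]\to\cofib(N\xrightarrow{p}N)\to N\otimes_A^\bL A/pA$ then gives $\hol^{-1}(N\otimes_A^\bL A/p)=\coker(N\otimes_A A[p]\to N[p])=0$ and $\hol^{-2}=0$; the higher vanishing follows by an easy bootstrap, using that $A[p]$ is an $A/p$-module and $N/pN$ is flat over $A/p$.
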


\begin{prop}[{\cite[Lemma 3.7 (2)]{Bhatt_2022}}]\label{prop: (p,I)-completely flat}
Let $\left( A,I \right)$ be a bounded prism and $M \in D \left( A \right)$ be derived $\left( p,I \right)$-complete and $\left( p,I \right)$-completely flat. Then, $M$ is a classically $\left( p,I \right)$-complete $A$-module concentrated on degree 0, and is $I$-torsion free.
\end{prop}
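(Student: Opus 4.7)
The plan is to reduce modulo the distinguished generator $d$ of $I$ (which is a non-zero-divisor since $I$ is invertible), apply Proposition \ref{prop: p-completely flat} to the quotient $\overline M := M \otimes_A^\bL \overline A$ with $\overline A = A/I$, and then lift the conclusions back to $M$ via the fibre sequence $M \xrightarrow d M \to \overline M$ together with derived Nakayama. Concretely, three things must be verified: $M$ is concentrated on degree $0$, $M$ is $I$-torsion-free, and $M$ is classically $(p,I)$-complete.

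For the first two, observe that $\overline M$ is $p$-completely flat over $\overline A$: this follows from $\overline M \otimes_{\overline A}^\bL \overline A/p \cong M \otimes_A^\bL A/(p,I)$ being flat over $A/(p,I) = \overline A/p$ by assumption. Moreover $\overline M$ is derived $p$-complete, since $M$ is derived $(p,I)$-complete. Proposition \ref{prop: p-completely flat} then gives that $\overline M$ is concentrated on degree $0$, classically $p$-complete and of bounded $p^\infty$-torsion. The long exact sequence of $M \xrightarrow d M \to \overline M$ shows that $d$ acts as an isomorphism on $\hol^i(M)$ for every $i<0$, and injectively on $\hol^0(M)$. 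By Lemma \ref{lem: t-structure on complete modules}, each $\hol^i(M)$ is derived $(p,I)$-complete, hence derived $d$-complete. Derived Nakayama in the principal case --- a classical derived $d$-complete module $N$ with $dN=N$ must vanish, since the tower $N \xleftarrow d N \xleftarrow d \cdots$ has surjective transition maps (so $R^1\lim$ vanishes), hence $T_d(N)$ identifies with the classical inverse limit, which projects surjectively onto $N$, forcing $N=0$ --- then gives $\hol^i(M)=0$ for $i<0$. Hence $M$ is concentrated on degree $0$, and injectivity of $d$ on $M$ shows that $M$ is $I$-torsion-free.

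For classical $(p,I)$-completeness, since $M$ is $d$-torsion-free we have $M \otimes_A^\bL A/d^n \cong M/d^nM$ concentrated on degree $0$ for every $n$. Using the short exact sequences $0 \to M/dM \xrightarrow{d^{n-1}} M/d^nM \to M/d^{n-1}M \to 0$ inductively, together with the classical $p$-completeness and bounded $p^\infty$-torsion of $\overline M = M/dM$, each $M/d^nM$ is classically $p$-complete with bounded $p^\infty$-torsion (extensions of such modules remain such, invoking Proposition \ref{prop: derived completion is classical when bounded}). Since $M$ is derived $d$-complete and the tower $\{M/d^nM\}$ consists of degree-$0$ modules with surjective transition maps, $M \cong R\lim_n M/d^nM$ coincides with the classical inverse limit, so $M$ is classically $d$-adically complete. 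Combined with each $M/d^nM$ being classically $p$-complete, we conclude that $M = \lim_{n,k}^{\mathrm{cl}} M/(d^n,p^k)M$, i.e.\ $M$ is classically $(p,I)$-complete.

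The main obstacle lies in the first stage: promoting the degree-$0$ concentration of $\overline M$ to that of $M$ depends crucially on derived Nakayama for the principal ideal $(d)$. The rest is bookkeeping with short exact sequences and with the equivalence between derived and classical completion for bounded $p^\infty$-torsion modules (Proposition \ref{prop: derived completion is classical when bounded}).
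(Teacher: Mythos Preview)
The paper does not supply its own proof; the proposition is simply cited from \cite[Lemma 3.7(2)]{Bhatt_2022}. Your argument is correct and is the standard route. Two minor points worth noting. First, you write ``the distinguished generator $d$ of $I$'', but for a general bounded prism $I$ is only an invertible ideal, not necessarily globally principal; the fix is to replace the fibre sequence $M \xrightarrow d M \to \overline M$ by $M \otimes_A^\bL I \to M \to \overline M$ (invertibility of $I$ gives $\hol^i(M \otimes_A^\bL I) \cong I \otimes_A \hol^i(M)$, and the same Nakayama reasoning applies), or simply invoke Proposition~\ref{prop: conc on degree 0 by reduced}, which handles concentration in degree $0$ and $I$-torsion-freeness for arbitrary effective Cartier divisors. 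Second, you only address $\hol^i(M)$ for $i\le 0$; the vanishing for $i>0$ follows either by the same long-exact-sequence-plus-Nakayama argument (one gets $d\cdot\hol^i(M)=\hol^i(M)$ for $i\ge 1$), or more directly from condition (4) of Proposition-Definition~\ref{prop-defi: complete flatness}, which already forces $M=\widehat M\in D^{\le 0}(A)$.
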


\begin{defi}\label{defi: completely syntomic smooth etale}
Let $A$ be a $p$-complete classical ring of bounded $p^\infty$-torsion (resp. $(A,I)$ be a bounded prism), and $B/A$ be a derived $p$-adically complete (resp. derived $(p,I)$-adically complete) simplicial ring. We say that $B$ is $p$-completely (resp. $(p,I)$-completely) syntomic, smooth, \'etale over $A$, if $B \otimes_A^\bL A/p$ (resp. $B \otimes_A^\bL A/(p,I)$) is syntomic, smooth, \'etale over $A/p$ (resp. $A/(p,I)$).

By the propositions above, any such algebra $B$ is $p$-completely flat (resp. $(p,I)$-completely flat) over $A$, hence is concentrated on degree 0.
\end{defi}

\begin{cor}\label{cor: tensor conc on degree 0}
Let $A$ be a classically $p$-complete ring with bounded $p^\infty$-torsion, $M,N$ be classically $p$-complete $A$-modules with bounded $p^\infty$-torsion, such that $M$ is $p$-completely flat over $A$. Then, $M \cotimes_A^\bL N$ is classically $p$-complete over $A$ with bounded $p^\infty$-torsion, and we have the following isomorphism
\[ \bigl( M \cotimes_A^\bL N \bigr) \left[ p^n \right] \cong M \otimes_A^\bL (N \left[ p^n \right]) \cong M \otimes_A (N[p^n]). \]

We will use the simpler symbol $M \cotimes_A N$ to denote $M \cotimes_A^\bL N$ in this case.
\end{cor}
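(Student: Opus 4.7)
Denote $K := M \otimes_A^\bL N$ and $\widehat K := M \cotimes_A^\bL N$. The plan is to compute $\widehat K$ via the Milnor sequence for $\widehat K \cong \lim_n (K \otimes_\mathbb Z^\bL \mathbb Z/p^n)$, then extract the $p^n$-torsion using that the canonical map $K \to \widehat K$ is a $p$-local equivalence.

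First I analyse $K \otimes_\mathbb Z^\bL \mathbb Z/p^n$. By Proposition \ref{prop: p-completely flat}, $M$ is classical with $M/p^n$ flat over $A/p^n$ for every $n$; choose $n_0 \ge 0$ with $N[p^\infty] = N[p^{n_0}]$. The complex $N \otimes_\mathbb Z^\bL \mathbb Z/p^n$ lies in $D^{[-1,0]}(A)$ with $\hol^{-1} = N[p^n]$, $\hol^0 = N/p^n$, and is $p^n$-torsion. Since $M \otimes_A^\bL -$ is $t$-exact on $p^\infty$-torsion modules by Proposition-Definition \ref{prop-defi: complete flatness}, applying $M \otimes_A^\bL -$ yields
\begin{equation*}
K \otimes_\mathbb Z^\bL \mathbb Z/p^n \in D^{[-1,0]}(A), \quad \hol^0 = (M \otimes_A N)/p^n, \quad \hol^{-1} = M \otimes_A N[p^n],
\end{equation*}
with the cohomology formulae coming from flatness of $M/p^n$ over $A/p^n$.

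The Milnor sequence then expresses $\hol^i(\widehat K)$ in terms of $\lim$ and $\clim 1$ of the cohomologies $\hol^i(K/p^n)$. The $\hol^0$-transitions are the natural surjections $(M \otimes_A N)/p^{n+1} \twoheadrightarrow (M \otimes_A N)/p^n$, contributing no $\clim 1$ and giving $\hol^1(\widehat K) = 0$. For the $\hol^{-1}$-system, lifting the projection $\mathbb Z/p^{n+1} \to \mathbb Z/p^n$ to the Koszul resolutions $[\mathbb Z \xrightarrow{p^n} \mathbb Z]$ forces the map on the source term to be multiplication by $p$, so the induced transition on $\hol^{-1}$ is multiplication by $p: N[p^{n+1}] \to N[p^n]$ (tensored with the identity on $M$). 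Now $N[p^n] = N[p^{n_0}]$ for $n \ge n_0$, and iterating $p \cdot N[p^k] \subseteq N[p^{k-1}]$ gives $p^{n_0} \cdot N[p^{n_0}] = 0$; hence the $\hol^{-1}$-system is Mittag-Leffler with eventually zero image, so both $\lim$ and $\clim 1$ of it vanish. Therefore $\widehat K \in D^{[0,0]}(A)$, and $\hol^0(\widehat K) = \lim_n (M \otimes_A N)/p^n$ is the classical $p$-adic completion of $M \otimes_A N$.

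Finally, since $K \to \widehat K$ is a $p$-local equivalence, $\widehat K \otimes_\mathbb Z^\bL \mathbb Z/p^n \cong K \otimes_\mathbb Z^\bL \mathbb Z/p^n$. As $\widehat K$ is classical, the left-hand side is $[\widehat K \xrightarrow{p^n} \widehat K]$ with $\hol^{-1} = \widehat K[p^n]$, and comparison with the computation above yields
\begin{equation*}
\widehat K[p^n] \cong M \otimes_A N[p^n] \cong M \otimes_A^\bL N[p^n],
\end{equation*}
the last isomorphism once again by $p$-complete flatness applied to the classical $p^n$-torsion module $N[p^n]$. For $n \ge n_0$ we have $N[p^n] = N[p^{n_0}]$, so $\widehat K[p^n] = \widehat K[p^{n_0}]$, giving bounded $p^\infty$-torsion. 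The main technical hurdle is verifying that the $\hol^{-1}$-transition is genuinely multiplication by $p$ rather than the identity; the rest is Milnor sequence bookkeeping plus repeated use of the $t$-exactness of $M \otimes_A^\bL -$ on $p^\infty$-torsion modules.
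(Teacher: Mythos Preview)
Your proof is correct. The paper takes a different, more conceptual route: it observes that $A' := A \oplus N$ (with $N$ made into a square-zero ideal) is again classically $p$-complete with bounded $p^\infty$-torsion, and that $M \cotimes_A^\bL A'$ is derived $p$-complete and $p$-completely flat over $A'$; Proposition~\ref{prop: p-completely flat} applied over $A'$ then gives classical completeness, bounded $p^\infty$-torsion, and the torsion isomorphism for the pair $(M \cotimes_A^\bL A', A')$, and the conclusion for $(M \cotimes_A^\bL N, N)$ drops out from the direct-sum decomposition $A' = A \oplus N$. Your approach instead unwinds the derived completion directly via the Milnor sequence, the key input being that the transition maps on the $\hol^{-1}$-system are multiplication by $p$ and hence pro-zero once $N[p^n]$ stabilises. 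The paper's trick is shorter and reuses Proposition~\ref{prop: p-completely flat} wholesale; your computation is more transparent about \emph{why} the completion stays in degree $0$, and along the way identifies $\widehat K$ explicitly as the classical $p$-completion of $M \otimes_A N$.
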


\begin{proof}
Take $A' = A \oplus N.$ We can make $A'$ a ring by making $N$ an ideal with $N^2=0.$ Thus by Proposition \ref{prop: p-completely flat}, since $M \cotimes_A^\bL A'$ is $p$-complete and $p$-completely flat over $A',$ is it classically $p$-adically complete, with an isomorphism
\[ \bigl( M \cotimes_A^\bL A' \bigr) \left[ p^n \right] \cong (M \cotimes_A^\bL A') \cotimes_{A'}^\bL A' \left[ p^n \right] \cong M \cotimes_A^\bL (A'[p^n]) \cong M \otimes_A^\bL (A'[p^n]). \]
Thus, as $A' \cong A \oplus N,$ $M \cotimes_A^\bL N$ is also classically $p$-complete with bounded $p^\infty$-torsion, and that we have an isomorphism of the corresponding direct sum
\[ \bigl( M \cotimes_A^\bL N \bigr) \left[ p^n \right] \cong M \otimes_A^\bL N \left[ p^n \right]. \qedhere \]
\end{proof}

\begin{lemma}\label{lem: p-complete module as classical completion}
Let $M \in D^{\le 0}(\mathbb Z)$ such that the derived $p$-adic completion $\widehat M$ is concentrated on degree 0 and classically $p$-adically complete (e.g. of bounded $p^\infty$-torsion). Then, the natural map
\[ \widehat M \to \widehat{\hol^0(M)} \to \clims 0 n \hol^0(M) / p^n \hol^0(M) \]
identifies $\widehat M$ as the \emph{classical} $p$-adic completion of $\hol^0(M)$.
\end{lemma}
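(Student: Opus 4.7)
The plan is to bypass the derived completion of $\hol^0(M)$ altogether and produce the comparison isomorphism directly from the fact that the unit $M \to \widehat M$ becomes an equivalence after reducing modulo $p^n$ for every $n$. Concretely, the cofibre $\cofib(M \to \widehat M)$ is $p$-divisible in the derived sense (its derived $p$-completion is zero), so for every $n \ge 1$ the natural morphism
\[
M \otimes_{\dZ}^\bL \dZ/p^n \longrightarrow \widehat M \otimes_{\dZ}^\bL \dZ/p^n
\]
is an isomorphism in $D(\dZ)$, and is compatible with the obvious transitions as $n$ varies.

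Next, I would observe that both $M \in D^{\le 0}(\dZ)$ and $\widehat M \in D^{\le 0}(\dZ)$, so the derived tensor products above lie in $D^{\le 0}(\dZ)$. Taking $\hol^0$ of the previous isomorphism yields a natural isomorphism of towers of classical abelian groups
\[
\bigl(\hol^0(M)/p^n \hol^0(M)\bigr)_n \xrightarrow{\cong} \bigl(\widehat M/p^n \widehat M\bigr)_n,
\]
using that $\hol^0(\widehat M) = \widehat M$ by the hypothesis that $\widehat M$ is concentrated in degree $0$. Passing to the classical inverse limit in $n$, and invoking the hypothesis that $\widehat M$ is classically $p$-adically complete (so that $\widehat M \xrightarrow{\cong} \clims{0}{n} \widehat M/p^n \widehat M$), I obtain a canonical isomorphism
\[
\widehat M \xrightarrow{\cong} \clims{0}{n} \hol^0(M)/p^n \hol^0(M),
\]
whose right-hand side is by definition the classical $p$-adic completion of $\hol^0(M)$.

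The remaining step is to identify the isomorphism just constructed with the composition $\widehat M \to \widehat{\hol^0(M)} \to \clims{0}{n} \hol^0(M)/p^n \hol^0(M)$ stated in the lemma. This is a bookkeeping chase via naturality: both arrows arise, upon applying $\lim_n$, from the canonical projection $N \otimes_\dZ^\bL \dZ/p^n \to \hol^0(N \otimes_\dZ^\bL \dZ/p^n) = \hol^0(N)/p^n \hol^0(N)$ applied to $N = \widehat M$ (factoring $M \to \widehat M \to \widehat M$) and to $N = \hol^0(M)$ (factoring $M \to \hol^0(M)$) respectively, and the naturality of $-\otimes_\dZ^\bL \dZ/p^n$ in $N$ shows that these two factorisations give the same map after taking $\lim_n$. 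The only (mild) obstacle is precisely this last naturality verification; the rest of the argument uses only the $\dZ/p$-equivalence $M \to \widehat M$ and the classical $p$-adic completeness of $\widehat M$.
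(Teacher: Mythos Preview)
Your argument is correct and is essentially the same as the paper's proof: both use that $M \otimes_\dZ^\bL \dZ/p^n \to \widehat M \otimes_\dZ^\bL \dZ/p^n$ is an isomorphism, take $\hol^0$ to obtain $\hol^0(M)/p^n \cong \widehat M/p^n$, and then pass to the inverse limit using classical completeness of $\widehat M$. Your additional care with the naturality identification of the composite map is a welcome elaboration but does not change the underlying strategy.
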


\begin{proof}
We have
\[ \widehat M / p^n \cong \hol^0(\widehat M \otimes_\mathbb Z^\bL \mathbb Z/p^n) \cong \hol^0(M \otimes_\mathbb Z^\bL \mathbb Z/p^n) \cong \hol^0(M)/p^n. \]
Take the inverse limit (both $\lim$ and $\clim 0$ work) gives us the result.
\end{proof}

\begin{rmk}
Usually we will only use this lemma for $M$ $p$-completely flat over some ring $A$ of bounded $p^\infty$-torsion.

Since $\widehat{(-)}$ is right $t$-exact, we see that the natural map $\widehat M \cong \hol^0(\widehat M) \to \hol^0(\widehat{\hol^0(M)})$ is an isomorphism. However, it is not a priori correct that $\widehat{\hol^0(M)}$ is concentrated on degree 0, so we cannot say that $\widehat M$ is isomorphic to the derived completion of $\hol^0(M)$ --- instead we only have an isomorphism $\widehat M \cong \hol^0(\widehat{\hol^0(M)})$.
\end{rmk}

\begin{prop-defi}\label{prop-defi: base change of adic modules}
Let $A$ be a Tate ring over $\mathbb Q_p$. For adic modules $M_1,\ldots,M_n,N$ over $A$, we define
\[ \Hom_A^\ad((M_1,\ldots,M_n),N) \]
to be the $A$-module consisting of \emph{continuous} multilinear functions $M_1 \times \cdots \times M_n \to N$. This is the same as bounded multilinear functions $M_1 \times \cdots \times M_n \to N$.

\begin{enumerate}[label=(\arabic*)]
\item The $A$-module $\Hom_A^\ad((M_1,\ldots,M_n),N)$ has an adic module structure
\[ \underline{\Hom}^\ad_A((M_1,\ldots,M_n),N) \]
such that for any adic modules $M'_1,\ldots,M'_m$, the natural map
\[ \Hom_A^\ad((M'_1,\ldots,M'_m), \underline{\Hom}^\ad_A((M_1,\ldots,M_n),N)) \to \Hom_A^\ad((M'_1,\ldots,M'_m,M_1,\ldots,M_n),N) \]
is an isomorphism. Moreover, if $A^+$ is a ring of definition of $A$ and $M_i^+$ are $A^+$-modules of definitions of $M_i$, then a module of definition of $\underline{\Hom}^\ad_A((M_1,\ldots,M_n),N)$ is given the set of multilinear functions
\[ \Hom_{A^+}((M_1^+,\ldots,M_n^+), N^+). \]

In particular, for the functor $F(N) = \Hom_A^\ad((M_1,\ldots,M_n),N)$, the module
\[ \underline{\Hom}^\ad_A((M_1,\ldots,M_n),N) \]
is represented by the functor
\[ M \mapsto F(\underline{\Hom}^\ad_A(M,N)). \]

\item The functor
\[ \Hom_A^\ad((M_1,\ldots,M_n),-) \]
is corepresentable by an adic module, which we denote by
\[ M_1 \cotimes_A \cdots \cotimes_A M_n. \]
The last paragraph shows that the modules
\[ \underline{\Hom}^\ad_A((M_1,\ldots,M_n),N), \quad \underline{\Hom}^\ad_A(M_1 \cotimes_A \cdots \cotimes_A M_n,N) \]
are naturally isomorphic.

Moreover, the obvious natural maps
\[ M_1 \cotimes_A \cdots \cotimes_A M_n \cotimes_A M'_1 \cotimes_A \cdots \cotimes_A M'_m \to (M_1 \cotimes_A \cdots \cotimes_A M_n) \cotimes_A (M'_1 \cotimes_A \cdots \cotimes_A M'_m) \]
are isomorphisms, so $- \cotimes_A -$ gives a symmetric monoidal structure on the category of adic $A$-modules.

\item If $A_0$ is a classical $p$-complete ring of bounded $p^\infty$-torsion, $M_0/A_0$ is $p$-complete and $p$-completely flat. For any adic module $N/A_0 \bigl[ \frac 1 p \bigr]$ denote
\[ M_0 \cotimes_{A_0} N = \bigl( M_0 \bigl[ \frac 1 p \bigr] \bigr) \cotimes_{A_0 [\frac 1 p ]} N. \]
Then, for any $p$-complete $A_0$-module $N_0$ of bounded $p^\infty$-torsion, $M_0 \cotimes_{A_0} N_0$ is $p$-complete and of bounded $p^\infty$-torsion, and the natural map
\[ M_0 \cotimes_{A_0} \bigl( N_0 \bigl[ \frac 1 p \bigr] \bigr) \to (M_0 \cotimes_{A_0} N_0) \bigl[ \frac 1 p \bigr] \]
is an isomorphism.
\end{enumerate}
\end{prop-defi}

\begin{proof}
For (1), define the adic module
\[ \underline{\Hom}^\ad_A((M_1,\ldots,M_n),N) = \Hom_{A^+}((M_1^+,\ldots,M_n^+), N^+) \bigl [\frac 1 p \bigr], \]
and it is easy to verify the universal properties.

For (2), let $A^+$ be a ring of definition of $A$ and $M_i^+$ be a $A^+$-module of definition of $M_i$. Denote $M$ to be the $p$-torsion-free quotient of $M_1^+ \otimes_{A^+} \cdots \otimes_{A^+} M_n^+$, and $M'$ the classical $p$-adic completion of $M$. It is easy to verify that $M' \bigl[ \frac 1 p \bigr]$ is the desired adic module.

The the functorial perspective gives the isomorphism
\[ \underline{\Hom}^\ad_A((M_1,\ldots,M_n),N) \cong \underline{\Hom}^\ad_A(M_1 \cotimes_A \cdots \cotimes_A M_n,N), \]
whose underlying map is the given isomorphism of functors
\[ \Hom^\ad_A((M_1,\ldots,M_n),N) \cong \Hom^\ad_A(M_1 \cotimes_A \cdots \cotimes_A M_n,N). \]
Therefore we have
\begin{align*}
& \Hom^\ad_A ((M_1 \cotimes_A \cdots \cotimes_A M_n) \cotimes_A (M'_1 \cotimes_A \cdots \cotimes_A M'_m),N ) \\
\cong{} & \Hom^\ad_A (M_1 \cotimes_A \cdots \cotimes_A M_n, \underline{\Hom}^\ad_A (M'_1 \cotimes_A \cdots \cotimes_A M'_m,N)) \\
\cong{} & \Hom^\ad_A ((M_1,\ldots,M_n), \underline{\Hom}^\ad_A ((M'_1,\ldots,M'_m),N)) \\
\cong{} & \Hom^\ad_A ((M_1,\ldots,M_n,M'_1,\ldots,M'_m),N) \\
\cong{} & \Hom^\ad_A (M_1 \cotimes_A \cdots \cotimes_A M_n \cotimes_A M'_1 \cotimes_A \cdots \cotimes_A M'_m,N),
\end{align*}
giving the isomorphism as desired.

For (3), it is easy to observe that $M_0 \cotimes_{A_0} N_0$ is $p$-complete and of bounded $p^\infty$-torsion by Corollary \ref{cor: tensor conc on degree 0}, and the isomorphism is a direct consequence of the universal property.
\end{proof}

\begin{lemma}\label{lemma: direct sum with bounded torsion}
Let $A$ be a ring and $\left\{ M_\lambda \right\}_{\lambda \in \Lambda}$ be a family of classically $p$-complete $A$-modules. Also, assume that the $p^\infty$-torsion of $\left\{ M_\lambda \right\}$ is uniformly bounded, i.e. there exists $n_0 \in \mathbb Z_+$ such that all $M_\lambda$ has
\[ M_\lambda \left[ p^\infty \right] = M_\lambda \left[ p^{n_0} \right]. \]
Then, the natural map
\[ \cbigoplus_{\lambda \in \Lambda} M_\lambda \to \prod_{\lambda \in \Lambda} M_\lambda, \]
is injective, with its image equal to
\[ \Bigl\{ \left( m_\lambda \right) \in \prod M_\lambda \mathbin{\Big|} \forall N \in \mathbb Z_+, \bigl\{ \lambda \in \Lambda \mid m_\lambda \notin p^N M_\lambda \bigr\} \text{\ is finite} \Bigr\}. \]
\end{lemma}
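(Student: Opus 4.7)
The plan is to reduce the statement about the derived completion $\cbigoplus_\lambda M_\lambda$ to one about the classical $p$-adic completion of $\bigoplus_\lambda M_\lambda$, and then carry out a direct combinatorial construction.

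First, I would observe that the uniform bound on $p^\infty$-torsion implies that $B := \bigoplus_\lambda M_\lambda$ satisfies $B[p^\infty] = B[p^{n_0}]$, so $B$ has bounded $p^\infty$-torsion. By Proposition \ref{prop: derived completion is classical when bounded}, the derived $p$-adic completion $\cbigoplus_\lambda M_\lambda$ agrees with the classical $p$-adic completion of $B$. Since $p^n B = \bigoplus_\lambda p^n M_\lambda$, this gives the identification
\[ \cbigoplus_\lambda M_\lambda \;\cong\; \clims{0}{n} \bigoplus_\lambda M_\lambda/p^n M_\lambda, \]
whose elements are compatible systems $(x_n)_{n \geq 1}$ with each $x_n \in \bigoplus_\lambda M_\lambda/p^n M_\lambda$ of \emph{finite} support. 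Since each $M_\lambda$ is classically $p$-complete, the natural map to $\prod_\lambda M_\lambda$ is given by $(x_n) \mapsto (m_\lambda)$, where $m_\lambda \in M_\lambda$ is the unique element with $m_\lambda \equiv x_n(\lambda) \pmod{p^n M_\lambda}$ for all $n$.

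For injectivity, if $m_\lambda = 0$ for every $\lambda$, then $x_n(\lambda) \equiv 0 \pmod{p^n M_\lambda}$ for all $n, \lambda$, i.e. $x_n = 0$ for each $n$. For one direction of the image description, if $(m_\lambda)$ is in the image with preimage $(x_n)$, then for any $N$ we have $\{\lambda : m_\lambda \notin p^N M_\lambda\} \subseteq \supp(x_N)$, which is finite.

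For the converse, given $(m_\lambda) \in \prod_\lambda M_\lambda$ such that $T_n := \{\lambda : m_\lambda \notin p^n M_\lambda\}$ is finite for each $n$, I would construct a preimage by defining $x_n \in \bigoplus_\lambda M_\lambda/p^n M_\lambda$ to be supported on $T_n$ with $x_n(\lambda) := m_\lambda \bmod p^n M_\lambda$. The inclusion $T_n \subseteq T_{n+1}$ and a brief case analysis show that this system is compatible under the transition maps. To verify it maps to $(m_\lambda)$: if $\lambda \in T_n$ for all sufficiently large $n$, then by construction the built-up element is $m_\lambda$; if $\lambda \notin T_n$ for any $n$, then $m_\lambda \in \bigcap_n p^n M_\lambda$, which is $0$ because $M_\lambda$ is classically $p$-complete (hence separated), and so the built-up element $0$ again equals $m_\lambda$. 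The only real subtlety in the whole argument is the initial reduction from derived to classical completion, which is where the \emph{uniform} bound on the torsion is essential; the rest is a straightforward combinatorial construction.
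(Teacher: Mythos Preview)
Your proof is correct and follows exactly the same approach as the paper: reduce to the classical completion via the observation that $\bigoplus_\lambda M_\lambda$ has bounded $p^\infty$-torsion together with Proposition~\ref{prop: derived completion is classical when bounded}, then read off the description of the image directly. The paper's own proof is a two-line sketch that leaves the explicit limit computation implicit, whereas you have written out the combinatorial details, but the content is the same.
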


\begin{proof}
This follows immediately from the fact that
\[ \bigoplus_{\lambda \in \Lambda} M_\lambda \]
has bounded $p^\infty$-torsion, and that the derived completion of a bounded $p^\infty$-torsion module is identical with its classical completion.
\end{proof}

\begin{prop}\label{prop: conc on degree 0 by reduced}
Let $A$ be a ring and $I \subseteq A$ be an effective Cartier divisor. Assume that $M \in D \left( A \right)$ is derived $I$-complete, and that $M \otimes_A^\bL A/I$ is concentrated on degree 0, then $M$ is concentrated on degree 0, and is $I$-torsion-free.
\end{prop}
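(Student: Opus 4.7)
Since $I$ is an effective Cartier divisor, choose a non-zero-divisor $d \in A$ with $I = dA$. Then the cofibre sequence $A \xrightarrow{d} A \to A/d$ tensored with $M$ gives an exact triangle
\[ M \xrightarrow{d} M \to M \otimes_A^\bL A/I \]
in $D(A)$. The associated long exact cohomology sequence reads
\[ \cdots \to \hol^{n-1}(M \otimes_A^\bL A/I) \to \hol^n(M) \xrightarrow{d} \hol^n(M) \to \hol^n(M \otimes_A^\bL A/I) \to \cdots, \]
and by the hypothesis the middle terms vanish unless $n=0$.

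Reading off this sequence, for each $n \neq 0$ both the incoming and outgoing groups $\hol^{n\pm 1}(M \otimes_A^\bL A/I)$ are zero, so multiplication by $d$ acts bijectively on $\hol^n(M)$ whenever $n \le -1$ or $n \ge 1$ (one direction uses $n-1 \neq 0$, the other $n \neq 0$). For such $n$, the tower $\cdots \xrightarrow d \hol^n(M) \xrightarrow d \hol^n(M)$ consists of isomorphisms, hence $T_d(\hol^n(M)) = \hol^n(M)$. By Lemma \ref{lem: t-structure on complete modules}(1), derived $d$-completeness of $M$ forces $T_d(\hol^n(M)) = 0$ for every $n$, so $\hol^n(M) = 0$ for all $n \neq 0$, i.e.\ $M$ is concentrated in degree $0$.

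It remains to verify that the classical $A$-module $M := \hol^0(M)$ is $d$-torsion-free. From the portion of the long exact sequence around $n=0$,
\[ 0 = \hol^{-1}(M \otimes_A^\bL A/I) \to \hol^0(M) \xrightarrow{d} \hol^0(M), \]
we see that $d$ acts injectively on $\hol^0(M)$, which is exactly the assertion that $M$ is $I$-torsion-free. The only step requiring care is the passage from derived completeness of $M$ to that of each $\hol^n(M)$, which is supplied by Lemma \ref{lem: t-structure on complete modules}(1); no further input is needed.
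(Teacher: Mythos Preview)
Your argument has a gap at $n=1$. Your own parenthetical ``one direction uses $n-1 \neq 0$'' shows that injectivity of $d$ on $\hol^n(M)$ requires $n \neq 1$; for $n=1$ the incoming term is $\hol^0(M \otimes_A^\bL A/I)$, which need not vanish. So from the long exact sequence you only get that $d$ is \emph{surjective} on $\hol^1(M)$, not bijective, and the tower $\cdots \xrightarrow{d} \hol^1(M) \xrightarrow{d} \hol^1(M)$ need not consist of isomorphisms.

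The fix is easy: surjectivity gives $\hol^1(M) = d\,\hol^1(M)$, and since $\hol^1(M)$ is derived $d$-complete (by the lemma you cite), the derived Nakayama argument (as in Lemma \ref{lem: basic properties of derived complete modules}(c) with $p$ replaced by $d$) forces $\hol^1(M)=0$. With this repair your proof is correct. A minor additional point: an effective Cartier divisor is only locally principal, so strictly speaking you should pass to a Zariski cover before choosing $d$; this is harmless since the conclusion is local.

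For comparison, the paper takes a different route: it shows by induction on $n$ that each $M \otimes_A^\bL A/I^n$ is concentrated in degree $0$ with surjective transition maps, and then uses $M \cong \lim_n M \otimes_A^\bL A/I^n$ directly. That approach never needs to isolate individual cohomology groups or invoke the lemma on $T_d$ of cohomology; yours is more elementary in that it works one degree at a time via the long exact sequence, at the cost of the extra $n=1$ step.
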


\begin{proof}
By induction, all $M \otimes_A^\bL A/I^n$ are concentrated on degree 0, and $M \otimes_A^\bL A/I^{n+1} \to M \otimes_A^\bL A/I^n$ are surjective. Since $I$ is an effective Cartier divisor, we see that
\[ M \cong \lim M \otimes_A^\bL A/I^n \]
is concentrated on degree 0. As $M \otimes_A^\bL A/I$ is concentrated on degree 0, we see that
\[ \ker \left( M \otimes_A I \to M \right) \cong \Tor_1^A \left( M,A/I \right) =0, \]
hence $M$ is $I$-torsion-free.
\end{proof}

We are also going to need some properties of `completed PD-powers'. Keep in mind of the following Lemma. We are going to use it without further mentioning it.

\begin{lemma}\label{lemma: concentrated on degree zero}\mbox{}
\begin{enumerate}[label=(\arabic*)]
\item Let $R$ be a classical ring and $M \in \Mod_R$ be an $R$-module concentrated on degree 0. Then, $\hol^0(\Sym^*(M)), \hol^0(\bigwedge^*(M)), \hol^0(\Gamma^*(M))$ agrees with the classical construction.

\item Let $R$ be a classical ring and $M$ be a flat $R$-module. Then, $\Sym^*(M), \bigwedge^*(M), \Gamma^*(M)$ are all flat $R$-modules, and they agree with the classical construction.

\item Let $R$ be a $p$-completele classical ring with bounded $p^\infty$-torsion and $M/R$ be $p$-complete and $p$-completely flat. Then, $\widehat{\Sym}^*(M), \cbigwedge^*(M), \cGamma^*(M)$ are all $p$-completely flat over $R$, and they agree with the classical completion of $\hol^0(\Sym^*(M)), \hol^0(\bigwedge^*(M)), \hol^0(\Gamma^*(M))$ respectively.
\end{enumerate}
\end{lemma}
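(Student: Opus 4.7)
The three parts are obtained in sequence: (1) reduces derived to classical for degree-zero modules, (2) bootstraps to the flat case via Lazard, and (3) propagates to the $p$-completely flat setting using the propositions already proved about $p$-completely flat modules.

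For part (1), I would recall that $\Sym^n, \bigwedge^n, \Gamma^n$ on $\SCRModcn$ are defined as left Kan extensions of the classical constructions from the subcategory spanned by pairs $(A,F)$ with $A$ a polynomial ring and $F$ a finite free $A$-module. Given a (classical) ring $R$ and an $R$-module $M$ concentrated on degree $0$, choose a simplicial resolution $P_\bullet \twoheadrightarrow M$ by finite free $R$-modules; then $\Sym^n(M)$ (derived) is computed as the geometric realisation $|\Sym^n(P_\bullet)|$, and similarly for $\bigwedge^n$ and $\Gamma^n$. Taking $\hol^0 = \pi_0$ yields the coequaliser of $\Sym^n(P_1) \rightrightarrows \Sym^n(P_0)$. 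The key observation is that each of the \emph{classical} functors $\Sym^n, \bigwedge^n, \Gamma^n$ on $\Mod_R$ is a left adjoint to the forgetful functor from commutative, strictly alternating, and divided-power $R$-algebras respectively, hence preserves all colimits. Consequently these coequalisers reproduce the classical $\Sym^n(M), \bigwedge^n(M), \Gamma^n(M)$.

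For part (2), by Lazard's theorem write $M = \colim_i M_i$ as a filtered colimit of finite free $R$-modules. The derived functors $\Sym^n, \bigwedge^n, \Gamma^n$ commute with sifted (in particular filtered) colimits because they are left Kan extensions from a small subcategory closed under finite coproducts. On finite free modules the derived and classical constructions agree and produce finite free (hence flat) modules. Thus $\Sym^n(M) \cong \colim_i \Sym^n(M_i)$ is a filtered colimit of flat $R$-modules, hence flat; the same argument reproduces the classical $\Sym^n(M)$ since classical symmetric (resp.\ exterior, PD) powers also commute with filtered colimits of modules. The analogous computation handles $\bigwedge^n$ and $\Gamma^n$.

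For part (3), Proposition \ref{prop: p-completely flat} says $M$ is a classically $p$-complete module in degree $0$ with bounded $p^\infty$-torsion, and $M/p$ is flat over $R/p$. Since the derived $\Sym^n$ commutes with arbitrary base change (being a left adjoint), one has $\Sym^n_R(M) \otimes_R^\bL R/p \cong \Sym^n_{R/p}(M/p)$, which is flat over $R/p$ by part (2) applied to the classical ring $R/p$. Hence the (uncompleted derived) $\Sym^n_R(M)$ is $p$-completely flat over $R$, and the same holds for $\bigwedge^n$ and $\Gamma^n$. Applying the direction (2) of Proposition \ref{prop: p-completely flat} to the derived completion, $\widehat{\Sym}^n(M), \cbigwedge^n(M), \cGamma^n(M)$ are all $p$-completely flat $R$-modules concentrated in degree $0$. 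Finally, Lemma \ref{lem: p-complete module as classical completion} identifies each of these with the classical $p$-adic completion of $\hol^0(\Sym^n(M))$ etc., which by part (1) is the classical $p$-adic completion of the classical symmetric (resp.\ exterior, PD) power. No step presents a real obstacle; the only point requiring care is verifying that the three classical constructions are genuinely left adjoints on all of $\Mod_R$ (rather than merely right exact), which is standard by writing down their universal properties.
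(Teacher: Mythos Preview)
Your proof follows the paper's approach essentially verbatim: the paper also proves (1) by taking a free presentation and reducing to (2), proves (2) via Lazard, and proves (3) by reducing modulo $p$ and invoking Lemma~\ref{lem: p-complete module as classical completion}. One minor slip: in (1) you resolve $M$ by \emph{finite} free $R$-modules, which is impossible for general $M$; use arbitrary free $R$-modules instead (these are flat, so (2) still applies), and note that what you actually need is that the classical $\Sym^n,\bigwedge^n,\Gamma^n$ preserve \emph{reflexive coequalizers} in $\Mod_R$---your left-adjoint remark gives this only after observing that the forgetful functors from (commutative, alternating, PD) algebras create reflexive coequalizers.
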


\begin{proof}
For (1), take a presentation $F_1 \to F_0 \to M$ where $F_i$ are free, and we are reduced to (2).

For (2), one only need to write $M$ as a filtered colimit of finite free $R$-modules.

For (3), reduce $\otimes_R^\bL (R/p)$, and we prove the first part of the statement. The second part of statement is a corollary of Lemma \ref{lem: p-complete module as classical completion}.
\end{proof}

\begin{lemma}\label{lem: finite group closed embedding}
Let $M$ be a classically $p$-complete abelian group of bounded $p^\infty$-torsion, and $G$ be a finite group $G$ acting on $M$. Then, the map $M^G \to M$ is a closed embedding, with both side equipped with the $p$-adic topology. Furthermore, we have an isomorphism
\[ M^G \to \clims 0 n \left( M/p^n M \right)^G. \]
\end{lemma}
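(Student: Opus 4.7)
My plan is to reduce the first assertion to Lemma \ref{lem: closed submodule}(1) by checking two things: that $M^G$ is closed in $M$, and that the quotient $M/M^G$ has bounded $p^\infty$-torsion. The limit identity will then follow from a short diagram chase that uses only the $p$-adic separatedness of $M$.

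First I would show that $M^G$ is closed. Consider the continuous $\mathbb Z$-linear map
\[ \Phi: M \to \prod_{g \in G} M, \qquad m \mapsto (gm - m)_{g \in G}, \]
where the target carries the product $p$-adic topology. Since $M$ is Hausdorff (as it is classically $p$-complete, hence $p$-adically separated), $M^G = \ker \Phi$ is a closed subgroup of $M$.

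Next I would argue that $M/M^G$ has bounded $p^\infty$-torsion. The map $\Phi$ factors through an injection $M/M^G \hookrightarrow \prod_{g \in G} M$. Because $G$ is finite and $M$ has bounded $p^\infty$-torsion, the same bound works for the finite product $\prod_{g\in G} M$, and therefore for the subgroup $M/M^G$. Lemma \ref{lem: closed submodule}(1) now applies: both $M^G$ and $M/M^G$ are classically $p$-adically complete, and the subspace topology induced on $M^G$ from $M$ coincides with its own $p$-adic topology. This gives the desired closed embedding.

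Finally, for the isomorphism $M^G \xrightarrow{\cong} \clims 0 n (M/p^nM)^G$, I would take the short exact sequences
\[ 0 \to (M/p^n M)^G \to M/p^n M \xrightarrow{\Phi_n} \prod_{g \in G} M/p^n M \]
and pass to the classical inverse limit in $n$. Since classical limits are left exact, and since $M$ and $\prod_{g\in G} M$ are classically $p$-adically complete, the resulting sequence reads
\[ 0 \to \clims 0 n (M/p^n M)^G \to M \xrightarrow{\Phi} \prod_{g \in G} M, \]
whose kernel on the right is precisely $M^G$. The induced map $M^G \to \clims 0 n (M/p^nM)^G$ is then an isomorphism. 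Concretely, surjectivity uses that any compatible system lifts to some $m \in M$ with $gm-m \in \bigcap_n p^n M = 0$, and injectivity is immediate from $\bigcap_n p^n M^G = 0$. There is no essential obstacle; the only point to watch is invoking the finite product structure so that the bounded $p^\infty$-torsion hypothesis of Lemma \ref{lem: closed submodule}(1) remains available.
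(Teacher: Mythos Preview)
Your proof is correct and, for the closed embedding part, takes a genuinely different route from the paper. You observe that $M/M^G$ embeds into the finite product $\prod_{g\in G} M$ via $m \mapsto (gm-m)_g$, so it inherits bounded $p^\infty$-torsion, and then you invoke Lemma~\ref{lem: closed submodule}(1) directly. The paper instead argues by hand that the induced and $p$-adic topologies on $M^G$ agree, using group cohomology: writing $p^e \parallel |G|$, it uses the exact sequence $M^G \xrightarrow{p^{n'}} (p^{n'}M)^G \to \hol^1(G, M[p^{n'}])$ together with the fact that $\hol^1(G,-)$ is annihilated by $|G|$ to produce an explicit $n'$ with $p^{n'}M \cap M^G \subseteq p^n M^G$. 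Your approach is shorter and avoids cohomology entirely, at the cost of appealing to the already-established Lemma~\ref{lem: closed submodule}; the paper's argument is self-contained and yields the explicit bound $n' = n_0 + e$. For the limit isomorphism both arguments are essentially the same (left exactness of $\clim 0$, or ``limits commute with limits'' as the paper puts it).
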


\begin{proof}
It is clear that with the induced topology, $M^G \subseteq M$ is closed, so we only need to prove that the induced topology coincides with the $p$-adic topology on $M^G.$ Take $e \in \mathbb Z_{\ge 0}$ such that $p^e \parallel \left|G \right|.$

Fix $n \in \mathbb Z_+,$ and we have to prove that $p^n M^G \supseteq p^{n'} M \cap M^G$ for some $n' \in \mathbb Z_+.$ As $M$ is of bounded $p^\infty$-torsion, we may take $n_0 \ge n$ such that $M \left[ p^\infty \right] = M \left[ p^{n_0} \right].$ Consider the map
\[ f: M \to p^{n'} M \]
given by $m \mapsto p^{n'} m.$ We get an exact sequence
\[ M^G \xrightarrow{p^{n'}} \left( p^n M \right)^G \to \hol^1 \left( G, M \left[ p^n \right] \right). \]
By the properties of cohomologies for finite groups, $\hol^1 \left( G, M \left[ p^n \right] \right)$ is annihilated by $\left| G \right|,$ hence by $p^e.$ Therefore, for any $m \in \bigl( p^{n'}M \bigr)^G,$ there exists $m' \in M^G$ such that $p^{n'} m'. = p^em.$ Take $n'=n_0+e,$ and we see that for any $m \in \bigl( p^{n_0+e} M \bigr)^G,$ there exists $m' \in M^G$ with
\[ p^{n_0} m' - m \in M \left[ p^e \right]. \]
Now, $p^{n_0} m' - m \in p^{n_0} M,$ and by assumption, it is easy to prove that $p^{n_0} M \cap M \left[ p^\infty \right] = \left\{ 0 \right\}.$ Therefore, we get $m = p^{n_0} m' \in p^n M^G.$ It follows that
\[ p^{n_0+e} M \cap M^G = \left( p^{n_0+e} M \right)^G \subseteq p^n M^G, \]
and we have found our $n$ as desired.

We now prove that we have the isomorphism as desired. Write
\[ M^G = \bigl( \clims 0 n M/p^n M \bigr)^G, \]
and the isomorphism follows from the fact that limits commute with limits in categories (here we use the ordinary category of abelian groups).
\end{proof}

\begin{prop}\label{prop: flat pd power}
Fix a ring $A$ which is classically $p$-adically complete, and of bounded $p^\infty$-torsion. Let $M$ be a derived $p$-complete and $p$-completely flat module over $A$. Then, we have a closed embedding of $p$-complete modules (with the $p$-adic topology)
\[ \cGamma^n_A \left( M \right) \to M^{\cotimes n/A} \]
given by
\[ \gamma^{d_1} \left( m_1 \right) \cdots \gamma^{d_k} \left( m_k \right) \mapsto \sum_{\sigma \in \Sigma_n / \Sigma_{d_1} \times \cdots \times \Sigma_{d_k}} \sigma \bigl( m_1^{\otimes d_1} \otimes \cdots \otimes m_k^{\otimes d_k} \bigr), \]
whose cokernel is $p$-completely flat over $A$. Also, for any $A$-module $N$ which is classically $p$-complete and of bounded $p^\infty$-torsion, it induces an isomorphism
\[ N \cotimes_A \cGamma^n_A \left( M \right) \to \bigl( N \cotimes_A M^{\cotimes n/A} \bigr)^{\Sigma_n}.\]

As a consequence, for any adic module $N$ over $A \bigl[ \frac 1 p \bigr]$, the above morphism is an isomorphism.
\end{prop}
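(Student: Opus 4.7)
The plan is to reduce everything to the classical (uncompleted) case for flat modules, where one has the orbit-sum description: if $M_0$ is flat over a ring $A_0$, write $M_0 = \colim M_i$ as a filtered colimit of finite free modules. For each $M_i$ with basis $\{e_\alpha\}$, the tensor power $M_i^{\otimes n}$ is a permutation $\Sigma_n$-module, whose orbits correspond bijectively to multisets of size $n$, i.e.\ to basis elements of $\Gamma^n_{A_0}(M_i)$. A direct calculation shows: (i) the map $\Gamma^n_{A_0}(M_i) \to M_i^{\otimes n}$ identifies $\Gamma^n_{A_0}(M_i)$ with the $\Sigma_n$-invariants and has flat cokernel (each orbit summand $A_0[O]$ splits as trivial rep $\oplus$ augmentation summand, both free); (ii) for any $A_0$-module $N_0$, the iso $N_0 \xrightarrow\cong (\bigoplus_{o\in O} N_0)^{\Sigma_n}$ (diagonal) summed over orbits gives $\Gamma^n_{A_0}(M_i) \otimes_{A_0} N_0 \xrightarrow\cong (M_i^{\otimes n} \otimes_{A_0} N_0)^{\Sigma_n}$. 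Passing to the filtered colimit (invariants commute with filtered colimits with injective transitions for a finite group) gives the statement for any flat $M_0$ and arbitrary $N_0$.

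For the $p$-complete closed embedding, I would reduce mod $p$: since $M$ is $p$-completely flat, $M/p$ is flat over $A/p$, and both $\cGamma^n_A$ and $(-)^{\cotimes n/A}$ commute with base change along $A \to A/p$. The cofibre of $\cGamma^n_A(M) \to M^{\cotimes n/A}$ therefore reduces to the classical flat quotient $Q_0 = (M/p)^{\otimes n/(A/p)}/\Gamma^n_{A/p}(M/p)$. By Propositions~\ref{prop-defi: complete flatness} and~\ref{prop: p-completely flat}, this cofibre $Q$ is $p$-completely flat, concentrated in degree $0$, classically $p$-adically complete, and $p$-torsion-free. This produces the short exact sequence $0 \to \cGamma^n_A(M) \to M^{\cotimes n/A} \to Q \to 0$; Lemma~\ref{lem: closed submodule} then promotes the injection to a closed embedding (the quotient is $p$-torsion-free, hence of bounded $p^\infty$-torsion).

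For the $\Sigma_n$-invariants isomorphism with $N$ classically $p$-complete of bounded $p^\infty$-torsion, I would pass to a mod-$p^k$ computation. By Corollary~\ref{cor: tensor conc on degree 0}, both $\cGamma^n_A(M) \cotimes_A N$ and $M^{\cotimes n/A} \cotimes_A N$ are classically $p$-complete with bounded $p^\infty$-torsion, and (using $p$-complete flatness of $M^{\cotimes n/A}$ and $\cGamma^n_A(M)$) their mod-$p^k$ reductions are $\Gamma^n_{A/p^k}(M/p^k)\otimes_{A/p^k}(N/p^k)$ and $(M/p^k)^{\otimes n}\otimes_{A/p^k}(N/p^k)$ respectively. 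Lemma~\ref{lem: finite group closed embedding} shows that $\Sigma_n$-invariants of the latter commute with $\clims 0 k$ of these reductions, and the classical iso (Step 1 applied to the flat module $M/p^k$ over $A/p^k$) produces a compatible iso on each level. Taking $\clims 0 k$ yields the desired isomorphism. Finally, for adic $N$ over $A\bigl[\frac 1 p \bigr]$, pick a module of definition $N^+$; apply the previous case to $N^+$ and invert $p$, using that filtered colimits commute with finite-group invariants.

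The main obstacle is the bookkeeping in the mod-$p^k$ argument: one needs to check that $\Sigma_n$-invariants interact well with both the completed tensor product and the reductions mod $p^k$, which ultimately rests on Lemma~\ref{lem: finite group closed embedding} controlling the topology of the invariant submodule. The classical input of Step 1 for infinite free modules is standard but must be stated carefully to apply uniformly to all $M/p^k$.
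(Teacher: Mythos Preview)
Your proposal is correct and follows essentially the same strategy as the paper: reduce modulo $p^k$, invoke the classical result for flat modules (which you spell out via the orbit decomposition), and use Lemma~\ref{lem: finite group closed embedding} to control the passage to the limit. The only organizational difference is that you establish the closed embedding first via the $p$-torsion-freeness of the cokernel and Lemma~\ref{lem: closed submodule}, whereas the paper first proves the $\Sigma_n$-invariants isomorphism (with $N=A$) and then reads off the closed embedding directly from Lemma~\ref{lem: finite group closed embedding}; both routes are valid.
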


\begin{proof}
The construction of the map $\cGamma^n_A \left( M \right) \to M^{\cotimes n/A}$ is nothing but the completion of its classical version, so we will omit it here. We also omit the verification that its cokernel is $p$-completely flat, since we may also reduce modulo $p$. We only need to verify that it is a closed embedding, and it induces the desired isomorphism.

We first verify that the map $N \cotimes_A \cGamma^n_A \left( M \right) \to \bigl( N \cotimes_A M^{\cotimes n/A} \bigr)^{\Sigma_n}$ is an isomorphism. By Lemma \ref{lem: finite group closed embedding}, we only need to verify that
\[ \left( N/p^m N \right) \otimes_{A/p^m A} \Gamma^n_{A/p^mA} \left( M/p^m M \right) \to \left( N/p^m N \right) \otimes_{A/p^m A} \bigl( \left( M/p^m M \right)^{\otimes n / (A/p^mA)} \bigr)^{\Sigma_n} \]
is an isomorphism, and then take $\clim 0$. This is the classical result for flat modules over arbitrary rings.

Now, to show that the designated map is a closed embedding, just apply Lemma \ref{lem: finite group closed embedding}.
\end{proof}

\begin{prop}\label{prop: topology of flat pd power}
Under the condition of Proposition \ref{prop: flat pd power}, we have
\[ p^n \cdot \bigl( N \cotimes_A M^{\cotimes n/A} \bigr)^{\Sigma_n} = \bigl( p^n ( N \cotimes_A M^{\cotimes n/A} ) \bigr)^{\Sigma_n}. \]
\end{prop}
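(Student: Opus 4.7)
The plan is to reduce this set-theoretic equality of submodules of $V^{\Sigma_n}$, where $V := N \cotimes_A M^{\cotimes n/A}$, to showing that the natural map
\[ V^{\Sigma_n}/p^n V^{\Sigma_n} \to V/p^n V \]
induced by the inclusion $V^{\Sigma_n} \hookrightarrow V$ is injective. The containment $p^n V^{\Sigma_n} \subseteq (p^n V)^{\Sigma_n}$ is immediate. Conversely, any $x \in (p^n V)^{\Sigma_n}$ can be written as $x = p^n y$ for some $y \in V$, so that its image in $V/p^n V$ vanishes; once the displayed map is injective, the image of $x$ in $V^{\Sigma_n}/p^n V^{\Sigma_n}$ also vanishes, which is exactly the statement $x \in p^n V^{\Sigma_n}$.

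To establish the injectivity, I would invoke Proposition \ref{prop: flat pd power} to obtain the short exact sequence
\[ 0 \to \cGamma^n_A M \to M^{\cotimes n/A} \to C \to 0 \]
in which $C$ is $p$-completely flat over $A$. Reducing modulo $p^n$ gives an exact sequence
\[ 0 \to \Gamma^n_{A/p^n A}(M/p^n M) \to (M/p^n M)^{\otimes n/(A/p^n A)} \to C/p^n C \to 0 \]
of $A/p^n A$-modules, where $C/p^n C$ is flat over $A/p^n A$; here I would use base change of divided powers and tensor products along the $p$-completely flat module $M$ to identify the first two terms. Since $C/p^n C$ is flat over $A/p^n A$, tensoring over $A/p^n A$ with $N/p^n N$ preserves exactness and in particular the injectivity of the leftmost arrow.

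The final step is to match the source and target of the resulting injection with $V^{\Sigma_n}/p^n V^{\Sigma_n}$ and $V/p^n V$ respectively. This follows from Corollary \ref{cor: tensor conc on degree 0} combined with the $p$-complete flatness of both $\cGamma^n_A M$ and $M^{\cotimes n/A}$ over $A$ (the latter being a tensor product of $p$-completely flat modules), and the compatibility of these identifications with the canonical inclusion $V^{\Sigma_n} \hookrightarrow V$ is guaranteed by the isomorphism in Proposition \ref{prop: flat pd power}. The main delicacy will be the bookkeeping of these last identifications and the naturality of the isomorphism from Proposition \ref{prop: flat pd power}; once that is pinned down, the result follows immediately from the flat base change argument above.
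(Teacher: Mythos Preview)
Your proposal is correct and follows essentially the same approach as the paper: both arguments hinge on the $p$-complete flatness of the cokernel $C$ of $\cGamma^n_A(M) \hookrightarrow M^{\cotimes n/A}$, which (since $A/p^n$ is $p^\infty$-torsion) gives $\Tor_1^A(C,A/p^n)=0$ and hence injectivity of $V^{\Sigma_n}/p^n \to V/p^n$. The only cosmetic difference is that the paper first reduces to $N=A$ via the square-zero extension trick of Corollary~\ref{cor: tensor conc on degree 0}, while you carry $N$ through and tensor with $N/p^n$ over $A/p^n$ at the end---both routes arrive at the same place.
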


\begin{proof}
Using the same technique as in Corollary \ref{cor: tensor conc on degree 0}, we may assume that $N$ is a ring over $A$. Now, we may pass to the case where the base-change has already been done, and assume that $N=A.$

Now, this is a direct result of that
\[ \bigl( M^{\cotimes n/A} \bigr)^{\Sigma_n} \to M^{\cotimes n/A} \]
has $p$-completely flat cokernel. More precisely, let $M_1 \subseteq M$ be an injection of $R$-modules, whose cokernel is derived $p$-complete, and $p$-completely flat over $R$. It follows that
\[ \Tor_1^R \left( M / M_1, R/p^n R \right) =0, \]
so the map
\[ M_1 / p^n M_1 \to M / p^n M \]
is injective, as desired.
\end{proof}

We will also need the theory of descent.

\begin{theo}\label{thm: descent theory}
Assume that $A,B$ are $p$-complete rings of bounded $p$-torsion, and $A \to B$ be $p$-completely faithfully flat. Consider the cosimplicial ring $B(n) = B^{\cotimes(n+1)/A}$. Then, we have:
\begin{enumerate}[label=(\arabic*)]
\item For any module $M$ over $A$ such that $M$ is $p$-complete of bounded $p^\infty$-torsion or $M$ is adic over $A\bigl[ \frac 1 p \bigr]$, we have a quasi-isomorphism of cosimplicial $A$-modules:
\[ M \to M \cotimes_A B(\bullet). \]
\item We have descent theory of finite projective modules for $A \to B(\bullet)$ and $A\bigl[ \frac 1 p \bigr] \to B(\bullet) \bigl[ \frac 1 p \bigr]$. In other words, we have equivalences of categories
\begin{align*}
\Vect(A) & \to \lim_\simp \Vect(B(\bullet)), \\
\Vect\bigl(A\bigl[\frac 1 p \bigr]\bigr) & \to \lim_\simp \Vect\bigl(B(\bullet) \bigl[ \frac 1 p \bigr]\bigr).
\end{align*}
\end{enumerate}
\end{theo}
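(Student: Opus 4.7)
The plan is to reduce everything to classical faithfully flat descent modulo $p$ and then lift back via derived Nakayama. The observation underlying both parts is that, since $A \to B$ is $p$-completely faithfully flat, the map $A/p \to B/p$ is classically faithfully flat, so the cosimplicial ring $B(\bullet)/p = (B/p)^{\otimes \bullet+1/(A/p)}$ furnishes a classical faithfully flat Čech cover of $A/p$.

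For part (1), I would work first with $M$ a $p$-complete $A$-module of bounded $p^\infty$-torsion. Form the fibre
\[ C := \fib \bigl( M \to \lim\nolimits_\simp \left( M \cotimes_A B(\bullet) \right) \bigr) \]
in $D(A)$. Each term $M \cotimes_A B(n)$ is derived $p$-complete, and the totalisation preserves derived $p$-completeness (Lemma \ref{lem: derived completion presentability}), so $C$ is derived $p$-complete. Now reduce modulo $p$: since $B(n)$ is $p$-completely flat over $A$, Corollary \ref{cor: tensor conc on degree 0} gives
\[ \bigl( M \cotimes_A B(n) \bigr) \otimes_A^\bL A/p \cong (M/p) \otimes_{A/p}^\bL (B(n)/p), \]
and by Proposition \ref{prop: totalisation commute with filtered colimits for coconnective spectra}, modding out by $p$ commutes with the totalisation (the diagram is uniformly coconnective). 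Classical faithfully flat descent for the map $A/p \to B/p$ then shows $C \otimes_A^\bL A/p =0$, and derived Nakayama gives $C=0$. For $M$ adic over $A\bigl[\frac 1 p \bigr]$, pick a module of definition $M_0$ and apply the $p$-complete case to $M_0$, then invert $p$; compatibility with $-\bigl[\frac 1 p \bigr]$ follows from Proposition-Definition \ref{prop-defi: base change of adic modules}(3).

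For part (2), I would proceed by upgrading the quasi-isomorphism of (1) to an equivalence of categories, following the standard pattern for faithfully flat descent. Given descent data $(N, \alpha)$ on $B$ with $N \in \Vect(B)$ and $\alpha$ the usual cocycle isomorphism over $B \cotimes_A B$, set $M := \lim_\simp N(\bullet)$, where $N(n) = N \cotimes_{B,d_n} B(n)$ carries the cosimplicial structure built from $\alpha$. The fully faithfulness of $\Vect(A) \to \lim_\simp \Vect(B(\bullet))$ is immediate from (1) applied to internal Homs (finite projective modules are dualisable, so $\Hom_A(M_1,M_2) \cotimes_A B(\bullet)$ matches $\Hom_{B(\bullet)}(M_1 \cotimes_A B(\bullet), M_2 \cotimes_A B(\bullet))$). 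For essential surjectivity, one uses that the base-change $M \cotimes_A B \to N$ is an isomorphism by (1), and then shows $M$ itself is finite projective: $M$ is derived $p$-complete, and the criterion of Proposition \ref{prop: p-completely flat} reduces finite-projectivity of $M$ over $A$ to that of $M/p$ over $A/p$. But $M/p$ carries classical faithfully flat descent data on $B/p$ with underlying module $N/p \in \Vect(B/p)$, hence $M/p \in \Vect(A/p)$ by classical faithfully flat descent of vector bundles. The rational variant follows by inverting $p$, using that finite projective modules over $A\bigl[\frac 1 p \bigr]$ correspond to $p$-completely flat finitely presented $A$-modules modulo isogeny.

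The main technical obstacle is the interchange of the totalisation with reduction modulo $p$ and with inverting $p$: without some uniform boundedness, limits over $\simp$ do not commute with $-\otimes^\bL A/p$ or $-\bigl[\frac 1 p \bigr]$. Here this is harmless because the cosimplicial diagram $M \cotimes_A B(\bullet)$ is concentrated in degree $0$ by $p$-complete flatness of $B(n)$ (Corollary \ref{cor: tensor conc on degree 0}), so Proposition \ref{prop: totalisation commute with filtered colimits for coconnective spectra} and Definition \ref{defi: compatible with totalisation} apply. A second subtle point, arising in (2), is that the finite-projectivity of $M$ must be verified before appealing to (1): one cannot simply define $M$ as a limit and hope it works. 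The cleanest route is to check $p$-complete flatness and finite generation of $M/p$ separately and invoke Proposition \ref{prop: p-completely flat}, so that the full-faithfulness argument and the essential-surjectivity argument remain cleanly separated.
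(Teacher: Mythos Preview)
Your strategy for (1)---reduce modulo $p$ and apply derived Nakayama---matches the paper's (Lemma~\ref{lem: acyclic cosimplicial}), but the execution has gaps. Proposition~\ref{prop: totalisation commute with filtered colimits for coconnective spectra} concerns filtered colimits, not tensor products, so it does not justify interchanging $\lim_\simp$ with $-\otimes_A^\bL A/p$; when $A$ has $p$-torsion, $A/p$ need not be perfect over $A$ and this interchange genuinely fails. The paper instead tensors with the perfect complex $A' = A \otimes_{\mathbb Z}^{\bL} \mathbb F_p$, which commutes with limits automatically. After that reduction one lands over the connective $\dE_\infty$-ring $A'$ rather than the discrete ring $A/p$, and a Postnikov-tower argument on $M' = M \otimes_{\mathbb Z}^{\bL} \mathbb F_p$ is still needed before classical descent applies. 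Your displayed formula is also off: the correct identity is $\bigl(M \cotimes_A B(n)\bigr) \otimes_A^\bL A/p \cong (M \otimes_A^\bL A/p) \otimes_{A/p}^\bL (B(n)/p)$, and $M \otimes_A^\bL A/p$ differs from $M/p$ in general.

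For (2) your route diverges from the paper, and the fix you propose does not close the gap. The paper's integral case is much cleaner than what you sketch: since $\Vect(R) \cong \lim_n \Vect(R/p^n)$ for $p$-complete $R$ of bounded $p^\infty$-torsion, one simply takes the inverse limit of the classical equivalences $\Vect(A/p^n) \cong \lim_\simp \Vect(B(\bullet)/p^n)$ and is done. In your argument, even after acknowledging the circularity, the proposed repair ``check $M/p$ is finite projective via classical descent'' presumes an identification $(\lim_\simp N(\bullet))/p \cong \lim_\simp (N(\bullet)/p)$, which is precisely the unjustified interchange; nor do you know a priori that $M$ is concentrated in degree $0$ with bounded $p^\infty$-torsion, so neither (1) nor Proposition~\ref{prop: p-completely flat} is yet available. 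The paper's appendix does pursue an argument closer to yours, but the key step there is to first base-change the cosimplicial object along $A \to B$, where the cover acquires a section and (1) applies directly to $N$ over $B$. For the rational case, your asserted correspondence between $\Vect\bigl(A\bigl[\frac 1 p\bigr]\bigr)$ and ``$p$-completely flat finitely presented $A$-modules modulo isogeny'' is not a standard fact and does not hold as stated; the paper either invokes \cite[Theorem~7.8]{mathew2021faithfullyflatdescentperfect} or carries out a separate descent argument for adic modules in the appendix.
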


\begin{proof}
For (1), see Lemma \ref{lem: acyclic cosimplicial}. We remark that it is not a priori correct that $\lim_\simp M \cotimes_A B(\bullet) \cong M \cotimes_A^\bL \lim_\simp B(\bullet)$ since the complex $B(\bullet)$ is not bounded.

For (2), the isomorphism on top follows from taking limit of the isomorphisms
\[ \Vect(A/p^n) \to \lim_\simp \Vect(B(\bullet)/p^n), \]
and the isomorphism at the bottom is \cite[Theorem 7.8]{mathew2021faithfullyflatdescentperfect} for $\text{Perf}_{[0,0]}$. We will also give an elementary proof (i.e. without using too much $\infty$-category) for this particular case in Appendix A.
\end{proof}

We now discuss (complete) flatness for filtered algebras.
\begin{prop}\label{prop: flatness by graded}
Let $B$ be a strictly connective filtered $\dE_\infty$-ring (i.e. all $\gr_*(B)$ are connective), and $M$ be a filtered $B$-module. Then, if the underlying module of $\gr_* M$ is (faithfully) flat over $\gr_* B$ (see \cite[Definition 7.2.2.10]{lurie2017higher} and \cite[Definition B.6.1.1]{lurie2018spectral}), then the underlying module of $M$ is (faithfully) flat over $B$.
\end{prop}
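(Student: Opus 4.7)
The plan is to reduce questions about $M_\infty := U(M)$ and $B_\infty := U(B)$ (where $U$ denotes the underlying-object functor) to the graded setting by means of the trivial filtration. First, since $\gr_* M$ is flat over $\gr_* B$, each $\gr_i M$ is connective; as a finite extension of $\gr_0 M, \dots, \gr_i M$, each $M_i$ is connective; and therefore $M_\infty = \colim_i M_i$ is connective. By Proposition \ref{prop: flatness as t-exactness}, it then suffices (for flatness) to show that for every discrete $\pi_0(B_\infty)$-module $N$, the tensor product $M_\infty \otimes_{B_\infty}^\bL N$ is again discrete.

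To this end, equip $N$ with the trivial filtration $N^\#$ defined by $(N^\#)_i = N$ and identity transition maps. Then $B_\infty^\#$ is a filtered $\dE_\infty$-ring, $N^\#$ is a $B_\infty^\#$-module, and the canonical filtered ring map $B \to B_\infty^\#$ (coming from the adjunction between $U$ and the trivial-filtration functor) makes $N^\#$ into a filtered $B$-module. Since $U$ and $\gr_*$ are both symmetric monoidal and preserve relative tensor products (Example \ref{ex: relative tensor products}), we obtain natural identifications
\[ U(M \otimes_B N^\#) \cong M_\infty \otimes_{B_\infty}^\bL N, \qquad \gr_*(M \otimes_B N^\#) \cong \gr_* M \otimes_{\gr_* B}^\bL \gr_* N^\#. \]
Now $\gr_* N^\#$ is $N$ concentrated in graded degree $0$, viewed as a $\gr_* B$-module through the augmentation $\gr_* B \to \gr_0 B = B_0$, so the right-hand side rewrites as $(\gr_* M \otimes_{\gr_* B}^\bL B_0) \otimes_{B_0}^\bL N$. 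Since flatness is preserved under base change, $\gr_* M \otimes_{\gr_* B}^\bL B_0$ is $B_0$-flat in each graded piece, and tensoring with the discrete $B_0$-module $N$ produces a discrete spectrum in each graded piece. Hence every $(M \otimes_B N^\#)_i$ is discrete (being a finite extension of discrete spectra), and $M_\infty \otimes_{B_\infty}^\bL N \cong \colim_i (M \otimes_B N^\#)_i$ is a filtered colimit of discrete spectra, which is again discrete. This establishes flatness.

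For the faithful case, assume that $\gr_* M$ is faithfully flat over $\gr_* B$. For any nonzero discrete $\pi_0(B_\infty)$-module $N$, the graded module $\gr_* N^\#$ is nonzero, so conservativity forces $\gr_*(M \otimes_B N^\#) \neq 0$. Let $i_0$ be the least index with $\gr_{i_0}(M \otimes_B N^\#) \neq 0$; then $(M \otimes_B N^\#)_j = 0$ for $j < i_0$ and $(M \otimes_B N^\#)_{i_0} \cong \gr_{i_0}(M \otimes_B N^\#)$ is a nonzero discrete spectrum. Since every $\gr_i(M \otimes_B N^\#)$ is discrete, the long exact sequence for each cofiber sequence $(M \otimes_B N^\#)_i \to (M \otimes_B N^\#)_{i+1} \to \gr_{i+1}(M \otimes_B N^\#)$ shows the transition map is injective on $\pi_0$ for all $i \geq i_0$. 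Therefore the nonzero group $\pi_0((M \otimes_B N^\#)_{i_0})$ injects into $\pi_0(M_\infty \otimes_{B_\infty}^\bL N)$, which is thus nonzero. The main technical point is the computation of $\gr_*(M \otimes_B N^\#)$ as $(\gr_* M \otimes_{\gr_* B}^\bL B_0) \otimes_{B_0}^\bL N$, together with the observation that the first factor is $B_0$-flat by base change.
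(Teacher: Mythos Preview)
Your proof is correct and follows essentially the same approach as the paper: equip $N$ with the trivial filtration, use that $\gr_*$ is symmetric monoidal and preserves relative tensor products to compute $\gr_*(M\otimes_B N^\#)\cong \gr_*M\otimes_{\gr_*B}^\bL\gr_*N^\#$, and then use flatness of $\gr_*M$ over $\gr_*B$ to conclude discreteness of each filtered piece. Your detour through $B_0$ is not needed (one can simply say $\gr_*N^\#$ is discrete, hence so is $\gr_*M\otimes_{\gr_*B}^\bL\gr_*N^\#$), and your faithful-flatness argument is a more explicit unpacking of the paper's one-line observation that if the colimit vanishes and all filtered pieces are discrete with discrete graded pieces, then all graded pieces vanish.
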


\begin{proof}
Since $\gr_*(M)$ is connective, we conclude that $M$ is strictly connective.

Let $N$ be any $B$-module concentrated on degree 0, and we may equip $N$ with the trivial filtration $\Fil_n(N) = N, \forall n \ge 0$, so $\gr_*(N)$ is concentrated on degree 0. We have (see Example \ref{ex: relative tensor products})
\[ \gr_* (M \otimes_B^\bL N) \cong \gr_*(M) \otimes_{\gr_*(B)}^\bL N, \]
which is concentrated on degree 0 by assumption. Therefore, $M \otimes_B^\bL N$ are strictly concentrated on degree 0, and we see that $M$ is flat over $B$ (see \cite[Theorem 7.2.2.15 (5)]{lurie2017higher}).

As for the faithfully flat part, by \cite[Proposition 7.2.2.13]{lurie2017higher}, we only need to prove that for any $B$-module $N$ concentrated on degree 0, we have $M \otimes_B^\bL N =0 \Rightarrow N=0$. Assume that $M \otimes_B^\bL N =0$. Since $M \otimes_B^\bL N$ is strictly concentrated on degree 0, all its graded pieces are zero, and
\[ 0 = \gr_* (M \otimes_B^\bL N) \cong \gr_*(M) \otimes_{\gr_*(B)}^\bL \gr_*(N). \]
Since $\gr_*(M)$ is faithfully flat over $\gr_*(B)$, we see that $\gr_*(N) =0,$ and it follows that $N = \gr_0(N) =0$, as desired.
\end{proof}

\begin{cor}\label{cor: flatness by graded complete case}
Let $B$ be a (classical)\footnote{We avoid discussing simplicial filtered rings here for simplicity.} filtered ring over $\mathbb Z$, and $M$ be a filtered $B$-module. Then, if the underlying module of $\gr_* M$ is $p$-completely (faithfully) flat over $\gr_* B$, then the underlying module of $M$ is $p$-completely (faithfully) flat over $B$.
\end{cor}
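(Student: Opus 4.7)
The strategy is a formal reduction modulo $p$ to Proposition \ref{prop: flatness by graded}. I would form the filtered $\dE_\infty$-ring $B' := B \otimes_\dZ^\bL \dZ/p$ and the filtered $B'$-module $M' := M \otimes_\dZ^\bL \dZ/p$ (where $\dZ/p$ is placed in filtration degree $0$). Since Day convolution with a trivially filtered object commutes with taking graded pieces, one has
\[ \gr_n(B') \cong \gr_n(B) \otimes_\dZ^\bL \dZ/p, \qquad \gr_n(M') \cong \gr_n(M) \otimes_\dZ^\bL \dZ/p, \]
and similarly the underlying modules of $B'$ and $M'$ are identified with $B \otimes_\dZ^\bL \dZ/p$ and $M \otimes_\dZ^\bL \dZ/p$ respectively, because taking underlying objects (the colimit $\colim_n \Fil_n$) commutes with $-\otimes_\dZ^\bL \dZ/p$.

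Next I would verify the hypotheses of Proposition \ref{prop: flatness by graded} for $B'$ and $M'$. Strict connectivity of $B'$ is automatic: $B$ is a classical filtered ring, so $\gr_n(B)$ is concentrated in cohomological degree $0$, and tensoring with $\dZ/p = \Kos(\dZ;p)$ keeps the result in cohomological degrees $[-1,0]$, hence still connective. The flatness hypothesis translates cleanly as well: by Proposition-Definition \ref{prop-defi: complete flatness} (applied with $I=p\dZ$ and generator $x_1=p$), the $p$-complete (faithful) flatness of $\gr_*(M)$ over $\gr_*(B)$ is exactly equivalent to the (faithful) flatness of $\gr_*(M)\otimes_\dZ^\bL \dZ/p \cong \gr_*(M')$ over $\gr_*(B)\otimes_\dZ^\bL \dZ/p \cong \gr_*(B')$.

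Finally, apply Proposition \ref{prop: flatness by graded}: the underlying module of $M'$ is (faithfully) flat over the underlying module of $B'$. Unwinding the identifications above, this says that $M \otimes_B^\bL (B \otimes_\dZ^\bL \dZ/p)$ is (faithfully) flat over $B \otimes_\dZ^\bL \dZ/p$, which is precisely condition (3) of Proposition-Definition \ref{prop-defi: complete flatness} (for $I=p\dZ$). Hence $M$ is $p$-completely (faithfully) flat over $B$. The argument is entirely formal; there is no substantive obstacle beyond bookkeeping---the only point requiring any care is matching the equivalent characterisations of $p$-complete flatness with the ordinary flatness output of the underlying proposition, which Proposition-Definition \ref{prop-defi: complete flatness} handles.
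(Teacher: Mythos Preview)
Your proof is correct and follows exactly the paper's approach: the paper's proof is the single line ``Take the tensor product $-\otimes_\mathbb Z^\bL \mathbb Z/p\mathbb Z$, and we are reduced to Proposition \ref{prop: flatness by graded},'' and you have simply unpacked this reduction carefully (verifying strict connectivity of $B\otimes_\dZ^\bL\dZ/p$ and translating both the hypothesis and the conclusion through condition (3) of Proposition-Definition \ref{prop-defi: complete flatness}).
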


\begin{proof}
Take the tensor product $- \otimes_\mathbb Z^\bL \mathbb Z/p\mathbb Z$, and we are reduced to Proposition \ref{prop: flatness by graded}.
\end{proof}

\begin{prop}\label{prop: pd strict flatness}
For any ring $A$ and an exact sequence of flat $A$-modules
\[ 0 \to M \to N \to Q \to 0, \]
the map $\Gamma^\bullet_A(M) \to \Gamma^\bullet_A(N)$ is faithfully flat.
\end{prop}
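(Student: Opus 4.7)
The plan is to exhibit $\Gamma^*_A(N)$ as an exhaustive increasing $\mathbb Z_{\ge 0}$-filtered $\Gamma^*_A(M)$-algebra with $\Fil_0 \Gamma^*_A(N) = \Gamma^*_A(M)$, whose associated graded, as a graded $\Gamma^*_A(M)$-algebra, is $\Gamma^*_A(M) \otimes_A \Gamma^*_A(Q)$, and then apply Proposition \ref{prop: flatness by graded}. Observe first that $A \to \Gamma^*_A(Q)$ is faithfully flat: it is flat since $\Gamma^n$ of a flat module is flat, and the augmentation $\Gamma^*_A(Q) \to A$ provides a ring retraction. Hence by base change, $\Gamma^*_A(M) \to \Gamma^*_A(M) \otimes_A \Gamma^*_A(Q)$ is faithfully flat. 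Granted the filtered structure on $\Gamma^*_A(N)$ as described, the inclusion $\Gamma^*_A(M) \hookrightarrow \Gamma^*_A(N)$ becomes a map of filtered $A$-algebras (with $\Gamma^*_A(M)$ placed in the trivial filtration concentrated in degree $0$), whose associated graded is precisely the above faithfully flat map, so Proposition \ref{prop: flatness by graded} immediately gives the conclusion.

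To construct this filtered structure, I would equip $N$ with the two-step increasing filtration $\Fil_0 N = M$, $\Fil_k N = N$ for $k \ge 1$, and form the Rees module
\[ \tilde N \mathrel{:=} \bigoplus_{k \ge 0} \Fil_k N \cdot t^k = M \otimes_A A[t] + t \cdot \bigl( N \otimes_A A[t] \bigr) \subseteq N \otimes_A A[t]. \]
The short exact sequence of graded $A[t]$-modules
\[ 0 \to M \otimes_A A[t] \to \tilde N \to t \cdot \bigl( Q \otimes_A A[t] \bigr) \to 0 \]
shows that $\tilde N$ is flat over $A[t]$, with $\tilde N / t \tilde N \cong M \oplus Q$ (the associated graded of $N$) and $\tilde N / (t-1) \tilde N \cong N$. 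Since $\Gamma^*$ preserves flatness and commutes with flat base change, $\Gamma^*_{A[t]}(\tilde N)$ is a flat graded $A[t]$-algebra whose specialisations at $t=0$ and $t=1$ are $\Gamma^*_A(M) \otimes_A \Gamma^*_A(Q)$ and $\Gamma^*_A(N)$ respectively.

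Under the Rees correspondence between flat (i.e.\ $t$-torsion-free) graded $A[t]$-algebras and exhaustive increasing $\mathbb Z_{\ge 0}$-filtered $A$-algebras, $\Gamma^*_{A[t]}(\tilde N)$ corresponds to the sought filtration on $\Gamma^*_A(N)$, with $\Fil_0 \Gamma^*_A(N) = \Gamma^*_A(\Fil_0 N) = \Gamma^*_A(M)$ and associated graded $\Gamma^*_A(M) \otimes_A \Gamma^*_A(Q)$, graded by degree in $\Gamma^*_A(Q)$. The main technical point is this last translation between Rees data and filtered data, which rests on the flat base change identity $\Gamma^*_{A[t]}(\tilde N) \otimes_{A[t]}^\bL A \cong \Gamma^*_A(\tilde N / t \tilde N)$ (valid since $\tilde N$ and $\Gamma^*_{A[t]}(\tilde N)$ are $A[t]$-flat), together with the fact that the grading on $\tilde N$ pulls through $\Gamma^*$ to produce a grading on $\Gamma^*_{A[t]}(\tilde N)$ compatible with the $t$-grading on $A[t]$.
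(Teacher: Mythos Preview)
Your proof is correct and takes a genuinely different route from the paper's. The paper argues by Lazard's theorem: writing $Q = \colim_i Q_i$ with $Q_i$ finite free, pulling back to $N_i = N \times_Q Q_i$, and using that $\Gamma^\bullet_A$ commutes with filtered colimits to reduce to the split case $N_i \cong M \oplus Q_i$, where $\Gamma^\bullet_A(N_i) \cong \Gamma^\bullet_A(M) \otimes_A \Gamma^\bullet_A(Q_i)$ is visibly faithfully flat. Your argument instead produces an explicit filtration on $\Gamma^\bullet_A(N)$ with associated graded $\Gamma^\bullet_A(M) \otimes_A \Gamma^\bullet_A(Q)$ via the Rees construction, then invokes Proposition~\ref{prop: flatness by graded}. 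The paper's proof is shorter and more elementary; yours is more structural, makes direct use of the filtered-to-graded machinery the paper develops, and has the pleasant feature of exhibiting the filtration canonically rather than only after passing to a colimit.

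One point worth tightening: the assertion that the grading on $\tilde N$ ``pulls through $\Gamma^*$'' to yield a $\mathbb Z_{\ge 0}$-grading on $\Gamma^*_{A[t]}(\tilde N)$ compatible with the $t$-grading. This is true, but deserves a sentence of justification. The cleanest argument is $\dG_m$-functoriality: the grading on $\tilde N$ is a $\dG_m$-action over the scaling action on $A[t]$, and functoriality of $\Gamma^*$ transports this to $\Gamma^*_{A[t]}(\tilde N)$. Alternatively, since $\tilde N$ is flat one may identify $\Gamma^n_{A[t]}(\tilde N)$ with the $\Sigma_n$-invariants in $\tilde N^{\otimes_{A[t]} n}$ and read off the grading directly; in particular the degree-$0$ part of $\Gamma^n_{A[t]}(\tilde N)$ is $(M^{\otimes_A n})^{\Sigma_n} = \Gamma^n_A(M)$, confirming $\Fil_0 = \Gamma^*_A(M)$ as you claim.
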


\begin{proof}
Write $Q$ as the filtered colimit of finite free modules, say
\[ Q = \colim_i Q_i \]
and take the pullbacks $N_i = N \times_Q Q_i$. We have exact sequences
\[ 0 \to M \to N_i \to Q_i \to 0, \]
and $\Gamma^\bullet_A(N) = \colim_i \Gamma_A^\bullet(N_i)$. Therefore, we only need to prove this for $(M,N_i,Q_i)$, hence we may assume that $Q$ is finite free.

In this case, the exact sequence is split, and we see that
\[ \Gamma^\bullet_A(N) \cong \Gamma^\bullet_A(M) \otimes_A \Gamma_A^\bullet(Q), \]
hence is faithfully flat over $\Gamma^\bullet_A(M)$.
\end{proof}

\begin{cor}\label{cor: pd strict flatness complete case}
For any $p$-complete ring $A$ of bounded $p^\infty$-torsion and an exact sequence of $p$-complete and $p$-completely flat $A$-modules
\[ 0 \to M \to N \to Q \to 0, \]
the map $\cGamma^\bullet_A(M) \to \cGamma^\bullet_A(N)$ is $p$-completely faithfully flat.
\end{cor}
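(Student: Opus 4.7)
The plan is to reduce the complete statement to the uncompleted Proposition \ref{prop: pd strict flatness} by taking the derived base change along $A \to A/p$. By Proposition-Definition \ref{prop-defi: complete flatness}, to verify that $\cGamma^\bullet_A(M) \to \cGamma^\bullet_A(N)$ is $p$-completely faithfully flat, it suffices to verify that the induced map
\[ \cGamma^\bullet_A(M) \otimes_A^\bL A/p \to \cGamma^\bullet_A(N) \otimes_A^\bL A/p \]
is a faithfully flat map of (classical) $A/p$-algebras.

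First I would identify the two sides. Since the derived PD-power functor $\Gamma^*: \SCRModcn \to \grAlg$ (from Section \ref{sect: remarks on infty-cat}) commutes with arbitrary colimits and with base change, and since the completion $\cGamma^*$ is obtained from $\Gamma^*$ by derived $p$-adic completion, we obtain a canonical isomorphism
\[ \cGamma^\bullet_A(M) \otimes_A^\bL A/p \cong \Gamma^\bullet_{A/p}(M \otimes_A^\bL A/p). \]
Because $M$ is $p$-completely flat over $A$, Proposition \ref{prop: p-completely flat} says $M \otimes_A^\bL A/p$ is concentrated in degree $0$ and coincides with the classical quotient $M/pM$, which is flat over $A/p$. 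Hence by Lemma \ref{lemma: concentrated on degree zero}, $\Gamma^\bullet_{A/p}(M/pM)$ agrees with the classical PD-power algebra; the analogous statement holds for $N$ and $Q$.

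Second, I would verify that the short exact sequence survives reduction mod $p$. Since $Q$ is $p$-completely flat, $Q \otimes_A^\bL A/p$ is concentrated in degree $0$, which forces the exactness of
\[ 0 \to M/pM \to N/pM \to Q/pQ \to 0, \]
as a sequence of flat $A/p$-modules (by Proposition \ref{prop: p-completely flat} again). Now applying Proposition \ref{prop: pd strict flatness} to this sequence over the classical ring $A/p$ yields that the map
\[ \Gamma^\bullet_{A/p}(M/pM) \to \Gamma^\bullet_{A/p}(N/pN) \]
is faithfully flat. Combining with the identification above gives the desired $p$-complete faithful flatness.

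I do not expect any serious obstacle: the only technical point is the compatibility of $\cGamma^*$ with the base change $A \to A/p$ in the derived setting, which is already built into the construction recalled in Section \ref{sect: remarks on infty-cat}. Once that identification is in hand, everything reduces formally to Proposition \ref{prop: pd strict flatness}.
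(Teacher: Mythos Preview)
Your proof is correct and follows exactly the same approach as the paper: the paper's proof is the one-liner ``Take the tensor product $-\otimes_A^\bL A/p$, and we are reduced to Proposition \ref{prop: pd strict flatness},'' and you have simply written out the details of that reduction (the base change identification $\cGamma^\bullet_A(M) \otimes_A^\bL A/p \cong \Gamma^\bullet_{A/p}(M/pM)$ and the survival of exactness mod $p$). Note the small typo $N/pM$ where you mean $N/pN$.
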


\begin{proof}
Take the tensor product $-\otimes_A^\bL A/p$, and we are reduced to Proposition \ref{prop: pd strict flatness}.
\end{proof}

\subsection{Complete cotangent complex}

In this subsection, we will define complete cotangent complexes, and verify that they satisfy some desirable properties.

\begin{defi}\label{defi: complete cotangent complex}
Let $A \to B$ be simplicial rings and $I \subseteq \pi_0 \left( B \right)$ be a finitely generated ideal. We define the completed cotangent complex $\cdL_{B/A}$ to be the (derived) $I$-adic completion of the (uncompleted) cotangent complex $\dL_{B/A}.$
\end{defi}

\begin{prop}\label{prop: complete cotangent completion}
In the above context, let $\widehat B$ be the (derived) $I$-adic completion of $B$. Then $\cdL_{\widehat B/B} =0$. Moreover, the canonical map $\cdL_{\widehat B/A} \to \cdL_{B/A}$ is an isomorphism.
\end{prop}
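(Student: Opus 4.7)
The plan is to first establish $\cdL_{\widehat B/B}=0$, and then deduce the desired isomorphism between $\cdL_{B/A}$ and $\cdL_{\widehat B/A}$ via the transitivity triangle for $A\to B\to\widehat B$. Throughout I will invoke the following standard fact about derived $I$-completion: for any $M\in D(B)$, the canonical map $M\otimes_B^\bL B/I\to\widehat M\otimes_B^\bL B/I$ is an isomorphism; equivalently, writing $F(M):=\fib(M\to\widehat M)$, one has $F(M)\otimes_B^\bL B/I=0$. This follows from derived Nakayama together with the idempotency of the completion functor.

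For the first claim, set $F:=\fib(B\to\widehat B)$. Tensoring the triangle $F\to B\to\widehat B$ with $B/I$ and using the standard fact, one obtains $\widehat B\otimes_B^\bL B/I\cong B/I$. Combined with base change for the cotangent complex, this yields
\[\cdL_{\widehat B/B}\otimes_B^\bL B/I\;\cong\;\dL_{\widehat B/B}\otimes_B^\bL B/I\;\cong\;\dL_{(\widehat B\otimes_B^\bL B/I)/(B/I)}\;\cong\;\dL_{(B/I)/(B/I)}\;=\;0.\]
Since $\cdL_{\widehat B/B}$ is derived $I$-complete by construction, derived Nakayama then forces $\cdL_{\widehat B/B}=0$.

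For the second claim, apply the transitivity triangle $\widehat B\otimes_B^\bL\dL_{B/A}\to\dL_{\widehat B/A}\to\dL_{\widehat B/B}$ and take derived $I$-completion. By the first step the third term becomes zero, so the derived $I$-completion of $\widehat B\otimes_B^\bL\dL_{B/A}$ is isomorphic to $\cdL_{\widehat B/A}$. To identify this in turn with $\cdL_{B/A}$, note that the fibre of the natural map $\dL_{B/A}\to\widehat B\otimes_B^\bL\dL_{B/A}$ is $F\otimes_B^\bL\dL_{B/A}$, whose tensor with $B/I$ equals $(F\otimes_B^\bL B/I)\otimes_B^\bL\dL_{B/A}=0$, so its derived $I$-completion vanishes. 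Hence $\dL_{B/A}\to\widehat B\otimes_B^\bL\dL_{B/A}$ is an isomorphism after completion, yielding $\cdL_{B/A}\cong\cdL_{\widehat B/A}$ (the map in the statement being the one induced by functoriality of $\dL_{-/A}$ along $B\to\widehat B$, followed by completion). The only delicate input is the standard identity $\widehat M\otimes_B^\bL B/I\cong M\otimes_B^\bL B/I$; beyond that, everything reduces to routine manipulations of base change and the transitivity triangle for the cotangent complex, so I do not expect any further obstacle.
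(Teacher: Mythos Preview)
Your proof is correct and follows essentially the same approach as the paper: both use base change for the cotangent complex together with the identity $\widehat B\otimes_B^\bL B_k\cong B_k$ (your ``standard fact'' is the case $k=1$) to kill $\cdL_{\widehat B/B}$, and then invoke the transitivity triangle. The only cosmetic difference is that the paper writes completion explicitly as $\lim_k(-)\otimes_B^\bL B_k$ and checks vanishing termwise, whereas you reduce modulo $I$ once and appeal to derived Nakayama; these are equivalent formulations.
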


\begin{proof}
Take a set of generators $a_1,\ldots,a_n$ of $I$ (and their liftings in the zeroth-complex of $B$). Recall that for any $X \in D \left( B \right),$ we have (see Definition \ref{defi: derived complete})
\[ \widehat X = \lim X \otimes_{\mathbb Z \left[ x_1,\ldots,x_n\right]}^\bL \mathbb Z \left[ x_1,\ldots,x_n \right]/ \bigl( x_1^k,\ldots,x_n^k \bigr), \]
where $\mathbb Z \left[ x_1,\ldots,x_n\right] \to B$ sends $x_i$ to $a_i.$ For simplicity, denote
\[ B_k = B \otimes_{\mathbb Z \left[ x_1,\ldots,x_n\right]}^\bL \mathbb Z \left[ x_1,\ldots,x_n \right]/ \bigl( x_1^k,\ldots,x_n^k \bigr). \]
Consider the exact triangle
\[ \widehat B \otimes_B^\bL \dL_{B/A} \to \dL_{\widehat B/A} \to \dL_{\widehat B/B}. \]
Take the $I$-adic completion, and we see that (since $\widehat B \otimes_B^\bL B_k = B_k$) there exists an exact triangle
\[ \cdL_{B/A} \to \cdL_{\widehat B/A} \to \cdL_{\widehat B/B}. \]
Therefore, we only need to prove that $\cdL_{\widehat B/B} =0.$

Now, since $\widehat B \otimes_B^\bL B_k = B_k,$ we have
\[ \dL_{\widehat B/B} \otimes_B^\bL B_k = \dL_{\widehat B \otimes_B^\bL B_k / B_k} =0, \]
so
\[ \cdL_{\widehat B/B} = \lim_k \dL_{\widehat B/B} \otimes_B^\bL B_k =0, \]
as desired.
\end{proof}

\begin{cor}
Let $A$ be a ring and $P$ be an $A$-algebra of bounded $p^\infty$-torsion for which $\dL_{P/A}$ is flat over $P$. Then,
\[ \cdL_{\widehat P/A} \cong \cdL_{P/A} \]
is $p$-completely flat over $\widehat P$.
\end{cor}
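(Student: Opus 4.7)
The plan is to chain together the preceding proposition with the $\Tor$-amplitude characterisation of $p$-completely flat modules in Proposition-Definition \ref{prop-defi: complete flatness}.

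First I would invoke Proposition \ref{prop: complete cotangent completion} with $B = P$ and the finitely generated ideal $I = (p) \subseteq \pi_0(P) = P$. This directly yields the canonical isomorphism $\cdL_{\widehat P/A} \cong \cdL_{P/A}$, reducing the claim to verifying that $\cdL_{P/A}$ is $p$-completely flat over $\widehat P$.

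For $p$-complete flatness I would use criterion (3) of Proposition-Definition \ref{prop-defi: complete flatness}: a module $M$ is $p$-completely flat over $\widehat P$ precisely when $M \otimes^{\bL}_{\mathbb{Z}[x]} \mathbb{Z}$ (with $x \mapsto p$) is flat over $\widehat P \otimes^{\bL}_{\mathbb{Z}[x]} \mathbb{Z}$. The key observation is that derived $p$-adic completion becomes trivial after $-\otimes^{\bL}_{\mathbb{Z}[x]} \mathbb{Z}$: for any $N$, the map $N \to \widehat N$ induces an isomorphism after $-\otimes^{\bL}_{\mathbb{Z}[x]} \mathbb{Z}/x^k$ for every $k$, hence after $-\otimes^{\bL}_{\mathbb{Z}[x]} \mathbb{Z}$. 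Applying this to $N = \dL_{P/A}$ and $N = P$ produces
\[ \cdL_{P/A} \otimes^{\bL}_{\mathbb{Z}[x]} \mathbb{Z} \;\cong\; \dL_{P/A} \otimes^{\bL}_{\mathbb{Z}[x]} \mathbb{Z}, \qquad \widehat P \otimes^{\bL}_{\mathbb{Z}[x]} \mathbb{Z} \;\cong\; P \otimes^{\bL}_{\mathbb{Z}[x]} \mathbb{Z}, \]
so the question reduces to showing that $\dL_{P/A} \otimes^{\bL}_{\mathbb{Z}[x]} \mathbb{Z}$ is flat over $P \otimes^{\bL}_{\mathbb{Z}[x]} \mathbb{Z}$.

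This final statement is immediate from the hypothesis: since $\dL_{P/A}$ is flat over $P$, its derived base change along $P \to P \otimes^{\bL}_{\mathbb{Z}[x]} \mathbb{Z}$ remains flat. There is no substantive obstacle; the only bookkeeping worth pausing on is the triviality of derived completion after reduction modulo $p$, which is already packaged into the equivalence of conditions (1)--(4) in Proposition-Definition \ref{prop-defi: complete flatness}.
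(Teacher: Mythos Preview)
Your proof is correct and matches the paper's intent. The paper gives no explicit proof for this corollary, treating it as an immediate consequence of Proposition~\ref{prop: complete cotangent completion}; you have simply fleshed out the straightforward reduction via criterion (3) of Proposition-Definition~\ref{prop-defi: complete flatness}, which is exactly how the paper expects the reader to fill in the gap.
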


\begin{notn}\label{notn: cOmega}
When $A,B$ are classical rings of bounded $p^\infty$-torsion, and $\cdL_{B/A}$ is $p$-completely flat over $B$, we also denote it by $\cOmega_{B/A}$.

If $\dL_{B/A}$ is flat over $B$, by Lemma \ref{lem: p-complete module as classical completion} and Proposition \ref{prop: derived completion is classical when bounded}, $\cOmega_{B/A}$ is always the derived (=classical) $p$-adic completion of $\Omega_{B/A}$, so our notations would be of no ambiguity.
\end{notn}

\begin{cor}
Let $A$ be a $p$-complete classical ring of bounded $p^\infty$-torsion. Denote $\SCR_{A/}$ ($\widehat\SCR_{A/}$) be the $\infty$-category of ($p$-complete) simplicial commutative rings over $A$, and $\FinPoly_A$ ($\widehat\FinPoly_A$) be the category of finite ($p$-complete) polynomial rings over $A$. Then, the functor $\cdL_{-/A}: \SCR_{A/} \to \cD \left( A \right)$ ($\cdL_{-/A}: \widehat\SCR_{A/} \to \cD \left( A \right)$) is the left Kan extension of its restriction on $\FinPoly_A$ ($\widehat\FinPoly_A$) given by
\[ P \mapsto \cOmega_{P/A}. \]
\end{cor}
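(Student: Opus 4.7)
The plan is to handle the two cases separately, starting with the uncompleted $\SCR_{A/}$ case and then deriving the completed one from it.

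For the $\SCR_{A/}$ case, I would invoke the classical fact (reviewed at the end of Section \ref{sect: remarks on infty-cat}) that $\dL_{-/A}: \SCR_{A/} \to D(A)$ is the left Kan extension of $\Omega_{-/A}: \FinPoly_A \to D(A)$; this follows from $\SCR_{A/} \cong \calP_\Sigma(\FinPoly_A)$ together with the fact that $\dL_{-/A}$ is left adjoint to the trivial square-zero extension. Since the $p$-adic completion $\widehat{(-)}: D(A) \to \cD(A)$ is itself a left adjoint by Lemma \ref{lem: derived completion presentability} and therefore preserves colimits, post-composition with it preserves left Kan extensions, yielding immediately that $\cdL_{-/A} = \widehat{(-)} \circ \dL_{-/A}: \SCR_{A/} \to \cD(A)$ is the left Kan extension of $\cOmega_{-/A} = \widehat{(-)} \circ \Omega_{-/A}: \FinPoly_A \to \cD(A)$.

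For the $\widehat\SCR_{A/}$ case I would mirror this strategy. First I would verify that $\cdL_{-/A}: \widehat\SCR_{A/} \to \cD(A)$ preserves colimits: this follows either directly from the derivations universal property (which still makes sense in the $p$-complete setting since $B \oplus M$ is $p$-complete whenever $B$ and $M$ are), or by combining Proposition \ref{prop: complete cotangent completion}, which gives $\cdL_{B/A} \cong \cdL_{\widehat B/A}$ and so factors $\cdL_{-/A}$ on $\SCR_{A/}$ through the reflective localization $\widehat{(-)}: \SCR_{A/} \to \widehat\SCR_{A/}$, with the observation that colimits in $\widehat\SCR_{A/}$ are obtained by completing colimits in $\SCR_{A/}$. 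Granted colimit preservation, every $B \in \widehat\SCR_{A/}$ can be written as a sifted colimit (computed in $\widehat\SCR_{A/}$) of objects of $\widehat\FinPoly_A$, and applying $\cdL_{-/A}$ yields the pointwise formula $\cdL_{B/A} = \ccolim_{(\widehat Q, \widehat Q \to B) \in (\widehat\FinPoly_A)_{/B}} \cOmega_{\widehat Q/A}$. By \cite[Proposition 5.5.8.15]{lurie2009higher}, this is exactly the left Kan extension description.

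The main obstacle I anticipate is the density statement for $\widehat\FinPoly_A$ in $\widehat\SCR_{A/}$, i.e., the identification $\widehat\SCR_{A/} \cong \calP_\Sigma(\widehat\FinPoly_A)$ used to invoke \cite[Proposition 5.5.8.15]{lurie2009higher}. Unlike in the uncompleted setting, the naive compactness of $A\langle x_1, \ldots, x_n \rangle$ in $\widehat\SCR_{A/}$ fails, because filtered colimits in $\widehat\SCR_{A/}$ involve $p$-adic completion and this completion step does not commute with the mapping space functor out of $A\langle x \rangle$. Nonetheless the objects of $\widehat\FinPoly_A$ do form a set of compact projective generators in the appropriate $p$-complete sense, so that $\widehat\SCR_{A/}$ is the free sifted-colimit completion of $\widehat\FinPoly_A$; once this standard fact about $p$-complete simplicial commutative rings is in hand, the rest of the argument is essentially formal.
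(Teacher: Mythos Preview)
Your treatment of the uncompleted case is correct and matches the paper's approach: $\SCR_{A/} = \calP_\Sigma(\FinPoly_A)$, $\dL_{-/A}$ is the left Kan extension of its restriction, and post-composing with the colimit-preserving completion $\widehat{(-)}$ gives the result for $\cdL_{-/A}$.

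For the completed case, your diagnosis of the obstacle is right, but your proposed resolution is wrong. The identification $\widehat\SCR_{A/} \cong \calP_\Sigma(\widehat\FinPoly_A)$ is \emph{false}: as you yourself observe, $A\langle x \rangle$ is not compact projective in $\widehat\SCR_{A/}$, because $\Map_{\widehat\SCR_{A/}}(A\langle x\rangle,-)$ is the underlying-space functor and this does not commute with the $p$-completion step in filtered colimits. There is no ``appropriate $p$-complete sense'' that repairs this; $\widehat\SCR_{A/}$ is genuinely not the free sifted-colimit completion of $\widehat\FinPoly_A$, so invoking \cite[Proposition 5.5.8.15]{lurie2009higher} directly does not work.

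The clean fix, which the paper's terse proof is implicitly using and which you have almost written down, is to transport the left Kan extension formula along the reflection. For $B \in \widehat\SCR_{A/}$, the completion--inclusion adjunction gives $\Map_{\SCR_{A/}}(P,B) \cong \Map_{\widehat\SCR_{A/}}(\widehat P,B)$, so completion induces an equivalence of slice categories $(\FinPoly_A)_{/B} \xrightarrow{\ \sim\ } (\widehat\FinPoly_A)_{/B}$. The former is sifted because $\SCR_{A/} = \calP_\Sigma(\FinPoly_A)$, hence so is the latter, and $B$ is the sifted colimit in $\widehat\SCR_{A/}$ of the canonical diagram over it. Now Proposition~\ref{prop: complete cotangent completion} gives $\cOmega_{P/A} \cong \cOmega_{\widehat P/A}$, so the colimit over $(\widehat\FinPoly_A)_{/B}$ of $\cOmega_{\widehat P/A}$ is identified with the colimit over $(\FinPoly_A)_{/B}$ of $\cOmega_{P/A}$, which is $\cdL_{B/A}$ by the uncompleted case. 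This is exactly the pointwise left Kan extension formula you wrote, but justified without any claim about $\calP_\Sigma(\widehat\FinPoly_A)$.
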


\begin{proof}
Observe that $\SCR_{A/} = \calP_\Sigma \left( \FinPoly_A \right)$ and $\widehat\SCR_{A/}$ is generated by $\widehat\FinPoly_A$ under sifted colimits. Therefore, we only need to prove that the functors commute with sifted colimits. This is just a direct consequence of Proposition \ref{prop: complete cotangent completion} and the fact $\dL_{-/A}$ and $\widehat{(-)}$ commute with sifted colimits.
\end{proof}

\begin{lemma}\label{lemma: completed conormal}
Let $R$ be an $\overline{A}$-algebra satisfying Condition \ref{cond: tor -1,0} such that the structure map $\overline{A}\to R$ is surjective with kernel $J$. Then $R$ satisfies Condition \ref{cond: tor -1}, and the $p$-completely flat module $\cdL_{R/\overline{A}}[-1]$ agrees with the classical $p$-adic completion of $J/J^2$. As in Sect. \ref{sect: notations}, we will always use $(J/J^2)^\land$ to denote the \emph{classical} $p$-adic completion of $J/J^2$.
\end{lemma}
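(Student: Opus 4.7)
The plan is to reduce Condition \ref{cond: tor -1,0} modulo $p$, combine it with the classical computation of the cotangent complex for a surjection, and deduce that $\dL_{R/\overline A} \otimes_R^\bL R/p$ is concentrated in a single cohomological degree; once concentrated, the $\Tor$-amplitude bound $[-1,0]$ will upgrade to flatness of the module in that degree, which is exactly what Condition \ref{cond: tor -1} requires. The identification with $(J/J^2)^\land$ will then drop out of Lemma \ref{lem: p-complete module as classical completion}.

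Concretely, since $\overline A \to R$ is surjective with kernel $J$, the standard conormal computation (cf. \cite[\href{https://stacks.math.columbia.edu/tag/08QJ}{Tag 08QJ}]{stacks-project}) gives $\hol^0(\dL_{R/\overline A}) = 0$ together with a canonical isomorphism $J/J^2 \xrightarrow\cong \hol^{-1}(\dL_{R/\overline A})$, so $\dL_{R/\overline A} \in D^{\le -1}(R)$. By Proposition-Definition \ref{prop-defi: complete flatness}, Condition \ref{cond: tor -1,0} means $\dL_{R/\overline A} \otimes_R^\bL R/p$ has $\Tor$-amplitude $[-1,0]$ over $R/p$, so it lies in $D^{[-1,0]}(R/p)$. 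The $p$-multiplication triangle yields short exact sequences
\[ 0 \to \hol^i(\dL_{R/\overline A})/p \to \hol^i \bigl( \dL_{R/\overline A} \otimes_R^\bL R/p \bigr) \to \hol^{i+1}(\dL_{R/\overline A})[p] \to 0, \]
and substituting $i=0,-1$ (using $\hol^0(\dL_{R/\overline A}) = 0$) shows that $\dL_{R/\overline A} \otimes_R^\bL R/p$ is concentrated in degree $-1$ with value $(J/J^2)/p$.

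Writing this reduction as $M[1]$ with $M = (J/J^2)/p$, the lower bound of $\Tor$-amplitude $[-1,0]$ demands $M \otimes_{R/p}^\bL N \in D^{\ge 0}$ for every $N \in D^{\ge 0}(R/p)$, equivalently $\Tor_i^{R/p}(M,N) = 0$ for all $i \ge 1$, i.e. $M$ is flat over $R/p$. Therefore $\cdL_{R/\overline A}[-1] \otimes_R^\bL R/p \cong (J/J^2)/p$ is flat over $R/p$, which means $\cdL_{R/\overline A}[-1]$ is $p$-completely flat over $R$; this is Condition \ref{cond: tor -1}. Finally, Proposition \ref{prop: p-completely flat} shows $\cdL_{R/\overline A}[-1]$ is classically $p$-complete and concentrated in cohomological degree $0$, so applying Lemma \ref{lem: p-complete module as classical completion} to $\dL_{R/\overline A}[-1] \in D^{\le 0}(R)$ with $\hol^0 = J/J^2$ identifies $\cdL_{R/\overline A}[-1]$ with the classical $p$-adic completion $(J/J^2)^\land$.

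The main subtlety is the shift bookkeeping: $\Tor$-amplitude $[-1,0]$ on $\cdL_{R/\overline A}$ only yields $\Tor$-amplitude $[0,1]$ (i.e. projective dimension at most $1$) on $\cdL_{R/\overline A}[-1]$, and it is the extra fact that the mod-$p$ reduction is concentrated in a single cohomological degree that collapses this to honest flatness. This concentration relies essentially on the vanishing $\hol^0(\dL_{R/\overline A}) = 0$ provided by the surjectivity of $\overline A \to R$; without it, the higher cohomologies of $\dL_{R/\overline A} \otimes_R^\bL R/p$ could contribute and the deduction would fail.
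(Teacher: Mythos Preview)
Your proof is correct and follows the same strategy as the paper's: both use that surjectivity of $\overline A \to R$ forces $\cdL_{R/\overline A} \in \cD^{\le -1}(R)$, which combined with $p$-complete $\Tor$-amplitude $[-1,0]$ collapses to $p$-complete flatness of $\cdL_{R/\overline A}[-1]$, and then Lemma \ref{lem: p-complete module as classical completion} applied to $\hol^0(\dL_{R/\overline A}[-1]) = J/J^2$ gives the classical completion. The paper's proof is a two-line sketch of exactly this; you have simply unpacked the mod-$p$ computation and the flatness deduction in full.
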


\begin{proof}
For (1), observe that under such condition, $\cdL_{R/A} \in \cD^{\le -1} \left( R \right),$ and we see that its Tor amplitude must be $\left\{ -1 \right\},$ i.e. Condition \ref{cond: tor -1} holds. The last part of statement follow from the observation that $J/J^2 = \hol^0(\dL_{R/\overline A}[-1])$ and Lemma \ref{lem: p-complete module as classical completion}.
\end{proof}

\begin{construction}\label{cons: dHodge}
    Let $A\to B$ be a map of derived $p$-complete simplicial rings such that $A$ is a classical ring, and $A \left\{ 1 \right\}$ be an invertible $A$-module (concentrated on degree 0). We define the graded (complete) $B$-algebra
    \[ \dHodge_{B/A} = \cbigoplus_{k \ge 0} \left( \cbigwedge^k(\cdL_{B/A}) \right) \left[ -k \right] \left\{ -k \right\}, \]
    where $\left\{ -k \right\}$ means ${}\otimes_A A\left\{ 1 \right\}^{\otimes(-k)}.$ We remark that $\dHodge_{B/A}$ is basically a twist of the functor $\cbigwedge^*[-*] \circ \cdL_{-/-}$ (see Sect. \ref{sect: remarks on infty-cat}), and it is well-defined by the last paragraph of Example \ref{ex: shearing}. By Example \ref{ex: shearing} and the last subsection of Section \ref{sect: remarks on infty-cat}, the functors $\cdL_{-/-}, \cbigwedge^*[-*], (M_n) \mapsto (M_n\{-n\})$ all commute with colimits (in a suitable sense), so over a fixed base $A, A\{1 \}$, the functor over $\widehat\SCR_{A/}$
    \[ \dHodge_{-/-}: \Fun(\Delta^1, \widehat\SCR_{A/}) \to \widehat\grAlg_A \]
    commutes with arbitrary colimits, where 
    \[ \widehat\grAlg_A := \widehat\SCR_{A/} \times_{\Fun(\{0\},\CAlg(\gr(\cD(A))))} \Fun(\Delta^1, \CAlg(\gr(\cD(A)))) \]
    consists of pairs $(B,B')$ where $B$ is a derived $p$-complete simplicial ring over $A$ and $B'$ is a derived $p$-complete graded $\dE_\infty$-algebra over $B$. Since $\widehat\grAlg_A \to \gr(\cD(A))$ commutes with colimits, we see that the induced functor
    \[ \dHodge_{-/-}: \Fun(\Delta^1, \widehat\SCR_{A/}) \to \gr(\cD(A)) \]
    also commute with colimits.
\end{construction}

\subsection{Prismatic cohomology}

As in \cite{bhatt2022absolute}, we will use $\Prism_{R/A}$ to denote the derived prismatic cohomology.

In this paper, we will frequently use the following theorem:

\begin{theo}[{\cite[Theorem 4.3.6]{bhatt2022absolute}, affine case}]\label{theo: comparison}
Let $R$ be a ring over $\overline A$ satisfying Condition \ref{cond: tor -1,0}. Then, the natural map
\[ \Prism_{R/A} \to R\Gamma \bigl( ( R/A )_\Prism, \calO \bigr) \]
is an isomorphism.
\end{theo}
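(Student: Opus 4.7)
The plan is to prove the isomorphism by quasi-syntomic descent, reducing to the case of (large) quasi-regular semiperfectoid algebras, where both sides become tautologically identified with the prismatic envelope.

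First, I would observe that both functors $R \mapsto \Prism_{R/A}$ and $R \mapsto R\Gamma((R/A)_\Prism, \calO)$ are derived $(p,I)$-complete, and that there is a natural comparison map between them, functorial in $R$ and compatible with base change in $(A,I)$. The left-hand side is defined as the left Kan extension from finite polynomial $\overline A$-algebras (followed by $(p,I)$-completion), so it automatically commutes with sifted colimits in the derived $(p,I)$-complete sense. Under Condition \ref{cond: tor -1,0}, $R/\overline A$ is quasi-syntomic in the sense of Bhatt-Morrow-Scholze, so one can find a quasi-syntomic cover $R \to R^{(0)}$ where $R^{(0)}$ is large quasi-regular semiperfectoid (after passing to a suitable integral perfectoid base, which is harmless for the comparison).

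Next, I would invoke quasi-syntomic descent for both sides. For the derived prismatic cohomology, this is Bhatt-Scholze's faithfully flat descent along the Čech nerve $R^{(\bullet)}$ of the chosen cover (all of whose terms remain large quasi-regular semiperfectoid, hence in particular quasi-l.c.i). For the sheaf cohomology side, $\calO$ is a sheaf for the $(p,I)$-complete faithfully flat topology on the prismatic site, and the prismatic site is compatible with the chosen cover in the sense that pullback along $R \to R^{(n)}$ induces a suitable map of prismatic sites; combining this with the fact that under Condition \ref{cond: tor -1,0} the cohomology is uniformly bounded in cohomological degree (via the Hodge-Tate filtration), one can interchange totalization with the comparison map. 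This reduces the theorem to the case where $R$ is itself large quasi-regular semiperfectoid.

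The base case is then essentially tautological: for $R$ large quasi-regular semiperfectoid over $\overline A$, the category $(R/A)_\Prism$ admits a weakly initial object $(\Prism_{R/A}^{\text{env}}, I \Prism_{R/A}^{\text{env}})$ constructed by the prismatic envelope, and the Čech-Alexander complex of this envelope together with its self-products computes $R\Gamma((R/A)_\Prism, \calO)$. A direct computation (using that the envelope agrees with the derived prismatic cohomology for such $R$, as in \cite[Construction 7.6]{Bhatt_2022}) shows that both sides agree with $\Prism_{R/A}^{\text{env}}$.

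The main obstacle I expect is the descent step for the right-hand side: one must show that the totalization $\lim_\simp R\Gamma((R^{(\bullet)}/A)_\Prism, \calO)$ agrees with $R\Gamma((R/A)_\Prism, \calO)$. This requires establishing that the natural map of prismatic sites induced by $R \to R^{(\bullet)}$ gives effective descent for quasicoherent sheaves, and controlling the cohomological amplitude so that Proposition \ref{prop: totalisation commute with filtered colimits for coconnective spectra} applies to commute the totalization past the filtered colimits used in constructing $\Prism_{R/A}$. Under Condition \ref{cond: tor -1,0}, this control is provided by the finiteness of Tor-amplitude of $\cdL_{R/\overline A}$ and the corresponding bound on the Hodge-Tate filtration, which in turn bounds the cohomological degrees appearing in $R\Gamma((R^{(n)}/A)_\Prism, \calO)$ uniformly in $n$.
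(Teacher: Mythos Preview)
The paper does not prove this statement; it is simply cited from \cite[Theorem 4.3.6]{bhatt2022absolute}. Your proposal attempts a proof where the paper offers none, so there is nothing in the paper to compare against directly.

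Your overall strategy --- quasi-syntomic descent to large quasi-regular semiperfectoid algebras, where both sides become the prismatic envelope --- is the standard one. But the step you flag as ``the main obstacle'' is a genuine gap, and your proposed resolution does not close it. You need
\[ R\Gamma\bigl((R/A)_\Prism, \calO\bigr) \xrightarrow{\ \sim\ } \lim_\simp R\Gamma\bigl((R^{(\bullet)}/A)_\Prism, \calO\bigr), \]
and you argue for this via cohomological boundedness from the Hodge--Tate filtration together with Proposition \ref{prop: totalisation commute with filtered colimits for coconnective spectra}. That proposition only lets you commute totalisation with filtered colimits; it says nothing about why the displayed map is an isomorphism in the first place. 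The underlying issue is that the functor $(R^{(0)}/A)_\Prism \to (R/A)_\Prism$ is not obviously a covering of sites: given $(B,IB) \in (R/A)_\Prism$, you must produce a faithfully flat $(B',IB')$ over $B$ with $R \to \overline{B'}$ factoring through $R^{(0)}$, and since $B$ itself is not an $R$-algebra you cannot just base-change. You would have to build $B'$ as a prismatic envelope of $R^{(0)} \cotimes_R \overline B$ over $B$ and check it is a bounded prism --- which is precisely what Corollary \ref{cor: universal property tor -1} and Proposition \ref{prop: weakly final} accomplish in the paper, but those results are stated \emph{after} Theorem \ref{theo: comparison} and explicitly invoke it in their proofs. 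So as written your argument is circular relative to the paper's logical structure. The fix (and what Bhatt--Lurie in fact do) is to establish directly, using only the Hodge--Tate comparison, that the relevant envelopes are bounded prisms and give effective covers in $(R/A)_\Prism$, and then compute both sides by the resulting \v{C}ech--Alexander complex; you have not supplied this step.
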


As a corollary,
\begin{cor}[{\cite[Remark 4.3.9]{bhatt2022absolute}}]\label{cor: universal property tor -1}
If $R/\overline A$ satisfies Condition \ref{cond: tor -1}, then $\Prism_{R/A}$ is a bounded prism, and is the final object of $(R/A)_\Prism$ given by the natural maps of derived prismatic cohomology
\[\xymatrix{
R \ar[r] & \overline\Prism_{R/A} \\
A \ar[r] \ar[u] & \Prism_{R/A} \ar[u]
}\]
\end{cor}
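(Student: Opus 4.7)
The plan is to use the identification $\Prism_{R/A} \cong R\Gamma((R/A)_\Prism, \calO)$ from Theorem \ref{theo: comparison}, together with the Hodge--Tate comparison on derived prismatic cohomology, to first verify that $\Prism_{R/A}$ is a bounded prism, and then extract the universal property from the site-theoretic interpretation.

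For the boundedness, I will first show that $\overline{\Prism}_{R/A}$ is concentrated in degree $0$ and $p$-completely flat over $R$. Recall that $\overline{\Prism}_{R/A}$ carries the conjugate filtration $\{\Fil_n\}$ whose graded pieces are $\cbigwedge^n_R(\cdL_{R/\overline A})\{-n\}[-n]$. Setting $N := \cdL_{R/\overline A}[-1]$, which by Condition \ref{cond: tor -1} is a $p$-completely flat $R$-module concentrated in degree $0$, the identification $\cbigwedge^n(N[1]) \cong \cGamma^n(N)[n]$ from Section \ref{sect: remarks on infty-cat} (completed variant) shows that each graded piece equals $\cGamma^n_R(N)\{-n\}$, which is $p$-completely flat and in degree $0$ by Lemma \ref{lemma: concentrated on degree zero}. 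By induction along the exact triangles $\Fil_{n-1} \to \Fil_n \to \gr_n$ (where extensions of degree-$0$ flat modules stay $p$-completely flat and in degree $0$), each $\Fil_n$ is $p$-completely flat and concentrated in degree $0$, hence so is $\colim_n \Fil_n$. Its derived $p$-completion $\overline{\Prism}_{R/A}$ is then concentrated in degree $0$ and $p$-completely flat over $R$ by Proposition \ref{prop: p-completely flat}. Now $\Prism_{R/A}$ is derived $(p,I)$-complete, and $I$ is an effective Cartier divisor on $A$ (since $(A,I)$ is a bounded prism). Proposition \ref{prop: conc on degree 0 by reduced} applied to $\overline{\Prism}_{R/A} = \Prism_{R/A} \otimes_A^\bL \overline A$ therefore yields that $\Prism_{R/A}$ is concentrated in degree $0$ and $I$-torsion-free. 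Since $R$ has bounded $p^\infty$-torsion and $\overline{\Prism}_{R/A}$ is $p$-completely flat over $R$, Proposition \ref{prop: p-completely flat} further guarantees that $\overline{\Prism}_{R/A}$ has bounded $p^\infty$-torsion; combined with the inherited $\delta$-structure, this shows that $(\Prism_{R/A}, I\Prism_{R/A})$ is a bounded prism.

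For the universal property, the structure map $A \to \Prism_{R/A}$ and the edge map $R = \Fil_0(\overline{\Prism}_{R/A}) \to \overline{\Prism}_{R/A}$ coming from the bottom of the conjugate filtration exhibit $\Prism_{R/A}$ as an object of $(R/A)_\Prism$. For any other object $((B, IB), R \to \overline B)$, evaluating the sheaf $\calO$ at $B$ and composing with the isomorphism from Theorem \ref{theo: comparison} yields a morphism of $(p,I)$-complete $\delta$-$A$-algebras
\[ \Prism_{R/A} \cong R\Gamma((R/A)_\Prism, \calO) \to \calO((B, IB)) = B, \]
which is compatible with the given structure map from $R$ by naturality, and uniqueness is built into the identification with the cohomology at the universal object of the site. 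The main delicate point of the argument is Step 1: since the conjugate filtration on $\overline{\Prism}_{R/A}$ is only exhaustive after derived $p$-completion, one must carefully control the interaction between this completion and the desired degree-$0$ concentration --- the essential input is that $p$-completely flat modules stay concentrated in degree $0$ after derived $p$-completion, as recorded in Proposition \ref{prop: p-completely flat}.
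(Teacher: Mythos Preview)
Your boundedness argument matches the paper's: Hodge--Tate comparison plus Condition \ref{cond: tor -1} identifies $\gr_n \overline\Prism_{R/A}$ with $\cGamma^n_R(N)\{-n\}$, and then Propositions \ref{prop: p-completely flat} and \ref{prop: conc on degree 0 by reduced} finish.

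The universal property step has a genuine gap. You construct $\psi_B: \Prism_{R/A} \to B$ via Theorem \ref{theo: comparison} correctly, but the uniqueness claim --- ``built into the identification with the cohomology at the universal object of the site'' --- is circular: there is no universal object a priori; that is exactly what you must prove. An abstract isomorphism $\Prism_{R/A} \cong \lim_B B$ does not force $\Prism_{R/A}$ to be initial unless you know that the self-map $e := \psi_{\Prism_{R/A}}: \Prism_{R/A} \to \Prism_{R/A}$ is the identity, and Theorem \ref{theo: comparison} (a left-Kan-extension comparison) gives no direct control over $e$. The paper fills this with an idempotent trick: since $f \circ \psi_B = \psi_C$ for any $f: B \to C$ in $(R/A)_\Prism$, taking $f = e$ gives $e^2 = e$; a short retract argument then shows $e$ is an isomorphism, hence $e = \id$, and uniqueness follows from $f = f \circ e = \psi_B$ for any $f: \Prism_{R/A} \to B$. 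The paper explicitly flags this as the crucial categorical step. A secondary point: one must also verify (as the paper does, citing \cite[Lemma 7.7]{Bhatt_2022}) that each $\psi_B$ is actually a morphism in $(R/A)_\Prism$ --- a $\delta$-map over $A$ compatible with $R \to \overline B$ after reduction --- rather than merely a ring map; your ``by naturality'' is too vague here.
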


\begin{proof}
This is essentially the argument as in \cite[Remark 4.3.9]{bhatt2022absolute} (which uses the argument in \cite[Lemma 7.8]{Bhatt_2022}), and we repeat it here for the sake of clarity.

By Hodge--Tate comparison, $\overline\Prism_{R/A}$ is $p$-complete and $p$-completely flat over $R$. Therefore, by Proposition \ref{prop: p-completely flat}, $\overline \Prism_{R/A}$ is a classically $p$-complete $R$-module concentrated on degree 0. and is of bounded $p^\infty$-torsion. By Proposition \ref{prop: conc on degree 0 by reduced}, we see that $\Prism_{R/A}$ is concentrated on degree 0, and is $I$-torsion-free. For each $(B,IB) \in (R/A)_\Prism$, denote $\psi_B: \Prism_{R/A} \to B$ the map given by the composition of the isomorphism
\[ \psi: \Prism_{R/A} \to R\Gamma \bigl( (R/A)_\Prism, \calO \bigr) \]
in Theorem \ref{theo: comparison} and the projection $R\Gamma \bigl( (R/A)_\Prism, \calO \bigr) \to B$. By the argument as in \cite[Lemma 7.7]{Bhatt_2022}, $\psi_B$ is a map in $(R/A)_\Prism$. For any $f:B \to C$ in $(R/A)_\Prism$ we have $f \psi_B = \psi_C$ (the commutation is in the ordinary-categorical sense since the involved objects are discrete). Take $f = \psi_{\Prism_{R/A}}: \Prism_{R/A} \to \Prism_{R/A}$, and we see that $e:=\psi_{\Prism_{R/A}}$ is an idempotent morphism. Let $B_0$ be the object given by the retract of $e$, say, $r: \Prism_{R/A} \to B_0$. Therefore, the map
\[ \psi_{B_0}: \Prism_{R/A} \xrightarrow{\psi} R\Gamma \bigl( (R/A)_\Prism, \calO \bigr) \to B_0 \]
is an isomorphism. We have $r = re = r\psi_{\Prism_{R/A}} = \psi_{B_0}$, so $r$ is an isomorphism, and it follows that $e$ is an isomorphism. As $e$ is idempotent, we see that $e=\id$. For any morphism $f: \Prism_{R/A} \to B$ in $(R/A)_\Prism$, we have $f=fe = f\psi_{\Prism_{R/A}} = \psi_B$, so $\Prism_{R/A}$ is the initial object of $(R/A)_\Prism$. (Note that $e$ is idempotent follows crucially from the categorical argument, instead of direct calculations.)
\end{proof}

\begin{rmk}
Given $(A \to R) \to (B \to \overline B),$ we can describe the map $\Prism_{R/A} \to B$ explicitly. Compose it with the natural isomorphism $B \to \Prism_{\overline B/B},$ and we get a map $\varphi: \Prism_{R/A} \to \Prism_{\overline B/B}.$ Using the universal property as above, we see that the map $\varphi$ agrees with the map induced by functoriality from $(A \to R) \to (B/\overline B)$.
\end{rmk}

\begin{cor}\label{cor: identification of prismatic maps}
Let $A \to S$ be a map of bounded prisms satisfying Condition \ref{cond: flat + tor 0}, and that $\overline S \to R$ be a map satisfying Condition \ref{cond: tor -1}. We can define the map $\Prism_{R/A} \to \Prism_{R/S}$ in two ways: one is by functoriality $(R,A) \to (R,S),$ and the other is by projection
\[ \Prism_{R/A} \cong \lim_{(B,IB) \in (R/A)_\Prism} B \xrightarrow{B=\Prism_{R/S}} \Prism_{R/S}. \]
Then, the two maps are naturally identified with each other.
\end{cor}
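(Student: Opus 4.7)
The plan is to identify both maps with the common composition $\psi_S^{-1}\circ u^*\circ\psi_A$, where $\psi_A,\psi_S$ denote the comparison isomorphisms of Theorem \ref{theo: comparison} for $R/A$ and $R/S$ respectively, and $u^*$ denotes the restriction along the forgetful functor $u:(R/S)_\Prism\to(R/A)_\Prism$.

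First I check that Theorem \ref{theo: comparison} applies to $R/\overline A$. The transitivity triangle
\[R\cotimes_{\overline S}\cdL_{\overline S/\overline A}\longrightarrow\cdL_{R/\overline A}\longrightarrow\cdL_{R/\overline S}\]
shows that $\cdL_{R/\overline A}$ has $p$-complete Tor-amplitude in $[-1,0]$: the first term is $p$-completely flat over $R$ because $\cdL_{\overline S/\overline A}$ is $p$-completely flat over $\overline S$ by Condition \ref{cond: flat + tor 0}, and the third term is the shift of a $p$-completely flat module by Condition \ref{cond: tor -1}. Hence $R/\overline A$ satisfies Condition \ref{cond: tor -1,0}, and Theorem \ref{theo: comparison} yields both $\psi_A$ and $\psi_S$.

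The key identification of the functoriality map $f_1:\Prism_{R/A}\to\Prism_{R/S}$ comes from the naturality of $\psi$ in the base prism: the square
\[\begin{tikzcd}
\Prism_{R/A}\ar[r,"\psi_A"]\ar[d,"f_1"'] & R\Gamma\bigl((R/A)_\Prism,\calO\bigr)\ar[d,"u^*"]\\
\Prism_{R/S}\ar[r,"\psi_S"] & R\Gamma\bigl((R/S)_\Prism,\calO\bigr)
\end{tikzcd}\]
commutes, so $f_1=\psi_S^{-1}\circ u^*\circ\psi_A$. For the projection map $f_2=\psi_{\Prism_{R/S}}$: by construction $f_2=\pi_{u(\Prism_{R/S})}\circ\psi_A$, where $\pi_{u(\Prism_{R/S})}$ denotes the coordinate projection; by the universal property of limits $\pi_{u(B)}=\pi_B\circ u^*$ for every $B\in(R/S)_\Prism$; and by Corollary \ref{cor: universal property tor -1} applied to $R/\overline S$ (which satisfies Condition \ref{cond: tor -1}), the idempotency argument in that proof gives $\pi_{\Prism_{R/S}}\circ\psi_S=\id$, whence $\pi_{\Prism_{R/S}}=\psi_S^{-1}$. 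Taking $B=\Prism_{R/S}$ and combining these facts yields $f_2=\psi_S^{-1}\circ u^*\circ\psi_A=f_1$.

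The main obstacle is establishing the naturality square for $\psi$: this is a compatibility between the construction of the comparison in \cite[Theorem 4.3.6]{bhatt2022absolute} and the functoriality of both sides in the base prism. It is expected from the construction --- since both sides are realised compatibly via the same prismatic \v Cech-type data --- but making it precise requires careful unwinding of the left Kan extension description of derived prismatic cohomology.
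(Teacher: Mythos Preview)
Your proof is correct and follows essentially the same route as the paper's. Both reduce to the naturality of the comparison isomorphism of Theorem~\ref{theo: comparison} in the base prism; the paper packages this slightly differently---first establishing the identity $\psi_B\circ\varphi_{R/A,B}\cong p_{(A\to R)\to(B\to\overline B)}$ for an arbitrary $B\in(R/A)_\Prism$ and then specializing to $B=\Prism_{R/S}$ and factoring through $p_{(A\to R)\to(S\to R)}$, rather than invoking the forgetful functor $u:(R/S)_\Prism\to(R/A)_\Prism$ and the restriction $u^*$ directly---but the underlying functoriality statement (from \cite[Proposition~4.3.4]{bhatt2022absolute}) is exactly the one you identify as the main obstacle.
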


\begin{proof}
For prisms $A \to B$ and $R/\overline A, R'/\overline B,$ we denote the natural map $p_{(A\to R) \to (B\to R')}: \Prism_{R/A} \to \Prism_{R'/B}.$

For any bounded $p$-complete ring $R/\overline A,$ we have the map
\[ \xi_R: \Prism_{R/A} \to \lim_{(B,IB) \in (R/A)_\Prism} B \]
given in \cite[Proposition 4.3.4]{bhatt2022absolute}. Denote its projections by $\varphi_{R/A,B}: \Prism_{R/A} \to B$. For any bounded prism $B$, denote the natural isomorphism $\psi_B: B \to \Prism_{\overline B/B},$ so $\psi_B$ is naturally identified with the inverse of $\varphi_{\overline B/B,B}.$ Functoriality gives a natural isomorphism
\[ \varphi_{\overline B/B,B} p_{(A\to R) \to (B \to \overline B)} \cong \varphi_{R/A,B}, \]
hence giving a natural isomorphism
\[ \psi_B \varphi_{R/A,B} \cong p_{(A \to R) \to (B \to \overline B)}. \]

Now, given the claim, for the map
\[ \varphi_{R/A,\Prism_{R/S}}: \Prism_{R/A} \to \lim_{(B,IB) \in (R/A)_\Prism} B \to \Prism_{R/S} \]
we see that
\[ \psi_{\Prism_{R/S}} \varphi_{R/A,\Prism_{R/S}} \cong p_{(A \to R) \to (\Prism_{R/S} \to \overline \Prism_{R/S})} \cong p_{(S \to R) \to (\Prism_{R/S} \to \overline\Prism_{R/S})} p_{(A \to R) \to (S \to R)}. \]
Also, by the remark following Corollary \ref{cor: universal property tor -1}, we see that $\psi_{\Prism_{R/S}}$ is identified with $p_{(S \to R) \to (\Prism_{R/S} \to \overline\Prism_{R/S})}.$ Therefore, since $\psi_{\Prism_{R/S}}$ is an isomorphism, we see that $\varphi_{R/A,\Prism_{R/S}}$ is identified with $p_{(A \to R) \to (S \to R)},$ as desired.
\end{proof}

\begin{cor}\label{cor: animated universal property}
Let $(A,I)$ be a bounded prism and $R/\overline A$ be a ring satisfying Condition \ref{cond: tor -1}. Define the category $(R/A)_\Prism^\an$ consisting of animated $(p,I)$-adically complete $\delta$-rings $B/A$ equipped with a map $R \to \overline B := \overline A \otimes_A^\bL B$. Then, $(R/A)_\Prism^\an$ admits an initial object $\Prism_{R/A}$.
\end{cor}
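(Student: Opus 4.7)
My strategy is to leverage the construction of derived prismatic cohomology as a left Kan extension to establish the animated universal property; Corollary \ref{cor: universal property tor -1} then ensures that under Condition \ref{cond: tor -1}, the resulting object is a classical bounded prism lying in $(R/A)_\Prism^\an$.

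First, I would establish the universal property in the polynomial case. For $P = \overline A[x_1, \ldots, x_n]$ a finitely generated polynomial algebra, the classical Bhatt--Scholze universal property of $\Prism_{P/A}$ gives
\[ \Hom_{\delta,(p,I),/A}(\Prism_{P/A}, B) = \Hom_{\overline A}(P, \overline B) \]
for any classical $(p,I)$-complete $\delta$-$A$-algebra $B$. I would promote this to an equivalence of mapping spaces in the $\infty$-categorical setting, noting that both sides are naturally equivalent to the $n$-fold product $|\overline B|^n$ of the underlying space of $\overline B$.

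Next, writing a general $R$ as a sifted colimit $R = \colim_\alpha P_\alpha$ of such polynomial algebras, the construction of derived prismatic cohomology as a left Kan extension (see \cite[Construction 7.6]{Bhatt_2022}) gives $\Prism_{R/A} \simeq \colim_\alpha \Prism_{P_\alpha/A}$ in the $\infty$-category of animated $(p,I)$-complete $\delta$-$A$-algebras. Passing to mapping spaces yields the desired universal property
\[ \Map_{\delta,(p,I),/A}(\Prism_{R/A}, B) \simeq \lim_\alpha \Map_{/\overline A}(P_\alpha, \overline B) \simeq \Map_{/\overline A}(R, \overline B). \]
By Corollary \ref{cor: universal property tor -1}, under Condition \ref{cond: tor -1} the object $\Prism_{R/A}$ is a classical bounded prism and hence lies in $(R/A)_\Prism^\an$ together with the tautological map $R \to \overline{\Prism_{R/A}}$; the displayed equivalence then identifies it as the initial object.

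\textbf{Main obstacle.} The main technical challenge is the $\infty$-categorical enhancement of the universal property for polynomial algebras. The classical Bhatt--Scholze statement asserts an equality of sets, and promoting it to an equivalence of full mapping spaces requires careful analysis of the interplay between the free $(p,I)$-adically completed $\delta$-$A$-algebra construction and the mod-$I$ reduction functor in the animated/simplicial setting, ensuring that higher homotopy of animated test algebras $B$ is correctly accounted for.
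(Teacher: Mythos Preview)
Your approach has a genuine gap at its starting point. You invoke a ``classical Bhatt--Scholze universal property'' asserting
\[ \Hom_{\delta,(p,I),/A}(\Prism_{P/A}, B) = \Hom_{\overline A}(P, \overline B) \]
for polynomial $P$, but no such statement is established in that reference. In fact, for $P = \overline A[x_1,\ldots,x_n]$ with $n > 0$, the Hodge--Tate comparison gives $\hol^1(\overline\Prism_{P/A}) \cong \Omega_{P/\overline A}\{-1\} \ne 0$, so $\Prism_{P/A}$ is not even discrete; polynomial algebras satisfy Condition~\ref{cond: tor -1,0} but \emph{not} Condition~\ref{cond: tor -1}, so Corollary~\ref{cor: universal property tor -1} does not apply to them. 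Your claimed identification $\Map_\delta(\Prism_{P/A}, B) \simeq |\overline B|^n$ is therefore not a technicality to be ``promoted'' from a known $\pi_0$ statement---it is exactly the content of the corollary specialised to $R = P$, so the argument is circular at this step. One can show abstractly that $B \mapsto \overline B$ admits a left adjoint $L$ (presentability), but identifying $L(P)$ with the derived prismatic cohomology $\Prism_{P/A}$ is the real work and is not what \cite[Construction 7.6]{Bhatt_2022} provides.

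The paper's proof takes a completely different route that never reduces to the polynomial case. It exploits only the functoriality of derived prismatic cohomology in the pair $(\text{base}, \text{ring})$: for each $B \in (R/A)_\Prism^\an$ one obtains a point $\eta_B: \Prism_{R/A} \to \Prism_{\overline B/B} \simeq B$. With $B_0 = \Prism_{R/A}$, the remark after Corollary~\ref{cor: universal property tor -1} (this is where Condition~\ref{cond: tor -1} enters) gives $\eta_{B_0} \simeq \id_{B_0}$. Then for any $f: B_0 \to B$ in $(R/A)_\Prism^\an$, functoriality yields a natural equivalence $f \simeq f \circ \eta_{B_0} \simeq \eta_B$; this exhibits the mapping space $\Map(B_0,B)$ as deformation retracting onto $\eta_B$, hence contractible. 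No free-algebra description of $\Prism_{P/A}$ is ever needed.
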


\begin{proof}
For $B \in (R/A)_\Prism^\an$ define $\eta_B \in \Map_{(R/A)_\Prism^\an}( \Prism_{R/A}, B )$ by
\[ \eta_B: \Prism_{R/A} \to \Prism_{\overline B/B} \cong B. \]
Denote $B_0 = \Prism_{R/A}$, then by the remark following Corollary \ref{cor: universal property tor -1}, $\eta_{B_0} = \id_{B_0}.$ For any other map $f: B_0 \to B$, we have a natural equivalence
\[ f \cong f \circ \eta_{B_0} = \eta_{B}. \]
Denote $\calC_0 = \Map(B_0,B)$. We have maps $\eta_B: \{* \} \to \calC_0$ and the unique projection $\pi: \calC_0 \to \{*\}$. It is clear that $\pi \circ \eta_B = \id_{\{*\}}$. The above argument provides an equivalence $\id_{\calC_0} \cong \underline{\eta_B} = \eta_B \circ \pi$, so $\pi$ is an equivalence of categories, as desired.
\end{proof}

\begin{prop}\label{prop: weakly final}
Assume that $S/A$ satisfies Condition \ref{cond: flat + tor 0}, and that $R/\overline S$ satisfies Condition \ref{cond: tor -1}. Then, $\Prism_{R/S} \to *$ is a covering in the topos $\Shv \left( (R/A)_\Prism \right).$ Moreover, it is represented by a $(p,I)$-completely faithfully flat affine cover.
\end{prop}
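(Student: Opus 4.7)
The plan is to show that for each object $(B, IB) \in (R/A)_\Prism,$ there is a $(p,I)$-completely faithfully flat cover $B \to B'$ in $(R/A)_\Prism$ admitting a morphism from $\Prism_{R/S},$ which will exhibit $\Prism_{R/S} \to *$ as a cover of sheaves. Given such a $B$ with structure map $\phi: \overline S \to R \to \overline B,$ I would first form $W := B \cotimes_A^\delta S,$ the pushout in derived $(p,I)$-complete $\delta$-rings. By Condition \ref{cond: flat + tor 0}, the map $B \to W$ is $(p,I)$-completely flat, so $W$ is a bounded prism with distinguished ideal $IW$ and a natural prism structure over both $B$ and $S.$ Its reduction is $\overline W \cong \overline B \cotimes_{\overline A} \overline S$ (concentrated in degree zero by flatness of $\overline S/\overline A$), and comes equipped with an augmentation $\epsilon: \overline W \to \overline B,$ $b \otimes s \mapsto b \cdot \phi(s),$ which retracts the structural map $\overline B \to \overline W.$

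Next, the transitivity triangle for $\overline A \to \overline W \to \overline B,$ together with the cotangent-complex formula for the pushout $\overline W = \overline B \cotimes_{\overline A} \overline S,$ yields $\overline B \cotimes_{\overline W}^\bL \cdL_{\overline W/\overline A} \cong \cdL_{\overline B/\overline A} \oplus \bigl(\overline B \cotimes_{\overline S}^\bL \cdL_{\overline S/\overline A}\bigr);$ the map to $\cdL_{\overline B/\overline A}$ induced by $\epsilon$ is $(\id, \phi_*),$ and taking the cofibre gives $\cdL_{\overline B/\overline W}[-1] \cong \overline B \cotimes_{\overline S}^\bL \cdL_{\overline S/\overline A},$ which is $p$-completely flat over $\overline B$ by Condition \ref{cond: flat + tor 0}. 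Thus $\overline B/\overline W$ satisfies Condition \ref{cond: tor -1}, and by Corollary \ref{cor: universal property tor -1} the derived prismatic cohomology $B' := \Prism_{\overline B/W}$ (with $\overline B$ viewed as an $\overline W$-algebra via $\epsilon$) is a bounded prism, serving as the initial object of $(\overline B/W)_\Prism.$

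I would then verify the two remaining claims. For $(p,I)$-complete faithful flatness of $B \to B',$ Theorem \ref{intro: prism=pd derived construction} identifies $\overline{B'} \cong \cGamma^*_{\overline B}\bigl(\cdL_{\overline B/\overline W}[-1]\{-1\}\bigr),$ which is $p$-completely faithfully flat over $\overline B$ by Corollary \ref{cor: pd strict flatness complete case} applied to the inclusion $0 \hookrightarrow \cdL_{\overline B/\overline W}[-1]\{-1\};$ the characterisation in Proposition-Definition \ref{prop-defi: complete flatness} then upgrades this to $(p,I)$-complete faithful flatness of $B \to B'.$ To see that $B'$ lies in the essential image of $(R/S)_\Prism \to (R/A)_\Prism,$ note that $B'$ is a prism over $S$ via $S \to W \to B',$ that $R \to \overline{B'}$ is given by the composition $R \to \overline B \to \overline{B'},$ and that the two $\overline S$-algebra structures on $\overline{B'}$ (one via $\overline S \to \overline W \to \overline{B'},$ the other via $\overline S \to R \to \overline B \to \overline{B'}$) coincide because the map $\overline W \to \overline{B'}$ factors through $\epsilon$ by the defining property of $B' \in (\overline B/W)_\Prism.$

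Finally, the initial property of $\Prism_{R/S}$ in $(R/S)_\Prism$ supplies the desired morphism $\Prism_{R/S} \to B'$ in $(R/A)_\Prism,$ and the cover is affine by construction. The step I expect to require the most care is the cotangent computation yielding $\cdL_{\overline B/\overline W}[-1] \cong \overline B \cotimes_{\overline S}^\bL \cdL_{\overline S/\overline A},$ since it is exactly this identification --- together with the PD-power description of the Hodge--Tate reduction from Theorem \ref{intro: prism=pd derived construction} --- that both forces the faithful flatness and encodes the compatibility of the two $\overline S$-structures.
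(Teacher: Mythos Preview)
Your construction is the same as the paper's: form $W = S \cotimes_A B$, verify that $\overline B/\overline W$ satisfies Condition \ref{cond: tor -1} via the retraction $\overline B \to \overline W \to \overline B$, and set $B' = \Prism_{\overline B/W}$. The paper additionally checks that $B'$ is the \emph{product} $\Prism_{R/S} \times B$ in $(R/A)_\Prism$ (this is what ``represented by an affine cover'' means), whereas you only exhibit a map $\Prism_{R/S} \to B'$; the product statement follows easily from the universal property of $B' = \Prism_{\overline B/W}$ together with that of $W$, but you should say so.

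There is one genuine misstep in your faithful-flatness argument. Theorem \ref{intro: prism=pd derived construction} does not apply: it requires $(B,IB)$ to be a prism \emph{over} $(W,IW)$ via a surjective $\delta$-ring map $W \to B$, but no such map exists in general. The augmentation $\epsilon$ is only defined on the reduction $\overline W$, and lifting it to a $\delta$-map $W \to B$ would amount to producing a $\delta$-map $S \to B$ over $A$ refining $\phi: \overline S \to \overline B$, which is precisely what is not given. What you actually need is the Hodge--Tate comparison (Proposition \ref{prop: HT comparison}): it identifies $\gr_n \overline{B'} \cong \cbigwedge^n_{\overline B}(\cdL_{\overline B/\overline W})[-n]\{-n\}$, which is $p$-completely flat over $\overline B$ since $\cdL_{\overline B/\overline W}[-1]$ is, and $\gr_0 = \overline B$ supplies faithfulness. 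This is exactly the paper's argument (``by Hodge--Tate comparison \ldots\ faithfulness follows from $\Fil_0$''). Your invocation of Corollary \ref{cor: pd strict flatness complete case} is then unnecessary, though not wrong once you know the graded pieces.
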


\begin{proof}
Let $\left( B,IB \right)$ be any object of $(R/A)_\Prism.$ There exists a natural morphism $S \cotimes_A^\bL B \to \overline B$, where $S \to \overline B$ is given by $S \to R \to \overline B.$ By conditions, $S \cotimes_A^\bL B$ is $\left( p,I \right)$-completely flat over $B$, hence is concentrated on degree 0 by Proposition \ref{prop: (p,I)-completely flat}. We verify that this morphism satisfies Condition \ref{cond: tor -1}.

By definition, $B$ is a bounded prism, so we need to prove that $\cdL_{\overline B/\overline S \cotimes_{\overline A}^\bL \overline B} \left[ -1 \right]$ is $p$-completely flat over $\overline B.$ Consider the map of rings $\overline B \to \overline S \cotimes_{\overline A}^\bL \overline B \to \overline B,$ and we get an exact triangle of cotangent complexes, inducing an isomorphism
\[ \cdL_{\overline B/\overline S \cotimes_{\overline A}^\bL \overline B} \left[ -1 \right] \cong \overline B \cotimes^\bL_{\overline S \cotimes_{\overline A}^\bL \overline B} \cdL_{\overline S \cotimes_{\overline A}^\bL \overline B / \overline B}. \]
As $\cdL_{\overline S \cotimes_{\overline A}^\bL \overline B / \overline B} \cong \cdL_{\overline S/\overline A} \cotimes_A^\bL \overline B$ is $p$-completely flat over $\overline S \cotimes_{\overline A}^\bL \overline B,$ we see that $\cdL_{\overline B/\overline S \cotimes_{\overline A}^\bL \overline B} \left[ -1 \right]$ is $p$-completely flat over $\overline B,$ as desired.

Now define $C = \Prism_{\overline B/S \cotimes_A^\bL B}.$ By verifying the universal property (Corollary \ref{cor: universal property tor -1}), we see that $\left( C,IC \right)$ is the product of $\left( B,IB \right)$ and $\left( \Prism_{R/S},I\Prism_{R/S} \right)$ in $( R/A )_\Prism.$ Since $\overline B \to \overline C$ is $p$-completely faithfully flat by Hodge--Tate comparison (faithfulness follows from $\Fil_0$), we see that $B \to C$ is also $\left( p,I \right)$-completely faithfully flat, so $\left( C,IC \right) = \left( \Prism_{R/S},I\Prism_{R/S} \right) \times \left( B,IB \right)$ is a $(p,I)$-completely faithfully flat affine cover of $\left( B,IB \right)$, as desired.
\end{proof}

\begin{defi}\label{defi: crystal}
Let $\calC$ be a category and $\calO$ be a functor from $\calC$ to commutative rings (we will use the covariant convention unless otherwise stated). A $\calO$-crystal on $\calC$ is a functor $\calE$ of $\calO$-modules, such that each $\calE \left( x \right)$ is finite projective over $\calO \left( x \right)$, and for each morphism $x \to y$, the induced map
\[ \calO \left( y \right) \otimes_{\calO \left( x \right)} \calE \left( x \right) \to \calE \left( y \right) \]
is an isomorphism. The category of $\calO$-crystals will be denoted as
\[ \Vect \left( \calC, \calO \right). \]
\end{defi}

\begin{lemma}\label{lemma: equivalence of crystals}
Let $F: \calC' \to \calC$ be a functor between small categories, and $\calO$ be a functor from $\calC$ to commutative rings. Assume that $F$ is initial in the sense of infinite categories. Then, the natural map
\[ F^*: \Vect(\calC,\mathcal O) \to \Vect(\calC',F^* \mathcal O) \]
is an equivalence of symmetric monoidal categories.
\end{lemma}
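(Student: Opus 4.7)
The plan is to recognise both categories of crystals as limits of a single canonical diagram valued in symmetric monoidal $\infty$-categories, so that the equivalence follows directly from the defining property of an initial functor. First I would construct, from the sheaf of rings $\calO$, a diagram $\Vect_\calO$ of symmetric monoidal categories indexed by $\calC$ (with the variance dictated by the convention in Definition \ref{defi: crystal}): each object $x \in \calC$ is sent to the symmetric monoidal category $\Vect(\calO(x))$ of finite projective $\calO(x)$-modules (with tensor product as the monoidal structure), and each morphism $y \to x$ to the symmetric monoidal base-change functor $\calO(y) \otimes_{\calO(x)} (-)$.

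Next I would identify $\Vect(\calC, \calO)$ with the limit of $\Vect_\calO$ in $\CAlg(\Cat)$; unpacking this limit recovers exactly the data of Definition \ref{defi: crystal}, namely a compatible family $(\calE(x))_{x \in \calC}$ of finite projective modules together with coherent base-change isomorphisms. Applied verbatim to $F^*\calO$ on $\calC'$, the same reformulation identifies $\Vect(\calC', F^*\calO)$ with the limit of $\Vect_\calO \circ F$, and the restriction functor $F^*$ with the canonical comparison map between the two limits. Now I would invoke the assumption that $F$ is initial in the $\infty$-categorical sense (see \cite{lurie2009higher}): by definition, for every diagram valued in an $\infty$-category admitting the relevant limits, the comparison map between the limit over $\calC$ and the limit over $\calC'$ pulled back along $F$ is an equivalence. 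Applying this to $\Vect_\calO$ (valued in $\CAlg(\Cat)$) produces the desired symmetric monoidal equivalence, which by construction is the restriction functor $F^*$.

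The main obstacle is the reformulation step, which passes between the hands-on definition of crystals and the $\infty$-categorical language of limits of symmetric monoidal categories. This is handled by the straightening--unstraightening formalism: crystals are identified with (symmetric monoidal) cocartesian sections of the fibration classified by $\Vect_\calO$, which in turn coincide with the limit of $\Vect_\calO$ in $\CAlg(\Cat)$. Once this identification is in place, the rest of the argument is formal, and the same proof shows the equivalence on the underlying unstructured categories of crystals as well.
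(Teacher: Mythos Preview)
Your proposal is correct and takes a genuinely different route from the paper's own proof. The paper argues by explicitly constructing an inverse to $F^*$: for a crystal $\calE'$ on $\calC'$, it defines
\[
\calE(x) = \colim_{x' \in \calC'_{/x}} \calO(x) \otimes_{\calO(Fx')} \calE'(x'),
\]
and then uses initiality of $F$ to say that each slice $\calC'_{/x}$ is weakly contractible, so the colimit is computed by any single term and is manifestly a finite projective $\calO(x)$-module. The paper then omits the verification that $\calE$ is a crystal and that this construction inverts $F^*$.

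By contrast, you package both sides as limits of the diagram $x \mapsto \Vect(\calO(x))$ in $\CAlg(\Cat)$ and invoke the defining property of initial functors directly. This is more abstract but also cleaner: the symmetric monoidal compatibility is automatic (since limits in $\CAlg(\Cat)$ are computed on underlying categories), and there is nothing to verify by hand once the identification $\Vect(\calC,\calO) \simeq \lim_\calC \Vect_\calO$ is made. The paper's approach, on the other hand, yields an explicit formula for the inverse functor, which is occasionally useful for computations but requires the omitted checks to be filled in. Both arguments ultimately rest on the same contractibility of slices; yours just uses it at a higher level of abstraction.

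One small point worth making explicit in your write-up: be careful that the diagram $\Vect_\calO$ is indexed by $\calC$ itself (not $\calC^\op$), so that initiality of $F$ is the correct hypothesis for comparing the two limits. The paper's conventions make $\calO$ a covariant functor on $\calC$ (as is visible from its proof and from the application to cosimplicial rings), so this is consistent, but it is easy to get the variance wrong here.
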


\begin{proof}
Since $F^*$ is a symmetric monoidal functor by definition, we only need to verify that the underlying functor of $F^*$ is an equivalence.

Its inverse functor is given as follows: for any vector bundle $\cal E'$ of $F^*\calO$ on $\cal C'$, define $\calE$ by
\[ \calE(x) = \colim_{x' \in \calC'_{/x}} \calO(x) \otimes_{\calO(Fx')}\calE'(x'). \]
Since $F$ is initial, $\calC'_{/x}$ is weakly contractible. Also, in the colimit above, the maps are quasi-isomorphisms. It follows that the maps
\[ \calO(x) \otimes_{\calO(Fx')}\calE'(x') \to \calE(x) \]
are all isomorphisms. We omit the verification that $\calE(x)$ is indeed a crystal, and that the construction above is an inverse of $F^*$.
\end{proof}

\begin{lemma}\label{lem: weakly final computation}
Assume that $R/\overline A$ is $p$-complete with bounded torsion, and $(B,IB) \in (R/A)_\Prism$ is an element such that $B \to *$ is represented by a faithfully flat cover. Denote its $(n+1)$-fold product by $(B^n,IB^n)$, so $(B^\bullet,IB^\bullet)$ is a (co)simplicial object in $(R/A)_\Prism$. Then,
\begin{enumerate}[label=(\arabic*)]
\item for any vector bundle $\calE$ of $\overline\calO$-modules or $\overline\calO\bigl[ \frac 1 p \bigr]$-modules on $(R/A)_\Prism$, $\calE$ is a sheaf of spectra, and the natural maps
\[ \lim_{(B',IB') \in (R/A)_\Prism} \calE(B',IB') \to R\Gamma((R/A)_\Prism,\calE) \]
and
\[ \lim_{(B',IB') \in (R/A)_\Prism} \calE(B',IB') \to \lim_\simp \calE(B^\bullet,IB^\bullet) \]
are isomorphisms.

\item we have equivalences of symmetric monoidal categories given by restriction:
\begin{align*}
\Vect\bigl((R/A)_\Prism,\overline\calO\bigr) & \to \Vect\bigl(\simp, \overline B^\bullet\bigr), \\
\Vect\bigl((R/A)_\Prism,\overline\calO\bigl[ \frac 1 p \bigr]\bigr) & \to \Vect\bigl(\simp, \overline B^\bullet\bigl[\frac 1 p \bigr]\bigr).
\end{align*}
\end{enumerate}
\end{lemma}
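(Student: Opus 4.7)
The proof rests on the faithfully flat descent theorem (Theorem \ref{thm: descent theory}) together with standard $\infty$-topos formalism. First I would show that $\calE$ is a sheaf of spectra on the $p$-complete fpqc topos. Given a $(p,I)$-completely faithfully flat cover $C \to D$ in $(R/A)_\Prism$ with \v{C}ech nerve $C^\bullet$, the crystal condition identifies $\calE(C^n) \cong \overline{C^n} \cotimes_{\overline D} \calE(D)$. Since $\overline D \to \overline{C^\bullet}$ is $p$-completely faithfully flat and $\calE(D)$ is finite projective (hence $p$-complete of bounded $p^\infty$-torsion in the integral case, or adic over $\overline D[\tfrac 1 p]$ in the rational case), Theorem \ref{thm: descent theory}(1) delivers the required quasi-isomorphism $\calE(D) \xrightarrow\sim \lim_\simp \calE(C^\bullet)$.

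Once $\calE$ is a sheaf of spectra, both isomorphisms in (1) are formal. The constant presheaf $\underline{*}$ is always a sheaf, so for any sheaf of spectra $\calE$ on any site $\calC$ one has
\[ R\Gamma(\calC, \calE) = \Map_{\Shv(\calC)}(\underline{*}, \calE) = \Map_{\PShv(\calC)}(\underline{*}, \calE) = \lim_{(B',IB')} \calE(B',IB'), \]
yielding the first arrow. For the second, $B \to *$ being a covering means that the geometric realisation of the \v{C}ech nerve $B^\bullet$ becomes equivalent to $\underline{*}$ after sheafification, and so
\[ R\Gamma((R/A)_\Prism, \calE) = \Map_{\Shv}(|B^\bullet|, \calE) = \lim_\simp \calE(B^\bullet). \]

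For part (2), the restriction functor $F: \Vect((R/A)_\Prism, \overline\calO) \to \Vect(\simp, \overline{B^\bullet})$ is manifestly symmetric monoidal. I would construct an inverse as follows: given a cosimplicial vector bundle $(M^\bullet)$ over $\overline{B^\bullet}$, set
\[ \calE(B') := \lim_\simp \bigl( \overline{B' \times B^\bullet} \cotimes_{\overline{B^\bullet}} M^\bullet \bigr), \]
where $B' \times B^n$ denotes the product in $(R/A)_\Prism$ (obtained via a prismatic envelope, as in Proposition \ref{prop: weakly final}). By the same argument as in that proposition, $B' \times B \to B'$ is a $(p,I)$-completely faithfully flat cover, so Theorem \ref{thm: descent theory}(2) identifies $\calE(B')$ with a finite projective $\overline{B'}$-module. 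The crystal property and functoriality in $B'$ then follow from the universal property of the products, and the fact that $F$ and the reconstruction are mutually inverse follows from Step 1 (the sheaf property) applied to the cover $B' \times B \to B'$. The $[\tfrac 1 p]$-version is entirely parallel, invoking the rational half of Theorem \ref{thm: descent theory}.

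The bulk of the work lies in the last step: though the idea is simply faithfully flat descent of finite projective modules along a fixed cover, what makes the bookkeeping non-trivial is that products in $(R/A)_\Prism$ are computed as prismatic envelopes rather than naive tensor products. The key task is therefore to verify that the reconstruction $B' \mapsto \calE(B')$ is genuinely functorial and satisfies the crystal condition, which reduces to diagram-chasing with the universal property of the prismatic envelope described in Proposition \ref{prop: weakly final}.
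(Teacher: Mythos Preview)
Your proposal is correct and follows essentially the same path as the paper's proof: use Theorem~\ref{thm: descent theory} to see that $\calE$ is a sheaf of spectra, deduce (1) from the effective-epimorphism/\v{C}ech formalism, and deduce (2) from descent of finite projective modules. The only difference is presentational: for (2) the paper simply observes that $B' \mapsto \Vect(\overline{B'})$ (resp.\ $\Vect(\overline{B'}[\tfrac 1 p])$) is a stack on $(R/A)_\Prism$ by Theorem~\ref{thm: descent theory}(2) and then invokes the stack condition for the cover $B \to *$, whereas you unpack this by writing down the inverse functor $\calE(B') = \lim_\simp \bigl(\overline{B' \times B^\bullet} \cotimes_{\overline{B^\bullet}} M^\bullet\bigr)$ and verifying its properties by hand---your ``bookkeeping'' with prismatic-envelope products is exactly what the abstract stack statement encodes.
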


\begin{proof}
These are direct consequences of Theorem \ref{thm: descent theory}. More precisely, Theorem \ref{thm: descent theory} shows that $\calE$ is a sheaf of spectra, and the following conclusions follow from this and that $B \to *$ is an effective epimorphism\footnote{In the classical language, see\cite[\href{https://stacks.math.columbia.edu/tag/03F9}{Tag 03F9}]{stacks-project} and \cite[\href{https://stacks.math.columbia.edu/tag/03AZ}{Tag 03AZ}]{stacks-project}, note that taking limit over all $(B',IB')$ is nothing but computing $R\Gamma$ as presheaves.}. The item (2) follows from that $B' \mapsto \Vect(B'), \Vect\bigl(B' \bigl[ \frac 1 p \bigr]\bigr)$ are stacks (= sheaves of 1-groupoids), and the equivalence is the stacky condition for the cover $B \to *$.
\end{proof}

We will also need the Hodge--Tate comparison:
\begin{prop}[{\cite[Construction 7.6]{Bhatt_2022}}]\label{prop: HT comparison}
Let $(A,I)$ be a bounded prism. For any animated ring $R$ over $A$, $\overline\Prism_{R/A}$ admits an (exhaustive in the sense of derived $p$-complete modules) increasing filtration, with a natural isomorphism $\gr_* \overline\Prism_{R/A} \cong \dHodge_{R/A}$ as functors $\widehat\SCR_{A/} \to \widehat\grAlg_A$ over $\widehat\SCR_{A/}$, where $\widehat\grAlg_A$ consists of pairs $(B,B')$ where $B$ is a derived $p$-complete simplicial ring over $A$ and $B'$ is a derived $p$-complete graded $\dE_\infty$-algebra over $B$.
\end{prop}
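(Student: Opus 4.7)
The strategy is to define the filtration on $\overline\Prism_{R/A}$ first on $p$-completely smooth algebras over $\overline A$ using a truncation-based construction, show the graded pieces match $\dHodge_{R/A}$ there by explicit calculation, and then left Kan extend to all animated rings.

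First I would restrict to the full subcategory $\widehat\FinPoly_{\overline A} \subseteq \widehat\SCR_{A/}$ of $p$-completely finite polynomial $\overline A$-algebras (or, equivalently, work on the smooth case). For such $R$, the classical Hodge--Tate comparison (the original theorem of Bhatt--Scholze) asserts that $\overline\Prism_{R/A}$ is concentrated in degrees $[0,d]$ with $\hol^n(\overline\Prism_{R/A}) \cong \cOmega^n_{R/\overline A}\{-n\}$. In this range the natural map $R \to \overline\Prism_{R/A}$ lifts to a map in $\CAlg(\cD^{\ge 0}(R))$, so Example \ref{ex: symmetric monoidal of truncations} produces a filtered $\dE_\infty$-algebra $(\tau^{\le n}\overline\Prism_{R/A})_{n \ge 0}$, functorially in $R \in \widehat\FinPoly_{\overline A}$. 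Its graded pieces are $\hol^n(\overline\Prism_{R/A})[-n]$, which by the smooth Hodge--Tate comparison are canonically identified with $\cOmega^n_{R/\overline A}\{-n\}[-n] = \gr_n \dHodge_{R/A}$; these identifications assemble into an isomorphism of graded $\dE_\infty$-$R$-algebras $\gr_* \tau^{\le *}\overline\Prism_{R/A} \cong \dHodge_{R/A}$, since on each graded piece we just recover the exterior-power multiplication on cohomology.

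Next I would left Kan extend both sides along $\widehat\FinPoly_{\overline A} \hookrightarrow \widehat\SCR_{A/}$. By definition, the derived prismatic cohomology $\Prism_{-/A}$ is obtained as such a left Kan extension (applied to the $p$-completed prismatic cohomology of smooth algebras), and by construction the same is true for $\overline\Prism_{-/A}$ when viewed as a functor to the $\infty$-category of $(p$-complete$)$ $\dE_\infty$-algebras. Thus the left Kan extension of $R \mapsto (\tau^{\le n}\overline\Prism_{R/A})_{n \ge 0}$ to $\widehat\SCR_{A/}$ defines a functor valued in filtered $\dE_\infty$-$R$-algebras whose underlying (derived $p$-completed) colimit recovers $\overline\Prism_{R/A}$; denote the result $\Fil_* \overline\Prism_{R/A}$. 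By Construction \ref{cons: dHodge}, $R \mapsto \dHodge_{R/A}$ commutes with colimits in $\widehat\SCR_{A/}$; and since $\gr_*$ commutes with colimits (Sect.~\ref{sect: remarks on infty-cat}), taking left Kan extensions commutes with $\gr_*$. Therefore the isomorphism $\gr_* \tau^{\le *}\overline\Prism_{-/A} \cong \dHodge_{-/A}$ on the smooth subcategory extends uniquely to the desired natural isomorphism $\gr_* \Fil_* \overline\Prism_{R/A} \cong \dHodge_{R/A}$ on all of $\widehat\SCR_{A/}$.

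Finally I would verify exhaustion in the derived $p$-complete sense. On the smooth subcategory, the filtration is finite (since $\overline\Prism_{R/A}$ has finite $t$-amplitude) and is trivially exhaustive. After left Kan extension, the colimit $\colim_n \Fil_n \overline\Prism_{R/A}$ commutes with the colimits defining both sides, so after derived $p$-completion we recover $\overline\Prism_{R/A}$, i.e. exhaustion holds in $\cD(R)$.

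The main obstacle is the leftmost step: verifying that on the smooth subcategory, the truncation filtration is genuinely functorial as a filtered $\dE_\infty$-algebra (not merely as a filtered spectrum) and that the resulting isomorphism $\gr_* \cong \dHodge$ is multiplicative. This multiplicativity is the heart of Bhatt--Scholze's smooth Hodge--Tate comparison, and it is where one must use the explicit description of $\overline\Prism_{R/A}$ via the prism attached to a $\delta$-envelope (plus the identification of the resulting differential-forms structure). Once this multiplicative smooth case is in hand, the Kan extension argument for animated $R$ is formal.
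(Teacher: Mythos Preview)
The paper does not actually prove this proposition; it is quoted directly from \cite[Construction 7.6]{Bhatt_2022} without argument. Your outline is essentially the Bhatt--Scholze construction that the paper cites: define the Hodge--Tate filtration on $p$-completely smooth $\overline A$-algebras via the canonical $t$-truncation (exactly as in Example~\ref{ex: symmetric monoidal of truncations}), identify the graded pieces there using the smooth Hodge--Tate comparison, and then left Kan extend everything to $\widehat\SCR_{A/}$ using that $\Prism_{-/A}$, $\dHodge_{-/A}$, and $\gr_*$ all commute with the relevant colimits. Your identification of the multiplicative structure in the smooth case as the only nonformal input is accurate.
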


\section{De Rham Realization of rational Hodge--Tate prismatic crystals of quasi-l.c.i algebras}

In this section, we prove Theorem \ref{theo: main dR realization}.

\subsection{Cosimplicial rings and stratifications}

This section mainly consists of technical results. Recall that for any classically $p$-complete ring $R$ and vector bundle $E/R$, we have the gerbe $B(E^\vee)^\sharp$ given as the colimit of the simplicial object
\[ [n] \mapsto \Spf \bigl(\cGamma^\bullet_R (E^{\oplus n}) \bigr). \]
Therefore, a vector bundle $\calE$ over $B(E^\vee)^\sharp$ is canonically identified with descent data $M^\bullet$ over the cosimplicial ring $\cGamma^\bullet_R (E^{\oplus n})$, which is further identified with an integrable Higgs bundle
\[ \nabla: M \to M \otimes_R E \]
over $R$. We also have a symmetric monoidal comparison of cohomologies
\[ R\Gamma(\calE) \cong \Tot( M^\bullet ) \cong  \DR(M, \dif: M \to M \otimes_R E). \]

In this section, we generalise the above result in two aspects: we are going to replace $E$ by a \emph{$p$-complete and $p$-completely flat module} $\Omega$ over $R$, and we replace the gerbe $B(E^\vee)^\sharp$ by an affine $p$-completely flat formal stack $\mathfrak X$ over $B(\Omega^\vee)^\sharp$. Each of such $\mathfrak X$ is identified with a \emph{split PD ring} $S^\bullet$ over $R$ (see Condition \ref{cond: split pd}). As above, vector bundles over $\mathfrak X$ are identified with a stratification of vector bundles $M^\bullet/S^\bullet$, which is further identified with an integrable Higgs bundle
\[ \nabla: M \to M \cotimes_R \Omega \]
compatible with a differential $\dif: S \to S \cotimes_R \Omega$ on $S$. Again we have a comparison of cohomologies. These statements are made precise in Proposition \ref{prop: strat to connection}, Theorem \ref{theo: comp strat connection} and Theorem \ref{theo: comp strat connection cohomologies}.

Also, we won't be directly constructing these stacks and gerbes, but rather take an algebraic approach. Interested readers can do the translation job by themselves.

Consider the condition as below:
\begin{cond}\label{cond: very conc deg 0}
    Let $R$ be a classically $p$-completely ring with bounded $p^\infty$-torsion. We say that an $R$-module $M$ satisfies this condition, if either of the following two conditions holds:
    \begin{enumerate}[label=(\arabic*)]
        \item $M$ is classically $p$-complete with bounded $p^\infty$-torsion,
        \item $M$ is an adic module over $R \bigl[ \frac 1 p \bigr].$
    \end{enumerate}
\end{cond}

We will often consider cosimplicial objects. We denote $\left[ n \right] = \left\{ 0,\ldots,n \right\},$ and denote
\begin{itemize}
\item $\delta_i^n: \left[ n-1 \right] \to \left[ n \right]$ the injection missing $i$ (with $0 \le i \le n$).
\item $\sigma_i^n: \left[ n+1 \right] \to \left[ n \right]$ the surjection hitting $i$ twice (with $0 \le i \le n$).
\item $q_i^n: \left[ 0 \right] \to \left[ n \right]$ sending $0$ to $n$ (with $0 \le i \le n$).
\item $\alpha_i^n: \left[ 1 \right] \to \left[ n \right]$ sending $t$ to $t+i$ (with $0 \le i \le n-1$).
\item $\beta_{i,j}^n: \left[ 1 \right] \to \left[ n \right]$ sending $0$ to $i$ and $1$ to $j$ (with $0 \le i \le j \le n$).
\end{itemize}

We impose the following condition on the cosimplicial ring:

\begin{cond}\label{cond: split pd}
Let $R$ be classically $p$-complete with bounded $p^\infty$-torsion, and $S^\bullet$ be a cosimplicial ring over $R$, such that each $S^n$ is $p$-adically complete and $p$-completely flat over $R$. We say that the cosimplicial ring is \emph{split PD} over $R$, if
\begin{enumerate}[label=(\arabic*)]
\item each $\ker \left( S^n \to S^0 \right)$ is equipped with a PD structure compatible with the cosimplicial ring structure,
\item we are given a $p$-complete and $p$-completely flat $R$-module $\Omega$ and a $R$-linear map $\varphi: \Omega \to \ker \left( S^1 \to S^0 \right),$ such that $\bigl( \delta_0^2-\delta_1^2+\delta_2^2 \bigr) \varphi =0,$ and the natural map
\[ q_0^n \otimes \left( \alpha_0^n,\ldots,\alpha_{n-1}^n \right): S^0 \cotimes_R \Omega^{\oplus n} \to S^n \]
(where $\alpha_i^n: [1] \to [n]$ is given by $\alpha_i^n \left( t \right) = i+t$) induces an isomorphism
\[ S^0 \cotimes_R \cGamma^\bullet_R \bigl( \Omega^{\oplus n} \bigr) \to S^n, \]
where $S^n$ is viewed as an $S^0$-algebra via the inclusion $i^n: [0] \to [n]$. A simple induction would show that
\[ \beta^n_{i,j} \left( \varphi( \omega) \right) = \sum_{k=i}^{j-1} \alpha_i^n \left( \varphi( \omega) \right), \forall \omega \in \Omega. \]
\end{enumerate}

Although not explicitly stated, it should be noticed that $\Omega \to \ker (S^1 \to S^0)$ is also part of the data of a split PD structure. When we are going to emphasise on the role of this map, we shall write $(S^\bullet, \varphi: \Omega \to S^1)$.
\end{cond}

\begin{lemma}\label{lem: commutative constraint}
    There exists a unique PD ring map $\tau: S^1 \to S^1$ switching $q_0^1 (s)$ and $q_1^1(s),$ and takes $\varphi \left( \omega \right)$ to $\varphi \left( -\omega \right).$
\end{lemma}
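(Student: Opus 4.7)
My approach is to handle uniqueness and existence separately. Uniqueness is essentially formal: Condition~\ref{cond: split pd}(2) exhibits $S^1 \cong S^0 \cotimes_R \cGamma^*_R(\Omega)$ as a PD $R$-algebra generated by $q_0^1(S^0)$ and $\varphi(\Omega)$, so any PD ring map $\tau$ sending $q_0^1(s) \mapsto q_1^1(s)$ and $\varphi(\omega) \mapsto -\varphi(\omega)$ is uniquely determined.

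For existence, the plan is to first produce a candidate $\tau$ via the universal property of the PD polynomial algebra, then verify the nontrivial half of ``switching''. Since $-\varphi(\Omega) \subseteq \ker(S^1 \to S^0)$ lies in a PD ideal and $q_1^1: S^0 \to S^1$ is a ring map, this universal property will deliver a unique PD $R$-algebra endomorphism $\tau: S^1 \to S^1$ with $\tau \circ q_0^1 = q_1^1$ and $\tau \circ \varphi = -\varphi$. Setting $\delta(s) := q_1^1(s) - q_0^1(s) \in \ker(S^1 \to S^0)$, the remaining condition $\tau \circ q_1^1 = q_0^1$ is equivalent to the identity $\tau(\delta(s)) = -\delta(s)$.

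I would extract this identity from a cocycle in $S^2$. First, from the simplicial identities one gets that both $\delta_1^2(\delta(s))$ and $\alpha_0^2(\delta(s)) + \alpha_1^2(\delta(s))$ equal $q_2^2(s) - q_0^2(s)$, yielding
\[ \delta_1^2(\delta(s)) = \alpha_0^2(\delta(s)) + \alpha_1^2(\delta(s)) \quad \text{in } S^2. \]
Next, by the universal property of $S^2 \cong S^0 \cotimes_R \cGamma^*_R(\Omega^{\oplus 2})$, I can construct a PD ring map $\Phi: S^2 \to S^1$ specified by $\Phi \circ q_0^2 = q_0^1$, $\Phi \circ (\alpha_0^2 \varphi) = \varphi$, and $\Phi \circ (\alpha_1^2 \varphi) = -\varphi$; this is well-defined since $\pm\varphi(\Omega) \subseteq \ker(S^1 \to S^0)$. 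Writing $\delta(s) = \sum_\alpha q_0^1(s_\alpha) \, m_\alpha$ with each $m_\alpha$ a PD monomial of positive degree in $\varphi(\Omega)$, the computations
\[ \Phi(\delta_1^2(\delta(s))) = 0, \quad \Phi(\alpha_0^2(\delta(s))) = \delta(s), \quad \Phi(\alpha_1^2(\delta(s))) = \tau(\delta(s)) \]
will combine with the cocycle to yield $0 = \delta(s) + \tau(\delta(s))$. The first vanishing uses positivity of the degree of $m_\alpha$ and the identity $\varphi(\omega) + (-\varphi(\omega)) = 0$; the second uses $\Phi \circ \alpha_0^2 = \id_{S^1}$; the third uses $\alpha_1^2 q_0^1 = q_1^2$ together with $q_1^2(s_\alpha) = q_0^2(s_\alpha) + \alpha_0^2(\delta(s_\alpha))$ to show $\Phi(q_1^2(s_\alpha)) = q_1^1(s_\alpha)$, thereby matching the definitional formula $\tau(q_0^1(s_\alpha) m_\alpha) = q_1^1(s_\alpha) \, m_\alpha(-\varphi(\omega))$.

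The main obstacle is the third identification: because $\alpha_1^2$ sends $q_0^1(s_\alpha) \in S^1$ to $q_1^2(s_\alpha)$ rather than $q_0^2(s_\alpha)$, applying $\Phi$ automatically produces the correction term $\delta(s_\alpha)$ in each coefficient, which is precisely what is needed to reconstruct $\tau(\delta(s))$. This self-referential feature of the cocycle delivers the involution identity without any auxiliary assumption on the form of $\delta$ (such as concentration in PD-degree one).
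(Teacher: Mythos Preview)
Your proof is correct and follows essentially the same route as the paper. Both construct the identical auxiliary PD ring map $\Phi=\epsilon:S^2\to S^1$ (specified by $q_0^2(s)\mapsto q_0^1(s)$, $\alpha_0^2\varphi(\omega)\mapsto\varphi(\omega)$, $\alpha_1^2\varphi(\omega)\mapsto-\varphi(\omega)$) and extract the missing relation $\tau\circ q_1^1=q_0^1$ from simplicial identities in $S^2$. The paper computes $\epsilon(q_2^2(s))$ in two ways via $q_2^2=\delta_0^2 q_1^1=\delta_1^2 q_1^1$, while you apply $\Phi$ to the equivalent cocycle $\delta_1^2(\delta(s))=\alpha_0^2(\delta(s))+\alpha_1^2(\delta(s))$; these are the same calculation. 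One stylistic point: your monomial expansion $\delta(s)=\sum_\alpha q_0^1(s_\alpha)m_\alpha$ is not needed and is mildly informal in the completed setting --- it is cleaner (and this is what the paper does) to note that $\Phi\circ\alpha_0^2$, $\Phi\circ\alpha_1^2$, $\Phi\circ\delta_1^2$ are PD ring maps and identify each with $\id_{S^1}$, $\tau$, $q_0^1\sigma_0^0$ respectively by checking on the generators $q_0^1(s)$ and $\varphi(\omega)$.
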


\begin{proof}
Uniqueness is evident. As for existence, by our definition, there exists a unique PD ring map $\tau: S^1 \to S^1$ \emph{taking} $q_0^1(s)$ to $q_1^1(s)$ and $\varphi \left( \omega \right)$ to $\varphi \left( -\omega \right).$ We only need to verify that $\tau \circ q_1^1 = q_0^1$.

Now, there exists a unique PD ring map $\epsilon: S^2 \to S^1$ taking $q_0^2(s)$ to $q_0^1(s)$, $\alpha_0^2 \left( \varphi(\omega) \right)$ to $\varphi(\omega),$ and $\alpha_1^2 \left( \varphi(\omega) \right)$ to $-\varphi(\omega).$ By verifying on the basis, it is clear that $\epsilon \circ \delta_2^2 = \id_{S^2}$. We have
\begin{align*}
\epsilon \circ \delta_1^2 (q_0^1(s)) & = \epsilon \circ q_0^2 (s) = q_0^1 (s), \\
\epsilon \circ \delta_1^2(\varphi(\omega)) & = \epsilon \circ (\alpha_0^2+\alpha_1^2)(\varphi(\omega)) = \varphi(\omega) -\varphi(\omega) =0, \\[10pt]
\epsilon \circ \delta_0^2 (q_0^1(s)) & = \epsilon \circ q_1^2 (s) = \epsilon \circ \delta_2^2 \circ q_1^1 (s) = q_1^1(s), \\
\epsilon \circ \delta_0^2(\varphi(\omega)) & = \epsilon \circ \alpha_1^2(\varphi(\omega)) = -\varphi(\omega).
\end{align*}
It follows that $\epsilon \circ \delta_1^2 = \delta_1^1 \sigma_0^0,$ and $\epsilon \circ \delta_0^2 = \tau,$ so we have
\begin{align*}
\epsilon ( q_2^2 (s) ) & = \epsilon(\delta_0^2 q_1^1(s)) = \tau (q_1^1 (s)), \\
\epsilon ( q_2^2 (s) ) & = \epsilon(\delta_1^2 q_1^1(s)) = \delta_1^1 \sigma_0^0 q_1^1(s) = q_0^1(s).
\end{align*}
Therefore, for any $s\in S^0$ we have
\[ \tau(q_1^1(s)) = q_0^1(s), \]
as desired.
\end{proof}

\begin{prop}\label{prop: commutative constraint}
Let $S^\bullet$ be split PD over $R$. Let $\mathscr F_+$ be the category whose objects are $\left[ n \right],$ but whose morphisms are all morphisms as sets, so $\simp$ is a non-full subcategory of $\mathscr F_+$. Then, there exists a unique extension of the functor $S^\bullet: \simp \to \cAlg_{R,\Flat,\PD}$ (where $\cAlg_{R,\Flat,\PD}$ denotes the category of PD algebras $(S,I,\gamma)$ over $R$ where $S$ and $S/I$ are $p$-complete and $p$-completely flat over $R$) to a functor $S(\bullet): \mathscr F_+ \to \cAlg_{R,\Flat,\PD}$, such that the transposition $\left[ 1 \right] \to \left[ 1 \right]$ is sent to $\tau$ as in Lemma \ref{lem: commutative constraint}.

In particular, for all injective maps $\varphi: \left[ i \right] \to \left[ j \right]$, it induces an isomorphism
\[ S^j \cong S^i \cotimes_R \cGamma_R^\bullet \bigl( \Omega^{\oplus ( j-i )} \bigr), \]
hence the induced map $S^i \to S^j$ is also $p$-completely flat.
\end{prop}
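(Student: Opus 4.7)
My strategy is to extend $S^\bullet \colon \simp \to \cAlg_{R,\Flat,\PD}$ to a functor $S \colon \mathscr{F}_+ \to \cAlg_{R,\Flat,\PD}$ by constructing, for each $n$, an action of the symmetric group $\Sigma_{n+1}$ on $S^n$ by PD $R$-algebra automorphisms, compatible with the cosimplicial structure. Since $\mathscr{F}_+$ is generated by $\simp$ together with the adjacent transpositions $t_i^n \in \Sigma_{n+1}$, and since the $\mathscr{F}_+$-identities
\[ t_i^n \circ q_j^n = q_{t_i^n(j)}^n, \qquad t_i^n \circ \alpha_i^n = \alpha_i^n \circ t, \qquad t_i^n \circ \alpha_k^n = \beta_{t_i^n(k),\,t_i^n(k+1)}^n \text{ for } k \ne i \]
(where $t = t_0^1$ is the unique non-trivial element of $\Sigma_2$) force the action of $S(t_i^n)$ on the generators $q_0^n(S^0)$ and $\alpha_k^n(\varphi(\Omega))$ of $S^n$ once we insist $S(t) = \tau$, uniqueness of the extension is immediate.

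For existence I define, for each $\sigma \in \Sigma_{n+1}$, the endomorphism $S(\sigma)\colon S^n \to S^n$ to be the unique PD $R$-algebra map satisfying
\[ S(\sigma) \circ q_0^n = q_{\sigma(0)}^n, \qquad S(\sigma)(\alpha_i^n(\varphi(\omega))) = \beta_{\sigma(i),\sigma(i+1)}^n(\varphi(\omega)), \]
where $\beta_{a,b}^n(\varphi(\omega))$ is extended to arbitrary $a,b \in [n]$ by $\beta_{a,b}^n(\varphi(\omega)) = \sum_{k=a}^{b-1}\alpha_k^n(\varphi(\omega))$ when $a \le b$ and $\beta_{a,b}^n = -\beta_{b,a}^n$ otherwise. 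Well-definedness follows from Condition \ref{cond: split pd}(2), which exhibits $S^n$ as a free PD $R$-algebra on $q_0^n(S^0)$ together with $\alpha_i^n(\varphi(\Omega))$. The \emph{telescoping identity} $\sum_{k=0}^{p-1}\beta_{c_k,c_{k+1}}^n(\varphi(\omega)) = \beta_{c_0,c_p}^n(\varphi(\omega))$ follows from writing $\beta_{a,b}^n(\varphi(\omega)) = F(b) - F(a)$ for the potential $F(i) := \beta_{0,i}^n(\varphi(\omega))$, and yields at once the stronger identity $S(\sigma)(\beta_{a,b}^n(\varphi(\omega))) = \beta_{\sigma(a),\sigma(b)}^n(\varphi(\omega))$ for all $a,b$.

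The principal technical obstacle is to verify the \emph{compatibility lemma}
\[ (\star) \qquad S(\sigma) \circ q_j^n = q_{\sigma(j)}^n \quad \text{for all } \sigma \in \Sigma_{n+1},\ j \in [n], \]
since $q_j^n(s)$ for $j > 0$ admits no explicit expression in terms of the generators of $S^n$. I would prove $(\star)$ by induction on $n$, simultaneously with the partial functoriality
\[ (\dagger) \qquad S(\sigma) \circ S(h) = S(h') \circ S(\sigma'') \text{ whenever } \sigma \circ h = h' \circ \sigma'' \text{ in } \mathscr{F}_+ \text{ with } h, h' \in \simp. \]
The base case $n = 1$ is precisely Lemma \ref{lem: commutative constraint}. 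For the inductive step, given $\sigma \in \Sigma_{n+1}$ and $j \in [n]$, pick $k \in [n] \setminus \{j\}$ and factor $q_j^n = \delta_k^n \circ q_{j'}^{n-1}$ (a $\simp$-identity for the appropriate $j'$); writing $\sigma \circ \delta_k^n = \delta_{\sigma(k)}^n \circ \sigma'$ uniquely in $\mathscr{F}_+$ with $\sigma' \in \Sigma_n$, I apply $(\dagger)$ at level $n$, whose verification on the generators $q_0^{n-1}(s)$ and $\alpha_i^{n-1}(\varphi(\omega))$ of $S^{n-1}$ reduces to $(\star)$ at level $n-1$ together with the telescoping identity, and then compose with $q_{j'}^{n-1}$ to conclude $(\star)$ at level $n$.

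With $(\star)$ and $(\dagger)$ in hand, I define $S(f) := S(h) \circ S(\sigma)$ for any $f \in \mathscr{F}_+$ and any factorization $f = h \circ \sigma$ with $h \in \simp$ and $\sigma$ a permutation; independence of the factorization follows from $(\star)$, since two such factorizations differ by a permutation of the fibres of $h$ on which $S(h)$ already identifies the corresponding generators. Functoriality of the resulting $S$ on $\mathscr{F}_+$ is checked on generators via $(\dagger)$ and the telescoping identity. Finally, for an injective $f\colon [i] \to [j]$ in $\mathscr{F}_+$, I factor $f = \sigma \circ \iota$ with $\iota \in \simp$ the order-preserving injection with image $f([i])$ and $\sigma \in \Sigma_{j+1}$; combining Condition \ref{cond: split pd}(2) applied to $\iota$ with the automorphism $S(\sigma)$ gives the desired isomorphism $S^j \cong S^i \cotimes_R \cGamma_R^*(\Omega^{\oplus(j-i)})$ and hence the $p$-complete flatness of $S^i \to S^j$.
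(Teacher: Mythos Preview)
Your approach is correct and shares the core ingredients with the paper's proof (the explicit formula on generators, the telescoping identity for $\beta$), but the organization is genuinely different. The paper defines $\widetilde f$ for \emph{all} $f\in\mathscr F_+$ in one stroke by the same formula you use for permutations, and then proves the analogue of your $(\star)$---namely $\widetilde f\circ q_i^m=q_{f(i)}^n$---\emph{without induction}: it first verifies $\widetilde f\circ\beta_{0,i}^m=\beta_{f(0),f(i)}^n$ as maps $S^1\to S^n$ (only two generators to check, using the definition and telescoping), and then simply composes with $q_1^1$, invoking the extended $\beta_{a,b}^n=\beta_{b,a}^n\tau$ from Lemma~\ref{lem: commutative constraint}. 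Functoriality is then immediate from the characterization $\widetilde f\circ\beta_{i,j}^m=\beta_{f(i),f(j)}^n$. This avoids both your induction on $n$ and the separate independence/functoriality checks for the factorization $f=h\sigma$. One small imprecision in your write-up: in the inductive step you must choose $k>0$ (not merely $k\ne j$), so that $\delta_k^n(0)=0$ and the LHS on the generator $q_0^{n-1}(s)$ evaluates by the definition of $S(\sigma)$ rather than by $(\star)$ at level $n$; also, the invocation of $(\star)$ at level $n-1$ occurs in the composition with $q_{j'}^{n-1}$ afterwards, not in the verification of $(\dagger)$ on generators itself.
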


\begin{proof}
We first prove uniqueness. Observe that any map $f: \left[ 1 \right] \to \left[ n \right]$ is either in $\simp$, or that $f\tau$ is in $\simp.$ It follows that $S (f)$ is uniquely determined. Now, any ring $S^m$ is generated by all $\beta_{i,j}^n (S^1),$ so $S^m \to S^n$ is uniquely determined by looking at its composition with $\beta_{i,j}^n S^1 \to S^m.$

To show existence, we first denote $\beta_{j,i}^n = \beta_{i,j}^n \tau: S^1 \to S^n$ (where $0 \le i \le j \le n$). Note that $\beta_{i,i}^n = q_i^n \sigma_0^0 = q_i^n \sigma_0^0 \tau = \beta_{i,i}^n \tau,$ so the above map $\beta_{j,i}^n$ is well-defined. We claim that for any map $f: \left[ m \right] \to \left[ n \right]$ in $\mathscr F_+$, there exists a unique PD ring homomorphism $\widetilde f: S^m \to S^n$ such that $\widetilde f \beta_{i,j}^m = \beta_{f(i),f(j)}^n$ for all $0 \le i,j \le m.$ This would then complete the proof.

Uniqueness is still evident. As for existence, we define $\widetilde f: S^m \to S^n$ as the unique ring map taking $q_0^m(s)$ to $q_{f(0)}^n(s),$ and taking $\alpha_i^m(\varphi(\omega))$ to $\beta_{f(i),f(i+1)}^n(\varphi(\omega)).$ Using the additive property of $\beta,$ we see that $\widetilde f \circ \beta_{i,j}^m (\varphi(\omega)) = \beta^n_{f(i),f(j)} (\varphi(\omega)).$ Verify on a basis, and we see that
\[ \widetilde f \circ \beta_{0,i}^m = \beta_{f(0),f(i)}^n \]
for all $0 \le i \le m.$ Compose $q_1^1$ on the right-hand side, and we see that $\widetilde f \circ q_i^m = q_{f(i)}^n$ for all $0 \le i \le m.$ Verify again on a basis, and we see that $\widetilde f \circ \beta_{i,j}^m = \beta_{f(i),f(j)}^m,$ as desired.
\end{proof}

\begin{rmk}
Lemma \ref{lem: commutative constraint} and Proposition \ref{prop: commutative constraint} can be proved using the abstract nonsense of \emph{groupoid objects} (see for example \cite[Definition 6.1.2.7]{lurie2009higher} or \cite[\href{https://stacks.math.columbia.edu/tag/0230}{Tag 0230}]{stacks-project}) --- using the definitions, one can show that $S^\bullet$ is a cogroupoid object in $\cAlg_{R,\Flat,\PD}$. In fact, even the (image) of the transposition itself can be uniquely determined by the $\simp$-structure. This follows from the simple observation as follows: given a cogroupoid $\simp \to \cAlg_{R,\Flat,\PD}$, any extension functor $\mathscr F_+ \to \cAlg_{R,\Flat,\PD}$ is the left Kan extension of the original functor.

A concrete way to show that the extension is uniquely determined is that given any extension to a functor $\mathscr F_+ \to \cAlg_{R,\Flat,\PD}$, (the image of) the morphism $\tau$ fits into the following commutative diagram which will uniquely determine $\tau$
\[\begin{tikzcd}
S^2 \ar[rd,dashed,"\varepsilon"] & S^1 \ar[d,dotted,"\tau"] \ar[l,"\delta_0^2"'] \\
S^1 \cotimes_{S^0} S^1 \ar[u,"{(\delta_2^2,\delta_1^2)}","\cong"'] \ar[r,"{(\id,\delta_1^1\sigma_0^0)}"'] & S^1
\end{tikzcd}\]
with $\varepsilon:[2] \to [1]$ given by $(0,1,2) \mapsto (0,1,0)$. However, we will not be needing this property in the future.
\end{rmk}

For future reference, we will also need a filtered version of split-PD-ness.

\begin{defi}[Filtration on split PD rings]\label{defi: filtered split PD}
Let $S^\bullet$ be split PD over $R$. We say that $S^\bullet$ is \emph{filtered split PD} over $R$, if $S^\bullet$ is equipped with an \emph{increasing} filtration $\Fil_d (S^n),$ such that
\begin{itemize}
\item each graded piece $\gr_d (S^n)$ is $p$-complete and $p$-completely flat over $R$,
\item $S^n = \colim_{d \in \mathbb Z_{\ge 0}} \Fil_d( S^n )$ in the category $\cD(R)$,
\item the filtrations are compatible with the cosimplicial structure and the PD structure (i.e. for $x \in \ker (S^n \to S^0) \cap \Fil_d (S^n)$ we have $\gamma_r(x) \in \Fil_{dr} (S^n)$),
\item the isomorphisms
\[ S^0 \cotimes_R \cGamma_R^\bullet (\Omega^{\oplus n}) \to S^n \]
are strict isomorphisms of filtered PD rings over $R$ (where $\cGamma^d(\Omega)$ is of degree $d$).
\end{itemize}

Under the above assumptions, it is easy to verify that the map $\tau$ as in Lemma \ref{lem: commutative constraint} is compatible with filtrations (i.e. $\tau(\Fil_d) \subseteq \Fil_d$), therefore the unique $\mathscr F_+$-structure as in Proposition \ref{prop: commutative constraint} will also be compatible with the filtrations.
\end{defi}

\begin{defi}\label{defi: graded split PD}
Similarly, we can define \emph{graded split PD} cosimplicial rings over $R.$ Clearly, if $S^\bullet$ is a filtered split PD cosimplicial ring over $R$, then $\gr_* (S^\bullet)$ is a graded split PD cosimplicial ring over $R$.

For example, let $A \to P \to R$ be a sequence of maps of $p$-complete rings, such that $A \to P$ is $p$-completely flat, $\cOmega_{P/A} := \cdL_{P/A}$ is $p$-completely flat over $P$, and that $R/P$ satisfies Condition \ref{cond: tor -1}. We also fix an invertible module $A \left\{ 1 \right\}$ over $A$. We now take $P(n) = P^{\cotimes(n+1)/A}$ and
\[ S(n) = \dHodge_{R/P(n)} \cong \cGamma^\bullet_R \bigl( \cdL_{R/P(n)}[-1]\{-1\} \bigr). \]
Take $\Omega = R \cotimes_P \cOmega_{P/A} \left\{ -1 \right\},$ and $\Omega \to S(1)$ defined by
\[ \Omega \cong R \cotimes_P \cdL_{P/P(1)}[-1]\{-1\} \to \cdL_{R/P(1)} [-1] \{-1\}. \]
Using some arguments of commutative algebra, it is easy to see that the above data gives a graded split PD structure on $S(\bullet) = \dHodge_{R/P(\bullet)}.$
\end{defi}

\begin{defi}[Stratifications]\label{defi:strat}
    For any split PD cosimplicial ring $S^\bullet$ (where each $S^n$ is classically $p$-complete with bounded $p^\infty$-torsion) over $R$, a \emph{stratification} along $S^\bullet$ is a classically $p$-complete module $M/S^0$ with bounded $p^\infty$-torsion, as well as an $S^1$-linear map
    \[ \ep:S^1\cotimes_{\delta_0^1,S^0}M\to S^1\cotimes_{\delta_1^1,S^0}M, \]
    such that:

    (1) The following diagram commutes:
    \[ \xymatrixcolsep{0.2in}
    \xymatrix@C-2.3pc{ & S^2\cotimes_{\delta_0^2,S^1}S^1\cotimes_{\delta_0^1,S^0}M\ar[rr]^-{\id_{S^2}\otimes_{\delta_0^2,S^1}\ep}\ar@{=}[d]&&S^2\cotimes_{\delta_0^2,S^1}S^1\cotimes_{\delta_1^1,S^0}M\ar@{=}[d]&\\
    S^2\cotimes_{\delta_1^2,S^1}S^1\cotimes_{\delta_0^1,S^0}M\ar[ddr]_-{\id_{S^2}\otimes_{\delta_1^2,S^1}\ep}\ar@{=}[r]&S^2\cotimes_{q_2^2,S^0}M&&S^2\cotimes_{q_1^2,S^0}M\ar@{=}[r]&S^2\cotimes_{\delta_2^2,S^1}S^1\cotimes_{\delta_0^1,S^0}M\ar[ldd]^-{\id_{S^2}\otimes_{\delta_2^2,S^1}\ep}\\
    &&S^2\cotimes_{q_0^2,S^0}M\ar@{=}[dl]\ar@{=}[dr]&&\\
    &S^2\cotimes_{\delta_1^2,S^1}S^1\cotimes_{\delta_1^1,S^0}M&&S^2\cotimes_{\delta_2^2,S^1}S^1\cotimes_{\delta_1^1,S^0}M&} \]

    (2) We have a commutative diagram
    \[\xymatrix{
     S^1 \cotimes_{\delta_0^1,S^0} M \ar[rr]^\epsilon \ar[rd]_{\sigma_0^0 \otimes \id} & &  S^1 \cotimes_{\delta_1^1, S^0} M \ar[ld]^{\sigma_0^0 \otimes \id} \\
     & M
    }\]
\end{defi}

In fact, we can deal with a slightly more general class of stratifications.

We will also employ a `rational' version of the stratification set-up as above:

\begin{defi}\label{defi:rational strat}
    For any split PD cosimplicial ring $S^\bullet$ (where each $S^n$ is classically $p$-complete with bounded $p^\infty$-torsion), a \emph{stratification} along $S^\bullet \bigl[ \frac 1 p \bigr]$ is an adic module $M/S^0 \bigl[ \frac 1 p \bigr]$ as well as a continuous $S^1$-linear map
    \[ \ep:S^1\cotimes_{\delta_0^1,S^0}M\to S^1\cotimes_{\delta_1^1,S^0}M, \]
    such that:

    (1) The following diagram commutes:
    \[ \xymatrixcolsep{0.2in}
    \xymatrix@C-2.3pc{ & S^2\cotimes_{\delta_0^2,S^1}S^1\cotimes_{\delta_0^1,S^0}M\ar[rr]^-{\id_{S^2}\otimes_{\delta_0^2,S^1}\ep}\ar@{=}[d]&&S^2\cotimes_{\delta_0^2,S^1}S^1\cotimes_{\delta_1^1,S^0}M\ar@{=}[d]&\\
    S^2\cotimes_{\delta_1^2,S^1}S^1\cotimes_{\delta_0^1,S^0}M\ar[ddr]_-{\id_{S^2}\otimes_{\delta_1^2,S^1}\ep}\ar@{=}[r]&S^2\cotimes_{q_2^2,S^0}M&&S^2\cotimes_{q_1^2,S^0}M\ar@{=}[r]&S^2\cotimes_{\delta_2^2,S^1}S^1\cotimes_{\delta_0^1,S^0}M\ar[ldd]^-{\id_{S^2}\otimes_{\delta_2^2,S^1}\ep}\\
    &&S^2\cotimes_{q_0^2,S^0}M\ar@{=}[dl]\ar@{=}[dr]&&\\
    &S^2\cotimes_{\delta_1^2,S^1}S^1\cotimes_{\delta_1^1,S^0}M&&S^2\cotimes_{\delta_2^2,S^1}S^1\cotimes_{\delta_1^1,S^0}M&} \]

    (2) We have a commutative diagram
    \[\xymatrix{
     S^1 \cotimes_{\delta_0^1,S^0} M \ar[rr]^\epsilon \ar[rd]_{\sigma_0^0 \otimes \id} & &  S^1 \cotimes_{\delta_1^1, S^0} M \ar[ld]^{\sigma_0^0 \otimes \id} \\
     & M
    }\]
\end{defi}

\begin{prop}\label{prop: commutative constraint strat}
Let $S(\bullet): \mathscr F^+ \to \cAlg_{R,\Flat,\PD}$ be as in Proposition \ref{prop: commutative constraint}. Consider the category of $S(\bullet)$-modules $M(\bullet)$ satisfying Condition \ref{cond: very conc deg 0}, such that for each map $f: [m] \to [n],$ the map $S(n) \cotimes_{S(m)} M(m) \to M(n)$ are isomorphisms. Then, the functor $M(\bullet) \mapsto \left( M(0), \epsilon: S^1 \cotimes_{\delta_0^1, S^0} M(0) \to M(1) \to S^1 \cotimes_{\delta_1^1, S^0} M(0) \right)$ is an equivalence between the category of $M(\bullet)$ and the category of stratifications over $S^\bullet$ and $S^\bullet \bigl[ \frac 1 p \bigr].$
\end{prop}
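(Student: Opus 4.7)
The plan is to construct an explicit quasi-inverse to the restriction functor $M(\bullet) \mapsto \bigl(M(0), \epsilon\bigr)$, then verify both compositions are naturally isomorphic to the identity. Both the integral and rational cases are handled by the same construction; the completeness/boundedness condition on the resulting $M(n)$ follows directly from Proposition-Definition \ref{prop-defi: base change of adic modules} and the $p$-complete flatness of the maps $q_i^n: S(0) \to S(n)$ guaranteed by Proposition \ref{prop: commutative constraint}.

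Given a stratification $(M,\epsilon)$, first I would establish that $\epsilon$ is an isomorphism. The inverse candidate is $\tau^*\epsilon: S^1 \cotimes_{\delta_1^1, S^0} M \to S^1 \cotimes_{\delta_0^1, S^0} M$, using the map $\tau$ of Lemma \ref{lem: commutative constraint}. That $\tau^*\epsilon$ is a two-sided inverse of $\epsilon$ is extracted from the cocycle in Definition \ref{defi:strat}(1) together with the unit axiom (2), by pulling back the cocycle diagram along suitable $\mathscr F^+$-maps $[2] \to [1]$ that collapse a non-adjacent pair of vertices and use the transposition on the remaining pair. Next I set $M(n) := S(n) \cotimes_{q_0^n, S(0)} M$, and for every $0 \le i \le n$ produce canonical identifications $\phi_i^n: S(n) \cotimes_{q_i^n, S(0)} M \to M(n)$ by composing the base changes $(\alpha_k^n)^*\epsilon$ along the path $0 \to 1 \to \cdots \to i$ (in either direction, using invertibility of $\epsilon$ when $i<0$ is reversed). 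For any arrow $f: [m] \to [n]$ in $\mathscr F^+$, the required structure map $M(m) \to M(n)$ is then defined by
\[ S(n) \cotimes_{f, S(m)} M(m) \cong S(n) \cotimes_{q_{f(0)}^n, S(0)} M \xrightarrow{\phi_{f(0)}^n} M(n), \]
which is manifestly an isomorphism.

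The main obstacle is functoriality of this assignment, i.e.\ verifying that for a composable pair $[\ell] \xrightarrow{g} [m] \xrightarrow{f} [n]$ in $\mathscr F^+$ the structure map attached to $fg$ equals the base change of that attached to $g$ followed by that attached to $f$. By a straightforward reduction using the explicit description of $\phi_i^n$ and the fact that $M(\bullet)$ is determined by its values on the maps $[0] \to [n]$, this reduces to showing that the various compositions of $(\alpha_k^n)^*\epsilon$ give a consistent system of identifications $S(n) \cotimes_{q_i^n, S(0)} M \cong S(n) \cotimes_{q_j^n, S(0)} M$ for $0 \le i,j \le n$. Consistency on increasing triples $i<j<k$ is the cocycle condition of Definition \ref{defi:strat}(1) pulled back via the appropriate face of $[n]$; consistency under permutations is the $\tau$-equivariance established in the previous paragraph; and the unit axiom of Definition \ref{defi:strat}(2) handles degenerate cases by identifying degenerate pullbacks of $\epsilon$ with the identity.

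Finally, I would verify that the two functors are mutually quasi-inverse. Starting from $(M,\epsilon)$, constructing $M(\bullet)$ and extracting the stratification at level $1$ recovers $(M,\epsilon)$ because the two identifications $M(1) \cong S(1) \cotimes_{\delta_i^1, S(0)} M$ appearing in the definition of $\epsilon$ are precisely $\phi_0^1$ and $\phi_1^1$, whose composite is $\epsilon$ by construction. Conversely, starting from $M(\bullet)$, the structure maps $S(n) \cotimes_{q_0^n, S(0)} M(0) \to M(n)$ are isomorphisms by hypothesis, and they commute with the $\mathscr F^+$-action because they do so on the generators $q_i^n$ (by the cocycle present in $M(\bullet)$ itself) and on the transpositions (again by invertibility of the base changes of $\epsilon$). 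Full faithfulness is then the observation that a morphism $M(\bullet) \to N(\bullet)$ is uniquely determined by its restriction $M(0) \to N(0)$, which must be compatible with the two $\epsilon$'s; this is immediate from the fact that $M(n)$ is generated by $M(0)$ over $S(n)$ via $q_0^n$.
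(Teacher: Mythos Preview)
Your proposal is correct and follows essentially the same route as the paper: both construct the quasi-inverse by setting $M(n) = S(n) \cotimes_{q_0^n,S(0)} M$ and defining the transition map for $f:[m]\to[n]$ via a base change of $\epsilon$ that carries the copy of $M$ sitting over the vertex $f(0)$ back to the vertex $0$. The only cosmetic difference is that the paper does this in a single step, pulling $\epsilon$ back along $\beta^n_{0,f(0)}$, whereas you iterate the adjacent-edge pullbacks $(\alpha_k^n)^*\epsilon$; the cocycle condition makes these agree, as you note. Your explicit verification that $\epsilon$ is invertible (via $\tau^*\epsilon$ and the $\mathscr F_+$-pullbacks of the cocycle) is a detail the paper leaves implicit under ``we omit the verifications,'' so in that respect your writeup is more complete.
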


\begin{proof}
This is some standard argument. The inverse functor is given by taking $M(n) = S^n \cotimes_{q_0^n, S^0} M$ and maps $S(n) \cotimes_{f, S(m)} M(m) \to M(n)$ given by
\[ S(n) \cotimes_{f,S(m)} M(m) \cong S(n) \cotimes_{\beta^n_{0,f(0)},S(1)} S(1) \cotimes_{\delta_0^1,S(0)} M \xrightarrow{\id \otimes \ep} S(n) \cotimes_{\beta^n_{0,f(0)},S(1)} S(1) \cotimes_{\delta_1^1,S(0)} M \cong M(n). \]
We omit the verifications.
\end{proof}

We now turn to differentials and connections.

\begin{defi}\label{defi: Higgs field}
Let $R$ be a classically $p$-complete ring with bounded $p^\infty$-torsion, and $\Omega$ be a $p$-complete and $p$-completely flat $R$-module.

Let $M$ be an $R$-module satisfying Condition \ref{cond: very conc deg 0}. Consider a \emph{continuous} (automatic when $M$ is $p$-adically complete of bounded $p^\infty$-torsion) $R$-linear map
\[ \nabla: M \to M \cotimes_R \Omega. \]
This map in turn induces maps
\[ \nabla: M \cotimes_R \cbigwedge^k(\Omega) \to M \cotimes_R \cbigwedge^{k+1}(\Omega) \]
via
\[ m \otimes \omega \mapsto \nabla(m) \wedge \omega. \]
We say that $\nabla$ is \emph{a Higgs field on $M$} if $\nabla \circ \nabla : M \to M \cotimes_R \cbigwedge^2(\Omega)$ is zero. It is easy to see that we may form the \emph{de Rham complex} $\DR(M,\nabla)$ of $\nabla$ as a differential graded module, with terms $M \cotimes_R \cbigwedge^k(\Omega)$ and differentials $\nabla: M \cotimes_R \cbigwedge^k(\Omega) \to M \cotimes_R \cbigwedge^{k+1}(\Omega)$. Fix $\Omega$. The category $(M,\nabla)$ of $p$-adically complete and $p$-completely flat modules $M/R$ equipped with a Higgs field $\nabla: M \to M \cotimes_R \Omega$ is a symmetric monoidal category: for $(M_1,\nabla_1)$ and $(M_2,\nabla_2)$, their tensor product is $(M_1 \cotimes_R M_2, \nabla_1 \otimes \id + \id \otimes \nabla_2)$. The unit object is $(R,0)$.

Assume that the algebra $S/R$ is $p$-complete and $p$-completely flat and $\dif: S \to S \cotimes_R \Omega$ is a Higgs field. If $\dif$ is a differential, then $(S,\dif)$ is an algebra in the symmetric monoidal category above, hence $\DR(S,\dif)$ is a differential graded algebra. For an $S$-module $M$ satisfying Condition \ref{cond: very conc deg 0} and a Higgs field $\nabla: M \to M \cotimes_R \Omega$, we say that $\nabla$ is \emph{an integrable $\dif$-connection}, if
\[ \nabla(sm) = m \otimes \dif(s) + s\nabla(m) \]
for each $s \in S$ and $m \in M$. In this case, $\DR(M,\nabla)$ (as a differential graded $R$-module) is canonically equipped with a structure of a $\DR(S,\dif)$-module. The category of integrable $\dif$-connections $(M,\nabla)$ where $M$ is $p$-complete and $p$-completely flat is symmetric monoidal by
\[ (M_1,\nabla_1) \otimes (M_2,\nabla_2) = (M_1 \cotimes_S M_2, \nabla(m_1 \otimes m_2) = \nabla_1(m_1) \otimes m_2 + m_1 \otimes \nabla_2(m_2)) \]
with unit object $(S,\dif)$. Similarly, the category of integrable $\dif$-connections $(M,\nabla)$ with $M/S\bigl[ \frac 1 p \bigr]$ finite projective is a symmetric monoidal category.
\end{defi}

\begin{lemma}\label{lem: connection to stratification}
Let $R$ be a classically $p$-complete ring with bounded $p^\infty$-torsion, and $\Omega$ be a $p$-complete and $p$-completely flat $R$-module.

Let $M$ be an $R$-module satisfying Condition \ref{cond: very conc deg 0}, equipped with a Higgs field
\[ \nabla: M \to M \cotimes_R \Omega. \]
Then, there exists a unique sequence of maps
\[ \nabla^n: M \to M \cotimes_R \cGamma^n \left( \Omega \right), \]
such that $\nabla^0=\id_M, \nabla^1=\nabla,$ and the following diagram commutes for all $n$:
\begin{equation}\label{eq: strat connection}
\xymatrix{
M \ar[r]^(.32){\nabla^n} \ar@{=}[d] & M \cotimes_R \cGamma^n \left( \Omega \right) \ar[r]^(.46){\id \otimes \cGamma^n \left( \Delta \right)} & M \cotimes_R \cGamma^n \left( \Omega \oplus \Omega \right) \ar[d]^(.37)\cong \\
M \ar[rr]_(.33){( \nabla^i \otimes \id ) \circ \nabla^{n-i}} & & \displaystyle\bigoplus_{i=0}^n M \cotimes_R \cGamma^i \left( \Omega \right) \cotimes_R \cGamma^{n-i} \left( \Omega \right)
}
\end{equation}
where $\Delta = \left( \id,\id \right): \Omega \to \Omega.$ Moreover, these $\nabla^n$ fits into the commutative diagram as follows:
\[\xymatrix{
& M \cotimes_R \cGamma^n \left( \Omega \right) \ar[rd] \\
M \ar[rr]_{\nabla^n_0} \ar[ru]^{\nabla^n} & & M \cotimes \Omega^{\cotimes n/R}
}\]
where $\cGamma^n \left( \Omega \right) \to \Omega^{\cotimes n/R}$ is given in Proposition \ref{prop: flat pd power}, and $\nabla_0^n$ is the composition of the chain below:
\[ M \xrightarrow{\nabla} M \cotimes_R \Omega \xrightarrow{\nabla \otimes \id} M \cotimes_R \Omega \cotimes_R \Omega \to \cdots \to M \cotimes_R \Omega^{\cotimes n/R}. \]
\end{lemma}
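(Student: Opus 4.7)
The plan is to construct $\nabla^n$ by first iterating the Higgs field to produce the map $\nabla_0^n: M \to M \cotimes_R \Omega^{\cotimes n/R}$ of the statement, and then factor it through the closed embedding
\[ M \cotimes_R \cGamma^n(\Omega) \hookrightarrow M \cotimes_R \Omega^{\cotimes n/R} \]
supplied by Proposition \ref{prop: flat pd power}. Concretely, I will show that the image of $\nabla_0^n$ lies in the $\Sigma_n$-invariants $\bigl( M \cotimes_R \Omega^{\cotimes n/R} \bigr)^{\Sigma_n}$; by Proposition \ref{prop: flat pd power} (which applies in both cases of Condition \ref{cond: very conc deg 0}), this invariant submodule is canonically identified with $M \cotimes_R \cGamma^n(\Omega)$, and the induced lift is the desired $\nabla^n$. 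The second commutative diagram in the lemma then holds tautologically by construction.

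The main obstacle is the $\Sigma_n$-invariance of $\nabla_0^n$. Since $\Sigma_n$ is generated by adjacent transpositions $\tau_i = (i,i+1)$, this reduces to the $\Sigma_2$-invariance of the two-fold iterate $(\nabla \otimes \id) \circ \nabla : M \to M \cotimes_R \Omega^{\cotimes 2/R}$ inserted at the $(i,i+1)$-positions. Because $\Omega$ is $p$-completely flat, reduction mod $p$ to the classical setting yields a short exact sequence
\[ 0 \to \cGamma^2(\Omega) \to \Omega^{\cotimes 2/R} \to \cbigwedge^2(\Omega) \to 0, \]
identifying the $\Sigma_2$-invariant submodule with $\cGamma^2(\Omega)$. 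Unwinding definitions, the composition of $(\nabla \otimes \id) \circ \nabla$ with the wedge projection is precisely the map $\nabla \circ \nabla : M \to M \cotimes_R \cbigwedge^2(\Omega)$ defining the Higgs condition, and hence vanishes. This yields the desired invariance.

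The verification of diagram (\ref{eq: strat connection}) can then be carried out after the injection $\cGamma^n(\Omega \oplus \Omega) \hookrightarrow (\Omega \oplus \Omega)^{\cotimes n/R}$: expanding the latter along the $2^n$ subsets of $\{1,\ldots,n\}$ and grouping by cardinality, both paths of the diagram produce the same collection of iterates of $\nabla$, indexed by the splitting of tensor factors into the two copies of $\Omega$. For uniqueness, the $i = n-1$ component of (\ref{eq: strat connection}) forces the composition $M \xrightarrow{\nabla^n} M \cotimes_R \cGamma^n(\Omega) \to M \cotimes_R \cGamma^{n-1}(\Omega) \cotimes_R \Omega$ to equal $(\nabla^{n-1} \otimes \id) \circ \nabla$. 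Iterating this down to $\cGamma^1 = \Omega$ identifies the resulting embedding $\cGamma^n(\Omega) \hookrightarrow \Omega^{\cotimes n/R}$ with that of Proposition \ref{prop: flat pd power}; its injectivity then determines $\nabla^n$ uniquely from $\nabla^0 = \id$, $\nabla^1 = \nabla$, and the diagram (\ref{eq: strat connection}), completing the induction on $n$.
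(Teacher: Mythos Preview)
Your proposal is correct and follows essentially the same approach as the paper: construct $\nabla_0^n$ by iteration, establish $\Sigma_n$-invariance of its image from the Higgs condition (the paper phrases this as invariance under $(1\ 2)$ combined with an inductive $\Sigma_1 \times \Sigma_{n-1}$-invariance, rather than handling each adjacent transposition directly), factor through $M \cotimes_R \cGamma^n(\Omega)$ via Proposition~\ref{prop: flat pd power}, verify diagram~\eqref{eq: strat connection} by expanding $\Delta^{\otimes n}$ over subsets of $\{1,\ldots,n\}$, and deduce uniqueness from the injectivity of $M \cotimes_R \cGamma^n(\Omega) \hookrightarrow M \cotimes_R \Omega^{\cotimes n/R}$. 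The only minor wrinkle is your phrase ``reduction mod $p$ to the classical setting'' for the short exact sequence $0 \to \cGamma^2(\Omega) \to \Omega^{\cotimes 2/R} \to \cbigwedge^2(\Omega) \to 0$, which is slightly off---this sequence already lives in the $p$-complete world (all three terms are $p$-completely flat), and its exactness after applying $M \cotimes_R -$ follows from Corollary~\ref{cor: tensor conc on degree 0} rather than from any reduction step.
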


\begin{proof}
By definition $\nabla \circ \nabla =0$, and we see that the image $\nabla_0^n$ is invariant under the action of $\left( 1\ 2 \right).$ By induction, it is easy to see that the image of $\nabla_0^n$ is invariant under the action of $\Sigma_1 \times \Sigma_{n-1}.$ Combine these two, we see that the image of $\nabla_0^n$ is invariant under the action of $\Sigma_n.$ Therefore, by Proposition \ref{prop: flat pd power}, we see that $\nabla_0^n$ factors through the injection $\cGamma^n \left( \Omega \right) \to M \cotimes_R \Omega^{\cotimes n/R}.$

We now prove the commutativity of the first diagram. Using Proposition \ref{prop: flat pd power}, we may identify, for example, $M \cotimes_R \cGamma^n \left( M \right)$ with $M \cotimes_R \bigl( \Omega^{\cotimes n} \bigr)^{\Sigma_n}$. Therefore, we only need to prove that the following diagram commutes:
\[\xymatrix{
M \ar[r]^(.32){\nabla^n_0} \ar@{=}[d] & M \cotimes_R \bigl( \Omega^{\cotimes n} \bigr)^{\Sigma_n} \ar[r]^(.46){\id \otimes \Delta^{\otimes n}} & M \cotimes_R \left( \Omega \oplus \Omega \right)^{\cotimes n} \\
M \ar[rr]_(.33){( \nabla^i_0 \otimes \id ) \circ \nabla^{n-i}_0} & & \displaystyle\bigoplus_{i=0}^n M \cotimes_R \bigl( \Omega^{\cotimes i} \bigr)^{\Sigma_i} \cotimes_R \bigl( \Omega^{\cotimes ( n-i )} \bigr)^{\Sigma_{n-i}} \ar[u]_(.6){\id \otimes \psi}
}\]
where $\psi$ takes $\omega_i \otimes \omega_{n-i}$ (with $\omega_i \in \bigl( \Omega^{\cotimes i} \bigr)^{\Sigma_i}, \omega_{n-i} \in \bigl( \Omega^{\cotimes \left( n-i \right)} \bigr)^{\Sigma_{n-i}}$) to
\[ \sum_{\sigma \in \Sigma_n / \Sigma_i \times \Sigma_{n-i}} \sigma \bigl( i_1^{\otimes i} \left( \omega_i \right) \otimes i_2^{\otimes (n-i)} \left( \omega_{n-i} \right) \bigr), \]
where $i_1,i_2: \Omega \to \Omega \oplus \Omega$ are injections to the first and second components. Note that $\left( \nabla_0^i \otimes \id \right) \circ \nabla_0^{n-i} = \nabla_0^n,$ so we essentially need to prove that for any $\omega \in \bigl( \Omega^{\cotimes n} \bigr)^{\Sigma_n}$ (hence $\omega$ is fixed by $\Sigma_i \times \Sigma_{n-i} \subseteq \Sigma_n$), we have
\[ \Delta^{\otimes n} \left( \omega \right) = \sum_{i=1}^n \sum_{\sigma \in \Sigma_n / \Sigma_i \times \Sigma_{n-i}} \sigma \circ \bigl( i_1^{\otimes i} \otimes i_2^{\otimes (n-i)} \bigr) \left( \omega \right). \]
This is easy to verify. For any subset $I \subseteq = \left\{ 1,\ldots,n \right\},$ consider the map
\[ j_I: \Omega^{\cotimes n} \to \left( \Omega \oplus \Omega \right)^{\cotimes n}, \]
which acts on factors in $I$ by $i_1,$ and factors outside $I$ by $i_2$. It is easy to prove that
\[ \sum_I j_I = \Delta^{\otimes n}. \]
Now, for any $I$ with $\left| I \right| = i,$ we may take $\sigma_I \in \Sigma_n$ with $\sigma_I \left( \left\{ 1,\ldots,i \right\} \right) = I.$ It is easy to see that
\[ j_I = \sigma_I \circ \bigl( i_1^{\otimes i} \otimes i_2^{\otimes (n-i)} \bigr) \circ \sigma_I^{-1}. \]
(Recall that $\sigma \left( \omega_1 \otimes \cdots \otimes \omega_n \right) = \omega_{\sigma^{-1} 1} \otimes \cdots \otimes \omega_{\sigma^{-1}n}.$) Now, all $\sigma_I$ (where $\left| I \right| =i$) form a system of representative of $\Sigma_n / \Sigma_i \times \Sigma_{n-i},$ so (note that $\omega$ is invariant under $\Sigma_n$)
\[ \sum_{\sigma \in \Sigma_n / \Sigma_i \times \Sigma_{n-i}} \sigma \circ \bigl( i_1^{\otimes i} \otimes i_2^{\otimes (n-i)} \bigr) \left( \omega \right) = \sum_{\left| I \right| = i} \sigma_I \circ \bigl( i_1^{\otimes i} \otimes i_2^{\otimes (n-i)} \bigr) \circ \sigma_I^{-1} \left( \omega \right). \]
Take the sum over all $i$, we get
\[ \sum_{i=1}^n \sum_{\sigma \in \Sigma_n / \Sigma_i \times \Sigma_{n-i}} \sigma \circ \bigl( i_1^{\otimes i} \otimes i_2^{\otimes (n-i)} \bigr) \left( \omega \right) = \Delta^{\otimes n} \left( \omega \right). \]

At last, uniqueness is easy to prove. By induction, we only need to prove that for each $n \ge 1,$ the map
\[ M \cotimes_R \cGamma^n \left( \Omega \right) \xrightarrow{\id \otimes \cGamma^n (\Delta)} M \cotimes_R \cGamma^n \left( \Omega \oplus \Omega \right) \to M \cotimes_R \Omega \cotimes_R \cGamma^{n-1} \left( \Omega \right) \]
is injective. Take the composition, we only need to prove that the following map
\[ M \cotimes_R \cGamma^n \left( \Omega \right) \xrightarrow{\id \otimes \cGamma^n (\Delta_n)} M \cotimes_R \cGamma^n \left( \Omega^{\oplus n} \right) \to M \cotimes_R \Omega^{\cotimes n}\]
is injective (where $\Delta_n$ is the diagonal, and $\cGamma^n \left( \Omega^{\oplus n} \right) \to \Omega^{\cotimes n}$ is given by
\[ \cGamma^n \left( \Omega^{\oplus n} \right) \cong \bigoplus_{i_1+\cdots+i_n=n} \cGamma^{i_1} \left( \Omega \right) \cotimes_R \cdots \cotimes_R \cGamma^{i_n} \left( \Omega \right) \]
taking the projection to the coordinate $\left( 1,1,\ldots,1 \right)$). However, it is easy to verify that this map is exactly the map given in Proposition \ref{prop: flat pd power}, and is injective as desired.
\end{proof}

We also have a converse for Lemma \ref{lem: connection to stratification} as follows:
\begin{lemma}
Let $R$ be a classically $p$-complete ring with bounded $p^\infty$-torsion, and $\Omega$ be a $p$-complete and $p$-completely flat $R$-module. Let $M$ be an $R$-module satisfying Condition \ref{cond: very conc deg 0}. Assume that there exists a sequence of maps $\nabla^n: M \to M \cotimes_R \cGamma^n \left( \Omega \right)$ with $\nabla^0 = \id_M$, and that the diagram \eqref{eq: strat connection} commutes, then $\nabla^1: M \to M \cotimes_R \Omega$ is a Higgs field.
\end{lemma}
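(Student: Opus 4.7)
The plan is to extract the vanishing $\nabla \circ \nabla = 0$ directly from the $n=2$ case of the diagram \eqref{eq: strat connection}, by projecting to the $(1,1)$-summand of the decomposition
\[ \cGamma^2(\Omega \oplus \Omega) \cong \cGamma^2(\Omega) \oplus \bigl( \Omega \cotimes_R \Omega \bigr) \oplus \cGamma^2(\Omega). \]

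First, I would identify the $(1,1)$-component of $\cGamma^2(\Delta)$ explicitly. Writing $\Delta(x) = x_1 + x_2$ with $x_i$ sitting in the $i$th copy of $\Omega$, the PD-identities
\[ \gamma^2(x_1+x_2) = \gamma^2(x_1) + x_1 x_2 + \gamma^2(x_2), \quad \Delta(x) \Delta(y) = x_1 y_1 + (x_1 y_2 + x_2 y_1) + x_2 y_2 \]
show that the $(1,1)$-projection sends $\gamma^2(x) \mapsto x \otimes x$ and $\gamma^1(x) \gamma^1(y) \mapsto x \otimes y + y \otimes x$. This is exactly the symmetrization embedding $\cGamma^2(\Omega) \hookrightarrow \Omega \cotimes_R \Omega$ of Proposition \ref{prop: flat pd power}, whose image is the $\Sigma_2$-invariant subspace.

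Next, reading off the $(1,1)$-component of \eqref{eq: strat connection} at $n=2$, the lower path yields $(\nabla^1 \otimes \id) \circ \nabla^1 = (\nabla \otimes \id) \circ \nabla$ as a map $M \to M \cotimes_R \Omega \cotimes_R \Omega$, while by the previous step the upper path factors as $\nabla^2: M \to M \cotimes_R \cGamma^2(\Omega)$ composed with the symmetrization. Hence $(\nabla \otimes \id) \circ \nabla$ lands in the $\Sigma_2$-invariants of $\Omega \cotimes_R \Omega$.

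Finally, the natural quotient $\Omega \cotimes_R \Omega \to \cbigwedge^2(\Omega)$ annihilates the $\Sigma_2$-invariants (since $x \otimes x \mapsto x \wedge x = 0$ and $x \otimes y + y \otimes x \mapsto 0$). By the formula $\nabla(m \otimes \omega) = \nabla(m) \wedge \omega$ of Definition \ref{defi: Higgs field}, the composition of $(\nabla \otimes \id) \circ \nabla$ with this quotient recovers $\nabla \circ \nabla : M \to M \cotimes_R \cbigwedge^2(\Omega)$, so $\nabla \circ \nabla = 0$, as required. The only computational content is the elementary $(1,1)$-projection of $\cGamma^2(\Delta)$, and I anticipate no substantial obstacle.
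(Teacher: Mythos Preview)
Your proposal is correct and follows essentially the same approach as the paper's proof: both extract from the $n=2$ case of diagram \eqref{eq: strat connection} that $(\nabla^1 \otimes \id)\circ\nabla^1$ lands in the $\Sigma_2$-invariants of $M \cotimes_R \Omega \cotimes_R \Omega$, which is the integrability condition. Your write-up is in fact more explicit than the paper's (which simply asserts that $\Sigma_2$-invariance ``is exactly the condition that $\nabla^1$ is integrable''), spelling out both the identification of the $(1,1)$-projection with the symmetrization embedding and the passage to $\cbigwedge^2(\Omega)$.
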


\begin{proof}
Consider the diagram \eqref{eq: strat connection} for $n=2$. The image of
\[ M \cotimes_R \Gamma^2 \left( \Omega \right) \to M \cotimes_R \Gamma^2 \left( \Omega \oplus \Omega \right) \to M \cotimes_R \Omega \cotimes_R \Omega \]
is invariant under $\Sigma_2,$ hence the image of $\left( \nabla^1 \otimes \id \right) \circ \nabla^1$ is invariant under $\Sigma_2,$ and this is exactly the condition that $\nabla^1$ is integrable.
\end{proof}

\begin{defi}\label{defi: tn}
Let $R,\Omega,M$ be as in Lemma \ref{lem: connection to stratification}. Then, a Higgs field
\[ \nabla: M \to M \cotimes_R \Omega \]
is called \emph{topolocially nilpotent}, if the sequence of maps $\nabla^n: M \to M \cotimes_R \cGamma^n \left( \Omega \right)$ given in Lemma \ref{lem: connection to stratification} fits into a map
\[ \alpha: M \to \cbigoplus_{n \ge 0} M \cotimes_R \cGamma^n \left( \Omega \right). \]
\end{defi}

We have the following criterion for topological nilpotency:

\begin{lemma}\label{lem: topologically nilpotent}
Let $R,\Omega,M$ and $\nabla: M \to M \cotimes_R \Omega$ be as in Lemma \ref{lem: connection to stratification}. Also, assume that we have the integral case (i.e. $M$ itself is classically $p$-complete and of bounded $p^\infty$-torsion). Then, $\nabla$ is topologically nilpotent, if and only if for any $m \in M$ and any $N \in \mathbb Z_+,$ there exists $n \in \mathbb Z_+$ with $\nabla^n \left( m \right) \in p \bigl( M \cotimes_R \cGamma^n \left( \Omega \right) \bigr).$
\end{lemma}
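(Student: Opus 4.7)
Recall from Definition \ref{defi: tn} that $\nabla$ is topologically nilpotent if and only if the tuple $(\nabla^n(m))_{n \ge 0}$ assembles into an element of the completed direct sum $\cbigoplus_{n \ge 0} M \cotimes_R \cGamma^n(\Omega)$. By Lemma \ref{lemma: direct sum with bounded torsion}, this happens exactly when for every $m \in M$ and every $N \ge 1$, all but finitely many $n$ satisfy $\nabla^n(m) \in p^N(M \cotimes_R \cGamma^n(\Omega))$. This already gives the ``only if'' direction (and I will prove the ``if'' direction in the version with $p^N$ in place of $p$, which is what the quantifier over $N$ suggests). It therefore remains to promote the existence condition ``for each $N$ some $n$ works'' to the stronger ``for each $N$ all sufficiently large $n$ work.''

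To do so, I transfer the entire discussion from $\cGamma^n(\Omega)$ to the ordinary tensor power $\Omega^{\cotimes n/R}$, where the iterated nature of $\nabla_0^n$ is transparent. By Proposition \ref{prop: flat pd power}, the map $M \cotimes_R \cGamma^n(\Omega) \to M \cotimes_R \Omega^{\cotimes n/R}$ identifies the source with the $\Sigma_n$-invariants of the target, and carries $\nabla^n(m)$ to $\nabla_0^n(m)$ by the second statement of Lemma \ref{lem: connection to stratification}. Proposition \ref{prop: topology of flat pd power} then yields the crucial compatibility
\[ p^N \cdot (M \cotimes_R \Omega^{\cotimes n/R})^{\Sigma_n} = \bigl(p^N (M \cotimes_R \Omega^{\cotimes n/R})\bigr)^{\Sigma_n}, \]
so $\nabla^n(m) \in p^N(M \cotimes_R \cGamma^n(\Omega))$ if and only if $\nabla_0^n(m) \in p^N(M \cotimes_R \Omega^{\cotimes n/R})$.

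The final step is a propagation argument. Unwinding the definition of $\nabla_0^n$ as the $n$-fold composition of ``apply $\nabla$ to the leftmost $M$-factor,'' one sees directly that for every $k \ge n$,
\[ \nabla_0^k = (\nabla_0^{k-n} \otimes \id_{\Omega^{\cotimes n/R}}) \circ \nabla_0^n. \]
If $\nabla_0^n(m) = p^N y$ for some $y \in M \cotimes_R \Omega^{\cotimes n/R}$, then by $R$-linearity of $\nabla_0^{k-n} \otimes \id$ we obtain $\nabla_0^k(m) = p^N (\nabla_0^{k-n} \otimes \id)(y) \in p^N(M \cotimes_R \Omega^{\cotimes k/R})$. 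Combined with the second step, the existence of a single $n$ with $\nabla^n(m) \in p^N(M \cotimes_R \cGamma^n(\Omega))$ upgrades automatically to $\nabla^k(m) \in p^N(M \cotimes_R \cGamma^k(\Omega))$ for all $k \ge n$, which is the ``cofinitely many'' condition. This finishes the proof.

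The main potential obstacle is the equivalence in the second paragraph: the obvious implication is that $p^N$-divisibility in $M \cotimes_R \cGamma^n(\Omega)$ pushes forward to $p^N$-divisibility of the image in $M \cotimes_R \Omega^{\cotimes n/R}$, but the converse (pulling back $p^N$-divisibility across a proper inclusion) is what lets the argument close, and this is supplied precisely by Proposition \ref{prop: topology of flat pd power}. Once that equivalence is in hand, the propagation in the last paragraph is formal.
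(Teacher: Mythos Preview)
Your argument is correct for the version of the statement you chose to prove, namely with $p^N$ in place of $p$, and it matches the paper's approach on that part: the paper also transfers the question to $\Omega^{\cotimes n/R}$ via Proposition~\ref{prop: flat pd power} and Proposition~\ref{prop: topology of flat pd power}, uses the factorisation $\nabla_0^k = (\nabla_0^{k-n}\otimes\id)\circ\nabla_0^n$ to propagate $p^N$-divisibility forward in $n$, and concludes via Lemma~\ref{lemma: direct sum with bounded torsion}.

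However, the paper's statement really is with $p$, not $p^N$: the quantifier ``for any $N\in\mathbb Z_+$'' is vacuous as written, and the paper's proof genuinely works from the weaker hypothesis that for each $m$ there is merely \emph{some} $n$ with $\nabla^n(m)\in p\bigl(M\cotimes_R\cGamma^n(\Omega)\bigr)$. Your proof is missing the step that bridges this gap. The paper supplies it by an induction on $N$ carried out not just for $m\in M$ but for arbitrary $u\in M\cotimes_R\Omega^{\cotimes n_0/R}$: one writes $u=\sum_{i=1}^k m_i\otimes\omega_i + pv$ with $m_i\in M$, applies the hypothesis (plus forward propagation) to each of the finitely many $m_i$ to get $(\nabla\otimes\id)^{n'}(u)\in p(\,\cdots)$ for all large $n'$, and then iterates this to climb from $p$-divisibility to $p^N$-divisibility. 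Without this bootstrap your argument does not establish the lemma as stated.
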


\begin{proof}
We only need to prove the `only if' part.

By Proposition \ref{prop: topology of flat pd power}, $\nabla^n \left( m \right) \in p^N \bigl( M \cotimes_R \cGamma^n \left( \Omega \right) \bigr)$ if and only if $\nabla^n_0 \left( m \right) \in p^N \bigl( M \cotimes_R \Omega^{\cotimes n} \bigr).$ Since $\nabla_0^n$ is defined by iterated composition, we see that $p^N \mid \nabla_0^n \left( m \right)$ implies that for all $n' \ge n$ we have $p^N \mid \nabla_0^{n'} \left( m \right).$

Now, fix $u \in M \cotimes_R \Omega^{\cotimes n_0}.$ We may write
\[ u = \sum_{i=1}^k m_i \otimes \omega_i + pv, \]
where $m_i \in M, \omega_i \in \Omega^{\cotimes n_0},$ and $v \in M \cotimes_R \Omega^{\cotimes n_0}.$ By our condition and the previous paragraph, there exists $n \in \mathbb Z_+$ such that for all $n' \ge n$ we have $p \mid \nabla^{n'}_0 \left( m_i \right).$ This implies that for all $n' \ge n$ we have
\[ \left( \nabla \circ \id \right)^{\circ n'} \left( u \right) = \sum_{i=1}^k \nabla_0^{n'} \left( m_i \right) \otimes \omega_i + p\left( \nabla \circ \id \right)^{n'} \left( v \right)  \in p \bigl( M \cotimes_R \Omega^{\cotimes (n_0+n')} \bigr). \]
By induction, we see that for all $N \in \mathbb Z_+$, there exists $n \in \mathbb Z_+$ such that for all $n' \ge n$ we have
\[ \left( \nabla \circ \id \right)^{\circ n'} \left( u \right) \in p^N \bigl( M \cotimes_R \Omega^{\cotimes (n_0+n')} \bigr). \]
In particular, for all $m \in M$ and $N \in \mathbb Z_+$, there exists $n \in \mathbb Z_+,$ such that
\[ \nabla_0^{n'} \left( m \right) \in p^N \bigl( M \cotimes_R \cGamma^{n'} \left( \Omega \right) \bigr). \]

Combine the first two paragraphs, and we see that for any $m \in M$ and $N \in \mathbb Z_+,$ there exists $n_0 \in \mathbb Z_+,$ such that
\[ p^N \mid \nabla^n \left( m \right) \]
for all $n \ge n_0.$ Now, this is equivalent to that for all $m \in M,$ the element
\[ \left( \nabla^n \left( m \right) \right)_{n \ge 0} \in \prod_{n \ge 0} M \cotimes_R \cGamma^n \left( \Omega \right) \]
lies in the submodule
\[ \cbigoplus_{n \ge 0} M \cotimes_R \cGamma^n \left( \Omega \right). \]
Now, our conclusion becomes evident.
\end{proof}

We now focus on the side of stratifications.

\begin{lemma}\label{lem: strat diagram chasing}
Assume that $R$ is a classically $p$-complete ring with bounded $p^\infty$-torsion, and $S^\bullet$ is split PD over $R$. Let $M/S^0$ be a module satisfying Condition \ref{cond: very conc deg 0}, and $\ep: S^1 \cotimes_{\delta_0^1, S^0} M \to S^1 \cotimes_{\delta_1^1, S^0} M$ be a continuous $S^1$-linear map. Define $\varphi^n: S^0 \cotimes_R \cGamma_R^n(\Omega) \to S^1$ by
\[ \varphi^n(s \otimes \gamma_{i_1}(\omega_1) \cdots \gamma_{i_n}(\omega_n)) = \delta_1^1(s) \otimes (\gamma_{i_1}(\varphi(\omega_1)) \cdots \gamma_{i_n}(\varphi(\omega_n)) \]
and its base change $\varphi^n_M: M \cotimes_R \cGamma_R^n(\Omega) \to S^1 \cotimes_{\delta_1^1, S^0} M$, so we have an isomorphism
\[ M \cotimes_R \cbigoplus_n \cGamma_R^n(\Omega) \to S^1 \cotimes_{\delta_1^1, S^0} M \]
induces by $\varphi^n_M$. Write
\[ \ep(1 \otimes m) = \sum_{n=0}^\infty \varphi_M^n(\nabla^n(m)). \]
Then, $\ep$ is a stratification if and only if $\nabla^0 = \id_M$, and $\{ \nabla^n \}$ fits into the commutative diagram \eqref{eq: strat connection}.
\end{lemma}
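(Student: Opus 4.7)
The plan is a diagram chase, using the split PD structure to translate each axiom on $\epsilon$ into an algebraic condition on the family $\{\nabla^n\}$. First, I would verify that the map $\varphi_M^n$ is well-defined and that the induced map $M \cotimes_R \cbigoplus_n \cGamma_R^n(\Omega) \to S^1 \cotimes_{\delta_1^1, S^0} M$ is an isomorphism: this is a direct consequence of Condition \ref{cond: split pd}(2), which identifies $S^1$ with $S^0 \cotimes_R \cGamma_R^*(\Omega)$ as an $S^0$-algebra via $q_0^1 = \delta_0^1$ (after applying $\tau$, via $\delta_1^1$, using Lemma \ref{lem: commutative constraint}). Consequently, since $\epsilon$ is continuous and $S^1$-linear, it is uniquely determined by its values on $1 \otimes m$, which gives the expansion $\epsilon(1 \otimes m) = \sum_n \varphi_M^n(\nabla^n(m))$ and defines the $\nabla^n$ uniquely.

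Next, I would treat condition (2) of Definition \ref{defi:strat}. Since $\sigma_0^0 \circ \delta_i^1 = \id_{[0]}$ for $i=0,1$, the map $\sigma_0^0 \otimes \id : S^1 \to S^0$ kills $\varphi(\omega)$ and hence all $\gamma_k(\varphi(\omega))$ with $k \ge 1$. Thus $(\sigma_0^0 \otimes \id) \circ \epsilon(1 \otimes m) = \nabla^0(m)$, and the commutativity of the triangle in Definition \ref{defi:strat}(2) is equivalent to $\nabla^0 = \id_M$.

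The main step is the cocycle condition, Definition \ref{defi:strat}(1). I would use Condition \ref{cond: split pd}(2) again to identify $S^2$ with $S^0 \cotimes_R \cGamma_R^*(\Omega \oplus \Omega)$ (as an $S^0$-algebra via $q_0^2$), where the two copies of $\Omega$ correspond to $\alpha_0^2$ and $\alpha_1^2$. Using the relation $\beta_{i,j}^n(\varphi(\omega)) = \sum_{k=i}^{j-1} \alpha_k^n(\varphi(\omega))$ from Condition \ref{cond: split pd}(2), the three face maps $\delta_0^2, \delta_1^2, \delta_2^2: S^1 \to S^2$ act on $\varphi(\omega)$ by $\alpha_1^2(\varphi(\omega))$, $\alpha_0^2(\varphi(\omega)) + \alpha_1^2(\varphi(\omega))$, and $\alpha_0^2(\varphi(\omega))$ respectively. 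By the PD binomial formula, $\delta_1^2(\gamma_n(\varphi(\omega))) = \sum_{i+j=n} \gamma_i(\alpha_0^2(\varphi(\omega))) \gamma_j(\alpha_1^2(\varphi(\omega)))$, which is exactly the image of $\gamma_n(\omega)$ under the PD-algebra map $\cGamma^*(\Delta): \cGamma^*(\Omega) \to \cGamma^*(\Omega \oplus \Omega)$ appearing in diagram \eqref{eq: strat connection}.

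With these identifications in place, I would compute both sides of the cocycle: the left-hand composition $\delta_2^2(\epsilon) \circ \delta_0^2(\epsilon)$ applied to $1 \otimes m$ produces $\sum_{i,j} \varphi_M^i(\alpha_1^2) \cdot \varphi_M^j(\alpha_0^2) \bigl((\nabla^i \otimes \id) \nabla^j(m)\bigr)$, while $\delta_1^2(\epsilon)(1 \otimes m) = \sum_n \delta_1^2(\varphi_M^n(\nabla^n(m)))$ becomes, after expansion via the PD binomial formula, the image of $\nabla^n(m)$ under the comultiplication into $\cGamma^*(\Omega) \cotimes \cGamma^*(\Omega)$. Matching graded components in $S^0 \cotimes_R \cGamma^a(\alpha_0^2 \Omega) \cotimes \cGamma^b(\alpha_1^2 \Omega) \cotimes M$ yields precisely the commutativity of diagram \eqref{eq: strat connection}. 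The main obstacle, as expected, is purely bookkeeping — keeping the two copies of $\Omega$ inside $S^2$ straight and applying the PD binomial expansion correctly; once this is done, the equivalence is automatic from the freeness/uniqueness of the expansion $\epsilon(1 \otimes m) = \sum_n \varphi_M^n(\nabla^n(m))$.
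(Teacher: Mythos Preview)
Your proposal is correct and follows essentially the same route as the paper's proof: both identify $S^2$ with $S^0 \cotimes_R \cGamma_R^*(\Omega \oplus \Omega)$ via the split PD structure, compute the two sides of the cocycle condition against this decomposition, and recognise the action of $\delta_1^2$ on $\gamma_n(\varphi(\omega))$ as the PD comultiplication $\cGamma^n(\Delta)$, so that matching bigraded components yields diagram~\eqref{eq: strat connection}. One small slip: with the paper's conventions $q_0^1 = \delta_1^1$ (the injection missing $1$), not $\delta_0^1$; this is consistent with the definition of $\varphi^n$ via $\delta_1^1(s)$ in the statement, and your argument goes through unchanged once the labels are corrected.
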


\begin{proof}
This is a routine diagram chasing, and is easier done on a scratch paper than written down.

The condition (2) of stratifications boils down to $\nabla^0 = \id_M$. We will check condition (1).

Denote $\varphi^n_0$ the restriction of $\varphi^n$ on $\cGamma_R^n(\Omega)$, so
\[ \varphi^n_0 (\gamma_{i_1}(\omega_1) \cdots \gamma_{i_n}(\omega_n)) = \gamma_{i_1}(\varphi(\omega_1)) \cdots \gamma_{i_n}(\varphi(\omega_n)), \]
then $\varphi^n_0$ gives a PD ring homomorphism $\cGamma_R^\bullet(\Omega) \to S^1$.

For $0 \le i, j \le n$ and $r \ge 0$, denote $\varphi^{n,r}_{i,j}$ the composition
$S_0 \cotimes_R \cGamma_R^r(\Omega) \xrightarrow{\varphi^r} S^1 \xrightarrow{\beta_{i,j}^n} S^n$. Define
\[ \psi^{i,j}: S_0 \cotimes_R \cGamma^i_R(\Omega) \cotimes_R \cGamma^j_R(\Omega) \to S^2 \]
by
\[ \psi^{i,j}(s \otimes \omega_1 \otimes \omega_2) = q_0^1(s) \delta_2^2(\varphi_0^i(\omega_1)) \delta_0^2(\varphi_0^j(\omega_2)), \]
Then $\psi^{i,j}$ induces an isomorphism
\[ \cbigoplus_{i,j} S^0 \cotimes_R \cGamma^i_R(\Omega) \cotimes_R \cGamma^j_R(\Omega) \to S^2, \]
where $q^2_0: S^0 \to S^2$. Also denote the sum of $\psi^{i,j}$ by
\[ \psi^n: S_0 \cotimes_R \cGamma^n_R(\Omega \oplus \Omega) \to S^2. \]
Denote their base change to $M$ by
\begin{gather*}
\varphi^{n,r}_{i,j,M}: M \cotimes_R \cGamma_R^r(\Omega) \to S^n \cotimes_{q_i^n, S^0} M, \\
\psi^{i,j}_M: M \cotimes_R \cGamma^i(\Omega) \cotimes_R \cGamma^j(\Omega) \to S^2 \cotimes_{q_0^2,S^2} M, \\
\psi^n_M: M \cotimes_R \cGamma^n_R (\Omega \oplus \Omega) \to S^2 \cotimes_{q_0^2,S^2} M.
\end{gather*}
Denote $\Delta: \Omega \to \Omega \oplus \Omega$ the diagonal map, and it is clear that
\[ \psi^n_M \circ (\id \otimes \cGamma^n(\Delta)) = \varphi^{2,n}_{0,2,M}. \]
Also, it is easy to verify that for $x \in M \cotimes_R \cGamma^i(R)$ and $y \in \cGamma^j(R)$ we have
\[ \psi_M^{i,j}(x \otimes y) = \delta_0^2(\varphi^j(y)) \varphi^{2,i}_{0,1,M}(x). \]

We first verify that
\[ \id_{S^n} \otimes_{\beta^n_{i,j}, S^0} \ep: S^n \cotimes_{q_j^n, S^0} M \cong S^n \cotimes_{\beta^n_{i,j}, S^1} (S^1 \cotimes_{\delta_0^1, S^0} M) \to S^n \cotimes_{\beta^n_{i,j}, S^1} (S^1 \cotimes_{\delta_1^1, S^0} M) \cong S^n \cotimes_{q_i^n, S^0} M \]
sends $1 \otimes m$ to
\[ \sum_{r=0}^\infty \varphi^{n,r}_{i,j,M} (\nabla^r(m)). \]
In fact, this is a direct consequence of the following commutative diagram
\[ \xymatrix{
S^n \cotimes_{q_i^n, S^0} M \ar[rr]^\cong & & S^n \cotimes_{\beta^n_{i,j}, S^1} (S^1 \cotimes_{\delta_1^1, S^0} M) \\
& M \cotimes_R \cGamma_R^r(\Omega) \ar[lu]^{\varphi^{n,r}_{i,j,M}} \ar[ru]_{1 \otimes \varphi^r_M}
}\]
and we are done.

We next verify that the composition
\[ M \cotimes_R \cGamma_R^r(\Omega) \xrightarrow{\varphi^{2,r}_{1,2,M}} S^2 \cotimes_{q_1^2,S^0} M \xrightarrow{\id_{S^2} \otimes_{\delta_2^2,S^1} \ep} S^2 \cotimes_{q_0^2, S^0} M \]
is given by
\[ x \mapsto \sum_{n=0}^\infty \psi^{n,r}_M \circ (\nabla^n \otimes \id) (x). \]
This is again straightforward: by the last paragraph, $\id_{S^2} \otimes_{\delta_2^2,S^1} \ep$ sends $1 \otimes m$ to for $\omega \in \cGamma^r(\Omega)$, so the composition acts by (by linearity)
\[ m \otimes \omega \mapsto \delta_0^2(\varphi^r(\omega))\otimes m \mapsto \delta_0^2(\varphi^r(\omega)) \sum_{n=0}^\infty \varphi^{2,n}_{0,1,M} (\nabla^n(m)) = \sum_{n=0}^\infty \psi^{n,r}_M (\nabla^n(m) \otimes \omega), \]
as desired.

Now, we can calculate the composition
\[ S^2 \cotimes_{q_2^2, S^0} M \xrightarrow{\id_{S^2} \otimes_{\delta_0^2,S^1},\ep} S^2 \cotimes_{q_1^2, S^0} M \xrightarrow{\id_{S^2} \otimes_{\delta_2^2,S^1} \ep} S^2 \cotimes_{q_0^2, S^0} M. \]
The first map sends $1 \otimes m$ to
\[ \sum_{r=0}^\infty \varphi^{2,r}_{1,2,M}(\nabla^r(m)). \]
Write the image of $1 \otimes m$ under the composition by
\[ \sum_{n,r \ge 0} \psi^{n,r}_M(c_{n,r}). \]
Note that the projection maps $S^2 \cotimes_{q_0^2,S^0} M \to M \cotimes_R \cGamma_R^i(\Omega) \cotimes_R \cGamma_R^j(\Omega)$ (projection to the direct sum decomposition induced by $\psi_M^{n,r}$) are continuous, so the results from the last paragraph shows that the $c_{n,r} = (\nabla^n \otimes \id) \circ \nabla^r (m)$\footnote{One can directly check that the elements $(\nabla^n \otimes \id) \circ \nabla^r (m)$ tends to zero, but the continuity of projection maps saves us from this discussion.}.

Now we know that
\[ \sum_{n,r \ge 0} \psi^{n,r}_M(c_{n,r}) = (\id_{S^2} \otimes_{\delta_1^2, S^1} \ep) (1 \otimes m) = \sum_{n=0}^\infty \varphi_{0,2,M}^{2,n}(\nabla^n(m)). \]
Since $\varphi_{0,2,M}^{2,n} = \psi_M^n \circ (\id \otimes \cGamma^n(\Delta))$, we conclude that
\[ (c_{i,j})_{i+j=n} \in \bigoplus_{i+j=n} M \cotimes_R \cGamma^i(\Omega) \cotimes \cGamma^j(\Omega) \]
is the image of $(\id \otimes \cGamma^n(\Delta))(m) \in M \cotimes_R \cGamma^n(\Omega \oplus \Omega)$. Recall that $c_{i,j} = (\nabla^i \otimes \id) \circ \nabla^j (m)$ in the last paragraph, and this is exactly the content of the diagram \eqref{eq: strat connection}, as desired.
\end{proof}

\begin{prop}\label{prop: strat to connection}
Let $R$ be a classically $p$-complete ring with bounded $p^\infty$-torsion, and $S^\bullet$ be split PD over $R$ (see Condition \ref{cond: split pd}). Define
\[ \dif^n: S^0 \to S^0 \cotimes_R \cGamma^n \left( \Omega \right) \]
by
\[ \delta_0^1 \left( s \right) = \sum_{n=0}^\infty \varphi^n \left( \dif^n \left( s \right) \right), \]
where $\varphi^n: S^0 \cotimes_R \cGamma^n \left( \Omega \right) \to S^1$ is induced by $q_0^1 \otimes \varphi: S^0 \cotimes_R \Omega \to S^1.$ Similarly, let $M$ be a stratification over $S^\bullet$ or $S^\bullet \bigl[ \frac 1 p \bigr].$ Define
\[ \nabla^n: M \to M \cotimes_R \cGamma^n \left( \Omega \right) \]
by
\[ \epsilon \left( 1 \otimes m \right) = \sum_{n=0}^\infty \varphi_M^n \left( \nabla^n \left( m \right) \right), \]
where $\varphi^n_M$ is the base change $M \cotimes_{S^0} \varphi^n$. Then,
\begin{enumerate}[label=(\arabic*)]
\item The operators $\dif^0$ and $\nabla^0$ are identity, and the systems $\left\{ \dif^n \right\}$ and $\left\{ \nabla^n \right\}$ fits into the commutative diagram \eqref{eq: strat connection}, so $\dif:=\dif^1$ and $\nabla:=\nabla^1$ are topologically nilpotent Higgs fields on $S^0$ and $M$.
\item $\dif$ is, in fact, a differential operator $S^0 \to S^0 \cotimes_R \Omega,$ and $\nabla$ is an integrable $\dif$-connection.
\item if $S^\bullet$ is filtered split PD (see Definition \ref{defi: filtered split PD}), then $\dif$ satisfies Griffith transversality. Furthermore, $\dif^k$ sends $\Fil_d (S^0)$ to $\Fil_{d-k} (S^0) \cotimes_R \cGamma^k(\Omega);$ if $S^\bullet$ is graded split PD, then $\dif^k$ sends $\gr_d (S^0)$ to $\gr_{d-k} (S^0) \cotimes_R \cGamma^k(\Omega).$
\end{enumerate}
\end{prop}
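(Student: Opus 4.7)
The plan is to verify all three assertions by a direct computation combining the split PD decomposition $S^1 \cong S^0 \cotimes_R \cGamma^\bullet_R(\Omega)$ (where $S^0$ acts via $\delta_1^1 = q_0^1$) with the cosimplicial relations among the face maps $\delta_i^n$. The central observation is that $\dif^n(s)$ and $\nabla^n(m)$ are, by construction, the graded pieces (in the PD-grading of $\cGamma^\bullet_R(\Omega)$) of the elements $\delta_0^1(s) \in S^1$ and $\epsilon(1 \otimes m) \in S^1 \cotimes_{\delta_1^1, S^0} M$, respectively. All the structure we wish to establish on $\dif$ and $\nabla$ can be read off from ring-theoretic identities in the cosimplicial ring $S^\bullet$.

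For (1), the identities $\dif^0 = \id_{S^0}$ and $\nabla^0 = \id_M$ follow by applying the degeneracy $\sigma_0^0: S^1 \to S^0$: since $\sigma_0^0 \delta_0^1 = \id$ and $\varphi$ maps into $\ker(S^1 \to S^0)$, all $n \geq 1$ terms vanish (for $\nabla^0$ one additionally uses the augmentation axiom (2) of a stratification). The commutativity of \eqref{eq: strat connection} for $\nabla^n$ is exactly Lemma \ref{lem: strat diagram chasing}. The analogous verification for $\dif^n$ is driven by the cosimplicial identity $\delta_0^2 \delta_0^1 = \delta_1^2 \delta_0^1$ (both equal $q_2^2$), applied inside the isomorphism $S^2 \cong S^0 \cotimes_R \cGamma^\bullet_R(\Omega \oplus \Omega)$ induced by $q_0^2 \otimes (\alpha_0^2, \alpha_1^2)$. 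Using the additivity $\beta_{i,j}^n \varphi = \sum_{k=i}^{j-1} \alpha_k^n \varphi$, one computes $\delta_1^2 \varphi = (\alpha_0^2 + \alpha_1^2)\varphi$ (yielding the diagonal $\Delta: \Omega \to \Omega \oplus \Omega$ after applying $\cGamma^n$) and $\delta_0^2 \varphi = \alpha_1^2 \varphi$ (landing purely in the second copy of $\Omega$). Applying $\delta_0^2$ also shifts the $S^0$-basis from $\delta_1^1 = q_0^1$ to $q_1^2 = \delta_2^2 \delta_0^1$, forcing a further expansion of the coefficients via $\dif^\bullet$ to return to the $q_0^2$-basis. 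Matching bi-degrees on the two sides of $\delta_0^2 \delta_0^1 = \delta_1^2 \delta_0^1$ then yields precisely \eqref{eq: strat connection} for $\dif^n$. Topological nilpotency of $\dif$ and $\nabla$ follows immediately: since $\delta_0^1(s)$ and $\epsilon(1 \otimes m)$ are genuine elements of $S^1$ and $S^1 \cotimes_{\delta_1^1, S^0} M$ (with the $p$-adic topology), their coefficient sequences lie in $\cbigoplus_n S^0 \cotimes_R \cGamma^n(\Omega)$ and $\cbigoplus_n M \cotimes_R \cGamma^n(\Omega)$, which by Definition \ref{defi: tn} is exactly topological nilpotency.

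For (2), the Leibniz rule for $\dif$ follows from the ring-homomorphism identity $\delta_0^1(s_1 s_2) = \delta_0^1(s_1) \delta_0^1(s_2)$: expanding both sides in the PD-basis and extracting the degree-$1$ component gives $\dif(s_1 s_2) = s_1 \dif(s_2) + s_2 \dif(s_1)$ (using $\dif^0 = \id$). The integrable $\dif$-connection property of $\nabla$ is analogous, from the $S^1$-linearity of $\epsilon$: from $\epsilon(\delta_0^1(s) \otimes m) = \delta_0^1(s) \cdot \epsilon(1 \otimes m)$ one extracts the degree-$1$ component to obtain $\nabla(sm) = m \otimes \dif(s) + s \nabla(m)$; integrability is already contained in (1). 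For (3), the filtered split PD hypothesis ensures that $S^1 \cong S^0 \cotimes_R \cGamma^\bullet_R(\Omega)$ is a strict isomorphism of filtered $R$-modules with $\cGamma^k(\Omega)$ placed in filtration degree $k$, so $\delta_0^1$ preserves $\Fil_d$ and its degree-$k$ component $\dif^k$ lands in $\Fil_{d-k}(S^0) \cotimes_R \cGamma^k(\Omega)$; Griffith transversality is the $k=1$ case, and the graded case is identical. The principal obstacle is the bookkeeping in the verification of \eqref{eq: strat connection} for $\dif^n$: one must carefully track how the three face maps $\delta_0^2, \delta_1^2, \delta_2^2: S^1 \to S^2$ interact with both the $S^0$-algebra and the PD structures, and how decomposition in the $q_0^2$-basis requires iterated applications of $\dif^\bullet$ to shift coefficients across the different inclusions $q_i^2: S^0 \to S^2$. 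This is essentially the same diagram chase as in Lemma \ref{lem: strat diagram chasing}, but performed for the cosimplicial ring itself rather than for an auxiliary module over it.
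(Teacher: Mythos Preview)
Your proposal is correct and follows essentially the same approach as the paper. The one place where the paper is more economical is in part (1): rather than redoing the diagram chase of Lemma \ref{lem: strat diagram chasing} for $\dif^n$ separately via the cosimplicial identity $\delta_0^2 \delta_0^1 = \delta_1^2 \delta_0^1$, the paper simply observes that $\{\dif^n\}$ is the special case of $\{\nabla^n\}$ for the trivial stratification $\epsilon = \id_{S^1}$ on $M = S^0$, so Lemma \ref{lem: strat diagram chasing} applies directly to both systems at once. Your separate verification is of course the same computation unwound, and you already note this in your final sentence; but recognizing the special-case structure saves you the bookkeeping you flag as the ``principal obstacle.'' Parts (2) and (3) match the paper's argument exactly.
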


\begin{proof}
For (1), note that $\left\{ \dif^n \right\}$ is nothing but a special case of $\nabla^n$ for $\ep = \id_{S^1}$. We can then apply Lemma \ref{lem: strat diagram chasing}.

As for (2), for $s \in S^0$ and $m \in M,$ we calculate as follows:
\[ \epsilon \left( 1 \otimes sm \right) = \delta_0^1 \left( s \right) \epsilon \left( 1 \otimes m \right) = \sum_{k,\ell} \varphi^k \left( \dif^n \left( s \right) \right) \varphi^\ell_M \bigl( \nabla^\ell \left( m \right) \bigr) = \sum_{n=0}^\infty \sum_{k+\ell = n} \varphi_M^n \bigl( \dif^k \left( s \right) \nabla^\ell \left( m \right) \bigr). \]
It follows that
\[ \nabla^n \left( sm \right) = \sum_{k+\ell = n} \dif^k \left( s \right) \nabla^\ell \left( m \right). \]
Take $n=1$ and we get (recall $\nabla^0 \left( m \right) = m \otimes 1$)
\[ \nabla \left( sm \right) = s\nabla \left( m \right) + m \otimes \dif \left( s \right). \]
Take $M=S^0,$ and we see that
\[ \dif ( ss' ) = s\dif ( s' ) + s' \dif \left( s \right), \]
as desired.

As for (3), we will only prove the filtered case. Note that for $s \in \Fil_d (S^0)$ we have
\[ \Fil_d(S^1) \ni \delta_0^1(s) = \sum_{k=0}^\infty \varphi^k( \dif^k (s)). \]
Now, since the map $S^0 \cotimes_R \cGamma^\bullet(\Omega) \to S^1$ is strict, we see that $\sum_k \dif^k(s) \in S^0 \cotimes_R \cGamma^\bullet(\Omega)$ lies in
\[ \Fil_d \bigl( S^0 \cotimes_R \cGamma^\bullet(\Omega) \bigr) = \bigoplus_{k=0}^d \Fil_{d-k} (S^0) \cotimes_R \cGamma^k(\Omega), \]
therefore $\dif^k(s) \in \Fil_{d-k} (S^0) \cotimes_R \cGamma^k(\Omega),$ as desired.
\end{proof}

\begin{ex}\label{ex: differential dHodge}
Let $A \to P \to R$ be as in Definition \ref{defi: graded split PD}, then the differential is given by, for $x \in (J/J^2)^\land \left\{ -1 \right\}$ (where $J = \ker(P \to R)$),
\[ \dif^k(\gamma^r(x)) = \gamma^{r-k}(x) \otimes \gamma^k(\dif x), \]
with $\dif: (J/J^2)^\land \left\{ -1 \right\} \to R \cotimes_P \cOmega_{P/A} \left\{ -1 \right\}$ is the map in the conormal sequence.
\end{ex}

\begin{lemma}\label{lem: Leibniz rule}
Let $R$ be a classically $p$-complete ring with bounded $p$-torsion, $\Omega$ a $p$-complete and $p$-completely flat $R$-module, and $S$ be a $p$-complete and $p$-completely flat $R$-algebra equipped with a $R$-linear derivation
\[ \dif: S \to S \cotimes_R \Omega. \]
such that $\dif$ is a Higgs field over $S/R$.

Let $M$ be an $S$-module satisfying Condition \ref{cond: very conc deg 0} equipped with an integrable $\dif$-connection
\[ \nabla: M \to M \cotimes_R \Omega. \]
Then, for all $s \in S$ and $m \in M$ we have
\[ \nabla^n \left( sm \right) = \sum_{k+\ell = n} \dif^k \left( s \right) \cdot \nabla^\ell \left( m \right), \]
where the multiplication is taken in the $S \cotimes_R \cGamma^\bullet \left( \Omega \right)$-module $M \cotimes_R \cGamma^\bullet \left( \Omega \right).$
\end{lemma}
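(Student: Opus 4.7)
I would prove the identity by induction on $n$, with the base case $n=0$ being trivial and $n=1$ being exactly the $\dif$-connection hypothesis
\[ \nabla(sm) = s\nabla(m) + m \otimes \dif(s). \]

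For the inductive step, the cleanest approach is to reformulate the identity as a functoriality statement. View $S \cotimes_R M$ as an $R$-module equipped with the tensor-product Higgs field
\[ \widetilde\nabla := \dif \otimes \id_M + \id_S \otimes \nabla : S \cotimes_R M \to (S \cotimes_R M) \cotimes_R \Omega, \]
which is easily verified to be integrable. The multiplication map $\mu : S \cotimes_R M \to M$, $s \otimes m \mapsto sm$, is $R$-linear, and the $n=1$ case of the Leibniz rule is precisely the assertion that $\mu$ intertwines the two Higgs fields, i.e.\ $\nabla \circ \mu = (\mu \otimes \id_\Omega) \circ \widetilde\nabla$. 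Using the description in Lemma \ref{lem: connection to stratification} of $\nabla^n$ as the factorization of the iterated composition $\nabla^{\circ n}$ through the inclusion $\cGamma^n(\Omega) \hookrightarrow \Omega^{\cotimes n}$, a short induction on $n$ shows that any morphism of Higgs fields automatically commutes with all higher operators; in particular
\[ \nabla^n \circ \mu = (\mu \otimes \id_{\cGamma^n(\Omega)}) \circ \widetilde\nabla^n. \]

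It then suffices to establish the tensor-product formula
\[ \widetilde\nabla^n(s \otimes m) = \sum_{k + \ell = n} \dif^k(s) \boxtimes \nabla^\ell(m), \]
where $\boxtimes$ denotes the natural pairing $(S \cotimes_R \cGamma^k(\Omega)) \otimes (M \cotimes_R \cGamma^\ell(\Omega)) \to (S \cotimes_R M) \cotimes_R \cGamma^n(\Omega)$ induced by the PD multiplication $\cGamma^k(\Omega) \cotimes_R \cGamma^\ell(\Omega) \to \cGamma^{k + \ell}(\Omega)$. This in turn follows from the uniqueness clause of Lemma \ref{lem: connection to stratification}: the right-hand side, indexed by $n$, reduces to the identity at $n=0$, and a direct computation (using the inductive hypothesis together with the $n=1$ Leibniz identities for $\dif$ and $\nabla$ in isolation) verifies the characterizing diagram \eqref{eq: strat connection}. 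Applying $(\mu \otimes \id)$ to both sides and observing that $\mu$ intertwines $\boxtimes$ with the multiplication in $M \cotimes_R \cGamma^\bullet(\Omega)$ yields the desired identity.

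The main technical obstacle lies in the bookkeeping behind the tensor-product formula: the iterated composition $\widetilde\nabla^{\circ n}$ produces $2^n$ summands in $(S \cotimes_R M) \cotimes_R \Omega^{\cotimes n}$, and regrouping them through the symmetrization $\Omega^{\cotimes n} \to \cGamma^n(\Omega)$ so that they align precisely with the PD products $\dif^k(s) \boxtimes \nabla^\ell(m)$ hinges on the combinatorial identity matching the binomial expansion of $(a+b)^n$ with the multiplication law $\gamma^k \cdot \gamma^\ell = \binom{k+\ell}{k} \gamma^{k+\ell}$ in the divided power algebra. This is routine but demands careful index tracking.
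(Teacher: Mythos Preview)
Your proof is correct and takes a genuinely different route from the paper. The paper works entirely inside $M \cotimes_R \cGamma^\bullet(\Omega)$: it introduces two auxiliary connections $\nabla_1$ (base-changed from $\nabla$) and $\nabla_2$ (the canonical PD differential, injective on positive degrees), reads off from the $i=1$ component of diagram \eqref{eq: strat connection} the recursion $\nabla_2 \circ \nabla^n = \nabla_1 \circ \nabla^{n-1}$ (and similarly for $\dif$), and then proves the identity at level $n$ by applying the injective $\nabla_2$ to both sides and invoking the inductive hypothesis together with the Leibniz rules for $\nabla_1,\nabla_2$. Your argument instead externalises the problem to $S \cotimes_R M$ with its tensor Higgs field $\widetilde\nabla$ and factors the statement into two pieces: functoriality of the operators $(-)^n$ under the Higgs morphism $\mu$, and the tensor-product formula $\widetilde\nabla^n = \sum_{k+\ell=n} \dif^k \boxtimes \nabla^\ell$. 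One small remark: your appeal to ``the inductive hypothesis'' in verifying the characterizing diagram for $c_n := \sum \dif^k \boxtimes \nabla^\ell$ is unnecessary---the diagram for $(c_n)$ follows directly from the diagrams for $(\dif^k)$ and $(\nabla^\ell)$ (supplied by Lemma \ref{lem: connection to stratification}) together with the multiplicativity of $\cGamma^\bullet(\Delta)$ and the componentwise description of the PD product on $\cGamma^\bullet(\Omega \oplus \Omega)$; uniqueness then gives $c_n = \widetilde\nabla^n$ at once. The paper's route is slightly more economical, while yours isolates the tensor formula as a reusable statement about Higgs fields on tensor products.
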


\begin{proof}
Denote
\[ S' = S \cotimes_R \cGamma^\bullet \left( \Omega \right). \]
We have two differentials on $S'$ (both of them are Higgs fields over $R$). One is the base change of $\dif: S \to S \cotimes_R \Omega,$ and we will denote it by $\dif_1.$ The other is the base change of the differential $\cGamma^\bullet \left( \Omega \right) \to \cGamma^\bullet \left( \Omega \right) \cotimes_R \Omega,$ and we will denote it by $\dif_2.$ Note that $\dif_2$ is injective on the positive degrees, whose cokernel is $p$-complete and $p$-completely flat over $S$ (by reducing ${}\otimes_R^\bL R/p$, applying the colimit argument, and the case for free modules is simply PD polynomials).

Now, $\dif_2$ extends to an integrable $\dif_2$-connection
\[ \nabla_2: M \cotimes_S S' \to M \cotimes_S S' \cotimes_R \Omega, \]
i.e.
\[ \nabla_2: M \cotimes_R \cGamma^\bullet \left( \Omega \right) \to M \cotimes_R \cGamma^\bullet \left( \Omega \right) \cotimes_R \Omega. \]
It follows that $\nabla_2$ is injective on the positive degrees. Also, $\nabla$ base-changes to an integrable $\dif_1$-connection
\[ \nabla_1: M \cotimes_R \cGamma^\bullet \left( \Omega \right) \to M \cotimes_R \cGamma^\bullet \left( \Omega \right) \cotimes_R \Omega. \]
It is easy to verify that $\nabla_2$ on $M \cotimes_R \cGamma^n \left( \Omega \right)$ coincides with the map
\[ M \cotimes_R \cGamma^n \left( \Omega \right) \xrightarrow{\id \otimes \cGamma^n (\Delta)} M \cotimes_R \cGamma^n \left( \Omega \oplus \Omega \right) \to M \cotimes_R \Omega \cotimes_R \cGamma^{n-1} \left( \Omega \right) \to M \cotimes_R \cGamma^{n-1} \left( \Omega \right) \cotimes_R \Omega, \]
therefore by diagram \eqref{eq: strat connection} we have
\[ \nabla_2 \left( \nabla^n \left( m \right) \right) = \nabla_1 \bigl( \nabla^{n-1} \left( m \right) \bigr). \]
Similarly we have
\[ \dif_2 \left( \dif^n \left( s \right) \right) = \dif_1 \bigl( \dif^{n-1} \left( s \right) \bigr). \]
We must also remark that $\nabla_2 \left( m \otimes 1 \right) =0$ and $\dif_2 \left( s \right) =0.$

Now, we prove by induction. The case $n=0$ is trivial. Assume case $n-1,$ we will prove this for case $n$. As $\nabla_2$ is injective on positive degrees, we only need to prove that
\[ \nabla_2 \left( \nabla^n \left( sm \right) \right) = \sum_{k+\ell = n} \nabla_2 \bigl( \dif^k \left( s \right) \cdot \nabla^\ell \left( m \right) \bigr). \]
For the left-hand side, by the inductive hypothesis, we have
\begin{align*}
\nabla_2 \left( \nabla^n \left( sm \right) \right) & = \nabla_1 \left( \nabla^{n-1} \left( sm \right) \right) \\
& = \sum_{k+\ell = n-1} \dif_1 \bigl( \dif^k (s) \bigr) \cdot \nabla^\ell \left( m \right) + \sum_{k+\ell = n-1} \dif^k (s) \cdot \nabla_1 \bigl( \nabla^\ell (m) \bigr). 
\end{align*}
On the right-hand side, we have
\begin{align*}
& \sum_{k+\ell = n} \nabla_2 \bigl( \dif^k \left( s \right) \cdot \nabla^\ell \left( m \right) \bigr) \\
={} & \sum_{k+\ell = n} \dif_2 \bigl( \dif^k (s) \bigr) \cdot \nabla^\ell \left( m \right) + \sum_{k+\ell = n} \dif^k \left( s \right) \cdot \nabla_2 \bigl( \nabla^\ell (m) \bigr).
\end{align*}
Using $\dif_2 \circ \dif^k = \dif_1 \circ \dif^{k-1}$ and $\nabla_2 \circ \nabla^\ell = \nabla_1 \circ \nabla^{\ell-1},$ it is easy to see that these values are the equal to each other, as desired.
\end{proof}

\begin{prop}\label{prop: connection to strat}
Let $R$ be a classically $p$-complete ring with bounded $p$-torsion, and $S^\bullet$ be split PD over $R$. Let $M$ be an $S^0$-module satisfying Condition \ref{cond: very conc deg 0}, equipped with an integrable $\dif$-connection
\[ \nabla: M \to M \cotimes_R \Omega, \]
and is topologically nilpotent in the sense of Definition \ref{defi: tn}. Then, there exists a unique stratification over $S^\bullet$ of $S^\bullet \bigl[ \frac 1 p \bigr]$
\[ \epsilon: S^1 \cotimes_{\delta_0^1, S^0} M \to S^1 \cotimes_{\delta_1^1, S^0} M \]
such that $\nabla$ is the connection determined by $\epsilon$ in Proposition \ref{prop: strat to connection}.
\end{prop}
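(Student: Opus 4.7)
The strategy is to invert the construction of Proposition \ref{prop: strat to connection}: package the derivatives of $\nabla$ into a sequence $\{\nabla^n\}$, sum them against the maps $\varphi^n_M$ provided by the split PD structure to get $\epsilon_0$ on $M$, and then extend by $S^1$-linearity using the Leibniz rule to obtain $\epsilon$. The main inputs are Lemma \ref{lem: connection to stratification}, Lemma \ref{lem: Leibniz rule}, and Lemma \ref{lem: strat diagram chasing}.

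First, apply Lemma \ref{lem: connection to stratification} to produce a unique sequence $\nabla^n: M \to M \cotimes_R \cGamma^n(\Omega)$ with $\nabla^0 = \id_M$, $\nabla^1 = \nabla$, and satisfying the commutative diagram (\ref{eq: strat connection}). By the definition of topological nilpotence (Definition \ref{defi: tn}; in the integral case also Lemma \ref{lem: topologically nilpotent}), these assemble into a map
\[ \alpha: M \to \cbigoplus_{n \ge 0} M \cotimes_R \cGamma^n(\Omega). \]
The split PD isomorphism of Condition \ref{cond: split pd}(2), base-changed along $M$, identifies the target with $S^1 \cotimes_{\delta_1^1, S^0} M$ via the maps $\varphi^n_M$, giving a map $\epsilon_0: M \to S^1 \cotimes_{\delta_1^1, S^0} M$ informally written as $m \mapsto \sum_n \varphi^n_M(\nabla^n(m))$.

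The key step is extending $\epsilon_0$ to an $S^1$-linear map $\epsilon: S^1 \cotimes_{\delta_0^1, S^0} M \to S^1 \cotimes_{\delta_1^1, S^0} M$. This amounts to checking that $\epsilon_0$ is $S^0$-balanced with respect to the $\delta_0^1$-structure, i.e.\ $\epsilon_0(sm) = \delta_0^1(s) \cdot \epsilon_0(m)$ for $s \in S^0, m \in M$. Using the expansion $\delta_0^1(s) = \sum_k \varphi^k(\dif^k(s))$ supplied by Proposition \ref{prop: strat to connection} and the Leibniz formula $\nabla^n(sm) = \sum_{k+\ell=n} \dif^k(s) \cdot \nabla^\ell(m)$ of Lemma \ref{lem: Leibniz rule}, both sides expand to $\sum_{k,\ell} \varphi^k(\dif^k(s)) \cdot \varphi^\ell_M(\nabla^\ell(m))$. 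Continuity of the resulting $\epsilon$ in the adic case follows from the topological nilpotence assumption applied uniformly on a module of definition.

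Finally, verify the stratification axioms. The counit condition (Definition \ref{defi:strat}(2)) reduces to $\nabla^0 = \id_M$, since $\sigma_0^0$ annihilates all $\varphi(\omega)$ and so kills the $n \ge 1$ terms of $\epsilon_0(m)$. The cocycle condition (Definition \ref{defi:strat}(1)) is exactly the content of Lemma \ref{lem: strat diagram chasing}, which gives an equivalence between stratifications and systems $\{\nabla^n\}$ satisfying (\ref{eq: strat connection})---we already have the latter from Step 1. For uniqueness, any stratification recovering $\nabla$ must, by the same lemma, correspond to the same $\{\nabla^n\}$, so it agrees with our $\epsilon$ on $1 \otimes m$ and hence everywhere by $S^1$-linearity and continuity. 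The main technical point will be the $S^1$-linearity extension: interchanging the infinite sum $\sum_n \varphi^n_M(\nabla^n(-))$ with multiplication by $\delta_0^1(s)$, which relies on the convergence provided by topological nilpotence matching up correctly with the filtered structure on $S^1 \cotimes_{\delta_1^1, S^0} M$ coming from the PD grading.
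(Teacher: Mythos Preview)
Your proposal is correct and follows essentially the same approach as the paper: construct $\alpha(m)=\sum_n \varphi^n_M(\nabla^n(m))$ using topological nilpotence, verify $(S^0,\delta_0^1)$-linearity via the Leibniz identity of Lemma~\ref{lem: Leibniz rule} together with the expansion $\delta_0^1(s)=\sum_k\varphi^k(\dif^k(s))$, extend $S^1$-linearly, and then invoke Lemma~\ref{lem: strat diagram chasing} for the stratification axioms; uniqueness comes from the formula on $1\otimes m$. The paper additionally makes the Fubini step explicit (checking $\dif^k(s)\nabla^\ell(m)\to 0$ as $k+\ell\to\infty$ before rearranging the double sum), which is precisely the ``main technical point'' you flagged at the end.
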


\begin{proof}
Uniqueness follows from the formula
\[ \epsilon \left( 1 \otimes m \right) = \sum_{n=0}^\infty \varphi_M^n \left( \nabla^n \left( m \right) \right). \]
As for existence, consider the map
\[ \alpha \left( m \right) = \sum_{n=0}^\infty \varphi_M^n \left( \nabla^n \left( m \right) \right). \]
We now show that $\alpha: M \to S^1 \cotimes_{\delta_1^1, S^0} M$ is $( S^0,\delta_0^1 )$-linear, i.e. we have to prove that
\[ \alpha \left( sm \right) = \delta_0^1 \left( s \right) \cdot \sum_{n=0}^\infty \varphi_M^n \left( \nabla^n \left( m \right) \right). \]
Consider the expansion
\[ \delta_0^1 \left( s \right) = \sum_{n=0}^\infty \varphi^n \left( \dif^n \left( s \right) \right). \]
Since $\dif^n(s) \to 0$ and $\nabla^n(m) \to 0$ (in the sense that for any $N\in \mathbb Z_+$, $\dif^n(s)$ is eventually divisible by $p^N$, and for any $M_0 \subseteq M$ open $S^0$-submodule, $\nabla^n(m)$ eventually lies in $M_0 \cotimes_R \cGamma_R^n(\Omega)$), it is easy to verify that $\dif^k(s) \nabla^\ell(m) \to 0, |k| + |\ell| \to \infty.$ Therefore, by Fubini's Theorem, we only need to prove the identity
\[ \nabla^n \left( sm \right) = \sum_{k+\ell = n} \dif^k \left( s \right) \nabla^\ell \left( m \right). \]
This is exactly the content of Lemma \ref{lem: Leibniz rule}.

Now, $\alpha$ extends to an $S^1$-linear map
\[ \epsilon: S^1 \cotimes_{\delta_0^1, S^0} M \to S^1 \cotimes_{\delta_1^1, S^0} M. \]
We have to verify both conditions (1) and (2) from Definition \ref{defi:strat}. This is a direct consequence of Lemma \ref{lem: strat diagram chasing}.
\end{proof}

We summarise the results of this section as below:

\begin{theo}\label{theo: comp strat connection}
Let $S^\bullet$ be split PD over $R$. Let $\dif: S^0 \to S^0 \cotimes_R \Omega$ be the differential operator in Proposition \ref{prop: strat to connection}. Then, for any $M/S^0$ satisfying Condition \ref{cond: very conc deg 0}, the map defined in Proposition \ref{prop: strat to connection} from the set of stratifications $\left( M,\ep \right)$ on $M$ over $S^\bullet$ or $S^\bullet \bigl[ \frac 1 p \bigr]$, to the set of topologically nilpotent (continuous) integrable connections $\left( M,\nabla \right)$ on $M$ over $\left( S^0,\dif \right)$, is bijective. 

Moreover, the above map gives an equvalence between the category of stratifications $(M,\ep)$ over $S^\bullet$ (resp. $S^\bullet \bigl[ \frac 1 p \bigr]$) and the category of topologically nilpotent connections $(M,\nabla)$ over $(S^0,\dif)$ with $M$ $p$-complete of bounded $p^\infty$-torsion (resp. $M$ adic over $S^0\bigl[ \frac 1 p \bigr]$). It is symmetric monoidal when restricted to the full subcategory of $p$-complete and $p$-completely flat modules $M/S^0$ or finite projective modules $M/S^0\bigl[ \frac 1 p \bigr]$.
\end{theo}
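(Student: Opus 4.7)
The bijection on objects is essentially already established in the preceding material: Proposition \ref{prop: strat to connection} gives the assignment $(M,\epsilon) \mapsto (M,\nabla)$ together with verification that $\nabla$ is an integrable $\dif$-connection; Proposition \ref{prop: connection to strat} provides the inverse assignment $(M,\nabla) \mapsto (M,\epsilon)$ on the topologically nilpotent side, via the formula $\epsilon(1 \otimes m) = \sum_n \varphi_M^n(\nabla^n(m))$. So my plan for the first part is simply to assemble these two propositions and record three points: (i) the stratification side automatically yields a topologically nilpotent connection, since $\epsilon(1 \otimes m) \in S^1 \cotimes_{\delta_1^1,S^0} M$ must be a convergent sum under the direct sum decomposition $S^1 \cong S^0 \cotimes_R \cbigoplus_n \cGamma^n_R(\Omega)$, which translates exactly into $\nabla^n(m) \to 0$ in the sense of Definition \ref{defi: tn}; (ii) conversely, the convergence needed to define $\epsilon$ from a topologically nilpotent $(M,\nabla)$ is exactly the defining condition of topological nilpotency; (iii) the two assignments are mutually inverse by the uniqueness statement in Lemma \ref{lem: connection to stratification}, together with the uniqueness of the expansion $\epsilon(1 \otimes m) = \sum_n \varphi_M^n(\nabla^n(m))$.

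Next I will upgrade the bijection to an equivalence of categories. A morphism of stratifications $f: (M,\epsilon_M) \to (N,\epsilon_N)$ is an $S^0$-linear (continuous in the adic case) map with $\epsilon_N \circ (\id_{S^1} \otimes f) = (\id_{S^1} \otimes f) \circ \epsilon_M$; a morphism of connections is an $S^0$-linear $f$ with $(f \otimes \id_\Omega) \circ \nabla_M = \nabla_N \circ f$. Expanding both sides of the stratification compatibility in the basis $\{\varphi_{M}^n, \varphi_N^n\}$, using the strict direct sum decomposition of Condition \ref{cond: split pd} and injectivity of $\cGamma_R^n(\Omega) \hookrightarrow \Omega^{\cotimes n/R}$ (Proposition \ref{prop: flat pd power}), one obtains $(f \otimes \id) \circ \nabla_M^n = \nabla_N^n \circ f$ for all $n$. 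This is equivalent to compatibility with $\nabla^1$ by the iteration construction of $\nabla^n$ from $\nabla^1$ in Lemma \ref{lem: connection to stratification}.

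For the symmetric monoidal enhancement on the subcategories of $p$-completely flat (or finite projective after inverting $p$) modules, I will verify that the tensor product of stratifications $(M_1, \epsilon_1) \otimes (M_2, \epsilon_2) := (M_1 \cotimes_{S^0} M_2, \epsilon_1 \cotimes \epsilon_2)$ corresponds under the bijection to the tensor product of connections with $\nabla = \nabla_1 \otimes \id + \id \otimes \nabla_2$. The main identity to check is
\[ \sum_{n \ge 0} \varphi_{M_1 \cotimes M_2}^n (\nabla^n(m_1 \otimes m_2)) = \Bigl( \sum_k \varphi_{M_1}^k(\nabla_1^k(m_1)) \Bigr) \cdot \Bigl( \sum_\ell \varphi_{M_2}^\ell (\nabla_2^\ell(m_2)) \Bigr), \]
and after multiplication in the PD ring $S^1$ (which decomposes via $\varphi^\bullet$ into $\cGamma_R^\bullet(\Omega)$), this reduces to the identity $\nabla^n(m_1 \otimes m_2) = \sum_{k+\ell=n} \nabla_1^k(m_1) \cdot \nabla_2^\ell(m_2)$ in $M_1 \cotimes_{S^0} M_2 \cotimes_R \cGamma^n_R(\Omega)$, which is the PD-Leibniz rule and follows from the same inductive reasoning as Lemma \ref{lem: Leibniz rule} (apply it twice, once to each factor). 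The unit object $S^0$ with its canonical stratification matches the unit connection $(S^0, \dif)$ by construction. Restricted to $p$-completely flat or finite projective modules, the tensor product preserves the class, so the categories are indeed symmetric monoidal and the functor is symmetric monoidal.

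The main technical obstacle I anticipate is purely bookkeeping around convergence: one must ensure that the formal sums defining $\epsilon$ and their tensor-product counterparts genuinely converge in the topology (both in the $p$-complete integral case and in the adic-after-inverting-$p$ case), and that multiplication in $S^1$ is compatible with the $\cGamma^n$-decomposition at the level of these convergent sums. Lemma \ref{lem: topologically nilpotent} and the continuity of the projection maps $S^1 \to S^0 \cotimes_R \cGamma_R^n(\Omega)$ (which underpin the last paragraph in the proof of Lemma \ref{lem: strat diagram chasing}) handle this, so the remaining work is to apply them uniformly in the tensor product setting, which I expect to be routine.
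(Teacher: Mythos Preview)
Your proposal is correct and follows essentially the same approach as the paper: the paper's proof simply points to Proposition \ref{prop: connection to strat} for the bijection, remarks that morphism compatibility with $\epsilon$ versus $\nabla$ is straightforward from the definition, and omits the symmetric monoidal verification as an easy calculation. You have filled in exactly these details (the mutual inverse via uniqueness, the expansion argument for morphisms, and the PD-Leibniz identity for tensor products), so your write-up is a faithful and more explicit version of what the paper leaves implicit.
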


\begin{proof}
This is merely a rephrasing of Proposition \ref{prop: connection to strat} --- one should also observe that for modules equipped with stratifications $(M,\ep)$ and $(N,\ep)$, a map $M \to N$ is compatible with the stratification if and only if it is compatible with the corresponding connection --- which is straightforward from the definition of the connection. We omit the verification that it is symmetric monoidal --- this is again some very easy calculations.
\end{proof}

We will also provide a ring version of the above theorem.

\begin{theo}\label{theo: split PD vs derivation}
Let $R$ be a classically $p$-adically complete ring and $\Omega$ be $p$-completely flat (and $p$-adically complete) over $R$. For any $p$-complete and $p$-completely flat ring $S^0/R$, the groupoid of split PD cosimplicial rings $(S^\bullet,\varphi: \Omega \to S^1 )$ is equivalent to the set of differentials $\dif: S^0 \to S^0 \cotimes_R \Omega$ which is a topologically nilpotent Higgs field over $R$. The equivalence is given by mapping $(S^\bullet,\varphi: \Omega \to S^1)$ to the differential $\dif: S^0 \to S^0 \cotimes_R \Omega$ associated to it.

Furthermore, if $S^0$ is filtered (resp. graded), then we have an equivalence of the groupoid of filtered (resp. graded) split PD cosimplicial rings $(S^\bullet, \varphi: \Omega \to S^1)$ to the set of differentials $\dif: S^0 \to S^0 \cotimes_R \Omega$ which is topologically nilpotent, integrable as a connection over $R$, and satisfies the Griffith transversality (resp. compatible with the grading).
\end{theo}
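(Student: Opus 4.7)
The forward direction $(S^\bullet,\varphi)\mapsto\dif$ is already contained in Proposition \ref{prop: strat to connection}, which in addition verifies that $\dif$ is a genuine derivation, integrable, topologically nilpotent, and (in the filtered, resp.\ graded, case) satisfies Griffith transversality, resp.\ shifts grading by $-1$. My task is therefore to construct an inverse and to show rigidity.

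\textbf{Construction.} Given a topologically nilpotent Higgs-field derivation $\dif: S^0 \to S^0 \cotimes_R \Omega$, define for each $n$ the $p$-complete PD $R$-algebra $S^n := S^0 \cotimes_R \cGamma^\bullet_R(\Omega^{\oplus n})$, with $\varphi: \Omega \to S^1$ the natural inclusion into the first PD-degree. The evident inclusions and projections produce $\delta_1^1, \sigma_0^0$, and the maps $\alpha_i^n(\varphi(\omega))$. The crucial ingredient is $\delta_0^1: S^0 \to S^1$. Use Lemma \ref{lem: connection to stratification} applied to $M=S^0$ with Higgs field $\dif$ to obtain a canonical system $\dif^n: S^0 \to S^0 \cotimes_R \cGamma^n_R(\Omega)$ with $\dif^0=\id$, $\dif^1=\dif$, satisfying diagram \eqref{eq: strat connection}; then set
\[ \delta_0^1(s) := \sum_{n\ge 0} \varphi^n(\dif^n(s)), \]
which converges by Definition \ref{defi: tn}. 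More generally, set $q_i^n(s) := \sum_{n\ge 0} \varphi^n_{(i)}(\dif^n(s))$, where $\varphi^n_{(i)}$ uses the $i$-th factor of $\Omega^{\oplus n}$. For any set map $f: [m] \to [n]$, define $\widetilde f: S^m \to S^n$ to be the unique PD ring map determined by $\widetilde f\circ q_0^m = q_{f(0)}^n$ and $\widetilde f \circ \alpha_i^m(\varphi(\omega)) = \beta^n_{f(i),f(i+1)}(\varphi(\omega))$, exactly as in Proposition \ref{prop: commutative constraint}.

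\textbf{Verification.} First, $\delta_0^1$ (and each $q_i^n$) is a ring homomorphism by the Leibniz rule for the iterated derivations $\dif^n$, which is Lemma \ref{lem: Leibniz rule} applied to $M=S^0$; the identity $\sigma_0^0 \circ \delta_0^1 = \id_{S^0}$ is immediate from $\dif^0=\id$. The cocycle condition $(\delta_0^2-\delta_1^2+\delta_2^2)\varphi = 0$ is just the additivity $\beta^2_{1,2}+\beta^2_{0,1}=\beta^2_{0,2}$ on $\varphi(\omega)$, built into $\cGamma^\bullet_R(\Omega^{\oplus 2})$. The subtle consistency, namely $\widetilde f\circ \widetilde g = \widetilde{f\circ g}$, reduces (by construction) to its content on $q_0$, which unwinds to the assertion
\[ \widetilde g(q_0^m)(s) = \sum_{n\ge 0} (\text{iterated } \dif\text{'s applied through two stages}). \]
This is exactly the coassociativity built into diagram \eqref{eq: strat connection}, which holds because $\dif$ is a Higgs field. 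All remaining axioms of Condition \ref{cond: split pd} are tautological from the construction of $S^n$ as $S^0 \cotimes_R \cGamma^\bullet_R(\Omega^{\oplus n})$.

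\textbf{Rigidity and enhancements.} Given any $(S^\bullet,\varphi)$ inducing $\dif$, Proposition \ref{prop: strat to connection} forces the formula $\delta_0^1(s) = \sum_n \varphi^n(\dif^n(s))$, and then the $\mathscr F_+$-uniqueness of Proposition \ref{prop: commutative constraint} forces all remaining structure maps. This proves the equivalence of groupoids. For the filtered variant, equip $S^n$ with the product filtration of $\Fil_* S^0$ and the PD-degree on $\cGamma^\bullet_R(\Omega^{\oplus n})$; the assertion that $\delta_0^1$ preserves this filtration unpacks exactly to the Griffith transversality condition on $\dif$, and similarly for the graded case. The main obstacle is the combinatorial verification of $\widetilde f\circ \widetilde g = \widetilde{f\circ g}$ on $q_0$: expanding both sides as $p$-adic series of PD-monomials, the identity amounts to checking that the tensor of the two sequences $(\dif^n)$ agrees with the sequence $(\dif^{n+m})$ under the coproduct $\cGamma^n(\Omega)\to \cGamma^i(\Omega)\otimes \cGamma^{n-i}(\Omega)$ induced by the diagonal, which is precisely the content of diagram \eqref{eq: strat connection}.
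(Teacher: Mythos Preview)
Your proof is correct and arrives at the same object, but the paper takes a shorter path that you should be aware of. Rather than building $S^\bullet$ by hand and verifying the cosimplicial identities directly, the paper first writes down the ``trivial'' split PD cosimplicial ring $E^n = \cGamma^\bullet_R(\Omega^{\oplus n})$ (with transition maps induced by the linear maps $\Omega^{\oplus m} \to \Omega^{\oplus n}$), and then observes that since $\dif$ is a derivation, $(S^0,\dif)$ is an \emph{algebra object} in the symmetric monoidal category of topologically nilpotent Higgs fields over $R$. Theorem \ref{theo: comp strat connection} is a symmetric monoidal equivalence, so the corresponding stratification $M^\bullet/E^\bullet$ is automatically a cosimplicial ring; setting $S^\bullet = M^\bullet$ with PD structure extended along $E^\bullet \to S^\bullet$ finishes the construction in one line. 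All of your explicit verifications (Leibniz rule for $\delta_0^1$, the functoriality $\widetilde f \circ \widetilde g = \widetilde{f\circ g}$ via diagram \eqref{eq: strat connection}) are then absorbed into the proof of Theorem \ref{theo: comp strat connection} and Proposition \ref{prop: connection to strat}, which you are effectively re-deriving at the algebra level.

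What your approach buys is self-containment and concreteness: one sees exactly which formula defines each face map. What the paper's approach buys is economy and a clearer explanation of \emph{why} the construction works --- the cosimplicial identities are not a coincidence to be checked but a formal consequence of transporting an algebra structure across a monoidal equivalence. One small point: your formula $q_i^n(s) := \sum_k \varphi^k_{(i)}(\dif^k(s))$ is notationally ambiguous (what is $\varphi^k_{(0)}$?); the cleaner statement is that $q_0^n$ is the tautological inclusion and $q_i^n = \beta^n_{0,i} \circ \delta_0^1$ for $i>0$, with $\delta_0^1$ given by your sum formula.
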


\begin{proof}
It is easy to verify that any split PD cosimplicial ring $(S^\bullet,\varphi: \Omega \to S^1 )$ has only trivial automorphisms fixing $S^0$, so we only need to verify that the map is essentially surjective.

Consider the ring $E^n = \cGamma^\bullet_R(\Omega^{\oplus n})$ with transition maps induced by
\[ f_*: \Omega^{\oplus n} \to \Omega^{\oplus m}, (x_i) \mapsto \left( \sum_{f(i-1) < j \le f(i)} x_i \right)_j.\]
Then, $(E^\bullet, \varphi: \Omega \to E^1)$ is split PD over $R$ by definition. Now we are given a topologically nilpotent Higgs field $\dif: S^0 \to S^0 \cotimes_R \Omega$, so it will correspond to a stratification $M^\bullet / E^\bullet$ by Theorem \ref{theo: comp strat connection}. Now, since $\dif$ is a derivation, one sees that $(S^0, \dif)$ is an algebra in the category of topologically nilpotent continuous integrable connections $(M,\nabla)$ over $R$ with $M$ $p$-complete and $p$-completely flat over $R$. Since the equivalence in Theorem \ref{theo: comp strat connection} is symmetric monoidal whenever $M$ is $p$-complete and $p$-completely flat over $S^0$, we see that $M^\bullet / E^\bullet$ is again an algebra, hence it is a ring. Take $S^\bullet = M^\bullet$, its PD structure extended from the structure map $E^\bullet \to S^\bullet$, and we are done.

Now assume that $S^0$ is filtered. The filtration on $S^i$ is characterised by the property as follows: for any $q^i_j: S^0 \to S^i$, the map $S^0 \cotimes_{E^0} E^i \to S^i$ is an isomorphism of filtered PD rings. As for existence, let the filtration given by $q^i_j$ be denoted $\Fil^{(j)}$. By definition, $\Fil^{(j)}_k$ is generated by elements of form $s_{k_1} \otimes e_{k_2}$, where $k_1+k_2 = k$, $s_{k_1} \in \Fil_{k_1}(S^0)$, $e_{k_2} \in \Fil_{k_2} (E^i)$. Therefore, it would suffice to show that for any $0 \le j,j' \le i$ and $s \in \Fil_k( S^0 )$, we have $q^i_{j'}(s) \in \Fil_k^{(j)} (s)$. This is evident for $j=0,j'=1,i=1$, and it true in general by Proposition \ref{prop: commutative constraint}. The proof for the graded case is similar.
\end{proof}

\begin{lemma}\label{lem: cosimplicial connection}
Let $S^\bullet$ be split PD over $R.$ Denote $\Omega \left( n \right) = \Omega^{\oplus (n+1)}.$ Then, there exists a unique $R$-linear differential compatible with the $\mathscr F_+$-structure (see Proposition \ref{prop: commutative constraint}) and the PD structure
\[ \dif^{(\bullet)}: S(\bullet) \to S(\bullet) \cotimes_R \Omega \left( \bullet \right) \]
with $\dif^{(0)} = \dif$ and $\dif^{(1)} \left( \varphi \left( \omega \right) \right) = 1 \otimes \left( -\omega,\omega \right)$ for any $\omega \in \Omega$. Moreover, $\dif^{(\bullet)}$ is a Higgs field over $R$.

Let $\left( M,\ep \right)$ be a stratification over $S^\bullet$ or $S^\bullet \bigl[ \frac 1 p \bigr].$ Denote $M(\bullet)$ the $S(\bullet)$-module given in Proposition \ref{prop: commutative constraint strat}. Then, there exists a unique integrable $\dif^{(\bullet)}$-connection
\[ \nabla^{(\bullet)}: M(\bullet) \to M(\bullet) \cotimes_R \Omega \left( \bullet \right) \]
with $\nabla^{(0)}=\nabla.$
\end{lemma}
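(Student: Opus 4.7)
\textbf{Plan for constructing $\dif^{(\bullet)}$.} My approach is to reduce everything to values on generators via the split PD structure. For uniqueness, observe that $S(n) \cong S^0 \cotimes_R \cGamma^\bullet_R(\Omega^{\oplus n})$ as a PD $R$-algebra, so $S(n)$ is generated as a PD $R$-algebra by $q^n_0(S^0)$ together with $\alpha^n_i(\varphi(\Omega))$ for $0 \le i \le n-1$. Cosimplicial compatibility (naturality in $\mathscr F_+$) applied to the maps $q^n_0\colon [0]\to [n]$ and $\alpha^n_i\colon [1]\to [n]$ forces
\[ \dif^{(n)}(q^n_0(s)) = \iota_0 (q^n_0(\dif s)), \qquad \dif^{(n)}(\alpha^n_i(\varphi(\omega))) = 1 \otimes e_i(-\omega,\omega), \]
where $\iota_0\colon \Omega \hookrightarrow \Omega(n)$ embeds into the 0-th component (since $(q^n_0)_*$ does so) and $e_i(-\omega,\omega) \in \Omega(n)$ has $-\omega$ in coordinate $i$ and $\omega$ in coordinate $i+1$ (computed from $(\alpha^n_i)_*\colon \Omega(1)\to \Omega(n)$). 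The PD Leibniz rule then pins down $\dif^{(n)}$ on all of $S(n)$. For existence, take these formulas as a \emph{definition}: the split PD isomorphism $S(n) \cong S^0 \cotimes_R \cGamma^\bullet_R(\Omega^{\oplus n})$ exhibits $S(n)$ as a relative PD polynomial ring, so assigning values on $q^n_0(s)$ and $\alpha^n_i(\varphi(\omega))$ and extending via PD Leibniz gives a well-defined $R$-linear PD derivation. Compatibility with $\mathscr F_+$ is then checked on the generators of $\mathscr F_+$ (face, degeneracy, and the transposition $\tau$), where it holds by construction; the compatibility with arbitrary morphisms follows by composition.

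\textbf{Higgs field.} I verify $\dif^{(n)} \circ \dif^{(n)} = 0$ on the two classes of generators. On $q^n_0(s)$, writing $\dif s = \sum s_i \otimes \omega_i$, one applies $\dif^{(n)}$ to $\iota_0(q^n_0(\dif s)) = \sum q^n_0(s_i) \otimes \iota_0(\omega_i)$ and obtains $\sum \iota_0(q^n_0(\dif s_i)) \wedge \iota_0(\omega_i) = \iota_0(q^n_0(\dif^2 s)) = 0$, using that $\dif$ itself is a Higgs field. On $\alpha^n_i(\varphi(\omega))$, the image $\dif^{(n)}(\alpha^n_i(\varphi(\omega))) = 1 \otimes e_i(-\omega,\omega)$ involves only constants in $\Omega(n)$, so its image under the extended $\dif^{(n)}$ vanishes. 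The PD Leibniz rule propagates this to products and PD powers, establishing the Higgs property in general.

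\textbf{Plan for $\nabla^{(\bullet)}$.} By Proposition \ref{prop: commutative constraint strat}, one has $M(n) \cong S(n) \cotimes_{q^n_0, S^0} M$, and on simple tensors define
\[ \nabla^{(n)}(s \otimes m) = \dif^{(n)}(s) \cdot (1 \otimes m) + s \cdot \iota_0\bigl((q^n_0 \otimes \id)(\nabla m)\bigr), \]
where $\iota_0\colon M \cotimes_R \Omega \to M(n) \cotimes_R \Omega(n)$ inserts into the 0-th component. Well-definedness across the relation $sm \otimes 1 = 1 \otimes sm$ (for $s \in S^0$) reduces to the Leibniz identity $\nabla(sm) = \dif(s)\otimes m + s\nabla(m)$, and by construction $\nabla^{(n)}$ is a $\dif^{(n)}$-connection. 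Uniqueness follows because cosimplicial compatibility with $q^n_0\colon [0]\to [n]$ forces $\nabla^{(n)}(1 \otimes m) = \iota_0(\nabla m)$, and Leibniz determines the value on $s\otimes m$. Integrability $\nabla^{(n)}\circ \nabla^{(n)} = 0$ follows by the same Leibniz bookkeeping from $\nabla^2 = 0$ on $M$ and the Higgs property of $\dif^{(n)}$. Compatibility of $\nabla^{(\bullet)}$ with the transition maps of $M(\bullet)$ reduces to (i) the compatibility of $\dif^{(\bullet)}$ already established and (ii) the compatibility of $\iota_0(\nabla m)$ with the stratification $\epsilon$ under the transition maps --- this last point is precisely the content of the diagram chasing in Lemma \ref{lem: strat diagram chasing}, read from the differential side rather than the stratification side.

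\textbf{Main obstacle.} The conceptual content is straightforward, but the verification of cosimplicial compatibility for $\nabla^{(\bullet)}$ against \emph{all} morphisms in $\mathscr F_+$ (not only the elementary face maps but also the transpositions coming from Proposition \ref{prop: commutative constraint}) is the most delicate piece. I anticipate handling it by factoring arbitrary $f\in \mathscr F_+([m],[n])$ through $q^k_0$'s, $\alpha^k_i$'s, and $\tau$, and invoking the stratification cocycle condition (Definition \ref{defi:strat}(1)) to bridge the cases where $f$ does not fix the vertex $0$ --- i.e.\ where the transition map for $M(\bullet)$ genuinely involves $\epsilon$ rather than being a pure base change along $S(f)$.
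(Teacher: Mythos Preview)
Your approach is correct in outline but takes a different, more hands-on route than the paper. You define $\dif^{(n)}$ and $\nabla^{(n)}$ directly on generators via the split PD isomorphism $S(n) \cong S^0 \cotimes_R \cGamma^\bullet_R(\Omega^{\oplus n})$ and then verify compatibility with $\mathscr F_+$-morphisms by factoring through elementary maps. The paper instead exploits that $\mathscr F_+$ (equivalent to nonempty finite sets) has \emph{products}: setting $S(i,j) = S([i]\times[j])$ and $M(i,j) = M([i]\times[j])$, one observes that for each fixed $i$ the cosimplicial ring $S(i,\bullet)$ is itself split PD over $R$ with $\Omega$ replaced by $\Omega(i)$, the structure map $\varphi^{(i)}: \Omega(i) \to S(i,1)$ built from the diagonal inclusions $\gamma_{j,\id}: S(1) \to S(i,1)$. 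Proposition~\ref{prop: strat to connection} applied to $S(i,\bullet)$ and to the stratification $M(i,\bullet)$ then produces $\dif^{(i)}$ and $\nabla^{(i)}$ in one stroke, with naturality in $i$ (i.e.\ $\mathscr F_+$-compatibility) automatic from functoriality of the product construction. This sidesteps precisely the ``main obstacle'' you identify: the interaction of $\nabla^{(\bullet)}$ with the $\epsilon$-twisted transition maps is absorbed into the stratification $M(i,\bullet)$ rather than checked by hand. Your method is more elementary and self-contained, but the computation you flag as delicate is genuinely so --- carrying it out directly requires unpacking how the full sequence $\{\nabla^k\}$ (not just $\nabla^1$) passes through $\epsilon$, essentially redoing parts of Lemma~\ref{lem: strat diagram chasing} in a more elaborate setting --- whereas the bisimplicial trick gets it for free.
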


\begin{proof}
We omit the proof of uniqueness. As for existence, observe that $\mathscr F^+$, equivalent to the category of nonempty finite sets, has products. We define
\[ S(i,j) = S(\left[ i \right] \times \left[ j \right]), \quad M(i,j) = M(\left[ i \right] \times \left[ j \right])\]
and we may identify $S(i,0)$ with $S(i)$ in a natural way. For $f: \left[ 1 \right] \to \left[ i \right]$ and $g: \left[ 1 \right] \to \left[ j \right],$ we have the map $\gamma_{f,g}: S(1) \to S(i,j).$

Now, $S (i,\bullet)$ has a split PD structure given by
\[ \varphi^{(i)} = \bigl( \gamma_{0,\id}, \gamma_{1,\id},\ldots,\gamma_{i,\id} \bigr) \circ \varphi: \Omega \left( i \right) \to S \left( i,1 \right) \]
(i.e. the $j$th component is mapped to $S(i,1)$ by $\Omega \xrightarrow{\varphi} S(1) \xrightarrow{\gamma_{j,\id}} S(i,1)$). For $f: [i] \to [j],$ we will denote the induced map $S(n,i) \to S(n,j)$ by $f^{(n)}.$

It is easy to verify that this split PD structure is natural in $i$, so we get a natural differential by Proposition \ref{prop: strat to connection}
\[ \dif^{(\bullet)}: S(\bullet) \to S(\bullet) \cotimes_R \Omega(\bullet). \]
It is easy to see that $\dif^{(0)}=\dif.$ By the additive relation for $\varphi$, we have
\[ \gamma_{\id,1} \left( \varphi \left( \omega \right) \right) - \gamma_{\id,0} \left( \varphi \left( \omega \right) \right) = \gamma_{1,\id} \left( \varphi \left( \omega \right) \right) - \gamma_{0,\id} \left( \varphi \left( \omega \right) \right). \]
The left-hand side is identified with
\[ \bigl(q_1^1\bigr)^{(1)} \left( \varphi \left( \omega \right) \right) - \bigl(q_0^1\bigr)^{(1)} \left( \varphi \left( \omega \right) \right), \]
and the right-hand side is identified with $\varphi^{(1)} \left( -\omega, \omega \right),$ so $\dif^{(1)} (\varphi(\omega)) = 1 \otimes \left( -\omega,\omega \right),$ as desired.

View $M(i,\bullet)$ as a stratification on $M(i),$ and by Proposition \ref{prop: strat to connection}, we get a continuous $\dif^{(\bullet)}$-integrable connection $\nabla^{(\bullet)}$ with $\nabla^{(0)}=\nabla.$
\end{proof}

\begin{theo}\label{theo: comp strat connection cohomologies}
Under the bijection as in Theorem \ref{theo: comp strat connection}, there exists a natural (quasi-)isomorphism in $D \left( R \right)$:
\[ S^\bullet \cotimes_{S^0} M \cong \DR \left( M,\nabla \right), \]
where $S^\bullet \cotimes_{S^0} M$ is the cosimplicial module determined by $\left( M,\ep \right)$, and $\DR \left( M,\nabla \right)$ is the de Rham complex of $\nabla$:
\[ M \to M \cotimes_R \Omega \to M \cotimes_R \cbigwedge\nolimits^2_R \left( \Omega \right) \to \cdots. \]

Moreover, if we restrict ourselves to $p$-complete and $p$-completely flat modules over $S^0$ or finite projective modules over $S^0 \bigl[ \frac 1 p \bigr]$, then the above map is an isomorphism of symmetric monoidal functors. In particular, the isomorphism
\[ S^\bullet \cong \DR(S,\dif) \]
is an isomorphism of $\dE_\infty$-algebras. Whenever $S^\bullet$ is filtered split PD (resp. graded split PD), then $S^\bullet$ is compatible with totalisations (see Definition \ref{defi: compatible with totalisation}), and the above isomorphism is an isomorphism of filtered (resp. graded) complete $\dE_\infty$-$R$-algebras.
\end{theo}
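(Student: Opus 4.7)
The strategy follows the framework of Tian \cite{Tian_2023}, generalized to allow arbitrary split PD structures on $S^\bullet$ beyond the polynomial $\delta$-ring case. The plan is to construct an explicit comparison map, verify it is a quasi-isomorphism via a bicomplex argument, and then upgrade to the symmetric monoidal, filtered, and graded statements.

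First, I would construct the comparison map explicitly. Using Condition \ref{cond: split pd}(2) and Proposition \ref{prop: commutative constraint strat}, the object $M(n) := S^n \cotimes_{q_0^n, S^0} M$ identifies with $M \cotimes_R \cGamma^*_R(\Omega^{\oplus n})$, and the cosimplicial face maps of $M(\bullet)$ are determined by the stratification $\ep$ (equivalently, by $\nabla$). The comparison map $\Phi: \DR(M,\nabla) \to N^*(M(\bullet))$ into the Dold--Kan normalized complex would send $m \otimes \omega_1 \wedge \cdots \wedge \omega_n$ to a suitably antisymmetrized lift in $M \cotimes_R \Omega^{\otimes n} \subseteq M(n)$, corrected by lower-order terms built from $\nabla^k(m)$ via Lemma \ref{lem: connection to stratification} so as to lie in the kernel of every codegeneracy. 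Compatibility with the differentials is a direct check using the formulas of Proposition \ref{prop: strat to connection} and Lemma \ref{lem: strat diagram chasing}.

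Second, to show $\Phi$ is a quasi-isomorphism, I would introduce the double complex
\[ C^{p,q} := M(p) \cotimes_R \cbigwedge\nolimits^q_R(\Omega) \]
with differentials given by the Čech alternating sum of cosimplicial face maps in the $p$-direction and by the $S^0$-linear component of the extended connection $\nabla^{(p)}$ from Lemma \ref{lem: cosimplicial connection} in the $q$-direction. Both spectral sequences of $C^{*,*}$ should degenerate: filtering by columns and applying the Poincaré lemma for the PD polynomial algebra $\cGamma^*_R(\Omega^{\oplus p})$ shows each column is acyclic except in degree zero, where it recovers $M(p)$, yielding $\Tot(S^\bullet \cotimes_{S^0} M)$; filtering by rows and applying a Čech-type vanishing recovers $\DR(M,\nabla)$, giving the desired comparison. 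Topological nilpotence of $\nabla$ is essential to ensure both spectral sequences converge in the completed setting.

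Third, the main obstacle is establishing the PD Poincaré lemma in sufficient generality. In the case $\Omega$ is finite free over $R$, there is an explicit contracting homotopy sending $\gamma_k(x_j) \otimes (dx_j \wedge \omega)$ to $\gamma_{k+1}(x_j) \otimes \omega$ which, combined with the identity on the base, realizes $hd + dh = \id$ away from degree $0$. In our generality $\Omega$ is only $p$-completely flat, so I would reduce to the finite free case mod $p^n$ by choosing a presentation of $\Omega$, applying the classical result, and passing back to the $p$-complete limit using Proposition \ref{prop: totalisation commute with filtered colimits for coconnective spectra} to interchange the completed limit with totalization of the bicomplex.

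Finally, the symmetric monoidal, filtered, and graded refinements follow formally. The bicomplex $C^{*,*}$ is lax symmetric monoidal in $(M,\nabla)$ and the map $\Phi$ respects tensor products by construction (antisymmetrization being natural). For the filtered case, Definition \ref{defi: filtered split PD} equips $S^p$ with a PD filtration, which induces a filtration on $C^{*,*}$ matching the Hodge filtration on $\DR(M,\nabla)$ via the Griffith transversality of $\dif$ established in Proposition \ref{prop: strat to connection}(3); compatibility with totalization of the filtration follows from the graded pieces being concentrated in bounded cohomological degrees (again invoking Proposition \ref{prop: totalisation commute with filtered colimits for coconnective spectra}). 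The graded case then results by applying $\gr_*$ and using its exactness.
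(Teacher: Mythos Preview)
Your overall strategy---a bicosimplicial/double-complex comparison invoking the PD Poincar\'e lemma, following Tian---is the same as the paper's. But the bicomplex you write down is misconfigured, and the roles you assign to the two spectral sequences do not work as stated.

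The paper forms, for each cosimplicial level $i$, the de Rham complex $\DR(M^i,\nabla^{(i)})$ with respect to $\Omega(i)=\Omega^{\oplus(i+1)}$ (this is the content of Lemma~\ref{lem: cosimplicial connection}), \emph{not} with respect to $\Omega$ alone. The point is that this yields a tensor factorisation
\[
\DR(M^i,\nabla^{(i)})\;\cong\;\DR(M,\nabla)\cotimes_R \DR\bigl(\cGamma_R^\bullet(\Omega^{\oplus i}),\dif'\bigr),
\]
and the PD Poincar\'e lemma then says each $\DR(M^i,\nabla^{(i)})$ is quasi-isomorphic to $\DR(M,\nabla)$---not to $M^i$ as you claim. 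Hence $\lim_\simp \DR(M^i,\nabla^{(i)})\simeq \DR(M,\nabla)$. Your bicomplex $C^{p,q}=M(p)\cotimes_R\cbigwedge^q_R(\Omega)$ with an ``$S^0$-linear component of $\nabla^{(p)}$'' does not carry a well-defined vertical differential that anticommutes with the \v Cech differential, and the PD Poincar\'e lemma for $\cGamma_R^\bullet(\Omega^{\oplus p})$ does not apply to a de~Rham complex taken only in the $\Omega$-direction.

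The other half of the argument is also not a ``\v Cech-type vanishing'' on rows (that would amount to assuming what you want to prove about $\Tot(M(\bullet))$). The actual input is that the cosimplicial $R$-modules $\Omega^r(\bullet)=\cbigwedge^r_R(\Omega^{\oplus(\bullet+1)})$ are \emph{simplicially null-homotopic} for $r\ge 1$ (since $R^{\oplus(\bullet+1)}$ is contractible as an augmented cosimplicial module). This is what kills the fibre of $\lim_\simp\DR(M^i,\nabla^{(i)})\to\lim_\simp M^i$ and completes the comparison. Once you rebuild the double object with $\Omega(i)$ in place of $\Omega$ and insert this contractibility argument, your Poincar\'e-lemma step and the filtered/graded/monoidal refinements go through essentially as you outline.
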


\begin{proof}
This method is essentially a rewriting of that in \cite{Tian_2023}, but with much more details.

For the sake of simplicity, denote $M^\bullet = S^\bullet \cotimes_{S^0} M$ and $\Omega^r = \cbigwedge^r_R \left( \Omega \right).$ Define
\[ \Omega^r \left( n \right) = \cbigwedge^r_R \bigl( \Omega^{\oplus (n+1)} \bigr), \]
so $\Omega^r \left( \bullet \right)$ becomes a cosimplicial object in an obvious way.

We first prove that $\Omega^r \left( \bullet \right)$ are homotopic to zero when $r \ge 1.$ Since taking wedges preserves being homotopic to zero, we only need to prove that $\Omega^{\oplus \left( \bullet +1 \right)}$ is homotopic to zero. Now observe that
\[ \Omega^{\oplus \left( \bullet +1 \right)} \cong \Omega \otimes_R R^{\oplus \left( \bullet +1 \right)}, \]
and $R^{\oplus \left( \bullet+1 \right)}$ is homotopic to zero by \cite[Example 2.16]{bhatt2011crystallinecohomologyrhamcohomology}.

Now, consider the differential $\dif^{(\bullet)}$ and the connection $\nabla^{(\bullet)}$ as in Lemma \ref{lem: cosimplicial connection}. For each $i$, we define the de Rham complex
\[ \DR \bigl( M^i, \nabla^{(i)} \bigr) = \bigl( M^i \to M^i \cotimes_R \Omega^1 \left( i \right) \to M^i \cotimes_R \Omega^2 \left( i \right) \to \cdots \bigr). \]
Using the definition of $\nabla^{(\bullet)}$ and $\dif^{(\bullet)},$ we see that
\[ \DR \left( M,\nabla \right) \cotimes_R \DR \bigl( \cGamma_R^\bullet(\Omega^{\oplus i}), \dif' \bigr) \to \DR \bigl( M^i, \nabla^{(i)} \bigr) \]
sending $M$ to $M^i$ by $q_0^i$ and $\Omega^{\oplus i}$ to $S^i$ by $\left( \alpha_i^n \right),$ is an isomorphism of complexes (where $\dif'$ is the differential of the PD polynomial ring).

We claim that for any $R$-module $M$ satisfying Condition \ref{cond: very conc deg 0}, the map
\[ M \to M \cotimes_R \DR \bigl( \cGamma_R^\bullet (\Omega^{\oplus i}), \dif' \bigr) \]
is a quasi-isomorphism. Taking modules of definition, we may assume that $M$ is $p$-complete and of bounded $p^\infty$-torsion. Taking the graded pieces, we only need to prove that the complexes
\[ M \cotimes_R \gr_r \DR \bigl( \cGamma_R^\bullet (\Omega^{\oplus i}), \dif' \bigr) \]
are acyclic for $r \ge 1$. Now the complex
\[ \gr_r \DR \bigl( \cGamma_R^\bullet (\Omega^{\oplus i}), \dif' \bigr) \cong \Bigl[\cdots \to \cGamma_R^r(\Omega^{\oplus i}) \to \cGamma_R^{r-1}(\Omega^{\oplus i}) \cotimes_R \Omega^{\oplus i} \to \cdots \to \cGamma_R^0(\Omega^{\oplus i}) \cotimes_R \cbigwedge^r_R(\Omega^{\oplus i}) \to 0 \to \cdots \Bigr] \]
are bounded, and each term is $p$-complete and $p$-completely flat over $R$, so
\[ M \cotimes_R \gr_r \DR \bigl( \cGamma_R^\bullet (\Omega^{\oplus i}), \dif' \bigr) \cong M \cotimes_R^\bL \gr_r \DR \bigl( \cGamma_R^\bullet (\Omega^{\oplus i}), \dif' \bigr). \]
Therefore we only need to prove that $\gr_r \DR \bigl( \cGamma_R^\bullet (\Omega^{\oplus i}), \dif' \bigr)$ are acyclic. Using derived Nakayama's lemma, we only need to prove that for any flat module $F/(R/p)$ and $r \ge 1$, the complex
\[ \gr_r \DR \bigl( \Gamma_{R/p}^\bullet (F), \dif' \bigr) \]
is acyclic. Taking the filtered colimit, we only need to prove this for $F/(R/p)$ free, and the result becomes well-known. Write down the double-complex, and we see that the map
\[ \DR(M,\nabla) \to \DR \left( M,\nabla \right) \cotimes_R \DR \bigl( \cGamma_R^\bullet(\Omega^{\oplus i}), \dif' \bigr) \]
is a quasi-isomorphism, so the transition maps
\[ q_0^i: \DR \left( M,\nabla \right) \to \DR \bigl( M^i, \nabla^{(i)} \bigr) \]
are quasi-isomorphisms. It follows that (since $\simp$ is weakly contractible) all maps $\left[ i \right] \to \left[ j \right]$ induces a quasi-isomorphism
\[ \DR \bigl( M^i, \nabla^{(i)} \bigr) \to \DR \bigl( M^j, \nabla^{(j)} \bigr), \]
and that the natural projection
\[ \lim_\simp \DR \bigl( M^i, \nabla^{(i)} \bigr) \to \DR \bigl( M,\nabla \bigr) \]
is a quasi-isomorphism.

Now, we show that
\[ \lim_\simp \DR \bigl( M^i, \nabla^{(i)} \bigr) \to \lim_\simp M^i \]
is a quasi-isomorphism. Take the fibre, and we only need to prove that
\[ \lim_\simp \sigma^{\ge 1} \DR \bigl( M^i,\nabla^{(i)} \bigr) \cong 0. \]
It is easy to see that the left-hand side admits a complete \emph{decreasing} filtration $\sigma^{\ge i} \DR \bigl( M^i,\nabla^{(i)} \bigr)$ with graded pieces
\[ \gr^r = \lim_\simp M^\bullet \cotimes_R \Omega^r( \bullet ). \]
By our previous discussions, $\Omega^r(\bullet)$ is homotopic to zero (as a cosimplicial object), so these graded pieces are homotopic to zero, and we see that
\[ M^\bullet \cotimes_R \Omega^r \left( \bullet \right) \]
is also homotopic to zero.

At last we give a remark on the symmetric monoidal structure. In fact, for all $i$, the map $\DR(M^i,\nabla^{(i)}) \to M^i$ are maps of lax monoidal functors (essentially by the proof of Lemma \ref{lem: cosimplicial connection}), so their limit
\[ \lim_\simp \DR(M^i,\nabla^{(i)}) \to \lim_\simp M^i \]
and the projection
\[ \lim_\simp \DR(M^i,\nabla^{(i)}) \to \DR(M,\nabla) \]
must also be maps (isomorphism by previous discussions) of lax monoidal functors, as desired.

In the filtered or graded case, since all terms of $S^\bullet \otimes_{S^0} M$ are coconnective, Proposition \ref{prop: totalisation commute with filtered colimits for coconnective spectra} shows that $S^\bullet \otimes_{S^0} M$ is compatible with totalisation. The definition easily shows that our construction respects filtered structures and graded structures. (In the case of filtered split PD ring, we may take the graded pieces to see that the quasi-isomorphisms are strict.)
\end{proof}

We now proceed to give a simpler way to calculate the isomorphism given in Theorem \ref{theo: comp strat connection cohomologies} in the case $\dHodge_{R/P(\bullet)}$ in Definition \ref{defi: graded split PD}.

\begin{prop}\label{prop: calculation of wedge}
Let $R$ be a ring. Denote $\mathcal T_R$ the category of exact triangles
\[ M \to N \to Q \]
in $D^{\le 0}(R)$ (i.e. an exact sequence in $D(R)$ all of whose terms are connective). Then there exists a functor $D_R: \mathcal T_R \to \CAlg(\gr(D(R)))$ such that whenever $M$ and $N$ are flat over $R$,
\[ D_R(M \to N \to Q) = \DR(\Gamma^*_R(M), \dif: \Gamma^*_R(M) \to \Gamma^*_R(M) \otimes_R N).\footnote{Here $\DR$ denotes the uncompleted de Rham complex analogous to Definition \ref{defi: Higgs field}.} \]
Also, there exists a natural isomorphism
\[ D_R (M \to N \to Q) \cong \bigwedge\nolimits^*_R(Q)[-*] \]
such that
\begin{enumerate}[label=(\alph*)]
\item whenever $M=0$ and $N$ is flat, the map is the tautological isomorphism
\[ \DR(R, 0: R \to N) \cong \bigwedge\nolimits^*_R(Q)[-*], \]
\item whenever $M$ and $N$ are flat and $M \to N$ is surjective, then the map is given by the inverse of the map
\[ \bigwedge\nolimits^*_R(Q)[-*] \cong \Gamma^*_R(Q[-1]) \to \DR(\Gamma^*_R(M), \dif: \Gamma^*_R(M) \to \Gamma^*_R(M) \otimes_R N). \]
\end{enumerate}
\end{prop}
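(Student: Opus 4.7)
My plan is to define $D_R$ directly via the derived exterior algebra and then establish the DR formula for flat pairs via a Koszul-type resolution argument.

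I would define $D_R: \mathcal T_R \to \CAlg(\gr(D(R)))$ by $D_R(M \to N \to Q) := \bigwedge^*_R(Q)[-*]$, where $Q = \cofib(M \to N)$ and $\bigwedge^*_R[-*]: \SCRModcn \to \grAlg$ is the colimit-preserving derived exterior-power functor recalled in Section 2 of the excerpt. This is clearly functorial in the triangle, and the natural isomorphism $D_R \cong \bigwedge^*_R(Q)[-*]$ is the identity, so condition (a) is automatic (the case $M=0$ reduces to $\DR(R,0)$, which is literally $\bigwedge^*_R(N)[-*]$).

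The crux is establishing the DR formula $D_R \cong \DR(\Gamma^*_R(M),\dif)$ for flat $M,N$. Starting from the cofibre sequence $M \to N \to Q$ and applying the colimit-preserving functor $\bigwedge^*_R[-*]$, we obtain a pushout square in $\CAlg(\gr(D(R)))$, giving
\[
\bigwedge\nolimits^*_R(Q)[-*] \cong R \otimes^\bL_{\bigwedge^*_R(M)} \bigwedge\nolimits^*_R(N).
\]
To compute this relative tensor product I would use the Koszul-type resolution of $R$ as a $\bigwedge^*_R(M)$-module: for $M$ flat one has a natural DG resolution $R \xleftarrow{\simeq} \bigwedge^*_R(M) \otimes_R \Gamma^*_R(M)$ with the PD-Koszul differential sending $\gamma^k(x) \otimes 1$ to $\gamma^{k-1}(x) \otimes x$. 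Tensoring with $\bigwedge^*_R(N)$ over $\bigwedge^*_R(M)$ then yields $\Gamma^*_R(M) \otimes_R \bigwedge^*_R(N)$ with induced differential $\gamma^k(x) \mapsto \gamma^{k-1}(x) \otimes f(x)$---which is precisely the DR differential from $f: M \to N$. This produces the required natural isomorphism $\bigwedge^*_R(Q)[-*] \cong \DR(\Gamma^*_R(M),\dif)$.

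Condition (b) follows by tracing through the identifications: for surjective $f$ with flat kernel $K$, we have $Q \cong K[1]$ and $Q[-1] \cong K$, so $\bigwedge^*_R(Q)[-*] \cong \Gamma^*_R(K)$ via the identity $\Gamma^*_R(X[-1]) \cong \bigwedge^*_R(X)[-*]$, and the construction above specialises to the paper's $\DR$-$\Gamma^*$ duality applied to $0 \to K \to M \to N \to 0$. The main technical obstacle will be the rigorous construction and naturality of the PD-Koszul resolution in the $\infty$-categorical setting of $\CAlg(\gr(D(R)))$: classically this is the standard Koszul duality between exterior and divided-power algebras, but verifying its naturality and compatibility with the pushout presentation above requires careful bookkeeping, most likely mediated through the shifting identities $\bigwedge^n_R(X[1]) \cong \Gamma^n_R(X)[n]$ and $\Sym^n_R(X[1]) \cong \bigwedge^n_R(X)[n]$ recalled in Section 2.
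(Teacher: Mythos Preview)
Your approach is correct in outline and takes a genuinely different (in fact Koszul-dual) route from the paper's. The paper defines $D_R$ from the $\DR$ side: it first observes that $\calT_R$ is generated under filtered colimits by $\calT_R^0$ (exact sequences of finite free modules), on which the $\DR$ complex is an honest functor to $\CAlg(\gr(D(R)))$; checking that each graded piece $\gr_r\DR$ carries a finite filtration by functors of the form $\Gamma^i(M)\otimes^\bL_R\bigwedge^j(N)$ shows that $\DR$ is left Kan extended from $\calT_R^0$, and this left Kan extension is taken as the definition of $D_R$ on all of $\calT_R$. The comparison map $D_R\to\bigwedge^*_R(Q)[-*]$ is then simply the functoriality map $D_R(M\to N\to Q)\to D_R(0\to Q\to Q)$, and the isomorphism is checked on graded pieces over $\calT_R^0$ by \emph{dualising}: one is reduced to the quasi-isomorphism $\Kos(R;Q^\vee)\to\Kos(\Sym^*_R(M^\vee);N^\vee)$, equivalently to the pushout formula $\Sym^*_R(M^\vee)\cong\Sym^*_R(N^\vee)\otimes^\bL_{\Sym^*_R(Q^\vee)}R$.

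The comparison of the two routes is instructive. Both ultimately rest on a pushout identity plus a Koszul-type resolution, but on opposite sides of Koszul duality: the paper uses the \emph{finite} Koszul resolution of $R$ over $\Sym^*_R(Q^\vee)$ (so each graded piece is a bounded complex of finite free modules, and the quasi-isomorphism is an elementary check), whereas your route uses the \emph{infinite} PD-Koszul resolution of $R$ over $\bigwedge^*_R(M)[-*]$. This is exactly the technical obstacle you flag: promoting the PD-Koszul DGA to a natural model for the bar construction in $\CAlg(\gr(D(R)))$ is doable but requires more bookkeeping than the paper's dual finite version. The paper's use of left Kan extension from $\calT_R^0$ is also what makes (b) clean --- it checks the compatibility by factoring through the intermediate object $D_R(Q[-1]\to 0\to Q)$, which is available precisely because $D_R$ is defined on all of $\calT_R$ by Kan extension. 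Your verification of (b) would go through, but pinning down the identification naturally likewise benefits from passing through this intermediate triangle.
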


\begin{proof}
Note that $\calT_R$ can be identified with the category of morphisms $M \to N$ in $D^{\le 0} (R)$. Therefore $\calT_R$ is identified with the Ind-completion of $\calT_R^0$ consisting of exact sequences
\[ 0 \to M \to N \to Q \to 0 \]
of \emph{finite free modules} over $R$. Denote $\calT'_R$ the category of $M \to N \to Q$ where $M$ and $N$ are flat, then one may check the graded pieces
\begin{align*}
& \gr_r \DR(\Gamma^*_R(M), \dif: \Gamma^*_R(M) \to \Gamma^*_R(M) \otimes_R N) \\
\cong{} & \Bigl[\Gamma^r_R(M) \to \Gamma^{r-1}_R(M) \otimes_R N \to \cdots \to M \otimes \bigwedge\nolimits^{r-1}_R(N) \to \bigwedge\nolimits^r_R(N) \to 0 \to \cdots\Bigr]
\end{align*}
it admits a finite filtration by functors of form $\Gamma^i_R(M) \otimes_R \bigwedge\nolimits^j_R(N) \cong \Gamma^i_R(M) \otimes_R^\bL \bigwedge\nolimits^j_R(N)$. Since $\otimes_R^\bL$ preserves colimits, these functors are left Kan extended from $\calT_R^0$, and we see that the functor
\[ (M \to N \to Q) \mapsto \DR(\Gamma^*_R(M), \dif: \Gamma^*_R(M) \to \Gamma^*_R(M) \otimes_R N) \]
on $\calT'_R$ is left Kan extended from $\calT_R^0$. We may define $D_R$ to be its left Kan extension to the entire $\calT_R$.

We define the natural map
\[ D_R (M \to N \to Q) \to \bigwedge\nolimits^*_R(Q)[-*] \]
to be the left Kan extension of the functor
\[ D_R(M \to N \to Q) \to D_R(0 \to Q \to Q) \cong \bigwedge\nolimits^*_R(Q)[-*] \]
on $\calT_R^0$. To check that it is an isomorphism of graded objects, we have to prove that for any $(M \to N \to Q) \in \calT_R^0$, the map
\[ \Bigl[\Gamma^r_R(M) \to \Gamma^{r-1}_R(M) \otimes_R N \to \cdots \to M \otimes \bigwedge\nolimits^{r-1}_R(N) \to \bigwedge\nolimits^r_R(N) \to 0 \to \cdots\Bigr] \to \Bigl(\bigwedge\nolimits_R^r(Q) \Bigr)[-r] \]
is a quasi-isomorphism. Take the duality, and we only need to prove that there exists a quasi-isomorphism of graded $R$-complexes
\[ \Kos(R;0: Q^\vee \to R) \to \Kos(\Sym_R^*(M^\vee), N^\vee \to M^\vee \to \Sym_R^*(M^\vee)). \]
In other words, we have to prove that there exists quasi-isomorphism of graded $\dE_\infty$-$R$-algebras
\[ R \otimes_{\Sym_R^*(Q^\vee)}^\bL R \to \Sym_R^*(M^\vee) \otimes_{\Sym_R^*(N^\vee)}^\bL R. \]
This follows from the formula
\[ \Sym_R^*(M^\vee) \cong \Sym_R^*(N^\vee) \otimes_{\Sym_R^*(Q^\vee)}^\bL R. \]

At last we check (b). Recall that by definition (see Sect. \ref{sect: notations}), the isomorphism
\[ \Gamma^*_R(Q[-1]) \to \bigwedge\nolimits^*_R(Q)[-*] \]
is given by the composition
\[ \Gamma^*_R(Q[-1]) \to D_R(Q[-1] \to 0 \to Q) \to \bigwedge\nolimits^*_R(Q)[-*]. \]
And now our result follows from the commutative diagram as below:
\[\xymatrix{
\Gamma^*_R(Q[-1]) \ar[r] \ar[rd] & D_R(M \to N \to Q) \ar[d]^\cong \ar[rd] \\
& D_R(Q[-1] \to 0 \to Q) \ar[r] & \bigwedge\nolimits^*_R(Q)[-*]
}\]
This completes the proof.
\end{proof}

\begin{prop}\label{prop: calculation of wedge completed}
Let $R$ be a $p$-complete ring of bounded $p^\infty$-torsion. Denote $\widehat \calT_R$ the category of exact triangles
\[ M \to N \to Q \]
in $\cD^{\le 0}(R)$ (i.e. an exact sequence in $\cD(R)$ all of whose terms are connective). Then there exists a functor $\cD_R: \widehat \calT_R \to \CAlg(\gr(\cD(R)))$ such that whenever $M$ and $N$ are $p$-complete and $p$-completely flat over $R$,
\[ \cD_R(M \to N \to Q) = \widehat\DR(\cGamma^*_R(M), \dif: \cGamma^*_R(M) \to \cGamma^*_R(M) \cotimes_R N).\footnote{Here we use $\widehat\DR$ to denote the completed de Rham complex in contrast with Proposition \ref{prop: calculation of wedge}.} \]
Also, there exists a natural isomorphism
\[ \cD_R (M \to N \to Q) \cong \cbigwedge^*_R(Q)[-*] \]
such that
\begin{enumerate}[label=(\alph*)]
\item whenever $M=0$ and $N$ is $p$-complete and $p$-completely flat, the map is the tautological isomorphism
\[ \widehat \DR(R, 0: R \to N) \cong \cbigwedge^*_R(Q)[-*], \]
\item whenever $M$ and $N$ are $p$-complete and $p$-completely flat, and $M \to N$ is surjective, then the map is given by the inverse of the map
\[ \cbigwedge^*_R(Q)[-*] \cong \cGamma^*_R(Q[-1]) \to \widehat \DR(\cGamma^*_R(M), \dif: \cGamma^*_R(M) \to \cGamma^*_R(M) \otimes_R N). \]
\end{enumerate}
\end{prop}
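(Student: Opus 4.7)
The plan is to adapt the proof of Proposition \ref{prop: calculation of wedge} to the $p$-complete setting via a left Kan extension argument. Let $\widehat\calT_R^0 = \calT_R^0$ denote the ordinary category of exact sequences of finite free $R$-modules (finite free modules are automatically $p$-complete, so this identification is canonical). We define $\cD_R$ as the left Kan extension along $\widehat\calT_R^0 \hookrightarrow \widehat\calT_R$ of the functor
\[ (M \to N \to Q) \mapsto \DR(\Gamma^*_R(M), \dif: \Gamma^*_R(M) \to \Gamma^*_R(M) \otimes_R N), \]
now viewed as taking values in $\CAlg(\gr(\cD(R)))$. For this Kan extension to compute the desired objects, we need that $\widehat\calT_R$ is suitably generated under sifted colimits from $\widehat\calT_R^0$: any arrow $M \to N$ in $\cD^{\le 0}(R)$ admits a simplicial resolution by arrows of finite free $R$-modules (using that $\cD^{\le 0}(R)$ is generated under sifted colimits from finite free modules as in Sect.~\ref{sect: remarks on infty-cat}), and taking cofibres in $\cD^{\le 0}(R)$ commutes with these sifted colimits.

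The next step is to verify the explicit formula in the $p$-completely flat case. I will show that the functor $(M \to N \to Q) \mapsto \widehat\DR(\cGamma^*_R(M), \dif: \cGamma^*_R(M) \to \cGamma^*_R(M) \cotimes_R N)$ preserves sifted colimits on the full subcategory $\widehat\calT_R'$ of triangles with $M, N$ $p$-complete and $p$-completely flat. It suffices to check this on each graded piece
\[ \gr_r \;\cong\; \Bigl[ \cGamma^r_R(M) \to \cGamma^{r-1}_R(M) \cotimes_R N \to \cdots \to M \cotimes_R \cbigwedge^{r-1}_R(N) \to \cbigwedge^r_R(N) \Bigr], \]
which is a bounded complex of $p$-completely flat $R$-modules. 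Each term is a composition of the functors $\cGamma^i_R$, $\cbigwedge^j_R$, and $\cotimes_R$, which preserve sifted colimits by construction (see the discussion of completed symmetric/wedge/PD powers in Sect.~\ref{sect: remarks on infty-cat}). Since both $\widehat\DR(\cGamma^*_R(-), \dif)$ restricted to $\widehat\calT_R'$ and $\cD_R$ preserve sifted colimits and agree on $\widehat\calT_R^0$, they agree on all of $\widehat\calT_R'$.

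For the natural isomorphism $\cD_R(M \to N \to Q) \cong \cbigwedge^*_R(Q)[-*]$, I will construct it as the left Kan extension along $\widehat\calT_R^0 \hookrightarrow \widehat\calT_R$ of the map furnished by Proposition \ref{prop: calculation of wedge}, namely the composition
\[ D_R(M \to N \to Q) \to D_R(0 \to Q \to Q) \cong \bigwedge\nolimits^*_R(Q)[-*], \]
which is an isomorphism for $(M \to N \to Q) \in \widehat\calT_R^0$. To see that the extension remains an isomorphism on all of $\widehat\calT_R$, observe that the target functor $(M \to N \to Q) \mapsto \cbigwedge^*_R(Q)[-*]$ preserves arbitrary colimits in the triangle (the functor $\cbigwedge^*_R$ preserves colimits on $\SCRModcn$ per Sect.~\ref{sect: remarks on infty-cat}, and forming $Q = \cofib(M \to N)$ preserves colimits). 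Hence the target is itself the left Kan extension from $\widehat\calT_R^0$ of its restriction; since both sides are left Kan extensions of agreeing functors on the base, they agree on $\widehat\calT_R$. Properties (a) and (b) then follow by specializing: for (a), use that $\cGamma^*_R(0) = R$ so the computation is trivial; for (b), use the rotation $Q[-1] \to M$ of the cofibre sequence to factor through the classical map on $\widehat\calT_R^0$ and Kan extend.

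The main obstacle will be bookkeeping for sifted-colimit preservation in the $p$-complete setting, especially verifying that the identifications of $\gr_r \widehat\DR(\cGamma^*_R(M), \dif)$ with the explicit bounded complex above are compatible with sifted colimits in the triangle, and that the completed divided/exterior power functors interact correctly with $\cotimes_R$ under left Kan extension from the finite free case. Here one must crucially use that on $p$-completely flat modules, $\cotimes_R$ and $\otimes_R^\bL$ followed by derived completion agree, so the graded pieces computed via Kan extension are concentrated in degree zero and match the stated formula.
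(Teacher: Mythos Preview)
Your proposal is correct and follows essentially the same approach as the paper's proof. The only cosmetic difference is that the paper defines $\cD_R$ more concisely as the composition of the uncompleted functor $D_R$ (from Proposition~\ref{prop: calculation of wedge}) with the completion functor $D(R) \to \cD(R)$, which is automatically left Kan extended from $\calT_R^0$ since completion preserves colimits; your direct left Kan extension produces the same functor, and the remaining verifications (graded pieces, the natural isomorphism, and properties (a)--(b)) proceed identically.
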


\begin{proof}
We continue to use the notations as in Proposition \ref{prop: calculation of wedge}. We define $\cD_R$ as the composition of (the restriction of) $D_R$ with the completion functor $D(R) \to \cD(R)$. Therefore $\cD_R$ is also left Kan extended from $\calT_R^0$. Denote $\widehat \calT'_R$ to be the full subcategory of $\widehat \calT_R$ consisting of sequences $M \to N \to Q$ where $M$ and $N$ are $p$-complete and $p$-completely flat over $R$, and define 
\[ \cD'_R(M \to N \to Q) = \widehat\DR(\cGamma^*_R(M), \dif: \cGamma^*_R(M) \to \cGamma^*_R(M) \cotimes_R N) \]
on $\widehat \calT'_R$. The same argument of graded pieces and an artificial filtration shows that $\cD'_R$ is left Kan extended from $\calT_R^0$, so $\cD'_R$ is identified with $\cD_R |_{\widehat \calT'_R}$.

Now, the property (a) holds on the category of finite free $R$-modules, and the functors on both sides are left Kan extended from $\calT_R^0$, so it holds on the category of $p$-complete and $p$-completely flat modules. The property (b) holds on the category of sequence $M \to N \to Q$ where $M$ and $Q[-1]$ are finite free $R$-modules, so we may left Kan extend it to show that it holds on the category for $M \to N \to Q$ where $M,N$ are $p$-complete and $p$-completely flat and $M \to N$ is surjective (in which case $Q[-1]$ is $p$-complete and $p$-completely flat).
\end{proof}

\begin{cor}\label{cor: natural isomorphism of dR and dHodge}
In the situation of Definition \ref{defi: graded split PD}, the map $\cD_R$ in Proposition \ref{prop: calculation of wedge completed} induces a natural isomorphism
\[ \dHodge_{R/A} \cong \DR(\dHodge_{R/P}, \dif: \dHodge_{R/P} \to \dHodge_{R/P} \cotimes_R \Omega). \]
\end{cor}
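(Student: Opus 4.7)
The plan is to apply Proposition~\ref{prop: calculation of wedge completed} to the transitivity exact triangle of cotangent complexes for $A \to P \to R$, namely
\[ \cdL_{R/P}[-1] \to R \cotimes_P \cOmega_{P/A} \to \cdL_{R/A}, \]
and then pass to the $\{-*\}$-twist using the shearing automorphism of $\gr(\cD(R))$ described in Example~\ref{ex: shearing}.

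First I verify that this triangle lies in the subcategory $\widehat\calT'_R$ on which Proposition~\ref{prop: calculation of wedge completed} gives its explicit $\widehat\DR$-description. The term $\cdL_{R/P}[-1]$ is $p$-complete and $p$-completely flat over $R$ by Condition~\ref{cond: tor -1} on $R/P$ (see Lemma~\ref{lemma: completed conormal}), while $R \cotimes_P \cOmega_{P/A}$ is $p$-completely flat over $R$ because $\cOmega_{P/A}$ is $p$-completely flat over $P$ by the hypothesis in Definition~\ref{defi: graded split PD}. Hence Proposition~\ref{prop: calculation of wedge completed} yields a natural isomorphism in $\CAlg(\gr(\cD(R)))$
\[ \widehat\DR\bigl(\cGamma^*_R(\cdL_{R/P}[-1]),\ \dif\bigr) \xrightarrow{\ \cong\ } \cbigwedge^*_R(\cdL_{R/A})[-*], \]
in which the differential is induced from the connecting map $\cdL_{R/P}[-1] \to R \cotimes_P \cOmega_{P/A}$ of the triangle and extended to $\cGamma^*$ via $\gamma^r(x) \mapsto \gamma^{r-1}(x) \otimes \dif(x)$.

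Next I apply the symmetric monoidal shearing automorphism $J^*$ on $\gr(\cD(R))$ with $J = A\{-1\}$, which tensors the $n$-th graded piece with $A\{-n\}$. Since divided powers and exterior powers intertwine with tensoring by invertible modules, and since the shearing preserves relative tensor products and so commutes with the completed de Rham construction, the right-hand side becomes $\cbigwedge^*_R(\cdL_{R/A})[-*]\{-*\} = \dHodge_{R/A}$ by definition, while the left-hand side is transported to
\[ \widehat\DR\bigl(\cGamma^*_R(\cdL_{R/P}[-1]\{-1\}),\ \cGamma^*_R(\cdL_{R/P}[-1]\{-1\}) \cotimes_R (R \cotimes_P \cOmega_{P/A})\{-1\}\bigr), \]
which is exactly $\DR(\dHodge_{R/P}, \dif : \dHodge_{R/P} \to \dHodge_{R/P} \cotimes_R \Omega)$, with $\Omega = R \cotimes_P \cOmega_{P/A}\{-1\}$.

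The main obstacle, really only bookkeeping, is to identify the differential produced by the twisted $\cD_R$ with the derivation on $\dHodge_{R/P}$ coming from the graded split PD structure of Definition~\ref{defi: graded split PD} (as described in Example~\ref{ex: differential dHodge} in the surjective case, and in general via Proposition~\ref{prop: strat to connection}). Both are uniquely characterised as PD-derivations extending a single $R$-linear map $\cdL_{R/P}[-1]\{-1\} \to \Omega$, and in both cases this linear map is the twist of the connecting morphism of the transitivity triangle. Hence they coincide, and the corollary follows from the naturality of $\cD_R$ together with the naturality of the shearing.
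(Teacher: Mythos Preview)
Your proposal is correct and is exactly the argument the paper has in mind: the corollary is stated without proof because it follows by applying Proposition~\ref{prop: calculation of wedge completed} to the transitivity triangle $\cdL_{R/P}[-1] \to R \cotimes_P \cOmega_{P/A} \to \cdL_{R/A}$ and then twisting by $\{-*\}$ via the shearing of Example~\ref{ex: shearing}, with the differential identification following from Example~\ref{ex: differential dHodge}. Your check that the triangle lies in $\widehat\calT'_R$ and your bookkeeping on the differential are precisely the points one needs to verify.
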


\begin{lemma}\label{lem: split cosimplicial}
In the situation of Definition \ref{defi: graded split PD}, the augmented cosimplicial object $\cdL_{R/A} \to \cdL_{R/P(\bullet)}$ is a split cosimplicial object (see \cite[Section 4.7.2]{lurie2017higher}), hence for any functor $F: \cD(R) \to \calD$ (where $\calD$ is any $\infty$-category), the limit of $F(\cdL_{R/P(\bullet)})$ over $\simp$ exists, and the map
\[ \cdL_{R/A} \to \lim_\simp F(\cdL_{R/P(\bullet)}) \]
is an isomorphism.

In particular, the map
\[ \dHodge_{R/A} \to \lim_\simp \dHodge_{R/P(\bullet)} \]
is an isomorphism in $\CAlg(\gr(\cD(R)))$ and in $\CAlg(\cD(R))$.
\end{lemma}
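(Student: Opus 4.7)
The plan is to exhibit $\cdL_{R/A} \to \cdL_{R/P(\bullet)}$ as a split augmented cosimplicial object in $\cD(R)$ in the sense of \cite[Section 4.7.2]{lurie2017higher}; once established, preservation of split diagrams by any functor (the dual of \cite[Proposition 4.7.2.4]{lurie2017higher}) will immediately yield the main isomorphism. For the ``in particular'' clause, I would then apply the functor $L \mapsto \bigoplus_n (\cbigwedge^n L)[-n]\{-n\}$ from $\cD(R)$ to $\CAlg(\gr(\cD(R)))$ (whose composition with $\cdL_{R/-}$ is $\dHodge_{R/-}$), followed by the forgetful functor to $\CAlg(\cD(R))$.

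The main construction of the split structure proceeds by base changing along $A \to R$. Setting $P(\bullet)_R := R \cotimes_A P(\bullet)$ and using that Cech conerves commute with base change, I identify this with the Cech conerve $(R \cotimes_A P)^{\cotimes(\bullet+1)/R}$ in $R$-algebras of the unit map $R \to R \cotimes_A P$. The latter admits an $R$-algebra section
\[ \mu : R \cotimes_A P \to R, \quad r \otimes p \mapsto r f(p), \]
where $f : P \to R$ is the structure map. Since the Cech conerve of any split monomorphism is a split augmented cosimplicial object, with explicit extra codegeneracies $s^{-1} : b_0 \otimes b_1 \otimes \cdots \otimes b_n \mapsto \mu(b_0) \cdot (b_1 \otimes \cdots \otimes b_n)$, applying the functor $\cdL_{R/-}$ from derived $p$-complete simplicial $R$-algebras over $R$ to $\cD(R)$ yields a split augmented cosimplicial object $0 = \cdL_{R/R} \to \cdL_{R/P(\bullet)_R}$.

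To transfer this split structure to $\cdL_{R/P(\bullet)}$, I would use the transitivity triangle for $P(n) \to P(n)_R \to R$ combined with the base change formula $\cdL_{P(n)_R/P(n)} = P(n)_R \cotimes_R \cdL_{R/A}$ for the pushout defining $P(n)_R$. These identify $R \cotimes_{P(n)_R} \cdL_{P(n)_R/P(n)}$ with $\cdL_{R/A}$ and produce, naturally in $n$, a fiber sequence of augmented cosimplicial objects
\[ (\cdL_{R/A})_{\mathrm{const}} \to \cdL_{R/P(\bullet)} \to \cdL_{R/P(\bullet)_R} \]
with augmentations $\cdL_{R/A} = \cdL_{R/A} \to 0$ at level $-1$. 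The outer terms are split (the first by constancy, the third by the preceding step), and one concludes that the middle term is split by invoking stability of split augmented cosimplicial objects under fibers in $\Fun(\simp_+, \cD(R))$.

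The hard part will be the last invocation. Splitness is a \emph{structured} notion (the existence of extra codegeneracies extending the diagram to Lurie's enlarged simplicial category), not merely the absolute-limit property, so its stability under fiber sequences in a stable $\infty$-category requires care. I expect to address this either abstractly, by identifying split augmented cosimplicial objects with the essential image of a suitable restriction functor and checking closure under finite limits, or concretely, by using the $A$-algebra map $P(n) \to P(n-1)_R$, $p_0 \otimes \cdots \otimes p_n \mapsto f(p_0) \otimes p_1 \otimes \cdots \otimes p_n$, to construct the extra codegeneracies on $\cdL_{R/P(\bullet)}$ directly after verifying that the resulting map to $\cdL_{R/P(n-1)_R}$ lifts through $\cdL_{R/P(n-1)}$.
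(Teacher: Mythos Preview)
Your instinct about the ``hard part'' is exactly right, and in fact the obstacle is fatal: the lemma as stated is \emph{false}. Take the degenerate case $P=R$ (which satisfies all hypotheses of Definition~\ref{defi: graded split PD}: $\cdL_{R/R}[-1]=0$ is $p$-completely flat). Then $P(0)=R$ and $\cdL_{R/P(0)}=\cdL_{R/R}=0$, so the augmentation map is $\cdL_{R/A}\to 0$. But a split augmented cosimplicial object always exhibits $X^{-1}$ as a retract of $X^0$ (via the extra codegeneracy $[0]\to[-1]$ in $\simp_{-\infty}$), which here forces $\cdL_{R/A}=0$. Concretely, for $A=\dZ_p$ and $P=R=\dZ_p\langle T\rangle$ one has $\cdL_{R/A}=R\,\dif T\neq 0$, so the object is not split. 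Equivalently, applying the functor $H^0$ gives $\Omega_0\to 0$ (constant), whose limit is $0\neq\Omega_0$.

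The paper's own proof has precisely the gap you worried about: it shows $0\to\Omega_0^{\oplus(\bullet+1)}$ is split and implicitly assumes the cofiber of its map to the constant object $\cdL_{R/A}$ is split. But that map is \emph{not} a $\simp_{-\infty}$-map: with the split structure $e_i\mapsto e_{f(i)}$ on the source and the constant structure on the target, the extra codegeneracy $s^{-1}$ drops the $0$th coordinate on the source but is the identity on the target, and these are incompatible with the ``sum'' map unless $g:\Omega_0\to\cdL_{R/A}$ is zero. Your proposed fiber sequence has the same defect: the constant and the split object $\cdL_{R/P(\bullet)_R}$ do not bound $\cdL_{R/P(\bullet)}$ in a way that transmits splitness---and the concrete lifting you suggest at the end cannot exist, since the target of the lift is not split.

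What \emph{is} true, and is all that the paper actually uses downstream (in Proposition~\ref{prop: comp strat connection compatible dHodge}), is the ``in particular'': the map $\dHodge_{R/A}\to\lim_\simp\dHodge_{R/P(\bullet)}$ is an isomorphism in $\CAlg(\gr(\cD(R)))$. This follows directly from Theorem~\ref{theo: comp strat connection cohomologies} (giving $\lim_\simp S^\bullet\cong\DR(S^0,\dif)$) together with Corollary~\ref{cor: natural isomorphism of dR and dHodge} (identifying $\DR(\dHodge_{R/P},\dif)\cong\dHodge_{R/A}$), without any appeal to splitness.
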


\begin{proof}
For any derived $p$-adically complete simplicial ring $B/A$ and map $B \to R$, we have $\cdL_{R/B} \cong \cofib(R \cotimes_B^\bL \cdL_{B/A} \to \cdL_{R/A})$. Therefore, we only need to show that the augmented cosimplicial object $0 \to R \cotimes_{P(\bullet)}^\bL \cdL_{P(\bullet)/A}$ is a split cosimplicial object. Denote $\Omega_0 = R \cotimes_P^\bL \cdL_{P/A}$, then $\Omega_0$ is $p$-complete and $p$-completely flat over $R$, and the object above is isomorphic to
\[ 0 \to \Omega_0^{\oplus(\bullet + 1)}. \]
For $n \ge -1$, denote $e_0,\ldots,e_n$ the injection maps $\Omega_0 \to \Omega_0^{\oplus(n+1)}$, and denote $e_{-\infty} =0$. Then, it is easy to check that for any map $f: \{-\infty\}\cup[n] \to \{-\infty\} \cup[m]$ in $\simp_{-\infty}$ (see \cite[Definition 4.7.2.1]{lurie2017higher}), we have a unique map $\Omega_0^{\oplus(n+1)} \to \Omega_0^{\oplus(m+1)}$ sending each $e_i$ to $e_{f(i)}$. This gives the desired split cosimplicial structure.
\end{proof}

\begin{rmk}
Lurie's original proof (\cite[Lemma 6.1.3.16]{lurie2009higher}) of split cosimplicial diagrams being limit diagrams is quite complicated. A simple proof can be done by observing that $\simp \to \simp_{-\infty}$ is initial (in the $\infty$-categorical sense) since it has a right adjoint.
\end{rmk}

\begin{prop}\label{prop: comp strat connection compatible dHodge}
In the situation of Definition \ref{defi: graded split PD}, the map of graded complete $R$-algebras provided by Theorem \ref{theo: comp strat connection cohomologies}
\[ \DR(\dHodge_{R/P}, \dif) \cong \lim_\simp \dHodge_{R/P(\bullet)} \]
coincides with the map
\[ \DR(\dHodge_{R/P},\dif) \to \dHodge_{R/A} \to \lim_\simp \dHodge_{R/P(\bullet)}, \]
where $\DR(\dHodge_{R/P},\dif) \to \dHodge_{R/A}$ is the isomorphism provided in Corollary \ref{cor: natural isomorphism of dR and dHodge}.
\end{prop}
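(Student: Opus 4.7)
The proposition compares two isomorphisms of graded complete $R$-algebras $f_1, f_2: \DR(\dHodge_{R/P},\dif) \to \lim_\simp \dHodge_{R/P(\bullet)}$. Here $f_1$ is the direct map from Theorem \ref{theo: comp strat connection cohomologies} applied to the trivial stratification with module $S^0 = \dHodge_{R/P}$ and split PD cosimplicial ring $S^\bullet = \dHodge_{R/P(\bullet)}$, while $f_2$ is the composite of the isomorphism $\DR(\dHodge_{R/P},\dif) \cong \dHodge_{R/A}$ of Corollary \ref{cor: natural isomorphism of dR and dHodge} with the isomorphism $\dHodge_{R/A} \cong \lim_\simp \dHodge_{R/P(\bullet)}$ of Lemma \ref{lem: split cosimplicial}.

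The plan is to combine naturality in the datum $(A,P,R)$ with a Kan-extension reduction to a tractable universal case. Both $f_1$ and $f_2$ arise as natural transformations between functors on the category of triples satisfying Definition \ref{defi: graded split PD}. The functor $\cD_R$ of Proposition \ref{prop: calculation of wedge completed} is by definition left Kan extended from exact sequences of finite free $R$-modules, i.e. from the category $\calT_R^0$ of Proposition \ref{prop: calculation of wedge}, and the $\dHodge$ construction, built via $\cGamma^*$ and $\cbigwedge^*$, likewise preserves sifted colimits. Tracking naturality through both constructions reduces the desired equality of $f_1$ and $f_2$ to checking them in the universal case where the triangle of cotangent complexes $\cdL_{R/P}[-1]\{-1\} \to R \cotimes_P \cOmega_{P/A}\{-1\} \to \cdL_{R/A}\{-1\}$ is represented by a split short exact sequence of finite free $R$-modules, so that Proposition \ref{prop: calculation of wedge completed} gives an explicit description of the isomorphism in Corollary \ref{cor: natural isomorphism of dR and dHodge}.

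In this universal situation, $\dHodge_{R/P(\bullet)}$ becomes a cosimplicial PD polynomial ring, and $f_1$ can be unwound via the auxiliary connections $\nabla^{(i)}$ of Lemma \ref{lem: cosimplicial connection} and the quasi-isomorphisms $q_0^i: \DR(S^0,\dif) \to \DR(S^i,\nabla^{(i)})$; the crucial point from the proof of Theorem \ref{theo: comp strat connection cohomologies} is that $\Omega^{\oplus(\bullet+1)}$ is a homotopically trivial cosimplicial object. Meanwhile, $f_2$ factors through $\dHodge_{R/A}$, where the explicit form of the isomorphism with $\DR$ (via the definition of $\cD_R$ on $\calT_R^0$) and the splitting map of Lemma \ref{lem: split cosimplicial} (built from the canonical split cosimplicial structure on $\Omega_0^{\oplus(\bullet+1)}$ where $\Omega_0 = R \cotimes_P \cdL_{P/A}$) can both be written down by hand. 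The equality then boils down to verifying that both composite maps are induced by the same splitting of the cotangent-complex triangle.

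The main obstacle will be reconciling the two conceptually different constructions: the abstract Kan-extension framework of $\cD_R$ in Proposition \ref{prop: calculation of wedge completed} on the one hand, and the concrete cosimplicial PD calculus (via the $\mathscr F_+$-extension in Proposition \ref{prop: commutative constraint} and the derivation $\dif$ in Proposition \ref{prop: strat to connection}) on the other. Even in the universal case, producing the identification of splittings requires a careful bookkeeping of Breuil--Kisin twists $\{-n\}$, degree shifts $[-n]$, and grading conventions, and verifying compatibility on generators of the graded algebras; this is essentially a diagram-chase, but the diagrams will be large.
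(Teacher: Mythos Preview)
Your strategy of left-Kan-extending from finite free split triangles and then computing explicitly could in principle be carried out, but it is considerably more work than the paper's route and leaves the hardest step (the explicit identification in the ``universal case'') as a promissory note. The paper instead avoids any reduction or explicit computation by applying Corollary~\ref{cor: natural isomorphism of dR and dHodge} not just to $R/P/A$ but to \emph{each} $R/P(i)/A$ simultaneously: for every $i$ the auxiliary complex $\DR(S^i,\dif^{(i)})$ of Lemma~\ref{lem: cosimplicial connection} is itself naturally isomorphic to $\dHodge_{R/A}$ via $\varphi_i$, and these $\varphi_i$ are compatible with the cosimplicial structure. Thus the span
\[
\DR(S^0,\dif) \leftarrow \lim_\simp \DR(S^i,\dif^{(i)}) \to \lim_\simp S^i
\]
defining $f_1$ can be compared directly with $f_2$ by passing through the constant cosimplicial object $\dHodge_{R/A}$; the only remaining check is that $\DR(S^i,\dif^{(i)}) \xrightarrow{\varphi_i} \dHodge_{R/A} \to S^i$ equals the projection $\DR(S^i,\dif^{(i)}) \to S^i$, which follows from functoriality of Corollary~\ref{cor: natural isomorphism of dR and dHodge} along $R/P(i)/A \to R/P(i)/P(i)$.

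The insight you are missing is that the isomorphism of Corollary~\ref{cor: natural isomorphism of dR and dHodge} is available at every cosimplicial level, which converts the whole comparison into a small naturality argument rather than a large diagram chase. Your reduction step would also need to justify why $f_1$ (built from the $\mathscr F_+$-extension and the PD calculus) is left-Kan-extended from $\calT_R^0$ in a way compatible with $f_2$; this is plausible since everything is built from $\cGamma^*$, $\cbigwedge^*$ and relative tensor products, but it is not automatic and you have not addressed it.
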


\begin{proof}
Denote $S^\bullet = \dHodge_{R/P(\bullet)}$. The isomorphism in Theorem \ref{theo: comp strat connection cohomologies} is given by the arrow as below:
\begin{equation}\label{eq: diagram comp strat connection cohomologies}
\xymatrix{
& \displaystyle \lim_\simp \DR (S^i, \dif^{(i)}) \ar[ld] \ar[rd] & \\
\DR(S^0, \dif) \ar@{-->}[rr] & & \displaystyle \lim_\simp S^i
}
\end{equation}
Consider the natural identification $\varphi_i: \DR( S^i, d^{(i)}) \cong \dHodge_{R/A}$ given by Corollary \ref{cor: natural isomorphism of dR and dHodge} applied to $R/P(i)/A$. It is easy to see that $\varphi_i$ is compatible with the $\mathscr F_+$-structure. Therefore, we have the commutative diagram as below:
\[\xymatrix{
\displaystyle \lim_\simp \DR (S^i, \dif^{(i)}) \ar[r] \ar[d] & \displaystyle \lim_\simp \dHodge_{R/A} \ar[d] \ar[rd] & \\
\DR(S^0, \dif) \ar[r] & \dHodge_{R/A} \ar[r] & \displaystyle \lim_\simp S^i
}\]
where $\dHodge_{R/A} \to S^i \cong \dHodge_{R/P(i)}$ is given by the natural map $A \to P(i)$. By functoriality of the map in Corollary \ref{cor: natural isomorphism of dR and dHodge} applied to $R/P(i)/A \to R/P(i)/P(i)$, the composition (as maps between graded objects)
\[ \DR( S^i, \dif^{(i)} ) \to \dHodge_{R/A} \to S^i \]
agrees with the map
\[ \DR ( S^i, \dif^{(i)}) \to \DR(S^i,0: S^i \to 0) = S^i. \]
It follows that the composition
\[ \lim_\simp \DR (S^i, \dif^{(i)}) \to \lim_\simp \dHodge_{R/A} \to \lim_\simp S^i \]
agrees with the map $\lim_\simp \DR (S^i, \dif^{(i)}) \to \lim_\simp S^i$ as in \eqref{eq: diagram comp strat connection cohomologies}. Therefore, $\DR( S^0,\dif) \to \lim_\simp S^i$ agrees with $\DR( S^0,\dif ) \to \dHodge_{R/A} \to \lim_\simp S^i$, as desired.
\end{proof}

\subsection{Prismatic envelopes and PD-envelopes}
\begin{lemma}\label{lem: cohomology of Esharp}
Let $S$ be a $p$-complete ring of bounded $p^\infty$-torsion and $E$ be a countably generated projective module over $W(S)$. Also, fix a limit cardinal $\kappa$ with cofinality $\lambda>\omega$.

\begin{enumerate}[label=(\arabic*)]
\item Consider the essentially small site $\calC$ of $\Spec R$, where $R$ is a $p$-nilpotent (i.e. $p^n=0$ for some $n$) $S$-algebra, and $\# R < \kappa$. Equipped $\calC$ with the fpqc topology, and denote $\calO$ its structure sheaf. Then, $\lambda$-small products in $\Shv_\calC (\Sptr)$ are $t$-exact.
\item Denote $W$ the functor of Witt vectors on $\calC$, then for any $\Spec R \in \calC$, we have $R\Gamma(R,W) = W(R)$.
\item Define the $\calO$-module scheme $\mathbb G_a^\sharp = \Spf(S [ x ]_\PD^\land)$, then the natural isomorphism $\mathbb G_a^\sharp \to \ker F: W \to F^*W$ induces a short exact sequence of $W$-modules in the fpqc topology:
\[ 0 \to \mathbb G_a^\sharp \to W \xrightarrow F F_* W \to 0. \]
\item Assume that $E \in \cD(S)$ and $E \otimes_S^\bL S/p$ is $\lambda$-small projective\footnote{projective and admits a generating set of cardinality $<\lambda$} over $S/p$, so $E$ determines a sheaf $\calE$ of projective $\calO$-modules\footnote{See \cite[\href{https://stacks.math.columbia.edu/tag/05CG}{Tag 05CG}]{stacks-project}.}. Then, $\calE$ is an $\calO$-module in $\Shv_\calC(\Sptr)$, and the functor $\underline{\RHom}(\calE,-)$ on the category of $\calO$-modules in $\Shv_\calC(\Sptr)$ is $t$-exact.

In particular, if we take $(E^\vee)^\sharp := \underline{\Hom}_\calO(\calE,\mathbb G_a^\sharp)$ then
\[ R\Gamma(R,(E^\vee)^\sharp) = \RHom_R(\calE(R), R\Gamma(R,\mathbb G_a^\sharp)). \]
\item The sheaf $(E^\vee)^\sharp$ above is represented by the $p$-complete and $p$-completely flat $S$-algebra $\cGamma^\bullet_S(E)$.
\end{enumerate}
\end{lemma}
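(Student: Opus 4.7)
The plan is to establish the five parts in order. For (1), products preserve $D^{\ge 0}$ automatically since they are limits, and $\tau^{\ge n}$ (being a right adjoint) commutes with limits. For $D^{\le 0}$, observe that $F \in D^{\le 0}$ iff $\tau^{\ge 1}F = 0$; since $\tau^{\ge 1}$ commutes with products, $\tau^{\ge 1}\prod_i F_i = \prod_i \tau^{\ge 1}F_i = 0$. For (2), write $W = \lim_n W_n$, where each $W_n$ is representable by the affine scheme $\Spec \mathbb{Z}[x_0, \ldots, x_{n-1}]$, so $R\Gamma(R, W_n) = W_n(R)$ by flat descent (affine schemes have vanishing higher fpqc cohomology). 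Since $R\Gamma$ commutes with limits, and the surjective transition maps $W_{n+1}(R) \twoheadrightarrow W_n(R)$ satisfy Mittag--Leffler, we conclude $R\Gamma(R, W) = W(R)$, concentrated in degree $0$.

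For (3), the kernel identification $\mathbb{G}_a^\sharp \cong \ker(F: W \to F_*W)$ is essentially classical, following from the ghost-component formula for Frobenius on Witt vectors combined with the universal property of the PD-polynomial ring $S[x]^\land_{\PD}$. The substantive content is fpqc-surjectivity of $F$. Given any ring $R \in \calC$ and a Witt vector $w = (w_0, w_1, \ldots) \in W(R)$, I would produce an fpqc cover $R \to R'$ (with $R'$ still in $\calC$ thanks to the cardinality constraint) by successively adjoining $p$-th roots of the Witt coordinates; each such extension is faithfully flat, and the polynomial formulas for Frobenius and Witt addition then allow one to reconstruct a genuine preimage $w' \in W(R')$ with $F(w') = w$. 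This is the main technical step of the lemma.

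For (4), the $\lambda$-smallness assumption on $E \otimes_S^{\bL} S/p$ ensures that $\calE$ is a retract of $\calO^{(I)}$ with $|I| < \lambda$, so $\underline{\RHom}(\calE, -)$ is a retract of $\underline{\RHom}(\calO^{(I)}, -) \cong \prod_I \id$, which is $t$-exact by (1). The global-sections formula $R\Gamma(R, (E^\vee)^\sharp) \cong \RHom_R(\calE(R), R\Gamma(R, \mathbb{G}_a^\sharp))$ then follows from the standard $\underline{\RHom}$-$\otimes$ adjunction, noting that $\calE(R)$ is projective over $R$. For (5), verify the finite free case directly: by the universal property of the completed divided-power polynomial ring, $\cGamma^\bullet_S(S^n)$ represents the functor $R \mapsto (\mathbb{G}_a^\sharp(R))^n = \underline{\Hom}_\calO(\calO^n, \mathbb{G}_a^\sharp)(R)$. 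For a general countably generated projective $E$, write $E$ as a filtered colimit of finite free pieces $E_i$; since $\cGamma^*$ commutes with colimits (cf.\ Section~\ref{sect: remarks on infty-cat}) and $\Spf$ turns completed colimits of rings into limits of formal schemes, we get $\Spf \cGamma^\bullet_S(E) \cong \lim_i \underline{\Hom}_\calO(\calE_i, \mathbb{G}_a^\sharp) \cong \underline{\Hom}_\calO(\calE, \mathbb{G}_a^\sharp) = (E^\vee)^\sharp$, as desired.

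The main obstacle I expect is in (3): identifying $\mathbb{G}_a^\sharp$ with $\ker F$ compatibly with the fpqc structure, and establishing fpqc-surjectivity of Frobenius in the mixed-characteristic setting where $S$ is only $p$-complete of bounded $p^\infty$-torsion (rather than a perfect $\mathbb{F}_p$-algebra). One must also verify the cover remains within the cardinality bound defining $\calC$. Once (3) is in place, the remaining parts are formal consequences of flat descent, adjunctions, and standard properties of completed divided-power polynomial algebras.
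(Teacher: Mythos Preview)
Your argument for (1) has a genuine gap. You claim that $\tau^{\ge 1}$ commutes with products in $\Shv_\calC(\Sptr)$, but this is false in general: $\tau^{\ge 1}$ is \emph{left} adjoint to the inclusion $\Shv^{\ge 1}\hookrightarrow\Shv$, so it commutes with colimits, not limits. What actually happens is that products of sheaves are computed sectionwise, and $\pi_{-n}$ of a product (sectionwise) is the product of the $\pi_{-n}$'s; the question is then whether a product of presheaves of abelian groups, each with trivial sheafification, again has trivial sheafification. This is \emph{not} automatic: to kill a section $(x_i)_i$ of the product you must find a single cover that kills every $x_i$ simultaneously. The paper produces such a cover by taking $R_* = \bigotimes_{i\in I} R_i$ over covers $R_i$ killing each $x_i$; the cofinality hypothesis $\lambda$ is precisely what guarantees $R_*$ remains in $\calC$ (it stays $p$-nilpotent, and $\#R_* < \kappa$ because $\kappa$ has cofinality $\lambda > \#I$). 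Your argument never invokes $\lambda$, which should have been a warning sign that something was missing.

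Your sketches for (2), (3), and (5) are essentially the paper's arguments. For (3) the paper also notes that one must iterate the ``adjoin $p$th roots'' step countably many times and take a colimit, using $\lambda>\omega$ to stay in $\calC$; you gesture at this but should make it explicit.

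For the displayed formula in (4), your ``standard adjunction'' is too quick. The point is that $\mathbb G_a^\sharp$ is a priori only a sheaf of \emph{sets} (equivalently, discrete spectra), and one must identify its spectral sheafification with $\mathscr F := \fib(F\colon W\to F_*W)$ using (2) and (3). The paper then shows that $\underline{\RHom}_\calO(\calE,\mathscr F)$ is connective (by the $t$-exactness you established) and hence is the spectral sheafification of $(E^\vee)^\sharp = \underline{\Hom}_\calO(\calE,\mathbb G_a^\sharp)$; only then does taking $R\Gamma$ give the desired formula. Without this step you have not explained why the underived $\underline{\Hom}$ computes the derived global sections.
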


\begin{proof}
As for (1), we only need to prove right $t$-exactness. By definition (see \cite[Sect. 1.3.2]{lurie2018spectral}), we only need to prove that for if $\mathscr F_i$ with $i \in I$ and $\# I < \lambda$ are presheaves of abelian groups with trivial sheafifications, then $\prod_{i \in I} \mathscr F_i$ has trivial sheafification. For any $(x_i) \in \prod \mathscr F_i(R)$, we may find affine fpqc covers $\Spec(R_i) \to \Spec(R)$ in $\calC$ for each $i$ such that $\left. x_i \right|_{R_i} =0$. Take
\[ R_* = \bigotimes_{i \in I} R_i, \]
so $R \to R_*$ is faithfully flat. Note that $p$ is nilpotent in $R$, so $p$ is also nilpotent in $R_*$. By definition, since $\kappa$ has cofinality $\lambda > \# I$, we have $\# R_* < \sum \# R_i < \kappa$, hence $\Spec(R_*) \in \calC$. It is easy to see that $\left. (x_i) \right|_{R_*} =0$. Therefore, every section of $\prod_{i \in I} \mathscr F_i$ is locally zero, this proves the claim.

For the item (2), observe that $\calO$ is a sheaf of spectra on $\calC$. Now the presheaf of spectra $W_n$ admits a finite filtration with graded pieces $\ker(W_n \to W_{n-1}) \cong F_*^{n-1} \calO$, and they are also sheaves of spectra. Therefore, $W_n$ is a sheaf of spectra, and $W_n \to W_{n-1}$ are surjective, so $W = \clim 0 = \lim W_n$ is also a sheaf of spectra.

The item (3) is nothing but \cite[Lemma 5.7]{bhatt2022prismatization}. A direct proof is seen as follows: for each $\Spec (R) \in \calC$ we may find a quasisyntomic cover $R \to R'$ with $\Spec(R')\in \calC$, such that $W(R)$ lies in the image of $F: W(R') \to W(R')$. Repeat this process and take the direct limit (using $\lambda > \omega$), we obtain a quasisyntomic cover $R \to S$ with $F: W(S) \to W(S)$ surjective.

As for (4), it is clear that $\calE$ is a sheaf of spectra. Since the $t$-structure can be checked locally, we only need to prove that $\underline{\RHom}(\calE,-)$ is left $t$-exact on $\calC_{/\Spec R}$. Denote $E = \calE(R)$, then $E$ is $\lambda$-small projective over $R$, so it is a retract of a $\lambda$-small free module $F$ over $R$. Now, we see that $\underline{\RHom}_{\calC_{/\Spec R}}(\calE,-) \cong \RHom_R(E,-)$ is a retract of $\RHom_R(F,-)$. By (1), $\RHom_R(F,-)$ is $t$-exact, so $\underline{\RHom}_{\calC_{/\Spec R}}(\calE,-)$ is also $t$-exact as its retraction.

For the second part of statement, denote $\mathscr F = \fib(F: W \to F_* W)$, so $\mathscr F$ is the sheafification of $\mathbb G_a^\sharp$ in $\Shv_\calC(\Sptr)$, hence $\mathscr F$ is connective. The natural map $\mathbb G_a^\sharp \to \mathscr F$ induces a natural isomorphism of sheaves of connective spectra
\[ \mathbb G_a^\sharp \to \tau^{\le 0} \mathscr F. \]
Now $\calE$ is connective. Take $\mathscr F' = \underline{\RHom}_{\calO} (\calE, \mathscr F)$, so by adjunction, the natural map
\[ \underline{\taulezRHom}_{\calO} (\calE, \mathbb G_a^\sharp) \to \tau^{\le 0} \mathscr F' \]
(recall that $\taulezRHom$ is the inner $\Hom$ in the category of connective spectra) is an isomorphism of sheaves of connective spectra. Observe that $\mathscr F'$ is connective since $\underline{\RHom}_{\calO} (\calE, -)$ is $t$-exact, and we conclude that the map above identifies $\mathscr F'$ as the sheafification of
\[ \underline{\taulezRHom}_{\calO} (\calE, \mathbb G_a^\sharp) \cong \underline{\Hom}_{\calO} (\calE, \mathbb G_a^\sharp). \]
In other words, the natural map
\[ R\Gamma(R,(E^\vee)^\sharp) \to R\Gamma(R,\mathscr F') = \RHom_R(\calE(R), R\Gamma(R,\mathbb G_a^\sharp)) \]
is an isomorphism.

As for (5), this is a direct calculation (one may resort to Lemma \ref{lem: p-complete module as classical completion}). A universal object in $(E^\vee)^\sharp (\cGamma_S^\bullet(E))$ is given by mapping each $x \in E$ to the sequence $(\gamma_i(x)) \in (E^\vee)^\sharp$.
\end{proof}

\begin{prop}\label{prop: trivial prismatization}
    Let $(A,I)$ be a bounded prism, and $S$ be a derived $(p,I)$-complete and $(p,I)$-completely flat $\delta$-algebra over $A$, such that $\overline S/p \otimes_{\overline S}^\bL \dL_{\overline S/\overline A}$ is \emph{projective} over $\overline S/p$. Denote $\overline S = S/IS$, then there exists a canonical isomorphism of filtered $\overline S$-algebras
    \[\dHodge_{\overline{S}/\overline{A}} \cong \overline\Prism_{\overline{S}/A}\]
    which fits into the diagram below:
    \[\xymatrix{
    \overline S \oplus \cdL_{\overline S/\overline A} \left[ -1 \right] \left\{ -1 \right\} \ar@{=}[r] \ar[d] & \cdL_{\overline S/A} \left[ -1 \right] \left\{ -1 \right\} \ar[d] \\
    \dHodge_{\overline S/\overline A} \ar[r] & \overline\Prism_{\overline S/A}
    }\]
    (the vertical arrow on the right-hand side is the isomorphism constructed in \cite[Proposition 4.15]{Bhatt_2022}).
\end{prop}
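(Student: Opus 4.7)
The plan is to combine Proposition~\ref{prop: HT comparison} with a canonical splitting of the cotangent complex triangle for $A \to \overline A \to \overline S$, and then extend a carefully-chosen section multiplicatively.

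First, I would observe that the hypothesis, together with derived Nakayama, forces $\cdL_{\overline S/\overline A}$ to be a $p$-completely flat $\overline S$-module concentrated in cohomological degree $0$. Consequently, the Hodge--Tate filtration from Proposition~\ref{prop: HT comparison} equips $\overline\Prism_{\overline S/A}$ with an exhaustive increasing filtration whose $k$th graded piece $\cbigwedge^k(\cdL_{\overline S/\overline A})[-k]\{-k\}$ lies in cohomological degree exactly $k$; in particular, the Hodge--Tate filtration coincides with the Postnikov tower. Next, using $\cdL_{\overline A/A} \cong \overline A\{1\}[1]$, the cotangent complex triangle for $A \to \overline A \to \overline S$ becomes, after shifting by $[-1]$ and twisting by $\{-1\}$, a fibre sequence
\[ \overline S \to \cdL_{\overline S/A}[-1]\{-1\} \to \cdL_{\overline S/\overline A}[-1]\{-1\}. \]
Since $\cdL_{\overline S/\overline A}[-1]\{-1\}$ sits in cohomological degree $1$, the connecting map to $\overline S[1]$ lives in $\Ext^{-2}_{\overline S}(\cdL_{\overline S/\overline A},\overline S\{1\}) = 0$, and the space of splittings is a torsor over $\Hom_{\overline S}(\cdL_{\overline S/\overline A}[-1]\{-1\},\overline S) = 0$. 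There is therefore a \emph{unique} canonical splitting $\cdL_{\overline S/A}[-1]\{-1\} \cong \overline S \oplus \cdL_{\overline S/\overline A}[-1]\{-1\}$, giving the horizontal equality at the top of the diagram together with a canonical section $s:\cdL_{\overline S/\overline A}[-1]\{-1\} \to \cdL_{\overline S/A}[-1]\{-1\}$.

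Composing $s$ with the natural map $\cdL_{\overline S/A}[-1]\{-1\} \to \Fil_1\overline\Prism_{\overline S/A}$ of \cite[Proposition 4.15]{Bhatt_2022} (which depends crucially on the $\delta$-structure making $(S,IS)$ a prism over $A$) yields a canonical section $\sigma_1:\cdL_{\overline S/\overline A}[-1]\{-1\} \to \Fil_1\overline\Prism_{\overline S/A}$ of the projection onto $\gr_1$, compatible with the Hodge--Tate comparison by construction. I would then use the $\dE_\infty$-multiplication on $\overline\Prism_{\overline S/A}$: the $n$-fold cup product of $\sigma_1$ lands in $\Fil_n$ and, by $\dE_\infty$-commutativity, is $\Sigma_n$-equivariant with respect to the Koszul-signed permutation action on its source. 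The $p$-complete flatness of $\cdL_{\overline S/\overline A}$ then identifies the corresponding derived coinvariants with $\cbigwedge^n(\cdL_{\overline S/\overline A})[-n]\{-n\}$, so that summing over $n$ produces a canonical filtered $\dE_\infty$-$\overline S$-algebra map $\varphi : \dHodge_{\overline S/\overline A} \to \overline\Prism_{\overline S/A}$.

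Finally, on each $\gr_k$ the map $\varphi$ is the $k$th derived symmetric power of the identity on $\gr_1$, which under the Hodge--Tate comparison of Proposition~\ref{prop: HT comparison} is the identity map on $\cbigwedge^k(\cdL_{\overline S/\overline A})[-k]\{-k\}$. Hence $\varphi$ is an isomorphism of graded pieces, so of filtered objects; commutativity of the diagram is immediate from the construction of $\sigma_1$. The hardest step will be the $\dE_\infty$-commutativity argument --- verifying that the iterated cup product of $\sigma_1$ factors through the correctly-identified completed wedge power requires careful bookkeeping with the shearing and Koszul conventions of Section~\ref{sect: remarks on infty-cat}, and one must be mindful of subtleties distinguishing derived symmetric powers from alternating coinvariants in positive characteristic. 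A robust fallback is to first construct $\varphi$ after $\cotimes_{\overline S}^\bL \overline S/p$, where $\overline\Prism_{\overline S/A}$ admits an explicit derived de~Rham-like description, and then lift the resulting map using the $p$-complete flatness of $\cdL_{\overline S/\overline A}$.
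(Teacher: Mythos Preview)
Your approach is genuinely different from the paper's and contains a real gap at the multiplicative extension step.

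The paper argues geometrically via prismatisation: it shows that the Hodge--Tate stack $X^{\HT}_A$ (for $X=\Spf\overline S$) is a $BT_X^\sharp$-torsor using derived deformation theory and Lemma~\ref{lem: cohomology of Esharp}, uses the $\delta$-structure on $S$ to produce a section trivialising it, and then identifies $R\Gamma(BT_X^\sharp,\calO)$ with $\dHodge_{\overline S/\overline A}$ via the split-PD cosimplicial formalism of Theorem~\ref{theo: comp strat connection cohomologies}. The filtered $\dE_\infty$-algebra structure comes for free from the stacky identification; compatibility with the Hodge--Tate comparison is checked by hand in degree~$1$, and uniqueness on $\Fil_1$ is handled by a naturality argument very close in spirit to your splitting argument.

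Your construction of the canonical section $\sigma_1$ is fine. The gap is in extending it: the $n$-fold cup product of $\sigma_1$ in an $\dE_\infty$-algebra factors through the \emph{homotopy} $\Sigma_n$-coinvariants $\bigl((\cdL_{\overline S/\overline A}[-1]\{-1\})^{\otimes n}\bigr)_{h\Sigma_n}$, i.e.\ through the degree-$n$ piece of the \emph{free $\dE_\infty$-algebra} on $\cdL_{\overline S/\overline A}[-1]\{-1\}$. But by the d\'ecalage of Section~\ref{sect: remarks on infty-cat}, $\dHodge_{\overline S/\overline A}$ is $\cGamma^*$ of its degree-$1$ part, and $\Gamma^*$ is not free in $\dE_\infty$-algebras; correspondingly, $((\cdL[-1])^{\otimes n})_{h\Sigma_n}$ does not agree with $\cbigwedge^n(\cdL)[-n]$ away from characteristic zero (already the free $\dE_\infty$-$\dF_p$-algebra on a degree-$1$ class carries Dyer--Lashof operations). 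So your procedure yields a map out of the free $\dE_\infty$-algebra, not out of $\dHodge$, and neither Koszul bookkeeping nor reduction mod~$p$ repairs this. To salvage your line of attack you would need a PD structure on $\ker(\overline\Prism_{\overline S/A}\to\overline S)$ compatible with $\sigma_1$, so as to invoke the universal property of $\cGamma^*$; the paper does exactly this in the surjective case (Theorem~\ref{theo: prism=pd derived construction} and Corollary~\ref{cor: Explicit PD structure}), but establishing that structure is precisely the nontrivial input you are bypassing.
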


\begin{proof}
    Assume that $\overline S/p \otimes_{\overline S}^\bL \dL_{\overline S/\overline A}$ is generated by a set of cardinality $\lambda$. Pick a cardinal $\kappa>\#S$ of cofinality $>\lambda$, and we apply Lemma \ref{lem: cohomology of Esharp}. Denote $\calC$ the category in Lemma \ref{lem: cohomology of Esharp}.

    We use the prismatisations introduced in \cite{bhatt2022prismatization}. Take $X=\Spf(\overline S)$ and \[X^\HT_{A}\to X\] be its Hodge--Tate stack relative to $(A,I)$. The $\delta$-structure of $S$ provides a section $X\to X_A^\HT$.

    We now repeat the arguments in \cite[Proposition 5.12]{bhatt2022prismatization} to show that $X_A^\HT$ is a $BT_X^\sharp$-torsor, where $T_X^\sharp := (\cOmega_X^\vee)^\sharp$ as in Lemma \ref{lem: cohomology of Esharp}. By derived deformation theory (see for example \cite[Section 5.1]{cesnavicius2023purityflatcohomology} for a discussion on derived deformation theory), for the square zero extension in $\calC$ (see \cite[Construction 5.10]{bhatt2022prismatization})
    \[ B\mathbb G_a^\sharp\{1\} \to \overline{W(R)} \to R \]
    the obstruction to lifting $\overline S \to R$ to $\overline S \to \overline{W(R)}$ is given by $\Ext^1_{\overline S}(\dL_{\overline S/\overline A}, B\mathbb G_a^\sharp (R)\{1\})$ and the groupoid of liftings is given by $\taulezRHom_{\overline S}(\dL_{\overline S/\overline A}, B\mathbb G_a^\sharp (R)\{1\})$. Denote $E' = R \otimes_{\overline S}^\bL \dL_{\overline S/\overline A}$, and we see that $E'$ is projective over $R$, so the obstruction vanishes, and the groupoid of liftings is
    \[ \taulezRHom_R(E', B\mathbb G_a^\sharp (R)\{1\}) = (\tau^{\le 1} \RHom_R(E', \tau^{\le 1} R\Gamma(R, \mathbb G_a^\sharp))) [1] \cong R\Gamma(R, T_X^\sharp)[1] = \Gamma(R, BT_X^\sharp), \]
    where the last two equalities follows from Lemma \ref{lem: cohomology of Esharp} (4).
    
    Now, the formal scheme map $X^\HT_A \to X$ admits a section given by $S$. By \cite[Theorem 7.20]{bhatt2022prismatization} (\cite[Proposition 7.22]{bhatt2022prismatization} guarantees that we can use the classical prismatisation, so we will not emphasise on $X_A^\HT$ being derived or classical) and \cite[Construction 7.6]{bhatt2022prismatization}, there exists isomorphisms
    \[R\Gamma(BT^\sharp_{X/\overline{A}},\calO)\cong R\Gamma(X^{HT}_A,\calO)\cong \overline\Prism_{\overline{S}/A}. \]
    The comparison map $R\Gamma(BT^\sharp_{X/\overline{A}},\calO) \cong \dHodge_{\overline S / \overline A}$ is provided by Theorem \ref{theo: comp strat connection cohomologies} applied to the graded split PD cosimplicial ring $\dHodge_{\overline S/\overline S(\bullet)}$ (in place of \cite[Lemma 7.8]{bhatt2022prismatization}). 
    
    We repeat the arguments in \cite[Remark 7.7]{bhatt2022prismatization} to show that the above map is compatible with the Hodge--Tate comparison. Since on each side, the $i$th graded piece is concentrated on degree $i$, the compatibility statement is purely a statement of cohomology rings. Therefore, we only need to show that they agree on the generating set $\cOmega_{\overline S/\overline A} \{ -1\}$ on degree 1. Argue locally, we may assume that $A$ is oriented with generator $d \in I$. Denote $E(\bullet) = \Prism_{\overline S/S(\bullet)}$, where $S(n) = S^{\cotimes(n+1)}$, and we identify $\overline\Prism_{\overline S/A}$ with $\overline{E(\bullet)}$ (since $E(n)$ is the $(n+1)$-fold product of $S$ over $X_A^\Prism$ by Corollary \ref{cor: animated universal property}). By definition, the map $\cOmega_{\overline S/\overline A} \{-1\} \to \overline{E(1)}$ is given by $d^{-1} \otimes \dif s \mapsto d^{-1} (\delta_0^1(s) - \delta_1^1(s))$ (where $s \in S$). Inspecting the proof of Theorem \ref{theo: comp strat connection cohomologies}\footnote{Recall that by Definition \ref{defi: graded split PD}, we have $d^{-1} \otimes \dif_0(s) = \varphi(d^{-1} \otimes \dif s)$ and $\dif^{(1)} (d^{-1} \otimes \dif_0 s) = \partial ( 1 \otimes (d^{-1} \otimes \dif s))$ by the definitions in Lemma \ref{lem: cosimplicial connection}.}, one sees that the map $\cOmega_{\overline S/\overline A} \{-1\} \to \dHodge_{\overline S/\overline {S(1)}}$ is given by $d^{-1} \dif s \mapsto d^{-1} \otimes \dif_0 s$, where $\dif_0: \overline S \to \cOmega_{S/A} \cong \cdL_{\overline S/\overline{S(1)}} \cong (J/J^2)^\land$ denotes the canonical map $s \mapsto \delta_0^1(s) - \delta_1^1(s)$ (where $J := \ker(\overline{S(1)} \to \overline S)$). Therefore, we only need to show that for each $s \in S$, the induced map $\dHodge_{\overline S/\overline{S(1)}} \to \overline{E(1)}$ sends $d^{-1} \otimes \dif_0 s$ to $d^{-1} (\delta_0^1(s) - \delta_1^1(s))$. This burns down to the following statement: for any $p$-nilpotent ring $R$ equipped with a map $\alpha: \overline{E(1)} \to R$, the induced path object $p: x \to x$ (where $x: \overline{W(S)} \to \overline{W(R)} \to R$ is an element in $X^\HT_A(R)$) gives a differential $\dif': \overline S \to \mathbb G_a^\sharp(R) \{ 1 \}$. Then, the image of $\dif'(s)$ under $\mathbb G_a^\sharp \to \mathbb G_a$ is $d \otimes \alpha(d^{-1}(\delta_0^1(s) - \delta_1^1(s)))$ for any $s \in S$. By derived deformation theory (this involves some concrete calculations, note that one can always adjust the signs of the deformation theory, so there would be no sign issues), the map $\dif'$ can be explicitly written down as
    \[ \dif'(s) = \beta\left(\frac{\delta_0^1(s) - \delta_1^1(s)} d \right) \in \ker(d: W(R) \to W(R)), \]
    where $\beta: E(1) \to W(R)$ is induced from $\alpha$. Now, the desired identity can be verified directly using the definitions in \cite[Construction 5.10]{bhatt2022prismatization} (note that the projection $W[F] \cong \mathbb G_a^\sharp \to \mathbb G_a$ is the same thing as taking the projection $W[F] \to W_1 \cong \mathbb G_a$).

    To show that we have the final commutative diagram, note that $\overline S \oplus \cdL_{\overline S/\overline A} \left[ -1 \right] \left\{ -1 \right\}$ is identified with $\Fil_1(\dHodge_{\overline S/\overline A})$, and $\cdL_{\overline S/A} \left[ -1 \right] \left\{ -1 \right\}$ is identified with $\Fil_1(\overline\Prism_{\overline S/A})$, so the isomorphism gives us a natural identification of the two functors as above. Consider the filtered object $F(S)$
    \[ 0 \subseteq \overline S \subseteq \overline S \oplus \cdL_{\overline S/\overline A} \left[ -1 \right] \left\{ -1 \right\}. \]
    We only need to show that $F(S)$ has a unique automorphism compatible with its filtration and Hodge--Tate comparison. Such an automorphism is always given by a compatible family of $\overline S$-linear maps
    \[ \eta_{S/A}: \cdL_{\overline S/\overline A} [-1]\{-1\} \to \overline S. \]
    
    Now view this as a functor in $(A \to S)$. We have natural commutative diagrams for each $A \to S$:
    \[\xymatrix{
    \cdL_{\overline S/\overline A} [-1]\{-1\} \ar[rr] \ar[rd]_{\eta_{S/A}}& & \cdL_{\overline S/\overline S} [-1]\{-1\} =0 \ar[ld]^{\eta_{S/S}} \\
    & \overline S
    }\]
    giving a natural isomorphism $\eta_{S/A} \cong 0$ (natural in $\eta$), as desired\footnote{This involves the following argument as in Corollary \ref{cor: animated universal property}: for any $\infty$-category $\calC$ and object $\eta_0 \in \calC$, if we are given an isomorphism $\id_\calC \cong \underline{\eta_0}$, then for $\eta_0: \{*\} \to \calC$ and the projection $p: \calC \to \{*\}$, we see that $\eta_0 \circ p = \id_{\{*\}}$ and $p \circ \eta_0 = \underline{\eta_0} \cong \id_\calC$, so $p$ is an equivalence of categories, hence $\calC$ is contractible.}.
\end{proof}

\begin{rmk}
    When $S$ is $(p,I)$-completely smooth (i.e. $S \otimes_A^\bL A/(p,I)$ is smooth over $A/(p,I)$), then the argument agrees with the isomorphism in \cite{Tian_2023} by our proof of Theorem \ref{theo: comp strat connection cohomologies}.

    Also, if we only assume that $\cdL_{\overline S/\overline A}$ is $p$-completely flat, then the conclusion still holds by the following corollary, but the deformation theory becomes harder to handle, so we will not discuss the details of this case.
\end{rmk}

\begin{cor}\label{cor: trivial prismatization}
Let $\left( A,I \right)$ be a bounded prism. Denote $\Alg^\Delta_{A,\delta,\cplt}$ be the $\infty$-category of derived $\left( p,I \right)$-complete animated $\delta$-algebras over $A$. Then, we have an isomorphism of functors $\Alg^\Delta_{A,\delta,\cplt} \to \cD \left( A \right)$ natural in $A$
\[ \dHodge_{(- \otimes^\bL_A \overline A)/\overline A} \to \overline\Prism_{(- \otimes^\bL_A \overline A)/A} \]
which is compatible with the Hodge--Tate comparison, fits into the diagram below
\[\xymatrix{
\overline{(-)} \oplus \cdL_{(- \otimes^\bL_A \overline A)/\overline A} \left[ -1 \right] \left\{ -1 \right\} \ar@{=}[r] \ar[d] & \cdL_{(- \otimes^\bL_A \overline A)/A} \left[ -1 \right] \left\{ -1 \right\} \ar[d] \\
\dHodge_{(- \otimes^\bL_A \overline A)/\overline A} \ar[r] & \overline\Prism_{(- \otimes^\bL_A \overline A)/A}
}\]
and agrees with that given in Proposition \ref{prop: trivial prismatization} when restricted to the subcategory of $S$ satisfying its conditions.
\end{cor}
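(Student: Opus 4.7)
The plan is to deduce the corollary from Proposition \ref{prop: trivial prismatization} by a left Kan extension argument. Let $\calP \subseteq \Alg^\Delta_{A,\delta,\cplt}$ denote the full subcategory spanned by derived $(p,I)$-completions $A\{x_1,\ldots,x_n\}^\wedge$ of finitely generated free $\delta$-algebras over $A$. For such an $S$, the underlying $A$-algebra is the derived $(p,I)$-completion of the polynomial $A$-algebra on the variables $\{x_i,\delta(x_i),\delta^2(x_i),\ldots\}$; in particular $S$ is $(p,I)$-completely flat over $A$, and $\cdL_{\overline S/\overline A}$ is the $p$-completion of a free $\overline S$-module, so $\overline S/p \otimes_{\overline S}^\bL \dL_{\overline S/\overline A}$ is free over $\overline S/p$. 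Hence Proposition \ref{prop: trivial prismatization} applies uniformly on $\calP$, yielding the required isomorphism together with the Hodge--Tate compatibility and the commutativity of the distinguished square.

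The next step is to verify that both functors in the statement preserve sifted colimits on $\Alg^\Delta_{A,\delta,\cplt}$. For the left-hand side, the base change $S \mapsto S \cotimes_A^\bL \overline A$ preserves arbitrary colimits, and by Construction \ref{cons: dHodge} the functor $\dHodge_{-/\overline A}$ on $\widehat\SCR_{\overline A/}$ preserves arbitrary colimits. For the right-hand side, derived prismatic cohomology \cite[Construction 7.6]{Bhatt_2022} is by definition left Kan extended from polynomial $\delta$-$A$-algebras, hence preserves sifted colimits of animated $\delta$-algebras; reduction modulo $I$ then preserves colimits in $\cD(A)$. Since $\calP$ generates $\Alg^\Delta_{A,\delta,\cplt}$ under sifted colimits by the standard description of animated categories, both target functors are uniquely determined by their restrictions to $\calP$ via left Kan extension.

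I would then left Kan extend the natural isomorphism on $\calP$ supplied by Proposition \ref{prop: trivial prismatization} to obtain the desired isomorphism of functors on $\Alg^\Delta_{A,\delta,\cplt}$. The commutative square in the statement involves the auxiliary functors $\cdL_{(-\cotimes_A^\bL \overline A)/\overline A}$ and $\cdL_{(-\cotimes_A^\bL \overline A)/A}$, both of which preserve sifted colimits (by the transitivity triangle together with the colimit-preserving property of $\cdL_{-/-}$); therefore the commutativity of the square extends from $\calP$ to all of $\Alg^\Delta_{A,\delta,\cplt}$. Compatibility with the Hodge--Tate comparison is handled the same way: the Hodge--Tate filtration on $\overline\Prism_{\overline S/A}$ and the natural grading on $\dHodge_{\overline S/\overline A}$ are both compatible with sifted colimits (Proposition \ref{prop: HT comparison}), so the agreement of graded pieces on $\calP$ propagates to the whole category.

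Finally, naturality in $(A,I)$ is inherited from base-change functoriality of all objects involved: Proposition \ref{prop: trivial prismatization} is itself natural in the base prism, and left Kan extension respects such naturality. The hard part of the argument has already been absorbed into Proposition \ref{prop: trivial prismatization}; in the present plan I expect only two substantive verifications — that $\calP$ meets the hypotheses of that proposition, and that both target functors preserve sifted colimits — and both are routine given the framework developed in the preceding sections.
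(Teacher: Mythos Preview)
Your proposal is correct and follows essentially the same left-Kan-extension strategy as the paper. The only notable difference is that the paper takes as its generating subcategory \emph{all} $S$ satisfying the hypotheses of Proposition~\ref{prop: trivial prismatization} (not just the finitely generated free $\delta$-algebras), which makes the final clause ``agrees with Proposition~\ref{prop: trivial prismatization} on its domain'' tautological and handles naturality in $A$ cleanly via a relative left Kan extension along a coCartesian fibration of total categories; with your smaller $\calP$ you should add one line verifying that the Kan-extended isomorphism restricts to the one of Proposition~\ref{prop: trivial prismatization} on the larger subcategory (this follows since both functors are left Kan extended from $\calP$, so natural transformations between them are determined on $\calP$).
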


\begin{proof}
Denote $\calA$ the category of $(p,I)$-completely $\delta$-algebras $S/A$ satisfying the conditions for $S$ in Proposition \ref{prop: trivial prismatization}. Now, since $\calA$ contains all free $\delta$-algebras, $\calA$ generates $\Alg^\Delta_{A,\delta,\cplt}$ under sifted colimits. Now observe that all functors involved commute with sifted colimits, so they are the left Kan extensions of their restrictions to $\calA.$ We define our isomorphism $\tau'$ as the left Kan extension of $\tau$ restricted to $\calA$.

Note that we used free $\delta$-algebras, so it is necessary to consider possibly infinitely generated algebras $S$ in Proposition \ref{prop: trivial prismatization}.

At last, we give a remark that the isomorphism functor is natural in $A$: we may consider the category $\calC_0$ of pairs $((A,I),S)$ where $(A,I)$ is a bounded prism and $S$ is a derived $(p,I)$-complete animated $\delta$-algebra over $A$ --- one may easily construct this category out of the category of morphisms $A \to S$ of animated $\delta$-algebras. Now consider the full subcategory $\calA'$ consisting of $S/A$ satisfying the conditions in Proposition \ref{prop: trivial prismatization}. Denote $\calC$ the category of bounded prisms, then it is easy to see that $\calC_0 \to \calC$ is a coCartesian fibration and $\calA' \subseteq \calC_0$ is closed under coCartesian arrows. Therefore we may take the $(\calC_0 \to \calC)$-relative left Kan extension to complete our construction (the target category is pairs $((A,I),X)$ where $(A,I) \in \calC$ and $X \in \CAlg(\Fil(\cD(A)))$ ).
\end{proof}

\begin{theo}\label{theo: prism=pd derived construction}
    Let $(A,I)$ be a bounded prism. Let $(R,IR)$ be a bounded prism over $(A,I)$ such that $A \to R$ is surjective, and $\overline R/\overline A$ satisfies Condition \ref{cond: tor -1}. Denote $J = \fib(A \to R) = \ker(A \to R)$ and $\overline J = J \otimes_A^\bL \overline A$. Then $\overline J = \fib(\overline A \to \overline R)$ is concentrated on degree 0, and there is a canonical isomorphism compatible with the Hodge--Tate comparison
    \[\overline\Prism_{\overline{R}/A} \cong \cGamma_{\overline R}^*\bigl((\overline J/\overline J^2)^\land \{-1\}\bigr),\]
    such that the map $\bigl(\overline J/\overline J^2\bigr)^\land \left\{ -1 \right\} \to \overline \Prism_{\overline R/A}$ is given by the twist of the reduction of
    \[ J \to I\Prism_{\overline R/A} \]
    (note that by the universal property of $\Prism_{\overline R/A},$ the image of $J$ lies in $I\Prism_{\overline R/A}$).

    By naturality, the map $\overline\Prism_{\overline R/A} \to \overline\Prism_{\overline R/R} = \overline R$ is identified with $\cGamma_{\overline R}^*\bigl((\overline J/\overline J^2)^\land \{-1\}\bigr) \to \overline R$ (sending the positive degree part to zero). Therefore, the positive degree part of $\cGamma_{\overline R}^*\bigl((\overline J/\overline J^2)^\land \{-1\}\bigr) \cong \overline\Prism_{\overline R/A}$ agrees with the reduction of the kernel of $\Prism_{\overline R/A} \to R.$
\end{theo}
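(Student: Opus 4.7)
The plan is to apply Corollary \ref{cor: trivial prismatization} to $R$ itself, viewed as a classical (and a fortiori animated) derived $(p,I)$-complete $\delta$-algebra over $A$; this is legitimate since $(R,IR)$ being a bounded prism over $A$ equips $R$ with the requisite $\delta$-structure. The first preparatory step is to verify that $R\otimes_A^\bL\overline A \cong \overline R$. Arguing locally, $I$ is generated by a distinguished element $d$, and since $(R,IR)$ is a prism $d$ remains a non-zero-divisor in $R$; hence $R\otimes_A^\bL\overline A \cong R\otimes_A\Kos(A;d)\cong\Kos(R;d)\cong R/dR=\overline R$. In particular $\overline A\to\overline R$ is a classical surjection, so $\overline J=\fib(\overline A\to\overline R)$ coincides with the classical kernel and is concentrated in degree zero.

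Invoking Corollary \ref{cor: trivial prismatization} now yields a canonical isomorphism $\dHodge_{\overline R/\overline A}\cong\overline\Prism_{\overline R/A}$ compatible with the Hodge--Tate comparison. To compute the left-hand side, note that $\overline R/\overline A$ satisfies Condition \ref{cond: tor -1}, so by Lemma \ref{lemma: completed conormal} the complex $\cdL_{\overline R/\overline A}[-1]$ is $p$-completely flat and identified with $(\overline J/\overline J^2)^\land$. Thus $\cdL_{\overline R/\overline A}\cong (\overline J/\overline J^2)^\land[1]$, and the natural identification $\cbigwedge^k(M[1])\cong\cGamma^k(M)[k]$ from the discussion of derived symmetric powers in Sect.~\ref{sect: remarks on infty-cat} gives
\[
 \cbigwedge\nolimits^k \cdL_{\overline R/\overline A}\cong \cGamma^k_{\overline R}\bigl((\overline J/\overline J^2)^\land\bigr)[k].
\]
Taking the direct sum with the Breuil--Kisin twists and cancelling the shifts, we obtain $\dHodge_{\overline R/\overline A}\cong\cGamma^*_{\overline R}\bigl((\overline J/\overline J^2)^\land\{-1\}\bigr)$ as graded $\overline R$-algebras; combining with the isomorphism to $\overline\Prism_{\overline R/A}$ produces the desired identification.

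It remains to identify the degree-one component of this map with the twist of the reduction of $J\to I\Prism_{\overline R/A}$. Under the compatibility diagram in Proposition \ref{prop: trivial prismatization}, the summand $\cdL_{\overline R/\overline A}[-1]\{-1\}\subseteq\Fil_1\dHodge_{\overline R/\overline A}$ is sent into $\Fil_1\overline\Prism_{\overline R/A}$ via the Hodge--Tate conormal map of \cite[Proposition 4.15]{Bhatt_2022}. To match this with the ideal-theoretic description, observe that the composition $J\hookrightarrow A\to\Prism_{\overline R/A}$ lands in $I\Prism_{\overline R/A}$ (since it vanishes after reducing to $\overline R$), and its reduction modulo $I\cdot I\Prism$ gives a map $\overline J\to I\otimes_A\overline\Prism\cong\overline\Prism\{1\}$ factoring through $(\overline J/\overline J^2)^\land$; equivalently a map $(\overline J/\overline J^2)^\land\{-1\}\to\overline\Prism_{\overline R/A}$. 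Checking that this coincides with the Hodge--Tate conormal map is the main technical obstacle: it requires unwinding the deformation-theoretic construction in Proposition \ref{prop: trivial prismatization} (where the splitting $X\to X_A^{\HT}$ determined by the $\delta$-structure is used to produce the isomorphism) and verifying that the path-object calculation there agrees with the ideal-theoretic reduction, much as in the final verification of the Hodge--Tate comparison at the end of the proof of Proposition \ref{prop: trivial prismatization}.

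Finally, the remarks in the last two sentences of the theorem follow by naturality of the construction applied to the morphism of prisms $(A\to R)\to(R\to R)$: the analogous ideal $J'$ for $R\to R$ is zero, so the map $\overline\Prism_{\overline R/A}\to\overline\Prism_{\overline R/R}=\overline R$ is identified under our isomorphism with the projection $\cGamma^*_{\overline R}\bigl((\overline J/\overline J^2)^\land\{-1\}\bigr)\to\overline R$ killing the positive-degree part. The statement about $\ker(\Prism_{\overline R/A}\to R)$ then follows by reducing modulo $I$.
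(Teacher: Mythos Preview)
Your approach is essentially the paper's: apply Corollary~\ref{cor: trivial prismatization} with $S=R$, identify $\dHodge_{\overline R/\overline A}$ with $\cGamma^*_{\overline R}\bigl((\overline J/\overline J^2)^\land\{-1\}\bigr)$ via Lemma~\ref{lemma: completed conormal}, and handle the last paragraph by naturality under $(A\to R)\to(R\to R)$.

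The one place you make your life harder is the degree-one verification. You correctly invoke the compatibility diagram to identify the map $(\overline J/\overline J^2)^\land\{-1\}\to\overline\Prism_{\overline R/A}$ with the Hodge--Tate conormal map of \cite[Proposition~4.15]{Bhatt_2022}, but then you propose to check agreement with the ideal-theoretic map by ``unwinding the deformation-theoretic construction in Proposition~\ref{prop: trivial prismatization}''. That detour is unnecessary: the deformation-theoretic work is already packaged into the diagram of Corollary~\ref{cor: trivial prismatization}. What remains is a purely formal check on conormal modules. The Bhatt--Scholze map is by definition the composite
\[
(\overline J/\overline J^2)^\land\{-1\}\cong \hol^0\bigl(\cdL_{\overline R/A}[-1]\{-1\}\bigr)\to \cdL_{\overline\Prism_{\overline R/A}/\Prism_{\overline R/A}}[-1]\{-1\}\cong\overline\Prism_{\overline R/A},
\]
the middle arrow induced by the prism map $A\to\Prism_{\overline R/A}$. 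Since the right-hand identification is $I\Prism/I^2\Prism\cong\overline\Prism\{1\}$ and the left-hand one sends $\overline j\mapsto \dif j$, tracing an element $j\in J$ through this composite lands on the class of its image in $I\Prism_{\overline R/A}/I^2\Prism_{\overline R/A}$, which is exactly your ideal-theoretic description. The paper dispatches this as ``formalities of conormal modules''; no return to the stacky picture is required.
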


\begin{proof}
    {\Res}trict the isomorphism of \ref{cor: trivial prismatization} to the category of all $R$ as above. By Lemma \ref{lemma: completed conormal}, we have an identification $\dHodge_{\overline R/\overline A} \cong \cGamma_{\overline R}^*\bigl((\overline J/\overline J^2)^\land \{-1\}\bigr)$. To verify that $J/J^2 \left\{ -1 \right\} \to \overline \Prism_{\overline R/A}$ is indeed the desired map, we check the definition in \cite[Proposition 4.15]{Bhatt_2022}. It is given by the composition
    \[ J/J^2 \left\{ -1 \right\} \cong \hol^0(\dL_{R/A} \left[ -1 \right] \left\{ -1 \right\}) \to \dL_{\overline \Prism_{\overline R/A} / \Prism_{\overline R/A}} \left[ -1 \right] \left\{ -1 \right\} \cong \overline \Prism_{\overline R/A}. \]
    Using the formalities of conormal modules, it is not hard to verify the map as above is indeed given by $J \to I\Prism_{\overline R/A}$.
\end{proof}

A similar observation has been discovered in \cite[Subsection 4.6]{Tian_2023}. We review (and generalize) it here and explain the relationship between these two constructions.

\begin{construction}\label{cons: prismatic envelope=PD explicit construction}
    Keep the notations in Theorem \ref{theo: prism=pd derived construction}.

    As the ring $A/J$ as well as the quotient $\delta$-structure defines a prism in $\bigl((\overline{A/J})/A\bigr)_\Prism$, hence there is a canonical (by the universal property of $\Prism_{\overline{R}/A}$) surjective morphism of prisms $\Prism_{\overline{R}/A}\to A/J$. Let $K$ be its kernel.

    In addition, we define a canonical homomorphism $\cGamma_{\overline R}^*\bigl((\overline J/\overline J^2)^\land \{-1\}\bigr) \to \overline\Prism_{\overline{R}/A}$ sending $\overline{J}/\overline{J}^2$ to $K/IK$ as follows. Let $f$ be the canonical map from $A$ to $\Prism_{\overline{R}/A}$. As the induced homomorphism $f:\overline{A}\to \overline\Prism_{\overline{R}/A}$ factors through the projection $\overline{A}\to \overline{R}$ by definition, $f$ sends $J$ to $I\Prism_{\overline{R}/A}$. On the other hand, the definition of $K$ implies that $f$ sends $J$ to $K$. Hence 
    \[f(J)\subset K\cap I\Prism_{\overline{R}/A}=IK,\]
    the last equality holds, since $R=\Prism_{\overline{R}/A}$ is $I$-torsion free. Thus, by tensoring with the projection $A\to \overline{R}$
    we get a $\overline{R}$-linear homomorphism
    \[g:\overline J/\overline J^2\to IK/I^2K=K/IK\{1\}.\]
    Define the canonical homomorphism from $\cGamma_{\overline R}^*\bigl((\overline J/\overline J^2)^\land \{-1\}\bigr)$ to $\overline\Prism_{\overline{R}/A}$ as the unique algebra homomorphism extending the $\overline{R}$-linear homomorphism $g\{-1\}$.
    
    We claim that there is a PD-struture on $K/IK$. It is enough to define $\gamma_p$.
    Let
    \[\gamma_p=-\frac{\delta}{(p-1)!}:K\to K.\]
    Obviously, $K$ is $I$-torsion free, so $K/IK$ is canonically embedded in $\overline\Prism_{\overline{R}/A}$.
\end{construction}

The key observation in \cite{Tian_2023} is the following lemma.

\begin{lemma}\label{lemma: tian_ideal}
    Let $A$ be a bounded prism such that $I=(d)$ for a distinguished element $d\in A$. Then in the $(p,I)$-completion of the $\delta$-polynomial ring $A\{x\}^\land$, for any integer $l\geq 0$, we have
    \[\phi(\delta^l(x))\in d A\{x\}^\land+\sum_{j\geq 1}\delta^j(dx)A\{x\}^\land.\]
\end{lemma}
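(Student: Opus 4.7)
The plan is to pass to the auxiliary $\delta$-ring $B := A\{x\}^\land / (dx)^\delta$, where $(dx)^\delta$ denotes the $\delta$-ideal generated by $dx$, equivalently the ideal generated by $\{\delta^n(dx) : n \geq 0\}$. Since $dx \in dA\{x\}^\land$, we have the equality of ideals $dA\{x\}^\land + (dx)^\delta = dA\{x\}^\land + \sum_{j \geq 1}\delta^j(dx)A\{x\}^\land$, so the lemma is equivalent to $\phi(\delta^l(x)) \in dB$ in $B$ for all $l \geq 0$. I will in fact prove the stronger claim: in $B$ one has
\[ \phi(\delta^l(x)) = \nu_l \cdot d^{p^{l+1}} \cdot \delta^{l+1}(x) \]
for some unit $\nu_l \in B^\times$, by induction on $l$.

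For the base case $l = 0$, in $B$ we have $\delta(dx) = 0$, and applying the Leibniz rule $\delta(dx) = d^p\delta(x) + x^p\delta(d) + p\delta(d)\delta(x)$ together with the distinguishedness of $d$ (so $\delta(d) \in A^\times$), one solves for $x^p$ to obtain $\phi(x) = x^p + p\delta(x) = -\delta(d)^{-1}d^p\delta(x)$ in $B$, so one may take $\nu_0 = -\delta(d)^{-1}$. For the inductive step, assuming $\phi(\delta^{l-1}(x)) = \nu_{l-1} d^{p^l}\delta^l(x)$ in $B$, I would apply $\phi \circ \delta = \delta \circ \phi$ and the Leibniz rule $\delta(\alpha\beta) = \alpha^p\delta(\beta) + \phi(\beta)\delta(\alpha)$ twice to expand $\phi(\delta^l(x)) = \delta(\nu_{l-1}d^{p^l}\delta^l(x))$, obtaining
\[ \phi(\delta^l(x)) = \nu_{l-1}^p d^{p^{l+1}}\delta^{l+1}(x) + \bigl[\nu_{l-1}^p\delta(d^{p^l}) + \delta(\nu_{l-1})\phi(d)^{p^l}\bigr]\phi(\delta^l(x)). \]
Rearranging yields $\Lambda_l \cdot \phi(\delta^l(x)) = \nu_{l-1}^p d^{p^{l+1}} \delta^{l+1}(x)$, where $\Lambda_l := 1 - \nu_{l-1}^p\delta(d^{p^l}) - \delta(\nu_{l-1})\phi(d)^{p^l}$.

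The main obstacle and the heart of the argument is to verify that $\Lambda_l$ is a unit in $B$ (equivalently in $A\{x\}^\land$). For this, I would show $\Lambda_l \equiv 1 \pmod{(p,d)}$ in $A\{x\}^\land$: from the expansion $\delta(d^{p^l}) = \sum_{i=1}^{p^l}\binom{p^l}{i}p^{i-1}d^{p(p^l-i)}\delta(d)^i$, each term lies in $(p,d)$, since for $i < p^l$ one has $d^{p(p^l-i)} \in (d)$, while for $i = p^l$ the term is $p^{p^l-1}\delta(d)^{p^l} \in (p)$ (using $p^l \geq p \geq 2$). Similarly $\phi(d)^{p^l} = (d^p + p\delta(d))^{p^l} \in (p,d)$ by binomial expansion. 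Since $A\{x\}^\land$ is $(p,d)$-adically complete, $\Lambda_l$ is therefore a unit, and setting $\nu_l := \Lambda_l^{-1}\nu_{l-1}^p$ closes the induction. Reducing modulo $d$ recovers the original statement.
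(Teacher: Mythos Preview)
Your argument is essentially the paper's, repackaged: both proceed by induction on $l$, use the Leibniz rule for $\delta$ together with $\phi\delta=\delta\phi$, and hinge on the observation that $\delta(d^{p^l})\in(p,d)$ for $l\geq 1$ so that a coefficient of the form $1-(\text{something in }(p,d))$ is a unit by completeness. Your passage to the quotient $B=A\{x\}^\land/(dx)^\delta$ is a clean reorganisation yielding the sharper formula $\phi(\delta^l(x))=\nu_l\,d^{p^{l+1}}\delta^{l+1}(x)$; the paper instead works directly in $A\{x\}^\land$ and tracks the auxiliary ideal $J_l=\sum_{j=1}^{l+1}d^{p^j}\delta^j(x)A\{x\}^\land+\sum_{j\geq 1}\delta^j(dx)A\{x\}^\land$.

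One small gap: you invoke the $(p,d)$-completeness of $A\{x\}^\land$ to conclude $\Lambda_l$ is a unit, but $\Lambda_l$ is defined in $B$, which is not obviously complete, and your parenthetical ``equivalently in $A\{x\}^\land$'' is unjustified since $\nu_{l-1}$ was only defined in $B$. The easy fix is to observe that $\nu_0=-\delta(d)^{-1}$ lies in $A$, and then inductively each $\nu_l$ and $\Lambda_l$ lie in $A$ (being built from $\nu_{l-1}$, $d$, and $\delta$ applied to these, all in the $\delta$-ring $A$); since $A$ itself is $(p,d)$-complete, $\Lambda_l\equiv 1\pmod{(p,d)A}$ is then a unit in $A$, hence in $B$.
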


\begin{proof}
    We prove by induction that for each $l\geq 0$,
    \[\phi(\delta^l(x))\in J_l:=\sum_{j=1}^{l+1}d^{p^j}\delta^j(x)A\{x\}^\land+\sum_{j\geq 1}\delta^j(dx)A\{x\}^\land.\]

    For $l=0$, as 
    \[\delta(dx)=\delta(d)\phi(x)+d^p\delta(x),\]
    the claim follows from the fact that $\delta(d)$ is invertible. Suppose that we have proved for $l\leq k$. Since for all $j\geq 0$,\[\delta(d^{p^j}\delta^j(x))=\delta(d^{p^j})\phi(\delta^j(x))+d^{p^{j+1}}\delta^{j+1}(x),\]
    by inductive hypothesis, for any $j\leq k$, $\delta(d^{p^j}\delta^j(x))\in J_k$. It follows that \[\phi(\delta^{k+1}(x))\in \delta(d^{p^{k+1}}\delta^{k+1}(x))A\{x\}^\land+J_k,\]
    and we can assume $\phi(\delta^{k+1}(x))=\delta(d^{p^{k+1}}\delta^{k+1}(x))f+g$ for some $f\in A\{x\}^\land$ and $g\in J_k$.
    
    Denote $u=\phi(\delta^{k}(x))$, then
    \[ \delta \left( u \right) = \phi ( \delta^{k+1} (x) ) = \delta(d^{p^{k+1}}\delta^{k+1}(x))f+g, \]
    and
    \[\delta(d^{p^{k+1}}\delta^{k+1}(x))=\delta(d^{p^{k+1}})\phi(\delta^{k+1}(x))+d^{p^{k+2}}\delta^{k+2}(x)=\delta(d^{p^{k+1}})\delta(u)+d^{p^{k+2}}\delta^{k+2}(x).\]
    This implies that
    \[\delta(u)(1-\delta(d^{p^{k+1}})f) = \delta(u) - \delta (d^{p^{k+1}} \delta^{k+1} (x)) f + fd^{p^{k+2}} \delta^{k+2} (x) = g+ fd^{p^{k+2}} \delta^{k+2} (x) \in J_{k+1}.\] 
    Observe that
    \[ \delta(d^{p^{k+1}}) = \sum_{i=1}^{p^{k+1}} p^{i-1}\binom{p^{k+1}}i d^{p(p^{k+1}-i)} \delta \left( d \right)^i \in pA\{\frac{x_i}{d}\}^\land+dA\{\frac{x_i}{d}\}^\land,\]
    so $(1-\delta(d^{p^{k+1}})f)$ is invertible in $A\{x\}^\land$, and $\phi (\delta^{k+1} (x)) = \delta (u) \in J_{k+1},$ as desired.
\end{proof}

\begin{cor}\label{cor: Explicit PD structure}
    Keep the notations in Construction \ref{cons: prismatic envelope=PD explicit construction}. The map $\gamma_p:K
    \to K$ descends to a map $\overline{\gamma}_p:K/IK\to K/IK$. Moreover, $\overline{\gamma}_p$ defines a PD-structure on $K/IK$.
\end{cor}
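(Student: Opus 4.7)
My proof plan proceeds in three stages: first I would establish the key auxiliary fact $\phi(K)\subseteq IK$; second, use this to verify that $\gamma_p$ descends to $K/IK$; third, verify the PD axioms. The main obstacle is the first stage, without which neither the descent nor the PD identity $p!\,\overline{\gamma}_p(x)=x^p$ would hold.

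Stage 1 (Key inclusion $\phi(K)\subseteq IK$). Since $R$ is a prism, the surjection $A\to R$ is a morphism of $\delta$-rings, so $J:=\ker(A\to R)$ is a $\delta$-ideal of $A$. By Construction \ref{cons: prismatic envelope=PD explicit construction} the image of any $y\in J$ lies in $IK$, and by $I$-torsion-freeness there is a unique $z_y\in K$ with $dz_y$ equal to the image of $y$; by the universal property of the prismatic envelope, $K$ is generated as a closed $\delta$-ideal by $\{z_y:y\in J\}$. For each such $z_y$ and each $l\ge 0$, Lemma \ref{lemma: tian_ideal} applied via the $\delta$-algebra map $A\{x\}^\land\to\Prism_{\overline R/A}$, $x\mapsto z_y$, yields
\[
\phi(\delta^l(z_y))\in d\Prism_{\overline R/A} + \sum_{j\ge 1}\delta^j(dz_y)\Prism_{\overline R/A} = d\Prism_{\overline R/A} + \sum_{j\ge 1}\delta^j(y)\Prism_{\overline R/A}.
\]
Since $J$ is a $\delta$-ideal, $\delta^j(y)\in J$ for $j\ge 1$, so $\delta^j(y)\Prism_{\overline R/A}\subseteq IK$; hence $\phi(\delta^l(z_y))\in d\Prism_{\overline R/A}+IK$. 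On the other hand $\phi(\delta^l(z_y))\in K$ (as $K$ is $\phi$-stable, being the kernel of a $\delta$-ring map), and $I$-torsion-freeness gives $K\cap d\Prism_{\overline R/A}=dK$, so $\phi(\delta^l(z_y))\in dK+IK=IK$. Extending by $\phi$-multiplicativity, additivity, and continuity yields $\phi(K)\subseteq IK$.

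Stage 2 (Descent). For $x\in K$ and $w=dv\in IK$ with $v\in K$, the non-additivity formula for $\delta$ gives
\[
\delta(x+w)-\delta(x) = \delta(w) - \sum_{i=1}^{p-1}\frac{(p-1)!}{i!(p-i)!}\,x^i w^{p-i}.
\]
Each summand lies in $IK$ since $x^i\in K$ and $w^{p-i}\in d^{p-i}K\subseteq IK$, while $\delta(w)=\delta(d)\phi(v)+d^p\delta(v)$ lies in $IK$ by Stage 1 (controlling $\phi(v)$) together with $d^p\in I$. Since $(p-1)!$ is a unit in the $p$-complete ring $\Prism_{\overline R/A}$, dividing by $-(p-1)!$ shows that $\overline{\gamma}_p:K/IK\to K/IK$ is well-defined.

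Stage 3 (PD axioms). For $1\le n<p$ the operations $\gamma_n(x):=x^n/n!$ are well-defined on $K/IK$. The crucial identity $p!\,\overline{\gamma}_p(x)\equiv x^p\pmod{IK}$ follows immediately from $\phi(x)=x^p+p\delta(x)\in IK$, yielding $x^p\equiv-p\delta(x)=p!\,\overline{\gamma}_p(x)\pmod{IK}$. The multiplicativity $\overline{\gamma}_p(ax)=a^p\overline{\gamma}_p(x)$ follows from the Leibniz formula $\delta(ax)=a^p\delta(x)+x^p\delta(a)+p\delta(a)\delta(x)$ combined with $x^p\equiv -p\delta(x)\pmod{IK}$, which makes the two extraneous terms cancel. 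The additivity axiom $\overline{\gamma}_p(x+y)=\sum_{i+j=p}\gamma_i(x)\gamma_j(y)$ drops out of the $\delta(x+y)$ expansion used in Stage 2. Finally, to extend $\overline{\gamma}_p$ to a full PD structure (with higher divided powers $\gamma_n$ for $n>p$ and their coherences), I would invoke Theorem \ref{theo: prism=pd derived construction}: the isomorphism $\overline\Prism_{\overline R/A}\cong\cGamma^*_{\overline R}((\overline J/\overline J^2)^\land\{-1\})$ identifies $K/IK$ with the divided power ideal of a standard PD algebra, endowing it with a canonical PD structure whose degree-$p$ operation agrees with $\overline{\gamma}_p$ by the axiom checks above.
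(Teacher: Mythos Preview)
Your argument follows the paper's approach closely: both establish $\phi(K)\subseteq IK$ by applying Lemma~\ref{lemma: tian_ideal} to generators $\delta^l(z_y)$ of $K$ (the paper writes these as $\delta^k(a/d)$ for $a\in J$), then verify descent and the basic identities (additivity, multiplicativity, $p!\,\overline{\gamma}_p(x)\equiv x^p$). One small omission: you use a generator $d$ of $I$ throughout without first reducing to the orientable case by faithfully flat base change, as the paper does explicitly at the start of its proof.

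Your closing appeal to Theorem~\ref{theo: prism=pd derived construction} to supply the higher divided powers is extraneous, and its justification is off: the axiom checks you perform do not show that the $\gamma_p$ coming from the $\cGamma^*$-isomorphism agrees with your $\overline{\gamma}_p$ --- that comparison is precisely the content of the \emph{next} theorem in the paper, not a consequence of the identities you verified. The paper's own proof simply stops after checking the three identities and descent, tacitly relying on the assertion in Construction~\ref{cons: prismatic envelope=PD explicit construction} that ``it is enough to define $\gamma_p$.'' So your argument up through the axiom checks matches the paper's and is as complete as it; you should drop the final paragraph rather than try to close a gap the paper itself leaves open.
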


\begin{proof}

    Using a faithfully flat base change, we may assume that $A$ is an orientable prism, and $I = \left( d \right).$ Using the universal property of $\overline \Prism_{\overline R/A},$ it is easy to check that $K$ is generated (as an ideal) by $\delta^k \left( a/d \right)$ for $k \ge 0$ and $a \in J.$
    
    We first show that $\phi \left( K \right) \subseteq dK.$ As $K$ is a $\delta$-ideal, we have $\phi \left( K \right) \subseteq K$, so we only need to prove that $\phi \left( K \right) \subseteq d \overline \Prism_{\overline R/A}.$ For this, we only need to check on a set of generators of $K$. By Lemma \ref{lemma: tian_ideal}, we see that
    \[ \phi \bigl( \delta^l ( a/d ) \bigr) \in d \overline \Prism_{\overline R/A} + \sum_{j \ge 1} \delta^j \left( a \right) \overline \Prism_{\overline R/A}. \]
    Now, $\delta^j \left( a \right)$ lies in $J \subseteq d\overline \Prism_{\overline R/A}$, as desired.
    
    Now, we have
    \[ \gamma_p \left( x+y \right) - \gamma_p \left( x \right) - \gamma_p \left( y \right) = \frac{\delta\left( x \right) + \delta \left( y \right) - \delta \left( x+y \right)}{\left( p-1 \right)!} = \frac{\left( x+y \right)^p-x^p-y^p}{p!}, \]
    which is exactly
    \[ \sum_{k=1}^{p-1} \frac{x^k y^{p-k}}{k! \left( p-k \right)!}.\]
    Also, for $a \in \overline \Prism_{\overline R/A}$ and $x \in K,$ we have
    \[ \gamma_p \left( ax \right) = -\frac{\delta \left( ax \right)}{\left( p-1 \right)!} = -\frac{\delta \left( a \right) \phi \left( x \right) + \delta \left( x \right) a^p}{\left( p-1 \right)!} \]
    and
    \[ x^p - p!\gamma_p \left( x \right) = x^p + p\delta \left( x \right) = \phi \left( x \right). \]
    Since $\phi \left( x \right) \in dK,$ we see that $x^p-p!\gamma_p \left( x \right) \in dK$ and $\gamma_p \left( ax \right) \equiv a^p \gamma_p \left( x \right) \pmod{dK}.$ It remains to prove that $\gamma_p$ is well-defined after modulo $dK.$ By the identity of addition, we only need to prove that $\gamma_p \left( dK \right) \subseteq dK,$ which is a direct consequence of the identity of multiplication.
\end{proof}

\begin{theo}
    Keep the notations, the homomorphism
    \[\cGamma_{\overline R}^*\bigl((\overline J/\overline J^2)^\land \{-1\}\bigr) \to \overline\Prism_{\overline{R}/A}\]
    is equal to the homomorphism in Theorem \ref{theo: prism=pd derived construction}.
\end{theo}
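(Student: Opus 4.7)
The plan is to use the universal property of $\cGamma^*_{\overline R}\bigl((\overline J/\overline J^2)^\land\{-1\}\bigr)$ as the free derived $p$-complete divided power $\overline R$-algebra: a PD $\overline R$-algebra homomorphism from it to any $p$-complete PD $\overline R$-algebra $(S,K_0,\gamma)$ is determined by an $\overline R$-linear map of the generating module into $K_0$. First I would verify that both maps agree when restricted to the generating module $(\overline J/\overline J^2)^\land\{-1\}$. By Theorem \ref{theo: prism=pd derived construction}, the first map sends the class of $\bar a$ (for $a\in J$) to the twist of the image of $a$ under $J\to I\Prism_{\overline R/A}$. By Construction \ref{cons: prismatic envelope=PD explicit construction}, the map $g$ is literally defined by the same formula, sending $\bar a$ to the class of $f(a)\in IK/I^2K$ after twisting. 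Hence both homomorphisms have the same restriction to the generating module.

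The remaining task is to see that both extend to the same map on divided powers. The explicit construction is, by definition, a PD map when $K/IK$ is equipped with the PD structure of Corollary \ref{cor: Explicit PD structure}, namely $\gamma_p=-\delta/(p-1)!$. On the other hand, the isomorphism of Theorem \ref{theo: prism=pd derived construction} transports the standard PD structure on the augmentation ideal of $\cGamma^*_{\overline R}\bigl((\overline J/\overline J^2)^\land\{-1\}\bigr)$ onto the positive-degree part of $\overline\Prism_{\overline R/A}$. If these two PD structures on $K/IK$ coincide, then both maps become PD maps out of the same free PD algebra with the same restriction to the generating module, and the universal property forces them to coincide.

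The crux is therefore to identify these two PD structures on $K/IK$. My approach is to reduce to a universal case via Corollary \ref{cor: trivial prismatization}: both constructions are functorial and compatible with sifted colimits in $(A\to R)$, so one may reduce to the situation where $A$ is the $(p,I)$-completion of a free $\delta$-algebra over a base prism and $J$ is generated by a single element $a$, with $R=A/(a)$. In this situation both sides can be written down explicitly, and the computation is essentially Tian's in \cite[Subsection 4.6]{Tian_2023}: Lemma \ref{lemma: tian_ideal} and the formulas appearing in the proof of Corollary \ref{cor: Explicit PD structure} supply the needed combinatorial identities relating $\delta$-operations on $K$ to divided powers modulo $IK$, and a direct comparison shows the two $\gamma_p$'s agree on the generator $a/d\bmod IK$.

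I expect the main obstacle to be the bookkeeping in this reduction and comparison. Care is required to propagate the identification of PD structures along the sifted colimits used in passing from single-generator to arbitrary $J$ (since divided power structures are not preserved under all colimits of rings, but only under colimits in the category of PD rings), and to maintain compatibility with the twist $\{-1\}$ and the $I$-adic filtration throughout. Once the single-generator case is settled by Tian-style calculation, however, functoriality of both constructions in $(A\to R)$ and the fact that $\cGamma^*$ commutes with sifted colimits should close the argument.
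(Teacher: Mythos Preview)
Your identification of the crux---that both maps agree on the generating module $(\overline J/\overline J^2)^\land\{-1\}$ by construction, and that the remaining content is to match their extensions to divided powers---is correct. But your proposed reduction has a gap, and the paper takes a different route that sidesteps the comparison of PD structures entirely.

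The gap is in the reduction step. You want to reduce via sifted colimits to the case where $J$ is principal. But the relevant category (bounded prisms $A$ with a surjection to a bounded prism $R$ such that $\overline R/\overline A$ satisfies Condition~\ref{cond: tor -1}) is not generated under sifted colimits by single-generator cases in any evident way. The left Kan extension in Corollary~\ref{cor: trivial prismatization} is from the subcategory where $\cdL_{\overline S/\overline A}$ is projective modulo $p$; this says nothing about the number of generators of $J$. You correctly flag that propagating PD structures along such colimits is delicate, but the more basic problem is that the colimit reduction you describe does not exist.

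The paper's argument avoids comparing PD structures. First it reduces via a \emph{surjection}, not a colimit: for $P=A\{x_\lambda\}^\land\to A$ the free $\delta$-polynomial presentation, functoriality makes both maps on $\overline\Prism_{\overline R/A}$ factor through the corresponding maps on $\overline\Prism_{\overline A/P}$, and one checks via Hodge--Tate comparison that $\overline\Prism_{\overline A/P}\to\overline\Prism_{\overline R/A}$ is surjective. So it suffices to treat $(P,A)$. Second, by functoriality in the base prism one reduces to the universal oriented prism $A=\dZ_p\{t,\delta(t)^{-1}\}^\land$, which is transversal. Now both $\cGamma^*_{\overline A}\bigl((\overline J_0/\overline J_0^2)^\land\{-1\}\bigr)$ and $\overline\Prism_{\overline A/P}$ are $p$-torsion-free, and the key point is that in a $p$-torsion-free target any $\overline A$-algebra map out of $\widehat{\Sym}^*_{\overline A}\bigl((\overline J_0/\overline J_0^2)^\land\{-1\}\bigr)$ admits \emph{at most one} extension to $\cGamma^*_{\overline A}\bigl((\overline J_0/\overline J_0^2)^\land\{-1\}\bigr)$, since $\gamma_n(x)=x^n/n!$ is forced. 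Agreement on degree~$1$---already built into the last sentence of Theorem~\ref{theo: prism=pd derived construction}---then finishes the proof, with no explicit PD computation required.
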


\begin{proof}
    Let $P=A\{x_\lambda:\lambda\in\Lambda\}^\land$ be a completed (here the derived completion agrees with the classical completion by Proposition \ref{prop: (p,I)-completely flat}) $\delta$-polynomial ring over $A$ and view $A$ as a $\delta$-$P$-algebra via the $\delta$-homomorphism \[P\to A:\ x_\lambda\mapsto 0.\]
    Define $\overline{P}=P/IP$ and $J_0\subset P$ as the kernel of $P\to A$. Let $\overline{J_0}$ be the kernel of the induced map $\overline{P} \to \overline{A}$. Consider the following diagram of $\delta$-rings
    \[\begin{tikzcd}[column sep=large]
        P \arrow[r,"x_\lambda\mapsto 0"]\ar[d,"f"] & A\ar[d]\\ A\ar[r] &R=A/J
    \end{tikzcd}.\]
    Taking prismatic envelopes of rows, there is a homomorphism of $\delta$-rings
    \[\Prism_{\overline{A}/P}\to \Prism_{\overline R/A}.\] We claim that this maps is surjective. Since derived completion is right $t$-exact, and for $M \in \cD(A)$, $M \in D^{\le 0}$ if and only if $M \otimes_A^\bL \overline A \in D^{\le 0}$ (because $M = \lim M \otimes_A^\bL A/I^n$ and $M \otimes_A^\bL A/I^n$ admits a filtration with graded pieces $M \otimes_A^\bL \overline A \{i\}$), it suffices to show that the induced homomorphism $\Fil_n \overline \Prism_{\overline{A}/P}\to \Fil_n \overline\Prism_{\overline{R}/A}$ is surjective for all $n\geq 0$. By an induction on $n$, we only need to check the surjectivity of graded pieces. By Hodge--Tate comparison (Proposition \ref{prop: HT comparison}), this is equivalent to check that the natural homomorphism
    \[ \cbigwedge^n \cdL_{\overline{A}/\overline{P}}[-n] \to \cbigwedge^n \cdL_{\overline{R}/\overline{A}}[-n]\]
    is surjective, which is obvious by Lemma \ref{lemma: completed conormal}.
    
    Therefore, we only need to prove that the two homomorphisms
    \[\cGamma_{\overline P}^*\bigl((\overline J_0/\overline J_0^2)^\land \{-1\}\bigr) \to \overline\Prism_{\overline{A}/P}\]
    coincide.

    We may check this Zariski locally, hence assuming that $I$ is generated by a distinguished element $d$. Hence, we have a $\delta$-homomorphism
    \[\dZ_p\bigl\{t,\frac{1}{\delta(t)}\bigr\}^\land\to A\]
    sending $t$ to the element $d$ in $A$. As both homomorphisms from $\cGamma_{\overline P}^*\bigl((\overline J_0/\overline J_0^2)^\land \{-1\}\bigr)$ to $\overline\Prism_{\overline{A}/P}$ are functorial in $A$ and $P$, we only need to deal with the universal case $A=\dZ_p\{t,\frac{1}{\delta(t)}\}^\land$. In particular, we may assume that $A$ is transversal.

    Assuming the transversality of $A$, as both $\cGamma_{\overline P}^*\bigl((\overline J_0/\overline J_0^2)^\land \{-1\}\bigr)$ and $\overline \Prism_{\overline{R}/A}$ are $p$-complete and $p$-completely flat over $\overline{A}$, they are both $p$-torsion free. Hence, for any homomorphism $\psi:\widehat{\Sym}_{\overline P}^*\bigl((\overline J_0/\overline J_0^2)^\land \{-1\}\bigr)\to \overline\Prism_{\overline{A}/P}$, it admits at most one extension to $\cGamma_{\overline P}^*\bigl((\overline J_0/\overline J_0^2)^\land \{-1\}\bigr)$. Thus, we only need to check that two homomorphisms coincide on $\overline J_0/\overline{J}_0^2\{-1\}$, which is exactly the last sentence of Theorem \ref{theo: prism=pd derived construction}.
\end{proof}

\subsection{De Rham Realization}
In this section we state and prove Theorem \ref{theo: main dR realization}. For this we first introduce some notations.

\begin{notn}\label{notn: Strat and MIC}
Let $R$ be a classically $p$-adically complete ring with bounded $p^\infty$-torsion, and $S^\bullet$ be split PD over $R$ (see Condition \ref{cond: split pd}). Then, we will use
\[ \Strat \left( S^\bullet \right), \Strat \bigl( S^\bullet \bigl[\frac 1 p \bigr] \bigr) \]
to denote categories of finite projective stratifications over $S^\bullet$ and $S^\bullet \bigl[ \frac 1 p \bigr],$ and use
\[ \MIC^\tn \left( S^0,\dif \right), \MIC^\tn \bigl( S^0 \bigl[ \frac 1 p \bigr], \dif \bigr) \]
to denote the categories of finite projective modules over $S^0$ and $S^0 \bigl[ \frac 1 p \bigr]$ equipped with a topologically nilpotent (continuous) integrable connection (see Definition \ref{defi: tn}) over $\left( S^0,\dif \right)$ (where $\dif$ is the differential operator defined in Proposition \ref{prop: strat to connection}). In the case $S^0=R$, we will instead use the following notations for $\MIC^\tn \left( S^0,\dif \right)$ and $\MIC^\tn \bigl( S^0 \bigl[ \frac 1 p \bigr], \dif \bigr)$:
\[ \Higgs^\tn(R, \Omega), \quad \Higgs^\tn\bigl(R\bigl[ \frac 1 p \bigr],\Omega). \]
Here we do not write $\Omega \bigl[ \frac 1 p \bigr]$, because topology nilpotency really depends on the integral structure. Either by combining Theorem \ref{theo: comp strat connection} and Theorem \ref{theo: split PD vs derivation}, or by a calculation by hand, one sees that $\Strat$ is functorial in $S^\bullet$, and $\MIC^\tn$ is functorial in $(S^0,\dif)$ (these two functorialities are the same thing by Theorem \ref{theo: split PD vs derivation}).
\end{notn}

In this section, we will always assume that $A$ is a bounded prism, $S/A$ satisfies Condition \ref{cond: flat + tor 0}, and $R/\overline S$ satisfies Condition \ref{cond: tor -1}.

\begin{lemma}\label{lemma: E(n) split PD}
Let $(S,I)$ be any bounded prism. Define $S \left( n \right) = S^{\cotimes (n+1)}$ and $E \left( n \right) = \Prism_{\overline S / S \left( n \right)}$ (hence $E \left( 0 \right) = S$). Then, the cosimplicial ring $\overline{E \left( n \right)}$, equipped with the Hodge--Tate filtration, is filtered split PD (see Definition \ref{defi: filtered split PD}) over $\overline S,$ with $\Omega = \cOmega_{\overline S/\overline A} \left\{ -1 \right\},$ and $\varphi: \Omega \to \ker \bigl( \overline{E \left( 1 \right)} \to \overline{E \left( 0 \right)} \bigr)$ is given by $\Omega \cong (\overline J/\overline J^2) \left\{ -1 \right\} \to \overline\Prism_{\overline S/S \left( 1 \right)}$ as in Theorem \ref{theo: prism=pd derived construction} (where $J = \ker \left( S \left( 1 \right) \to S \right)$). Moreover, $\gr_* (S(\bullet))$ is identified with $\dHodge_{\overline S/\overline S(\bullet)}$ (see as in Definition \ref{defi: graded split PD}).
\end{lemma}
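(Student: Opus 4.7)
The plan is to apply Corollary \ref{cor: trivial prismatization} levelwise and then carefully identify the resulting filtered PD structure. First I would verify that Condition \ref{cond: tor -1} is satisfied for $\overline S/\overline{S(n)}$. For this, I consider the transitivity triangle
\[ S \cotimes^\bL_{S(n)} \cdL_{S(n)/A} \to \cdL_{S/A} \to \cdL_{S/S(n)}. \]
Since $S/A$ satisfies Condition \ref{cond: flat + tor 0}, we have $\cdL_{S(n)/A} \cotimes_{S(n)} S \cong \cdL_{S/A}^{\oplus(n+1)}$ (one contribution from each of the $n+1$ tensor slots), and the map to $\cdL_{S/A}$ is the sum map, which is split surjective (a splitting is given by any face map $q_i^n$). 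Thus $\cdL_{S/S(n)}[-1] \cong \cdL_{S/A}^{\oplus n}$ is $p$-completely flat, and the same holds for $\cdL_{\overline S/\overline{S(n)}}[-1] \cong \cOmega_{\overline S/\overline A}^{\oplus n}$ after reducing mod $I$. By Corollary \ref{cor: trivial prismatization}, we obtain a natural filtered isomorphism
\[ \overline{E(n)} = \overline\Prism_{\overline S/S(n)} \cong \dHodge_{\overline S/\overline{S(n)}} \cong \cGamma_{\overline S}^*(\Omega^{\oplus n}), \]
with $\Omega = \cOmega_{\overline S/\overline A}\{-1\}$, which takes care of the final graded identification.

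Next, I will unpack the conditions of Definition \ref{defi: filtered split PD}. Properties (a) (flatness of graded pieces), (b) (cosimplicial/PD compatibility) and (c) (PD structure on $\ker(\overline{E(n)} \to \overline S)$ being the canonical one on $\cGamma^{\ge 1}$) follow directly from the identification above, the Hodge--Tate comparison, and the naturality statement in Corollary \ref{cor: trivial prismatization} (noting that this identification is functorial in the base prism). The definition of $\varphi: \Omega \to \ker(\overline{E(1)} \to \overline{E(0)})$ is dictated by the bottom square of the diagram in Corollary \ref{cor: trivial prismatization} applied to $S(1)/A$, and the cocycle $(\delta_0^2 - \delta_1^2 + \delta_2^2)\varphi = 0$ is then immediate from functoriality applied to the diagram of prisms $S(1) \rightrightarrows S(2)$, combined with the fact that the connecting map $\cdL_{\overline S/\overline A} \left[ -1 \right] \cong \hol^{-1}(\dL_{\overline S/\overline A})$ is functorial.

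The main obstacle is verifying property (d): that the natural map
\[ q_0^n \otimes (\alpha_0^n, \ldots, \alpha_{n-1}^n): \overline S \cotimes_{\overline S} \cGamma_{\overline S}^*(\Omega^{\oplus n}) \to \overline{E(n)} \]
is a strict isomorphism of filtered PD rings. Since both sides are already identified with $\cGamma^*_{\overline S}(\Omega^{\oplus n})$ as filtered PD rings (by Corollary \ref{cor: trivial prismatization}), this reduces to showing that the $n$ maps $\alpha_i^n \circ \varphi: \Omega \to \overline{E(n)}$ for $i = 0, \ldots, n-1$ correspond exactly to the $n$ canonical inclusions $\Omega \hookrightarrow \Omega^{\oplus n}$. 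By functoriality of the Hodge--Tate comparison applied to $\alpha_i^n: S(1) \to S(n)$, this is in turn equivalent to computing that the induced maps $\cdL_{\overline S/\overline{S(1)}}[-1] \to \cdL_{\overline S/\overline{S(n)}}[-1]$ realize the $n$ factor-inclusions $\cOmega_{\overline S/\overline A} \hookrightarrow \cOmega_{\overline S/\overline A}^{\oplus n}$. For this, I would use the explicit description of $\alpha_i^n: s_0 \otimes s_1 \mapsto 1 \otimes \cdots \otimes s_0 \otimes s_1 \otimes \cdots \otimes 1$ (with $s_0, s_1$ in slots $i, i+1$) together with the natural presentation $S(n) \cong S(1)^{\cotimes n/S}$ (where the $i$-th $S(1)$-factor maps into slots $i$ and $i+1$ of $S(n)$). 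Under this decomposition, $\cdL_{\overline S/\overline{S(n)}}$ decomposes as the direct sum of the $n$ conormal pieces, and $\alpha_i^n$ obviously picks out the $i$-th component.

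Since verifying (d) reduces to the above identification, which is natural in the split structure, the other required compatibilities (compatibility with face and degeneracy maps, and with the PD structure) follow formally. Finally, the moreover clause follows directly from the isomorphism in the first paragraph, yielding the identification $\gr_*(\overline{E(\bullet)}) \cong \dHodge_{\overline S/\overline{S(\bullet)}}$ compatibly in the cosimplicial direction.
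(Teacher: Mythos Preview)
Your proposal is correct and follows essentially the same strategy as the paper. The paper invokes Theorem \ref{theo: prism=pd derived construction} (a repackaging of Corollary \ref{cor: trivial prismatization} for surjective prism maps) to get the filtered identification $\overline{E(n)} \cong \cGamma_{\overline S}^*\bigl((\overline J_n/\overline J_n^2)^\land\{-1\}\bigr)$, checks the additivity law $(\delta_0^2-\delta_1^2+\delta_2^2)\varphi=0$ by the same reduction to $\overline J/\overline J^2$ you describe, and then handles property (d) by passing to graded pieces and invoking the graded split PD structure on $\dHodge_{\overline S/\overline{S(\bullet)}}$ already recorded in Definition \ref{defi: graded split PD}---whereas you unpack this last step explicitly via the $\alpha_i^n$-compatibility of the cotangent complex decomposition. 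Both routes amount to the same computation.
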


\begin{proof}
Denote $J_n = \ker \left( S \left( n \right) \to S \right).$ By Theorem \ref{theo: prism=pd derived construction}, we have natural (in $S$) PD structures on $\ker \bigl( \overline{E(n)} \to \overline S \bigr)$, and maps
\[ \varphi_n: \bigl( \overline J_n/\overline J_n^2 \bigr)^\land \left\{ -1 \right\} \to \ker \bigl( \overline{E(n)} \to \overline{E(0)} = \overline S \bigr)  \]
inducing natural (strict) isomorphisms of filtered rings
\[ \overline{E(n)} = \cGamma_{\overline S}^* \bigl( \bigl( \overline J_n/\overline J_n^2 \bigr)^\land \left\{ -1 \right\} \bigr). \]
Take $\varphi = \varphi_1.$

We first verify the additivity law, i.e. $\left(\delta_1^2 - \delta_2^2-\delta_0^2 \right) \varphi =0.$ It is equal to the composition
\[ \bigl( \overline J/\overline J^2 \bigr)^\land \left\{ -1 \right\} \xrightarrow{\delta_1^2 - \delta_2^2-\delta_0^2} \bigl( \overline J_2/\overline J_2^2 \bigr)^\land \left\{ -1 \right\} \xrightarrow{\varphi_2 \left\{ 1 \right\}} \ker \bigl( \overline{E(2)} \to \overline S \bigr), \]
where $\varphi_2$ is again the map in Theorem \ref{theo: prism=pd derived construction} for $S(2) \to S.$ Since $\bigl( \overline J/\overline J^2 \bigr)^\land$ is topologically generated by elements of form $1 \otimes s - s \otimes 1,$ it is easy to verify that
\[ \delta_1^2 - \delta_2^2 - \delta_0^2: \bigl( \overline J/\overline J^2 \bigr)^\land \to \bigl( \overline J_2/\overline J_2^2 \bigr)^\land \]
is zero, hence $\left(\delta_1^2 - \delta_2^2-\delta_0^2 \right) \varphi =0.$

We now verify the isomorphisms. However, strict isomorphisms can be verified on gradeds, and we already know that $\gr_*(\overline{E(\bullet)}) \cong \dHodge_{\overline S / \overline{S(\bullet)}}$ is graded split PD, so we get our isomorphisms as desired.
\end{proof}

\begin{prop}\label{prop: D(n) split PD}
Define $D \left( n \right) = \Prism_{R / S \left( n \right)}.$ Then, the cosimplicial ring $\overline{D \left( n \right)}$ is filtered split PD over $R$ with respect to the Hodge--Tate filtration, with $\Omega = R \cotimes_{\overline S} \cOmega_{\overline S/\overline A} \left\{ -1 \right\}$ (classical completion, by Lemma \ref{lem: p-complete module as classical completion}). Moreover, we have $\bigl( \overline{D(n)} \cdot \ker \bigl( \overline{E(n)} \to \overline{E(1)} \bigr) \bigr)^\land = \ker \bigl( \overline{D(n)} \to \overline{D(1)} \bigr)$, the PD structure on this ideal is uniquely extended from $\overline{E(n)}$, and $\varphi: \Omega \to \overline{D(1)}$ is induced by $\cOmega_{\overline S/\overline A} \left\{ -1 \right\} \to \overline{E(1)} \to \overline{D(1)}$ as in Lemma \ref{lemma: E(n) split PD}. Moreover, the split PD structure on $\gr_* ( \overline{D(\bullet)} )$ is the same as $\dHodge_{R/\overline{S(\bullet)}}$ in Definition \ref{defi: graded split PD}.
\end{prop}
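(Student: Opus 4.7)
The plan is to prove the proposition in three stages: (a) verify that each $D(n)$ is a well-defined bounded prism, (b) produce the split PD structural data on $\overline{D(\bullet)}$ by transport from $\overline{E(\bullet)}$, and (c) check that the resulting data is strict on filtrations by comparing graded pieces with the graded split PD structure on $\dHodge_{R/\overline{S(\bullet)}}$ from Definition \ref{defi: graded split PD}.

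First I would show that $R/\overline{S(n)}$ satisfies Condition \ref{cond: tor -1}, so that Corollary \ref{cor: universal property tor -1} applies and $D(n)$ is concentrated on degree $0$ and initial in $(R/S(n))_\Prism$. The cotangent triangle for $\overline{S(n)} \to \overline S \to R$ reads
\[ R \cotimes^\bL_{\overline S} \cdL_{\overline S/\overline{S(n)}} \to \cdL_{R/\overline{S(n)}} \to \cdL_{R/\overline S}, \]
and both outer terms have $p$-completely flat $[-1]$-shifts — the first because Lemma \ref{lemma: E(n) split PD} identifies $\cdL_{\overline S/\overline{S(n)}}[-1]$ with $\cOmega_{\overline S/\overline A}^{\oplus n}$, and the second by hypothesis on $R/\overline S$. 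This gives the desired flatness and also identifies the piece coming from $\overline S/\overline{S(n)}$ with $\Omega^{\oplus n}\{1\}$. Since $D(n)$ is in $(\overline S/S(n))_\Prism$ via $\overline S \to R \to \overline{D(n)}$, and $E(n)$ is initial there, we obtain natural ring maps $\overline{E(n)} \to \overline{D(n)}$, and cosimplicial functoriality provides the maps $\overline{D(\bullet)}$ needed.

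Next I would construct $\varphi: \Omega \to \overline{D(1)}$ as the base change along $\overline S \to R$ of Lemma \ref{lemma: E(n) split PD}'s $\varphi$ followed by $\overline{E(1)} \to \overline{D(1)}$; the additivity $(\delta_0^2 - \delta_1^2 + \delta_2^2)\varphi = 0$ is inherited from the same relation in $\overline{E(\bullet)}$. The PD structure on $\ker(\overline{D(n)} \to \overline{D(0)})$ is declared to be the unique extension of the PD structure on $\ker(\overline{E(n)} \to \overline S)$ along $\overline{E(n)} \to \overline{D(n)}$. The core claim — that the natural morphism
\[ \overline{D(0)} \cotimes_R \cGamma^*_R(\Omega^{\oplus n}) \to \overline{D(n)}, \]
built from the cosimplicial maps $\overline{D(0)} \to \overline{D(n)}$ and iterated $\varphi$ through the face maps $\beta^n_{i,i+1}$, is a strict isomorphism of filtered rings — is verified on graded pieces. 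The Hodge--Tate comparison (Proposition \ref{prop: HT comparison}) gives $\gr_* \overline{D(n)} \cong \dHodge_{R/\overline{S(n)}}$ functorially in $S(\bullet)$. Definition \ref{defi: graded split PD}, applied to $A \to S \to R$, already equips $\dHodge_{R/\overline{S(\bullet)}}$ with a graded split PD structure in which $\gr_* \overline{D(0)} \cong \dHodge_{R/\overline S} \cong \cGamma^*_R(\cdL_{R/\overline S}[-1]\{-1\})$ and the canonical map provides the isomorphism on graded pieces. Once the graded isomorphism is established, strictness of the filtered isomorphism follows from $p$-completeness and $p$-complete flatness of all graded pieces. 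The identification of the PD ideal $\ker(\overline{D(n)} \to \overline{D(0)})$ with the completion of $\overline{D(n)} \cdot \ker(\overline{E(n)} \to \overline{E(0)})$ drops out of this: on the right-hand side of the displayed isomorphism, the ideal is the positive-degree part of $\cGamma^*_R(\Omega^{\oplus n})$, which is exactly the completed image under $\overline{E(n)} \to \overline{D(n)}$ of the positive-degree part of $\cGamma^*_{\overline S}(\cOmega_{\overline S/\overline A}^{\oplus n}\{-1\})$.

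The main obstacle is ensuring naturality of the comparison in the last step: one must check that the map constructed from $\varphi$ and the cosimplicial structure agrees on $\gr_1$ with the map coming from Definition \ref{defi: graded split PD}, namely the composition $\Omega \cong R \cotimes_{\overline S} \cdL_{\overline S/\overline{S(1)}}[-1]\{-1\} \to \cdL_{R/\overline{S(1)}}[-1]\{-1\} \cong \gr_1 \dHodge_{R/\overline{S(1)}}$. This amounts to tracing the explicit form of the Hodge--Tate comparison from Proposition \ref{prop: trivial prismatization} (or more precisely its consequence Theorem \ref{theo: prism=pd derived construction}) through the transition $\overline{E(1)} \to \overline{D(1)}$, using the conormal description of $\cdL_{\overline S/\overline{S(1)}}$ via $\varphi_E$ and the naturality of the triangle. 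Once this compatibility is in hand, the final statement that $\gr_*(\overline{D(\bullet)})$ agrees with $\dHodge_{R/\overline{S(\bullet)}}$ as graded split PD cosimplicial rings is automatic.
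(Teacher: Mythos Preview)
Your proposal is correct and follows essentially the same route as the paper. The paper's proof is terser: it phrases the key map as $\overline{D(0)} \cotimes_{\overline{E(0)}} \overline{E(n)} \to \overline{D(n)}$ and checks it is a strict filtered isomorphism by reducing on graded pieces to $\dHodge_{R/\overline S} \cotimes_{\overline S} \dHodge_{\overline S/\overline{S(n)}} \to \dHodge_{R/\overline{S(n)}}$, which it dispatches via the cotangent triangle; your formulation via $\overline{D(0)} \cotimes_R \cGamma^*_R(\Omega^{\oplus n})$ is the same map after applying Lemma \ref{lemma: E(n) split PD}, and your preliminary step (a) and the $\gr_1$-compatibility discussion spell out points the paper leaves implicit.
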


\begin{proof}
We have natural maps $D(0) \cotimes_{E(0)} E(n) \to D(n)$, where $q_i^n: E(0) \to E(n)$ and $q_i^n: D(0) \to D(n)$. We now verify that after reducing modulo $I$, they are strict isomorphisms of filtered rings. This reduces to verifying that
\[ \dHodge_{R/\overline S} \cotimes_{\overline S} \dHodge_{\overline S/\overline{S(n)}} \to \dHodge_{R/\overline{S(n)}} \]
is an isomorphism, which is a direct consequence of the exact triangle of cotangent complexes.

As $\overline{D(0)} \cotimes_{\overline{E(0)}} \overline{E(n)} \to \overline{D(n)}$ are isomorphisms, the PD structure on $\ker \bigl( \overline{E(n)} \to \overline{E(1)} \bigr)$ indeed (uniquely, hence independent of $i$) extends to $\bigl( \overline{D(n)} \cdot \ker \bigl( \overline{E(n)} \to \overline{E(1)} \bigr) \bigr)^\land = \ker \bigl( \overline{D(n)} \to \overline{D(1)} \bigr)$ (equality follows from the tensor product). Again, the data give a split PD structure on $\overline{D(\bullet)}$ because the graded pieces $\gr_*(\overline{D(\bullet)})$ is split PD. (Note that the PD structure on $\gr_*(\overline{D(n)}) \cong \dHodge_{R/\overline{S(n)}}$ coincides with the extension of the PD structure on $\gr_*(\overline{E(n)}) \cong \dHodge_{\overline S/\overline{S(n)}}$)
\end{proof}

Now denote $D = D \left( 0 \right) = \Prism_{R/S},$ and $\dif: \overline D \to \overline D \cotimes_R ( R \cotimes_{\overline S} \cOmega_{\overline S/\overline A}) \left\{ -1 \right\}$ the differential as in Proposition \ref{prop: strat to connection}, so Theorem \ref{theo: comp strat connection} gives us the following proposition:

\begin{prop}
We have equivalences of symmetric monoidal categories:
\begin{align*}
\Strat \bigl( \overline{D \left( n \right)} \bigr) & \xrightarrow{\cong} \MIC^\tn \bigl( \overline D,\dif \bigr), \\
\Strat \bigl( \overline{D \left( n \right)} \bigl[ \frac 1 p \bigr] \bigr) & \xrightarrow{\cong} \MIC^\tn \bigl( \overline D \bigl[ \frac 1 p \bigr],\dif \bigr).
\end{align*}
\end{prop}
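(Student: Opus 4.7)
The plan is to deduce this proposition directly from Theorem~\ref{theo: comp strat connection} by applying it to the filtered split PD cosimplicial ring $\overline{D(\bullet)}$ constructed in Proposition~\ref{prop: D(n) split PD}. No new technical input is required; all the work has already been done in the preceding subsections.

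First, I would invoke Proposition~\ref{prop: D(n) split PD}, which presents $\overline{D(\bullet)}$ as a filtered split PD cosimplicial ring over $R$ in the sense of Definition~\ref{defi: filtered split PD}, with $S^0 = \overline D$ and $\Omega = R \cotimes_{\overline S} \cOmega_{\overline S/\overline A}\{-1\}$. The map $\varphi \colon \Omega \to \overline{D(1)}$ of Condition~\ref{cond: split pd} is exactly the one induced from $\cOmega_{\overline S/\overline A}\{-1\} \to \overline{E(1)} \to \overline{D(1)}$, and the differential $\dif \colon \overline D \to \overline D \cotimes_R \Omega$ featuring in the target of the equivalence is the one produced by Proposition~\ref{prop: strat to connection} out of this split PD datum. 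In particular, $(\overline D, \dif)$ is the canonical object to which Theorem~\ref{theo: comp strat connection} applies.

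Next, I would apply Theorem~\ref{theo: comp strat connection} to $\overline{D(\bullet)}$. In the integral case, the theorem yields an equivalence between the category of stratifications $(M,\epsilon)$ over $\overline{D(\bullet)}$ with $M$ classically $p$-complete of bounded $p^\infty$-torsion, and the category of topologically nilpotent continuous integrable $\dif$-connections $(M,\nabla)$ on such $M$. In the rational case, it yields the analogous equivalence between stratifications over $\overline{D(\bullet)}\bigl[\tfrac 1 p\bigr]$ with $M$ adic over $\overline D\bigl[\tfrac 1 p\bigr]$, and topologically nilpotent integrable $\dif$-connections on such modules. Moreover, the theorem affirms that these equivalences are symmetric monoidal when restricted to the subcategory of $p$-completely flat modules over $\overline D$ (resp.\ finite projective modules over $\overline D\bigl[\tfrac 1 p\bigr]$).

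The only remaining verification is that the finite-projective condition is preserved and reflected by the correspondence, so that the equivalences restrict to the subcategories $\Strat$ and $\MIC^\tn$ appearing in the statement. This is immediate from the construction: the functor in Theorem~\ref{theo: comp strat connection} does not alter the underlying $\overline D$-module (resp.\ $\overline D\bigl[\tfrac 1 p\bigr]$-module) --- it only translates the extra structure between a stratification $\epsilon$ and a connection $\nabla$. Hence a stratification lies in $\Strat(\overline{D(\bullet)})$ (resp.\ $\Strat(\overline{D(\bullet)}\bigl[\tfrac 1 p\bigr])$) if and only if the corresponding pair $(M,\nabla)$ lies in $\MIC^\tn(\overline D,\dif)$ (resp.\ $\MIC^\tn(\overline D\bigl[\tfrac 1 p\bigr],\dif)$), and the symmetric monoidal enhancement is part of the cited theorem. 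I do not foresee any obstacle: the statement is engineered so as to be a formal specialization of Theorem~\ref{theo: comp strat connection}, and the role of Proposition~\ref{prop: D(n) split PD} is precisely to supply the split PD structure that makes this specialization legal.
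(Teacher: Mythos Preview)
Your proposal is correct and matches the paper's approach exactly: the paper simply states that Theorem~\ref{theo: comp strat connection}, applied to the filtered split PD cosimplicial ring $\overline{D(\bullet)}$ supplied by Proposition~\ref{prop: D(n) split PD}, yields the proposition directly. Your additional remark that the underlying module is unchanged (so finite-projectivity is preserved and reflected) is implicit in the paper's use of Notation~\ref{notn: Strat and MIC} and Theorem~\ref{theo: comp strat connection}.
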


Also, by Proposition \ref{prop: weakly final}, $D \to *$ is a covering in the topos $\Shv \bigl( (R/A)_\Prism \bigr),$ so Lemma \ref{lem: weakly final computation} gives the equivalences of symmetric monoidal categories as below (along with comparison of cohomologies)
\begin{align*}
\Vect \bigl( (R/A)_\Prism, \overline\calO_{(R/A)_\Prism} \bigr) & \xrightarrow{\cong} \Strat \bigl( \overline{D \left( n \right)} \bigr), \\
\Vect \bigl( (R/A)_\Prism, \overline\calO_{(R/A)_\Prism} \bigl[ \frac 1 p \bigr] \bigr) & \xrightarrow{\cong} \Strat \bigl( \overline{D \left( n \right)} \bigl[ \frac 1 p \bigr] \bigr).
\end{align*}

We are now ready to prove one of our main theorems:

\begin{theo}[See Theorem \ref{theo: main dR realization}]\label{theo: main dR realization revisit}
    Let $R$ be an $\overline A$-algebra, and $S$ be a bounded prism over $A$, equipped with a morphism of $A$-algebras $S \to R.$ Assume that $S/A$ satisfies Condition \ref{cond: flat + tor 0} and $R/S$ satisfies Condition \ref{cond: tor -1}. Then, $D := \Prism_{R/S}$ is concentrated on degree 0, and a bounded prism over $S$.
    
    Define $\overline D = D/ID$. Then, we have an $R$-linear derivation on $\overline D$ (integrable as a Higgs field over $R$):
    \[ \dif: \overline D \to \overline D \cotimes_R \bigl( R \cotimes_{\overline S} \cOmega_{\overline S/\overline A} \bigr) \left\{ -1 \right\}, \]
    satisfying the Griffith transversality, with $\gr_*(\dif)$ as in Example \ref{ex: differential dHodge}, such that we have the symmetric monoidal correspondences below:
    \begin{align*}
        \Vect \bigl( \left( R/A \right)_\Prism, \overline\calO \bigr) & \cong \MIC^\tn \bigl( \overline D, \dif \bigr), \\
        \Vect \bigl( \left( R/A \right)_\Prism, \overline\calO \bigl[ \frac 1 p \bigr] \bigr) & \cong \MIC^\tn \bigl( \overline D \bigl[ \frac 1 p \bigr], \dif \bigr).
    \end{align*}
    Moreover, if $\mathcal E$ corresponds to $\left( M,\nabla \right)$ under the correspondence above, then we have a natural isomorphism of cohomologies (compatible with the symmetric monoidal structure)
    \[ R\Gamma \bigl( \left( R/A \right)_\Prism, \mathcal E \bigr) \cong \DR \left( M,\nabla \right). \]
    When $\calE=\overline\calO$ is trivial, then the above isomorphism can be made into a strict isomorphism of filtered complete $\dE_\infty$-$R$-algebras, and its graded pieces is identified with the map
    \[ \dHodge_{R/A} \to \DR(\dHodge_{R/S}, \dif) \]
    as in Corollary \ref{cor: natural isomorphism of dR and dHodge}.
\end{theo}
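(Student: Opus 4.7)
The strategy is to assemble the machinery built up in the previous subsections: split PD cosimplicial rings, the stratification--connection equivalence, and the prismatic envelope computation. The core construction is the cosimplicial prism $D(n) := \Prism_{R/S(n)}$, where $S(n) := S^{\cotimes(n+1)/A}$. First, since $R/\overline S$ satisfies Condition \ref{cond: tor -1}, Corollary \ref{cor: universal property tor -1} shows that $D = D(0)$ is a bounded prism concentrated in degree 0, and is the initial object of $(R/S)_\Prism$. The same reasoning applies to each $D(n)$ after verifying Condition \ref{cond: tor -1} for $R/\overline{S(n)}$ via the transitivity triangle of cotangent complexes (using that $\cdL_{S(n)/A}$ is $p$-completely flat over $S(n)$ by Condition \ref{cond: flat + tor 0}). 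Proposition \ref{prop: D(n) split PD} then shows that the Hodge--Tate reduction $\overline{D(\bullet)}$ is a filtered split PD cosimplicial ring over $R$ with $\Omega = R \cotimes_{\overline S} \cOmega_{\overline S/\overline A}\{-1\}$, and its graded pieces coincide with $\dHodge_{R/\overline{S(\bullet)}}$ from Definition \ref{defi: graded split PD}.

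The differential $\dif$ is produced by Proposition \ref{prop: strat to connection} applied to this filtered split PD structure; its Griffith transversality is part (3) of that proposition, and Example \ref{ex: differential dHodge} identifies $\gr_*(\dif)$ on graded pieces as claimed. For the equivalence of categories, Proposition \ref{prop: weakly final} shows that $D \to *$ is represented by a $(p,I)$-completely faithfully flat cover in $(R/A)_\Prism$. Applying Lemma \ref{lem: weakly final computation} with the cosimplicial object $D(\bullet)$ then yields equivalences of symmetric monoidal categories
\[ \Vect\bigl((R/A)_\Prism, \overline\calO^{(\cdot)}\bigr) \xrightarrow{\cong} \Strat\bigl(\overline{D(\bullet)}^{(\cdot)}\bigr), \]
with $(\cdot)$ being either empty or $[\tfrac 1 p]$. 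Theorem \ref{theo: comp strat connection} converts these into the desired categories $\MIC^\tn$ of topologically nilpotent integrable $\dif$-connections on $\overline D$ or $\overline D[\tfrac 1 p]$.

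For the cohomology comparison, Lemma \ref{lem: weakly final computation} again gives
\[ R\Gamma\bigl((R/A)_\Prism, \calE\bigr) \cong \lim_\simp \calE\bigl(D(\bullet), ID(\bullet)\bigr), \]
and Theorem \ref{theo: comp strat connection cohomologies} identifies the right-hand side with $\DR(M,\nabla)$ as a symmetric monoidal natural isomorphism. The final filtered statement for $\calE = \overline\calO$ requires a bit more care: taking $\calE = \overline\calO$ yields
\[ \overline\Prism_{R/A} \cong \lim_\simp \overline{D(\bullet)}, \]
and the filtered version of Theorem \ref{theo: comp strat connection cohomologies} (applied to the filtered split PD ring $\overline{D(\bullet)}$, which is compatible with totalisation by Proposition \ref{prop: totalisation commute with filtered colimits for coconnective spectra} since its terms are concentrated in degree $0$) provides a strict filtered isomorphism with $\DR(\overline D, \dif)$. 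Passing to graded pieces identifies $\dHodge_{R/A} \cong \lim_\simp \dHodge_{R/\overline{S(\bullet)}}$ (this is Lemma \ref{lem: split cosimplicial}) with $\DR(\dHodge_{R/S}, \dif)$, and Proposition \ref{prop: comp strat connection compatible dHodge} tells us precisely that this graded-piece identification is the one in Corollary \ref{cor: natural isomorphism of dR and dHodge}.

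The main conceptual obstacle lies in verifying that \emph{all} the pieces are compatible with their filtered and monoidal structures --- particularly that the identification on graded pieces really comes from Corollary \ref{cor: natural isomorphism of dR and dHodge}. Here the technical input is Proposition \ref{prop: comp strat connection compatible dHodge}, together with the fact that the natural map $\overline\Prism_{R/A} \to \overline{D(n)}$ factors compatibly through the limit, which is guaranteed by Corollary \ref{cor: identification of prismatic maps}. Once this compatibility is verified, the symmetric monoidal structure on cohomology comparison for general $\calE$ is routine because $\lim_\simp$ of cosimplicial $\dE_\infty$-algebra modules preserves lax monoidal natural transformations.
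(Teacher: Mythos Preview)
Your proposal is correct and follows essentially the same route as the paper: build the cosimplicial prism $D(\bullet)$, invoke Proposition \ref{prop: D(n) split PD} for the filtered split PD structure, combine Proposition \ref{prop: weakly final} and Lemma \ref{lem: weakly final computation} with Theorem \ref{theo: comp strat connection} for the categorical equivalences, apply Theorem \ref{theo: comp strat connection cohomologies} for the cohomology comparison, and use Proposition \ref{prop: comp strat connection compatible dHodge} together with Corollary \ref{cor: identification of prismatic maps} for the filtered compatibility with $\dHodge$. The paper's proof is terser (it refers back to the discussion immediately preceding the theorem), but the ingredients and the order in which they are assembled are the same as yours.
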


\begin{proof}
The part of categorical equivalences is just as what we have discussed. The part of cohomologies is nothing but Theorem \ref{theo: comp strat connection cohomologies}.

As for the filtered part, we know by Theorem \ref{theo: comp strat connection cohomologies} and Proposition \ref{prop: comp strat connection compatible dHodge}, that the cosimplicial object $\overline{D(\bullet)}$ is compatible with totalisation, and that the isomorphism of filtered objects
\[ \lim_\simp \overline{D(\bullet)} \cong \DR(\overline D, \dif) \]
is compatible with the Hodge--Tate filtration and the map $\dHodge_{R/A} \to \DR(\dHodge_{R/S}, \dif)$. Therefore, we only need to show that the map
\[ \lim_\simp\overline{D(\bullet)} \leftarrow R\Gamma((R/A)_\Prism, \overline\calO) \leftarrow \overline \Prism_{R/A} \]
can be made into an isomorphism of filtered objects compatible with the Hodge--Tate comparison (i.e. its graded pieces are $\dHodge_{R/A} \to \lim_\simp \dHodge_{R/S(\bullet)}$). Observe that the map is given by
\[ \lim_\simp \overline{D(\bullet)} = \lim_\simp \overline\Prism_{R/S(\bullet)} \leftarrow \lim_{(R/A)_\Prism} \overline\calO = R\Gamma((R/A)_\Prism, \overline\calO). \]
Recall that by Corollary \ref{cor: identification of prismatic maps},
\[ \overline\Prism_{R/A} \to R\Gamma((R/A)_\Prism, \overline\calO) \to \overline{D(\bullet)} \]
is identified with the map $\overline\Prism_{R/A} \to \overline\Prism_{R/S(\bullet)}$ by functoriality of $\overline\Prism.$ Therefore, the map of filtered objects $\overline\Prism_{R/A} \to \lim_\simp \overline\Prism_{R/S(\bullet)}$ reduces to the desired map after forgetting the filtration, as desired.
\end{proof}

\begin{ex}[Irrational disks]\label{ex: dR realisation for irrational disks}
Let $(A,I) = (\AAinf(\calO_C),\ker \theta)$ and $R = \calO_C\bigl\{ \frac T {p^r} \bigr\}$ (see Section \ref{sect: irrational disks}). Then, $R$ admits a $\delta$-lifting (with $\delta(T/[p^\flat]^s) =0$)
\[ S = \ccolim_{s \in \mathbb Q, s \le r} A \bigl\langle \frac T {[p^\flat]^s} \bigr\rangle, \]
satisfying Condition \ref{cond: flat + tor 0}, and we get symmetric monoidal correspondences as below:
\begin{align*}
    \Vect \bigl( \left( R/A \right)_\Prism, \overline\calO \bigr) & \cong \Higgs^\tn \bigl( R, \cOmega_{R/\overline A}\{-1\} \bigr), \\
    \Vect \bigl( \left( R/A \right)_\Prism, \overline\calO \bigl[ \frac 1 p \bigr] \bigr) & \cong \Higgs^\tn \bigl( R \bigl[ \frac 1 p \bigr], \cOmega_{R/\overline A}\{-1\} \bigr).
\end{align*}
In the correspondence as above, if $\calE$ corresponds to $(M,\nabla)$, then we have a natural isomorphism of the cohomologies
\[ R\Gamma \bigl( \left( R/A \right)_\Prism, \mathcal E \bigr) \cong \DR \left( M,\nabla \right). \]
\end{ex}

\section{Abstract coordinate models, abstract toric charts and Sen theory}

In preparation of various applications, we will construct some basic models. To avoid some boring calculation of Breuil--Kisin twists, we assume that the base prism has sufficiently many roots of unity. Explicitly, let
\[\dZ_p^{\cyc}=\widehat{\dZ_p[\mu_{p^{\infty}}]}\]
be the completion of the valuation ring of $\bigcup_{n\geq 1}\dQ_p(\mu_{p^n})$.

Let $\overline{A}/\dZ_p^{\cyc}$ be an integral perfectoid algebra and put $A=\AAinf(\overline{A})$. Let $R/\overline{A}$ be a $p$-complete and $p$-torsion free algebra that satisfies Condition \ref{cond: tor -1,0}.

\begin{defi}\label{defi: coord model}
    An abstract coordinate model of $R$ consists of the following:
    \begin{enumerate}
        \item A $\delta$-algebra $S$ over $A$ that satisfies Condition \ref{cond: flat + tor 0};
        \item A class of finitely many elements $T_1,T_2,\dots,T_d\in S$ such that $\delta(T_i)=0$ for any $i$;
        \item A homomorphism of $A$-algebras $f:S\to R$ satisfying Condition \ref{cond: tor -1} such that
        \[\cdL_{R/\overline A}[\frac{1}{p}]=\bigoplus_{i=1}^d R[\frac{1}{p}]\dif f(T_i)[0].\]
    \end{enumerate}
    The induced map $f:\overline{A}\langle T_1,T_2,\dots,T_d \rangle$ is called the chart of this abstract coordinate model. In addition, the abstract coordinate model is called toric if satisfies the following assumptions.
    \begin{enumerate}
        \item For any $i$, $f(T_i)$ is invertible in $R[\frac{1}{p}]$.
    
        \item The ring
        \[\bigl(\overline{A}\langle T_1^{\frac{1}{p^\infty}},T_2^{\frac{1}{p^\infty}},\dots, T_d^{\frac{1}{p^\infty}} \rangle\cotimes_{\overline{A}\langle T_1,T_2,\dots, T_d\rangle}R\bigr)[\frac{1}{p}]\]
        is a perfectoid algebra. 
    \end{enumerate}
    We define $R_\infty$ as the $p$-closure of \[\overline{A}\langle T_1^{\frac{1}{p^\infty}},T_2^{\frac{1}{p^\infty}},\dots, T_d^{\frac{1}{p^\infty}} \rangle\cotimes_{\overline{A}\langle T_1,T_2,\dots, T_d\rangle}R\]
    in
    \[\bigl(\overline{A}\langle T_1^{\frac{1}{p^\infty}},T_2^{\frac{1}{p^\infty}},\dots, T_d^{\frac{1}{p^\infty}} \rangle\cotimes_{\overline{A}\langle T_1,T_2,\dots, T_d\rangle}R\bigr)[\frac{1}{p}],\]
    which is an integral perfectoid algebra.
\end{defi}

\begin{rmk}
     We explain the idea of our framework so that the readers can understand our setting easily.

    The link between prismatic theory and the $p$-adic Simpson theory was introduced by Min-Wang in \cite{Min_2025}, which said that for any smooth formal scheme $X/\calO_{\mathbf{C}_p}$, the restriction provides an equivalence between Hodge--Tate crystals and $v$-vector bundles on the generic fiber of $X$ satisfying some nilpotency conditions of its Sen operator.

    The key technique of their proof was a calculation on the toric chart. Explicitly, they fixed an affine open $\Spf(R)\subseteq X$ and a $p$-completely \'etale homomorphism
    \[f:\calO_{\mathbf{C}_p}\langle T_1^{\pm1}, T_{2}^{\pm1},\dots, T_{d}^{\pm1} \rangle\to R.\]
    Under this assumption, we can lift $f$ to a $(p,I)$-completely \'etale homomorphism
    \[\widetilde f:\Ainf\langle T_1^{\pm1}, T_{2}^{\pm1},\dots, T_{d}^{\pm1} \rangle\to \widetilde R\]
    and there exists a $\delta$-structure on $\widetilde R$ such that $\delta(T_{i})=0$ for any $i$. Using this structure, they can translate Hodge--Tate crystals to Higgs bundles and can compare them to generalized representations by some exponentials.

    However, for general algebras over $\calO_{\mathbf{C}_p}$ satisfying Condition \ref{cond: tor -1,0}, it is hard to find such an \'etale homomorphism \[f:\calO_{\mathbf{C}_p}\langle T_1^{\pm1}, T_{2}^{\pm1},\dots, T_{d}^{\pm1} \rangle\to R.\]
    To solve this problem, we will not only choose a chart, but also choose a map $S\to R$ satisfying Condition \ref{cond: tor -1}. The reader may imagine that there exists a `ghost prism $\widetilde R$' so that $\delta(T_i)=0$ and $\overline{\widetilde R}=R$. 
\end{rmk}

\subsection{Locally perfectoid Tate algebras}

\begin{defi}\label{defi: locally perfectoid huber pairs}
    Let $(R,R^+)/(\mathbb{Q}_p,\mathbb{Z}_p)$ be a Tate--Huber pair. We say that $(R,R^+)$ is \textit{locally perfectoid} if it satisfies the following conditions:
    \begin{enumerate}
        \item $R$ is stably uniform;
        \item $\Spa(R,R^+)$ is a perfectoid space in the sense of \cite{scholze2012perfectoid}.
    \end{enumerate}
\end{defi}

Clearly, a perfectoid Tate--Huber pair $(R,R^+)$ over $\mathbb{Q}_p$ locally perfectoid. We provide another important example. Suppose $K/\mathbb{Q}_p$ is a perfectoid field and $\mathcal{O}_K$ is its ring of integers. Let
\[
\mathrm{D}_d := K\langle T_1,T_2,\dots,T_d \rangle
\]
and
\[
\mathrm{D}_d^+ := \mathcal{O}_K\langle T_1,T_2,\dots,T_d \rangle.
\]

\begin{theo}\label{theo: lci locally perfectoid}
    Under the above assumptions, let $R/K$ be a topologically finite-type Tate algebra and
    \[ f: \mathrm{D}_d \to R \]
    be an \'etale morphism in the sense of \cite[Definition 1.6.5]{Huber_1996}. Then for any ring of integers $R^+ \subseteq R$ containing all $f(T_i)$, the Tate--Huber pair $(R_\infty, R_\infty^+)$ is locally perfectoid, where
    \[
    R_\infty = R \cotimes_{\mathrm{D}_d} K\langle T_1^{\frac{1}{p^\infty}}, T_2^{\frac{1}{p^\infty}}, \dots, T_d^{\frac{1}{p^\infty}} \rangle,
    \]
    and $R_\infty^+$ is the integral closure of
    \[
    R^+ \cotimes_{\mathrm{D}_d^+} \mathcal{O}_K\langle T_1^{\frac{1}{p^\infty}}, T_2^{\frac{1}{p^\infty}}, \dots, T_d^{\frac{1}{p^\infty}} \rangle
    \]
    in $R_\infty$.
\end{theo}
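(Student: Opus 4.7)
The plan is to realise $\Spa(R_\infty, R_\infty^+)$ as the étale pullback of an affinoid perfectoid space, and then invoke Scholze's structure theorems for étale morphisms of perfectoid spaces.

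First, set $(P, P^+) := (K\langle T_1^{1/p^\infty}, \ldots, T_d^{1/p^\infty}\rangle, \mathcal{O}_K\langle T_1^{1/p^\infty}, \ldots, T_d^{1/p^\infty}\rangle)$; this is a perfectoid Tate--Huber pair (the perfectoid polydisk), and the natural map $\Spa(P, P^+) \to \Spa(\mathrm{D}_d, \mathrm{D}_d^+)$ is a pro-finite-étale cover. Since $f: \mathrm{D}_d \to R$ is étale in the sense of Huber, the morphism $\Spa(R,R^+) \to \Spa(\mathrm{D}_d, \mathrm{D}_d^+)$ admits a finite cover by open affinoids, each of which is realised as an open immersion into a finite étale cover of some rational localisation of $\Spa(\mathrm{D}_d, \mathrm{D}_d^+)$.

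Next, form the fibered product $X := \Spa(R,R^+) \times_{\Spa(\mathrm{D}_d, \mathrm{D}_d^+)} \Spa(P, P^+)$ in the category of adic spaces. Pulling back the affinoid cover from the previous paragraph to $\Spa(P, P^+)$ and applying the standard perfectoid stability results --- rational subsets of affinoid perfectoid spaces remain affinoid perfectoid, and finite étale covers of affinoid perfectoid spaces are affinoid perfectoid with $+$-ring equal to the integral closure of the base's $+$-ring --- one concludes that $X$ admits an open cover by affinoid perfectoid pieces, hence is a perfectoid space. Because $\Spa(R, R^+) \to \Spa(\mathrm{D}_d, \mathrm{D}_d^+)$ is a morphism of affinoid adic spaces and $\Spa(P, P^+)$ is affinoid, $X$ is itself affinoid; write $X = \Spa(A, A^+)$ for a perfectoid pair $(A, A^+)$.

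It remains to identify $(A, A^+)$ with $(R_\infty, R_\infty^+)$. The universal property of the fibered product produces a canonical ring map $R \otimes_{\mathrm{D}_d} P \to A$ which extends by $p$-adic continuity to $\phi: R_\infty \to A$. To show $\phi$ is an isomorphism, I would reduce modulo $p$ and argue in almost mathematics: the integral structure sheaf of the affinoid perfectoid pieces constructed above computes $R^+ \cotimes_{\mathrm{D}_d^+} P^+$ up to almost-zero discrepancies, and then taking integral closures on both sides matches $A^+$ with $R_\infty^+$ on the nose. Stable uniformity of $R_\infty$ is automatic once $(R_\infty, R_\infty^+)$ is seen to be a perfectoid pair. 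The hard part will be this final identification step: matching the ring $A$ extracted from the geometric fibered product with the prescribed completed tensor product $R_\infty$, and confirming that the integral closure operation on both sides produces the same $+$-ring; this requires careful almost-purity bookkeeping to bridge the ring-theoretic definition of $R_\infty$ with the sheaf-theoretic presentation of $X$.
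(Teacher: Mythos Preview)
Your approach shares the paper's opening move: cover $\Spa(R,R^+)$ by rational opens $\Spa(R_i,R_i^+)$ on which the map to the disk becomes standard \'etale, so that each $R_{i,\infty}$ is perfectoid by Scholze's results. The divergence, and the gap, is in how you globalise.

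The assertion that $X$ is affinoid ``because it is a fibre product of affinoids'' is not valid in the category of adic spaces. Fibre products of affinoid adic spaces need not be affinoid; the natural candidate here is $\Spa(R_\infty,R_\infty^+)$, but this is only an honest adic space once $R_\infty$ is known to be sheafy---which is part of what you are trying to prove. So the step ``write $X=\Spa(A,A^+)$ for a perfectoid pair'' is circular: you have a perfectoid space $X$ glued from affinoid perfectoid pieces, but no reason yet that it is globally affinoid, and hence no access to a global perfectoid ring $A$ to compare with $R_\infty$. Your claim that stable uniformity is automatic once $(R_\infty,R_\infty^+)$ is a perfectoid pair is correct, but you have not shown it is a perfectoid pair; you have only shown the geometric fibre product is a perfectoid space.

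The paper sidesteps this entirely by aiming only for the weaker conclusion that $R_\infty$ is \emph{locally perfectoid}: stably uniform with $\Spa(R_\infty,R_\infty^+)$ a perfectoid space. Perfectoidness of the local pieces $R_{i,\infty}$ handles the second condition. For stable uniformity, the paper first reduces to mere uniformity by observing that every rational localisation of $R_\infty$ descends to some finite level $R_n$ (so one can rerun the argument there), and then proves uniformity directly: since $\mathrm{D}_{d,\infty}^+$ is the $p$-adic completion of a \emph{free} $\mathrm{D}_d^+$-module, tensoring the exact \v{C}ech sequence $0\to R^\circ\to\prod_i R_i^\circ\to Q\to 0$ (available because $R$ is strongly noetherian, hence sheafy) by $\mathrm{D}_{d,\infty}^+$ stays exact, and inverting $p$ exhibits $R_\infty$ as a closed subalgebra of the uniform ring $\prod_i R_{i,\infty}$. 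This flatness-plus-\v{C}ech argument is the concrete content that your ``reduce modulo $p$ and argue in almost mathematics'' would have to supply, and it is precisely what bridges the ring-theoretic $R_\infty$ with the sheaf-theoretic picture.
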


\begin{proof}
    First, \cite[Proposition 8.1.2]{Fresnel_2004} shows that there exists a rational open cover
    \[ \Spa(R,R^+) = \bigcup_{i=1}^N \Spa(R_i,R_i^+) \]
    such that for all $i$, the induced morphism $f_i: \Spa(\mathrm{D}_d,\mathrm{D}_d^+) \to \Spa(R_i,R_i^+)$ is standard \'etale (i.e., a composition of finitely many rational localizations and finite \'etale morphisms). Therefore, by \cite[Theorem 1.8 \& Theorem 1.10]{scholze2012perfectoid}, for any $i$, 
    \[
    R_{i,\infty} := R_i \cotimes_{\mathrm{D}_d} K\langle T_1^{\frac{1}{p^\infty}}, T_2^{\frac{1}{p^\infty}}, \dots, T_d^{\frac{1}{p^\infty}} \rangle
    \]
    is a perfectoid Tate algebra.

    It remains to show that $R_\infty$ is stably uniform. Indeed, it suffices to show that $R_\infty$ is uniform. In fact, if this is correct, put
    \[ R_n=R\otimes_{\mathrm D_d}K\langle T_1^{\frac{1}{p^n}},T_2^{\frac{1}{p^n}}\dots,T_d^{\frac{1}{p^n}} \rangle,\ \forall n\geq 0\]
    and put $R_n^+$ the integral closure of
    \[ R^+ \otimes_{\mathrm D_{d}}\calO_K\langle  T_1^{\frac{1}{p^n}},T_2^{\frac{1}{p^n}}\dots,T_d^{\frac{1}{p^n}} \rangle \]
    in $R_n$. Then, as topological spaces, we have the inverse limit
    \[\Spa(R_\infty,R_\infty^+) = \lim_n \Spa(R_n,R_n^+).\]
    It follows that each rational localization of $R_{\infty}$ comes from a pull-back from a rational localization of some $R_n$. 
    Suppose $(S,S^+)$ is a rational localization of $(R_n,R^+_n)$ for some $n$. Define $f':\mathrm D_d\to S$ by sending $T_i$ to $f(T_i^{\frac{1}{p^n}})$. Then we may apply the same argument to $R=S$ and $f=f'$, deducing that $S\cotimes_{R_n}R_\infty$ is uniform.
    
    Recall from the first paragraph that we have chosen rational localizations $\{R\to R_i:i=1,2,\dots,N\}$ such that the induced morphism $\mathrm{D}_d \to R_i$ is standard \'etale.  Note that since $R$ is strongly noetherian, $(R,R^+)$ is sheafy. Hence the \v{C}ech sequence
    \[
    0 \to R^\circ \to \prod_{i=1}^N R_i^\circ \to \prod_{i,j=1}^N (R_i \cotimes_R R_j)^\circ
    \]
    is exact. Take the cokernel $Q$ of $R^\circ \to \prod R_i^\circ$, and we get an exact sequence
    \[ 0 \to R^\circ \to \prod_{i=1}^N R_i^\circ \to Q \to 0. \]
    Note that $Q$ is derived $p$-adically complete, and is $p$-torsion-free as a submodule of $\prod_{i,j=1}^N (R_i \cotimes_R R_j)^\circ$.
    
    Note that $\mathrm D_{d,\infty}^+:=\mathcal{O}_K\langle T_1^{\frac{1}{p^\infty}}, T_2^{\frac{1}{p^\infty}}, \dots, T_d^{\frac{1}{p^\infty}} \rangle$ is the $p$-adic completion of the free module
    \[\bigoplus_{\alpha_{1},\alpha_{2},\dots,\alpha_d\in{\dZ[\frac{1}{p}]\cap[0,1)}}\mathrm{D}_d^+T_1^{\alpha_1} \cdots T_d^{\alpha_d}\]
    over $\mathrm{D}_d^+$, so it is $p$-adically complete and $p$-completely flat. Therefore, $- \cotimes_{\mathrm{D}_d^+}\mathrm{D}_{d,\infty}^+$ calculates $- \cotimes^\bL_{\mathrm{D}_d^+}\mathrm{D}_{d,\infty}^+$. It follows that the induced sequence
    \[
    0 \to R^\circ \cotimes_{\mathrm{D}_d^+}\mathrm{D}_{d,\infty}^+ \to \prod_{i=1}^N R_i^\circ \cotimes_{\mathrm{D}_d^+} \mathrm{D}_{d,\infty}^+\to Q \cotimes_{\mathrm D_d^+} \mathrm{D}_{d,\infty}^+ \to 0
    \]
    is exact. Note that $Q \cotimes_{\mathrm D_d^+} \mathrm D_{d,\infty}^+$ is classically $p$-adically complete by Corollary \ref{cor: tensor conc on degree 0}. Inverting $p$, and we see that $R_\infty$ is a closed subalgebra of the perfectoid Tate ring $\prod_{i=1}^N R_{i,\infty}$, hence is uniform.
\end{proof}

\begin{defi}
    Let $(R,R^+)$ be a Tate Huber pair, and let $f_1, f_2, \dots, f_n \in R$. For any subset $I \subseteq \{1,\ldots,n\}$, define
    \[
    V_I(R,R^+) = \{ x \in \Spa(R,R^+) : |f_i|_x \leq 1, \forall i \in I; |f_j|_x \geq 1, \forall j \in \{1,\ldots,n\} \setminus I \}.
    \]
    Clearly, all the $V_I(R,R^+)$ form a rational open cover of $\Spa(R,R^+)$.
\end{defi}

\begin{lemma}\label{lemma: Laurent refine}
    Let $(R,R^+)$ be a Tate Huber pair, and let $X = \Spa(R,R^+)$. Then any open cover of $X$ can be refined by some Laurent open cover.
\end{lemma}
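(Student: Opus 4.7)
The proof proceeds by a three-step reduction: quasi-compactness, passage to rational opens, and explicit construction of a Laurent refinement.

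First, since $\Spa(R, R^+)$ is quasi-compact for any Tate--Huber pair (by standard adic space theory), any open cover admits a finite subcover. Since rational open subsets form a basis of the topology on $\Spa(R, R^+)$, we may further refine to a finite cover by rational opens $\{U_j\}_{j=1}^k$ of the form $U_j = X\bigl(\tfrac{f_{j,1}, \ldots, f_{j, n_j}}{g_j}\bigr)$; by adding $g_j$ to the list of numerators if necessary (which does not change the rational open), we may assume $g_j$ is among the $f_{j, i}$. The cover property forces $\{g_j\}_{j=1}^k$ to generate the unit ideal of $R$: otherwise they lie in a common maximal ideal of $R$, and a standard valuation-theoretic construction for Tate rings (using the quotient and a suitable continuous valuation) yields a point of $\Spa(R, R^+)$ at which all $g_j$ vanish, contradicting that $\{U_j\}$ is a cover. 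Fix $h_j \in R$ with $\sum_{j=1}^k h_j g_j = 1$.

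Next, fix a pseudouniformizer $\pi \in R$. Since $X$ is quasi-compact, each $h_l g_j$ and $h_l f_{j, i}$ is uniformly bounded on $X$, so there exists $N_0 \in \mathbb Z_{\ge 0}$ with $|h_l g_j(x)|, |h_l f_{j, i}(x)| \le |\pi|^{-N_0}$ for all $x \in X$ and all relevant indices. For an integer $N_1$ to be specified, form the finite set
\[ E = \{\pi^N h_l g_j, \pi^N h_l f_{j, i} : l, j \in \{1, \ldots, k\}, 1 \le i \le n_j, -N_0 \le N \le N_1\} \subseteq R, \]
and consider the associated Laurent cover $\{V_I\}_{I \subseteq E}$. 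The idea is that the $\pi$-rescalings pin the sizes $|h_l g_j|$ and $|h_l f_{j, i}|$ to bounded multiplicative windows on each $V_I$, and the pointwise comparison $|f_{j^*, i}(x)| \le |g_{j^*}(x)|$ (available for any $x \in V_I$ and any $j^*$ with $x \in U_{j^*}$) propagates to a uniform comparison on $V_I$ by choosing $l^*$ via the key identity $1 = \sum h_l g_l$, which (by the non-archimedean triangle inequality) always produces some $l^*$ with $|h_{l^*}(x) g_{l^*}(x)| \ge 1$ so that $h_{l^*}$ is non-vanishing on $V_I$ and divides out cleanly.

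The main obstacle is the discrete nature of the rescalings: an integer power $|\pi|^{-N}$ need not lie in every interval of multiplicative length $\ge 1$, so the Laurent conditions may fail to witness the ratio comparison when $|f_{j^*, i}(x)|$ and $|g_{j^*}(x)|$ are too close. This is resolved by exploiting the additional freedom provided by the $k$ different multipliers $h_l$: the intervals $[|h_l(x) f_{j^*, i}(x)|, |h_l(x) g_{j^*}(x)|]$ are multiplicative translates of each other as $l$ varies, and a suitable choice of $l$ (for sufficiently large $N_1$) locates a power of $|\pi|^{-1}$ inside the interval. Alternatively, one argues locally: for each $x \in X$, the data at $x$ produces an explicit finite element set $E_x \subset R$ and a Laurent open $V_x \ni x$ with $V_x \subseteq U_{j(x)}$; quasi-compactness yields finitely many $V_{x_1}, \ldots, V_{x_M}$ covering $X$, and the combined set $E = \bigcup_m E_{x_m}$ produces a Laurent cover that refines each $V_{x_m}$ (hence refines $\{U_j\}$) after a routine check that the boundary ambiguities in Laurent conditions $|e|_{x} = 1$ do not obstruct the containment $V_I \subseteq U_{j(m)}$. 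Either approach completes the proof; the second is closer to the original treatment in Huber's book.
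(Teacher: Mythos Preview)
The paper's own proof is a bare citation to \cite[Lemma 8]{Buzzard_2016}; there is no in-paper argument to compare against.

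Your reduction to a finite rational cover is fine, but the heart of the matter --- producing a Laurent cover that refines it --- is not carried out. You correctly isolate the difficulty: to force $V_I\subseteq U_{j^*}$ you would like some $|\pi|^N|h_l(x)|$ to land in the interval $\bigl[\,|g_{j^*}(x)|^{-1},\ \min_i|f_{j^*,i}(x)|^{-1}\,\bigr]$, and this interval can degenerate to a single point (when $|f_{j^*,i}(x)|=|g_{j^*}(x)|$ for all $i$). Your claimed fix, that ``a suitable choice of $l$ locates a power of $|\pi|^{-1}$ inside the interval,'' is unsupported: the finitely many values $|h_1(x)|,\ldots,|h_k(x)|$ give only $k$ cosets of $|\pi|^{\mathbb Z}$ in the value group, and there is no reason these should hit an arbitrary prescribed point. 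Even when you do hit the interval at the chosen $x$, the resulting Laurent datum may be an equality $|\pi^N h_l f_{j^*,i}(x)|=1$, in which case the definition of $V_I$ gives no control over the direction of the inequality at other $y\in V_I$. Your alternative ``local'' approach does not escape this: you never specify $E_x$ or $V_x$, and the claim that the Laurent cover by $\bigcup_m E_{x_m}$ refines each $V_{x_m}$ ``after a routine check'' is exactly the boundary issue just described, restated.

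The argument in Huber (on which Buzzard--Verberkmoes rely) proceeds differently: one first refines to a \emph{standard} rational cover $\{X(T_1,\ldots,T_n/T_i)\}_i$ with $T_1,\ldots,T_n$ generating an open (hence, in the Tate case, the unit) ideal, and then handles that uniform shape by a direct combinatorial construction. If you want a self-contained proof rather than a citation, that is the route to follow; as written, your sketch has a genuine gap at the refinement step.
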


\begin{proof}
    \cite[Lemma 8]{Buzzard_2016}.
\end{proof}

\begin{prop}\label{prop: locally perfectoid independ integers}
    Suppose that $(R,R^+)$ is a Tate--Huber pair and $R$ is stably uniform. Then $(R,R^+)$ is locally perfectoid if and only if there exists a finite set $\{f_1, f_2, \dots, f_r\} \subseteq R$ such that for all $I \subseteq \{1,2,\dots,r\}$, the rational localization corresponding to $V_I(R,R^+)$ is perfectoid. In particular, if $(R,R^+)$ is locally perfectoid, then for any ring of integers $R' \subseteq R$, the Tate--Huber pair $(R,R')$ is locally perfectoid.
\end{prop}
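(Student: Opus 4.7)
The plan is to prove the two directions of the equivalence and then derive the ``in particular'' clause as a formal consequence; the key unifying observation I will use is that the complete Tate ring underlying a rational localization of $(R,R^+)$ along a finite family of functions depends only on $R$ and the chosen functions, and not on the auxiliary ring of integers.

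For the ``only if'' direction, I start from the definition that $\Spa(R,R^+)$ is a perfectoid space, so it admits an open cover by perfectoid affinoid subsets. Applying Lemma \ref{lemma: Laurent refine} refines this cover by a Laurent cover associated to some finite $\{f_1,\ldots,f_r\}\subseteq R$. Each piece $V_I(R,R^+)$ is then a rational subset contained in a perfectoid affinoid of the original cover, hence is perfectoid by the stability of the perfectoid property under passage to rational subsets (\cite[Theorem 6.3]{scholze2012perfectoid}). Conversely, given such a family, the $\{V_I(R,R^+)\}_I$ already form an open cover of $\Spa(R,R^+)$ by perfectoid affinoids, so $\Spa(R,R^+)$ is a perfectoid space by definition; combining this with the standing hypothesis that $R$ is stably uniform shows that $(R,R^+)$ is locally perfectoid.

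For the ``in particular'' clause, let $R'\subseteq R$ be an arbitrary ring of integers, and apply the ``only if'' direction to $(R,R^+)$ to obtain a finite family $\{f_1,\ldots,f_r\}$ whose Laurent pieces are perfectoid. Since $R^+$ and $R'$ are both open bounded subrings of $R^\circ$ they are commensurable, so they induce the same Tate ring topology on any localization $R[s^{-1}]$, and therefore the rational localizations of $(R,R^+)$ and $(R,R')$ along the same functions yield canonically isomorphic complete Tate rings. Consequently each $V_I(R,R')$ has perfectoid complete Tate ring, and the ``if'' direction applied to $(R,R')$ concludes the proof. The main technical point, and the only one that requires genuine verification rather than invocation of the definitions, is exactly this independence of the rational localization (as a complete Tate ring) from the choice of ring of integers; I expect it to follow cleanly from the commensurability of open bounded subrings of $R^\circ$ together with the universal property of rational localizations.
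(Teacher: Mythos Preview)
Your approach is essentially the same as the paper's: the paper simply says ``This is a direct corollary of Lemma~\ref{lemma: Laurent refine}'' and leaves the rest implicit, while you spell out the details of both directions and the ``in particular'' clause.

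One minor slip: you write that $R^+$ and $R'$ are ``open bounded subrings of $R^\circ$,'' but rings of integers need not be bounded (rings of \emph{definition} are). This does not actually damage your argument, because the relevant fact is that the complete Tate ring $R\langle f_i\ (i\in I),\ f_j^{-1}\ (j\notin I)\rangle$ underlying the rational localization is constructed from a ring of definition of $R$ and hence depends only on the topological ring $R$, not on the choice of $R^+$; what changes with $R^+$ is only the induced ring of integers in the localization. Since being perfectoid is a property of the Tate ring alone, your conclusion stands. Just rephrase that sentence to invoke rings of definition (or, equivalently, Huber's universal property of rational localization) rather than claiming commensurability of $R^+$ and $R'$.
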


\begin{proof}
    This is a direct corollary of Lemma \ref{lemma: Laurent refine}.
\end{proof}

\begin{defi}\label{defi:locally perfectoid algebras}
    For a Tate algebra $R$ over $\dQ_p$, call $R$ \emph{locally perfectoid} if there exists a ring of integers $R^+$ such that $(R,R^+)$ is locally perfectoid.
\end{defi}

By Proposition \ref{prop: locally perfectoid independ integers}, for any locally perfectoid algebra $R$ and any ring of integers $R^+$, the Tate--Huber pair $(R,R^+)$ is locally perfectoid (cf. Definition \ref{defi: locally perfectoid huber pairs}).

\begin{lemma}\label{lemma: v descent of locally perfectoid}
	Suppose that $(R,R^+)$ is a locally perfectoid Tate--Huber pair. Then the functor $\Gamma\big(\Spa(R,R^+),-\big)$ defines an equivalence from the category of $v$-vector bundles on $\Spa(R,R^+)$ to the category of finite-rank projective modules over $R$.
\end{lemma}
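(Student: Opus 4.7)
The strategy is to reduce to the affinoid perfectoid case via Laurent descent.

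First, by Proposition \ref{prop: locally perfectoid independ integers}, I would choose elements $f_1,\ldots,f_r \in R$ such that every Laurent subdomain $V_I \subseteq \Spa(R,R^+)$ with $I \subseteq \{1,\ldots,r\}$ is affinoid perfectoid, say $V_I = \Spa(R_I, R_I^+)$ with $R_I$ a perfectoid Tate ring. Any finite intersection $V_{I_0} \cap \cdots \cap V_{I_n}$ is a rational subdomain of the affinoid perfectoid space $V_{I_0}$, hence is itself affinoid perfectoid by Scholze's stability of perfectoidness under rational localization; write its ring as $R_{I_0,\ldots,I_n}$, and assemble the associated Čech cosimplicial $R$-algebra $R_\bullet$.

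Second, on each piece $V_{I_0,\ldots,I_n}$, the classical theorem on $v$-vector bundles over affinoid perfectoid spaces (see \cite[Lemma 17.1.8]{Scholze_2020}) provides a symmetric monoidal equivalence
\[ \Gamma(V_{I_0,\ldots,I_n}, -) : \Vect\bigl((V_{I_0,\ldots,I_n})_v, \widehat\calO\bigr) \xrightarrow{\cong} \Vect(R_{I_0,\ldots,I_n}). \]
Since $v$-vector bundles are $v$-sheaves and the $v$-topology refines the analytic one, they satisfy Čech descent along the Laurent cover $\{V_I\}$. Consequently the composite
\[ \Vect\bigl(\Spa(R,R^+)_v, \widehat\calO\bigr) \xrightarrow{\cong} \lim_\simp \Vect\bigl((V_\bullet)_v, \widehat\calO\bigr) \xrightarrow{\cong} \lim_\simp \Vect(R_\bullet) \]
is an equivalence, and by construction it is identified with the restriction of the global sections functor $\Gamma(\Spa(R,R^+), -)$ to the Čech nerve.

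Third, because $R$ is stably uniform, so are all $R_I$ and all $R_{I_0,\ldots,I_n}$, and hence these Tate rings are sheafy. The glueing theorem of Kedlaya--Liu for finite projective modules along rational covers of sheafy stably uniform Tate rings (cf.\ \cite[Theorem 2.7.7]{Kedlaya_2015}) then yields
\[ \Vect(R) \xrightarrow{\cong} \lim_\simp \Vect(R_\bullet). \]
Chaining with the previous equivalence gives the lemma, with the composite functor naturally identified with $\Gamma(\Spa(R,R^+), -)$.

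\textbf{Main obstacle.} The principal non-formal input is the Kedlaya--Liu glueing of finite projective modules along Laurent covers of stably uniform Tate rings; once this is invoked, the rest of the argument is categorical assembly. A minor verification is that Laurent intersections of affinoid perfectoid subdomains remain affinoid perfectoid, which is immediate from Scholze's theory. Everything in sight is compatible with symmetric monoidal structures, so the resulting equivalence is one of symmetric monoidal categories.
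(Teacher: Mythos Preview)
Your proof is correct and follows essentially the same strategy as the paper. The paper's version is slightly more packaged: it first invokes the equivalence between $v$-vector bundles and analytic vector bundles on the perfectoid space $\Spa(R,R^+)$, then applies the Kedlaya--Liu glueing theorem \cite[Theorem 2.7.7]{KEDLAYA_2018} directly to the sheafy ring $R$, whereas you unwind both steps through the explicit Laurent cover.
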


\begin{proof}
	Let $\mu$ be the projection from the $v$-topology to the analytic topology on $\Spa(R,R^+)$. Since $X=\Spa(R,R^+)$ is a perfectoid space, $\mu_*$ is an equivalence from the category of $v$-vector bundles on $X$ to the category of analytic vector bundles on $X$. Furthermore, since $(R,R^+)$ has the sheaf property, by \cite[Theorem 2.7.7]{KEDLAYA_2018}, taking global sections gives an equivalence between analytic vector bundles on $X$ and finite-rank projective modules over $R$. Composing these two equivalences yields the desired result.
\end{proof}

\begin{lemma}\label{lem: tensor product and continuous functions}
	Suppose that $A$ is a Tate ring and $M$ is an adic $A$-module (see Proposition-Definition \ref{prop-defi: adic complete}). For a quasi-compact topological space $X$, let $\calC(X,A)$ be the ring of $A$-valued continuous functions on $X$. Then there is a canonical isomorphism 
	\[\calC(X,A)\cotimes_{A}M \to \mathcal{C}(X,M).\]
\end{lemma}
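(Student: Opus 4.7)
The plan is to construct the map via the universal property in Proposition-Definition \ref{prop-defi: base change of adic modules}, and then verify it is an isomorphism by reducing to a calculation modulo $p^n$ where everything becomes algebraic.

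First, I would fix rings/modules of definition: $A^+\subseteq A$ and $M^+\subseteq M$, where $M^+$ is classically $p$-adically complete of bounded $p^\infty$-torsion. Equip $\calC(X,A)$ and $\calC(X,M)$ with the topology of uniform convergence, so $\calC(X,A^+)$ and $\calC(X,M^+)$ become a ring of definition and a module of definition respectively (this uses only that $X$ is quasi-compact, so every continuous function has bounded image). The map $(f,m)\mapsto (x\mapsto f(x)m)$ is continuous $A$-bilinear, since for any $n$ we have $p^n\calC(X,A^+)\cdot M^+\subseteq p^n\calC(X,M^+)$. By the universal property of $\cotimes_A$, this gives a continuous $A$-linear map
\[ \phi: \calC(X,A)\cotimes_A M \to \calC(X,M), \]
which restricts to $\phi^+:\calC(X,A^+)\cotimes_{A^+}M^+\to\calC(X,M^+)$ between modules of definition.

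The next step is to show that $\calC(X,A^+)$ is $p$-completely flat over $A^+$. The reduction $\calC(X,A^+/p)$ consists of continuous maps to a discrete ring, hence of locally constant maps; by quasi-compactness of $X$ these have finite image, so $\calC(X,A^+/p)$ is the filtered colimit of free $A^+/p$-modules $\calC_\mathcal{U}(X,A^+/p)$ indexed by finite clopen partitions $\mathcal U$ of $X$, and is therefore flat. One also checks that $\calC(X,A^+)/p^n\cong\calC(X,A^+/p^n)$ by lifting each locally constant function componentwise on a finite clopen partition. Thanks to Corollary \ref{cor: tensor conc on degree 0}, the completed tensor product $\calC(X,A^+)\cotimes_{A^+}M^+$ is simply the classical $p$-adic completion of $\calC(X,A^+)\otimes_{A^+}M^+$.

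The heart of the proof is then the following discrete claim: for any classical ring $R$ and any (discrete) $R$-module $N$, the natural map
\[ \calC(X,R)\otimes_R N \to \calC(X,N) \]
is an isomorphism, where $R$ and $N$ are given the discrete topology. Surjectivity is immediate: a locally constant function to $N$ has finite image by quasi-compactness, hence is of the form $\sum_i\mathbf{1}_{U_i}n_i$ for a finite clopen partition $\{U_i\}$. Injectivity follows by common refinement: given $\sum_j f_j\otimes n_j$ mapping to zero, the joint map $x\mapsto(f_j(x))_j$ is locally constant with finite image, producing a finite clopen partition $\{U_i\}$ on which $f_j$ takes the constant value $\alpha_{i,j}$; in the tensor product we may rewrite $\sum_j f_j\otimes n_j=\sum_i\mathbf{1}_{U_i}\otimes\sum_j\alpha_{i,j}n_j$, and the inner sums vanish by assumption.

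Applying this with $R=A^+/p^n$ and $N=M^+/p^n$ gives $\calC(X,A^+/p^n)\otimes_{A^+/p^n}M^+/p^n\cong\calC(X,M^+/p^n)$, hence
\[ (\calC(X,A^+)\otimes_{A^+}M^+)/p^n \cong \calC(X,M^+/p^n), \]
using $p$-complete flatness of $\calC(X,A^+)$. Passing to the inverse limit in $n$ yields an isomorphism $\calC(X,A^+)\cotimes_{A^+}M^+\xrightarrow{\cong}\calC(X,M^+)$, and inverting $p$ (using Proposition-Definition \ref{prop-defi: base change of adic modules}(3)) gives the desired isomorphism $\calC(X,A)\cotimes_A M\cong\calC(X,M)$. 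The main technical point to get right is the passage between the topological/completed side and the discrete/algebraic side, which rests crucially on quasi-compactness of $X$ forcing continuous functions to discrete modules to have finite image.
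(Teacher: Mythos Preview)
Your proof is correct and follows essentially the same approach as the paper's: both reduce to the discrete case by working modulo $p^n$, use that continuous maps to a discrete module are locally constant with finite image (by quasi-compactness), and then take the $p$-adic completion. The paper organizes this by first replacing $X$ with the profinite space $\pi_0(X)$ and writing $\calC_0(X,-)$ as a filtered colimit over finite quotients, whereas you work directly with finite clopen partitions; these are equivalent bookkeeping devices for the same argument.
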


\begin{proof}
We may replace $X$ by the profinite space $\pi_0(X)$, hence assuming that $X$ is profinite.

For any abelian group $M$, denote $\calC_0(X,M)$ the group of locally constant functions from $X$ to $M$. If we write $X = \lim_{i \in I^\op} X_i$ for some finite sets $X_i$ and a filtered category $I$, we get
\[ \calC_0(X,-) = \colim_{i \in I} \calC_0(X_i,-) \]
so $\calC_0(X,-)$ is exact, and the natural maps
\[ \calC_0(X,A) \otimes_A^\bL M \to \calC_0(X,A) \otimes_A M \to \calC_0(X,M) \]
are isomorphisms for any ring $A$ and $A$-module $M$. (We use here that $\calC_0(X,A) = \colim_{i \in I}\calC_0(X_i,A)$ is flat over $A$.)

Back to the original lemma, we may pick a ring of definition $A^+ \subseteq A$ and an $A^+$-module of definition $M^+ \subseteq M$. Now, $\calC_0(X,A^+)$ is flat over $A^+$, and we have an isomorphism
\[ \calC_0(X,A^+) \otimes_{A^+} M^+ \to \calC_0(X,M^+). \]
Now, take the derived $p$-adic completion on both sides. Since $\calC_0(X,-)$ is exact, we see that $\calC_0(X,M^+)/p^n = \calC_0(X,M^+/p^n)$, and we get an isomorphism $\calC_0(X,M^+)^\land \to \calC(X,M^+)$ (here $(-)^\land$ denotes the derived completion, which is the same as the classical completion). Therefore, $\calC(X,A^+)$ is $p$-complete and $p$-completely flat over $A^+$, and we get an isomorphism
\[ \calC(X,A^+) \cotimes_{A^+} M^+ \to \calC(X,M^+). \]
Inverting $p$, using Proposition-Definition \ref{prop-defi: base change of adic modules} (3), we get our result.
\end{proof}

\begin{prop}\label{prop: continuous funtion ring stably uniform}
	Suppose that $A$ is a stably uniform complete Tate ring. Then for every quasicompact topological space $X$, $\calC(X,A)$ is stably uniform.
\end{prop}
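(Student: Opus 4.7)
The plan is to show that every rational localization $B$ of $\calC(X,A)$ is uniform, by reducing to the case where the defining data is locally constant and then decomposing $B$ as a finite product of rings of the form $\calC(Y, A')$ with $A'$ a rational localization of $A$.

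First, I would reduce to the case where $X$ is profinite: since the topology on $A$ is non-archimedean, any continuous function $X \to A$ factors through $\pi_0(X)$, giving $\calC(X,A) = \calC(\pi_0(X),A)$, and $\pi_0(X)$ is profinite by quasicompactness of $X$. Writing $\pi_0(X) = \lim_j X_j$ as a cofiltered limit of finite sets, and fixing a ring of definition $A^+ \subseteq A$, the identification $\calC(X, A^+) = \ccolim_j (A^+)^{X_j}$ (classical $p$-adic completion) shows that the locally constant functions are dense in $\calC(X, A)$.

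Next, for any rational localization $B = \calC(X, A)\langle f_1/g, \ldots, f_n/g\rangle$ with $(f_1,\ldots,f_n)$ generating the unit ideal, I would invoke Huber's stability result for rational subsets under small perturbations of the defining data (see \cite{Huber_1996}) to replace $(f_i, g)$ by sufficiently close locally constant approximations $(\tilde f_i, \tilde g)$, so that the two rational localizations are canonically isomorphic as Tate rings. Refining the quotient $X \twoheadrightarrow X_j$, I can arrange that $\tilde f_i$ and $\tilde g$ are constant on each piece of a finite clopen partition $X = \bigsqcup_{k=1}^m Y_k$, with constants $f_i^{(k)}, g^{(k)} \in A$. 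Evaluating a Bezout identity $\sum h_i \tilde f_i = 1$ at any point of $Y_k$ shows that $(f_i^{(k)})_i$ generates the unit ideal of $A$, and consequently
\[ B \cong \prod_{k=1}^m \calC(Y_k, A_k), \qquad A_k := A\langle f_1^{(k)}/g^{(k)},\ldots, f_n^{(k)}/g^{(k)}\rangle, \]
with each $A_k$ a rational localization of $A$, hence uniform by the stable uniformity hypothesis on $A$.

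The final step is to directly verify that $\calC(Y, A')$ is uniform whenever $A'$ is uniform and $Y$ is profinite: fixing a ring of definition $A'^+$ makes $\calC(Y, A'^+)$ a ring of definition of $\calC(Y, A')$; evaluation at points of $Y$ yields the inclusion $\calC(Y,A')^\circ \subseteq \calC(Y, A'^\circ)$; and since $A'^\circ \subseteq p^{-N} A'^+$ for some $N$ by uniformity of $A'$, we obtain $\calC(Y, A'^\circ) \subseteq p^{-N} \calC(Y, A'^+)$, which is bounded. As a finite product of uniform Tate rings is uniform, it follows that $B$ is uniform. The principal technical obstacle is the approximation step: one must carefully invoke Huber's stability result to ensure that a sufficiently close locally constant perturbation of the rational-localization data produces a canonically isomorphic Tate ring, where the required smallness of the perturbation involves inverse powers of $g$ and the $f_i$.
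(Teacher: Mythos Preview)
Your proposal is correct and follows essentially the same route as the paper: reduce to profinite $X$, approximate the rational-localization data by locally constant functions, decompose along a clopen partition, and identify each factor with $\calC(Y_k,A_k)$ for a rational localization $A_k$ of $A$. The only notable difference is that the paper cites Huber's result (\cite[Proposition~3.9]{Huber-continuous-valuations}) that rational subsets can always be presented using data from any dense subring, which cleanly bypasses the hands-on perturbation estimate you flag as the main obstacle; the paper also makes the identification $\calC(Y_k,A)\langle f^{(k)}_\bullet/g^{(k)}\rangle \cong \calC(Y_k,A_k)$ explicit via the preceding tensor-product lemma rather than asserting it directly.
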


\begin{proof}
	We may again assume that $X$ is a profinite set. Let $\calC_0(X,A)$ be the ring of locally constant $A$-valued functions on $X$. Then $\calC_0(X,A)$ is dense in $\mathcal{C}(X,A)$, so every rational localization can be written as
	\[\mathcal{C}(X,A)\bigl\langle \frac{f_1, f_2, \ldots, f_r}{g} \bigr\rangle,\]
	such that all $f_i$ and $g$ belong to $\calC_0(X,A)$ (cf. \cite[Proposition 3.9]{Huber-continuous-valuations}). Take an open cover $X = \bigcup_{j=1}^N U_j$ where the $U_j$ are pairwise disjoint and such that all $f_i$ and $g$ are constant on each $U_j$.	For each $i, j$, let $f_{ij}$ denote the restriction of $f_i$ to $U_j$, and let $g_j$ denote the restriction of $g$ to $U_j$.
    
    By definition, we have
	\[\calC(X,A) = \prod_{j=1}^N \calC(U_j, A).\]
	Under this identification, $f_i = (f_{i1}, f_{i2}, \ldots, f_{iN})$. Therefore,
	\[\mathcal{C}(X,A)\langle \frac{f_1, f_2, \ldots, f_r}{g} \rangle = \prod_{j=1}^N \mathcal{C}(U_j, A)\langle \frac{f_{1j}, f_{2j}, \ldots, f_{rj}}{g_j} \rangle.\]
	For each $j \in \{ 1, \ldots, N\}$, by Lemma \ref{lem: tensor product and continuous functions}, we have
	\[\mathcal{C}(U_j, A)\bigl\langle \frac{f_{1j}, f_{2j}, \dots, f_{rj}}{g_j} \bigr\rangle = \mathcal{C}\bigl(U_j, A\bigl\langle \frac{f_{1j}, f_{2j}, \ldots, f_{rj}}{g_j} \bigr\rangle\bigr). \]
    Since $A\bigl\langle \frac{f_{1j}, f_{2j}, \ldots, f_{rj}}{g_j} \bigr\rangle$ is uniform by assumption, the ring above is also uniform. It follows that $\mathcal{C}(X,A)\langle \frac{f_1, f_2, \ldots, f_r}{g} \rangle$, as their product, is also uniform.
\end{proof}

\begin{cor}\label{cor: continuous function of locally perfectoid is locally perfectoid}
    Let $R$ be a locally perfectoid Tate algebra and $X$ be a quasi-compact space. Then the Tate ring
    \[\calC(X,R)\]
    is locally perfectoid.
\end{cor}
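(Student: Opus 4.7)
The plan is to check the criterion of Proposition \ref{prop: locally perfectoid independ integers}: namely, that $\calC(X,R)$ is stably uniform (already handled by Proposition \ref{prop: continuous funtion ring stably uniform}) and that there exist finitely many elements $g_1,\ldots,g_r \in \calC(X,R)$ such that every Laurent localization $V_I(\calC(X,R),\calC(X,R)^+)$ is perfectoid. I will take the $g_i$ to be the constant functions given by a set $\{f_1,\ldots,f_r\} \subseteq R$ witnessing local perfectoidness of $R$, and the main work is then identifying the resulting rational localizations with $\calC(X, R_I)$ and proving that these are perfectoid.

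First I would establish an auxiliary lemma: for any perfectoid Tate algebra $A$ and quasi-compact topological space $X$, the ring $\calC(X,A)$ is perfectoid. I reduce to the case where $X$ is profinite, exactly as in the proof of Proposition \ref{prop: continuous funtion ring stably uniform}. In this case, writing $X = \lim_{i} X_i$ with $X_i$ finite, $\calC(X, A^\circ)$ is the $\pi$-adic completion of $\colim_i A^{\circ, X_i}$ for any pseudouniformizer $\pi \in A$ with $\pi^p \mid p$. Since $A$ is uniform, $\calC(X,A)^\circ = \calC(X,A^\circ)$ and $\calC(X,A)$ is Tate and uniform. For the perfectoid Frobenius condition, the discrete-quotient variant of Lemma \ref{lem: tensor product and continuous functions} gives identifications $\calC(X,A^\circ)/\pi^k = \calC(X, A^\circ/\pi^k)$, under which the Frobenius on $\calC(X,A)^\circ/\pi \to \calC(X,A)^\circ/\pi^p$ is induced by the bijective Frobenius $A^\circ/\pi \to A^\circ/\pi^p$ coming from perfectoidness of $A$.

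With the lemma in hand, I would choose by Proposition \ref{prop: locally perfectoid independ integers} a ring of integers $R^+ \subseteq R$ and elements $f_1,\ldots,f_r \in R$ such that each Laurent localization $R_I := R\langle f_i : i \in I\rangle\langle f_j^{-1} : j \notin I\rangle$ is perfectoid. Viewing each $f_i$ as a constant function in $\calC(X,R)$, the base-change property of rational localizations, together with Lemma \ref{lem: tensor product and continuous functions} applied to the adic $R$-module $R_I$, yields a natural identification of the Laurent localization of $(\calC(X,R), \calC(X,R)^+)$ at $V_I$ with
\[ \calC(X,R) \cotimes_R R_I \;=\; \calC(X, R_I), \]
which is perfectoid by the lemma. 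Combining this with stable uniformity and invoking Proposition \ref{prop: locally perfectoid independ integers} in the reverse direction shows that $\calC(X,R)$ is locally perfectoid.

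The main technical obstacle will be the verification that the rational localization of $\calC(X,R)$ at the constants $f_i$ is indeed the continuous-function ring of the rational localization $R_I$; this requires identifying $\calC(X,R)\langle \cdots \rangle$ with $\calC(X,R)\cotimes_R R\langle \cdots \rangle$, obtained via the universal property of rational localizations together with Lemma \ref{lem: tensor product and continuous functions}. The Frobenius bijectivity in the auxiliary lemma is the other delicate step, but is essentially formal once one observes that $A^\circ/\pi$ carries the discrete topology, so continuous functions into it are locally constant and the bijectivity reduces pointwise to the corresponding property of $A$.
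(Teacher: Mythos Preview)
Your proof is correct and follows exactly the approach intended by the paper's terse ``Combine Proposition \ref{prop: continuous funtion ring stably uniform} and Lemma \ref{lem: tensor product and continuous functions}.'' You have made explicit the one step the paper leaves to the reader: the auxiliary fact that $\calC(X,A)$ is perfectoid when $A$ is perfectoid and $X$ is quasi-compact. Your argument for this (reducing to profinite $X$, using $\calC(X,A^\circ)/\pi^k \cong \calC(X,A^\circ/\pi^k)$, and checking the Frobenius bijectivity pointwise) is the standard one and is correct. The identification of the Laurent localization at the constant functions $f_i$ with $\calC(X,R)\cotimes_R R_I \cong \calC(X,R_I)$ via Lemma \ref{lem: tensor product and continuous functions} is likewise exactly what the paper has in mind (and is a special, simpler instance of the computation carried out inside the proof of Proposition \ref{prop: continuous funtion ring stably uniform}).
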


\begin{proof}
    Combine Proposition \ref{prop: continuous funtion ring stably uniform} and Lemma \ref{lem: tensor product and continuous functions}.
\end{proof}

\subsection{Generalized geometric Sen theory}

The geometric Sen theory was introduced by Camargo in \cite{camargo2023geometric}, which is a globalization of Faltings' local Simpson correspondence. For our use, we need to generalize the theory to a larger class of algebras, especially those algebras that are not topologically finitely generated.

Let $\overline{A}/\dZ_p^\cyc$ be an integral perfectoid algebra and put $A=\AAinf(\overline{A})$. Let $R/\overline{A}$ be a $p$-complete and $p$-torsion free algebra.

\begin{defi}\label{defi: toric chart}
    A toric chart of $R$ is a homomorphism of $\overline{A}$-algebras
    \[f:\overline{A}\langle T_1,T_2,\dots,T_d\rangle\to R\]
    satisfying the following conditions:
    \begin{enumerate}[label=(\arabic*)]
        \item $\cdL_{R/\overline{A}}[\frac{1}{p}]=\bigoplus_{j=1}^dR\bigl[\frac{1}{p}\bigr]\dif f(T_j)$;
        \item The ring 
        \[\bigl(\overline{A}\langle T_1^{\frac{1}{p^\infty}},T_2^{\frac{1}{p^\infty}},\dots, T_d^{\frac{1}{p^\infty}} \rangle\cotimes_{\overline{A}\langle T_1,T_2,\dots, T_d\rangle}R\bigr)\bigl[\frac{1}{p}\bigr]\]
        is locally perfectoid (Definition \ref{defi: locally perfectoid huber pairs});
        \item For any $i$, $f(T_i)$ is invertible in $R$.
    \end{enumerate}
    For such a toric chart, define $R_\infty$ to be the integral closure of
    \[ \overline{A}\langle T_1^{\frac{1}{p^\infty}},T_2^{\frac{1}{p^\infty}},\dots, T_d^{\frac{1}{p^\infty}} \rangle\cotimes_{\overline{A}\langle T_1,T_2,\dots, T_d\rangle}R\]
    inside
    \[\bigl(\overline{A}\langle T_1^{\frac{1}{p^\infty}},T_2^{\frac{1}{p^\infty}},\dots, T_d^{\frac{1}{p^\infty}} \rangle\cotimes_{\overline{A}\langle T_1,T_2,\dots, T_d\rangle}R\bigr)\bigl[\frac{1}{p}\bigr].\]
\end{defi}

\begin{rmk}
Whether $\overline A$ is $p$-torsion-free is unimportant in this definition, but one should note that $\overline A$ has bounded $p^\infty$-torsion by \cite[Lemma 2.34]{Bhatt_2022}. In fact, we have $\overline A[p^\infty]=\overline A[p]$.
\end{rmk}

\begin{defi}
    The algebra $R$ is called \emph{senable} if there exists a toric chart.
\end{defi}

\begin{theo}\label{theo: sheafy R}
    Let $R$ be a senable algebra over $\overline{A}$. Let $R^+$ be the integral closure of $R$ in $R\bigl[\frac{1}{p}\bigr]$. Then $R\bigl[\frac{1}{p}\bigr]$ is stably uniform (cf. \cite[The paragraph before Theorem 7]{Buzzard_2016}). In particular, the Huber pair $\bigl(R\bigl[\frac{1}{p}\bigr],R^+\bigr)$ is sheafy.
\end{theo}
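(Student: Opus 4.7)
The plan is to deduce stable uniformity of $R[\tfrac{1}{p}]$ from the local perfectoidness of $R_\infty[\tfrac{1}{p}]$ by constructing a continuous linear retraction and tracking boundedness under rational localization. The crucial structural input is that the ring $\overline{A}\langle T_1^{1/p^\infty},\ldots,T_d^{1/p^\infty}\rangle$ is the $p$-adic completion of the free $\overline{A}\langle T_1,\ldots,T_d\rangle$-module on the fractional monomials $T^\alpha$, $\alpha \in (\mathbb{Z}[1/p]/\mathbb{Z})^d$. Base-changing along $\overline{A}\langle T\rangle \to R$ and inverting $p$, and using that $R_\infty[\tfrac{1}{p}]$ coincides with the $p$-inverted tensor product (since $R_\infty$ is integral over that tensor product by construction), I obtain a Banach direct-sum decomposition of Banach $R[\tfrac{1}{p}]$-modules
\[ R_\infty\bigl[\tfrac{1}{p}\bigr] \;\cong\; \widehat{\bigoplus}_{\alpha \in (\mathbb{Z}[1/p]/\mathbb{Z})^d}\, R\bigl[\tfrac{1}{p}\bigr] \cdot T^\alpha, \]
and projection onto the $\alpha = 0$ summand yields a continuous $R[\tfrac{1}{p}]$-linear retraction $\pi: R_\infty[\tfrac{1}{p}] \to R[\tfrac{1}{p}]$ of the natural inclusion.

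Next, set $B=R[\tfrac{1}{p}]$ and $B_\infty=R_\infty[\tfrac{1}{p}]$, and fix an arbitrary rational localization $C=B\langle f_1/g,\ldots,f_r/g\rangle$. Define $C_\infty := B_\infty\langle f_1/g,\ldots,f_r/g\rangle$, which is a rational localization of $B_\infty$; since $B_\infty$ is locally perfectoid and hence stably uniform, $C_\infty$ is uniform, i.e.\ $C_\infty^\circ$ is bounded in $C_\infty$. Realizing $C_\infty$ as the Banach completion of the algebraic tensor product $C^{\mathrm{alg}} \otimes_B B_\infty$ with $C^{\mathrm{alg}}=B[Y_1,\ldots,Y_r]/(gY_i-f_i)$, and base-changing the Banach decomposition of $B_\infty$ along $B \to C$, I obtain a compatible Banach $C$-module decomposition $C_\infty \cong \widehat{\bigoplus}_{\alpha} C \cdot T^\alpha$, and hence a continuous $C$-linear retraction $\pi_C: C_\infty \to C$ of the inclusion $\iota:C\hookrightarrow C_\infty$.

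The conclusion now follows by a short chase: for any $x \in C^\circ$, continuity of $\iota$ gives $\iota(x) \in C_\infty^\circ$, and then $x = \pi_C(\iota(x)) \in \pi_C(C_\infty^\circ)$; since $\pi_C$ is continuous and $C_\infty^\circ$ is bounded in $C_\infty$, the image $\pi_C(C_\infty^\circ)$ is bounded in $C$, so $C^\circ$ is bounded, and $C$ is uniform. As $C$ was an arbitrary rational localization, $B$ is stably uniform, and sheafiness of $(B, R^+)$ then follows from the standard fact (Buzzard--Verberkmoes, Mihara) that stably uniform Tate--Huber pairs are sheafy. The main technical obstacle, in my view, is the second paragraph: carefully identifying $C_\infty$ as a Banach $C$-module with the displayed Banach direct sum, i.e.\ verifying that rational localization commutes with the completed Banach direct sum in the appropriate sense. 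Once that identification is in hand, the retraction-plus-continuity argument makes everything immediate.
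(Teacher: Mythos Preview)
Your proposal is correct and follows essentially the same approach as the paper: the paper simply cites the proof of \cite[Proposition 6.3.4]{Scholze_2020}, and what you have written is precisely an unpacking of that argument in the present context (Banach direct-sum decomposition via fractional monomials, continuous $R[\tfrac1p]$-linear retraction, base-change to an arbitrary rational localization, and the boundedness chase). Your identification of the main technical point---that the Banach decomposition of $B_\infty$ over $B$ base-changes to $C_\infty$ over $C$---is exactly the substance of Scholze's lemma.
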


\begin{proof}
    As $R$ is senable, we can choose a toric chart
    \[f:\overline{A}\langle T_1,T_2,\dots,T_d\rangle\to R.\]
    Note that
    \[\left(\overline{A}\langle T_1^{\frac{1}{p^\infty}},T_2^{\frac{1}{p^\infty}},\dots, T_d^{\frac{1}{p^\infty}} \rangle\cotimes_{\overline{A}\langle T_1,T_2,\dots, T_d\rangle}R\right)\bigl[\frac{1}{p}\bigr]\]
    is stably uniform by definition. Hence, the claim follows from the proof of \cite[Proposition 6.3.4]{Scholze_2020}.
\end{proof}

In the following, we will put $X=\Spa\bigl(R\bigl[\frac{1}{p}\bigr],R^+\bigr)$.

Recall that the construction of geometric Sen theory of rigid analytic varieties is divided into three steps. First, by passing to local, we can construct an equivalence between $v$-vector bundles and generalized representations. Second, we use the decompletion technique to construct the Sen operator relative to a toric chart. Finally, by an abstract calculation, we can glue our local constructions to the whole $v$-site.

In our generalized theory, we also follow this idea. Let $\Perf_{\overline{A}\bigl[\frac{1}{p}\bigr]}$ be the category of affinoid perfectoid spaces over $\overline{A}$.

Recall that we use $(R/A)_\Prism^\perf$ to denote the full subcategory of perfect prisms in $(R/A)_\Prism$. Let $\calE$ be an object in $\Vect\bigl( (R/A)_\Prism^\perf,\overline\calO\bigl[\frac{1}{p}\bigr] \bigr)$, we can define a $v$-vector bundle on $X$ by sending each $\Spa(S,S^+)$ and $f\in X(S,S^+)$ to $\calE(S^+)$. Here, we view the pair $(S^+,f:R\to S^+)$ as an object in $(R/A)_{\Prism}^\perf$. This defines a functor from $\Vect\bigl( (R/A)_\Prism^\perf,\overline\calO\bigl[\frac{1}{p}\bigr] \bigr)$ to the category of vector bundles on $X_v$, which we denote by $\mathrm{L}$. 

It is proved in \cite{Min_2025} that $\mathrm{L}$ is an equivalence. However, their construction of the inverse of $\mathrm{L}$ warrants further scrutiny. Specifically, we remark on a subtle point in \cite[Construction 5.4]{Min_2025} which appears to implicitly rely on the non-trivial fact that the quotient of a perfectoid ring by its $p$-power torsion remains perfectoid. This property is not automatic. We complete the proof of this property. Later, we learn that this has been already proved in \cite[Lecture \uppercase\expandafter{\romannumeral4}, Proposition 3.2]{bhatt_prismnote}. Our proof is slightly different from \cite{bhatt_prismnote}, and we want to write it down here.

\begin{prop}\label{prop: perfectoid divides out p-torsion}
Let $\overline A$ be integral perfectoid in the sense of \cite[Section 3]{bhatt2018integral}, and $J$ be the kernel of $\overline A \to \overline A \bigl[ \frac 1 p \bigr]$, then $J \subseteq \overline A$ is closed (in the $p$-adic topology), and $\overline A/J$ is also integral perfectoid.

In particular, $\overline A \bigl[ \frac 1 p \bigr] \cong (\overline A/J) \bigl[ \frac 1 p \bigr]$ is perfectoid as a Tate algebra (i.e. Fontaine's perfectoid).
\end{prop}

\begin{proof}
Write $A = \AAinf(\overline A)$ and $I = \ker \theta$, then $(A,I)$ is an orientable perfect prism by \cite[Theorem 3.11]{Bhatt_2022}. By \cite[Lemma 2.34]{Bhatt_2022}, we see that $J = \overline A[p]$ is closed. Write $A=W(S)$ for $S = \overline A^\flat$, and take a generator $d$ of $I$. Write $d = [a_0] + pu$ for some $u \in A^\times$ and $a_0 \in S$, and we take $J' \subseteq A$ to be the kernel of $A \to A \bigl[ \frac 1 {[a_0]} \bigr]$. We claim that $J'$ is a $\delta$-ideal for which $\phi^{-1}(J')=J'$.

First, fix $x \in J'$, so $[a_0]^Nx =0$ for some $N \in \mathbb Z_+$. Since $\delta([a_0])=0$, it follows that $[a_0]^{pN}\delta(x) = \delta([a_0]^Nx)=0$, and we see that $\delta(x) \in J'$. Now assume that $x \in A$ and that $\phi(x) \in J'$. Assume that $[a_0]^{pN} \phi(x) =0$, and we see that $\phi([a_0]^Nx) = [a_0]^{pN} \phi(x)=0$. Since $A$ is perfect, we conclude that $[a_0]^N x=0$, and therefore $x\in J'$.

Now take $J''$ to be the closure of $J'$ in $A$ (with respect to  the $(p,d)$-adic topology), and take $A' = A/J''$, so $A'$ is a perfect $\delta$-ring by the second paragraph, and is classically $(p,d)$-adically complete by the same argument as in Lemma \ref{lem: closed submodule}. Since $d$ is distinguished in $A'$, by \cite[Lemma 2.34]{Bhatt_2022}, we see that $(A',dA')$ is a perfect prism. Take $\overline{A'} = A'/dA'$ to be the integral perfectoid ring associated to $A'$, and we have a natural surjective map $\psi: \overline A \to \overline{A'}$. We claim that $\ker \psi = J$.

On the one hand, for $x \in J'$ we have $[a_0]^Nx=0$ for some $N$, and we see that $p^N \overline x = 0 \in \overline A$ for $\overline x \in \overline A$ the image of $x$ in $\overline A$. It follows that $\overline x \in J$, so $J' \subseteq A$ is contained in the inverse image of $J$. Since $J$ is closed, we see that $J''$ is also contained in the inverse image of $J$, and it follows that $\ker \psi = (J''+dA)/dA \subseteq J$. Now, to show that $J \subseteq \ker \psi$, we only need to show that $\overline{A'}$ is $p$-torsion-free.

Note that $[a_0] \in A'$ is $p$-torsion-free. For $\overline x \in \overline{A'}[p]$, take a lifting $x \in A'$, and write $px = dy$ for some $y \in A'$. It follows that $px = [a_0]y + puy$, so $[a_0]y \in pA'$. Now since $[a_0]$ is a non-zero divisor, we see that $[a_0][b]=0 \Rightarrow [b]=0$. Write down the Teichm\"uller expansion for $y$, and we see that $y \in pA'$, say $y=pz$. It follows that $px=pdz$. Since $A'$ is $p$-torsion-free (by \cite[Lemma 2.28]{Bhatt_2022}), we conclude that $x=dz \in A'$, and $\overline x=0$. Therefore, $\overline{A'}$ is $p$-torsion-free, and we see that $J = \ker \psi$.

Apply \cite[Lemma 3.21]{bhatt2018integral} to $\overline A/J \cong \overline{A'}$, we see that $(\overline A/J) \bigl[ \frac 1 p \bigr]$ is perfectoid in Fontaine's sense. By definition, $\overline A \bigl[ \frac 1 p \bigr] \cong (\overline A/J) \bigl[ \frac 1 p \bigr]$, and we are done.
\end{proof}

\begin{theo}\label{theo: v vector bundle=perfect crystals}
    The functor $\mathrm{L}$ is an equivalence.
\end{theo}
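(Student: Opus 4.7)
The plan is to descend both sides along a common perfectoid cover arising from a toric chart, and then identify the resulting descent data.

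Since $R$ is senable in this subsection, fix a toric chart $f: \overline{A}\langle T_1,\dots,T_d\rangle \to R$ and let $R_\infty$ be the associated integral perfectoid $\overline{A}$-algebra as in Definition \ref{defi: toric chart}. The perfect prism $\mathfrak{S} := (\AAinf(R_\infty),\ker\theta)$ together with the map $R \to R_\infty$ defines an object of $(R/A)_\Prism^\perf$. First I would check that $R \to R_\infty$ is $p$-completely faithfully flat: it factors through the $p$-completely faithfully flat base change $\overline{A}\langle T_i^{1/p^\infty}\rangle \cotimes_{\overline{A}\langle T_i\rangle} R$, and taking the integral perfectoid closure preserves this property. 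The argument of Proposition \ref{prop: weakly final}, transplanted to the perfect prismatic setting, then shows $\mathfrak{S} \to *$ is a cover in $(R/A)_\Prism^\perf$.

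Let $\mathfrak{S}^{(\bullet)}$ be the \v Cech nerve of $\mathfrak{S}$ in $(R/A)_\Prism^\perf$ and write $R_\infty^{(\bullet)} := \overline{\mathfrak{S}^{(\bullet)}}$ for its Hodge--Tate reduction, a cosimplicial integral perfectoid $\overline{A}$-algebra. By Lemma \ref{lem: weakly final computation} applied to the perfect variant, the category $\Vect\bigl((R/A)_\Prism^\perf,\overline\calO[\tfrac 1 p]\bigr)$ is equivalent to $\lim_{\simp} \Vect\bigl(R_\infty^{(\bullet)}[\tfrac 1 p]\bigr)$. On the geometric side, $X_\infty := \Spa(R_\infty[\tfrac 1 p],R_\infty)$ is an affinoid perfectoid space by Definition \ref{defi: toric chart} and Proposition \ref{prop: locally perfectoid independ integers}, and $X_\infty \to X$ is a $v$-cover. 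Lemma \ref{lemma: v descent of locally perfectoid} identifies $v$-vector bundles on $X_\infty$ with finite projective $R_\infty[\tfrac 1 p]$-modules, and standard $v$-descent of vector bundles on perfectoid spaces presents $\Vect(X_v,\widehat\calO)$ as $\lim_{\simp} \Vect(R_\infty^{(\bullet),v}[\tfrac 1 p])$, where $R_\infty^{(n),v}$ denotes the integral perfectoid ring of $X_\infty \times_X \cdots \times_X X_\infty$ ($n+1$ factors).

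The functor $\mathrm{L}$ sends the presentation on the prismatic side to the presentation on the $v$-side term-by-term, so the theorem reduces to identifying the two cosimplicial rings $R_\infty^{(\bullet)} \cong R_\infty^{(\bullet),v}$ compatibly with the cosimplicial structure. Both are characterised by the same universal property: they represent the $(n+1)$-fold coproduct of $R_\infty$ over $R$ in the category of integral perfectoid $\overline{A}$-algebras receiving a map from $R$ (on the prismatic side via the perfect-prism universal property together with the tilting equivalence \cite[Theorem 3.10]{Bhatt_2022}, on the $v$-side via the universal property of fibre products of affinoid perfectoid spaces together with the identification $(S,S^+) \mapsto S^+$ for $S^+$ integral perfectoid).

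The main obstacle is precisely this last identification of cosimplicial rings. One has to verify that fibre products of $X_\infty$ over $X$ in the category of affinoid perfectoid spaces are represented by rings of integers that are integral perfectoid, that the resulting ring of integers really coincides (as an $\overline{A}$-algebra receiving a map from $R$) with the coproduct in $(R/A)_\Prism^\perf$, and that all transition maps in the two cosimplicial diagrams match. The key technical input is that at each simplicial level the natural map from the perfectoidisation of the $p$-complete tensor product (in the prismatic side) to the ring of integers of the perfectoid fibre product (on the $v$-side) is an isomorphism. Once this identification is established, $\mathrm{L}$ becomes an equivalence by descent.
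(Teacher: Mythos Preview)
Your approach via descent along a toric-chart cover is quite different from the paper's, which is far more direct: the paper simply writes down an explicit inverse to $\mathrm L$. For a $v$-vector bundle $\calV$ on $X$ and any $T \in (R/A)_\Prism^\perf$, set $T^+$ to be the integral closure of $T$ in $T[\tfrac 1 p]$; since $T$ is integral perfectoid over $\overline A \supseteq \dZ_p^\cyc$, the pair $(T[\tfrac 1 p], T^+)$ is a perfectoid Huber pair, so one can evaluate $\calV$ there and obtain a finite projective $T[\tfrac 1 p]$-module. The assignment $T \mapsto \calV(T[\tfrac 1 p], T^+)$ is a crystal (the crystal condition is precisely the $v$-vector-bundle property of $\calV$), and this is visibly a quasi-inverse to $\mathrm L$. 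The paper defers the remaining verifications to \cite[Theorem 5.1]{Min_2025}.

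Your route can in principle be made to work, but the step you yourself flag as ``the main obstacle'' --- matching the two \v Cech nerves --- is precisely what the paper's direct construction sidesteps: rather than identifying coproducts of $R_\infty$ over $R$ in integral perfectoid $\overline A$-algebras with fibre products of $X_\infty$ over $X$ in perfectoid spaces, the paper associates to \emph{every} object of $(R/A)_\Prism^\perf$ its generic-fibre perfectoid pair, so no specific cover or nerve comparison is needed. Your proposal also transplants Proposition \ref{prop: weakly final} to the perfect setting without supplying the extra ingredient (perfectoidisation of the relevant tensor product) required to produce a perfect prism cover. None of this is wrong in spirit, but as written your argument remains an outline with its key technical identification left open; the paper's approach is both shorter and does not rely on the toric chart for this statement.
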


\begin{proof}
    In fact, this has already been proved in the proof of \cite[Theorem 5.1]{Min_2025}. We only construct the inverse of $\mathrm{L}$. Let $\calV$ be a vector bundle on $X_v$. For any $R\to T$ in $(R/A)_{\Prism}^\perf$, define $T^+$ as the integral closure of $T$ in $T\bigl[\frac{1}{p}\bigr]$. Now,
    $\bigl(T\bigl[\frac{1}{p}\bigr],T^+\bigr)$ forms a perfectoid Tate--Huber pair by Proposition \ref{prop: perfectoid divides out p-torsion}. We define a crystal on $(R/A)_\Prism^\perf$ as sending each $R\to T$ to $\calV\bigl(T\bigl[\frac{1}{p}\bigr],T^+\bigr)$. This defines the quasi-inverse of $\mathrm{L}$.
\end{proof}

Next, we will construct an equivalence between the category of generalized representations and the category of $v$-vector bundles. Indeed, this has already been proved several times (see, for example, \cite[Section 5]{Min_2025}). Here, we provide a slightly general form. We assume that all Tate--Huber pairs are over $\dQ_p$.

\begin{lemma}\label{lemma: preadic space morphisms}
    Let $(R,R^+)$ be a complete Tate--Huber pair and $X=\Spa^{\mathrm{ind}}(R,R^+)$. Then for any sheafy Tate--Huber pair $(S,S^+)$, the canonical map
    \[\Mor_{\CAfd}\left((R,R^+),(S,S^+)\right)\to \Mor\left(\Spa(S,S^+),X\right)\]
    is a bijection.
\end{lemma}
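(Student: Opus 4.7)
This lemma is essentially the universal property built into the construction of affinoid pre-adic spaces in \cite[Definition 3.4.1]{Scholze_2020}: the functor $\Spa^{\mathrm{ind}}$ realizes the opposite of the category of complete Tate--Huber pairs as the category of affinoid pre-adic spaces. The task is to promote this to morphisms out of the sheafy affinoid adic space $\Spa(S,S^+)$, viewed as a pre-adic space.

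The plan is as follows. In the forward direction, a morphism $\varphi:(R,R^+)\to(S,S^+)$ in $\CAfd$ induces a map $f_\varphi:\Spa(S,S^+)\to X$ by pullback of valuations on underlying spaces, together with the morphism of structure presheaves coming from the functoriality of rational localization. Conversely, a morphism $f:\Spa(S,S^+)\to X$ of pre-adic spaces (per \cite[Definition 3.4.2]{Scholze_2020}) includes a morphism of structure presheaves of Huber pairs $(\calO_X,\calO_X^+)\to f_*(\calO_{\Spa(S,S^+)},\calO_{\Spa(S,S^+)}^+)$; taking global sections on both sides yields a morphism
\[ (R,R^+) = (\Gamma(X,\calO_X),\Gamma(X,\calO_X^+)) \to (\Gamma(\Spa(S,S^+),\calO),\Gamma(\Spa(S,S^+),\calO^+)) = (S,S^+), \]
where the last identification crucially uses the sheafyness of $(S,S^+)$.

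Finally, I would verify that these two constructions are mutually inverse. One direction is immediate on global sections by construction; for the other, the key observation is that any morphism of pre-adic spaces into $X$ is determined by its induced map on structure presheaves, since $X$ admits a basis of affinoid opens of the form $\Spa^{\mathrm{ind}}$ of rational localizations of $(R,R^+)$, and the behaviour on each such open is recovered from the global sections map together with the universal property of the localization. The main potential obstacle is unpacking \cite[Definition 3.4.2]{Scholze_2020} carefully enough to confirm that morphisms of pre-adic spaces are precisely as described --- the combined data of a continuous map on underlying spaces together with a compatible morphism of structure presheaves of Huber pairs --- but this is built into the construction of pre-adic spaces and so amounts to chasing definitions.
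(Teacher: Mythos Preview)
Your approach is correct and lands on the same essential ingredient as the paper: sheafiness of $(S,S^+)$ is what turns a morphism of pre-adic spaces into a single global Huber pair map. The paper's proof is terser --- it simply cites \cite[Proposition 3.4.3]{Scholze_2020} to say that any morphism $\Spa(S,S^+)\to\Spa^{\mathrm{ind}}(R,R^+)$ is \emph{locally on the source} induced by a map of Huber pairs, and then invokes sheafiness to glue these local maps into a global one.

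The one place where your plan is slightly optimistic is the claim that unpacking \cite[Definition 3.4.2]{Scholze_2020} to extract ``continuous map plus morphism of structure presheaves'' merely ``amounts to chasing definitions''. In the Scholze--Weinstein setup, pre-adic spaces are defined via the Yoneda embedding (as sheaves on the opposite of complete Huber pairs, locally representable), so morphisms are a priori natural transformations of functors, not directly locally-ringed-space-style data. Producing the description you want is essentially the content of \cite[Proposition 3.4.3]{Scholze_2020} itself, not a triviality upstream of it. So your proof works, but the step you flag as routine is in fact the same citation the paper makes; once you grant that, your global-sections argument and the paper's local-to-global gluing argument are equivalent repackagings of the same idea.
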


\begin{proof}
    By \cite[Proposition 3.4.3]{Scholze_2020}, any morphism from $\Spa(S,S^+)$ to $\Spa^{\mathrm{ind}}(R,R^+)$ is locally on $\Spa(S,S^+)$ induced by a map of Tate--Hubers from $(R,R^+)$ to $(S,S^+)$. Hence, the claim follows from the sheafiness of $(S,S^+)$.
\end{proof}

\begin{lemma}\label{lemma: diamondian adic space and coproduct}
    Let $R$ be a $p$-complete and $p$-torsion free algebra over $\overline{A}$. Let $\Perf_{R}$ be the category of triples $(S,S^+,f:R\to S^+)$, where $(S,S^+)$ be a perfectoid Tate--Huber pairs and $f$ be a homomorphism of $\overline{A}$ algebras.

    \begin{enumerate}
        \item The functor sending $(S,S^+)$ to $\Hom_{\overline{A}}(R,S^+)$ is represented by a diamond.
        \item The category $\Perf_{R}$ has arbitrary finite coproducts.
    \end{enumerate}
\end{lemma}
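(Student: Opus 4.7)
The plan is to identify the functor in part (1) with the diamond of a natural pre-adic space, and in part (2) to realise coproducts as fibre products of affinoid perfectoid spaces.

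For part (1), let $R^+$ denote the integral closure of the image of $R$ in $R[1/p]$. Since $R$ is $p$-complete and $p$-torsion-free, the pair $(R[1/p], R^+)$ is a (possibly non-sheafy) Tate--Huber pair over $\overline A[1/p]$. For any perfectoid Tate--Huber pair $(S, S^+)$ over $\overline A$, every $\overline A$-algebra homomorphism $R \to S^+$ is automatically continuous for the $p$-adic topologies, and extends uniquely to a map $R^+ \to S^+$ because $S^+$ is integrally closed in $S$. Hence
\[ \Hom_{\overline A}(R, S^+) = \Mor_{\CAfd}((R[1/p], R^+), (S, S^+)), \]
which by Lemma \ref{lemma: preadic space morphisms} coincides with $\Mor(\Spa(S, S^+), \Spa^{\mathrm{ind}}(R[1/p], R^+))$. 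Our functor is therefore represented by the small $v$-sheaf $\Spd(R[1/p], R^+)$.

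To prove this is actually a diamond, the plan is to exhibit an affinoid perfectoid $v$-cover. Choose a set $\{r_i\}_{i \in I}$ of topological generators of $R^+$ as an $\overline A$-algebra, and form the perfectoid $\overline A$-algebra $B = \overline A \bigl\langle T_i^{1/p^\infty} : i \in I \bigr\rangle$. The continuous $\overline A$-algebra map $B \to R_\infty^+$ sending $T_i \mapsto r_i$, where $R_\infty^+$ denotes a perfectoidisation of $R^+$ obtained by iteratively adjoining $p^n$-th roots and $p$-completing, is surjective. Composed with the canonical inclusion $R \hookrightarrow R_\infty^+$, this furnishes a $v$-surjection $\Spd(R_\infty^+[1/p], R_\infty^+) \to \Spd(R[1/p], R^+)$ from an affinoid perfectoid diamond. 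The associated equivalence relation is represented by the affinoid perfectoid fibre product $\Spa(R_\infty^+[1/p]) \times_{\Spa^{\mathrm{ind}}(R[1/p], R^+)} \Spa(R_\infty^+[1/p])$, whose existence follows from the existence of fibre products of affinoid perfectoid spaces over arbitrary pre-adic base spaces. This exhibits $\Spd(R[1/p], R^+)$ as the $v$-quotient of a groupoid in affinoid perfectoid spaces, hence as a diamond.

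For part (2), given objects $(S_1, S_1^+, f_1)$ and $(S_2, S_2^+, f_2)$ of $\Perf_R$, the plan is to take the coproduct to be represented by the affinoid perfectoid fibre product
\[ \Spa(S_1, S_1^+) \times_{\Spa^{\mathrm{ind}}(R[1/p], R^+)} \Spa(S_2, S_2^+), \]
whose existence in the category of affinoid perfectoid spaces is given by the same result invoked above. Letting $(T, T^+)$ denote the corresponding perfectoid Huber pair, the two agreeing compositions $R \to S_i^+ \to T^+$ provide the canonical map $R \to T^+$. The universal property of coproducts in $\Perf_R$ then follows formally from the universal property of fibre products, via the contravariant identification of $\Perf_R$ with the category of affinoid perfectoid spaces over $\Spa^{\mathrm{ind}}(R[1/p], R^+)$.

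The principal technical obstacle concerns the diamond property in part (1) when $R$ is not topologically finitely generated: one must ensure that the perfectoidisation $R^+ \to R_\infty^+$ can be carried out within a manageable set-theoretic framework, and that the resulting $v$-cover effectively computes our functor. The plan for handling this is to reduce to the topologically finitely generated case (where the argument is classical) by presenting $R$ as a $p$-completed filtered colimit of its topologically finitely generated sub-$\overline A$-algebras, and using that all the relevant constructions (integral closure, perfectoidisation, and fibre products of affinoid perfectoid spaces) commute with filtered colimits.
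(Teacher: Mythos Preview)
Your identification of the functor with $\Spd(R[1/p], R^+)$ in part (1) is correct and matches the paper exactly. However, the paper then simply cites \cite[Section 10.1]{Scholze_2020} (specifically the result that for any Tate--Huber pair over $\dZ_p$ the associated $v$-sheaf is a diamond), whereas you attempt to reprove this diamond property directly. Your argument here has genuine gaps: the ``perfectoidisation $R_\infty^+$ obtained by iteratively adjoining $p^n$-th roots'' is not a well-defined construction for general $R^+$, and more seriously, exhibiting $\Spd(R[1/p],R^+)$ as a $v$-quotient of a perfectoid space by a perfectoid equivalence relation is \emph{not} sufficient to conclude it is a diamond --- one needs the cover to be (quasi-)pro-\'etale, not merely a $v$-surjection. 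Scholze's proof proceeds differently, via pro-\'etale towers, and there is no reason to redo it here. Your ``principal technical obstacle'' about set-theoretic issues and filtered colimits is therefore a red herring.

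For part (2), your approach via fibre products of affinoid perfectoid spaces over the pre-adic base is correct and essentially equivalent to the paper's. The paper is slightly more explicit: it first takes the coproduct $(T,T^+)$ of $(S_1,S_1^+)$ and $(S_2,S_2^+)$ in affinoid perfectoid spaces over $\overline A$ (citing \cite[Proposition 6.18]{scholze2012perfectoid}), then forms the quotient of $T$ by the closed ideal generated by $r\otimes 1 - 1\otimes r$ for $r\in R$ using the perfectoid quotient construction of \cite[Section II.1]{Scholze_2015torsion}. Your fibre-product formulation packages the same construction more abstractly; both are fine.
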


\begin{proof}
    Let $R^+$ be the integral closure of $R$ in $R[\frac{1}{p}]$, and $(S,S^+)$ be a Tate--Huber pair. Since $S^+$ is integrally closed in $S$, 
    \[\Hom(R,S^+)=\Hom(R^+,S^+).\]
    Hence, by Lemma \ref{lemma: preadic space morphisms}, the functor is equal to the associated diamond of $\Spa^{\mathrm{ind}}(R,R^+)$. See \cite[Section 10.1]{Scholze_2020} for details.

    Then we construct the coproduct of two objects $\Spa(S_1,S_1^+)$ and $\Spa(S_2,S_2^+)$ in $\Perf_R$. By \cite[Proposition 6.18]{scholze2012perfectoid}, the category of affinoid perfectoid spaces has arbitrary finite coproducts. Let $(T,T^+)/\overline{A}$ be the coproduct of $\Spa(S_1,S_1^+)$ and $\Spa(S_2,S_2^+)$ over $\overline A$. Let $I$ be the ideal generated by all elements of the form $r\otimes1-1\otimes r$ where $r\in R$. By the construction in \cite[Section \uppercase\expandafter{\romannumeral2}.1]{Scholze_2015torsion}, there exists a quotient $T/I$ in the category of perfectoid algebras, which is equal to the required coproduct.
\end{proof}

\begin{rmk}
    Keep the notation in Lemma \ref{lemma: diamondian adic space and coproduct} and its proof. Let $(S_1,S_1^+)$ and $(S_2,S_2^+)$ be two objects in $\Perf_{R}$. By Yoneda lemma, the adic spectrum of the coproduct of these two objects is equal to the diamondian fiber product
    \[\Spa(S_1,S_2^+)^\diamond\times_{\Spd(R,R^+)}\Spa(S_2,S_2^+)^\diamond.\]
\end{rmk}

\begin{defi}\label{defi:generalized rep}
    Let $G$ be a topologically group that acts on a topological ring $R$. A \emph{generalized representation} with respect to $(G,R)$ is a finite-rank projective module $M/R$ which is endowed with a semi-linear $G$-action. Use
    \[\Rep_G(R)\]
    to denote the category of all generalized representations.
\end{defi}

The relationship between $v$-vector bundles and generalized representations is given by the following theorem.

\begin{theo}\label{theo: v vector bundles=generalized representations}
    Let $(R,R^+)$ be a Tate--Huber pair and $\{(R_i,R_i^+):i\in I\}$ be an inductive system of finite \'etale algebras over $(R,R^+)$. Assume the following holds:
    \begin{enumerate}[label=(\arabic*)]
        \item $\colim_{i\in I} R_i$ has a locally perfectoid tilde-limit (cf. \cite[Definition 2.4.2]{Huber_1996}) $\widetilde R$. Let $\widetilde R^+$ be the closure of $\colim_{i\in I}R_i^+$ in $\widetilde R$.

        \item There exists an inverse system of finite groups $\{G_i:i\in I^{\op}\}$ such that each $(R_i,R^+_i)$ is a $G_i$-torsor over $(R,R^+)$, compatible with change in $i$. Let $G=\lim_{i\in I^{\op}}G_i$, where the limit is taken in the category of profinite group.
    \end{enumerate}
    Then, as diamonds, $\Spd(R,R^+)=\Spa(\widetilde
     R,\widetilde R^+)^\diamond/G$. Furthermore, the evaluation on $\Spa(\widetilde R,\widetilde R^+)$ defines an equivalence between the category of $v$-vector bundles on $\Spd(R,R^+)$ and $\Rep_G(\widetilde R)$.
\end{theo}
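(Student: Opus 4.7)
I would prove the two assertions in order, first identifying the diamond quotient and then descending vector bundles along it. For the diamond statement, the strategy is to show that $\Spa(\widetilde R, \widetilde R^+)^\diamond \to \Spd(R,R^+)$ is a pro-\'etale $G$-torsor. Each finite \'etale Galois cover $(R,R^+) \to (R_i, R_i^+)$ yields a finite \'etale $G_i$-torsor $\Spa(R_i, R_i^+) \to \Spa(R,R^+)$ on the level of adic spaces (here the sheafiness needed to talk about $\Spa(R,R^+)$ can be side-stepped by working with the diamonds $\Spa^{\mathrm{ind}}(R,R^+)^\diamond = \Spd(R,R^+)$, as in Lemma \ref{lemma: preadic space morphisms} and Lemma \ref{lemma: diamondian adic space and coproduct}). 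Taking the inverse limit over $i$ in the category of diamonds and using hypothesis (1) that $\widetilde R$ is a locally perfectoid tilde-limit of $\{R_i\}$, I get $\Spa(\widetilde R, \widetilde R^+)^\diamond \cong \lim_i \Spa(R_i, R_i^+)^\diamond$, and the compatibly-changing $G_i$-actions assemble into a continuous action of $G = \lim_i G_i$ which is free with quotient $\Spd(R,R^+)$ in the $v$-topology.

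\textbf{Descent of vector bundles.} Once $\Spa(\widetilde R, \widetilde R^+)^\diamond \to \Spd(R,R^+)$ is established as a pro-\'etale (hence $v$-) $G$-torsor, I would apply $v$-descent for vector bundles: a $v$-vector bundle on $\Spd(R,R^+)$ is equivalent to a $v$-vector bundle on $\Spa(\widetilde R, \widetilde R^+)^\diamond$ together with a $G$-equivariant structure compatible with the descent cocycle. Since $(\widetilde R, \widetilde R^+)$ is locally perfectoid, Lemma \ref{lemma: v descent of locally perfectoid} identifies the underlying $v$-vector bundle with a finite projective $\widetilde R$-module $M$ obtained as its global sections. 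The equivariant structure then transports to a collection of $\widetilde R$-semilinear isomorphisms $g \cdot : M \to M$ for $g \in G$, and the cocycle condition on the torsor becomes the group-action identity $(gh)\cdot = g\cdot(h\cdot)$.

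\textbf{Main obstacle.} The genuine technical point is continuity of the resulting $G$-action in the sense of Definition \ref{defi:generalized rep}. Continuity of the action of $G$ on $\widetilde R$ itself is inherited from the fact that each $G_i$ acts through a finite discrete quotient on $R_i$, and $\widetilde R$ is the closure of $\colim R_i$. For the action on $M$, after choosing a $\widetilde R^+$-module of definition $M^+ \subseteq M$, I would argue that the orbit map $G \to \End_{\widetilde R}(M)$ lands in bounded operators (by invoking Corollary \ref{cor: uniform boundedness} applied to the pointwise-continuous family $\{g \cdot -\}_{g \in G}$) and is continuous for the natural operator topology (using Lemma \ref{lem: continuous equiv to bounded}). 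Conversely, given a generalized representation $M \in \Rep_G(\widetilde R)$, the associated finite projective $\widetilde R$-module gives a $v$-vector bundle on $\Spa(\widetilde R, \widetilde R^+)^\diamond$ by Lemma \ref{lemma: v descent of locally perfectoid}, and the semilinear $G$-action provides descent data along the torsor $\Spa(\widetilde R,\widetilde R^+)^\diamond \to \Spd(R,R^+)$; $v$-descent then produces the desired $v$-vector bundle on $\Spd(R,R^+)$, and the two constructions are mutually inverse by construction. The formal descent content is standard; the bookkeeping for continuity, and for the passage between ``pro-\'etale quotient of locally perfectoid objects'' and the globally defined diamond $\Spd(R,R^+)$, is where most of the work will live.
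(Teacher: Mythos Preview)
Your overall framework matches the paper: the diamond identification is cited directly from \cite[Lemma 10.1.7]{Scholze_2020}, and the equivalence is obtained by $v$-descent along the pro-\'etale $G$-torsor $\Spa(\widetilde R,\widetilde R^+)^\diamond \to \Spd(R,R^+)$ together with Lemma \ref{lemma: v descent of locally perfectoid}.

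The substantive divergence is in the continuity argument, and your version has a gap. To invoke Corollary \ref{cor: uniform boundedness} you need to know that for each $m\in M$ the orbit $\{g\cdot m\}_{g\in G}$ is bounded; but this is not a priori available. Evaluating the $v$-bundle at $(\widetilde R,\widetilde R^+)$ and pulling back along individual automorphisms $g$ only yields a set-theoretic map $G \to \Aut(M)$, with no control on how the cocycle $A_g$ varies with $g$. Even granting uniform boundedness, Lemma \ref{lem: continuous equiv to bounded} tells you each $g$ acts continuously, not that $g\mapsto g\cdot m$ is continuous; a uniformly bounded family of isometries of a Banach module need not vary continuously in the index.

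The paper resolves this by evaluating the $v$-bundle not just at the point $\Spa(\widetilde R,\widetilde R^+)$ but on the entire \v{C}ech nerve of the torsor, whose $n$-th stage is represented by the locally perfectoid pair $(\calC(G^n,\widetilde R),\calC(G^n,\widetilde R^+))$; Corollary \ref{cor: continuous function of locally perfectoid is locally perfectoid} guarantees this is locally perfectoid, so Lemma \ref{lemma: v descent of locally perfectoid} applies at each stage. The resulting stratification over this cosimplicial ring is, by the standard translation (cf.\ \cite[Example 5.7]{Min_2025}), precisely a \emph{continuous} semilinear $G$-action. This is where continuity actually enters: it is encoded in the value of the bundle on the family $\underline G \times \Spa(\widetilde R,\widetilde R^+)$, not recoverable from the values at individual $g$. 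Replace your uniform boundedness paragraph with this cosimplicial argument.
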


\begin{proof}
    The first claim is exactly \cite[Lemma 10.1.7]{Scholze_2020}. 
We only need to prove the second claim.
    
For a $(V,\rho) \in \Rep_{G}(\widetilde{R})$, define a $v$-vector bundle on $\Spa(R,R^+)$ as follows. For all affinoid perfectoid spaces $\Spa(S,S^+)$, a morphism $\Spa(S,S^+)^\diamond \to \Spd(R,R^+)$ and $i \in I$, define $(S_i,S_i^+)$ as the base change of $(S,S^+)$ along $(R,R^+) \to (R_i,R_i^+)$. Note that $(S_i,S_i^+)$ is a finite \'etale Tate--Huber pair over $(S,S^+)$, hence also perfectoid. Let $(\widetilde{S},\widetilde{S}^+)$ be the colimit of $(\widetilde{S}_i,\widetilde{S}_i^+)$, then there is a canonical map $(\widetilde{R},\widetilde{R}^+) \to (\widetilde{S},\widetilde{S}^+)$. Set $V(\Spa(\widetilde{S},\widetilde{S}^+)) = V \otimes_{\widetilde{R}} \widetilde{S}$ and construct a semilinear continuous $G$-action via base change. By the pro-finite \'etale-descent of perfectoid spaces (cf. \cite[Corollary 3.2.2]{Berger_2008}), we obtain a finite-rank projective module on $S$, denoted by $V(\Spa(S,S^+))$. By construction, the assignment $\Spa(S,S^+) \mapsto V(\Spa(S,S^+))$ is a $v$-vector bundle on $\Spa(R,R^+)$.

On the other hand, for a $v$-vector bundle ${\mathcal{E}}$ on $\Spd(R,R^+)$, consider the global sections of the pullback of ${\mathcal{E}}$ to $\Spa(\widetilde{R},\widetilde{R}^+)$, denoted by $V_{{\mathcal{E}}}$. By Lemma \ref{lemma: v descent of locally perfectoid}, this is a finite-rank projective $\widetilde{R}$-module. For any $g \in G$, the action of $g$ on $\widetilde{R}$ induces a semilinear automorphism of $V_{{\mathcal{E}}}$, which gives a semilinear action of $G$ on $V_{{\mathcal{E}}}$. Next, we show that this action is continuous. Indeed, consider the cosimplicial Tate--Huber pair 
\[
\{(\widetilde{R}^n,\widetilde{R}^{n,+}) : n \geq 0\} := \{(\calC(G^{n},R), \calC(G^n,R^+)) : n \geq 0\}.
\]
By Corollary \ref{cor: continuous function of locally perfectoid is locally perfectoid}, each term of this cosimplicial ring is a locally perfectoid ring. Therefore, the global sections of the pullback of ${\mathcal{E}}$ to $\Spa(\widetilde{R}^n,\widetilde{R}^{n,+})$ yield a finite-rank projective $\widetilde{R}^n$-module. This gives a stratification of the cosimplicial ring $\{\widetilde{R}^n : n \geq 0\}$. Using the proof in \cite[Example 5.7]{Min_2025}, we know that this stratification induces a continuous semilinear $G$-representation on $\widetilde{R}$, and this representation coincides with $V_{{\mathcal{E}}}$.

Finally, by construction, the above argument gives an equivalence between the category of $v$-vector bundles on $\Spd(R,R^+)$ and $\mathbf{Rep}_{G}(\widetilde{R})$.
\end{proof}

Next, we will use the famous decompletion theory to construct our Sen operator for a toric chart. Let \[f:\overline{A}\langle T_1,T_2,\dots,T_d\rangle\to R\] be a toric chart. For each $n\geq 1$, choose a primitive $p^{n}$-th root of unity $\zeta_{p^n}$ such that $\zeta_{p^n}^p=\zeta_{p^{n-1}}$. This defines a basis $t$ of the Tate twist. 
    Let $\Gamma=\dZ_p^d$ and denote by $\gamma_i$ the $i$-th canonical base $(0,0,\dots,0,1,0,\dots,0)\in \Gamma$. Define the action of $\Gamma$ on $R_{\infty}[\frac{1}{p}]$ (see Definition \ref{defi: toric chart}) by
    \[\gamma_i(T_j^{\frac{1}{p^n}})=\delta_{ij}\zeta_{p^n}T_j^{\frac{1}{p^n}}\]
    for any $i,j$.

    By definition, the set $\{ T_1^{\alpha_1}T_2^{\alpha_2}\dots T_d^{\alpha_d}: \alpha_i\in \frac{1}{p^\infty}\dZ_{\geq 0} \}$
    is an orthogonal basis of \[\overline{A}\langle T_1^{\frac{1}{p^\infty}},T_2^{\frac{1}{p^\infty}},\dots, T_d^{\frac{1}{p^\infty}} \rangle[\frac{1}{p}]\] as a topological $\overline{A}[\frac{1}{p}]$-module. For any $n\geq 0$ and $1\leq i\leq d$, define $R_{n}^i$ as the  $\overline{A}[\frac{1}{p}]$-linear continuous endomorphism of $\overline{A}\langle T_1^{\frac{1}{p^\infty}},T_2^{\frac{1}{p^\infty}},\dots, T_d^{\frac{1}{p^\infty}} \rangle[\frac{1}{p}]$ as sending
    $T_1^{\alpha_1}T_2^{\alpha_2}\dots T_d^{\alpha_d}$ to itself if $p^n\alpha_i\in \dZ$ and $0$ otherwise. It is easy to check that $R_n^i$ is $\overline{A}\langle T_1,T_2,\dots,T_d\rangle$-linear and thus, by the functorality of complete tensor products, can be extended to a $R$-linear continuous endomorphism of $R_{\infty}[\frac{1}{p}]$. We still denote the endomorphism by $R_{n}^i$.

\begin{prop}\label{prop: strong decom sen of charts}
    The datum $(R_{\infty}[\frac{1}{p}],\Gamma, R_{n}^i)$ forms a Camargo's $d$-dimensional Sen theory (cf. \cite[Definition 2.2.5]{camargo2023geometric}).
\end{prop}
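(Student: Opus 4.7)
The plan is to verify each of the axioms in Camargo's \cite[Definition 2.2.5]{camargo2023geometric} directly, reducing everything to the universal perfectoid base where the verification is essentially formal. The main point is that the whole data $(R_\infty[\frac{1}{p}], \Gamma, R_n^i)$ is obtained by $\overline{A}\langle T_1,\dots,T_d\rangle$-linear base change from the ``universal chart''
\[ \bigl(\overline{A}\langle T_1^{1/p^\infty},\dots,T_d^{1/p^\infty}\rangle[\tfrac{1}{p}],\; \Gamma,\; R_n^i\bigr), \]
where the operators $R_n^i$ are literally the orthogonal projectors onto the closed span of monomials $T_1^{\alpha_1}\cdots T_d^{\alpha_d}$ with $p^n \alpha_i \in \mathbb{Z}$. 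In this universal case, all axioms (commutativity of the $R_n^i$ with each other and with $\Gamma$; the identities $R_m^i \circ R_n^i = R_{\min(m,n)}^i$; the fact that $R_n^i$ is a projector of operator norm $\le 1$; and the identification of its image with $\Gamma_n$-analytic vectors, where $\Gamma_n := p^n\mathbb{Z}_p^d$) can be read off immediately from the orthonormal basis of monomials.

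Next, I would promote each of these properties to the base-changed situation. Since $R_n^i$ is $\overline{A}\langle T_1,\dots,T_d\rangle$-linear by construction, its $R$-linear extension to $R_\infty[\frac 1 p]$ is well-defined, and the $\Gamma$-action on $R_\infty[\frac 1 p]$ is by definition trivial on the factor $R$. Hence the identities of projectors, pairwise commutativity, and commutation with $\Gamma$ descend verbatim from the universal case. The decomposition
\[ R_\infty[\tfrac 1 p] = R \cotimes_{\overline{A}\langle T_1,\dots,T_d\rangle[\frac{1}{p}]} \overline{A}\langle T_1^{1/p^\infty},\dots,T_d^{1/p^\infty}\rangle[\tfrac 1 p] \]
(together with Lemma \ref{lem: tensor product and continuous functions} to handle the continuous-function formalism when describing analytic vectors) identifies the $\Gamma_n$-analytic vectors of $R_\infty[\frac{1}{p}]$ with the image of $\prod_i R_n^i$, as required.

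The step I expect to be the main obstacle is the norm-theoretic axiom: Camargo's framework requires a \emph{Banach} representation, and the projectors $R_n^i$ must be uniformly bounded with operator norm at most $1$ with respect to a fixed submultiplicative norm defining the topology. This is where Definition \ref{defi: toric chart}(2) is essential. The locally perfectoid assumption on $\overline{A}\langle T_i^{1/p^\infty}\rangle\cotimes R[\frac{1}{p}]$ together with Proposition \ref{prop: locally perfectoid independ integers} and Theorem \ref{theo: sheafy R} ensures that $R_\infty[\frac{1}{p}]$ is uniform, so that its power-bounded subring $R_\infty^\circ$ (whose closure under the subspace topology agrees with $R_\infty$ up to bounded $p$-torsion) defines a canonical spectral norm. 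Under this norm the universal projectors $R_n^i$ on the perfectoid base have operator norm exactly $1$, and a standard argument using that the base change is by a bounded map shows that the extensions remain norm-$1$ projectors on $R_\infty[\frac 1 p]$.

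Finally, I would check the completeness axiom: the union $\bigcup_n \mathrm{Im}(R_n^1 \cdots R_n^d)$ is dense in $R_\infty[\frac 1 p]$. Again this reduces to the universal chart, where density of the $\overline{A}\langle T_1^{1/p^n},\dots,T_d^{1/p^n}\rangle[\frac 1 p]$ in $\overline{A}\langle T_i^{1/p^\infty}\rangle[\frac 1 p]$ is obvious, and then follows on $R_\infty[\frac 1 p]$ because the base change is by a completed tensor product. Putting all the verified axioms together yields the claimed Sen theory datum.
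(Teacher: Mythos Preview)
Your proposal is correct and takes essentially the same approach as the paper, which simply cites Camargo's Example~2.2.7 (the universal Sen theory on the perfectoid Tate algebra) and Proposition~2.2.14 (stability under base change) --- exactly the reduction you spell out by hand. One minor quibble: the reference to Lemma~\ref{lem: tensor product and continuous functions} is misplaced (that lemma concerns $\calC(X,A)\cotimes_A M$ for a compact space $X$ and has nothing to do with analytic vectors here), but it is not needed for the argument and can simply be deleted.
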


\begin{proof}
    This comes from a direct checking of the definitions. See \cite[Example 2.2.7]{camargo2023geometric} and \cite[Proposition 2.2.14]{camargo2023geometric} for details.
\end{proof}

For any integer $n\geq 0$, define
\[R_N=\overline{A}\langle T_1^{\frac{1}{p^N}},T_{2}^{\frac{1}{p^N}},\dots,T_d^{\frac{1}{p^N}} \rangle\otimes_{\overline{A}\langle T_1,T_2\dots,T_d\rangle}R.\]
Completion is not necessary since $R_N$ is a finite free module of $R$. We have the following decompletion result as a corollary of Proposition \ref{prop: strong decom sen of charts}.

\begin{theo}\label{theo: decompletion}
    Keep the notation, for any $M\in \mathbf{Rep}_{\Gamma}(R_\infty)[\tfrac{1}{p}]$, there exists $N\geq 1$ and a $\Gamma$-invariant finite-rank projective sub-$R_{N}$-module $M_N$ such that the canonical homomorphism
    \[\iota:R_{\infty}\otimes_{R_N}M_N\to M\]
    is a $\Gamma$-equivariant isomorphism. Furthermore, $M_N$ is unique and is equal to the submodule of $p^N\Gamma$-analytic vectors of $M$.
\end{theo}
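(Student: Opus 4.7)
The plan is to invoke the general decompletion theorem for Camargo's Sen theories, taking Proposition \ref{prop: strong decom sen of charts} as input. Once the triple $(R_\infty[\tfrac 1 p], \Gamma, R_n^i)$ is known to form a $d$-dimensional Sen theory, the abstract machinery of locally analytic vectors produces, for any $M \in \Rep_\Gamma(R_\infty[\tfrac 1 p])$, a decompletion over the ring of analytic vectors. What remains is to match this abstract output with the concrete algebra $R_N[\tfrac 1 p]$ and the concrete notion of $p^N\Gamma$-analyticity that appears in the statement.

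First, I would identify the ``decompleted'' subring of $R_\infty[\tfrac 1 p]$ arising from the projectors $R_n^i$. The composition $R_N^1 \circ \cdots \circ R_N^d$ is the idempotent onto the $\overline A[\tfrac 1 p]$-closed span of monomials $T_1^{\alpha_1}\cdots T_d^{\alpha_d}$ whose every exponent $\alpha_i$ has denominator dividing $p^N$; by the orthogonal basis description recalled before Proposition \ref{prop: strong decom sen of charts}, this image is exactly $R_N[\tfrac 1 p]$. Second, I would check the matching identity $R_N[\tfrac 1 p] = (R_\infty[\tfrac 1 p])^{p^N\Gamma\text{-}\an}$. For a monomial $T_j^{k/p^n}$, the element $\gamma_i^{p^N}$ acts by $\delta_{ij}\zeta_{p^n}^{kp^N}$; if $n\le N$ this action is trivial, so $R_N[\tfrac 1 p]$ consists of $p^N\Gamma$-analytic vectors, and if $n>N$ the eigenvalue is a non-trivial $p^{n-N}$-th root of unity, lying outside the convergence disk of the $p$-adic logarithm, which by orthogonality forces any analytic vector to have zero coefficient on such a monomial.

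Third, with the dictionary in place, Camargo's decompletion theorem attached to the Sen theory $(R_\infty[\tfrac 1 p],\Gamma,R_n^i)$ yields, for every object $M$ of $\Rep_\Gamma(R_\infty[\tfrac 1 p])$, an integer $N\ge 1$ and a $\Gamma$-stable finite projective $R_N[\tfrac 1 p]$-submodule $M_N\subseteq M$ of the same rank as $M$, such that the multiplication map
\[ R_\infty[\tfrac 1 p]\otimes_{R_N[\tfrac 1 p]} M_N \longrightarrow M \]
is a $\Gamma$-equivariant isomorphism. Uniqueness of $M_N$ and its identification with the space of $p^N\Gamma$-analytic vectors of $M$ are built into the statement of the decompletion theorem, together with the identification of analytic vectors in the ambient ring carried out above.

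The main obstacle I expect is not any one deep step but rather the bookkeeping in step two: making sure the abstract notion of $p^N\Gamma$-analytic vector employed in Camargo's framework translates verbatim into the concrete decomposition by monomial denominators used here, in particular handling continuity and convergence carefully so that the orthogonal-basis argument on $R_\infty[\tfrac 1 p]$ extends to a general representation $M$ and pins down $M_N$ as the full space of $p^N\Gamma$-analytic vectors rather than merely containing it.
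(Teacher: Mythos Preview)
Your approach is essentially the same as the paper's: both feed Proposition \ref{prop: strong decom sen of charts} into Camargo's abstract decompletion machinery (\cite[Theorem 2.4.4]{camargo2023geometric}), and both identify $R_N[\tfrac 1 p]$ with $(R_\infty[\tfrac 1 p])^{p^N\Gamma\text{-}\an}$ --- the paper cites the argument of \cite[Theorem 3.2]{Laurent_Berger_2016} for this, while you sketch the monomial computation directly.

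There is one genuine gap. Camargo's Theorem 2.4.4, as invoked in the paper, applies directly only when $M$ is \emph{free} over $R_\infty[\tfrac 1 p]$. For a general finite projective $M$, the paper adds a step you omit: since $R[\tfrac 1 p]$ is sheafy (Theorem \ref{theo: sheafy R}), one passes to a rational open cover of $\Spa(R[\tfrac 1 p],R^+)$ on which $M$ becomes free, applies the free case on each piece, and then glues the resulting $M_N$. Your proposal assumes the abstract theorem already handles arbitrary finite projective representations; if that is not how the cited reference is stated, you need this reduction-and-gluing argument. The uniqueness clause (identification with $p^N\Gamma$-analytic vectors) is what makes the gluing go through, so once you have the free case plus uniqueness, the general case follows --- but it should be said.
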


\begin{proof}
    If $M$ is free as a $R_\infty[\frac{1}{p}]$-module, the existence and uniqueness of $M_N$ is proved by \cite[Proposition 3.3.1]{Berger_2008} (for $1$-dimensional case) and \cite[Theorem 2.4.4 (1)]{camargo2023geometric} (for higher dimensional case). For general $M$, as $(R[\frac{1}{p}],R^+)$ is sheafy (cf. Theorem \ref{theo: sheafy R}), we can change $R[\frac{1}{p}]$ by a rational covering so that $M$ is free and construct $M_N$ by a gluing argument.

    For the rest, see \cite[Theorem 2.4.4 (2)]{camargo2023geometric}. Note that the argument in \cite[Theorem 3.2]{Laurent_Berger_2016} also works in our case and can show that $R_N[\tfrac{1}{p}]=R_\infty[\tfrac{1}{p}]^{p^N\Gamma-\mathrm{an}}$.
\end{proof}

As a corollary, for any $M\in\mathbf{Rep}_\Gamma(R_\infty[\tfrac{1}{p}])$, the canonical map
    \begin{equation}\label{eq: decompletion}
        M^{\la}\otimes_{R_{\infty}[\tfrac{1}{p}]^\la}R_\infty[\frac{1}{p}]\to M
    \end{equation}
is an isomorphism. Recall that $\{\gamma_i:i=1,2,\dots,d\}$ is the set of canonical basis of $\Gamma\cong\dZ_p$. The Lie algebra $\Lie(\Gamma)$ of $\Gamma$ is canonically isomorphic to $\dQ_p^d$ and we denote its canonical basis by $\{\log\gamma_i:i=1,2,\dots,d\}$. The derivation of $\Gamma$ acting on $M^\la$ provides a $R_\infty[\frac{1}{p}]^\la$-linear action of $\Lie(\Gamma)$ on $M^\la$. Let $\theta_i$ be the action of $\log\gamma_i$. We extend each $\theta_i$ linearly to $M$.

\begin{lemma}\label{lemma: properties of sen actions}
    Keep the notation and let $N$ be the integer in Theorem \ref{theo: decompletion}. The following holds.
    \begin{enumerate}
        \item For any $i$ and $n\geq N$, $\theta_i$ sends $M^{p^n\Gamma-\mathrm{an}}$ to itself.

        \item There exists $n\geq N$ such that for any $a\in p^n\dZ_p$ and $1\leq i\leq d$, the action of $a\gamma_i$ on $M_N$ is equal to the linear operator $\exp(a\theta_i)$.

        \item As actions of $M$, $\theta_i$ commutes with $\gamma_j$ for any $1\leq i,j\leq d$.
    \end{enumerate}
\end{lemma}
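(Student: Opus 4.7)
The plan is to deduce all three statements from the standard theory of $p$-adic analytic actions of the abelian Lie group $\Gamma \cong \dZ_p^d$ on Banach modules. The essential input, provided by Theorem \ref{theo: decompletion} together with the identification $M_N = M^{p^N\Gamma\text{-}\mathrm{an}}$, is that for $v \in M^{p^n\Gamma\text{-}\mathrm{an}}$ the orbit maps $p^n\dZ_p \to M$, $t \mapsto \gamma_i^t v$, are honestly analytic, and that $\theta_i = \log \gamma_i$ is realised as their derivative at the origin. With this in hand, parts (1) and (3) are soft and part (2) is a $p$-adic convergence argument.

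For (3), the abelianness of $\Gamma$ gives $\gamma_j \gamma_i^t = \gamma_i^t \gamma_j$ for every $t \in \dZ_p$; since $\gamma_j$ is a continuous linear operator on $M$ which preserves $M^{\la}$, commuting $\gamma_j$ past the derivative $\theta_i(v) = \tfrac{d}{dt}\bigr|_{t=0} \gamma_i^t v$ on $M^{\la}$ yields $\gamma_j \theta_i = \theta_i \gamma_j$, and the identity propagates to all of $M$ by $R_\infty[\tfrac 1 p]$-linearity and the isomorphism \eqref{eq: decompletion}. For (1), fix $n \ge N$ and $v \in M^{p^n\Gamma\text{-}\mathrm{an}}$; analyticity of the orbit map is a closed condition stable under left translation by $\Gamma$, so the entire Taylor expansion of $t \mapsto \gamma_i^t v$ at $0$ has coefficients in the closed Banach subspace $M^{p^n\Gamma\text{-}\mathrm{an}} \subseteq M$. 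In particular the first coefficient $\theta_i(v)$ lies in $M^{p^n\Gamma\text{-}\mathrm{an}}$.

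The main content is (2). Since $M_N$ is finite projective over $R_N[\tfrac 1 p]$, choose a finitely generated $R_N$-submodule $M_N^\circ \subseteq M_N$ with $M_N^\circ[\tfrac 1 p] = M_N$; as $\theta_i$ is $R_N[\tfrac 1 p]$-linear, there exists $c \in \dZ$ with $\theta_i(M_N^\circ) \subseteq p^{-c} M_N^\circ$. Combining this with the classical estimate $v_p(k!) \le (k-1)/(p-1)$ shows that the formal series
\[ \exp(a\theta_i) \;=\; \sum_{k \ge 0} \frac{a^k}{k!}\,\theta_i^k \]
converges in the operator norm on $M_N^\circ$ for all $a \in \dZ_p$ with $v_p(a) > c + 1/(p-1)$. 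Pick $n \ge N$ so that $p^n\dZ_p$ lies in this disk. Then both $a \mapsto \gamma_i^a|_{M_N}$ (which is $p^N$-analytic in $a$ by construction of $M_N$) and $a \mapsto \exp(a\theta_i)$ are analytic one-parameter families of $R_N[\tfrac 1 p]$-linear endomorphisms of $M_N$; both equal $\id$ at $a=0$ and both have derivative $\theta_i$ there. Using the one-parameter subgroup property $\gamma_i^{a+b} = \gamma_i^a \gamma_i^b$ and the analogous multiplicativity of $\exp(-\theta_i)$, both families satisfy the ODE $\tfrac{d}{da} F(a) = \theta_i \circ F(a)$ on $p^n\dZ_p$, and the uniqueness of analytic solutions forces the two to coincide.

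The principal obstacle is organising part (2): one must pass through an auxiliary $R_N$-lattice to make sense of the operator bound on $\theta_i$ (since $M_N$ is only projective after inverting $p$) and then enlarge $n$ accordingly. Once that is done, the convergence and ODE-uniqueness arguments are standard, and (1) and (3) follow by formal manipulations on locally analytic vectors.
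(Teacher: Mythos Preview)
Your proposal is correct and follows essentially the same approach as the paper: both establish (2) by passing to a finitely generated integral lattice in $M_N$ and invoking convergence of the exponential series, and both obtain (3) by differentiating the commutation relation coming from the abelianness of $\Gamma$. The only organizational difference is that the paper deduces (3) from (2) by first writing $\gamma_i^a=\exp(a\theta_i)$ and then differentiating $\gamma_j\exp(a\theta_i)=\exp(a\theta_i)\gamma_j$ at $a=0$, whereas your direct differentiation of $\gamma_j\gamma_i^t=\gamma_i^t\gamma_j$ on $M^{\la}$ avoids this detour.
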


\begin{proof}
    By definition, the action of $p^N\Gamma$ on $M_N$ is $R_N$-linear and analytic. The first statement holds by \cite[Lemma 2.6]{Laurent_Berger_2016}. For the second, we fix a finitely generated open bounded sub-$R_N^\circ$-module $M_{N}^\circ$ of $M$ and enlarge $n$ so that $p^n\Gamma$ stabilizes $M_N^\circ$. The existence of $n$ comes from the fact that $p^N\Gamma$ acts on $M_N$ linearly. Keep enlarge $n$ so that the induced action of $p^n\Gamma$ on $M_N^\circ/p$ is trivial. Then the claim then follows from the fact that the exponential of a $p$-adic Lie group is an isomorphism on an open subgroup.

    For the third part, we only need to check on $M_n$ for some $n\geq N$ by Theorem \ref{theo: decompletion}. Hence, we can choose $n$ in the second claim. Then we have
    \[\gamma_j\exp(a\theta_i)=\exp(a\theta_i)\gamma_j\]
    for any $a\in p^n\dZ_p$. Take the derivation at $a=0$ and the claim follows.
\end{proof}

For our application, we also need to consider when the integer $N$ in Theorem \ref{theo: decompletion} can be chosen to be $0$.

\begin{theo}\label{theo: full decompletion}
    Assume more that for any $i$, $(1-\zeta_p)^{-1}\theta_i$ is topologically nilpotent, then we can choose the $N$ in Theorem \ref{theo: decompletion} to be $0$.
\end{theo}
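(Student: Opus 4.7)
The plan is to construct the required descent $M_0 \subset M$ via a twisted $\Gamma$-action and finite Galois descent along $R \to R_n$ for $n$ sufficiently large. The key idea is that the topological nilpotency hypothesis is precisely what makes the exponential of the Sen operator converge, allowing us to ``undo'' the discrepancy between $\gamma_i$ and $\exp(\theta_i)$.

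First I would define the twisted action $\rho' : \Gamma \to \Aut(M)$ by $\rho'(\gamma_i) = \exp(-\theta_i)\gamma_i$. Under the hypothesis, the series $\exp(\theta_i) = \sum_{k \ge 0} \theta_i^k/k!$ converges as an $R_\infty[\tfrac{1}{p}]$-linear automorphism of $M$: one writes $\theta_i^k/k! = ((\zeta_p-1)^k/k!) \cdot ((\zeta_p-1)^{-1}\theta_i)^k$, where the scalar factor is a $p$-adic integer since $v_p((\zeta_p-1)^k/k!) = s_p(k)/(p-1) \ge 0$, while the operator factor tends to zero by topological nilpotency. Taking derivatives of the relations in Lemma \ref{lemma: properties of sen actions}(3), the operators $\theta_i$ pairwise commute and commute with each $\gamma_j$, so the $\rho'(\gamma_i)$ pairwise commute and define a continuous $R_\infty[\tfrac{1}{p}]$-semilinear $\Gamma$-action on $M$ with the same semilinearity as $\rho$.

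Next I would take $N$ from Theorem \ref{theo: decompletion} and $n \ge N$ large enough for Lemma \ref{lemma: properties of sen actions}(2) so that $\gamma_i^{p^n} = \exp(p^n\theta_i)$ on $M_N$. Since the derivative of a $p^n\Gamma$-analytic orbit map is again $p^n\Gamma$-analytic, $\theta_i$ and hence $\exp(\theta_i)$ preserve the enlargement $M_n$ of $p^n\Gamma$-analytic vectors. Consequently $\rho'(\gamma_i^{p^n})|_{M_n} = \exp(-p^n\theta_i)\exp(p^n\theta_i) = 1$, and $\rho'|_{M_n}$ factors through $\Gamma/p^n\Gamma \cong \Gal(R_n/R) = (\mathbb{Z}/p^n)^d$. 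Because $\exp(-\theta_i)$ is $R_\infty[\tfrac{1}{p}]$-linear, $\rho'(\gamma_i)$ has the same semilinearity as $\gamma_i$, so $\rho'|_{M_n}$ is a genuine semilinear action of $\Gal(R_n/R)$ on the finite projective $R_n$-module $M_n$.

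Then I would invoke finite étale Galois descent along the faithfully flat map $R \to R_n$ to set $M_0 := M_n^{\rho'(\Gal(R_n/R))}$, obtaining a finite projective $R$-module with $M_0 \otimes_R R_n \cong M_n$, and therefore $M_0 \otimes_R R_\infty[\tfrac{1}{p}] \cong M$. Stability of $M_0$ under the original $\rho$ follows from the computation $\rho'(\gamma_i)(\gamma_j m) = \exp(-\theta_i)\gamma_i\gamma_j m = \gamma_j \exp(-\theta_i)\gamma_i m = \gamma_j \rho'(\gamma_i)(m) = \gamma_j m$ for $m \in M_0$, using that $\theta_i$ commutes with $\gamma_j$. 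On $M_0$ the action of $\gamma_i$ coincides with the $R$-linear operator $\exp(\theta_i)$, so $M_0$ carries an $R$-linear $\Gamma$-action; by the uniqueness statement in Theorem \ref{theo: decompletion} it must coincide with the $\Gamma$-analytic vectors, realizing the decompletion with $N = 0$.

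The main obstacle will be controlling the interaction between the locally analytic structure and the exponential. Concretely, one must verify that $\theta_i$ preserves each $M_n$, that the series for $\exp(\theta_i)$ converges on $M$ for a suitable operator topology (not merely pointwise on $M^{\la}$), and that the equality $\gamma_i^{p^n} = \exp(p^n\theta_i)$ of Lemma \ref{lemma: properties of sen actions}(2) upgrades to an honest trivialization $\rho'(p^n\Gamma)|_{M_n} = 1$ rather than just an analytic identity. Once these compatibilities are in place, the construction of $\rho'$ and the descent step become routine applications of classical finite Galois descent.
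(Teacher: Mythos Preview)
Your proposal is correct and follows essentially the same approach as the paper: twist the $\Gamma$-action by $\exp(-\theta_i)$ (the paper writes it as $\gamma_{\text{Gal}}(x):=\gamma(\exp(-\sum a_j\theta_j)x)$, which agrees with your $\rho'(\gamma_i)=\exp(-\theta_i)\gamma_i$ since $\theta_i$ commutes with $\gamma_j$), observe that this twisted action factors through a finite quotient, and then apply finite Galois descent to produce $M_0$. Your version is in fact more explicit about the convergence of the exponential (via $v_p((\zeta_p-1)^k/k!)=s_p(k)/(p-1)\ge 0$) and about the choice of level $n$, but the underlying argument is the same.
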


\begin{proof}
    Choose $N\geq 0$ as in Theorem \ref{theo: decompletion}.

    Define the action of $\Gamma=\dZ_p^d$ on $M_N$ by the formula
    \[(a_1,a_2,\dots,a_d)*x=\exp(a_1\theta_1+a_2\theta_2+\dots+a_d\theta_d)x\]
    for any $a_i\in\dZ_p$ and $x\in M$. We note that this is well defined since $\theta_1$ commutes with each other.

    By Lemma \ref{lemma: properties of sen actions}, the for any $\gamma=(a_1,a_2,\dots,a_d)\in\Gamma$, the formula
    \[\gamma_{\text{Gal}}(x):=\gamma(\gamma^{-1}*x)=\gamma(\exp(-\sum_{j=1}^da_j\theta_j)x)\]
    defines a semi-linear locally constant action of $\Gamma$ on $M_N$. By Galois descent, if we denote
    \[M_0:=\{m\in M_N:\gamma_{\text{Gal}}(m)=m,\ \forall \gamma\in \Gamma\},\]
    then $M_0$ is finite-rank projective and $M_0\otimes_{R}R_N=M_N$. The action of $\Gamma$ on $M_0$ is $R$-linear by the definition of $M_0$. Thus, $M_0$ is the wanted sub-$R$-module of $M$.
\end{proof}

Finally, we provide a `chart-free' version of the decompletion, which is known as the geometric Sen theory (cf. \cite{camargo2023geometric}). 

\begin{prop-defi}\label{prop-defi: sen theory}
    Let $R/\overline A$ be a senable algebra, for any $\calE\in \Vect\bigl( (R/A)_\Prism^\perf,\overline\calO\bigl[\frac{1}{p}\bigr] \bigr)$, there exists a canonical $\overline{\calO}[\frac{1}{p}]$ linear homomorphism 
    \[\theta_{\calE}:\calE\to \calE\otimes_{R[\frac 1 p]}\cdL_{R/\overline{A}}[\tfrac{1}{p}]\{-1\},\]
    called the \emph{Sen operator} of $\calE$, satisfying the following assumptions.
    \begin{enumerate}
        \item The operator $\theta_\calE$ is a Higgs bundle, that is 
        \[\theta_{\calE}\wedge \theta_{\calE}=0.\]
        \item If $\calE$ is a base change of finite-rank projective module over $R[\frac{1}{p}]$,
        \[\theta_{\calE}=0.\]
    \end{enumerate}
\end{prop-defi}

\begin{proof}
    We first construct $\theta_\calE$ for a toric chart and then prove the independency of the choice of the charts.

    Let \[f:\overline{A}\langle T_1,T_2,\dots,T_d\rangle\to R\] be a toric chart. For each $n\geq 1$, choose a primitive $p^{n}$-th root of unity $\zeta_{p^n}$ such that $\zeta_{p^n}^p=\zeta_{p^{n-1}}$. This defines a basis $t$ of the Tate twist. Define $\Gamma$ and its action as above. By (\ref{eq: decompletion}),
    for any $(M,\rho)\in \mathbf{Rep}_\Gamma(R_\infty[\frac{1}{p}])$, the canonical map
    \begin{equation*}
        M^{\la}\otimes_{\bigcup_{n\geq 1}\overline{A}\langle T_1^{\frac{1}{p^n}},T_2^{\frac{1}{p^n}},\dots, T_d^{\frac{1}{p^n}} \rangle[\frac{1}{p}]}\overline{A}\langle T_1^{\frac{1}{p^\infty}},T_2^{\frac{1}{p^\infty}},\dots, T_d^{\frac{1}{p^\infty}} \rangle[\frac{1}{p}]\to M
    \end{equation*}
    is an isomorphism.

    As each $f(T_i)$ is invertible in $R[\frac{1}{p}]$, by Theorem \ref{theo: v vector bundle=perfect crystals} and Theorem \ref{theo: v vector bundles=generalized representations}, there exists a canonical equivalence between $\Vect\bigl( (R/A)_\Prism^\perf,\overline\calO\bigl[\frac{1}{p}\bigr] \bigr)$ and $\mathbf{Rep}_\Gamma(R_\infty[\frac{1}{p}])$. Let $\calE\in \Vect\bigl( (R/A)_\Prism^\perf,\overline\calO\bigl[\frac{1}{p}\bigr] \bigr)$, define $(M,\rho)$ as the corresponding generalized representation. Then there exists a $\dQ_p$-linear action of $\Lie(\Gamma)$ on $M^{\la}$ by taking derivations. Using the inclusion
    \[\log(\gamma_i)\mapsto tT_i\frac{\partial}{\partial T_i}\] and the equality (\ref{eq: decompletion}),
    we can extend the $\Lie(\Gamma)$-action to a linear $R_\infty\otimes_{R}\cdL_{R/\overline{A}}[\frac{1}{p}]^\vee$-action on $M$. This action is $\Gamma$-equivariant, again by the equivalence between $\Vect\bigl( (R/A)_\Prism^\perf,\overline\calO\bigl[\frac{1}{p}\bigr] \bigr)$ and $\mathbf{Rep}_\Gamma(R_\infty[\frac{1}{p}])$, which defines a linear homomorphism
    \[\theta_f:\calE\to \calE\otimes_{R[\frac 1 p]}\cdL_{R/\overline{A}}[\frac{1}{p}].\] 

    We to check the independency of the choice of $f$. This can be proved by the same argument in the proof of Theorem \cite[Theorem 4.8]{heuer2024padicsimpsoncorrespondencesmooth}. Also, we need to calculate the Sen operator of those $\calE$ which comes from a finite-rank projective module over $R[\frac{1}{p}]$. If this holds, the corresponding generalized representation $(M,\rho)$ is indeed trivial (i.e. $M=M^{\Gamma=id}\otimes_{R}R_{\infty}$), and the claim follows from the definition of $\theta$.
\end{proof}

\subsection{Local coordinates}
In this section, fix a bounded prism $(A,I)$, a ring $R/\overline A$ complete of bounded $p^\infty$-torsion, and a $\delta$-ring $P/A$ satisfying Condition \ref{cond: flat + tor 0} equipped with an $\overline A$-algebra map $\overline P \to R$. In the application, we are usually going to take $P = A \left\langle T_1,\ldots,T_d \right\rangle$ with $\delta(T_i)=0$.

\begin{prop-defi}\label{prop-defi: sheaf of local coordinates}
Consider the presheaf $\mathscr C_{R/P/A}$ on $(R/A)_\Prism$, such that $\mathscr C_{R/P/A}(B)$ consists of $\delta$-ring maps $P \to B$ lifting the composition $\overline P \to R \to \overline B$. Then,
\begin{enumerate}[label=(\arabic*)]
\item $\mathscr C_{R/P/A}$ is a sheaf;
\item we have a natural isomorphism of sheaves
\[ \prod_{i=1}^n \mathscr C_{R/P_i/A} \cong \mathscr C_{R/P/A}, \]
where $P = P_1 \cotimes_A \cdots \cotimes_A P_n$;
\item the map $\mathscr C_{R/P/A} \to *$ is represented by a $(p,I)$-completely faithfully flat affine cover.
\end{enumerate}
We call $\mathscr C_{R/P/A}$ the \emph{sheaf of local coordinates} with respect to $P$.
\end{prop-defi}

\begin{rmk}
When $P = A \left\langle T_1,\ldots,T_d \right\rangle$ with $\delta(T_i)=0$, the map $\overline P \to R$ is a `local coordinate' on $R$, and any element in $\mathscr C_{R/P/A}(B)$ represents a `local coordinate' on $B$ which is compatible with that on $R$. This is how $\mathscr C_{R/P/A}$ gets its name.
\end{rmk}

\begin{proof}
The item (1) is merely a straightforward discussion of ($p$-completely) faithfully flat descent, see Theorem \ref{thm: descent theory} (1). The item (2) is a direct consequence using the universal property of $P$ as a tensor product.

As for (3), for any $B$, we may calculate that
\[ h_B \times \mathscr C_{R/P/A} \cong h_{B'}, \]
where $B' = \Prism_{\overline B / B \cotimes_A P}$. By Hodge--Tate comparison (Proposition \ref{prop: HT comparison}), we see that $\overline B \to \overline B'$ is $p$-completely faithfully flat, so $B \to B'$ is $(p,I)$-completely faithfully flat, as desired.
\end{proof}

\begin{prop}\label{prop: natural algebra structure}
Let $R/P/A$ be as above. Define $P(n) = P^{\cotimes(n+1)/A}$ and $E(n) = \Prism_{\overline P/P(n)}.$ Then, there is a natural ring map
\[ R \cotimes_{\overline P} \overline{E(n)} \to \Gamma\bigl(\mathscr C_{R/P(n)/A}, \overline\calO_{(R/A)_\Prism} \bigr|_{\mathscr C_{R/P(n)/A}}\bigr) \]
compatible with the (co)simplicial structure.
\end{prop}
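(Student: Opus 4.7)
The plan is as follows. For each $B \in (R/A)_\Prism$ and each $\xi \in \mathscr C_{R/P(n)/A}(B)$, I will construct a functorial ring map
\[ \psi_{B,\xi}: R \cotimes_{\overline P} \overline{E(n)} \to \overline B, \]
and then assemble these into the desired map into $\Gamma(\mathscr C_{R/P(n)/A}, \overline\calO|_{\mathscr C_{R/P(n)/A}})$, noting that by definition this section ring computes as the limit of $\overline B$ over the category of elements of the sheaf $\mathscr C_{R/P(n)/A}$. Compatibility with the cosimplicial structure in $n$ will then be formal, inherited from the functoriality of $\Prism_{\overline P/-}$ and of $\mathscr C_{R/-/A}$ in the cosimplicial ring $P(\bullet)$.

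The essential input is the universal property of $E(n) = \Prism_{\overline P/P(n)}$ as the initial object of the prismatic site $(\overline P/P(n))_\Prism$, which requires verifying that $\overline P/\overline{P(n)}$ satisfies Condition \ref{cond: tor -1}. Boundedness of $p^\infty$-torsion in $\overline P$ is immediate from Condition \ref{cond: flat + tor 0}. For the cotangent complex, one uses the standard decomposition
\[ \cdL_{\overline{P(n)}/\overline A} \cong \bigoplus_{i=0}^n \overline{P(n)} \cotimes_{\overline P,\,i}^\bL \cdL_{\overline P/\overline A}, \]
where the $i$-th summand is tensored along the $i$-th tensor-factor inclusion $\overline P \to \overline{P(n)}$. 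Base-changing along the multiplication $\overline{P(n)} \to \overline P$ yields $\cdL_{\overline P/\overline A}^{\oplus(n+1)}$, and the induced map to $\cdL_{\overline P/\overline A}$ is the sum, which is split surjective. Taking cofibres in the cotangent triangle then gives $\cdL_{\overline P/\overline{P(n)}} \cong \cdL_{\overline P/\overline A}^{\oplus n}[1]$, so $\cdL_{\overline P/\overline{P(n)}}[-1]$ is $p$-completely flat over $\overline P$, again by Condition \ref{cond: flat + tor 0}.

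Now fix $(B, \xi)$. The $\delta$-map $\xi$ turns $B$ into a bounded prism over $P(n)$. The composite $\overline P \to R \to \overline B$ is a $\overline{P(n)}$-algebra map because the two candidate structure maps $\overline{P(n)} \rightrightarrows \overline B$ --- namely $\overline \xi$, and $\overline{P(n)} \xrightarrow{\text{mult}} \overline P \to R \to \overline B$ --- agree by the defining condition that $\xi$ lifts $\overline{P(n)} \to R \to \overline B$, where $\overline{P(n)} \to R$ is itself by definition the multiplication followed by $\overline P \to R$. Corollary \ref{cor: universal property tor -1} now produces a unique map of prisms $E(n) \to B$ over $P(n)$, and reducing modulo $I$ yields a morphism of $\overline P$-algebras $\overline{E(n)} \to \overline B$. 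Coupling this with $R \to \overline B$ --- compatible as $\overline P$-algebras by the previous sentence --- defines $\psi_{B,\xi}$ via the universal property of the completed tensor product.

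Functoriality of $\psi_{B,\xi}$ in $(B,\xi)$ is a direct consequence of the uniqueness part of the universal property, whence the family assembles into a natural ring map as claimed, and cosimplicial compatibility in $n$ is then automatic from the analogous naturality in $P(\bullet)$. The main technical hurdle is the cotangent complex verification of Condition \ref{cond: tor -1} for $\overline P/\overline{P(n)}$; everything else is standard bookkeeping with universal properties.
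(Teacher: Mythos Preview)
Your proof is correct and follows essentially the same approach as the paper: both construct, for each $(B,\xi)$, the unique prism map $E(n) \to B$ over $P(n)$ via the universal property of $E(n)$ as initial object (Corollary \ref{cor: universal property tor -1}), reduce modulo $I$, and couple with $R \to \overline B$ to obtain the desired ring map. The only difference is that you make explicit the verification of Condition \ref{cond: tor -1} for $\overline P/\overline{P(n)}$ via the cotangent complex computation, which the paper leaves implicit; this verification is correct and indeed necessary to invoke the universal property.
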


\begin{proof}
Any $h_B \to \mathscr C_{R/P(n)/A}$ in $\Shv((R/A)_\Prism)$ corresponds to a map $P(n) \to B$ lifting the composition $\overline{P(n)} \to \overline P \to R \to \overline B$. By the universal property as in Corollary \ref{cor: universal property tor -1}, this map uniquely extends to a map $E(n) \to B$ such that its reduction
\[ \overline P \to \overline{E(n)} \to \overline B \]
agrees with $\overline P \to R \to B$. Therefore, we have a natural map
\[ R \cotimes_{\overline P} \overline{E(n)} \to \overline B \]
compatible with all morphisms $h_B \to h_C$, hence we get a natural ring map
\[ R \cotimes_{\overline P} \overline{E(n)} \to \Gamma\bigl(\mathscr C_{R/P(n)/A}, \overline\calO_{(R/A)_\Prism} \bigr|_{\mathscr C_{R/P(n)/A}}\bigr). \]
It is easy to verify that the ring map commutes with the (co)simplicial structure.
\end{proof}

Combining Proposition-Definition \ref{prop-defi: sheaf of local coordinates} and Proposition \ref{prop: natural algebra structure}, since $\Vect(\mathscr F,\overline\calO)$ and $\Vect\bigl(\mathscr F,\overline\calO \bigl[ \frac 1 p \bigr]\bigr)$ is a stack on the topos $\Shv((R/A)_\Prism)$, we immediately get the following proposition:

\begin{prop}\label{prop: Higgs to crystal}
For any $R/P/A$ as above. Recall that $\bigl( R \cotimes_{\overline P} \overline{E(n)}, R \cotimes_{\overline P} \cOmega_{\overline P/\overline A} \{-1\} \bigr)$ has a split PD structure given by (the base-change of) Lemma \ref{lemma: E(n) split PD}. Then. there exists a functor of categories
\[
\Higgs^\tn \bigl(R, R \cotimes_{\overline P} \cOmega_{\overline P/\overline A} \{-1\} \bigr) \to \Vect((R/A)_\Prism, \overline\calO), 
\]
which fits into the commutative diagram as below:
\[\xymatrix{
\Higgs^\tn \bigl(R, R \cotimes_{\overline P} \cOmega_{\overline P/\overline A} \{-1\} \bigr) \ar[r] \ar[d]_\cong & \Vect \bigl((R/A)_\Prism, \overline\calO_{(R/A)_\Prism} \bigr) \ar[d]^\cong \\ 
\Strat \bigl( R \cotimes_{\overline P} \overline{E(\bullet)} \bigr) \ar[r] & \displaystyle \lim_\simp \Vect\bigl(\mathscr C_{R/P(\bullet)/A}, \overline\calO_{(R/A)_\Prism} \bigr|_{\mathscr C_{R/P(\bullet)/A}}\bigr)
}\]
Similarly for the invert-$p$ version.
\end{prop}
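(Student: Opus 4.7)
The plan is to assemble the top arrow of the diagram by composing three ingredients: (i) the equivalence between topologically nilpotent Higgs bundles and stratifications provided by Theorem \ref{theo: comp strat connection}, (ii) the faithfully flat descent identification of crystals on $(R/A)_\Prism$ with descent data along the cover $\mathscr C_{R/P/A} \to *$, and (iii) the base change along the cosimplicial ring maps constructed in Proposition \ref{prop: natural algebra structure}. The top arrow will then be defined as the composition making the diagram commute, and its rational version is obtained by inverting $p$ throughout.

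First, I would apply Theorem \ref{theo: comp strat connection} to the filtered split PD cosimplicial ring $R \cotimes_{\overline P} \overline{E(\bullet)}$, obtained by base change along $\overline P \to R$ from the filtered split PD structure on $\overline{E(\bullet)}$ given by Lemma \ref{lemma: E(n) split PD}. One must check that base change along $\overline P \to R$ preserves the split PD condition of Definition \ref{defi: filtered split PD}; this is routine because each term $\overline{E(n)}$ is $p$-complete and $p$-completely flat over $\overline P$, so $R \cotimes_{\overline P} \overline{E(\bullet)}$ inherits the split PD structure and the isomorphisms $R \cotimes_{\overline P} \cGamma^\bullet_{\overline P}(\cOmega_{\overline P/\overline A}\{-1\}^{\oplus n}) \xrightarrow\cong R \cotimes_{\overline P} \overline{E(n)}$ remain strict. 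Since $\overline{E(0)} = \overline P$ gives $R \cotimes_{\overline P} \overline{E(0)} = R$, the induced derivation $\dif$ on $R$ from Proposition \ref{prop: strat to connection} is zero, and therefore $\MIC^\tn(R, \dif) = \Higgs^\tn(R, R \cotimes_{\overline P} \cOmega_{\overline P/\overline A}\{-1\})$. This furnishes the left vertical equivalence.

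Next, I would construct the right vertical equivalence. By Proposition-Definition \ref{prop-defi: sheaf of local coordinates}(3), $\mathscr C_{R/P/A} \to *$ is a covering in $\Shv((R/A)_\Prism)$ represented by a $(p,I)$-completely faithfully flat affine cover; by part (2) of the same Proposition-Definition, its $(n+1)$-fold iterated self-product is identified with $\mathscr C_{R/P(n)/A}$. Since $\Vect(-, \overline\calO)$ and $\Vect(-, \overline\calO[\tfrac 1 p])$ are stacks for $(p,I)$-completely faithfully flat covers (Theorem \ref{thm: descent theory}(2)), the same argument as in Lemma \ref{lem: weakly final computation} yields
\[ \Vect\bigl((R/A)_\Prism, \overline\calO\bigr) \xrightarrow{\cong} \lim_\simp \Vect\bigl(\mathscr C_{R/P(\bullet)/A}, \overline\calO|_{\mathscr C_{R/P(\bullet)/A}}\bigr), \]
and likewise after inverting $p$. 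The bottom arrow is then constructed as follows: the cosimplicial ring maps of Proposition \ref{prop: natural algebra structure} turn a stratification $(M, \epsilon)$ along $R \cotimes_{\overline P} \overline{E(\bullet)}$ (i.e.\ a compatible system of isomorphisms on the cosimplicial levels) into a descent datum along $\mathscr C_{R/P(\bullet)/A}$ by base change along these ring maps, producing an object in the right-hand totalisation.

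The top arrow of the diagram is then defined as the composition of the two vertical equivalences with this bottom arrow; its symmetric monoidal structure is inherited from the tensor structures on stratifications and vector bundles. The main obstacle is verifying the commutativity of the diagram as a diagram of \emph{symmetric monoidal} functors, which reduces to verifying the naturality of the base change maps in Proposition \ref{prop: natural algebra structure} at the level of cosimplicial stratifications. This naturality is built into the construction of those ring maps, since they are defined using precisely the universal property of $E(n) = \Prism_{\overline P/P(n)}$ from Corollary \ref{cor: universal property tor -1}; applying that universal property to a test prism $B$ equipped with $P(n) \to B$ produces exactly the compatibility between the two cosimplicial descent data. The rational version then follows formally by inverting $p$ everywhere, using that stratifications over $R \cotimes_{\overline P} \overline{E(\bullet)}[\tfrac 1 p]$ and descent data for $\Vect(-, \overline\calO[\tfrac 1 p])$ are equivalently governed by the same cover $\mathscr C_{R/P/A} \to *$.
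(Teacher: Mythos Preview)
Your proposal is correct and follows essentially the same approach as the paper, which compresses the entire argument into a single sentence preceding the proposition: ``Combining Proposition-Definition \ref{prop-defi: sheaf of local coordinates} and Proposition \ref{prop: natural algebra structure}, since $\Vect(\mathscr F,\overline\calO)$ and $\Vect\bigl(\mathscr F,\overline\calO \bigl[ \frac 1 p \bigr]\bigr)$ is a stack on the topos $\Shv((R/A)_\Prism)$, we immediately get the following proposition.'' You have simply unpacked each of these three ingredients --- the left vertical equivalence via Theorem \ref{theo: comp strat connection}, the right vertical equivalence via the stack property for the cover $\mathscr C_{R/P/A} \to *$, and the bottom arrow via base change along the ring maps of Proposition \ref{prop: natural algebra structure} --- and assembled them in the same order.
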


We now establish some ways to compute the functor as above.
\begin{prop}\label{prop: Higgs to crystal computation}
Let $R/P/A$ be as in Proposition \ref{prop: Higgs to crystal} and $(M,\nabla)$ be a Higgs field over $\bigl(R, R \cotimes_{\overline P} \cOmega_{\overline P/\overline A} \{-1\} \bigr)$ or $\bigl(R \bigl[ \frac 1 p \bigr], R \cotimes_{\overline P} \cOmega_{\overline P/\overline A} \{-1\} \bigl[ \frac 1 p \bigr] \bigr)$. Assume that it is mapped to the crystal $\calE$ on $(R/A)_\Prism$. Then, for any $B \in (R/A)_\Prism$,
\begin{enumerate}[label=(\arabic*)]
\item for any $f: P \to B$ in $\mathscr C_{R/P/A}(B)$, there exists an isomorphism $\eta_{B,f}:\overline B \cotimes_R M \to \calE(B)$ functorial in $(B,f) \in ((R/A)_\Prism)_{/\mathscr C_{R/P/A}}$,
\item for any pair of $f,g$ in $\mathscr C_{R/P/A}(B)$, they induce a map $h: E(1) \to B$ by $(f,g): P(1) \to B$. Then, the following diagram is commutative:
\[\xymatrix{
& \calE(B) & \\
\overline B \cotimes_R M \ar[ru]^{\eta_{B,f}}_\cong \ar@{=}[d] & & \overline B \cotimes_R M \ar[lu]_{\eta_{B,g}}^\cong \ar@{=}[d] \\
\overline B \cotimes_{h,R \cotimes_{\overline P} \overline{E(1)}} (\overline{E(1)} \cotimes_{\delta_1^1,\overline P} M) & & \overline B \cotimes_{h,R \cotimes_{\overline P} \overline{E(1)}} (\overline{E(1)} \cotimes_{\delta_0^1,\overline P}M) \ar[ll]^{\id \otimes \epsilon}_\cong
}\]
where $\epsilon: \overline{E(1)} \cotimes_{\delta_0^1,\overline P}M \to \overline{E(1)} \cotimes_{\delta_1^1,\overline P} M$ is the stratification corresponding to $\nabla$.
\end{enumerate}
\end{prop}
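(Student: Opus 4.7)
My plan is to unwind the construction of the functor in Proposition \ref{prop: Higgs to crystal} and exploit the descent equivalence on the cover $\mathscr C_{R/P(\bullet)/A} \to *$ established in Proposition-Definition \ref{prop-defi: sheaf of local coordinates} and Lemma \ref{lem: weakly final computation}. Recall that this functor sends $(M,\nabla)$ to the stratification associated by Theorem \ref{theo: comp strat connection} over $R\cotimes_{\overline P}\overline{E(\bullet)}$, base-changed via the ring maps of Proposition \ref{prop: natural algebra structure} to a stratification on $\mathscr C_{R/P(\bullet)/A}$, and then descended to a crystal on $(R/A)_\Prism$.

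For (1), note that $\overline{E(0)} = \overline P$ (since $\overline P$ is the initial object of $(\overline P/P)_\Prism$ by Corollary \ref{cor: universal property tor -1}), so $R\cotimes_{\overline P}\overline{E(0)} = R$ and the underlying module of the stratification at degree $0$ is simply $M$. A section $f \in \mathscr C_{R/P/A}(B)$ corresponds to a morphism of sheaves $h_B \to \mathscr C_{R/P/A}$, and pulling back the degree-$0$ piece of the base-changed stratification along this morphism yields $\overline B \cotimes_R M$ (where $R \to \overline B$ is the structure map, which is indeed independent of $f$). The descent isomorphism identifies this with $\calE(B)$; this is $\eta_{B,f}$, which is functorial in $(B,f)$ by construction.

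For (2), the pair $(f,g)$ yields, via Proposition-Definition \ref{prop-defi: sheaf of local coordinates}(2), a morphism $h_B \to \mathscr C_{R/P(1)/A}$, and the universal property used in Proposition \ref{prop: natural algebra structure} lifts this uniquely to a $\delta$-ring map $h: E(1) \to B$. The two projections $\mathscr C_{R/P(1)/A} \to \mathscr C_{R/P/A}$ induced by $\delta_0^1, \delta_1^1: P \to P(1)$ send the pair $(f,g)$ to $f$ and $g$ respectively. The descent datum of $\calE$ over this cover is, by the construction recalled above, the base-change of $\epsilon$ along the ring map $R\cotimes_{\overline P}\overline{E(1)} \to \Gamma(\mathscr C_{R/P(1)/A},\overline\calO)$ of Proposition \ref{prop: natural algebra structure}. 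Pulling this datum back along $h_B \to \mathscr C_{R/P(1)/A}$ and using the two identifications $\eta_{B,f}$ and $\eta_{B,g}$ on the corresponding pullbacks produces precisely the diagram in (2).

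Everything here is a routine bookkeeping exercise once the construction is unwound; the only point requiring care is to verify that the map $h: E(1) \to B$ extracted from $(f,g)$ by the universal property is the same one governing the base-change in Proposition \ref{prop: natural algebra structure}, but this is immediate since Proposition \ref{prop: natural algebra structure} was itself defined via that very universal property.
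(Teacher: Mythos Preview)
Your proposal is correct and follows essentially the same approach as the paper: both treat (1) as an immediate unwinding of the construction in Proposition \ref{prop: Higgs to crystal}, and for (2) both pull back the stratification $\epsilon$ on $R\cotimes_{\overline P}\overline{E(1)}$ along the map $h:E(1)\to B$ obtained from $(f,g)$ by the universal property. The paper phrases (2) slightly more tersely by directly base-changing the defining triangle of the stratification module $M(1)$ along $\overline B \cotimes_{h,R\cotimes_{\overline P}\overline{E(1)}}-$, but this is exactly your ``pull back the descent datum along $h_B\to\mathscr C_{R/P(1)/A}$'' written in different words.
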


\begin{proof}
These are merely abstract nonsense. As for (1), this is the definition. As for (2), consider the stratification $M(\bullet)/\overline{E(\bullet)}$, and we may consider the natural map
\[ \overline B \cotimes_{\overline{E(1)}} M(1) \to \calE(B) \]
given by $(B,h)$. We have the commutative diagram by definition
\[\xymatrix{
& M(1) & \\
\overline{E(1)} \cotimes_{\delta_1^1, \overline P} M \ar[ru]^\cong & & \overline{E(1)} \cotimes_{\delta_0^1, \overline P} M \ar[lu]_\cong \ar[ll]^\epsilon_\cong
}\]
and taking the tensor product $\overline B \cotimes_{h,R \cotimes_{\overline P} \overline{E(1)}} -$ completes the proof.
\end{proof}

Also, we would like to compute the functor by means of the de Rham realisation as in Theorem \ref{theo: main dR realization revisit}.

\begin{prop}\label{prop: Higgs to MIC}
Let $R/P/A$ be as in Proposition \ref{prop: Higgs to crystal}, and let $B/A$ be a bounded prism satisfying Condition \ref{cond: flat + tor 0}, $P \to B$ be a map of prisms, $\overline B \to R$ be a ring map satisfying Condition \ref{cond: tor -1}, such that the composition $\overline P \to \overline B \to R$ agrees with the map $\overline P \to R$ in $R/P/A$. Denote $D = \Prism_{R/B}$ and $\dif: D \to D \cotimes_{\overline B} \cOmega_{\overline B/\overline A} \{-1\}$ the differential as in Theorem \ref{theo: main dR realization revisit}. Then, the composition
\[ \Higgs^\tn \bigl(R, R \cotimes_{\overline P} \cOmega_{\overline P/\overline A} \{-1\} \bigr) \to \Vect\bigl( (R/A)_\Prism, \overline\calO \bigr) \to \MIC^\tn \bigl( \overline D, \dif \bigr) \]
agrees with the the natural map
\[ \Higgs^\tn \bigl(R, R \cotimes_{\overline P} \cOmega_{\overline P/\overline A} \{-1\} \bigr) \to \MIC^\tn \bigl( \overline D, \dif \bigr) \]
obtained by taking tensor product
\[ (M,\nabla) \mapsto \bigl(\overline D \otimes_R M, \dif \otimes \id + \id \otimes (f \circ\nabla) \bigr) \]
where $f: R \cotimes_{\overline P} \cOmega_{\overline P/\overline A} \{-1\} \to R \cotimes_{\overline B} \cOmega_{\overline B/\overline A} \{-1\}$ is the natural map obtained from $P \to B$. Similarly for the case with $p$ inverted.
\end{prop}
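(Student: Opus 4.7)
The plan is to reduce to the case $P = B$ and then match the stratifications on $\overline{D(\bullet)}$ produced by both functors.

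For the reduction, precomposition with the $\delta$-map $P \to B$ gives a morphism of sheaves $\mathscr C_{R/B/A} \to \mathscr C_{R/P/A}$ on $(R/A)_\Prism$, both of which are covers of the terminal sheaf by Proposition-Definition \ref{prop-defi: sheaf of local coordinates}. By functoriality of prismatic envelopes, the natural map $E_P(n) \to E_B(n)$ (where $E_P(n) := \Prism_{\overline P/P(n)}$ and $E_B(n) := \Prism_{\overline B/B(n)}$) intertwines the stratification constructions of Proposition \ref{prop: Higgs to crystal} for the triples $R/P/A$ and $R/B/A$; via Theorem \ref{theo: prism=pd derived construction}, this map recovers exactly $f$ at the level of the split PD data of Lemma \ref{lemma: E(n) split PD}. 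Consequently, Functor 1 factors as $(M,\nabla) \mapsto f_*(M,\nabla) := (M, f\circ\nabla) \mapsto \calE$, computed via $\mathscr C_{R/B/A}$. Since Functor 2 visibly factors through $f_*$ as well, we may replace $(M,\nabla)$ by $f_*(M,\nabla)$ and assume $P = B$ with $f = \id$.

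In the $P = B$ case, Proposition \ref{prop: Higgs to crystal computation}(1) applied to the canonical map $\iota_D: B \to D$ gives $\calE(D) \cong \overline D \cotimes_R M$. The stratification on $\overline{D(\bullet)}$ produced by Functor 1 is, by Proposition \ref{prop: Higgs to crystal computation}(2), the pullback along $R \cotimes_{\overline B} \overline{E(\bullet)} \xrightarrow{\cong} \overline{D(\bullet)}$ (Proposition \ref{prop: D(n) split PD}) of the stratification on $R \cotimes_{\overline B} \overline{E(\bullet)}$ coming from $\nabla$, tensored with $\overline D$ over $R$. Equivalently, it is the tensor product, in the category of stratifications over $\overline{D(\bullet)}$, of the tautological stratification of $\overline D$ (whose underlying crystal is $\overline\calO$ and whose associated $\MIC^\tn$-object is $(\overline D,\dif)$) with the pullback of $\nabla$'s stratification. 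By the symmetric monoidality of Theorem \ref{theo: comp strat connection}, together with the Leibniz rule of Lemma \ref{lem: Leibniz rule}, this tensor stratification corresponds exactly to the connection $\dif \otimes \id + \id \otimes \nabla$ on $\overline D \cotimes_R M$, which is the output of Functor 2.

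The main obstacle will be the bookkeeping in the reduction step: confirming that the morphism $\mathscr C_{R/B/A} \to \mathscr C_{R/P/A}$, combined with the functoriality of prismatic envelopes, induces precisely the map $f$ on the conormal data. This traces back, via Theorem \ref{theo: prism=pd derived construction}, to the naturality of the PD-envelope identification for the conormal sequence, and requires careful tracking of Breuil--Kisin twists. Once this identification is fixed, the rest of the argument is a direct application of the stratification formalism developed in Section \ref{sect: remarks on infty-cat} and Subsection 4.1, and the case with $p$ inverted is immediate since all the constructions involved are compatible with inverting $p$.
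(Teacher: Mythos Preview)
Your overall strategy—factor through the $P=B$ case, then exploit symmetric monoidality—is conceptually appealing, but there is a gap that turns out to be exactly the core verification the paper carries out directly.

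The issue is this: the crystal $\calE$ attached to $(M,\nabla)$ via Proposition~\ref{prop: Higgs to crystal} is built from the ring map of Proposition~\ref{prop: natural algebra structure}; restricting $\calE$ to $D(\bullet)$, the stratification you obtain is the base change of $(M,\epsilon)$ along
\[
u_n\colon R\cotimes_{\overline P}\overline{E(n)} \longrightarrow \overline{D(n)},
\]
coming from the universal property of $E(n)=\Prism_{\overline P/P(n)}$ applied to $P(n)\to B(n)\to D(n)$. On the other hand, your Functor~2 and your tensor-product description rely on the map
\[
v_n\colon R\cotimes_{\overline P}\overline{E(n)} \cong \cGamma_R^\bullet\bigl(R\cotimes_{\overline P}\cOmega_{\overline P/\overline A}^{\oplus n}\{-1\}\bigr) \longrightarrow \overline{D(n)}
\]
arising from the split PD structure of $\overline{D(\bullet)}$ (Proposition~\ref{prop: D(n) split PD}). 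Your argument silently assumes $u_n=v_n$; this is not addressed by Proposition~\ref{prop: Higgs to crystal computation}(2), which only expresses transitions in terms of the prismatic map $h$, not the split PD map. The same gap appears in your reduction step: to show that the Higgs-to-crystal functors for $R/P/A$ and $R/B/A$ are intertwined by $f_*$, you need the functorial map $\overline{E_P(n)}\to\overline{E_B(n)}$ to match the split PD map induced by $f$, which is the same unproved comparison. So the reduction does not simplify anything.

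The paper instead works directly with general $P$: it introduces $g_n\colon E(n)=\Prism_{\overline P/P(n)}\to D(n)=\Prism_{R/B(n)}$ from naturality of derived prismatic cohomology for $(\overline P/P(n))\to(R/B(n))$, and verifies that both $u_n$ and $v_n$ restrict to $\overline g_n$ on $\overline{E(n)}$. For $v_n$ this uses that $\overline g_n$ is a map of split PD cosimplicial rings (the split PD structures of Lemma~\ref{lemma: E(n) split PD} and Proposition~\ref{prop: D(n) split PD} are functorial in the triple via Theorem~\ref{theo: prism=pd derived construction}), hence determined by its values on $\varphi(\Omega)$; for $u_n$ this is just the universal property of $E(n)$. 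This is shorter and avoids the detour.

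A minor point: your claimed isomorphism $R\cotimes_{\overline B}\overline{E(\bullet)}\xrightarrow{\cong}\overline{D(\bullet)}$ is incorrect; Proposition~\ref{prop: D(n) split PD} gives $\overline D\cotimes_{\overline B}\overline{E(n)}\cong\overline{D(n)}$, not $R\cotimes_{\overline B}\overline{E(n)}$.
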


\begin{proof}
Denote $B(n) = B^{\cotimes (n+1){/A}}$ and $D(n) = \Prism_{R/B(n)}$, and we have a natural map $h_{D(n)} \to \mathscr C_{R/P(n)/A}$ given by
\[ P(n) \to B(n) \to D(n). \]
Using Theorem \ref{theo: comp strat connection} and the construction in Theorem \ref{theo: main dR realization revisit} (namely, the stratification of a vector bundle is given by its restriction on $D(\bullet)$), the composition is identified with the limit of the composition
\[ \Vect\bigl(R \cotimes_{\overline P} \overline{E(n)} \bigr) \to \Vect\bigl(\mathscr C_{R/P(n)/A}, \overline\calO \bigr) \to \Vect\bigl(\overline{D(n)} \bigr) \]
over $\simp$, and the natural map below it is identified with the limit of the maps
\[ \Vect\bigl(R \cotimes_{\overline P} \overline{E(n)} \bigr) \to \Vect\bigl(\overline{D(n)} \bigr). \]
Therefore, we only need to verify that the ring maps
\[ u_n: R \cotimes_{\overline P} \overline{E(n)} \to \overline{D(n)}\]
as in Proposition \ref{prop: natural algebra structure} agrees with the ring maps
\[ v_n: R \cotimes_{\overline P} \overline{E(n)} \cong \cGamma_R^\bullet\bigl( R \cotimes_{\overline P} \cOmega_{\overline P/\overline A}^{\oplus n} \{-1\} \bigr) \to \overline{D(n)} \]
given by the split PD structure. Since $u_n,v_n$ are $R$-algebra maps, we only need to show that they agree on $\overline{E(n)}$. Consider the map
\[ g_n: \Prism_{\overline P/P(n)} \to \Prism_{R/B(n)} \]
given by the naturality of $\overline P/P/A \to R/B/A$. By Theorem \ref{theo: main dR realization revisit}, this map is induces a map of split PD rings, compatible with the map on differentials. Therefore, since $\overline{E(n)}$ is topologically generated (as a PD algebra over $\overline P$) purely out of $\alpha_i^n(\varphi(\omega))$ for $\omega \in \cOmega_{\overline P/\overline A}$, any map $\overline{E(n)} \to \overline{D(n)}$ is completely determined by its value on $\varphi(\Omega)$, and we see that the restriction
\[ v'_n: \overline{E(n)} \to \overline{D(n)} \]
agrees with the reduction $\overline g_n$ of $g_n$.

We still have to show that the restriction of $u_n$
\[ u'_n: \overline{E(n)} \to \overline{D(n)} \]
agrees with $\overline g_n$. By definition and the universal property (Corollary \ref{cor: universal property tor -1}), $u'_n$ is the reduction of the unique $\delta$-ring map
\[ h_n: E(n) \to D(n) \]
such that $P(n) \to E(n) \xrightarrow{h_n} D(n)$ agrees with $P(n) \to B(n) \to D(n)$. Observe that $g_n$ satisfies the condition, because we have the commutative diagram
\[\xymatrix{
P(n) \ar[r] \ar[d] & B(n) \ar[d] \\
\Prism_{\overline P/P(n)} \ar[r]_{g_n} & \Prism_{R/B(n)}
}\]
Therefore, the map $h_n$ agrees with $g_n$, hence $u'_n$ agrees with $\overline g_n$. We see that $u'_n = \overline g_n = v'_n$, so $u_n=v_n$, as desired.
\end{proof}

We now calculate the map in Proposition \ref{prop: Higgs to crystal} for a special case.

\begin{prop}\label{prop: Higgs to perfect crystals}
Let $\overline A / \mathbb Z_p^\cyc$ be an integral perfectoid algebra and put $A = \mathbb A_{\inf}( \overline A )$. Put
\[ P = A \left\langle T_1, \ldots, T_d \right\rangle \]
with $\delta$-structure given by $\delta(T_i)=0$. Assume that $R/\overline A$ is a senable algebra satisfying Condition \ref{cond: tor -1,0}, and
\[ f: \overline P \to R \]
is a toric chart. Then we have the a commutative diagram as follows
\[\xymatrix{
\Vect\bigl((R/A)_\Prism, \overline\calO\bigl[\frac 1 p \bigr]\bigr) \ar[r] & \Vect\bigl((R/A)_\Prism^\perf, \overline\calO\bigl[\frac 1 p \bigr]\bigr) \ar[d] \\
\Higgs^\tn(R,R \otimes_{\overline P}\cOmega_{\overline P/\overline A} \{-1\}) \ar[r] \ar[u] & \Rep_\Gamma \bigl( R_\infty \bigl[ \frac 1 p \bigr] \bigr)
}\]
where the arrow on the left-hand side is the map in Proposition \ref{prop: Higgs to crystal}, and the arrow at the bottom is given by $(M,\nabla) \mapsto M_\infty = R_\infty \otimes_R M$, where $\gamma_i$ acts semilinearly on $M_\infty$, and acts on $M$ by $\exp((\zeta_p-1) f(T_i)\nabla_i)$, in which
\[ \nabla(m) = \sum_{i=1}^d \nabla_i(m) \otimes \frac{\dif x_i}{\xi}, \]
and $\xi := \dfrac{[\ep]-1}{[\ep]^{1/p}-1} \in \AAinf(\dZ_p^\cyc)$.
\end{prop}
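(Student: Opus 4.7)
The plan is to reduce the commutativity of the diagram to an explicit stratification computation at a single perfect prism. Let $A_\infty := \AAinf(R_\infty)$; since $R_\infty$ is integral perfectoid, $(A_\infty, IA_\infty)$ is a perfect prism in $(R/A)_\Prism^\perf$ with $\overline{A_\infty} = R_\infty$, and the $\Gamma$-action on $R_\infty$ (fixing $R$) lifts uniquely by functoriality of $\AAinf$ to a $\Gamma$-action on $A_\infty$ over $A$. Via Theorems \ref{theo: v vector bundle=perfect crystals} and \ref{theo: v vector bundles=generalized representations}, the right-hand composition becomes $\calE \mapsto \calE(A_\infty)\bigl[\tfrac{1}{p}\bigr]$ equipped with its induced semilinear $\Gamma$-action. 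So for $(M,\nabla) \in \Higgs^\tn$ with associated crystal $\calE$, it suffices to identify $\calE(A_\infty) \cong M_\infty = R_\infty \otimes_R M$ and verify the claimed $\gamma_i$-action.

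I would take the tautological local coordinate $f_0 : P \to A_\infty$, $T_j \mapsto [T_j^\flat]$; its reduction modulo $I$ sends $T_j \mapsto T_j \in R_\infty$, so it agrees with $\overline P \xrightarrow{f} R \to R_\infty$, placing $f_0 \in \mathscr C_{R/P/A}(A_\infty)$. Proposition \ref{prop: Higgs to crystal computation}(1) then gives an $R_\infty$-linear isomorphism $\eta_{f_0} : R_\infty \otimes_R M \xrightarrow{\sim} \calE(A_\infty)$, the desired identification. For each generator $\gamma_i \in \Gamma$, I would view $\gamma_i : A_\infty \to A_\infty$ as a morphism $(A_\infty, f_0) \to (A_\infty, g)$ with $g := \gamma_i \circ f_0$ in the site fibered over $\mathscr C_{R/P/A}$; naturality of $\eta_{(-)}$ yields
\[ \calE(\gamma_i) \circ \eta_{f_0} = \eta_g \circ (\overline{\gamma_i} \otimes \id_M). \]
Since $\gamma_i$ fixes $R$, the maps $f_0$ and $g$ share a common reduction modulo $I$, so $(f_0,g) : P(1) \to A_\infty$ extends uniquely to a prism map $h_i : E(1) = \Prism_{\overline P/P(1)} \to A_\infty$. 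Proposition \ref{prop: Higgs to crystal computation}(2) gives $\eta_g = \eta_{f_0} \circ (\id \otimes \ep)_{h_i}$ where $\ep$ is the stratification attached to $(M,\nabla)$; combining with the displayed equation, the action of $\gamma_i$ on $m \in M$ becomes $\sum_n \overline{h_i}\bigl(\varphi^n_M(\nabla^n(m))\bigr)$ by the formula $\ep(1 \otimes m) = \sum_n \varphi^n_M(\nabla^n(m))$ of Lemma \ref{lem: strat diagram chasing}.

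The final step is explicit evaluation. Under the split PD identification $\overline{E(1)} \cong \cGamma^*_{\overline P}\bigl((\overline J/\overline J^2)^\land\{-1\}\bigr)$ of Theorem \ref{theo: prism=pd derived construction}, the element $\varphi(\dif T_j/\xi)$ is the reduction modulo $I$ of $(1 \otimes T_j - T_j \otimes 1)/\xi \in E(1)$. Under $h_i$ this becomes $(g(T_j) - f_0(T_j))/\xi$, which vanishes for $j \neq i$ (as $\gamma_i$ fixes $[T_j^\flat]$) and equals $([\ep] - 1)[T_i^\flat]/\xi = ([\ep]^{1/p} - 1)[T_i^\flat]$ for $j = i$ by the defining identity $[\ep] - 1 = \xi([\ep]^{1/p} - 1)$; applying $\theta$ gives $\overline{h_i}(\varphi(\dif T_j/\xi)) = \delta_{ij}(\zeta_p - 1)f(T_i) \in R_\infty$. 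Writing $\nabla(m) = \sum_j \nabla_j(m) \otimes (\dif T_j/\xi)$, the operators $\nabla_j$ commute (since $\nabla \wedge \nabla = 0$), so $\nabla^n(m) = \sum_{k_1 + \cdots + k_d = n} \nabla_1^{k_1} \cdots \nabla_d^{k_d}(m) \otimes \prod_j \gamma_{k_j}(\dif T_j/\xi)$. Applying $\varphi^n_M$ and then $\overline{h_i}$, the vanishing of $\overline{h_i}(\varphi(\dif T_j/\xi))$ for $j \neq i$ forces $k_j = 0$ for $j \neq i$, so only the term $k_i = n$ survives and yields
\[ \gamma_i \cdot m = \sum_n \frac{((\zeta_p - 1) f(T_i))^n}{n!} \nabla_i^n(m) = \exp\bigl((\zeta_p - 1) f(T_i) \nabla_i\bigr)(m), \]
as required. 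The main obstacle is the careful bookkeeping of the Breuil--Kisin twist $\{-1\}$ and the reduction identity $[\ep] - 1 = \xi([\ep]^{1/p} - 1)$, which together produce precisely the factor $\zeta_p - 1$ modulo $I$.
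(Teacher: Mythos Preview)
Your overall strategy and the explicit computation of $\overline{h_i}(\varphi(\dif T_j/\xi))$ are correct and match the paper's approach exactly. There is, however, one genuine gap.

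You assume that $A_\infty := \AAinf(R_\infty)$ is a perfect prism in $(R/A)_\Prism^\perf$, i.e.\ that $R_\infty$ is integral perfectoid. But Definition \ref{defi: toric chart} only requires the Tate ring $R_\infty\bigl[\tfrac 1 p\bigr]$ to be \emph{locally perfectoid} in the sense of Definition \ref{defi: locally perfectoid huber pairs}: the adic space $\Spa\bigl(R_\infty\bigl[\tfrac 1 p\bigr],R_\infty\bigr)$ is a perfectoid space, but the global ring need not be perfectoid. Consequently there is no object ``$A_\infty$'' at which to evaluate $\calE$ directly. The paper repairs this by choosing a finite cover of $\Spa\bigl(R_\infty\bigl[\tfrac 1 p\bigr],R_\infty\bigr)$ by genuine affinoid perfectoids $\Spa(\overline B_i\bigl[\tfrac 1 p\bigr],\overline B_i)$, setting $B_i = \AAinf(\overline B_i)$, and using the sheaf exact sequence
\[
0 \to M_\infty \to \prod_i \calE(B_i) \to \prod_{i,j} \calE(B_{ij})
\]
to reduce the verification of the $\Gamma$-action on $M_\infty$ to checking, for each perfectoid $\overline B$ over $R_\infty$, a commutative triangle between $M_\infty$ and $\calE(B)$ via the two structure maps $f$ and $f\circ\gamma$. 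From that point on the argument is literally yours: one chooses the coordinate $g(T_j)=[f(T_j^\flat)]$, applies Proposition \ref{prop: Higgs to crystal computation}(2) with $h$ induced by $(g,g')$, and computes $h\bigl(\tfrac{\dif T_j}{\xi}\bigr)$ modulo $I$ exactly as you do.

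One further small point: you verify the action only for the generators $\gamma_i$, whereas the paper treats an arbitrary $\gamma = (n_1,\ldots,n_d) \in \dZ_p^d$ directly. Your reduction is fine provided you note that both sides are \emph{continuous} semilinear $\Gamma$-actions and that $\dZ^d$ is dense in $\dZ_p^d$, so agreement on generators propagates.
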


\begin{proof}
Fix $(M,\nabla) \in \Higgs^\tn(R,R \otimes_{\overline P}\cOmega_{\overline P/\overline A} \{-1\})$, and denote $M_\infty = R_\infty \otimes_R M$. Assume that it corresponds to the $\overline \calO\bigl[ \frac 1 p \bigr]$-crystal $\calE$.

Pick a finite covering of $\Spa\bigl(R_\infty\bigl[ \frac 1 p \bigr], R_\infty \bigr)$ by perfectoid rings $f_i: R_\infty \to \overline B_i$, and denote $B_i = \AAinf(\overline B_i)$. Denote the product (in the category of perfectoid rings) of $\overline B_i,\overline B_j$ over $R_\infty$ by $\overline B_{ij}$. Then, for every $\gamma \in \Gamma$, we have the following commutative diagram
\[\begin{tikzcd}[column sep=large, row sep=large]
0 \ar[r] & M_\infty \ar[r,"(f_i\gamma)"] \ar[d,"\gamma"'] & \prod_i \calE(B_i) \ar[r] \ar[d, equal] & \prod_{i,j} \calE(B_{ij}) \ar[d, equal] \\
0 \ar[r] & M_\infty \ar[r,"(f_i)"] & \prod_i \calE(B_i) \ar[r] & \prod_{i,j} \calE(B_{ij})
\end{tikzcd}\]
Therefore, to show that our defined action on $M_\infty$ agrees with the action given by the vector bundle, we only need to prove that for any perfectoid ring $\bigl( \overline B\bigl[ \frac 1 p \bigr], \overline B \bigr)$ over $\bigl(R_\infty\bigl[ \frac 1 p \bigr], R_\infty\bigr)$ with $B = \AAinf(\overline B)$, and we have the following commutative diagram
\[\begin{tikzcd}[column sep=large, row sep=large]
M_\infty \ar[rr, "\gamma"] \ar[rd, "f\circ\gamma"'] & & M_\infty \ar[ld, "f"] \\
& \calE(B) &
\end{tikzcd}\]
where $f$ denotes the map $R_\infty \to \overline B$.

Denote $f' = f \circ \gamma$, and they correspond to $g,g' \in \mathscr C_{R/P/A}(B)$ via $g(T_i) = [f(T_i^\flat)]$ and $g'(T_i) = [f'(T_i^\flat)]$. We only need to show that there exists a commutative diagram as below
\[\xymatrix{
& \calE(B) & \\
\overline B \otimes_{f',R_\infty} M_\infty \ar[rr]_{\gamma} \ar[ru]^{\eta_{B,g'}} & & \overline B \otimes_{f,R_\infty} M_\infty \ar[lu]_{\eta_{B,g}}
}\]
where $\gamma: \overline B \otimes_{f',R_\infty} M_\infty \to \overline B \otimes_{f,R_\infty} M_\infty$ is given by $b \otimes m \mapsto b \otimes \gamma(m)$.

For this, we have the commutative diagram in Proposition \ref{prop: Higgs to crystal computation}, where $h: E(1) \to B$ is induced by $(g,g')$ (note the directions of the arrows --- the arrow at the bottom is from right to left):
\[\xymatrix{
& \calE(B) & \\
\overline B \cotimes_R M \ar[ru]^{\eta_{B,g}}_\cong \ar@{=}[d] & & \overline B \cotimes_R M \ar[lu]_{\eta_{B,g'}}^\cong \ar@{=}[d] \\
\overline B \cotimes_{h,R \cotimes_{\overline P} \overline{E(1)}} (\overline{E(1)} \cotimes_{\delta_1^1,\overline P} M) & & \overline B \cotimes_{h,R \cotimes_{\overline P} \overline{E(1)}} (\overline{E(1)} \cotimes_{\delta_0^1,\overline P}M) \ar[ll]^{\id \otimes \epsilon}_\cong
}\]
Going from the arrow at the bottom, we have
\[ 1 \otimes m \mapsto 1 \otimes (1 \otimes m) \mapsto 1 \otimes \sum_{\underline k} h \left( \frac{\dif \underline T}{\xi} \right)^{[\underline k]} \otimes \nabla^{\underline k} (m), \]
where $\underline k$ runs through all multi-indices in $\mathbb N^d$. Assume that $\gamma = (n_1,\ldots,n_d)$ for $n_i \in \dZ_p$, and we only need to show that $\frac{\dif T_i}{\xi}$ is sent to $n_i ( \zeta_p-1 ) f(T_j)$ in $\overline B$. Indeed, in $B$, we have
\[ h \left( \frac{\dif T_i}{\xi} \right) = \frac{g'(T_i) - g(T_i)}\xi = \frac{[f(\gamma(T_i^\flat))]-[f(T_i^\flat)]}{\xi}. \]
Note that $f(\gamma(T_i^\flat)) = \ep^{n_i} T_i^\flat$, so the value above is equal to (note that by definition $B$ is $p$-torsion-free)
\[ \frac{[\ep]^{n_i}-1}{\xi} [f(T_i^\flat)] = \frac{[\ep]^{n_i}-1}{[\ep]-1} ([\ep]^{1/p}-1)[f(T_i^\flat)]. \]
We have
\[ \frac{[\ep]^{n_i}-1}{[\ep]- 1} = n_i + \binom{n_i} 2 ([\ep]-1) + \binom{n_i} 3 ([\ep]-1)^2 + \cdots, \]
so it reduces to $n_i$ in $\overline B$, and we see that $h \bigl( \frac{\dif T_i}{\xi} \bigr)$ reduces to $n_i (\zeta_p-1) T_i$ in $\overline B$, as desired.
\end{proof}

\section{Local Simpson correspondence}
The readers may check that whether all result of this sections may be applied for the integral case, but we only concern ourselves in the rational case.

\subsection{Small component, small connections and small crystals}
In this section, we will construct the concept `small component' of a filtered module over a ring $A$ and apply this idea to the categories $\MIC^\tn$. The readers may picture the `small component' $A^J$ as a neighbourhood of $\Spf A$, i.e. the unit disk contained in the disk of radius $r^{-1}$. Differential equations on $A^J$ are therefore differential equations defined on a neighbourhood of $A$. In the next subsection, we will see that under some appropriate `overconvergence' condition as above, we can always solve the differential equation, see (the proof) of Theorem \ref{theo: simpson correspondence}.

\begin{defi}[{\cite[Section 3.2]{khan2019virtualcartierdivisorsblowups}}]
Let $A$ be a simplicial (commutative) ring. A \emph{generalised Cartier divisor} is a pair $(J,\alpha: J \to A)$, where $J$ is a line bundle over $A$ and $\alpha$ is a map of $A$-modules. The $\infty$-groupoid of generalised Cartier divisors will be denoted $\GCart(A)$.
\end{defi}

We have the following proposition from \cite{khan2019virtualcartierdivisorsblowups}:
\begin{prop}[{\cite[Proposition 3.2.6]{khan2019virtualcartierdivisorsblowups}}]
There exists a canonical isomorphism of $\infty$-sheaves $\GCart \cong [\mathbb A^1 / \mathbb G_m]$, where $\mathbb G_m$ acts on $\mathbb A^1$ by scaling.
\end{prop}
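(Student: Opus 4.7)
The plan is to unpack both sides via their functor of points and exhibit an explicit natural equivalence. On the right-hand side, the quotient stack $[\mathbb A^1/\mathbb G_m]$ (with $\mathbb G_m$ acting by scaling) admits a standard groupoid presentation $\mathbb G_m \times \mathbb A^1 \rightrightarrows \mathbb A^1$, so for any simplicial ring $A$ the $\infty$-groupoid $[\mathbb A^1/\mathbb G_m](A)$ is equivalent to the $\infty$-groupoid of pairs $(P,\phi)$ where $P$ is a $\mathbb G_m$-torsor over $\Spec A$ and $\phi: P \to \mathbb A^1$ is a $\mathbb G_m$-equivariant morphism. By classical fpqc descent (extended to the derived setting), $\mathbb G_m$-torsors over $\Spec A$ are equivalent to invertible $A$-modules $L$.

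Next, I would translate the data of $\phi$ into linear-algebraic terms on $L$. A $\mathbb G_m$-equivariant map $P \to \mathbb A^1$ for the standard scaling action is the same datum as a global function on $P$ which is homogeneous of the appropriate weight; equivalently, it is a global section of the weight-one line bundle associated to $P$, which by our convention (reading off from $L = P \times^{\mathbb G_m} \mathbb A^1$) is a section of $L^\vee$, i.e.\ an $A$-module map $\alpha: L \to A$. This assembles into a functor $[\mathbb A^1/\mathbb G_m](A) \to \GCart(A)$ sending $(P,\phi)$ to $(L,\alpha)$.

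To exhibit the inverse, given $(J,\alpha: J \to A)$, form the $\mathbb G_m$-torsor $P := \underline{\mathrm{Isom}}_A(A,J)$ (whose total space away from the zero section realises the standard frame-bundle description of a line bundle), and let $\phi$ be the equivariant map corresponding to $\alpha$ under the adjunction above. A short verification shows these constructions are mutually inverse as functors of $\infty$-groupoids, naturally in $A$, and hence define an equivalence of functors $\SCR \to \Grpd_\infty$. Since both sides are $\infty$-sheaves (the left-hand side by general properties of quotient stacks, the right-hand side because line bundles and module maps satisfy fpqc descent), this pointwise equivalence upgrades to an isomorphism of $\infty$-sheaves.

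The main subtlety, though not really an obstacle, lies in keeping the conventions consistent (which side of the duality appears, and which weight one chooses for the tautological character of $\mathbb G_m$); the rest is a formal manipulation of the functor of points of a quotient stack, and is carried out in full in \cite[Proposition 3.2.6]{khan2019virtualcartierdivisorsblowups}.
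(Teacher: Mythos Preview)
The paper does not give its own proof of this proposition; it simply cites it from \cite[Proposition 3.2.6]{khan2019virtualcartierdivisorsblowups}. Your outline is the standard functor-of-points argument one would expect to find in that reference, and is correct in spirit, so there is nothing to compare.
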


For simplicity, we will only work with the case that we are interested in. Let $A$ be a classical ring, which is $p$-adically complete and of bounded $p^\infty$-torsion. For any generalised Cartier divisor $\alpha: J \to A$, we adopt the following definition.

\begin{prop-defi}[small component]\label{prop-defi: small component}
Denote $J^*: \gr(\widehat\Mod_{A,\Flat}) \to \gr(\widehat\Mod_{A,\Flat})$ by
\[ J^* (M_k) = (J^k \otimes_A M_k). \]
It is easy to see that $J^*$ is symmetric monoidal.

For $M \in \widehat\Mod_{A,\Flat,\fil}$, define the \emph{$J$-small component of $M$} by 
\[ \Fil_n(M^J) = \bigoplus_{k=0}^n J^k \otimes_A \Fil_k(M) / \bigoplus_{k=0}^{n-1} J^{k+1} \otimes_A \Fil_k(M), \]
where the map
\[ J^{k+1} \otimes_A \Fil_k(M) \to \bigoplus_{k=0}^n J^k \otimes_A \Fil_k(M) \]
is given by the difference of the maps
\[ J^{k+1} \otimes_A \Fil_k(M) \to J^k \otimes_A \Fil_k(M), \quad J^{k+1} \otimes_A \Fil_k(M) \to J^{k+1} \otimes_A \Fil_{k+1}(M) \]
induced by $J^{k+1} \to J^k$ and $\Fil_k(M) \to \Fil_{k+1}(M)$.

Then, $M^J$ is natural in $M$ and $J$. We have a natural isomorphism
\[ \gr_n(M^J) \xrightarrow{\cong} J^n \otimes_A \gr_n(M) \]
given by $J^k \otimes_A \Fil_k(M) \mapsto 0$ for $k <n$ and $J^n \otimes_A \Fil_n(M) \to J^n \otimes_A \gr_n(M)$ for $k=n$. We also have a natural map $M^J \to M$ given by
\[ J^n \otimes_A \Fil_n(M) \xrightarrow{\alpha^n} \Fil_n(M), \]
and it induce the map on graded pieces
\[ \gr_n(M^J) \cong J^n \otimes_A \gr_n(M) \xrightarrow{\alpha^n} \gr_n(M). \]
Moreover, for $M,N \in \widehat\Mod_{A,\Flat,\fil}$ and $i+j \le k$, the maps
\[ (J^i \otimes_A \Fil_i(M)) \cotimes_A (J^j \otimes_A \Fil_j(N)) \cong J^{i+j} \otimes_A \Fil_{i+j}(M) \to \Fil_k((M \cotimes_A N)^J) \]
makes $(-)^J$ into a symmetric monoidal functor, such that the isomorphism above $\gr_* \circ (-)^J \cong J^* \circ \gr_*$ is symmetric monoidal.
\end{prop-defi}

\begin{proof}
We prove that the transition maps $\Fil_{n-1}(M^J) \to \Fil_n(M^J)$ are injective with the desired cokernels. This is a straightforward consequence of the (classical) pushout square
\[\xymatrix{
\Fil_{n-1}(M^J) \ar[r] & \Fil_n(M^J) \\
J^n \otimes_A \Fil_{n-1}(M) \ar[r] \ar[u] & J^n \otimes_A \Fil_n(M) \ar[u]
}\]
and that the arrow on the bottom is injective. The other properties are easy to prove, and we omit them here.
\end{proof}

\begin{rmk}
The functor $(-)^J$ as above can be defined on the $\infty$-category $D(A)$, but the process is rather tricky. Possible ways are to define $(-)^J$ to be the right adjoint of the lax monoidal functor
\[ M = (M_0 \xrightarrow{f_0} M_1 \xrightarrow{f_1} M_2 \to \cdots) \mapsto (M_0 \xrightarrow{\alpha f_0} J^{-1} \otimes_A M_1 \xrightarrow{\alpha f_1} J^{-2} \otimes_A M^2 \to \cdots). \]
Another possible approach is to identify $\mathbb Z$-filtered modules with quasi-coherent modules on the stack $\Spec A \times [\mathbb A^1 / \mathbb G^m]$, and $(-)^J$ is the pullback via
\[ J: \Spec A \times [\mathbb A^1 / \mathbb G^m] \to \Spec A \times [\mathbb A^1 / \mathbb G^m] \]
(note that $\Spec A \times [\mathbb A^1 / \mathbb G^m]$ is a monoid over $\Spec A$, and the map is given by multiplication with $J: \Spec A \to \Spec A \times [\mathbb A^1 / \mathbb G^m]$). Since we won't use this construction, we will not make further discussions into it.
\end{rmk}

\begin{lemma}\label{lem: J-small differential}
Let $A$ be a $p$-complete ring with bounded torsion, $\alpha: J \to A$ be a generalised Cartier divisor, and $S/A$ be a strictly $p$-complete flat filtered ring. Assume that $\Omega$ is a $p$-complete and $p$-completely flat module over $A$, and
\[ \dif: S \to S \cotimes_A \Omega \]
is an $A$-linear differential operator satisfying the Griffith transversality. Denote $J^* \Omega = J \otimes_A \Omega$. Then, there exists a unique differential
\[ \dif': S^J \to S^J \cotimes_A J^*\Omega \]
such that $\dif'(a \otimes x) = a \otimes \dif(x)$ for any $x \in \Fil_k (S)$ and $a \in J^k$. Note that $\dif(x) \in \Fil_{k-1}(S)$ and
\[ a\otimes \dif(x) \in (J^k \otimes_A \Fil_{k-1}(S)) \cotimes_A \Omega \cong (J^{k-1} \otimes_A \Fil_{k-1}(S)) \cotimes_A J^*\Omega\]
is well-defined. We will denote this differential by $J^{-1} \dif$. We have a natural functor between categories:
\[ \MIC^\tn\bigl(S^J\bigl[\frac 1 p \bigr],J^{-1}\dif\bigr) \to \MIC^\tn\bigl(S\bigl[\frac 1p\bigr],\dif\bigr) \]
given by $(M,\nabla') \mapsto (S \otimes_{S^J} M, J\nabla')$, where $J\nabla'$ is defined by
\[ J\nabla'(s \otimes m) = \dif(s) \otimes m + s \alpha( \nabla'(x) ). \]
It is faithful whenever $\alpha \bigl[ \frac 1 p \bigr]$ is injective.
\end{lemma}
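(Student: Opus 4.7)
The plan is to construct $\dif'$ by applying functoriality of the $J$-small component functor to $\dif$, appropriately reinterpreted as a filtered morphism. Let $\Omega\{1\}$ denote $\Omega$ placed in filtration degrees $\ge 1$, so $\Fil_0(\Omega\{1\})=0$ and $\Fil_k(\Omega\{1\})=\Omega$ for $k\geq 1$. Then Griffith transversality $\dif(\Fil_k(S))\subseteq \Fil_{k-1}(S)\cotimes_A\Omega$ is precisely the assertion that $\dif\colon S \to S \cotimes_A \Omega\{1\}$ is a morphism in $\widehat\Mod_{A,\Flat,\fil}$. Applying the symmetric monoidal functor $(-)^J$ yields $\dif^J\colon S^J \to S^J \cotimes_A (\Omega\{1\})^J$. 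A direct computation from Proposition-Definition \ref{prop-defi: small component} shows $(\Omega\{1\})^J \cong (J^*\Omega)\{1\}$: the $k=0$ summand vanishes, and for $k\ge 1$ the inclusion $\Fil_k \hookrightarrow \Fil_{k+1}$ is the identity on $\Omega$, so the relations collapse $\bigoplus_{k\ge 1} J^k\otimes\Omega$ onto $J\otimes\Omega=J^*\Omega$ via iterated $\alpha$. Passing to underlying modules gives $\dif'\colon S^J \to S^J\cotimes_A J^*\Omega$; the formula $\dif'(a\otimes x)=a\otimes\dif(x)$ then holds by naturality, once $J^k\otimes\Fil_{k-1}(S)\cotimes_A\Omega$ is rewritten via the canonical isomorphism $J^k\otimes\Omega \cong J^{k-1}\otimes J^*\Omega$. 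Uniqueness is immediate: any $A$-linear map satisfying this formula is determined on the generating images of $J^k\otimes\Fil_k(S)$ in $\Fil_n(S^J)$.

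Next, by tracing the construction on generators, the square
\[ \xymatrix{ S^J \ar[r]^-{\dif'} \ar[d]_{\pi} & S^J\cotimes_A J^*\Omega \ar[d]^{\pi \otimes \alpha} \\ S \ar[r]_-{\dif} & S\cotimes_A \Omega } \]
commutes, where $\pi\colon S^J\to S$ is the structural map with components $\alpha^n$ on $J^n\otimes\Fil_n(S)$. Given $(M,\nabla')\in\MIC^\tn\bigl(S^J\bigl[\tfrac{1}{p}\bigr], J^{-1}\dif\bigr)$, the prescribed formula for $J\nabla'$ defines an $A$-linear map on $S\cotimes_{S^J} M$; the Leibniz rule for $\dif$, combined with the $J^{-1}\dif$-connection rule for $\nabla'$, yields the Leibniz rule for $J\nabla'$ by a direct computation. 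Integrability $(J\nabla')^2=0$ follows formally from $\dif^2=0$, $\nabla'^2=0$, and the compatibility square above. For topological nilpotence, Lemma \ref{lem: Leibniz rule} yields
\[(J\nabla')^n(s\otimes m) = \sum_{k+\ell=n} \dif^k(s)\cdot (J\nabla')^\ell(1\otimes m),\]
and one verifies by induction on $\ell$ that $(J\nabla')^\ell(1\otimes m) = \alpha^\ell(\nabla'^\ell(m))$ under the canonical map $\alpha^\ell\colon\cGamma^\ell(J^*\Omega) \cong J^\ell\otimes\cGamma^\ell\Omega \to \cGamma^\ell\Omega$. Since for fixed $s\in\Fil_N(S)$ one has $\dif^k(s)=0$ for $k>N$, and $\nabla'^\ell(m)\to 0$ by topological nilpotence of $\nabla'$, the sum tends to zero as $n\to\infty$.

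For faithfulness when $\alpha[\tfrac 1 p]$ is injective, I would reduce to injectivity of $S^J\bigl[\tfrac{1}{p}\bigr]\to S\bigl[\tfrac{1}{p}\bigr]$. By induction on the graded pieces (all of which are $p$-completely flat by assumption), each $\Fil_k(S)$ is $p$-completely flat over $A$, hence $\Fil_k(S)\bigl[\tfrac{1}{p}\bigr]$ is flat over $A\bigl[\tfrac{1}{p}\bigr]$; combined with injectivity of $\alpha^k[\tfrac{1}{p}]\colon J^k\bigl[\tfrac{1}{p}\bigr]\hookrightarrow A\bigl[\tfrac{1}{p}\bigr]$, the composite $J^k\otimes\Fil_k(S)\bigl[\tfrac{1}{p}\bigr] \to \Fil_k(S)\bigl[\tfrac{1}{p}\bigr]\hookrightarrow S\bigl[\tfrac{1}{p}\bigr]$ is injective. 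The defining relation $(ja)\otimes x \sim \alpha(j)a\otimes x$ in $S^J$ is compatible with $\pi$ (both sides produce $\alpha^{k+1}(ja)\cdot x = \alpha(j)\alpha^k(a)\cdot x$ in $S$), so the map $\Fil_n(S^J)\bigl[\tfrac{1}{p}\bigr]\to S\bigl[\tfrac{1}{p}\bigr]$ is injective; passing to the filtered colimit yields the claim. Since any finite projective module $M$ over $S^J\bigl[\tfrac{1}{p}\bigr]$ is a retract of a finite free module, $M\hookrightarrow S\otimes_{S^J} M$, so the underlying base change functor on modules is faithful, and the claimed faithfulness of $F$ follows.

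The main technical obstacle will be the careful verification of topological nilpotence through base change, particularly the inductive formula $(J\nabla')^\ell(1\otimes m) = \alpha^\ell(\nabla'^\ell(m))$, since $(J\nabla')^\ell$ is defined implicitly by Lemma \ref{lem: connection to stratification} and its explicit expression requires mixing $\dif$ on $S$ with $\nabla'$ on $M$ (Lemma \ref{lem: Leibniz rule} isolates the relevant piece only after $s$ is factored out). The injectivity argument for faithfulness also requires tracking the relations in $S^J$ carefully against the colimit structure.
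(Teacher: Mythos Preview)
Your approach matches the paper's—its proof is a single sentence citing Proposition-Definition~\ref{prop-defi: small component} and the ring structure on $S^J$—and your construction of $\dif'$ by applying the symmetric monoidal functor $(-)^J$ to $\dif: S \to S \cotimes_A \Omega\{1\}$ is the right way to unpack that sentence.

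One genuine gap in your detailed verification: for topological nilpotence you use $\dif^k(s)=0$ when $s \in \Fil_N(S)$ and $k>N$, but the filtration on $S$ is exhaustive only in the $p$-completed sense, so $\bigcup_N \Fil_N(S)$ is merely dense in $S$ and not every $s$ lies in some $\Fil_N$. The cleanest fix is to observe that in every application $\dif$ arises from a filtered split PD structure (Lemma~\ref{lem: J-small differential and split PD}) and is thus itself topologically nilpotent by Proposition~\ref{prop: strat to connection}; then $\dif^k(s)\to 0$ for all $s$, and the convolution-type estimate as in the proof of Proposition~\ref{prop: connection to strat} finishes the argument. Your faithfulness sketch has an analogous completion issue: injectivity of each $\Fil_n(S^J)[\tfrac1p] \hookrightarrow S[\tfrac1p]$ does not formally pass to the $p$-completed colimit $S^J[\tfrac1p]$. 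One route around this: each $\Fil_n(S^J)$ is $p$-completely flat over $A$ (via Corollary~\ref{cor: flatness by graded complete case}), so its $p^\infty$-torsion is bounded uniformly in $n$ by that of $A$ (Proposition~\ref{prop: p-completely flat}); the kernel of $\Fil_n(S^J) \to \Fil_n(S)$ is then uniformly $p^{N_0}$-torsion, and this bound survives the completion.
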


\begin{proof}
This is a direct result of Proposition-Definition \ref{prop-defi: small component}, as it describes the ring structure on $S^J$.
\end{proof}

\begin{lemma}\label{lem: J-small differential and split PD}
If $(S(\bullet), \varphi: \Omega \to \Fil_1(S(1)))$ is filtered split PD over $A$ (Definition \ref{defi: filtered split PD}), we may form the cosimplicial ring $S(\bullet)^J$. Then,
\begin{enumerate}[label=(\alph*)]
\item $S(\bullet)^J$ is filtered split PD with $\varphi^J: J \otimes_A \Omega \to S(1)^J$ given by
\[ \varphi^J( a \otimes \omega) = a \otimes \varphi(\omega), \]
\item the differential operator corresponding to $(S(\bullet)^J,\varphi^J)$ is $J^{-1}\dif$ as in Lemma \ref{lem: J-small differential},
\item taking gradings, then the natural isomorphism $J^*\gr_* S(\bullet) \to \gr_* S(\bullet)^J$ is commutative with the split PD structure, i.e. we have a commutative diagram (note that $\Omega$ is mapped into degree 1)
\[\xymatrix{
J^*\gr_* S(\bullet) \ar[rr] & & \gr_* S(\bullet)^J \\
& J^*\Omega \ar[lu]^{J^*\varphi} \ar[ru]_{\varphi^J}
}\]
\item the natural equivalences in Theorem \ref{theo: comp strat connection} fits into the commutative diagram (in the $(2,1)$-category of categories)
\[
\xymatrix{
\Strat\bigl(S(\bullet)^J\bigl[\frac 1 p \bigr]\bigr) \ar[r] \ar[d]_\cong & \Strat\bigl(S(\bullet)\bigl[\frac 1 p \bigr]\bigr) \ar[d]^\cong \\
\MIC^\tn\bigl(S^J\bigl[\frac 1 p\bigr],J^{-1}\dif\bigr) \ar[r] & \MIC^\tn\bigl(S\bigl[\frac 1p\bigr],\dif\bigr)
}\]
where the arrow at the bottom is as in Lemma \ref{lem: J-small differential}.
\end{enumerate}
\end{lemma}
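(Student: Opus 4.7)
The proof will split cleanly along the four items; the unifying observation is that $(-)^J$ is a symmetric monoidal functor on $\widehat\Mod_{A,\Flat,\fil}$ by Proposition-Definition \ref{prop-defi: small component}, so it automatically transports cosimplicial algebras, $\dE_\infty$-structures, and compatible PD structures. For (a), I would apply $(-)^J$ termwise to $S(\bullet)$. The PD structure on $\ker(S(n)^J \to S(0)^J)$ would be defined by $\gamma_r(a \otimes x) := a^r \otimes \gamma_r(x)$ for $a \in J^d$ and $x \in \ker(S(n) \to S(0)) \cap \Fil_d(S(n))$; the filtered PD compatibility of $S(\bullet)$ forces $\gamma_r(x) \in \Fil_{dr}(S(n))$, so the value lies in $J^{dr} \otimes_A \Fil_{dr}(S(n)) \subseteq \Fil_{dr}(S(n)^J)$. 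The split isomorphism
\[ S(0)^J \cotimes_A \cGamma^\bullet_A\bigl((J \otimes_A \Omega)^{\oplus n}\bigr) \xrightarrow{\cong} S(n)^J \]
would be obtained by applying $(-)^J$ to the corresponding isomorphism for $S(\bullet)$ together with the identity $\cGamma^r_A(J \otimes_A \Omega) \cong J^{\otimes r} \otimes_A \cGamma^r_A(\Omega)$, valid because $J$ is a line bundle and $\cGamma^r_A$ has weight $r$.

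For (b), the plan is to unpack the defining expansion from Proposition \ref{prop: strat to connection}: for $s \in \Fil_k(S(0))$ and $a \in J^k$, Griffith transversality gives $\dif^n(s) \in \Fil_{k-n}(S(0)) \cotimes_A \cGamma^n(\Omega)$, so comparing $\delta_0^1(a \otimes s) = \sum_n a \otimes \varphi^n(\dif^n(s))$ with $\sum_n \varphi^{J,n}(\dif^n_{S^J}(a \otimes s))$ after reindexing $J^k \otimes_A \cGamma^n(\Omega) \cong J^{k-n} \otimes_A \cGamma^n(J \otimes_A \Omega)$ will force $\dif^n_{S^J}(a \otimes s) = a \otimes \dif^n(s)$. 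This is precisely $J^{-1}\dif$ of Lemma \ref{lem: J-small differential}. Part (c) is then immediate from the construction in (a), since under the identification $\gr_1(S(1)^J) \cong J \otimes_A \gr_1(S(1))$ both $J^*\varphi$ and $\varphi^J$ tautologically send $a \otimes \omega$ to $a \otimes \varphi(\omega)$.

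For (d), I would chase the correspondence of Theorem \ref{theo: comp strat connection}. A stratification $(M, \epsilon^J)$ on $S(\bullet)^J\bigl[\frac 1 p\bigr]$ corresponds to a $J^{-1}\dif$-connection $\nabla^J$ via $\epsilon^J(1 \otimes m) = \sum_n \varphi^{J,n}_M(\nabla^{J,n}(m))$. The natural base change $S(\bullet)^J \to S(\bullet)$ sends $\varphi^J(a \otimes \omega) \mapsto \alpha(a) \varphi(\omega)$ and applies $\alpha^n$ to the $J^{\otimes n}$-factor in $\cGamma^n(J \otimes_A \Omega) \cong J^{\otimes n} \otimes_A \cGamma^n(\Omega)$, so the base-changed expansion should become $\epsilon(1 \otimes m) = \sum_n \varphi^n_{M \cdot S(0)}\bigl(\alpha^n(\nabla^{J,n}(m))\bigr)$. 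Reading off the associated $\dif$-connection and combining with the Leibniz rule from being $S(0)$-linear will then produce $\nabla(s \otimes m) = \dif(s) \otimes m + s \cdot \alpha(\nabla^J(m))$, matching $J\nabla^J$ from Lemma \ref{lem: J-small differential}. The only real difficulty throughout is bookkeeping the filtered weights, and this is exactly what the symmetric monoidality of $(-)^J$ and the identity $\cGamma^r(J \otimes_A \Omega) \cong J^{\otimes r} \otimes_A \cGamma^r(\Omega)$ arrange automatically.
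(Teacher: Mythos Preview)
Your proposal is correct and is precisely the kind of verification the paper has in mind: the paper's own proof reads in its entirety ``These are trivial calculations.'' You have simply unpacked those calculations, and the key ingredients you identify --- symmetric monoidality of $(-)^J$, the identification $\cGamma^r_A(J\otimes_A\Omega)\cong J^{\otimes r}\otimes_A\cGamma^r_A(\Omega)$ for a line bundle $J$, and direct comparison of the defining expansions from Proposition~\ref{prop: strat to connection} --- are exactly what makes the verification routine.
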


\begin{proof}
These are trivial calculations.
\end{proof}

\begin{lemma}\label{lem: flatness of J-small component}
Let $A$ be a $p$-complete ring of bounded $p^\infty$-torsion, and $J$ be an invertible $A$-module. Let $B$ be a graded $p$-complete and $p$-completely flat $A$-algebra and $M$ be a graded derived $p$-complete $B$-module, then $M/B$ is of $p$-complete $\Tor$-amplitude in $[a,b]$ (resp. $p$-completely flat, $p$-completely faithfully flat) if and only if $J^*M$ is of $p$-complete $\Tor$-amplitude in $[a,b]$ (resp. $p$-completely flat, $p$-completely faithfully flat) over $J^* B$. Here, $J^*: \gr(D(A)) \to \gr(D(A))$ is defined by $(M_i) \mapsto (J^i \otimes_A M_i)$, which can be realised into a symmetric monoidal functor of $\infty$-categories by the same argument as in Example \ref{ex: shearing}.
\end{lemma}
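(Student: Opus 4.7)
The plan rests on Example \ref{ex: shearing}: the shearing $J^*$ is a symmetric monoidal auto-equivalence of $\gr(D(A))$ with inverse $J_* = (J^{-1})^*$. Because $J$ is invertible, hence $A$-flat, each component shearing $M_n \mapsto J^{\otimes n} \otimes_A M_n$ is $t$-exact, so $J^*$ is $t$-exact as a functor on $\gr(D(A))$ equipped with the componentwise $t$-structure. As a symmetric monoidal equivalence that preserves arbitrary colimits, $J^*$ also preserves relative tensor products (Example \ref{ex: relative tensor products}): for a graded algebra $B$ and graded $B$-modules $M,N$, one has a natural isomorphism $J^*(M \otimes_B^\bL N) \cong J^*M \otimes_{J^*B}^\bL J^*N$.

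First I would reduce to the mod-$p$ case. By Proposition-Definition \ref{prop-defi: complete flatness}, $p$-complete $\Tor$-amplitude $[a,b]$ of $M$ over $B$ is equivalent to $M \otimes_{\mathbb Z}^\bL \mathbb F_p$ having $\Tor$-amplitude $[a,b]$ over $B \otimes_{\mathbb Z}^\bL \mathbb F_p$, and $p$-complete faithful flatness is equivalent to $M \otimes_{\mathbb Z}^\bL \mathbb F_p$ being faithfully flat over $B \otimes_{\mathbb Z}^\bL \mathbb F_p$. Since $J$ is $A$-flat, $\overline J := J/pJ$ is invertible over $\overline A := A/p$, and derived reduction commutes with shearing: for any $N \in \gr(D(A))$,
\[ (J^*N) \otimes_{\mathbb Z}^\bL \mathbb F_p \cong \overline J{}^*\bigl(N \otimes_{\mathbb Z}^\bL \mathbb F_p\bigr), \]
which on the $n$-th graded piece follows from $(J^{\otimes n} \otimes_A N_n) \otimes_A^\bL \overline A \cong \overline J{}^{\otimes n} \otimes_{\overline A} (N_n \otimes_A^\bL \overline A)$ using the flatness of $J$. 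Replacing $(A,B,M,J)$ by its mod-$p$ reduction, I would be reduced to the uncompleted analogue of the lemma over $\overline A$.

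In this reduced setting, $\overline J{}^*$ is a $t$-exact symmetric monoidal equivalence of $\gr(D(\overline A))$, which preserves and reflects both colimits and the $t$-structure. The condition that $M$ has $\Tor$-amplitude $[a,b]$ over $B$ is characterised by the functor $- \otimes_B^\bL M : \gr(D(B)) \to \gr(D(\overline A))$ sending $\gr(D(B))^{\le 0}$ into $\gr(D(\overline A))^{\le b}$ and $\gr(D(B))^{\ge 0}$ into $\gr(D(\overline A))^{\ge a}$; similarly, faithful flatness is the conjunction of flatness with conservativity of this functor. All these properties are transported by the induced $t$-exact symmetric monoidal equivalence between graded $B$-modules and graded $\overline J{}^*B$-modules, so both directions of the lemma follow. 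The only technical step requiring verification is the commutation of shearing with derived reduction modulo $p$, which is immediate from flatness of $J$; I do not expect any serious obstacle beyond this bookkeeping.
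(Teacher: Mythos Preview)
Your reduction modulo $p$ matches the paper's first move, and your observation that $J^*$ is a $t$-exact symmetric monoidal auto-equivalence of $\gr(D(\overline A))$ preserving relative tensor products is correct. However, the final step contains an unjustified assertion. You claim that the (ungraded) $\Tor$-amplitude of the underlying module of $M$ over the underlying ring of $B$ is ``characterised by the functor $- \otimes_B^\bL M : \gr(D(B)) \to \gr(D(\overline A))$'' acting on \emph{graded} $B$-modules. The forward implication is clear (the underlying-module functor is symmetric monoidal), but the reverse is not: testing only against graded $B$-modules does not a priori detect behaviour on arbitrary ungraded $B$-modules, and the equivalence $J^*$ does not intertwine the underlying-module functors. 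This equivalence (graded $\Tor$-amplitude $=$ ungraded $\Tor$-amplitude for graded modules) is in fact true---one way to see it is the Laurent trick: base-change along $B \to B[t,t^{-1}]$ with $\deg t = 1$ identifies graded $B[t,t^{-1}]$-modules with ungraded $B$-modules via $N \mapsto N_0$---but you do not supply any such argument, and it is not a triviality.

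The paper sidesteps this issue entirely. After reducing modulo $p$, it simply observes that $\Tor$-amplitude, flatness, and faithful flatness of the underlying modules can be checked Zariski-locally on $\Spec \overline A$; since $\overline J$ is invertible it is Zariski-locally trivial, and on such an open $J^* \cong \id$, so there is nothing to prove. This two-line argument avoids the graded-versus-ungraded comparison altogether. Your categorical approach would work once the gap above is filled, but the paper's localisation is both shorter and more robust.
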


\begin{proof}
Reduce $- \otimes_A^\bL A/p$, and we may assume that $pA=0$, so we only need to deal with the classical non-completed case. Recall that $\Tor$-amplitude (resp. flatness, faithfully flatness) can be checked Zariski locally, so we may reduce to the case where $J \cong A$, and in this case $J^* \cong \id$.
\end{proof}

\begin{lemma}\label{lem: flatness revisited}
Let $A$ be a $p$-complete ring of bounded $p^\infty$-torsion, and $B,B_1,\ldots,B_n,C$ be filtered $p$-complete $A$-algebras such that $\gr_* B, \gr_* B_i,\gr_* C$ are concentrated on degree 0 of bounded $p^\infty$-torsion. Also let $B \to B_i$ and $B \to C$ be $A$-algebra maps and $B_i \to C$ be $B$-algebra maps. Also, let $\alpha: J \to A$ be a generalised Cartier divisor.

\begin{enumerate}[label=(\arabic*)]
\item if $\gr_* B \to \gr_* B_i$ are $p$-completely faithfully flat, then the maps $B^J \to B_i^J$ are $p$-completely faithfully flat;
\item if the induced map
\[ \gr_* B_1 \cotimes_{\gr_* B}^\bL \cdots \cotimes_{\gr_* B}^\bL \gr_* B_n \to \gr_* C \]
is an isomorphism, then the induced map
\[ B_1^J \cotimes_{B^J}^\bL \cdots \cotimes_{B^J}^\bL B_n^J \to C^J \]
is a strict isomorphism of filtered complete $A$-algebras.
\end{enumerate}
\end{lemma}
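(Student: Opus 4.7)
The plan is to reduce both parts of the lemma to statements about graded pieces, leveraging the natural symmetric monoidal isomorphism $\gr_*(M^J) \cong J^*(\gr_* M)$ recorded in Proposition-Definition \ref{prop-defi: small component}, where $J^*$ denotes the Breuil--Kisin-type twist $(N_i) \mapsto (J^i \otimes_A N_i)$ on graded modules.

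For (1), $p$-complete faithful flatness of the filtered algebra map $B^J \to B_i^J$ can be tested on graded pieces by Corollary \ref{cor: flatness by graded complete case}. Under the identification above, this boils down to proving that $J^*(\gr_* B) \to J^*(\gr_* B_i)$ is $p$-completely faithfully flat, which is immediate from Lemma \ref{lem: flatness of J-small component} once we invoke the standing hypothesis that $\gr_* B \to \gr_* B_i$ is itself $p$-completely faithfully flat.

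For (2), I would first observe that a map of exhaustively filtered objects in $\cD(A)$ (such as the complete filtered $\dE_\infty$-algebras at hand) is a strict isomorphism if and only if it induces an isomorphism on all graded pieces. Apply $\gr_*$ to the map
\[ B_1^J \cotimes_{B^J}^\bL \cdots \cotimes_{B^J}^\bL B_n^J \to C^J. \]
Since $\gr_*$ is symmetric monoidal and preserves colimits, it commutes with relative tensor products by Example \ref{ex: relative tensor products}, turning this into
\[ \gr_*(B_1^J) \cotimes_{\gr_*(B^J)}^\bL \cdots \cotimes_{\gr_*(B^J)}^\bL \gr_*(B_n^J) \to \gr_*(C^J). \]
Using $\gr_*(-^J) \cong J^*\gr_*(-)$ together with the fact that $J^*$ is a symmetric monoidal auto-equivalence of $\gr(\cD(A))$ by Example \ref{ex: shearing} (hence preserves colimits and relative tensor products), this rewrites as $J^*$ applied to the map
\[ \gr_* B_1 \cotimes_{\gr_* B}^\bL \cdots \cotimes_{\gr_* B}^\bL \gr_* B_n \to \gr_* C, \]
which is an isomorphism by assumption. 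Hence the original filtered map is a strict isomorphism.

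The only non-routine point is bookkeeping the compatibility between the three symmetric monoidal structures involved (the $J$-small component functor on filtered modules, the twist $J^*$ on graded modules, and the respective Day convolution tensor products). Once one accepts that $\gr_*(-^J) \cong J^* \gr_*(-)$ upgrades to a \emph{symmetric monoidal} natural isomorphism --- which is exactly the content of the last assertion in Proposition-Definition \ref{prop-defi: small component} --- both parts of the lemma follow formally, without any further explicit computation.
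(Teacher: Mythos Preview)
Your proposal is correct and follows essentially the same route as the paper: both parts reduce to graded pieces via the identification $\gr_*(M^J)\cong J^*(\gr_* M)$, then invoke Lemma~\ref{lem: flatness of J-small component} and Corollary~\ref{cor: flatness by graded complete case} for (1), and the fact that $J^*$ is a symmetric monoidal auto-equivalence of $\gr(\cD(A))$ preserving relative tensor products for (2). The only cosmetic difference is that the paper remarks one can view the relative tensor product as a coproduct of commutative algebras (so as to sidestep any need for $(-)^J$ to be ``relative symmetric monoidal''), whereas you rely directly on $\gr_*$ commuting with relative tensor products from Example~\ref{ex: relative tensor products}; both are valid.
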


\begin{proof}
For (1), by Lemma \ref{lem: flatness of J-small component}, the maps $J^* \gr_* B \to J^* \gr_* B_i$ are $p$-completely flat, i.e. the maps $\gr^* (B^J) \to \gr_* (B_i^J)$ are $p$-completely flat, and Corollary \ref{cor: flatness by graded complete case} shows that the maps $B^J \to B_i^J$ are $p$-completely faithfully flat.

For (2), to show that the induced map
\[ B_1^J \cotimes_{B^J}^\bL \cdots \cotimes_{B^J}^\bL B_n^J \to C^J \]
is a strict isomorphism of filtered complete $A$-algebras, we may take the graded pieces, and prove that
\[ \gr_* (B_1^J) \cotimes_{\gr_*(B^J)}^\bL \cdots \cotimes_{\gr_*(B^J)} \gr_* (B_n^J) \to \gr_*(C^J) \]
is an isomorphism of graded complete $A$-algebras. Using the natural isomorphism $\gr_*(X^J) \cong J^*(\gr_*(X))$, we only need to prove that the natural map
\[ J^* \gr_*B_1 \cotimes_{J^* \gr_* B}^\bL \cdots \cotimes_{J^* \gr_* B} J^* \gr_* B_n \to J^* \gr_* C \]
is an isomorphism. This follows from our condition and the fact that $J^*$ can be made into a symmetric monoidal equivalence $\gr(\cD(A)) \to \gr(\cD(A))$ preserving relative tensor products (see the last paragraph of Example \ref{ex: shearing}).

Here, we take advantage of $\cotimes^\bL$ being coproducts for algebras, so we do not need to construct $(-)^J$ being `relative symmetric monoidal'.
\end{proof}

\begin{theo}\label{theo: fake J-small connection}
Let $A$ be a bounded prism, $R/\overline A$ be a ring satisfying Condition \ref{cond: tor -1,0}, and $\alpha: J \to R$ be a generalised Cartier divisor. Then, there exists a category
\[ \Vect^{\fJsm J} \bigl( (R/A)_\Prism, \overline\calO \bigl[ \frac 1 p \bigr] \bigr) \]
functorial in $R,A$ and $J$, and a functor (faithful whenever $\alpha \bigl[ \frac 1 p \bigr]$ is injective)
\[ \psi_{R/A}: \Vect^{\fJsm J} \bigl( (R/A)_\Prism, \overline\calO \bigl[ \frac 1 p \bigr] \bigr) \to \Vect \bigl( (R/A)_\Prism, \overline\calO \bigr) \]
such that for each $A \to S \to R$ with $S/A$ satisfying Condition \ref{cond: flat + tor 0} and $R/\overline S$ satisfying Condition \ref{cond: tor -1}, we have a commutative diagram in the $(2,1)$-category of categories:
\[\xymatrix@C=5pc{
\Vect^{\fJsm J} \bigl( (R/A)_\Prism, \overline\calO \bigl[ \frac 1 p \bigr] \bigr) \ar[d]_{\psi_{R/A}} \ar[r]^(.58){\varphi_{R/S/A}}_(.58)\cong & \MIC^\tn \bigl( \overline\Prism_{R/S}^J \bigl[ \frac 1 p \bigr], J^{-1}\dif \bigr) \ar[d] \\
\Vect \bigl( (R/A)_\Prism, \overline\calO \bigl[ \frac 1 p \bigr] \bigr) \ar[r]_\cong & \MIC^\tn \bigl( \overline\Prism_{R/S} \bigl[ \frac 1 p \bigr], \dif \bigr) \\
}\]
\end{theo}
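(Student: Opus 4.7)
The plan is to fix any single $S$ in the category $\calC$ of intermediate $A$-algebras $A \to S \to R$ satisfying the hypotheses of Theorem \ref{theo: main dR realization revisit}, define
\[ \Vect^{\fJsm J}\bigl((R/A)_\Prism,\overline\calO\bigl[\tfrac 1 p\bigr]\bigr) := \MIC^\tn\bigl(\overline\Prism_{R/S}^J\bigl[\tfrac 1 p\bigr], J^{-1}\dif\bigr), \]
and then prove that the definition is independent of the choice of $S$ up to a canonical equivalence. The functor $\psi_{R/A}$ is the composition of base change along $\overline\Prism_{R/S}^J \to \overline\Prism_{R/S}$ (as in Lemma \ref{lem: J-small differential}) with the equivalence of Theorem \ref{theo: main dR realization revisit}, and the stated commutative diagram is then tautological; faithfulness when $\alpha\bigl[\tfrac 1 p\bigr]$ is injective is immediate from Lemma \ref{lem: J-small differential}.

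First I would check that $\calC$ is cofiltered, so that any two objects $S_1, S_2$ are dominated by the $(p,I)$-completed tensor product $S_{12} := S_1 \cotimes_A S_2$; Condition \ref{cond: flat + tor 0} and Condition \ref{cond: tor -1} are preserved by this operation via the standard cotangent complex triangles. It therefore suffices to show that for every morphism $S \to S'$ in $\calC$, the base change functor
\[ F_{S,S'}: \MIC^\tn\bigl(\overline\Prism_{R/S}^J\bigl[\tfrac 1 p\bigr], J^{-1}\dif\bigr) \to \MIC^\tn\bigl(\overline\Prism_{R/S'}^J\bigl[\tfrac 1 p\bigr], J^{-1}\dif\bigr) \]
is a symmetric monoidal equivalence, and that these equivalences satisfy a cocycle condition under composition, so that they glue into a canonical category.

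To produce the equivalence I would analyze $\overline\Prism_{R/S} \to \overline\Prism_{R/S'}$ using the relative forms of Lemma \ref{lemma: E(n) split PD} and Proposition \ref{prop: D(n) split PD}, combined with Theorem \ref{theo: prism=pd derived construction}. This yields a strict isomorphism of filtered split PD rings
\[ \overline\Prism_{R/S'} \cong \overline\Prism_{R/S} \cotimes_R \cGamma^*_R\bigl(\cofib\bigl(R\cotimes_{\overline S}\cOmega_{\overline S/\overline A} \to R\cotimes_{\overline{S'}}\cOmega_{\overline{S'}/\overline A}\bigr)\{-1\}\bigr) \]
with differential $\dif_{S'} = \dif_S + \dif_{\PD}$. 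Applying $(-)^J$ preserves this decomposition by Lemma \ref{lem: flatness revisited}\,(2), and the induced split PD datum and differential transport coherently by Lemma \ref{lem: J-small differential and split PD}\,(a)--(c), giving the $J$-twisted differential $J^{-1}\dif_S + J^{-1}\dif_{\PD}$. Then $F_{S,S'}$ reduces to the $J$-twisted PD Poincaré lemma: base change along a completed PD polynomial extension with augmentation induces an equivalence on topologically nilpotent integrable connections, whose quasi-inverse is given by taking horizontal sections along the PD variables.

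The main obstacle is establishing this $J$-twisted PD Poincaré lemma as a symmetric monoidal equivalence. Convergence of the horizontal-section construction, which is an infinite series in the PD variables built from iterated application of $\nabla_{\PD}$, uses the topological nilpotency hypothesis in the sense of Definition \ref{defi: tn}; crucially, topological nilpotency is preserved by the $J$-twist since $J$ is a line bundle and thus only rescales the differential without affecting its filtered/topological behaviour. Symmetric monoidality then follows because horizontal sections for tensor products are tensor products of horizontal sections. The cocycle condition among the $F_{S,S'}$ for chains $S \to S' \to S''$ follows from naturality of the horizontal-section construction in the filtered split PD data, so the collection $\{\MIC^\tn(\overline\Prism_{R/S}^J[\tfrac 1 p], J^{-1}\dif)\}_{S \in \calC}$ assembles into the canonical category $\Vect^{\fJsm J}$; functoriality in $(R,A,J)$ and the commutative diagram are then immediate from the construction.
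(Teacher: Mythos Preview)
Your overall architecture matches the paper's: define the category via a choice of $S$, then show independence. But the key technical step—proving the base change $F_{S,S'}$ is an equivalence—is handled very differently, and your route has a genuine gap.

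The paper does \emph{not} establish the filtered decomposition
\[ \overline\Prism_{R/S'} \cong \overline\Prism_{R/S} \cotimes_R \cGamma^*_R\bigl(\cofib(R\cotimes_{\overline S}\cOmega_{\overline S/\overline A} \to R\cotimes_{\overline{S'}}\cOmega_{\overline{S'}/\overline A})\{-1\}\bigr) \]
that you invoke. The split PD structures proved in the paper (Lemma \ref{lemma: E(n) split PD} and Proposition \ref{prop: D(n) split PD}) are for the \v{C}ech cosimplicial rings $\overline{E(n)}, \overline{D(n)}$ built from iterated self-tensor powers of a \emph{single} $S$, not for a map of base prisms $S \to S'$. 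Even in the insertion case $S_1 \to S_1 \cotimes_A S_2$, the paper never claims $\overline\Prism_{R/S_1 \cotimes_A S_2}$ decomposes over $\overline\Prism_{R/S_1}$ as a filtered split PD ring; at the graded level this follows from the cotangent triangle and Hodge--Tate comparison, but lifting to the filtered level is exactly the hard part, and your sketch does not address it. Similarly, the ``$J$-twisted PD Poincar\'e lemma'' you appeal to is not proven anywhere in the paper.

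The paper's approach avoids both issues by passing to stratifications (Theorem \ref{theo: comp strat connection}) and arguing with the full double-cosimplicial ring $\overline{D'(n,m)} = \overline\Prism_{R/S_1(n) \cotimes_A S_2(m)}$. The equivalence $\Strat(\overline{D_1(\bullet)}^J[\tfrac 1 p]) \simeq \Strat(\overline{D'(\bullet,\bullet)}^J[\tfrac 1 p])$ is obtained by (i) reducing to insertion maps $S_1 \to S_1 \cotimes_A S_2$ via a categorical trick (if $Fi, Fj$ are isomorphisms for both insertions into the coproduct, then $F\varphi$ is for any $\varphi$), (ii) using initiality of the diagonal $\simp \to \simp^2$ (Lemma \ref{lemma: equivalence of crystals}), and (iii) applying descent (Theorem \ref{thm: descent theory}) after verifying faithful flatness and the tensor-power identity at the graded level via Lemma \ref{lem: flatness revisited}. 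This requires only graded-level control (where the Hodge--Tate comparison gives explicit PD-polynomial descriptions), not the filtered decomposition you need.

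Your approach might be salvageable if you can prove the filtered split PD decomposition directly, but this would essentially reprove the descent step by other means, and you would still need an explicit PD Poincar\'e lemma for the $J$-twisted setting. The paper's descent route is both shorter and uses only machinery already in hand.
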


\begin{proof}
We will first ignore the set-theoretic difficulties in the following proof, and then address it in the remark below.

Fix $R/\overline A$ satisfying Condition \ref{cond: tor -1,0} and a generalised Cartier divisor $J \to R$. Consider the category $\mathcal A$ of diagrams $A \to S \to R$ with $S/A$ satisfying Condition \ref{cond: flat + tor 0} and $R/\overline S$ satisfying Condition \ref{cond: tor -1}. The category $\mathcal A$ is nonempty, and has pairwise coproduct given by
\[ ( A\to S_1 \to R) \sqcup (A \to S_2 \to R) = (A \to S_1 \cotimes_A S_2 \to R). \]
It follows that $\mathcal A$ is sifted, hence is weakly contractible (\cite[Proposition 5.5.8.7]{lurie2009higher}).

Denote $\Cat_1$ the (2,1)-category of small categories, and we may form the functor $F: \mathcal A \to \Cat_1$ by
\[ (A \to S \to R) \mapsto \MIC^\tn \bigl( \overline\Prism_{R/S}^N \bigl[ \frac 1 p \bigr], J^{-1}\dif \bigr). \]
If we proved that every morphism in $\mathcal A$ maps to an isomorphism, we will then be able to take $\Vect^{\fJsm J} \bigl( (R/A)_\Prism, \overline\calO \bigl[ \frac 1 p \bigr] \bigr)$ to be the limit of $F$. Let $x,y$ be objects in $\mathcal A$ and $f: x \to y$ be a morphism. We may form the coproduct $x\sqcup y$ and the insertion maps $i: x \to x \sqcup y, j : y \to x \sqcup y$. Form $\varphi: x \sqcup y \to y$ by $\varphi i = f$ and $\varphi j = \id_y$. If we proved that $Fi$ and $Fj$ are isomorphisms, then by the second identity, we see that $F\varphi$ is an isomorphism, and the first identity now shows that $f$ is an isomorphism. Therefore, we only need to prove that $Fi$ is an isomorphism for an insertion map
\[ i: x \to x\sqcup y.\]

Write $x=(A \to S_1 \to R)$ and $y=(A \to S_2 \to R),$ hence $x\sqcup y = (A \to S_1 \cotimes_A S_2 \to R)$, and $i$ is given by $i: S_1 \to S_1 \cotimes_A S_2$. Denote $S=S_1 \cotimes_A S_2$ and $\Omega_i = \overline S \cotimes_{\overline S_i} \cOmega_{\overline S_i/\overline A} \{-1\}$. Consider the filtered cosimplicial rings
\[ S_1(\bullet) = S_1^{\cotimes(\bullet+1)/A}, S_2(\bullet) = S_2^{\cotimes(\bullet+1)/A}, S(\bullet) = S^{\cotimes(\bullet+1)/A}. \]
Denote $D(\bullet) = \Prism_{R/S(\bullet)}$, and also the double cosimplicial ring
\[ D'(n,m) = \Prism_{R/S_1(n) \cotimes_A S_2(m)}. \]
Therefore, $D(n)$ is nothing else but $D'(n,n)$. 

Take
\[ D_1(n) = \Prism_{R/S_1(n)}, \]
then we have a natural morphism of filtered double cosimplicial rings
\[ \overline{D_1(n)} \to \overline{D'(n,m)}, \quad\forall n,m.\]
Passing $\MIC$ to stratifications, and we only need to show that the map of cosimplicial rings
\[ \overline{D_1(n)}^J \to \overline{D'(n,n)}^J \]
induces an equivalence of categories of stratifications, i.e. we have to show that the map
\[ \Vect\bigl(\simp, \overline{D_1(n)}^J \bigl[ \frac 1 p \bigr] \bigr) \to \Vect\bigl(\simp,\overline{D'(n,n)}^J \bigl[ \frac 1 p \bigr] \bigr) \]
is an equivalence. Since $\simp^\op$ is sifted, the map $\simp \to \simp^2$ is initial (in the sense of $\infty$-categories), and by Lemma \ref{lemma: equivalence of crystals}, we only need to prove that the map
\[ \Vect\bigl( \simp^2, \overline{D_1(n)}^J \bigl[ \frac 1 p \bigr] \bigr) \to \Vect(\simp^2,\overline{D'(n,m)}^J \bigl[ \frac 1 p \bigr] \bigr) \]
is an equivalence of categories. Taking slices of the first factor, and we only need to prove that for each $n$, the map
\[ \Vect\bigl( \simp, \overline{D_1(n)}^J \bigl[ \frac 1 p \bigr] \bigr) \to \Vect(\simp,\overline{D'(n,\bullet)}^J \bigl[ \frac 1 p \bigr] \bigr) \]
is an equivalence of categories. By Theorem \ref{thm: descent theory} (2), we only need to show that $\overline{D_1(n)}^J \to \overline{D'(n,0)}^J$ are $p$-completely faithfully flat, and the natural maps
\[ \bigl(\overline{D'(n,0)}^J \bigr)^{\cotimes^\bL(m+1)/\overline{D_1(n)}^J} \to \overline{D'(n,m)}^J  \]
are isomorphisms. By Lemma \ref{lem: flatness revisited}, we only need to show that $\dHodge_{R/\overline{S_1(n)}} \to \dHodge_{R/\overline{S(n,0)}}$ is $p$-completely faithfully flat, and that the maps
\[ \dHodge_{R/\overline{S(n,0)}}^{\cotimes^\bL(m+1)/\dHodge_{R/\overline{S_1(n)}}} \to \dHodge_{R/\overline{S(n,m)}} \]
are isomorphisms. The faithful flatness part follows from the exact triangle (all of whose terms are concentrated on degree 0)
\[ \cdL_{R/\overline{S_1(n)}} [-1]\{-1\} \to \cdL_{R/\overline{S(n,0)}} [-1]\{-1\} \to R \cotimes_{\overline{S(n,0)}}^\bL \cdL_{\overline{S(n,0)} / \overline{S_1(n)}}\{-1\}, \]
and Corollary \ref{cor: pd strict flatness complete case}. The isomorphism part is now evident, once one observe that $\overline{S(n,m)} \cong \overline{S(n,0)}^{\cotimes^\bL(m+1)/{\overline{S_1(n)}}}$, and that $\dHodge_{-/-}$ commutes with colimits.

Now consider the category $\calC$ with objects $(A \to R,J \to R)$ with $A$ a prism, $R$ a bounded $p$-complete algebra over $\overline A$ satisfying Condition \ref{cond: tor -1,0}, and $J$ a generalised Cartier divisor over $A$. Denote $\calA'$ the category of all $(A \to S \to R, J \to R)$ with $S/A$ satisfying Condition \ref{cond: flat + tor 0}, $R/\overline S$ satisfying Condition \ref{cond: tor -1}, and $J \to R$ a generalised Cartier divisor. We have the natural projection
\[ \pi : \calA' \to \calC. \]
Note that given any particular $x  =(A \to R, J \to R) \in \calC$, the na\"ive fibre $\calA'_x$ of $\pi$ is given by the category $\calA$ as above, hence is weakly contractible. We have the functor
\[ F: \calA' \to \Cat_1, \quad (A \to S \to R, J\to R) \mapsto \MIC^\tn\bigl( \overline\Prism_{R/S} \bigl[ \frac 1 p \bigr]^J, J^{-1}\dif \bigr). \]
We claim that for any $x =(A \to R)\in \calC$, the functor $\calA'_{x} \to \calA'_{x/}$ is initial. Indeed, for $y = (A' \to S' \to R', A\to A', R\to R', R' \otimes_R J \to J') \in \calA'_{x/}$, the category $(\calA'_{x})_{/y}$ is the category of all commutative diagrams as below:
\[\xymatrix{
A \ar[r] \ar[d] & S \ar[r] \ar[d] & R \ar[d] \\
A' \ar[r] & S' \ar[r] & R'
}\]
where $(A \to S \to R) \in \calA'_x$. This category has pairwise coproduct given by $(S_1,S_2) \mapsto S_1 \cotimes_A S_2$, and we see that it is sifted, hence is weakly contractible (\cite[Proposition 5.5.8.7]{lurie2009higher}). Therefore, the right Kan extension of $F$ along $\pi$ is given by
\[ \RKE_\pi F(x) = \lim_{\calA'_{x/}} F = \lim_{\calA'_x} F. \]
We have already seen that every morphism $y \to y'$ in $\calA'_x$ induces an equivalence of categories $Fy \to Fy'$, hence every projection map
\[ \lim_{\calA'_x} F \to Fy \]
(where $y \in \calA'_x$) is an isomorphism. It follows that $F \to (\RKE_\pi F) \circ \pi$ is an isomorphism. We may define the functor $\Vect^{\fJsm J}$ to be the right Kan extension $\RKE_\pi F$.

At last, by our previous discussions, the natural map
\[ \Vect\bigl((R/A)_\Prism, \overline\calO \bigl[ \frac 1 p \bigr] \bigr)\to \Vect^{\fJsm \id} \bigl((R/A)_\Prism, \overline\calO \bigl[ \frac 1 p \bigr] \bigr) \]
is an isomorphism (where $\id$ denotes the trivial Cartier divisor $\id: A \to A$), and the commutative diagram is given by the naturality of $\varphi_{R/S/A}$ with respect to $J$ (and the natural transformation of functors $J \to \id$).
\end{proof}

\begin{rmk}
The main set-theoretic issue here is that the categories $\calA'_x$ are not small, let alone $\calA'$. There are two ways to resolve to this problem: one is to take a truncation by a uncountable strong limit cardinal $\kappa$, and then the categories are all (essentially) small; the other way is to construct a functorial resolution $(A \to P_{R/A} \to R)$ for each $R$. For example, take $P_{R/A}$ to be the completed polynomial ring generated by all elements of $R$, and its $\delta$ structure be given by $\delta(x)=0$ for each indeterminant $x$. Now we take $\Vect^{\fJsm J}\bigl((R/A)_\Prism,\overline\calO\bigl[\frac 1p\bigr]\bigr)$ to be $\MIC^\tn\bigl(\overline\Prism_{R/P_{R/A}}^J \bigl[ \frac 1 p \bigr], J^{-1}\dif \bigr)$, and define the transition maps by the functoriality of $P_{R/A}$. As for the map $\varphi_{R/S/A}$, one may use the arguments as in the proof, to write it as the composition of some functors given by the (inverse of) the insertion map $x \to x\sqcup y$, and one may fill in the squares by $2$-morphisms in a canonical way. We will omit the details here.
\end{rmk}

\begin{theo}
For any bounded prism $(A,I)$, a ring $A'/\overline A$ and a generalised Cartier divisor $\alpha: J \to A'$, the construction
\[ R \mapsto \Vect^{\fJsm {J_R}}\bigl((R/A)_\Prism, \overline\calO \bigl[ \frac 1 p \bigr] \bigr) \]
(where $J_R := R \otimes_{A'} J$) is a quasi-syntomic stack on the category of rings $R/A'$ such that $R/\overline A$ satisfies Condition \ref{cond: tor -1,0}.
\end{theo}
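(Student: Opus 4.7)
The plan is to verify the stacky condition for an arbitrary quasi-syntomic cover $R \to R^0$ with \v{C}ech nerve $R^\bullet$, using a compatible cosimplicial model to reduce to $\MIC^\tn$-descent. The first step is to confirm that every $R^i$ still lies in the relevant category: combining the exact triangle
\[ R^i \cotimes_R^{\bL} \widehat\dL_{R/\overline A} \to \widehat\dL_{R^i/\overline A} \to \widehat\dL_{R^i/R} \]
with quasi-syntomicity (so $\widehat\dL_{R^i/R}$ is $p$-completely flat concentrated in degree $0$) and the Condition \ref{cond: tor -1,0} assumption on $R$ shows $\widehat\dL_{R^i/\overline A}$ has $p$-complete $\Tor$-amplitude in $[-1,0]$. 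Generalised Cartier divisors are stable under base change, so each $J_{R^i}$ remains valid.

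Next, I would construct a compatible cosimplicial $\delta$-algebra yielding a $\MIC$-realization over the cover. Choose $S = A\{y_r : r \in R\}^\land$ with $y_r \mapsto r$ on $\overline S$, and $S^0 = A\{y_r, z_s : r \in R, s \in R^0\}^\land$ with $z_s \mapsto s$ on $\overline{S^0}$; set $S^i := S^{0, \cotimes(i+1)/S}$. Each $S^i/A$ satisfies Condition \ref{cond: flat + tor 0} (using that polynomial $\delta$-extensions are $(p,I)$-completely flat with $p$-completely flat cotangent complex and that these properties pass to cotensor products over $S$), and each $\overline{S^i} \to R^i$ is surjective, so by Lemma \ref{lemma: completed conormal} each $R^i/\overline{S^i}$ satisfies Condition \ref{cond: tor -1}. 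Applying Theorem \ref{theo: fake J-small connection} termwise gives a cosimplicial identification
\[ \Vect^{\fJsm {J_{R^\bullet}}}\bigl((R^\bullet/A)_\Prism, \overline\calO[\tfrac 1 p]\bigr) \cong \MIC^\tn\bigl(\overline\Prism_{R^\bullet/S^\bullet}^J[\tfrac 1 p], J^{-1}\dif\bigr), \]
and the descent statement reduces to descent of topologically nilpotent connections along this cosimplicial filtered ring.

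The last main step is to show $\overline\Prism_{R/S}^J \to \overline\Prism_{R^\bullet/S^\bullet}^J$ gives a $p$-completely faithfully flat \v{C}ech-type resolution compatibly with the split PD structure and the differential $J^{-1}\dif$. By Lemma \ref{lem: flatness revisited} and Corollary \ref{cor: flatness by graded complete case}, faithful flatness can be checked on graded pieces, which by Hodge--Tate comparison identify with $\cGamma_{R^i}^*\bigl((J^i/J^{i,2})^\land\{-1\}\bigr)$, where $J^i = \ker(\overline{S^i} \to R^i)$. The required flatness then reduces to quasi-syntomic descent of the conormal modules $(J^i/J^{i,2})^\land$ via Corollary \ref{cor: pd strict flatness complete case}. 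Compatibility of the split PD structure across the cosimplicial layers follows from Proposition \ref{prop: D(n) split PD} applied uniformly, and the differentials match by naturality in $S$ (Proposition \ref{prop: Higgs to MIC}).

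The hard part will be confirming that the equivalences provided by Theorem \ref{theo: fake J-small connection} assemble into a strict equivalence of cosimplicial diagrams rather than merely termwise equivalences. Since $\Vect^{\fJsm J}$ was defined as a right Kan extension along $\pi: \calA' \to \calC$ whose fibres are weakly contractible, the independence-of-$S$ transition isomorphisms must be tracked through the sifted-colimit argument to verify their compatibility with the cosimplicial structure of $S^\bullet$. Once that coherence is in place, descent along $\overline\Prism_{R/S}^J \to \overline\Prism_{R^\bullet/S^\bullet}^J$ follows from the standard descent of vector bundles with connection along $p$-completely faithfully flat maps of split PD cosimplicial rings, closing the argument.
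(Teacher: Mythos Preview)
Your approach is essentially the same as the paper's: pick compatible polynomial $\delta$-resolutions $P \to R$ and $P' \to R'$ (your $S, S^0$), form the \v Cech nerve, and reduce to a flatness computation on Hodge--Tate graded pieces via Lemma \ref{lem: flatness revisited} and Corollary \ref{cor: pd strict flatness complete case}. The cotangent-complex verification you sketch is also what the paper does.

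The one place where the paper is sharper is exactly the ``hard part'' you flag. Rather than attempting to prove a free-standing descent theorem for $\MIC^\tn$ along a $p$-completely faithfully flat map of split PD cosimplicial rings (which is not established anywhere in the paper and is awkward to state), the paper immediately rewrites $\MIC^\tn \cong \Strat \cong \lim_{m \in \simp} \Vect(-)$ and works with a \emph{double} cosimplicial object: one index $m$ for the $\Strat$ direction (tensor powers of $P$ over $A$) and one index $n$ for the quasi-syntomic cover direction (tensor powers of $P'$ over $P$). The target of descent then becomes
\[ \lim_{m \in \simp} \Bigl[ \Vect\bigl(\overline\Prism_{R/P(m)}^J\bigl[\tfrac 1 p\bigr]\bigr) \longrightarrow \lim_{n \in \simp} \Vect\bigl(\overline\Prism_{R'(n)/P'(n,m)}^J\bigl[\tfrac 1 p\bigr]\bigr) \Bigr], \]
and the inner map is an equivalence for each fixed $m$ by plain $\Vect$-descent (Theorem \ref{thm: descent theory} (2)) once the faithful flatness and tensor-power identities are checked at the $\dHodge$ level. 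This manoeuvre simultaneously dissolves the coherence issue you raise: the identification $\Vect^{\fJsm J} \cong \MIC^\tn$ is, by construction in Theorem \ref{theo: fake J-small connection}, realised through $\Strat$ on the concrete cosimplicial ring $\overline{D(\bullet)}^J$, so naturality in $S$ is automatic at the $\Strat$ level and no further coherence tracking is needed. Your final sentence (``standard descent of vector bundles with connection'') hides precisely this step; replacing it with the double-cosimplicial reduction to $\Vect$-descent makes the argument complete.
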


\begin{proof}
Note that for an algebra $R/A'$, the construction $(-)^{J_R}$ is the same as $(-)^J$, so we will always write $(-)^J$.

It is clear that the construction commutes with finite products. Let $R \to R'$ be a quasi-syntomic cover. Pick a completed polynomial ring $P/A$ and a surjection $P \to R$ of $A$-algebras, and then pick a completed polynomial ring $P'/P$ and a surjection $P' \to R'$ extending $P \to R$. We have the following commutative diagram
\[\xymatrix{
A \ar[r] \ar[d] & P \ar[r] \ar[d] & P' \ar[d] \\
\overline A \ar[r] & R \ar[r] & R'
}\]
where $P \to P'$ is faithfully flat and $\cdL_{P'/P}$ is $(p,I)$-completely flat over $P'$. Equip $P$ and $P'$ with the $\delta$-structure sending each indeterminate $T$ to $0$, so $P,P'$ are made into prisms over $A$, and $P \to P'$ is a map of prisms. Also, define
\[ P(m) = P^{\cotimes(m+1)/A}, \quad P'(n,m) = (P'(n))^{\cotimes (m+1)/A}. \]

Denote $R'(n) = (R')^{\cotimes(n+1)/R}$ and $P'(n) = (P')^{\cotimes(n+1)/P}$. By naturality, the map
\[ \MIC^{\tn} \bigl( \overline\Prism_{R/P}^J \bigl[ \frac 1 p \bigr], J^{-1} \dif \bigr) \to \lim_\simp \MIC^{\tn} \bigl( \overline\Prism_{R'(\bullet)/P'(\bullet)}^J \bigl[ \frac 1 p \bigr], J^{-1} \dif \bigr) \]
is identified with
\[ \Vect \bigl(\simp, \overline\Prism_{R/P(\bullet)}^J \bigl[ \frac 1 p \bigr]\bigr) \to \lim_{n \in \simp} \Vect \bigl(\simp, \overline\Prism_{R'/P'(n,\bullet)}^J \bigl[ \frac 1 p \bigr]\bigr) \]
Write $\Vect(\simp,-)$ as $\lim_{m \in \simp}$, and we only need to prove that for each $m \in \simp$, the map
\[ \Vect \bigl( \overline\Prism_{R/P(m)}^J \bigl[ \frac 1 p \bigr]\bigr) \to \lim_{n \in \simp} \Vect \bigl( \overline\Prism_{R'(n)/P'(n,m)}^J \bigl[ \frac 1 p \bigr]\bigr) \]
is an isomorphism. By Theorem \ref{thm: descent theory} (2), we only need to prove that $\overline\Prism_{R'/P'(m)}^J$ is $p$-completely faithfully flat over $\overline\Prism_{R/P(m)}^J$, and the natural maps
\[ \bigl( \overline\Prism_{R'/P'(0,m)}^J \bigr)^{\cotimes^\bL(n+1)/ \overline\Prism_{R/P(m)}^J} \to \overline\Prism_{R'(n)/P'(n,m)}^J \]
are isomorphisms. By Lemma \ref{lem: flatness revisited}, we only need to prove that $\dHodge_{R/\overline{P(m)}} \to \dHodge_{R'/\overline{P'(0,m)}}$ is $p$-completely faithfully flat, and the maps
\[ \bigl( \dHodge_{R'/\overline{P'(0,m)}} \bigr)^{\cotimes^\bL (n+1)/ \dHodge_{R/\overline{P(m)}}} \to \dHodge_{R'(n)/\overline{P'(n,m)}} \]
are isomorphisms. The isomorphisms are a direct consequence that $\dHodge_{-/-}$ commutes with colimits, so we only need to prove the first part. For the faithful flatness part, since $R \to R'$ is $p$-completely faithfully flat, we only need to prove that
\[ R' \cotimes_R \dHodge_{R/\overline{P(m)}} \to \dHodge_{R'/\overline{P'(0,m)}} \]
is $p$-completely faithfully flat. By Corollary \ref{cor: pd strict flatness complete case}, we only need to prove that the cofibre of
\[ R' \cotimes_R \cdL_{R/\overline{P(m)}}[-1] \to \cdL_{R'/\overline{P'(0,m)}}[-1] \]
is $p$-completely flat over $R'$.

Denote $B = \overline{P(m)}$ and $B' = \overline{P'(0,m)}$, so $B'$ is a $p$-completed polynomial ring over $B$. Define $R_1 = R \cotimes_B B'$, so there exists a map $R_1 \to R'$, and we have
\[ R' \cotimes_R \cdL_{R/B} \cong R' \cotimes_R^\bL \cdL_{R/B} \cong R' \cotimes_{R_1}^\bL \cdL_{R_1/B'}. \]
The map
\[ R' \cotimes_R \cdL_{R/\overline{P(m)}}[-1] \to \cdL_{R'/\overline{P'(0,m)}}[-1] \]
is identified with
\[ R' \cotimes_{R_1}^\bL \cdL_{R_1/B'}[-1] \to \cdL_{R'/B'}[-1], \]
and its cofibre is
\[ \cdL_{R'/R_1}[-1]. \]
Note that $R_1 \to R'$ is surjective, since $B' \to R'$ is surjective. It follows that $\cdL_{R'/R_1}[-1]$ is connective. Also, we have an exact triangle
\[ \cdL_{R'/R}[-1] \to \cdL_{R'/R_1}[-1] \to R' \cotimes_{R_1}^\bL \cdL_{R_1/R}. \]
The complexes $\cdL_{R'/R}[-1]$ and $R' \cotimes_{R_1}^\bL \cdL_{R_1/R} \cong R' \cotimes_{B'}^\bL \cOmega_{B'/B}$ are all of $p$-complete $\Tor$-amplitude on $[0,1]$, so $\cdL_{R'/R_1}[-1]$ is also of $p$-complete $\Tor$-amplitude on $[0,1]$. Since it is also connective, it must be of $p$-complete $\Tor$-amplitude on $[0,0]$, i.e. it is $p$-completely flat over $R'$, as desired.
\end{proof}

\begin{defi}\label{defi: fake J-small crystal globalised}
Let $(A,I)$ be a bounded prism. For any qcqs bounded $\overline A$-formal scheme $X$ and a generalised Cartier divisor $\alpha: \calJ \to \calO_X$, define the quasi-syntomic stack
\[ \Vect^{\fJsm \calJ} \bigl((X/A)_\Prism, \overline\calO\bigl[ \frac 1 p \bigr]\bigr) \]
on $X$ by
\[ \Spf(R) \mapsto \Vect^{\fJsm {\calJ(R)}}\bigl((R/A)_\Prism, \overline\calO \bigl[ \frac 1 p \bigr]\bigr). \]
This stack is natural in $(A,I,\calJ,X)$, and we have a natural map
\[ \Vect^{\fJsm \calJ}\bigl((X/A)_\Prism, \overline\calO \bigl[ \frac 1 p \bigr]\bigr) \to \Vect\bigl((X/A)_\Prism,\overline\calO \bigl[ \frac 1 p \bigr] \bigr) \]
which is an isomorphism when $\alpha = \id$, and is faithful whenever $\alpha \bigl[ \frac 1 p \bigr]$ is injective.

If $\alpha \bigl[ \frac 1 p \bigr]$ is invertible, we define $\Vect^{\sm \calJ} \bigl((X/A)_\Prism, \overline\calO \bigl[ \frac 1 p \bigr]\bigr)$ to be the essential image of
\[ \Vect^{\fJsm \calJ}\bigl((X/A)_\Prism, \overline\calO \bigl[ \frac 1 p \bigr]\bigr) \to \Vect\bigl((X/A)_\Prism,\overline\calO \bigl[ \frac 1 p \bigr] \bigr). \]
The \'etale local essential image is denoted by $\Vect^{\smet \calJ} \bigl((X/A)_\Prism, \overline\calO \bigl[ \frac 1 p \bigr]\bigr)$
\end{defi}

For future use, we will also need the following lemma:

\begin{lemma}\label{lem: Higgs is fpqc stack}
Let $R$ be a $p$-complete ring of bounded $p^\infty$-torsion, and $S^\bullet$ be split PD over $R$ (see Condition \ref{cond: split pd}). Then, the assignments
\begin{align*}
\Spf(R') & \mapsto \Strat (R' \cotimes_R S^\bullet), \\
\Spf(R') & \mapsto \Strat \bigl(R' \cotimes_R S^\bullet \bigl[ \frac 1 p \bigr] \bigr)
\end{align*}
are fpqc stacks on the category of bounded affine formal schemes over $R$.

In particular, if we put $S^\bullet = \cGamma^*_R(E^{\oplus \bullet})$ as in Theorem \ref{theo: split PD vs derivation}, by Theorem \ref{theo: comp strat connection}, we see that the assignments
\begin{align*}
\Spf(R') & \mapsto \Higgs^\tn(R',R' \cotimes_R E), \\
\Spf(R') & \mapsto \Higgs^\tn \bigl(R'\bigl[ \frac 1 p \bigr],R' \cotimes_R E\bigr)
\end{align*}
are fpqc stacks on the category of bounded affine formal schemes over $R$.
\end{lemma}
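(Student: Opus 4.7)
The plan is to deduce both descent claims from the level-wise descent theory of finite projective modules, namely Theorem \ref{thm: descent theory}, by first identifying stratifications with descent data along the cosimplicial ring $T^\bullet := R' \cotimes_R S^\bullet$, and then applying descent term-by-term and commuting the resulting limits.

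First, I would unwind Definition \ref{defi:strat} (and its rational variant Definition \ref{defi:rational strat}) to establish natural equivalences of categories
\[ \Strat(T^\bullet) \cong \lim_{n \in \simp} \Vect(T^n), \qquad \Strat\bigl(T^\bullet \bigl[ \tfrac 1 p \bigr]\bigr) \cong \lim_{n \in \simp} \Vect\bigl(T^n \bigl[ \tfrac 1 p \bigr]\bigr), \]
where the $2$-limits on the right are computed using base-change along the coface maps of $T^\bullet$. Indeed, a stratification $(M,\epsilon)$ is precisely a descent datum: $M$ lies over $T^0$, $\epsilon$ provides an isomorphism over $T^1$, and conditions (1) and (2) of Definition \ref{defi:strat} express the cocycle and unit conditions. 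Higher-level coherence is automatic, as per Proposition \ref{prop: commutative constraint strat} or the standard fact that a descent datum is determined by its restriction to the $2$-skeleton.

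Second, fix a $p$-completely faithfully flat map $R' \to R''$ of bounded $p$-complete $R$-algebras, with Cech nerve $R''(\bullet)$. By Condition \ref{cond: split pd} combined with Proposition \ref{prop: commutative constraint}, each $S^n$ is a completed PD polynomial of the form $S^0 \cotimes_R \cGamma_R^*(\Omega^{\oplus n})$, hence is $p$-complete and $p$-completely flat over $R$. Consequently the induced map $R' \cotimes_R S^n \to R'' \cotimes_R S^n$ remains $p$-completely faithfully flat for every $n$, and (by commutation of completed tensor products) its Cech nerve relative to $R' \cotimes_R S^n$ is canonically identified with $R''(\bullet) \cotimes_R S^n$.

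Third, applying Theorem \ref{thm: descent theory} (2) level-wise yields equivalences
\[ \Vect(R' \cotimes_R S^n) \xrightarrow{\cong} \lim_{m \in \simp} \Vect(R''(m) \cotimes_R S^n) \]
and analogously after inverting $p$, for each $n \in \simp$. Taking the limit over $n$ on both sides and swapping the order of the two limits (which is always legitimate for $2$-limits) yields
\[ \Strat(R' \cotimes_R S^\bullet) \cong \lim_{m \in \simp} \Strat(R''(m) \cotimes_R S^\bullet), \]
as required. The only nontrivial input is the descent in Theorem \ref{thm: descent theory}; everything else is a bookkeeping of the $2$-limit, and I anticipate no significant obstacle beyond making this identification precise. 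The ``in particular'' statement for Higgs bundles then follows immediately from the symmetric monoidal equivalence $\Strat \cong \Higgs^\tn$ of Theorem \ref{theo: comp strat connection}, applied fiberwise over each $R'$ in the site.
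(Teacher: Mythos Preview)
Your proposal is correct and follows exactly the paper's approach: identify $\Strat(X^\bullet) \cong \lim_{n \in \simp} \Vect(X^n)$ and then invoke Theorem \ref{thm: descent theory} level-wise. The paper's proof is a terse two-sentence version of your argument; you have simply made explicit the intermediate steps (faithful flatness at each level via the split PD structure, the Cech-nerve identification, and the commutation of the two $2$-limits) that the paper leaves to the reader.
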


\begin{proof}
For any simplicial ring $X^\bullet$, we may write
\[ \Strat(X^\bullet) \cong \lim_{n \in \simp} \Vect(X^n). \]
Now, Theorem \ref{thm: descent theory} gives us the desired result.
\end{proof}

\subsection{Local Simpson correspondence}
\begin{prop}\label{prop: factor-through connection}
Let $(A,I)$ be a bounded prism and $R/\overline A$ be an algebra satisfying Condition \ref{cond: tor -1,0}, such that $\cdL_{R/\overline A}$ is concentrated on degree 0 and $p$-torsion-free.

Assume that the $\delta$-algebra $S/A$ satisfies Condition \ref{cond: flat + tor 0}, and an $A$-algebra map map $S \to R$ satisfies Condition \ref{cond: tor -1}. Denote $N = \cdL_{R/S}[-1]$. Then,
\begin{enumerate}[label=(\arabic*)]
\item for $k \in \mathbb Z_+$, the map
\[\cbigwedge\nolimits^k_R (N) \to \cbigwedge^k_R(R \cotimes_{\overline S} \cOmega_{\overline S/\overline A}) \]
is injective, whose cokernel $Q_k$ is of bounded $p^\infty$-torsion; moreover, $Q_k[p^\infty]$ is annihilated by $k!$;
\item the differential $\dif: \overline\Prism_{R/S} \to \overline\Prism_{R/S} \cotimes_R \bigl(R \cotimes_{\overline S} \cOmega_{\overline S/\overline A} \bigr)\{-1\}$ factors through $N \to R \cotimes_{\overline S} \cOmega_{\overline S/\overline A}$ uniquely to give a $R$-linear differential
\[ \dif_0: \overline\Prism_{R/S} \to \overline\Prism_{R/S} \cotimes_R N \{-1\}, \]
which is also a Higgs field over $R$.
\item the differential $\dif_0$ satisfies Griffith transversality, is topologically nilpotent, and for the corresponding filtered PD ring $D'(\bullet)$ given in Theorem \ref{theo: split PD vs derivation}, the ring $\gr_* D'(0)$ is faithfully flat over $R$, and the natural maps
\[ \gr_* D'(0)^{\cotimes^\bL (n+1) /R} \to \gr_* D'(n) \]
are isomorphisms. As a result, by Corollary \ref{cor: flatness by graded complete case} and Theorem \ref{thm: descent theory}, the functors below
\begin{align*}
\Vect(R) & \to \MIC^\tn(\overline\Prism_{R/\overline S}, \dif_0) \\
\Vect\bigl(R\bigl[ \frac 1 p \bigr]\bigr) & \to \MIC^\tn\bigl(\overline\Prism_{R/\overline S}\bigl[ \frac 1 p \bigr], \dif_0\bigr)
\end{align*}
are isomorphisms.
\item for $M/R$ satisfying Condition \ref{cond: very conc deg 0} and generalised Cartier divisor $\alpha: J \to R$, the map $M \to \DR(\overline \Prism_{R/S}^J \cotimes_R M, J^{-1} \dif_0 \otimes \id)$ is a quasi-isomorphism.
\end{enumerate}
\end{prop}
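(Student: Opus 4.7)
The argument rests on the cotangent triangle for $\overline A \to \overline S \to R$ together with our hypotheses: since $\cdL_{R/\overline A}$ is concentrated on degree $0$ and $p$-torsion-free (hence of $p$-complete $\Tor$-amplitude $[0,0]$, i.e.\ $p$-completely flat), $\cdL_{R/\overline S} \cong N[1]$ by Condition \ref{cond: tor -1}, and $R \cotimes_{\overline S}^\bL \cdL_{\overline S/\overline A}$ is $p$-completely flat by Condition \ref{cond: flat + tor 0}, we obtain a short exact sequence of $p$-completely flat $R$-modules
\[ 0 \to N \to E \to F \to 0, \qquad E := R \cotimes_{\overline S} \cOmega_{\overline S/\overline A}, \quad F := \cdL_{R/\overline A}. \]
For (1), the Koszul filtration on $\cbigwedge^k_R E$ has $\cbigwedge^k_R N$ as its top subobject with successive graded quotients $\cbigwedge^{j}_R N \cotimes_R \cbigwedge^{k-j}_R F$ for $j = 0, \ldots, k-1$; since the sequence consists of $p$-completely flat modules, these graded pieces agree with their derived counterparts by Lemma \ref{lemma: concentrated on degree zero}, so the injectivity follows, and $Q_k$ inherits $p$-complete flatness, hence has bounded $p^\infty$-torsion by Proposition \ref{prop: p-completely flat}. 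To obtain the $k!$-bound on $Q_k[p^\infty]$ I plan to use the antisymmetrisation factorisation $\cbigwedge^k_R E \xrightarrow{a} E^{\cotimes k} \xrightarrow{\pi} \cbigwedge^k_R E$ (where $\pi \circ a$ is multiplication by $k!$): combined with the fact that the submodule of $E^{\cotimes k}$ generated by pure tensors containing at least one factor in $N$ surjects onto $\ker(E^{\cotimes k} \to F^{\cotimes k})$, this should show that $k!$ annihilates $Q_k[p^\infty]$.

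For (2), by Example \ref{ex: differential dHodge} together with the identification $\gr_* \overline\Prism_{R/S} \cong \cGamma^*_R(N\{-1\})$ from Theorem \ref{theo: prism=pd derived construction}, the graded pieces of $\dif$ factor through $N\{-1\} \hookrightarrow E\{-1\}$, so the composition of $\dif$ with the projection $\overline\Prism_{R/S} \cotimes_R E\{-1\} \to \overline\Prism_{R/S} \cotimes_R F\{-1\}$ vanishes on graded pieces and hence in the filtered limit. The sequence $0 \to N \to E \to F \to 0$ remains exact after completed tensoring with the $p$-completely flat algebra $\overline\Prism_{R/S}$, so $\dif$ factors uniquely through $\dif_0 : \overline\Prism_{R/S} \to \overline\Prism_{R/S} \cotimes_R N\{-1\}$; integrability of $\dif_0$ follows from that of $\dif$ (Theorem \ref{theo: main dR realization revisit}) combined with the injectivity of $\cbigwedge^2_R N \to \cbigwedge^2_R E$ established in (1). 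For (3), Griffith transversality and topological nilpotency of $\dif_0$ are inherited from $\dif$. The split PD cosimplicial ring $D'(\bullet)$ attached to $\dif_0$ via Theorem \ref{theo: split PD vs derivation} satisfies $D'(n) \cong \overline\Prism_{R/S} \cotimes_R \cGamma^*_R(N^{\oplus n}\{-1\})$, and using the identity $\cGamma^*_R(M_1) \cotimes_R \cGamma^*_R(M_2) \cong \cGamma^*_R(M_1 \oplus M_2)$ we compute
\[ \gr_* D'(n) \cong \cGamma^*_R(N\{-1\}) \cotimes_R \cGamma^*_R(N^{\oplus n}\{-1\}) \cong \cGamma^*_R(N^{\oplus(n+1)}\{-1\}) \cong \gr_* D'(0)^{\cotimes^\bL(n+1)/R}. \]
Faithful flatness of $\gr_* D'(0)/R$ follows from Corollary \ref{cor: pd strict flatness complete case}; lifting via Corollary \ref{cor: flatness by graded complete case} and invoking Theorem \ref{thm: descent theory} with Theorem \ref{theo: comp strat connection} yields the asserted equivalences.

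For (4), by Lemma \ref{lem: J-small differential and split PD} the filtered split PD cosimplicial ring attached to $(\overline\Prism_{R/S}^J, J^{-1}\dif_0)$ is $D'(\bullet)^J$, so Theorem \ref{theo: comp strat connection cohomologies} identifies the de Rham complex in question with $\lim_\simp (D'(\bullet)^J \cotimes_R M)$. Applying the symmetric monoidal equivalence $J^*$ on $\gr(\cD(R))$ (Example \ref{ex: shearing}) to the \v Cech-nerve description of $\gr_* D'(\bullet)$ from (3), and using Lemma \ref{lem: flatness revisited}, we see that $\gr_* D'(\bullet)^J$ is the \v Cech nerve of the $p$-completely faithfully flat map $R \to \gr_* D'(0)^J$. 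Faithfully flat descent yields $R \cong \lim_\simp \gr_* D'(\bullet)^J$; compatibility with totalisation (Proposition \ref{prop: totalisation commute with filtered colimits for coconnective spectra}) lifts this to $R \cong \lim_\simp D'(\bullet)^J$, and tensoring with $M$ preserves the limit for the same reason. The principal obstacle in the whole argument is the $k!$-torsion bound in (1): establishing it rigorously will require delicate manipulation of the Koszul filtration together with the antisymmetriser-and-wedge identity; once this is in hand, the remaining parts are formal consequences of the split PD formalism developed in earlier sections.
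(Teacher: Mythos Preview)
Your argument for (1) contains a genuine error: you assert that $\cdL_{R/\overline A}$ being concentrated in degree $0$ and $p$-torsion-free makes it $p$-completely flat over $R$. This is false---$p$-torsion-freeness only controls $F \otimes_\mathbb{Z}^\bL \mathbb{Z}/p$, not $F \otimes_R^\bL R/p$ as an $R/p$-module. Without $p$-complete flatness of $F$, the Koszul filtration on $\cbigwedge_R^k E$ with putative graded pieces $\cbigwedge_R^j N \cotimes_R \cbigwedge_R^{k-j} F$ need not consist of submodules (the derived wedge $\cbigwedge_R^{k-j} F$ need not even be concentrated in degree $0$), so neither injectivity nor your claimed $p$-complete flatness of $Q_k$ follows. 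The paper avoids this by working with \emph{tensor} powers: the chain $N^{\cotimes k/R} \to N^{\cotimes (k-1)/R} \cotimes_R E \to \cdots \to E^{\cotimes k/R}$ has successive cofibres $N^{\cotimes i/R} \cotimes_R E^{\cotimes (k-i-1)/R} \cotimes_R F$, which are concentrated in degree $0$ and $p$-torsion-free by Corollary~\ref{cor: tensor conc on degree 0} (needing only that $N,E$ are $p$-completely flat and $F$ is $p$-torsion-free). Thus $N^{\cotimes k/R} \hookrightarrow E^{\cotimes k/R}$ has $p$-torsion-free cokernel. Now the antisymmetrisation maps $\xi: \cbigwedge^k \to (-)^{\cotimes k}$ and $\eta: (-)^{\cotimes k} \to \cbigwedge^k$ with $\eta\xi = k!$ transfer this to wedge powers: for $x \in Q_k[p^\infty]$, lift to $y \in \cbigwedge^k_R(E)$, push $\xi(y)$ into $E^{\cotimes k/R}$, use $p$-torsion-freeness of the tensor-power cokernel to find $u \in N^{\cotimes k/R}$ with image $\xi(y)$, and then $\eta(u)$ witnesses $k!x = 0$. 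Injectivity of $\cbigwedge^k_R(N) \to \cbigwedge^k_R(E)$ follows by the same $k!$-trick together with $p$-torsion-freeness of $\cbigwedge^k_R(N)$.

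Your argument for (2) is also incomplete: ``vanishes on graded pieces and hence in the filtered limit'' does not work, because the Hodge--Tate filtration is increasing and exhaustive, not decreasing and complete. Knowing that $\gr_k(\pi\circ\dif)=0$ for all $k$ does not force $\pi\circ\dif=0$; the lower-order terms of $\dif$ on $\Fil_k$ are not controlled by the graded data alone. The paper instead uses that $\dif$ is a derivation together with a density argument: the subring $\mathfrak A := R\bigl[\tfrac 1 p\bigr][\Fil_1]$ is dense in $\overline\Prism_{R/S}\bigl[\tfrac 1 p\bigr]$, and since $\pi\circ\dif$ vanishes on $\Fil_1$ (this \emph{is} the $\gr_1$ statement, because $\Fil_0 \cotimes_R F$ sits in degree $0$), the Leibniz rule forces $\dif(\mathfrak A) \subseteq \overline\Prism_{R/S} \cotimes_R N\{-1\}\bigl[\tfrac 1 p\bigr]$. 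One then intersects with the integral structure and uses $p$-torsion-freeness of $\overline\Prism_{R/S} \cotimes_R F$ to conclude. Your treatment of (3) and (4) is essentially that of the paper.
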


\begin{proof}
For (1), recall that we have a natural exact sequence (the cotangent sequence)
\[ 0 \to N \to \Omega \to \cdL_{R/\overline A} \to 0. \]
Observe that the map $N^{\cotimes k/R} \to \Omega^{\cotimes k/R}$ admits a factorisation
\[ N^{\cotimes k/R} \to \cdots \to N^{\cotimes i/R} \cotimes_R \Omega^{\cotimes(k-i)/R} \to \cdots \to \Omega^{\cotimes k/R}. \]
The cofibres of the maps in the factorisation are given by $N^{\cotimes i/R} \cotimes_R \Omega^{\cotimes(k-i-1)/R} \cotimes_R \cdL_{R/\overline A}$. By Corollary \ref{cor: tensor conc on degree 0}, these objects are concentrated on degree 0 and $p$-torsion-free. For any derived $p$-complete $R$-module $X$, consider the natural map
\[ \eta_X: X^{\cotimes^\bL k/R} \to \cbigwedge^k_R(X). \]
Define the map $\xi_X: \cbigwedge^k_R(X) \to X^{\cotimes^\bL k/R}$ as (the left Kan extension of) the map
\[ x_1 \wedge \cdots \wedge x_k \mapsto \sum_{\sigma \in \Sigma_k} \sgn(\sigma) \cdot x_{\sigma(1)} \otimes \cdots \otimes x_{\sigma(k)}. \]
We have a natural isomorphism $\eta_X \circ \xi_X \cong k!$. For $x \in Q_k[p^\infty]$, assume that $p^N x =0$. Take a lifting $y \in \cbigwedge^k_R(\Omega)$ of $x$, and we see that $p^N y$ is in the image of $\cbigwedge^k_R(N) \to \cbigwedge^k_R(\Omega)$. Therefore, $p^N \xi_\Omega(y)$ is in the image of $N^{\cotimes k/R} \to \Omega^{\cotimes k/R}$. Since the cokernel of the injection $N^{\cotimes k/R} \to \Omega^{\cotimes k/R}$ is $p$-torsion-free, we may take $u \in N^{\cotimes k/R}$ mapping to $\xi_\Omega(y)$. It follows that the image of $\eta_N(u)$ is $\eta_\Omega \xi_\Omega(y) = k! y$, and we see that $k!x =0$. Therefore, $Q_k[p^\infty]$ is annihilated by $k!$, as desired. Similarly, the kernel of $\cbigwedge\nolimits^k_R (N) \to \cbigwedge^k_R(R \cotimes_{\overline S} \cOmega_{\overline S/\overline A})$ is annihilated by $k!$, but the kernel is $p$-torsion-free as a submodule of the $p$-complete and $p$-completely flat $R$-module $\cbigwedge\nolimits^k_R (N)$, so the kernel is zero.

For (2), recall that by Hodge--Tate comparison,
\[ \cbigoplus_{k=0}^\infty \gr_k \bigl( \overline\Prism_{R/S} \bigr) \cong \cGamma_R^*(N), \]
so the ring
\[ \mathfrak A:= R\bigl[ \frac 1 p \bigr] [\Fil_1] \]
is dense in $\overline\Prism_{R/S} \bigl[ \frac 1 p \bigr]$ (for each $x \in p^k\Fil_k$ and $N \in \mathbb Z_+$, there exists $y \in R[\Fil_1]$ and $x' \in \Fil_k$ such that $x-x' \in p^N \Fil_k$ and $x'-y \in \Fil_{k-1}$, so induction shows that $\mathfrak A \cap \Fil_k \bigl[ \frac 1 p \bigr]$ is dense in $\Fil_k \bigl[ \frac 1 p \bigr]$, and hence $\mathfrak A$ is dense in $\overline\Prism_{R/S} \bigl[ \frac 1 p \bigr]$). Since the differential $\dif: \overline\Prism_{R/S} \to \overline\Prism_{R/S} \cotimes_R \bigl(R \cotimes_{\overline S} \cOmega_{\overline S/\overline A} \bigr)\{-1\}$ is compatible with the Hodge--Tate comparison, we see that the image of $\Fil_1$ is $\overline\Prism_{R/S} \cotimes_R N \{-1\}$. By Corollary \ref{cor: tensor conc on degree 0}, we have an exact sequence of $p$-torsion free modules
\[ 0 \to \overline\Prism_{R/S} \cotimes_R N\{-1 \} \to \overline\Prism_{R/S} \cotimes_R \bigl(R \cotimes_{\overline S} \cOmega_{\overline S/\overline A} \bigr)\{-1\} \to \overline\Prism_{R/S} \cotimes_R \cdL_{R/\overline A} \to 0. \]
Inverting $p$, and we see that $\dif(\mathfrak A) \subseteq \overline\Prism_{R/S} \cotimes_R N\{-1 \} \bigl[ \frac 1 p \bigr]$. Denote
\[ \mathfrak A^\circ = \mathfrak A \cap \overline\Prism_{R/S}, \]
so $\mathfrak A^\circ \bigl[ \frac 1 p \bigr] = \mathfrak A$. Since $\overline\Prism_{R/S} \cotimes_R \cdL_{R/\overline A}$ is $p$-torsion free by Corollary \ref{cor: tensor conc on degree 0}, we see that $\dif(\mathfrak A^\circ) \subseteq \overline\Prism_{R/S} \cotimes_R N\{-1 \}$. As $\mathfrak A^\circ$ is dense in $\overline\Prism_{R/S}$, we see that $\im \dif \subseteq \overline\Prism_{R/S} \cotimes_R N\{-1 \}$, and we get a desired $\dif_0$.

To prove that $\dif_0$ is a Higgs field, it suffices to prove that
\[ \overline\Prism_{R/S} \cotimes_R \bigwedge\nolimits_R^2(N) \{-2\} \to \overline\Prism_{R/S} \cotimes_R \bigwedge\nolimits_R^2\bigl( R \cotimes_{\overline S} \cOmega_{\overline S/\overline A} \bigr) \{-2 \} \]
is injective, which is follows from Corollary \ref{cor: tensor conc on degree 0} and the content of (1) for $k=2$.

Now we prove (3). We omit the verification that $\dif_0$ satisfies Griffith transversality. Observe that $\gr_*(\dif_0)$ is the canonical differential $\dif'$ on $\dHodge_{R/\overline S}$, which is topologically nilpotent. That $\dif_0$ is topologically nilpotent now follows from Lemma \ref{lem: topologically nilpotent} and a simple induction (note that we only need to verify the condition on $A^\circ$). At last, for the ring part, it is easy to see that $\gr_*(\overline\Prism_{R/S}, \dif_0) \cong (\cGamma^*_R(N\{-1\}),\dif': \cGamma^*_R(N\{-1\}) \cotimes_R N \{-1\})$ (where $\dif'$ is the natural derivation), and the graded PD cosimplicial ring corresponding to $(\cGamma^*_R(N\{-1\}),\dif': \cGamma^*_R(N\{-1\}) \cotimes_R N \{-1\})$ is $\cGamma^*_R(N\{-1\}^{\oplus(\bullet +1)})$. Therefore, $\gr_* D'(\bullet) \cong \cGamma^*_R(N\{-1\}^{\oplus(\bullet +1)})$, and our conclusion follows from Proposition \ref{prop: pd strict flatness}.

At last, (4) is a direct consequence of (3), Lemma \ref{lem: flatness revisited}, Theorem \ref{theo: comp strat connection cohomologies} and Theorem \ref{thm: descent theory}.
\end{proof}

\begin{rmk}
It is a priori hard to describe $D'(n)$ --- hard even for $D'(1)$.
\end{rmk}

\begin{theo}\label{theo: simpson correspondence}
Let $(A,I)$ be a bounded prism and $R/\overline A$ be a ring satisfying Condition \ref{cond: tor -1,0}, such that $R$ is $p$-torsion-free, $\cdL_{R/\overline A}$ is concentrated on degree 0 and $p$-torsion-free. Let $P/A$ be a $\delta$-algebra satisfying Condition \ref{cond: flat + tor 0}. Fix an element $a \in R$ which is invertible in $R \bigl[ \frac 1 p \bigr]$. Assume that we are given a map of $A$-algebras $P \to R$ such that the induced map $R \otimes_{\overline P}\cOmega_{\overline P/\overline A} \to \cdL_{R/\overline A}$ is an isomorphism after inverting $p$, and its cokernel is annihilated by $a$. Then, the functor
\[ \Higgs^\tn\bigl(R \bigl[ \frac 1 p \bigr], R \cotimes_{\overline P} \cOmega_{\overline P/\overline A} \{-1\}\bigr) \to \Vect\bigl( (R/A)_\Prism, \overline \calO \bigl[ \frac 1 p \bigr] \bigr) \]
given in Proposition \ref{prop: Higgs to crystal} is fully faithful, and its essential image contains
\[ \Vect^{\sm a}\bigl( (R/A)_\Prism, \overline \calO \bigl[ \frac 1 p \bigr] \bigr). \]

Moreover, for any element $b \in R \cap \bigl( R \bigl[ \frac 1 p \bigr] \bigr)^\times$, if we denote the essential image of the composition
\[ \Higgs^\tn\bigl(R \bigl[ \frac 1 p \bigr], bR \cotimes_{\overline P} \cOmega_{\overline P/\overline A} \{-1\}\bigr) \to \Higgs^\tn\bigl(R \bigl[ \frac 1 p \bigr], R \cotimes_{\overline P} \cOmega_{\overline P/\overline A} \{-1\}\bigr) \to \Vect\bigl( (R/A)_\Prism, \overline \calO \bigl[ \frac 1 p \bigr] \bigr) \]
by $\calC_b$, then
\[ \Vect^{\sm{ab}}\bigl( (R/A)_\Prism, \overline \calO \bigl[ \frac 1 p \bigr] \bigr) \subseteq \calC_b \subseteq \Vect^{\sm b}\bigl( (R/A)_\Prism, \overline \calO \bigl[ \frac 1 p \bigr] \bigr). \]
\end{theo}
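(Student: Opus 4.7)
The plan is to apply Theorem~\ref{theo: main dR realization revisit} to an auxiliary prism $S$ and then unravel the functor of Proposition~\ref{prop: Higgs to crystal} via Proposition~\ref{prop: Higgs to MIC}. Concretely, I would pick a $\delta$-algebra $S/A$ satisfying Condition~\ref{cond: flat + tor 0} together with a $\delta$-ring map $P \to S$ and a surjection $\overline S \twoheadrightarrow R$ factoring $\overline P \to R$, such that $R/\overline S$ satisfies Condition~\ref{cond: tor -1}. For instance, take $S$ to be a completed $\delta$-polynomial ring over $P$ (with $\delta=0$ on the new variables) whose variables surject onto $R$; the required $p$-complete flatness of $N := \cdL_{R/\overline S}[-1]$ follows from the transitivity triangle $R \cotimes_{\overline S}^\bL \cdL_{\overline S/\overline A} \to \cdL_{R/\overline A} \to \cdL_{R/\overline S}$ together with our hypothesis that $\cdL_{R/\overline A}$ is $p$-completely flat. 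Theorem~\ref{theo: main dR realization revisit} then gives an equivalence
\[\Vect((R/A)_\Prism, \overline\calO[\tfrac{1}{p}]) \simeq \MIC^\tn(\overline\Prism_{R/S}[\tfrac{1}{p}], \dif),\]
and Proposition~\ref{prop: Higgs to MIC} identifies our functor as $(M,\nabla) \mapsto \bigl(\overline\Prism_{R/S} \otimes_R M,\ \dif \otimes \id + \id \otimes (f \circ \nabla)\bigr)$, where $f : R\cotimes_{\overline P}\cOmega_{\overline P/\overline A} \to R\cotimes_{\overline S}\cOmega_{\overline S/\overline A}$ is induced by $P \to S$. By Proposition~\ref{prop: factor-through connection}, $\dif = \iota \circ \dif_0$ with $\iota : N\{-1\} \hookrightarrow R\cotimes_{\overline S}\cOmega\{-1\}$, and $M' \mapsto (\overline\Prism_{R/S}\otimes_R M',\ \dif_0\otimes \id)$ induces an equivalence $\Vect(R[\tfrac{1}{p}]) \xrightarrow{\sim} \MIC^\tn(\overline\Prism_{R/S}[\tfrac{1}{p}], \dif_0)$.

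The decisive observation is that after inverting $p$, our hypothesis makes $f[\tfrac{1}{p}]$ a section of the projection $R\cotimes_{\overline S}\cOmega \twoheadrightarrow \cdL_{R/\overline A}$, yielding a canonical splitting
\[R\cotimes_{\overline S}\cOmega_{\overline S/\overline A}[\tfrac{1}{p}] \;\cong\; N[\tfrac{1}{p}] \,\oplus\, f\bigl(R\cotimes_{\overline P}\cOmega_{\overline P/\overline A}\bigr)[\tfrac{1}{p}].\]
Any $\dif$-connection $\nabla_X$ on an $\overline\Prism_{R/S}[\tfrac{1}{p}]$-module then decomposes as $\nabla_X = \nabla^N + \nabla^f$; since $\dif$ lives entirely in the $N$-summand, the Leibniz rule forces $\nabla^N$ to be a $\dif_0$-connection and $\nabla^f$ to be $\overline\Prism_{R/S}[\tfrac{1}{p}]$-linear, and integrability and topological nilpotency of $\nabla_X$ translate into the corresponding conditions on the pair. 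Applied to $X = \overline\Prism_{R/S}\otimes_R M$ coming from a Higgs field, this recovers $(\dif_0\otimes \id,\ \id \otimes f\nabla)$. For fully faithfulness, an $\overline\Prism_{R/S}$-linear morphism $\varphi$ between two such bundles is determined by its restriction $\psi$ to $1 \otimes M$; compatibility with the $\nabla^N$-components, via the equivalence of Proposition~\ref{prop: factor-through connection}(3), forces $\psi$ to be $R[\tfrac{1}{p}]$-linear, while compatibility with the $\nabla^f$-components (using that $f[\tfrac{1}{p}]$ identifies $R\cotimes_{\overline P}\cOmega[\tfrac{1}{p}]$ with its image summand) is exactly Higgs compatibility.

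For the essential image, the inclusion $\calC_b \subseteq \Vect^{\sm b}$ is obtained directly: given $\nabla$ valued in $bR\cotimes_{\overline P}\cOmega\{-1\}$, the pair $(\overline\Prism_{R/S}^b\otimes_R M,\ b^{-1}\dif \otimes \id + \id \otimes f\nabla)$ is an object of $\MIC^\tn(\overline\Prism_{R/S}^b[\tfrac{1}{p}], b^{-1}\dif)$, and the base-change functor of Lemma~\ref{lem: J-small differential} returns the crystal attached to $(M,\nabla)$, witnessing $b$-smallness. The converse $\Vect^{\sm{ab}} \subseteq \calC_b$ is the deep step: starting from an object of $\MIC^\tn(\overline\Prism_{R/S}^{ab}[\tfrac{1}{p}], (ab)^{-1}\dif)$, base-change to $\overline\Prism_{R/S}[\tfrac{1}{p}]$, apply the decomposition above, and use the containment $a\cdot(R\cotimes_{\overline S}\cOmega) \subseteq N + f(R\cotimes_{\overline P}\cOmega)$ --- a direct consequence of the hypothesis that $a$ annihilates the cokernel of $R\cotimes_{\overline P}\cOmega \to \cdL_{R/\overline A}$ --- to absorb the factor of $a$ from $ab$ into the image of $f$, extracting a Higgs field with values integrally in $bR\cotimes_{\overline P}\cOmega\{-1\}$. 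The main obstacle is precisely this last step: tracking how the $(ab)^{-1}$-rescaling built into the small component $\overline\Prism_{R/S}^{ab}$ interacts with the integral decomposition (which fails to split on the nose by exactly the defect measured by $a$), and verifying that the extracted Higgs field remains topologically nilpotent, will require careful use of the filtered split PD structure of Lemma~\ref{lemma: E(n) split PD} to organize the error terms convergently.
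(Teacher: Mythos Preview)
Your setup and fully-faithfulness argument are essentially the paper's: choose $S$ a completed $\delta$-polynomial ring over $P$ surjecting onto $R$, so that $h: R\cotimes_{\overline P}\cOmega_{\overline P/\overline A} \to \Omega := R\cotimes_{\overline S}\cOmega_{\overline S/\overline A}$ is split injective and $\Omega[\tfrac 1 p]$ decomposes as $N[\tfrac 1 p]\oplus h(-)[\tfrac 1 p]$. The paper packages your decomposition $\nabla = \nabla^N + \nabla^f$ as a 2-pullback square
\[
\Higgs^\tn\bigl(R[\tfrac 1 p], bR\cotimes_{\overline P}\cOmega_{\overline P/\overline A}\{-1\}\bigr)\;\cong\;\MIC^\tn(\overline D^b[\tfrac 1 p], b^{-1}\dif)\times_{\MIC^\tn(\overline D^b[\tfrac 1 p],\, b^{-1}\dif'_0)}\Vect(R[\tfrac 1 p]),
\]
where $\dif'_0 = q\circ\dif$ for the retraction $q:\Omega\to a^{-1}N$, and the right-hand vertical map is post-composition with $q$. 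Your argument for $\calC_b\subseteq\Vect^{\sm b}$ is correct and agrees with the paper's.

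The gap is in $\Vect^{\sm{ab}}\subseteq\calC_b$. After you recognise (via Proposition~\ref{prop: factor-through connection}(3)) that the $\nabla^N$-part makes the underlying module isomorphic to $\overline D\otimes_R M$ for some finite projective $M/R[\tfrac 1 p]$, you still must show that the $\overline D$-linear endomorphism $\nabla^f$ restricts to $M$, i.e.\ that $\nabla^f(1\otimes m)$ lies in $M\otimes h(-)$ rather than merely in $(\overline D\otimes_R M)\otimes h(-)$. This is not a convergence issue and is not resolved by the filtered split PD structure of Lemma~\ref{lemma: E(n) split PD}. The paper handles it as follows: integrability of the full $\nabla$ forces the image of $\nabla'(1\otimes m)$ under $((\id\otimes q)\circ\nabla')\otimes\id$ to vanish (this is the mixed $N\wedge h(-)$ component of $\nabla\wedge\nabla=0$), and then Proposition~\ref{prop: factor-through connection}(4) --- the de Rham acyclicity of $(\overline D^J,\,J^{-1}\dif_0)$ tensored with $M\otimes(bR\cotimes_{\overline P}\cOmega_{\overline P/\overline A})\{-1\}$ --- identifies the kernel of that map with $M\otimes(bR\cotimes_{\overline P}\cOmega_{\overline P/\overline A})\{-1\}$. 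The integral control of the $b$-factor is then obtained by working on $\overline D^b$ rather than $\overline D$: the paper observes that $(ab)^{-1}\dif'_0$ on $\overline D^{ab}$ already has target in $bN$ (because $q$ lands in $a^{-1}N$), so the composite $\MIC^\tn(\overline D^{ab},(ab)^{-1}\dif)\to\MIC^\tn(\overline D^b,b^{-1}\dif'_0)$ factors through $\MIC^\tn(\overline D^b,b^{-1}\dif_0)\simeq\Vect(R[\tfrac 1 p])$. Your containment $a\Omega\subseteq N+h(-)$ is the right ingredient for this factorisation, but the descent of $\nabla^f$ to $M$ is a separate step requiring Proposition~\ref{prop: factor-through connection}(4).
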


\begin{proof}
Let $S/P$ by any completed polynomial ring equipped with a $\delta$-structure satisfying Condition \ref{cond: flat + tor 0}, such that there exists a surjective map of $P$-algebras $S \to R$. Denote $N = \widehat\dL_{R/\overline S}[-1]$, then $N$ is $p$-complete and $p$-completely flat over $R$.

Take $D = \Prism_{R/\overline S}$, then we have the topologically nilpotent differential $\dif_0: \overline D \to \overline D \cotimes_R N\{-1\}$ as in Proposition \ref{prop: factor-through connection}. Denote $\Omega = R \cotimes_{\overline S} \cOmega_{\overline S/\overline A}$, and $i: N \to \Omega, \pi: \Omega \to \cdL_{R/\overline A}$ the natural maps. Also denote $g: R \cotimes_{\overline P} \cOmega_{\overline P/\overline A} \to \cdL_{R/\overline A}$ the natural map. Note that after inverting $p$, $g$ becomes an isomorphism, and the domain of $g$ is $p$-torsion free, so $g$ is itself an injection. The natural map $P \to S$ induces the map $h: R \cotimes_{\overline P} \cOmega_{\overline P/\overline A} \to \Omega$, and we have the following commutative diagram
\[\xymatrix{
& & & R \cotimes_{\overline P} \cOmega_{\overline P/\overline A} \ar[d]^g \ar[ld]_h \\
0 \ar[r] & N \ar[r]_i & \Omega \ar[r]_\pi & \cdL_{R/\overline A} \ar[r] & 0
}\]
Since $S/P$ is a completed polynomial ring, we see that $h$ admits a left inverse. Invert the diagram above by $p$, and we get the following commutative diagram
\[\xymatrix{
& & & R \cotimes_{\overline P} \cOmega_{\overline P/\overline A} \bigl[ \frac 1 p \bigr] \ar[d]^\cong \ar[ld]_h \\
0 \ar[r] & N \bigl[ \frac 1 p \bigr] \ar[r]_i & \Omega \bigl[ \frac 1 p \bigr] \ar[r]_\pi & \cdL_{R/\overline A} \bigl[ \frac 1 p \bigr] \ar[r] & 0
}\]
Therefore, we may take a section $q: \Omega \to N \bigl[ \frac 1 p \bigr]$ via the formula $q(\omega) = i^{-1}(\omega - hg^{-1}p(\omega))$. Recall that $a(\coker g) =0$ by our condition. Therefore, for $\omega \in \Omega$, we have $a\pi(\omega) \in \im g$, and we see that $aq(\omega) \in N$. In other words, the section $q: \Omega \to N \bigl[ \frac 1 p \bigr]$ is in fact a section $q: \Omega \to a^{-1} N$. Definition of $\dif_0: \overline D \to \overline D \cotimes_R N \{-1\}$ shows that $\dif = i \circ \dif_0$, so $\dif_0 = q \circ \dif$.

Recall that $\overline D^a$ is the $a$-small component constructed in Proposition-Definition \ref{prop-defi: small component}. Denote $\dif'_0$ the composition $\overline D \to \overline D \cotimes_R N \to \overline D \cotimes_R a^{-1}N$. We first show that there exists a (2-)pullback diagram of categories
\begin{equation}\label{eq: Higgs by pullback}
\xymatrix{
\Higgs^\tn\bigl(R \bigl[ \frac 1 p \bigr], bR \otimes_{\overline P} \cOmega_{\overline P/\overline A} \{-1\}\bigr) \ar[r] \ar[d] & \MIC^\tn\bigl( \overline D^b \bigl[ \frac 1 p \bigr], b^{-1}\dif \bigr) \ar[d] \\
\Vect\bigl( R \bigl[ \frac 1 p \bigr] \bigr) \ar[r] & \MIC^\tn\bigl( \overline D^b \bigl[ \frac 1 p \bigr], b^{-1}\dif'_0 \bigr)
}
\end{equation}
where the arrow on the top is the $b$-small component of the morphism provided in Proposition \ref{prop: Higgs to MIC}, and the arrow on the right-hand side is induced by the map $q:\Omega \to a^{-1} N$. We have to prove the following two parts of statement:
\begin{enumerate}[label=(\arabic*)]
\item \emph{(fully faithful)} for any two topologically nilpotent Higgs fields $(M_1,\nabla_1), (M_2,\nabla_2) \in \Higgs^\tn\bigl(R \bigl[ \frac 1 p \bigr], bR \cotimes_{\overline P} \cOmega_{\overline P/\overline A} \{-1\}\bigr)$, a morphism $f: M_1 \to M_2$ is compatible with $\nabla$ if and only if its base change $M'_1 \to M'_2$ is compatible with the connection $\nabla'_i$ on $M'_i$ (where $M'_i = \overline D^b \otimes_R M_i$).
\item \emph{(essentially surjective)} for any finite projective module $M/ R \bigl[ \frac 1 p \bigr]$ with $M' = \overline D^b \otimes_R M$, if we are given a topologically nilpotent integrable $b^{-1}\dif$-connection (we require it to be a Higgs field over $R$)
\[ \nabla': M' \to M' \cotimes_R b\Omega\{-1\} \]
with $(\id \otimes q) \circ \nabla' = (b^{-1}\dif'_0) \otimes \id$, then there exists a topologically nilpotent Higgs field
\[ \nabla: M \to M \otimes_R (bR \cotimes_{\overline P} \cOmega_{\overline P/\overline A} \{-1\}) \]
inducing $\nabla'$.
\end{enumerate}

For (1), we only need to show that for any $(M,\nabla) \in \Higgs^\tn\bigl(R \bigl[ \frac 1 p \bigr], R \otimes_{\overline P} \cOmega_{\overline P/\overline A} \{-1\}\bigr)$ with $M' = \overline D^b \otimes_R M$, the map
\[ M \otimes_R (R \cotimes_{\overline P} \cOmega_{\overline P/\overline A}) \to M' \cotimes_R \Omega \]
is injective. Since $M \to M'$ is a closed embedding and $R \cotimes_{\overline P} \cOmega_{\overline P/\overline A}$ is $p$-complete and $p$-completely flat over $R$, by Lemma \ref{lem: closed submodule} and Corollary \ref{cor: tensor conc on degree 0} we see that the map
\[ M \otimes_R (R \cotimes_{\overline P} \cOmega_{\overline P/\overline A}) \to M' \otimes_R (R \cotimes_{\overline P} \cOmega_{\overline P/\overline A}) \]
is injective. To show that
\[ M' \cotimes_R (R \cotimes_{\overline P} \cOmega_{\overline P/\overline A}) \to M' \cotimes_R \Omega \]
is injective, one only need to observe that $h: R \cotimes_{\overline P} \cOmega_{\overline P/\overline A} \to \Omega$ has a left inverse.

For (2), we shall again resort to the injection above. We claim that the image of $M$ under the map $m \mapsto \nabla'(1 \otimes m)$ is contained in $M \otimes_R (bR \cotimes_{\overline P} \cOmega_{\overline P/\overline A}) \{-1\}$. Fix $m \in M$. Since $h$ is split injective, we have a split exact sequence
\[ 0 \to R \cotimes_{\overline P} \cOmega_{\overline P/\overline A} \to \Omega \to \coker h \to 0. \]
The map $q: (\coker h) \bigl[ \frac 1 p \bigr] \to N \bigl[ \frac 1 p \bigr]$ is an isomorphism, so we obtain a split exact sequence
\[ 0 \to M' \cotimes_R (bR \cotimes_{\overline P} \cOmega_{\overline P/\overline A}) \xrightarrow h M' \cotimes_R b\Omega \xrightarrow q M' \cotimes_R (ba^{-1}N) \to 0. \]
Therefore, by the condition $(\id \otimes q) \circ \nabla' = (b\dif'_0) \otimes \id$, we see that $\nabla'(1 \otimes m) \in M' \cotimes_R(bR \otimes_{\overline P} \cOmega_{\overline P/\overline A}) \{-1\}$. Also, by definition, the image of $\nabla'(1 \otimes m)$ under the composition
\begin{gather*}
M' \cotimes_R b\Omega\{-1\} \xrightarrow{\nabla' \otimes \id} M' \cotimes_R b\Omega \cotimes_R b\Omega \{-2\} \to M' \cotimes_R \cbigwedge_R^2(b\Omega) \{-2\} \\
\to M' \cotimes_R b\coker h \cotimes_R (bR \cotimes_{\overline P} \cOmega_{\overline P/\overline A}) \{-2\} \to M' \cotimes_R ba^{-1}N \cotimes_R (bR \cotimes_{\overline P} \cOmega_{\overline P/\overline A}) \{-2\}
\end{gather*}
is zero. In other words, the image of $\nabla'(1 \otimes m)$ under the map
\[ M' \cotimes_R(bR \cotimes_{\overline P} \cOmega_{\overline P/\overline A}) \{-1\} \xrightarrow{((\id \otimes q) \circ \nabla') \otimes \id} (M' \cotimes_R ba^{-1}N) \cotimes_R (bR \cotimes_{\overline P} \cOmega_{\overline P/\overline A}) \{-2\} \]
is zero. Note that by Proposition \ref{prop: factor-through connection} (4) applied to $M \cotimes_R (bR \cotimes_{\overline P} \cOmega_{\overline P/\overline A}) \{-1\}$, the kernel of the map above is $M \cotimes_R (bR \cotimes_{\overline P} \cOmega_{\overline P/\overline A}) \{-1\}$. This shows that the image of $M$ under the map $m \mapsto \nabla'(1 \otimes m)$ is contained in $M \cotimes_R (bR \cotimes_{\overline P} \cOmega_{\overline P/\overline A}) \{-1\}$. Since the map
\[ M \cotimes_R \cbigwedge_R^2(bR \cotimes_{\overline P} \cOmega_{\overline P/\overline A}) \to M \cotimes_R \cbigwedge_R^2(b\Omega) \]
has a left inverse, it is injective, and we see that the induced map $\nabla: M \to M \otimes_R (bR \cotimes_{\overline P} \cOmega_{\overline P/\overline A}) \{-1\}$ is a Higgs field. At last, because $h$ is split injective, we may take a section $q'$ of $h$, and the induced connection $(M', (\id \otimes q') \circ \nabla')$ is topologically nilpotent. The map $(M,\nabla) \to (M',(\id \otimes q') \circ \nabla')$ is a closed embedding, and a direct calculation (involving Lemma \ref{lem: closed submodule} (2)) shows that $\nabla$ is topologically nilpotent. Denote $\nabla'_1: M' \to M' \cotimes_R b\Omega \{-1\}$ the connection induced by $M$. Now, $\nabla'_1$ and $\nabla$ agree on $M$. By definition of connections, they must also agree on the $M'_1$, the $\overline D^b$-submodule of $M'$ generated by $M$. Since $M'_1$ is dense in $M'$, we conclude that $\nabla'_1=\nabla'$, and $\nabla'$ is induced by the topological nilpotent Higgs field $\nabla$. This proves our item (2).

Therefore, \eqref{eq: Higgs by pullback} is a 2-pullback diagram of categories.

We first prove that the functor
\[ \Higgs^\tn\bigl(R \bigl[ \frac 1 p \bigr] ,R \cotimes_{\overline P} \cOmega_{\overline P/\overline A} \{-1\}\bigr) \to \Vect\bigl( (R/A)_\Prism, \overline \calO \bigl[ \frac 1 p \bigr] \bigr) \]
is fully faithful. By \eqref{eq: Higgs by pullback}, we only need to prove that the map
\[ \Vect\bigl( R \bigl[ \frac 1 p \bigr] \bigr) \to \MIC^\tn\bigl( \overline D \bigl[ \frac 1 p \bigr], \dif'_0 \bigr) \]
is fully faithful. Fix finite projective modules $M_1,M_2$ over $R \bigl[ \frac 1 p \bigr]$. Denote $M'_i = \overline D \otimes_R M_i$ with connections $\nabla_i$. By Proposition \ref{prop: factor-through connection} (3), we see that $M_i = (M'_i)^{\nabla_i=0}$, therefore $\Hom(M_1,M_2) \to \Hom((M'_1,\nabla_1),(M'_2,\nabla_2))$ is injective. To show that it is surjective, for any map $\psi': (M'_1,\nabla_1) \to (M'_2,\nabla_2)$, it induces a map $\psi: M_1 \to M_2$ by taking $(-)^{\nabla=0}$. Now $M_i$ is a topological generating set of $M'_i$ as a $\overline D$-module, and $\psi'$ and $1\otimes \psi$ agrees on $M_1$, so they agree on the entire $M'_1$. In other words, $\psi'$ is induced by $\psi \in \Hom(M_1,M_2)$, proving that $\Hom(M_1,M_2) \to \Hom((M'_1,\nabla_1),(M'_2,\nabla_2))$ is surjective. This shows that the functor is fully faithful.

We now show that its essential image $\calC_b$ of the composition
\[ \Higgs^\tn\bigl(R \bigl[ \frac 1 p \bigr], bR \cotimes_{\overline P} \cOmega_{\overline P/\overline A} \{-1\}\bigr) \to \Higgs^\tn\bigl(R \bigl[ \frac 1 p \bigr], R \cotimes_{\overline P} \cOmega_{\overline P/\overline A} \{-1\}\bigr) \to \Vect\bigl( (R/A)_\Prism, \overline \calO \bigl[ \frac 1 p \bigr] \bigr) \]
satisfies
\[ \Vect^{\sm{ab}} \bigl( (R/A)_\Prism, \overline \calO \bigl[ \frac 1 p \bigr] \bigr) \subseteq \calC_b \subseteq \Vect^{\sm b} \bigl( (R/A)_\Prism, \overline \calO \bigl[ \frac 1 p \bigr] \bigr). \]
By the naturality for small components, this composition can also be given by
\[ \Higgs^\tn\bigl(R \bigl[ \frac 1 p \bigr], bR \cotimes_{\overline P} \cOmega_{\overline P/\overline A} \{-1\}\bigr) \to \MIC^\tn \bigl( \overline D^b \bigl[ \frac 1 p \bigr], b^{-1}\dif \bigr) \to  \MIC^\tn \bigl( \overline D \bigl[ \frac 1 p \bigr], \dif \bigr), \]
and it follows right away that $\calC_b \subseteq \Vect^{\sm b} \bigl( (R/A)_\Prism, \overline \calO \bigl[ \frac 1 p \bigr] \bigr)$. To show that
\[ \calC_b \supseteq \Vect^{\sm{ab}} \bigl( (R/A)_\Prism, \overline \calO \bigl[ \frac 1 p \bigr] \bigr), \]
we only need to show that the essential image of the map
\[ \MIC^\tn \bigl( \overline D^{ab} \bigl[ \frac 1 p \bigr], (ab)^{-1}\dif \bigr) \to \MIC^\tn \bigl( \overline D^b \bigl[ \frac 1 p \bigr], b^{-1}\dif \bigr) \]
is contained in the essential image of
\[ \Higgs^\tn\bigl(R \bigl[ \frac 1 p \bigr], bR \cotimes_{\overline P} \cOmega_{\overline P/\overline A} \{-1\}\bigr) \to \MIC^\tn \bigl( \overline D^b \bigl[ \frac 1 p \bigr], b^{-1}\dif \bigr). \]
By \eqref{eq: Higgs by pullback}, we only need to show that the essential image of the composition
\[ \MIC^\tn \bigl( \overline D^{ab} \bigl[ \frac 1 p \bigr], (ab)^{-1}\dif \bigr) \to \MIC^\tn \bigl( \overline D^b \bigl[ \frac 1 p \bigr], b^{-1}\dif \bigr) \to \MIC^\tn (\overline D^b \bigl[ \frac 1 p \bigr], b^{-1}\dif'_0) \]
is contained in the essential image of
\[ \Vect\bigl( R \bigl[ \frac 1 p \bigr] \bigr) \to \MIC^\tn\bigl( \overline D^b \bigl[ \frac 1 p \bigr], b^{-1} \dif'_0 \bigr). \]
It is clear that the essential image of the composition
\[ \MIC^\tn \bigl( \overline D^{ab} \bigl[ \frac 1 p \bigr], (ab)^{-1}\dif \bigr) \to \MIC^\tn \bigl( \overline D^b \bigl[ \frac 1 p \bigr], b^{-1}\dif \bigr) \to \MIC^\tn (\overline D^b \bigl[ \frac 1 p \bigr], b^{-1}\dif'_0) \]
is contained in the essential image of the composition
\[ \MIC^\tn \bigl( \overline D^{ab} \bigl[ \frac 1 p \bigr], (ab)^{-1}\dif'_0 \bigr) \to \MIC^\tn (\overline D^b \bigl[ \frac 1 p \bigr], b^{-1}\dif'_0). \]

By definition, the maps $(ab)^{-1} \dif'_0$ and $b^{-1} \dif'_0$ fit into the commutative diagram
\[\begin{tikzcd}[column sep=large]
\overline D^{ab} \ar[r, "(ab)^{-1}\dif'_0"] \ar[d] & \overline D^{ab} \cotimes_R bN \{-1\} \ar[d] \\
\overline D^b \ar[r, "b^{-1}\dif_0"] \ar[d] & \overline D^b \cotimes_R bN \{-1\} \ar[d] \\
\overline D^b \ar[r, "b^{-1}\dif'_0"] & \overline D^b \cotimes_R ba^{-1}N \{-1\}
\end{tikzcd}\]
and the square on the outside defines the map
\[ \MIC^\tn \bigl( \overline D^{ab} \bigl[ \frac 1 p \bigr], (ab)^{-1}\dif'_0 \bigr) \to \MIC^\tn (\overline D^b \bigl[ \frac 1 p \bigr], b^{-1}\dif'_0) \]
Therefore, this map factors as
\[\MIC^\tn (\overline D^{ab} \bigl[ \frac 1 p \bigr], (ab)^{-1} \dif'_0) \to \MIC^\tn (\overline D^b \bigl[ \frac 1 p \bigr], b^{-1}\dif_0) \to \MIC^\tn (\overline D^b \bigl[ \frac 1 p \bigr], b^{-1}\dif'_0). \]
By Proposition \ref{prop: factor-through connection} (3), we see that the map
\[ \Vect(R) \to \MIC^\tn(\overline D^b \bigl[ \frac 1 p \bigr], b^{-1}\dif_0) \]
is an equivalence. Therefore the map factors through the essential image of $\Vect(R) \to \MIC^\tn (\overline D^b \bigl[ \frac 1 p \bigr], b^{-1}\dif'_0)$, as desired.
\end{proof}

Note that under the assumptions of Theorem \ref{theo: simpson correspondence} the image of the fully faithful functor
\[ \Higgs^\tn\bigl(R \bigl[ \frac 1 p \bigr] ,R \otimes_{\overline P} \cOmega_{\overline P/\overline A} \{-1\}\bigr) \to \Vect\bigl( (R/A)_\Prism, \overline \calO \bigl[ \frac 1 p \bigr] \bigr) \]
is an \'etale local full subcategory of $\Vect\bigl( (R/A)_\Prism, \overline \calO \bigl[ \frac 1 p \bigr] \bigr)$, so we have the following direct consequence (note that fully-faithfulness can be checked locally):

\begin{cor}\label{cor: simpson correspondence etale}
Let $(A,I)$ be a bounded prism and $R/\overline A$ be a ring satisfying Condition \ref{cond: tor -1,0}, such that $R$ is $p$-torsion-free, $\cdL_{R/\overline A}$ is concentrated on degree 0 and $p$-torsion-free. Let $P/A$ be a $\delta$-algebra satisfying Condition \ref{cond: flat + tor 0}. Fix a Cartier divisor $J \subseteq R$ of $R$ such that $(R/J) \bigl[ \frac 1 p \bigr] =0$. Assume that we are given a map of $A$-algebras $P \to R$ such that the induced map $R \otimes_{\overline P}\cOmega_{\overline P/\overline A} \to \cdL_{R/\overline A}$ is an isomorphism after inverting $p$, and its cokernel is annihilated by $J$. Then, the functor
\[ \Higgs^\tn\bigl(R \bigl[ \frac 1 p \bigr], R \cotimes_{\overline P} \cOmega_{\overline P/\overline A} \{-1\}\bigr) \to \Vect\bigl( (R/A)_\Prism, \overline \calO \bigl[ \frac 1 p \bigr] \bigr) \]
given in Proposition \ref{prop: Higgs to crystal} is fully faithful, and for any Cartier divisor $J' \subseteq R$ with $(R/J') \bigl[ \frac 1 p \bigr] =0$, the essential image $\calC_{J'}$ of the composition
\[ \Higgs^\tn\bigl(R \bigl[ \frac 1 p \bigr], J' \cotimes_{\overline P} \cOmega_{\overline P/\overline A} \{-1\}\bigr) \to \Higgs^\tn\bigl(R \bigl[ \frac 1 p \bigr], R \cotimes_{\overline P} \cOmega_{\overline P/\overline A} \{-1\}\bigr) \to \Vect\bigl( (R/A)_\Prism, \overline \calO \bigl[ \frac 1 p \bigr] \bigr) \]
satisfies
\[ \Vect^{\smet{JJ'}}\bigl( (R/A)_\Prism, \overline \calO \bigl[ \frac 1 p \bigr] \bigr) \subseteq \calC_{J'} \subseteq \Vect^{\sm{J'}}\bigl( (R/A)_\Prism, \overline \calO \bigl[ \frac 1 p \bigr] \bigr). \]
\end{cor}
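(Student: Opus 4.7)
The plan is to deduce the corollary from Theorem \ref{theo: simpson correspondence} by a Zariski-local argument on $\Spf R$. Since the Cartier divisor $J$ is locally principal and $(R/J)[\tfrac 1 p] = 0$, and similarly for $J'$, I can choose a finite affine Zariski cover $\{\Spf R_i\}_{i=1}^n$ of $\Spf R$ over which $J R_i = (a_i)$ and $J' R_i = (b_i)$, with $a_i, b_i \in R_i$ non-zero-divisors invertible in $R_i[\tfrac 1 p]$ (refining further by intersecting with complements of vanishing loci if necessary). On each $R_i$, the hypotheses of Theorem \ref{theo: simpson correspondence} are satisfied with the chosen $a = a_i$ and $b = b_i$, yielding a fully faithful functor with essential image sandwiched as $\Vect^{\sm{a_i b_i}} \subseteq \calC_{b_i} \subseteq \Vect^{\sm{b_i}}$.

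For global fully-faithfulness, both the source $\Higgs^\tn(R[\tfrac 1 p], R \cotimes_{\overline P} \cOmega_{\overline P/\overline A}\{-1\})$ and the target $\Vect((R/A)_\Prism, \overline\calO[\tfrac 1 p])$ are Zariski (in fact fpqc) stacks on $\Spf R$ — the former by Lemma \ref{lem: Higgs is fpqc stack}, the latter because Zariski localization of $R$ induces the evident localization of the prismatic site and vector bundles form a stack. Hence fully-faithfulness descends from the local statements on the $R_i$'s. For the right inclusion $\calC_{J'} \subseteq \Vect^{\sm{J'}}$: by construction $\calC_{J'}$ consists of those crystals arising from a Higgs field valued in $J' \otimes \cOmega$, and since $J' R_i = (b_i)$, restricting to $R_i$ lands in $\calC_{b_i} \subseteq \Vect^{\sm{b_i}}$; by Definition \ref{defi: fake J-small crystal globalised} this Zariski-local membership implies global membership in $\Vect^{\sm{J'}}$.

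The main obstacle is the left inclusion $\Vect^{\smet{JJ'}} \subseteq \calC_{J'}$, which requires gluing local Higgs fields to a global one with the correct target. Given $\calE \in \Vect^{\smet{JJ'}}$, by definition there exists an étale cover $\{R \to R'_j\}$ on which $\calE|_{R'_j} \in \Vect^{\sm{JJ' R'_j}}$; after refining we may assume each $R'_j$ factors through some $R_i$, so $JJ' R'_j = (a_i b_i)$ is principal. The local theorem produces topologically nilpotent Higgs fields $(M_j, \nabla_j)$ with $\nabla_j$ valued in $M_j \otimes_{R'_j} (b_i R'_j \cotimes_{\overline P} \cOmega_{\overline P/\overline A})\{-1\}$. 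By the already-established fully-faithfulness together with étale descent for Higgs bundles (Lemma \ref{lem: Higgs is fpqc stack}), these pieces glue to a global $(M, \nabla)$ realizing $\calE$. The delicate point is that a priori $\nabla$ only lands in $M \otimes_R (R \cotimes_{\overline P} \cOmega)\{-1\}$; however since $J'$ is a Cartier divisor and $R \cotimes_{\overline P} \cOmega_{\overline P/\overline A}$ is $p$-completely flat over $R$, the inclusion $J' \cotimes_{\overline P} \cOmega \hookrightarrow R \cotimes_{\overline P} \cOmega$ is a closed embedding whose local sections over $R'_j$ are exactly $b_i R'_j \cotimes \cOmega$. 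The local constraints $\im(\nabla_j) \subseteq M_j \otimes (b_i R'_j \cotimes \cOmega)\{-1\}$ therefore patch to $\im(\nabla) \subseteq M \otimes (J' \cotimes \cOmega)\{-1\}$, placing $\calE$ in $\calC_{J'}$.
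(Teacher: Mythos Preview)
Your proposal is correct and follows essentially the same strategy as the paper: reduce to Theorem \ref{theo: simpson correspondence} by passing to a cover where $J$ and $J'$ become principal, using that both $\Higgs^\tn$ (Lemma \ref{lem: Higgs is fpqc stack}) and $\Vect$ satisfy descent, and that the hypotheses of the theorem are preserved under $p$-completely \'etale base change.

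One simplification worth noting: the paper handles the right inclusion $\calC_{J'} \subseteq \Vect^{\sm{J'}}$ globally rather than locally. Since the functor of Proposition \ref{prop: Higgs to crystal} factors as
\[ \Higgs^\tn\bigl(R\bigl[\tfrac 1 p\bigr], J' \cotimes_{\overline P}\cOmega_{\overline P/\overline A}\{-1\}\bigr) \to \MIC^\tn\bigl(\overline\Prism_{R/S}^{J'}\bigl[\tfrac 1 p\bigr], (J')^{-1}\dif\bigr) \to \MIC^\tn\bigl(\overline\Prism_{R/S}\bigl[\tfrac 1 p\bigr], \dif\bigr) \]
(Proposition \ref{prop: Higgs to MIC} and Lemma \ref{lem: J-small differential}), the inclusion $\calC_{J'}\subseteq \Vect^{\sm{J'}}$ is immediate from the definition of $\Vect^{\sm{J'}}$, without any localisation. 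Similarly, for the left inclusion the paper avoids your explicit image-containment check for $\nabla$: since the fully faithful functor identifies $\calC_{J'}$ with the \'etale stack $\Higgs^\tn\bigl(R\bigl[\tfrac 1 p\bigr], J'\cotimes_{\overline P}\cOmega_{\overline P/\overline A}\{-1\}\bigr)$, local membership in $\calC_{J'R'_j}$ directly glues to global membership in $\calC_{J'}$. Your argument that the image of $\nabla$ lies in $M\otimes (J'\cotimes\cOmega)$ is correct, but this reformulation makes the descent step a one-liner.
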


\begin{proof}
Pick any $\delta$-ring $S/P$ satisfying Condition \ref{cond: flat + tor 0}, and take a surjection $S \to R$. Denote $D = \Prism_{R/S}$, and the inclusion
\[ \calC_{J'} \subseteq \Vect^{\sm{J'}}\bigl( (R/A)_\Prism, \overline \calO \bigl[ \frac 1 p \bigr] \bigr) \]
follows directly from the commutative diagram as below:
\[\begin{tikzcd}
\Higgs^\tn\bigl(R \bigl[ \frac 1 p \bigr], J' \cotimes_{\overline P} \cOmega_{\overline P/\overline A} \{-1\}\bigr) \ar[r] \ar[d] & \Higgs^\tn\bigl(R \bigl[ \frac 1 p \bigr], R \cotimes_{\overline P} \cOmega_{\overline P/\overline A} \{-1\}\bigr) \ar[d] \\
\MIC^\tn\bigl( \overline D^{J'} \bigl[ \frac 1 p \bigr], (J')^{-1}\dif \bigr) \ar[r] & \MIC^\tn \bigl( \overline D \bigl[ \frac 1 p \bigr], \dif \bigr).
\end{tikzcd}\]
As for the other side of the inclusion, one only need to observe that $\calC_{J'} \cong \Higgs^\tn\bigl(R \bigl[ \frac 1 p \bigr], J' \cotimes_{\overline P} \cOmega_{\overline P/\overline A} \{-1\}\bigr)$ is a stack in the $p$-completely \'etale topology (by Lemma \ref{lem: Higgs is fpqc stack}), and apply Theorem \ref{theo: simpson correspondence} for $p$-completely \'etale covers of $R$. (Here \'etaleness is necessary, because the condition of Theorem \ref{theo: simpson correspondence} requires calculation of the completed cotangent complex.)
\end{proof}

Recall our calculations in Proposition \ref{prop: Higgs to perfect crystals}. We get the following corollary.

\begin{cor}\label{cor: fully faithfulness on small crystals}
Let $\overline A/\mathbb Z_p^\cyc$ be an integral perfectoid algebra and put $A = \dA_{\inf}(\overline A)$ and $I = \ker \theta$. Let $R/\overline A$ be a ring satisfying Condition \ref{cond: tor -1,0}, such that $R$ is $p$-torsion-free, $\cdL_{R/\overline A}$ is concentrated on degree 0 and $p$-torsion-free. Also, let $x_1,\ldots,x_d$ be elements in $R^\times$ and $J \subseteq R$ be a Cartier divisor with $(R/J) \bigl[ \frac 1 p \bigr] =0$, such that $\dif x_i$ form a basis of $\cdL_{R/\overline A}$, and the cokernel of
\[ \bigoplus_{i=1}^d R \dif x_i \to \cdL_{R/\overline A} \]
is annihilated by $J$. Also, assume that the map
\[ A \langle T_1,\ldots,T_d \rangle \to R \]
sending $T_i$ to $x_i$ is a toric chart of $R$ (see Definition \ref{defi: toric chart}), then the composition
\[ \Vect^{\smet J}\bigl( (R/A)_\Prism, \overline \calO \bigl[ \frac 1 p \bigr] \bigr) \to \Vect\bigl( (R/A)_\Prism, \overline \calO \bigl[ \frac 1 p \bigr] \bigr) \xrightarrow{\Res_{\Spf(R)}} \Vect\bigl( (R/A)_\Prism^\perf, \overline \calO \bigl[ \frac 1 p \bigr] \bigr) \]
is fully faithful.

Moreover, denote $\cdL_{R/\overline A}^\vee$ the classical $R$-module $\Hom_R(\cdL_{R/\overline A},R)$.\footnote{We have $\cdL_{R/\overline A}^\vee \bigl[ \frac 1 p \bigr] \cong \cdL_{R/\overline A} \bigl[ \frac 1 p \bigr]^\vee$, because every $R$-linear map $\cdL_{R/\overline A} \bigl[ \frac 1 p \bigr] \to R \bigl[ \frac 1 p \bigr]$ is bounded by Proposition \ref{prop: finitely generated adic modules}.} For any Cartier divisor $J' \subseteq R$ with $(R/J') \bigl[ \frac 1 p \bigr]=0$, denote
\[ \Vect^{\sm{J'}} \bigl( (R/A)_\Prism^\perf, \overline \calO \bigl[ \frac 1 p \bigr] \bigr) \]
the category of all vector bundles $\calE \in \Vect\bigl( (R/A)_\Prism^\perf, \overline \calO \bigl[ \frac 1 p \bigr] \bigr)$ for which the Sen action of $(J')^{-1} (\zeta_p-1)^{-1}\cdL_{R/\overline A}^\vee \{1\}$ is topologically nilpotent\footnote{We remark that topological nilpotency can be checked $v$-locally, because for a $v$-vector bundle $\calE$ and a locally perfectoid ring $(S,S^+)$, we may take a finite open cover of $\Spa(S,S^+)$ by affinoid perfectoid opens, and then further cover these opens by their $v$-covers. Therefore, we get arbitrarily fine $v$-covers $\bigsqcup_{i=1}^N \Spa(S_i,S_i^+) \to \Spa(S,S^+)$, and we see that $\calE(S) \to \prod \calE(S_i)$ is a closed embedding. Therefore, if an operator $T: \calE \to \calE$ is topologically nilpotent on all $\calE(S_i)$, then it is topologically nilpotent on $\calE(S)$.} \emph{(do not forget the $(\zeta_p-1)$-factor)}. Then, there exists a full subcategory $\calC \subseteq \Vect\bigl( (R/A)_\Prism, \overline \calO \bigl[ \frac 1 p \bigr] \bigr)$ such that for any Cartier divisor $J'$ of $R$ with $(R/J') \bigl[ \frac 1 p \bigr]=0$, we have
\begin{align*}
\Res_{\Spf(R)} \bigl( \calC \cap\Vect^{\sm{J'}} \bigl( (R/A)_\Prism, \overline \calO \bigl[ \frac 1 p \bigr] \bigr) \bigr) & \supseteq \Vect^{\sm{JJ'}} \bigl( (R/A)_\Prism^\perf, \overline \calO \bigl[ \frac 1 p \bigr] \bigr), \\
\Res_{\Spf(R)} \bigl( \Vect^{\smet{JJ'}} \bigl( (R/A)_\Prism, \overline \calO \bigl[ \frac 1 p \bigr] \bigr) \bigr) & \subseteq \Vect^{\sm{J'}} \bigl( (R/A)_\Prism^\perf, \overline \calO \bigl[ \frac 1 p \bigr] \bigr).
\end{align*}
\end{cor}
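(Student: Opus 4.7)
The plan is to reduce both statements to Corollary \ref{cor: simpson correspondence etale} applied to the polynomial prism $P = A\langle T_1,\ldots,T_d\rangle$ (with $\delta(T_i)=0$) mapping into $R$ via $T_i\mapsto x_i$, and to use the explicit shape of the composition $\mathrm{Higgs}\to\Vect\to\Vect_{\perf}$ given by Proposition \ref{prop: Higgs to perfect crystals}, combined with the decompletion results of Theorems \ref{theo: decompletion} and \ref{theo: full decompletion}. First, I verify the hypotheses of Corollary \ref{cor: simpson correspondence etale}: the map $R\otimes_{\overline P}\cOmega_{\overline P/\overline A}\to\cdL_{R/\overline A}$ becomes an isomorphism after inverting $p$ (since $\dif x_i$ form a basis of $\cdL_{R/\overline A}[\tfrac 1 p]$) and its cokernel is annihilated by $J$ (by hypothesis). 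This produces a fully faithful Higgs realization
\[ \Psi: \Higgs^\tn\bigl(R\bigl[\tfrac 1 p\bigr], R\cotimes_{\overline P}\cOmega_{\overline P/\overline A}\{-1\}\bigr) \to \Vect\bigl((R/A)_\Prism,\overline\calO\bigl[\tfrac 1 p\bigr]\bigr), \]
and, for any Cartier divisor $J'\subseteq R$ with $(R/J')[\tfrac 1 p]=0$, the essential image $\calC_{J'}$ of the subcategory with coefficients in $J'\cdot\cOmega$ satisfies $\Vect^{\smet{JJ'}}\subseteq\calC_{J'}\subseteq\Vect^{\sm{J'}}$. I then set $\calC := \calC_R$, the full essential image of $\Psi$, which contains $\Vect^{\smet J}$.

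Next I identify the composition $\Res_{\Spf(R)}\circ\Psi$ via Proposition \ref{prop: Higgs to perfect crystals}: it sends $(M,\nabla)$ to the generalized representation $M_\infty=R_\infty\otimes_R M$ with $\gamma_i$ acting as $\exp((\zeta_p-1)x_i\nabla_i)$, where $\nabla = \sum\nabla_i\otimes\tfrac{\dif x_i}{\xi}$. Using the fixed trivialization of the Tate twist coming from the choice of $\zeta_{p^n}$, this yields the identification of the Sen operator as $\theta_i \sim (\zeta_p-1)x_i\nabla_i$ (up to a universal unit). Crucially, topological nilpotency of the Higgs field $\nabla$ implies topological nilpotency of each $\theta_i$, and in particular of $(\zeta_p-1)^{-1}\theta_i = x_i\nabla_i$ (an $R^\times$-multiple of $\nabla_i$), so the full decompletion Theorem \ref{theo: full decompletion} applies whenever the source is a Higgs bundle.

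For the first claim (fully faithfulness of $\Vect^{\smet J}\to\Vect_{\perf}$), since $\Vect^{\smet J}\subseteq\calC$ and $\Psi$ is fully faithful onto $\calC$, it suffices to show the restriction $\calC\to\Vect_{\perf}$ is fully faithful on morphisms between objects coming from Higgs bundles. Given $(M_i,\nabla_i)$ ($i=1,2$) and a $\Gamma$-equivariant map $\phi_\infty:M_{1,\infty}\to M_{2,\infty}$, Theorem \ref{theo: full decompletion} descends $\phi_\infty$ to an $R[\tfrac 1 p]$-linear map $\phi:M_1\to M_2$ (both $M_i$ satisfy the decompletion hypothesis as explained above). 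Passing the $\Gamma$-equivariance to Lie algebra actions and using $\log\gamma_i = (\zeta_p-1)x_i\nabla_i$ shows that $\phi$ commutes with each $\nabla_i$, hence with $\nabla$. This recovers a unique Higgs-morphism, and combined with faithfulness of $R\to R_\infty$ on morphisms proves full faithfulness.

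For the second claim, the inclusion $\Res(\Vect^{\smet{JJ'}})\subseteq\Vect^{\sm{J'}}$ is checked étale locally: by Corollary \ref{cor: simpson correspondence etale}, any $\calE\in\Vect^{\smet{JJ'}}$ étale-locally equals $\Psi(M,\nabla)$ with $\nabla_i\in JJ'\cdot R$, so $\theta_i\in (\zeta_p-1)JJ'\cdot R$ and $(J')^{-1}(\zeta_p-1)^{-1}\theta_i\in J\cdot R$ acts topologically nilpotently because each $\nabla_i$ does and $x_i\in R^\times$. For the reverse inclusion $\Res(\calC\cap\Vect^{\sm{J'}})\supseteq\Vect^{\sm{JJ'}}$, take $V\in\Vect^{\sm{JJ'}}$. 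The hypothesis that $(JJ')^{-1}(\zeta_p-1)^{-1}\cdL^\vee\{1\}$ acts topologically nilpotently in particular implies $(\zeta_p-1)^{-1}\theta_i$ is topologically nilpotent, so Theorem \ref{theo: full decompletion} descends $V$ to a finite projective $R[\tfrac 1 p]$-module $M$ with $\Gamma$-action. Setting $\nabla_i := ((\zeta_p-1)x_i)^{-1}\theta_i$ (legitimate since $x_i\in R^\times$), the smallness assumption on $V$ forces $\nabla_i\in JJ'\cdot R$, hence $\nabla$ takes values in $JJ'\cdot\cOmega\{-1\}\subseteq J'\cdot\cOmega\{-1\}$, and topological nilpotency of $\nabla$ follows from that of the $\theta_i$'s together with their mutual commutation (Lemma \ref{lemma: properties of sen actions}(3)) and Lemma \ref{lem: topologically nilpotent}. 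Then $\Psi(M,\nabla)\in\calC_{JJ'}\subseteq\calC\cap\Vect^{\sm{J'}}$, and the explicit formula of Proposition \ref{prop: Higgs to perfect crystals} shows $\Res(\Psi(M,\nabla)) = V$.

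The main obstacle will be keeping track of the Tate twists and the precise scaling by $(\zeta_p-1)$ and by the units $x_i$, so that the ``smallness of $\nabla$'' and ``smallness of the Sen operator'' translate into the exact Cartier-divisor-indexed conditions appearing in the statement; the conceptual content beyond that is straightforward bookkeeping using Sen decompletion together with Corollary \ref{cor: simpson correspondence etale}.
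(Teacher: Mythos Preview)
Your approach is essentially identical to the paper's: apply Corollary \ref{cor: simpson correspondence etale} with $P=A\langle T_1,\ldots,T_d\rangle$, take $\calC$ to be the essential image of the Higgs functor $\Psi$, use Proposition \ref{prop: Higgs to perfect crystals} to make $\Res_{\Spf(R)}\circ\Psi$ explicit, and then invoke the decompletion machinery (Theorem \ref{theo: full decompletion}) to identify $M$ with the $\Gamma$-analytic vectors and $\nabla_i$ with $(\zeta_p-1)^{-1}x_i^{-1}\theta_i$.

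One bookkeeping slip to flag: in your essential-image arguments you repeatedly write ``$\nabla_i\in JJ'\cdot R$'' where the correct bound is only $J'$, not $JJ'$. The point is that the inclusion $\langle\partial_i\rangle\subseteq J^{-1}\cdL_{R/\overline A}^\vee$ (dual to the hypothesis that $J$ kills the cokernel of $\bigoplus R\,\dif x_i\to\cdL_{R/\overline A}$) means $J\partial_i\in\cdL_{R/\overline A}^\vee$, so the $JJ'$-smallness hypothesis on the Sen side only buys you topological nilpotence of $(J')^{-1}\nabla_i$, i.e.\ $(M,\nabla)\in\Higgs^{\tn}\bigl(R[\tfrac 1 p],\,J'\cdot\cOmega\{-1\}\bigr)$, hence $\Psi(M,\nabla)\in\calC_{J'}\subseteq\calC\cap\Vect^{\sm{J'}}$. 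Likewise for the reverse inclusion, $\Vect^{\smet{JJ'}}\subseteq\calC_{J'}$ (not $\calC_{JJ'}$). Fortunately this over-claim is harmless for the stated conclusions, since you only need to land in $\calC\cap\Vect^{\sm{J'}}$ and in $\Vect^{\sm{J'}}_\perf$ respectively; but you should correct the intermediate assertions so the argument reads cleanly. The paper records this via the chain $\cdL_{R/\overline A}^\vee\subseteq\langle\partial_1,\ldots,\partial_d\rangle\subseteq J^{-1}\cdL_{R/\overline A}^\vee$ and then does exactly the ``direct calculation of $\theta_i$'' you sketch.
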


\begin{proof}
By Theorem \ref{theo: v vector bundles=generalized representations}, evaluating on $R_\infty$ gives an equivalence of categories
\[ \Vect\bigl( (R/A)_\Prism^\perf, \overline \calO \bigl[ \frac 1 p \bigr] \bigr) \to \Rep_\Gamma \bigl(R_\infty \bigl[ \frac 1 p \bigr] \bigr). \]
For the fully faithful part, put $P = A \langle T_1,\ldots,T_d \rangle$. By Corollary \ref{cor: simpson correspondence etale}, we only need to prove that the composition
\begin{align*}
& \Higgs^\tn\bigl( R \bigl[ \frac 1 p \bigr], (R \dif x_1 \oplus \cdots \oplus R\dif x_d)\{-1\} \bigr) \to \Vect\bigl( (R/A)_\Prism, \overline \calO \bigl[ \frac 1 p \bigr] \bigr) \\
\to {} & \Vect\bigl( (R/A)_\Prism^\perf, \overline \calO \bigl[ \frac 1 p \bigr] \bigr) \to \Rep_\Gamma \bigl(R_\infty \bigl[ \frac 1 p \bigr] \bigr)
\end{align*}
is fully faithful.

By Proposition \ref{prop: Higgs to perfect crystals}, this functor sends $(M,\nabla)$ to $M_\infty =R_\infty \otimes_R M$ where $\gamma_i$ acts on $M$ by $\exp((\zeta_p-1) x_i \nabla_i)$ and
\[ \nabla(m) = \sum_{i=1}^d \nabla_i(m) \otimes \frac{\dif x_i} \xi, \quad \xi = \frac{[\ep]-1}{[\ep]^{1/p}-1}. \]
Now, $M$ may be identified with the $\Gamma$-analytic vectors of $M_\infty$, and the operators $\nabla_i$ may be identified with $(\zeta_p-1)^{-1} x_i^{-1} \theta_i$ (note that $(\zeta_p-1) x_i$ is invertible in $R \bigl[ \frac 1 p \bigr]$) for the Sen operators $\theta_i$ (see Theorem \ref{theo: full decompletion} and the definitions before Lemma \ref{lemma: properties of sen actions}). This easily shows that the corresponding functor
\[ \Higgs^\tn\bigl( R \bigl[ \frac 1 p \bigr], R \dif x_1 \oplus \cdots \oplus R\dif x_d \bigr) \to \Rep_\Gamma \bigl(R_\infty \bigl[ \frac 1 p \bigr] \bigr) \]
is fully faithful.

For the essential image part, we take $\calC$ to be the essential image of
\[ \Higgs^\tn\bigl( R \bigl[ \frac 1 p \bigr], (R \dif x_1 \oplus \cdots \oplus R\dif x_d)\{-1\} \bigr) \to \Vect\bigl( (R/A)_\Prism, \overline \calO \bigl[ \frac 1 p \bigr] \bigr), \]
and by the discussions above we see that $\Res_{\Spf(R)}$ is fully faithful on $\calC$. By definition, one sees that the dual basis $\partial_i$ of $\dif x_i$ satisfies
\[ \cdL_{R/\overline A}^\vee \subseteq \langle \partial_1,\ldots,\partial_n \rangle \subseteq J^{-1}\cdL_{R/\overline A}^\vee, \]
and $x_i$ are invertible in $R$.

In fact, $\Res_{\Spf(R)} \bigl( \calC \cap \Vect^{\sm{J'}} \bigl( (R/A)_\Prism, \overline \calO \bigl[ \frac 1 p \bigr] \bigr) \bigr)$ contains the essential image of
\[ \Higgs^\tn\bigl( R \bigl[ \frac 1 p \bigr], (J' \dif x_1 \oplus \cdots \oplus J' \dif x_d)\{-1\} \bigr) \to \Vect\bigl( (R/A)_\Prism^\perf, \overline \calO \bigl[ \frac 1 p \bigr] \bigr), \]
because the map
\[ \Higgs^\tn\bigl( R \bigl[ \frac 1 p \bigr], (J' \dif x_1 \oplus \cdots \oplus J' \dif x_d)\{-1\} \bigr) \to \Vect\bigl( (R/A)_\Prism, \overline \calO \bigl[ \frac 1 p \bigr] \bigr) \]
factors through
\[ \calC \cap \Vect^{\sm{J'}} \bigl( (R/A)_\Prism, \overline \calO \bigl[ \frac 1 p \bigr] \bigr) \]
by Corollary \ref{cor: simpson correspondence etale}. A direct calculations of $\theta_i$ shows that this essential image contains
\[ \Vect^{\sm{JJ'}}\bigl( (R/A)_\Prism^\perf, \overline \calO \bigl[ \frac 1 p \bigr] \bigr). \]
Also, by Corollary \ref{cor: simpson correspondence etale} again, the essential image of $\Res_{\Spf(R)} \bigl( \Vect^{\smet{JJ'}} \bigl( (R/A)_\Prism, \overline \calO \bigl[ \frac 1 p \bigr] \bigr) \bigr)$ is contained in the essential image of
\[ \Higgs^\tn\bigl( R \bigl[ \frac 1 p \bigr], (J' \dif x_1 \oplus \cdots \oplus J' \dif x_d)\{-1\} \bigr) \to \Vect\bigl( (R/A)_\Prism^\perf, \overline \calO \bigl[ \frac 1 p \bigr] \bigr), \]
and a direct calculation of $\theta_i$ shows that this essential image is contained in
\[ \Vect^{\sm{J'}}\bigl( (R/A)_\Prism^\perf, \overline \calO \bigl[ \frac 1 p \bigr] \bigr). \qedhere \]
\end{proof}

\begin{cor}\label{cor: HT comparison revisited}
Let $(A,I)$ be a bounded prism and $R/\overline A$ be a ring satisfying Condition \ref{cond: tor -1,0}. Assume that $R$ is $p$-torsion-free, $\cdL_{R/\overline A}$ is concentrated on degree 0, $p$-torsion-free, and that $\cdL_{R/\overline A} \bigl[ \frac 1 p \bigr]$ is finite projective over $R \bigl[ \frac 1 p \bigr]$. Then,
\begin{enumerate}[label=(\arabic*)]
\item for any $M \in \widehat D(R)$ and integer $n \ge 0$, the map
\[ M \otimes_R^\bL \bigwedge\nolimits_R^n(\cdL_{R/\overline A}) \to M \cotimes_R^\bL \cbigwedge_R^n(\cdL_{R/\overline A}) \]
is an isomorphism; moreover, if $M$ is concentrated on degree 0 of bounded $p^\infty$-torsion, then there exists $k \ge 0$ independent of $n$ and $M$, such that
\[ p^{nk}\hol^i \bigl( M \otimes_R^\bL \bigwedge\nolimits_R^n(\cdL_{R/\overline A}) \bigr) =0, \forall i \ne 0 \]
and, for $M[p^\infty] = M[p^{n_0}]$,
\[ \hol^0 \bigl( M \otimes_R^\bL \bigwedge\nolimits_R^n(\cdL_{R/\overline A}) \bigr) [p^\infty] = \hol^0 \bigl( M \otimes_R^\bL \bigwedge\nolimits_R^n(\cdL_{R/\overline A}) \bigr) [p^{nk+n_0}]. \]

\item the colimit in $D(R)$
\[ \colim_n \Fil_n(\overline\Prism_{R/A}) \]
is already derived $p$-adically complete; therefore, the Hodge--Tate filtration on $\overline\Prism_{R/A}$ is exhaustive in $D(R)$;

\item the Hodge--Tate filtration on the $\dE_\infty$-ring $\overline \Prism_{R/A} \bigl[ \frac 1 p \bigr]$ induces an isomorphism of cohomology rings
\[ \hol^*\bigl( \overline \Prism_{R/A} \bigl[ \frac 1 p \bigr] \bigr) \cong \bigwedge\nolimits^*_{R[\frac 1 p]} \bigl(\cdL_{R/\overline A} \bigl[ \frac 1 p \bigr] \bigr)\]
\end{enumerate}
\end{cor}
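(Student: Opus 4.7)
The plan is to first prove part (1), which provides the key technical control on the $p^\infty$-torsion of the derived exterior powers $\bigwedge^n_R(\cdL_{R/\overline A})$. Part (3) then follows from (1) by a spectral sequence argument, and part (2) is reduced to a cohomological comparison after inverting $p$.

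For part (1): by definition, $\cbigwedge^n_R(\cdL_{R/\overline A})$ is the derived $p$-adic completion of $\bigwedge^n_R(\cdL_{R/\overline A})$, so the desired isomorphism amounts to showing that $\bigwedge^n_R(\cdL_{R/\overline A})$ is itself derived $p$-complete, equivalently has bounded $p^\infty$-torsion on each cohomological degree, with bound growing at most like $p^{nk}$. To establish this, I would use the finite projectivity of $\cdL_{R/\overline A}[\frac 1 p]$: locally on $\Spec R$ we can find $x_1,\ldots,x_d \in R$ whose differentials form a basis of $\cdL_{R/\overline A}[\frac 1 p]$, giving a $\delta$-algebra $P = A\langle T_1,\ldots,T_d\rangle \to R$ (with $\delta(T_i)=0$) such that the induced map $R \cotimes_{\overline P} \cOmega_{\overline P/\overline A} \to \cdL_{R/\overline A}$ becomes an isomorphism after inverting $p$, with cokernel $Q$ annihilated by some $p^k$. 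An iterated Koszul-style analysis of exterior powers of the exact triangle $R \cotimes_{\overline P} \cOmega_{\overline P/\overline A} \to \cdL_{R/\overline A} \to Q$, together with Proposition \ref{prop: factor-through connection} (1), then yields the $p^{nk}$ bound uniformly in $n$.

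For part (3): by part (1), after inverting $p$ the graded pieces $\gr_n(\overline\Prism_{R/A})[\frac 1 p] \cong \bigwedge^n_{R[\frac 1 p]}(\cdL_{R/\overline A}[\frac 1 p])[-n]\{-n\}$ of the Hodge--Tate filtration are finite projective modules concentrated in cohomological degree $n$. Since distinct graded pieces live in distinct cohomological degrees, the spectral sequence associated to the Hodge--Tate filtration of $\overline\Prism_{R/A}[\frac 1 p]$ degenerates at $E_1$, giving an isomorphism of graded abelian groups $\hol^*\bigl((\Spf R/A)_\Prism, \overline\calO[\frac 1 p]\bigr) \cong \bigwedge^*_{R[\frac 1 p]}(\cdL_{R/\overline A}[\frac 1 p])\{-*\}$. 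The multiplicative structure matches the exterior product because the Hodge--Tate comparison is symmetric monoidal (Proposition \ref{prop: HT comparison}). For part (2): the fibre of $\colim_n \Fil_n(\overline\Prism_{R/A}) \to \overline\Prism_{R/A}$ is $\RHom_\dZ(\dZ[\frac 1 p], \colim_n \Fil_n)$, which is a $\dZ[\frac 1 p]$-module and hence equals itself after inverting $p$, so it suffices to show the map is an isomorphism after inverting $p$. Running the same spectral sequence argument on $\colim_n \Fil_n[\frac 1 p]$ (whose filtration has the same graded pieces, now degree-concentrated by part (1)) gives the same cohomology ring, and the comparison map respects filtrations and is the identity on graded pieces, so it induces an isomorphism on cohomology, hence is an isomorphism in $D(R[\frac 1 p])$.

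The main obstacle is the uniform bound $p^{nk}$ in part (1). The naive antisymmetrization map $\bigwedge^n L \to L^{\otimes^\bL n}$ only supplies a factor $n!$, which is insufficient for the exponential-in-$n$ control needed to make parts (2) and (3) go through. The crucial extra input is Proposition \ref{prop: factor-through connection} (1), applied to $\delta$-algebras chosen locally on $\Spec R$ adapted to the finite projective structure of $\cdL_{R/\overline A}[\frac 1 p]$; the globalization of these local choices, together with the combinatorics of Koszul filtrations on exterior powers of short exact sequences with bounded-torsion cokernels, is the technical heart of the argument.
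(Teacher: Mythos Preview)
Your argument for part (2) is circular. You correctly reduce to showing that $\colim_n \Fil_n\bigl[\frac 1 p\bigr] \to \overline\Prism_{R/A}\bigl[\frac 1 p\bigr]$ is an isomorphism, and then claim that since the map respects filtrations and is the identity on graded pieces, it is an isomorphism. But a filtered map which is an isomorphism on graded pieces is an isomorphism on underlying objects only when both filtrations are exhaustive; the Hodge--Tate filtration on $\overline\Prism_{R/A}$ is exhaustive only in $\cD(R)$ (i.e. after derived $p$-completion), and whether it is exhaustive in $D(R)$ is precisely the content of (2). Put differently, the spectral sequence of that filtration abuts to $\hol^*\bigl(\colim_n \Fil_n\bigl[\frac 1 p\bigr]\bigr)$, not to $\hol^*\bigl(\overline\Prism_{R/A}\bigl[\frac 1 p\bigr]\bigr)$, so you cannot use it to compute the latter. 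The paper's approach is entirely different: it realises $\overline\Prism_{R/A}$ as the filtered de Rham complex $\DR(\overline D,\dif)$ via Theorem \ref{theo: main dR realization revisit} (with $\overline D = \overline\Prism_{R/S}$), and then shows directly that the cofibre $T$ of $\colim_n \Fil_n(\DR(\overline D,\dif)) \to \DR(\overline D,\dif)$ vanishes. Using the factored differential $\dif_0$ of Proposition \ref{prop: factor-through connection} one constructs maps $f,g$ on the target module of the differential with $gf = p^{k+\ell}$, so that $T(\DR(\overline D,\dif))$ is a retract of $T(\DR(\overline D, f\dif))$; the latter factors as a tensor product with one factor a bounded complex of finite free modules, and the other $T(\DR(\overline D, p^k\dif_0))$, which vanishes by Proposition \ref{prop: factor-through connection} (4).

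For part (1), your invocation of Proposition \ref{prop: factor-through connection} (1) is misplaced: that result concerns the map $\cbigwedge^k_R(\cdL_{R/\overline S}[-1]) \to \cbigwedge^k_R(R \cotimes_{\overline S} \cOmega_{\overline S/\overline A})$ and yields only a $k!$ bound on torsion, which you yourself flag as insufficient, and the ``iterated Koszul-style analysis'' is not spelled out. The paper's argument for (1) is short, global (no Zariski localisation needed), and avoids any such combinatorics: pick a finite free $R$-module $F$ together with maps $q: F \to N := \cdL_{R/\overline A}$ and $s: N \to F$ satisfying $qs = p^k$ (take $q$ surjective after inverting $p$, use finite projectivity of $N\bigl[\frac 1 p\bigr]$ to get a section, and scale it into $F$). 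Applying the derived functor $G(X) := M \otimes_R^\bL \bigwedge\nolimits^n_R(X)$ gives $G(q)G(s) \cong G(p^k) \cong p^{nk}$ by functoriality, so $G(N)$ is a retract of $G(F)$ up to $p^{nk}$; since $G(F)$ is a finite direct sum of copies of $M$, the completeness of $G(N)$ and the torsion bounds follow immediately.
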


\begin{proof}
Note that (3) is a direct consequence of (1) and (2), so we only need to prove (1) and (2).

For (1), denote $N = \cdL_{R/\overline A}$. Let $F$ be a finite free module over $R$ such that we have a surjection $F \bigl[ \frac 1 p \bigr] \to N \bigl[ \frac 1 p \bigr]$ of adic modules. Multiply by a sufficient power of $p$ and take a lifting, we may assume that the surjection comes from a map $q: F \to N$ of $R$-modules. Now, since $N \bigl[ \frac 1 p \bigr]$ is finite projective over $R \bigl[ \frac 1 p \bigr]$, we may take a section $s'$ of the surjection above. The map $s'$ is continuous by Proposition \ref{prop: finitely generated adic modules}, so $s'$ is bounded, and we see that $s'(N)$ is contained in some $p^{-k} F \subseteq F \bigl[ \frac 1 p \bigr]$ for $k \ge 0$ (here we use that $R$ is $p$-torsion-free). Take $s = p^k s': N \to F$, and we see that $qs = p^k$. Denote the functor
\[ G(X) = M \otimes_R^\bL \bigwedge\nolimits_R^n(X), \]
and we may apply $G$ to get $G(q) G(s) \cong G(p^k) \cong p^{nk}$ (the last step is a routine argument of left Kan extensions).

To show that $G(N)$ is derived $p$-adically complete, we may apply the functor $T_p(-) = \RHom_{\mathbb Z}(\mathbb Z[p^{-1}], -)$ (see Lemma \ref{lem: t-structure on complete modules}), and we only need to show that $T_p(G(N)) =0$. Now that $G(F)$ is a direct sum of a finite number of copies of $M$, so $G(F)$ is derived $p$-adically complete, and $T_p(G(F)) =0$. Note that the composition $p^{nk}: T_p(G(N)) \to T_p(G(F)) \to T_p(G(N))$ is invertible (since these are modules over $\mathbb Z[p^{-1}]$). This shows that $T_p(G(N)) =0$.

Now assume that $M$ is concentrated on degree 0 of bounded $p^\infty$-torsion. Observe that
\[ G(F) = M \otimes_R^\bL \bigwedge\nolimits_R^n(F) \]
is a direct sum of a finite number of copies of $M$, therefore $G(F)$ is concentrated on degree 0 and with $G(F)[p^\infty] = G(F)[p^{n_0}]$. Therefore, using the equality $G(q)G(s) = p^{nk}$, we see that for $i \ne 0$ we see that $p^{nk} = 0$ on $\hol^i(G(N))$. For $x \in \hol^0(G(M))[p^\infty]$, we see that $p^{n_0} G(s)(x) =0$, so $p^{nk+n_0} x = G(q) (p^{n_0} G(s) x) =0$.

For (2), pick a $\delta$-algebra $S/A$ satisfying Condition \ref{cond: flat + tor 0} and a map $S \to R$ of $A$-algebras such that $R/\overline S$ satisfies Condition \ref{cond: tor -1}. Denote
\[ \overline D = \overline \Prism_{R/S}, N = \cdL_{R/\overline S}[-1], \Omega = R \cotimes_{\overline S} \cdL_{\overline S/\overline A}, \]
and the differentials $\dif: \overline D \to \overline D \cotimes_R \Omega\{-1\}, \dif_0: \overline D \to \overline D \cotimes_R N\{-1\}$ with $\dif_0$ defined in Proposition \ref{prop: factor-through connection}.

We introduce the following notations: denote $\Comp^{\ge 0} (\Mod_{R,\Flat,\fil})$ the additive $1$-category of complexes $X^\bullet$ over $\Mod_{R,\Flat,\fil}$ with $X^n=0$ for $n<0$. For $X^\bullet \in \Comp^{\ge 0} (\Mod_{R,\Flat,\fil})$, denote $T(X^\bullet)$ the complex with terms
\[ T(X^\bullet)^n = X^n / \bigcup_{k \ge 0} \Fil_k(X^n), \]
and for $a \in R \cap R\bigl[ \frac 1 p \bigr]^\times$, define $S_a(X^\bullet)$ as $S_a(X^\bullet)^n = X^n$, but with differential $a \dif_X$. We always have the natural map $X^\bullet \to S_a(X^\bullet)$ given by multiplication by $a^n$ on degree $n$. Recall that in $D(R)$,
\[ T(X^\bullet)^n \cong \cofib( M \to \widehat M ) = T_p(M)[-1], \]
so $T(X^\bullet)^n$ are modules over $R \bigl[ \frac 1 p \bigr]$. Therefore we see that the natural map $X^\bullet \to S_a(X^\bullet)$ induces an isomorphism on $T(X^\bullet)^n \to T(S_a(X^\bullet))^n$, hence an isomorphism of complexes $T(X^\bullet) \to T(S_a(X^\bullet))$.

We use $U(X^\bullet) \in \Comp^{\ge 0}(R)$ to denote $X^\bullet$ with the structure of filtration forgotten, and $L(X^\bullet) \in \Comp^{\ge 0}(R)$ to be the complex with terms
\[ L(X^\bullet)^n = \bigcup_{k \ge 0} \Fil_k(X^n), \]
then $T(X^\bullet) = \cofib(L(X^\bullet) \to U(X^\bullet))$. If $Y^\bullet \in \Comp^{\ge 0}(R)$ such that the filtration on each $Y^i$ is finite, and the graded pieces are finite projective over $R$, then it is easy to see that for any $X^\bullet \in \Comp^{\ge 0} (\Mod_{R,\Flat,\fil})$, we have natural isomorphisms
\[ U(X^\bullet) \otimes_R U(Y^\bullet) \cong U(X^\bullet \otimes_R Y^\bullet), \quad L(X^\bullet) \otimes_R L(Y^\bullet) \cong L(X^\bullet \otimes_R Y^\bullet), \]
and the map $L(Y^\bullet) \to U(Y^\bullet)$ is an isomorphism of complexes. Therefore,
\[ T(X^\bullet \otimes_R Y^\bullet) \cong T(X^\bullet) \otimes_R L(Y^\bullet). \]
If furthermore $L(Y^\bullet)$ is bounded, then in $D(R)$ we have
\[ T(X^\bullet \otimes_R Y^\bullet) \cong T(X^\bullet) \otimes_R^\bL L(Y^\bullet). \]

In $D(R)$, $T(X^\bullet)$ is the cofibre of the map
\[ \colim \Fil_k(X^\bullet) \to \ccolim \Fil_k(X^\bullet). \]
We wish to show that $T(\DR(\overline D, \dif)) =0$. For this, consider the following exact sequence of $p$-torsion-free $p$-complete $R$-modules
\[ 0 \to N \xrightarrow i \Omega \xrightarrow \pi \cdL_{R/\overline A} \to 0. \]
After inverting $p$, we get a split exact sequence of adic modules (since $\cdL_{R/\overline A} \bigl[ \frac 1 p \bigr]$ is finite projective over $R\bigl[ \frac 1 p \bigr]$). Therefore, the same as above, we may pick an integer $\ell \ge 0$ and maps $s': \cdL_{R/\overline A} \to \Omega$ and $q': \Omega \to N$ with
\[ q's'=0, \pi s' = q'i = iq'+s'\pi = p^\ell. \]
Now take $f: \Omega \to N \oplus F$ and $g: N \oplus F \to \Omega $(recall that $F$ is the finite free module we picked above for $\cdL_{R/\overline A}$, and we have maps $s: \cdL_{R/\overline A} \to F$ and $q: F \to \cdL_{R/\overline A}$ with $qs = p^k$) by 
\[ f= (q',s\pi), \quad g = (p^ki, s'q), \]
and we get $gf = p^k iq' + s'(qs)\pi = p^k(iq'+s'\pi) = p^{k+\ell}$. Denote $a= p^{k+\ell}$, and we get a sequence in $\Comp^{\ge 0} (\Mod_{R,\Flat,\fil})$ as follows:
\[ \DR(\overline D, \dif) \xrightarrow{f_*} \DR(\overline D, f\dif) \xrightarrow{g_*} \DR(\overline D, a\dif). \]
Under the identification $\DR(\overline D, a\dif) \cong S_a(\DR(\overline D,\dif))$, the map $g_*f_*$ is identified with the natural map $\id \to S_a$, hence inducing an isomorphism
\[ g_* f_*: T(\DR(\overline D,\dif) \to \DR(\overline D, a\dif)). \]
Therefore, $T(\DR(\overline D,\dif))$ is a retract of $T(\DR(\overline D, f\dif))$, and we only need to show that $T(\DR(\overline D, f\dif))$ is acyclic. Now, $f\dif: \overline D \to \overline D \cotimes_R (N \oplus F)$ is given by $(q' i \dif_0, s\pi i \dif_0) = (p^k \dif_0, 0)$. Therefore we get a tensor product
\[ \DR(\overline D, f\dif) \cong \DR(\overline D, p^k \dif_0) \otimes_R \DR(R, 0: R \to F) \]
(where $\DR(R, 0: R \to F)$ is a bounded complex all of whose terms are equipped with finite filtrations with terms finite free over $R$). By the discussions of the last paragraph, we see that in $D(R)$
\[ T(\DR(\overline D, p^k \dif_0) \cotimes_R \DR(R, 0: R \to F)) \cong T(\DR(\overline D, p^k\dif_0)) \otimes_R^\bL L(\DR(R, 0: R \to F)). \]
We have $T(\DR(\overline D, p^k\dif_0)) = T(S_{p^k} (\DR(\overline D, p^k\dif_0))) \cong T(\DR(\overline D,\dif_0))$. By Proposition \ref{prop: factor-through connection} (4)), the complexes $\DR(\overline D,\dif_0) / \Fil_m(\DR(\overline D,\dif_0))$ are exact for all $m \ge 0$. Take the filtered colimits, one sees that $T(\DR(\overline D,\dif_0))$ is acyclic, so $T(\DR(\overline D, f\dif))$ is acyclic. Therefore, $T(\DR(\overline D, \dif))$ as a retract of $T(\DR(\overline D, f\dif))$ is acyclic, as desired.
\end{proof}

\section{Prismatic non-abelian Hodge theory}

The goal of this section is to prove Theorem \ref{intro: prism non abelian hodge of geo val rings} and Theorem \ref{intro: main theorem p adic Simpson}.

\subsection{Prismatic non-abelian Hodge theory of geometric valuation rings}


The valuation rings plays a central role in the $p$-adic geometry (or even all $p$-adic mathematics). We will use our theory to understand rational Hodge--Tate prismatic crystals of valuation rings which `comes from geometry'. These special valuation rings were introduced and studied by Tongmu He in \cite{He_2025} and \cite{he2025padicgaloiscohomologyvaluation}.

In the following, we assume that all valuation rings are of characteristic $0$ (i.e. $p$ is non-zero). We begin with the following theorem taking from \cite[Theorem 3.8]{Bouis_2023}.

\begin{theo}\label{theo: valuation rings are cartier smooth}
    Let ${C}^+$ be an integral perfectoid ring such that the underlying ring is a valuation ring, and let
    $f:C^+\to E^+$ be an extension of valuation rings. Then $f$ is $p$-Cartier smooth in the sense of \cite[Definition 2.5 (2)]{Bouis_2023}.
\end{theo}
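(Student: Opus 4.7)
The plan is to verify the three defining conditions of $p$-Cartier smoothness from \cite[Definition 2.5(2)]{Bouis_2023} for $f: C^+ \to E^+$, namely: (a) the mod-$p$ cotangent complex $\cdL_{E^+/C^+} \otimes_{\dZ}^{\bL} \dZ/p$ is concentrated in degree zero and flat over $E^+/p$; (b) the Frobenius on $E^+/p$ is flat; and (c) the inverse Cartier map induces an isomorphism of graded algebras $\Omega^{*}_{(E^+/p)/(C^+/p)} \to \hol^{*}(\Omega^{\bullet}_{(E^+/p)/(C^+/p)})$.

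Condition (a) will be obtained from the transitivity triangle $E^+ \otimes_{C^+}^{\bL} \cdL_{C^+/\dZ_p} \to \cdL_{E^+/\dZ_p} \to \cdL_{E^+/C^+}$. Since $C^+$ is perfectoid, the $p$-completion of $\cdL_{C^+/\dZ_p}$ is identified with a Breuil--Kisin twist of $C^+/p$ placed in degree $1$ and is $p$-completely flat over $C^+$, so the question reduces to showing that $\cdL_{E^+/\dZ_p} \otimes_{\dZ}^{\bL} \dZ/p$ is concentrated in degree zero with $(E^+/p)$-flat $\hol^0$. This is precisely the content of the Gabber--Ramero theorem on cotangent complexes of valuation rings, which gives that $\dL_{V/\dZ}$ is concentrated in degree zero with torsion-free $\Omega^1$ for any valuation ring $V$. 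Condition (b) is the elementary fact that the Frobenius of any valuation ring of characteristic $p$ (such as $E^+/p$) is flat; this follows directly from the fact that $E^+/p$ is torsion-free over its Frobenius-image subring, which in turn is immediate from the total ordering of $E^+/p$ by divisibility.

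The main obstacle is condition (c), the Cartier isomorphism for the map $(C^+/p) \to (E^+/p)$ of characteristic-$p$ valuation rings. Because $C^+/p$ is perfect (as $C^+$ is perfectoid), the relative Frobenius of $E^+/p$ over $C^+/p$ agrees with the absolute Frobenius, so one reduces to the absolute Cartier isomorphism for the characteristic-$p$ valuation ring $E^+/p$. This is the non-trivial theorem of Gabber--Ramero on Cartier isomorphisms for valuation rings, which can be approached either via filtered colimit techniques (realizing $E^+/p$ as a colimit of smooth $\dF_p$-subalgebras along maps compatible with the de Rham complex, and appealing to the classical Cartier isomorphism for smooth algebras) or via perfectoid descent arguments (passing to a perfectoid cover on which the de Rham complex is trivially acyclic, and descending the isomorphism through faithfully flat descent). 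Assembling (a), (b) and (c) then yields the desired $p$-Cartier smoothness of $f: C^+ \to E^+$.
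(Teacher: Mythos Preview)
The paper does not actually prove this theorem: it simply records it as \cite[Theorem 3.8]{Bouis_2023} and moves on to the corollary about $p$-cotangent smoothness. So there is nothing to compare at the level of the paper's own argument.

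Your sketch is essentially the strategy Bouis uses in the cited reference: condition (a) on the cotangent complex is exactly the Gabber--Ramero theorem that $\dL_{W/V}$ is a flat module in degree $0$ for any extension of valuation rings $V \to W$ (you can cite this directly for $C^+ \to E^+$ rather than going through the transitivity triangle and the perfectoid computation of $\cdL_{C^+/\dZ_p}$), and condition (c) is Gabber's theorem on the Cartier isomorphism for valuation rings, with the reduction to the absolute case over $\dF_p$ via perfectness of $C^+/p$ being correct.

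One point to fix: your condition (b) is not part of Bouis's definition of $p$-Cartier smoothness (which only asks for (a) and (c)), and in any case your justification is wrong as stated. The ring $E^+/p$ is \emph{not} a valuation ring: it is not a domain whenever $p$ is neither zero nor a unit in $E^+$ (e.g.\ $E^+ = \dZ_p[p^{1/p}]$ gives $E^+/p \cong \dF_p[t]/(t^p)$). What survives is that ideals of $E^+/p$ are still totally ordered by inclusion, but you should drop (b) entirely since it is not required.
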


By the definition of $p$-cotangent smoothness, we immediately get the following corollary:

\begin{cor}
    Keep the notation in Theorem \ref{theo: valuation rings are cartier smooth}, $f$ is $p$-cotangent smooth, i.e. the cotangent complex
    \[\cdL_{E^+/C^+}\]
    is $p$-completely flat over $E^+$. In addition, if $E^+$ is $p$-adically complete, $f$ satisfies Condition \ref{cond: flat + tor 0}.
\end{cor}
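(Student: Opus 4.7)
Plan: The corollary contains two claims. The first is essentially a tautology: by the definition of $p$-Cartier smoothness in \cite[Definition 2.5 (2)]{Bouis_2023}, $p$-Cartier smoothness subsumes the condition that $\cdL_{E^+/C^+}$ be $p$-completely flat over $E^+$, which is exactly $p$-cotangent smoothness. Thus Theorem \ref{theo: valuation rings are cartier smooth} immediately gives the first sentence of the corollary; I would simply cite this translation of definitions and move on.

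For the second claim, write $A = \AAinf(C^+)$ with $I = \ker\theta$, so $\overline A = C^+$. Condition \ref{cond: flat + tor 0} requires a bounded prism $S$ over $A$ with $\overline S = E^+$ satisfying two flatness conditions: $S/A$ is $(p,I)$-completely flat, and $\cdL_{S/A}$ is $p$-completely flat over $S$. By Proposition \ref{prop: (p,I)-completely flat}, the first reduces on the Hodge--Tate side to the claim that $E^+$ is $p$-completely flat over $C^+$. For this, I would invoke the general fact that any torsion-free module over a valuation ring is flat: since $E^+$ is a ring extension of the valuation ring $C^+$, it is in particular a torsion-free $C^+$-module, hence flat. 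Combined with the $p$-adic completeness hypothesis on $E^+$, Proposition \ref{prop: p-completely flat} then yields $p$-complete flatness in the required derived sense.

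For the cotangent condition, the key input is the base-change property of the completed cotangent complex applied to the $(p,I)$-completely flat extension $A \to S$: one has a canonical identification $\cdL_{S/A} \otimes_S^\bL \overline S \simeq \cdL_{\overline S / \overline A} = \cdL_{E^+/C^+}$. The right-hand side is $p$-completely flat over $E^+$ by the first claim, so by Proposition-Definition \ref{prop-defi: complete flatness} together with the criterion for $p$-complete flatness checked after further reduction modulo $(p,I)$, this lifts to $p$-complete flatness of $\cdL_{S/A}$ itself over $S$.

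The main technical obstacle is the existence of a bounded prism lift $S$ of $E^+$ over $A$. Given that $E^+$ is $p$-adically complete, $p$-completely flat over $C^+$, and has $p$-completely flat cotangent complex over $C^+$, such an $S$ can be produced by derived deformation theory, since the obstruction classes live in groups controlled by $\cdL_{E^+/C^+}$ and vanish under the flatness assumption. The $p$-adic completeness hypothesis in the corollary enters exactly here: it is needed to place $E^+$ within the bounded-prism framework in the first place, and is precisely what allows Proposition \ref{prop: p-completely flat} to translate the derived-complete flatness back into the classical form demanded by Condition \ref{cond: flat + tor 0}.
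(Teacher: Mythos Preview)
Your treatment of the first claim is correct and matches the paper: $p$-Cartier smoothness in Bouis' sense includes $p$-cotangent smoothness by definition, so this is immediate. Indeed, the paper offers no proof at all beyond the sentence ``By the definition of $p$-cotangent smoothness, we immediately get the following corollary.''

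For the second claim you have over-read what is being asserted. Condition~\ref{cond: flat + tor 0} is being invoked directly for the map $f\colon C^+ \to E^+$ itself (with the $(p,I)$-adic condition degenerating to the $p$-adic one), not for some prism lift of $E^+$ over $\AAinf(C^+)$. Under this reading, the condition asks only that $E^+$ be $p$-completely flat over $C^+$ and that $\cdL_{E^+/C^+}$ be $p$-completely flat over $E^+$. The former holds because any extension of valuation rings is torsion-free hence flat, and the $p$-adic completeness hypothesis on $E^+$ upgrades this to $p$-complete flatness; the latter is exactly the first claim. These are precisely the observations in your second and third paragraphs, and they already suffice.

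Your fourth paragraph therefore introduces an unnecessary and unjustified step: constructing a bounded prism $S$ over $\AAinf(C^+)$ lifting $E^+$ via deformation theory. The paper neither asserts nor uses such a lift, and the corollary does not require it. Moreover, the deformation argument you sketch is not solid as stated: lifting $E^+$ to a $\delta$-ring over $\AAinf(C^+)$ through successive square-zero thickenings requires vanishing of obstruction classes in $\Ext$-groups of $\cdL_{E^+/C^+}$, and $p$-complete flatness of the cotangent complex does not by itself kill these (you would need something like projectivity, not just flatness, for an arbitrary valuation ring extension). So this detour both overshoots the target and rests on a step that would need substantially more work to make rigorous.
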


Now we fix a complete algebraic closed $p$-adic non-archimedean field $\mathbf{C}$. Define $\calO_{\bfC}=\mathbf C^\circ$ as its valuation ring with maximal ideal $\mathfrak m = \mathbf C^{\circ\circ}$. Our definition of geometric valuation rings is those $p$-complete valuation rings which have almost finitely generated complete differential modules. We first recall some basic commutative algebra of valuation rings.

\begin{theo}\label{theo: almost finitely generated}
    Let $E^+$ be a valuation ring which is $p$-complete. Then:
    \begin{enumerate}[label=(\arabic*)]
        \item $E:=E^+[\frac{1}{p}]$ is a field (and is equipped with a natural structure of a Tate ring);
        \item Any $p$-complete and $p$-torsion free $E^+$-module $M$ such that
\[ \dim_{E} M \bigl[ \frac 1 p \bigr] < +\infty\]
is almost finitely generated with respect to $\sqrt{pE^+} = \bigcup_{n >0} p^{1/n} E^+$. Moreover, for any set $X$ of topological generators of $M$ and $n \in \mathbb Z_+$, we may find $x_1,\ldots,x_d \in X$ such that $M/\langle x_1,\ldots,x_d\rangle$ is annihilated by $p^{1/n}$, and that $x_1,\ldots,x_d$ form a basis of $M \bigl[ \frac 1 p \bigr]$ over $E$.
    \end{enumerate}
\end{theo}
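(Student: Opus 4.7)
The plan is to prove (1) using $p$-adic separation, then attack (2) by induction on $d = \dim_E M\bigl[\frac 1 p\bigr]$ with careful tracking of a shrinking tolerance.

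For (1), since $E^+$ is $p$-adically complete (hence separated), $\bigcap_N p^N E^+ = 0$, so every nonzero $x \in E^+$ fails to lie in $p^N E^+$ for some $N$. In the valuation ring $E^+$ this means $v(x) < N\,v(p)$, which forces $x \mid p^N$: writing $p^N = xz$ with $z \in E^+$ yields $x^{-1} = z/p^N \in E := E^+\bigl[\frac 1 p\bigr]$. Hence $E$ is a field, and $E^+$ is visibly a ring of definition for $E$ with $p$ a pseudo-uniformizer, giving the Tate structure.

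For (2), first I would observe that $M\bigl[\frac 1 p\bigr]$ is a $d$-dimensional adic $E$-vector space with module of definition $M$. Since $E$ is a complete non-archimedean field, every finite-dimensional $E$-subspace of $M\bigl[\frac 1 p\bigr]$ is closed in its natural topology (a standard fact of non-archimedean functional analysis). Therefore the $E$-span of the image of $X$ in $M\bigl[\frac 1 p\bigr]$ is both dense (by topological generation) and a closed finite-dimensional subspace, hence equals $M\bigl[\frac 1 p\bigr]$; in particular the image of $X$ contains $E$-bases. I would then proceed by induction on $d$, proving that for every $n$ there exist $x_1,\ldots,x_d \in X$ forming an $E$-basis of $M\bigl[\frac 1 p\bigr]$ with $p^{1/n} M \subseteq Vx_1 + \cdots + Vx_d$, writing $V := E^+$. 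For the base case $d = 1$, $M$ identifies with a bounded sub-$V$-module of $E$, so $\alpha := \inf_{m \in M \setminus \{0\}} v(m)$ is well-defined; the set $\{y \in M : v(y) \ge \alpha + v(p)/n\}$ is a $p$-adically closed proper $V$-submodule of $M$ (its complement is open by the ultrametric inequality), so topological generation forbids $X$ from being contained in it, whence some $x \in X$ satisfies $v(x) \le \alpha + v(p)/n$, yielding $p^{1/n} M \subseteq Vx$.

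For the inductive step, given target tolerance $v(p)/n$ I would work with the halved tolerance $v(p)/(2n)$ at this level. Pick a nonzero continuous $E$-linear functional $\pi : M\bigl[\frac 1 p\bigr] \twoheadrightarrow E$ rescaled so that $\pi(M) \subseteq V$; the base-case argument applied to $\pi(X) \subseteq \pi(M)$ produces $x_d \in X$ with $v(\pi(x_d)) \le \inf_{m \in M} v(\pi(m)) + v(p)/(2n)$, which forces any $y = c\,x_d \in M \cap Ex_d$ to have $v(c) \ge -v(p)/(2n)$, so $p^{1/(2n)}(M \cap Ex_d) \subseteq Vx_d$. Apply the inductive hypothesis with tolerance $v(p)/(2n)$ to the $(d-1)$-dimensional quotient $M\bigl[\frac 1 p\bigr]/Ex_d$ (in which the image of $X$ still topologically generates the image of $M$, using openness of the quotient map in finite dimensions) to get $x_1,\ldots,x_{d-1} \in X$ whose images form an $E$-basis of the quotient and satisfy the analogous almost inclusion there. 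Lifting an arbitrary $m \in M$ through this inclusion and absorbing the residual element of $M \cap Ex_d$ into $Vx_d$ yields $p^{1/n} m \in \sum_i Vx_i$. The main obstacle is precisely this compounding of tolerances across the $d$ levels of recursion: halving at each step (so the bottom of the recursion uses tolerance $v(p)/(2^d n)$) keeps things under control, but one must verify that the quotient maps, the lift, and the basis property remain simultaneously compatible throughout; the cleanest organisation is to fix a complete flag $0 \subset V_1 \subset \cdots \subset V_d = M\bigl[\frac 1 p\bigr]$ in advance and choose $x_i \in X$ at step $i$ with controlled projection onto $V_i/V_{i-1}$.
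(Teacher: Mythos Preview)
Your argument for (1) is the same as the paper's. For (2), your approach is correct but genuinely different from the paper's. You run an induction on $d$ via successive one-dimensional quotients, halving the tolerance at each stage; the paper instead passes in one shot to the determinant line $N = \bigwedge_E^d(M[\tfrac 1 p])$, lets $N_0 \subseteq N$ be the $E^+$-submodule generated by all wedges $x_1 \wedge \cdots \wedge x_d$ with $x_i \in X$, checks that $N_0$ is a module of definition for $N$, applies the one-dimensional case once to find a single wedge $x_1 \wedge \cdots \wedge x_d$ with $p^{1/n} N_0 \subseteq E^+(x_1 \wedge \cdots \wedge x_d)$, and then reads off $\lambda_i \in p^{-1/n}E^+$ for any $m = \sum \lambda_i x_i \in M$ via the Cramer-type identity $x_1 \wedge \cdots \wedge m \wedge \cdots \wedge x_d = \lambda_i\, x_1 \wedge \cdots \wedge x_d$.

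The determinant trick buys you the sharp tolerance $p^{1/n}$ with no compounding and avoids checking that the quotient modules $\bar M$ inherit the hypotheses ($p$-completeness, $p$-torsion-freeness, topological generation by the image of $X$); on the other hand it requires knowing that wedge products of bounded subsets stay bounded in finite dimensions. Your inductive route is more elementary but those quotient verifications are load-bearing and should be stated explicitly. Also, your worry about compounding is overstated: since the claim being proved is ``for every $n$ there exist $x_i$'', you simply invoke the inductive hypothesis with parameter $2n$ --- there is nothing to keep ``under control'' beyond that, and the flag reformulation at the end is unnecessary.
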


\begin{proof}
    For (1), redefine $E$ to be the fractional field of $E^+$. We only need to prove that for any $f\in E$, $p^Nf\in E^+$ for some $N\geq 0$. We may assume that $f\not\in E^+$, or we can choose $N=0$. Thus, $f^{-1}\in E^+$. Since $\bigcap_{n\geq 1}p^nE^+=0$, there exists an $N\geq 1$ such that $f^{-1}\not \in p^NE^+$. This implies $p^N\in f^{-1}E^+$ as $E^+$ is a valuation ring.

    For (2), Let $M/E^+$ be a $p$-complete and $p$-torsion free module such that
\[\dim_{E}M \bigl[ \frac 1 p \bigr]<+\infty.\]
    
    First assume that $\dim_{E} M \bigl[ \frac 1 p \bigr]=1$, and we may identify $M$ with a submodule of $E$. By Proposition \ref{prop: finitely generated adic modules}, we see that the map $M \bigl[ \frac 1 p \bigr] \to E$ is a homeomorphism, so $M \subseteq E$ is open and bounded. Fix $n \in \mathbb Z_+$. Take the greatest integer $m$ such that $M \subseteq p^{m/n} E^+$. Such integer exists because $M$ is open and bounded. Therefore, since $X$ is a set of topological generators of $M$, we see that $X \nsubseteq p^{(m+1)/n} E^+$. Take $x \in X$ with $x \in X \setminus p^{(m+1)/n} E^+$, and we see that $p^{(m+1)/n} \in xE^+$, and we get $M \subseteq p^{-1/n} x E^+$. Taking $x_1=x$ would suffice.

     For general $M$, denote
\[ d= \dim_{E}M \bigl[ \frac 1 p \bigr]. \]
Take $N = \bigwedge_{E}^d \bigl( M \bigl[ \frac 1 p \bigr] \bigr)$, and $N_0$ the submodule of $N$ generated by all elements of form $x_1 \wedge \cdots \wedge x_d$ for $x_i \in X$. By Proposition \ref{prop: finitely generated adic modules}, the finite-dimensional vector space $\bigl( M \bigl[ \frac 1 p \bigr] \bigr)^{\otimes n/E}$ agrees with the completed tensor product (by the universal property), and the map
\[ \bigl( M \bigl[ \frac 1 p \bigr] \bigr)^{\cotimes n/E} \cong \bigl( M \bigl[ \frac 1 p \bigr] \bigr)^{\otimes n/E} \to N \]
is bounded. It follows that $N_0$ is bounded in $N$. Also, since $N$ is one-dimensional, it is clear that $N_0$ is open in $N$, so $N_0$ is a module of definition for $N$.

By the last paragraph applied to $N_0$, for any $n \in \mathbb Z_+$, there exists $x_1,\ldots,x_d \in X$ such that $N_0 / E^+ (x_1 \wedge \cdots \wedge x_d)$ is annihilated by $p^{1/n}$. In particular, $x_1 \wedge \cdots \wedge x_d$ is nonzero in $N$, so they form a basis of $M \bigl[ \frac 1 p \bigr]$.

For any $x \in M$, write
\[ x = \sum_{i=1}^d \lambda_i x_i, \]
We see that in $N$,
\[ N_0 \ni x_1 \wedge \cdots \wedge x_{i-1} \wedge x \wedge x_{i+1} \wedge \cdots \wedge x_d = \lambda_i x_1 \wedge \cdots \wedge x_d. \]
By our assumptions on $x_1 \wedge \cdots \wedge x_d$, we see that all $\lambda_i \in p^{-1/n} E^+$, so $M / \langle x_1,\ldots,x_d \rangle$ is annihilated by $p^{1/n}$.
\end{proof}

\begin{lemma}\label{lemma: more on val}
    Let $E^+$ be a $p$-complete valuation ring over $O$. Then the following holds.
    \begin{enumerate}[label=(\arabic*)]
        \item The ring $E^\circ$ is the localization of $E^+$ at the prime ideal $\sqrt{pE^+}$.
        \item The inclusion $E^+\to E^\circ$ is an almost isomorphism with respect to the ideal $\sqrt{pE^+} = \bigcup_{n>0}p^{\frac{1}{n}}E^+$. In particular, $E^\circ$ is bounded in $E$.
    \end{enumerate}
    In particular, the map $E^\circ \cotimes_{E^+} \cdL_{E^+/O} \to \cdL_{E^\circ/O}$ is an isomorphism. (Observe that the map $E^\circ \otimes_{E^+} \cdL_{E^+/O} \to E^\circ \cotimes_{E^+} \cdL_{E^+/O}$ is also an isomorphism because $E^\circ/E^+$ is $p$-torsion, or because of Corollary \ref{cor: HT comparison revisited} (1).)
\end{lemma}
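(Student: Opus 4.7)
The plan is to prove the three assertions by translating everything into statements about the valuation $v$ on $E$ extending the one on $E^+$, and then to deduce the cotangent-complex assertion by invoking the fact that a localization has vanishing cotangent complex. Write $\mathfrak p = \sqrt{pE^+} = \bigcup_{n\ge 1} p^{1/n}E^+$, which is a prime ideal since prime ideals of a valuation ring are totally ordered; equivalently, $b \in E^+ \setminus \mathfrak p$ iff $v(b) < v(p)/n$ for every $n \ge 1$.

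For (1), I would argue $E^\circ = E^+_{\mathfrak p}$ by two-sided inclusion. For ``$\supseteq$'', given $a/b$ with $a \in E^+$ and $b \notin \mathfrak p$, the inequality $v(b) < v(p)/n$ for every $n$ forces $nv(b) < v(p)$ for every $n$, so that $v((a/b)^n) \ge -nv(b) > -v(p)$, i.e.\ $\{(a/b)^n\} \subseteq p^{-1}E^+$, hence $a/b \in E^\circ$. For ``$\subseteq$'', take $x \in E^\circ \setminus E^+$; then $v(x)<0$ and $x^{-1}\in E^+$. Topological boundedness gives some $N$ with $nv(x^{-1}) \le Nv(p)$ for all $n$, so $v(x^{-1}) \le Nv(p)/n$ for all $n$. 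One then rules out $v(x^{-1}) = v(p)/m$ for any $m$: otherwise $x^{-1}$ differs from $p^{1/m}$ by a unit and the powers $x^n$ grow like $p^{-n/m}$, contradicting boundedness. Hence $v(x^{-1}) < v(p)/n$ for all $n$, so $x^{-1} \notin \mathfrak p$, and $x = 1/x^{-1} \in E^+_{\mathfrak p}$.

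For (2), I would show directly that $\mathfrak p \cdot (E^+_{\mathfrak p}/E^+) = 0$. Any $\pi \in \mathfrak p$ is of the form $p^{1/n}u$ with $u \in E^+$; for $a/b$ with $b \notin \mathfrak p$, the valuation of $\pi \cdot a/b = p^{1/n}ua/b$ equals $v(p)/n + v(u) + v(a) - v(b) > 0$ since $v(b) < v(p)/n$ and $v(u),v(a) \ge 0$. Therefore $\pi \cdot a/b \in E^+$, proving that $E^+ \to E^\circ$ is an almost isomorphism with respect to $\sqrt{pE^+}$. Taking $n=1$ yields $pE^\circ \subseteq E^+$, so $E^\circ \subseteq p^{-1}E^+$ is bounded in $E$.

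For the final assertion on cotangent complexes, the point is that $E^+ \to E^\circ = E^+_{\mathfrak p}$ is a localization, hence flat and epimorphic in the category of rings. In particular $\Omega_{E^\circ/E^+} = 0$ and $\dL_{E^\circ/E^+} = 0$ in $D(E^\circ)$, and so also $\cdL_{E^\circ/E^+} = 0$. The transitivity triangle for $O \to E^+ \to E^\circ$ then yields $\cdL_{E^\circ/O} \cong E^\circ \cotimes_{E^+}^{\bL} \cdL_{E^+/O}$, and since $\cdL_{E^+/O}$ is $p$-completely flat by Theorem \ref{theo: valuation rings are cartier smooth} and $E^\circ$ is flat over $E^+$, the derived and underived (completed) tensor products coincide. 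The most delicate step is the strict-inequality argument in (1), i.e.\ ruling out $v(x^{-1}) = v(p)/m$ for some $m$; aside from that, the proof is a straightforward unwinding of the definitions.
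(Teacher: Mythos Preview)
Your proof is correct, and the overall strategy---translate to valuation inequalities for (1) and (2), then use vanishing of the cotangent complex for a localization---is sound. Two remarks.

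First, in the ``$\subseteq$'' direction of (1), the step you flag as most delicate is in fact unnecessary. From $nv(x^{-1}) \le Nv(p)$ for all $n$ you can argue directly: if $v(x^{-1}) \ge v(p)/m$ for some $m$, then taking $n > Nm$ gives $nv(x^{-1}) \ge (n/m)v(p) > Nv(p)$, a contradiction. So you get the strict inequality $v(x^{-1}) < v(p)/m$ for all $m$ immediately, without treating the equality case separately. Your argument is not wrong, just longer than it needs to be.

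Second, the paper takes a slightly different route for (1) ``$\subseteq$'': it observes that for $f \in E^\circ \setminus E^+$ one has $f^{-1} \in E^+ \subseteq E^\circ$, so $f^{-1}$ is a unit in $E^\circ$ and hence $f^{-1} \notin E^{\circ\circ}$; then it uses the identification $E^{\circ\circ} = \sqrt{pE^+}$ (which is straightforward given that $p^{1/n} \in O$) to conclude $f^{-1} \notin \mathfrak p$. This packages your valuation computation into a single conceptual statement. The paper does not spell out (2) or the cotangent-complex claim; your arguments for those parts are correct and fill in what the paper leaves implicit.
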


\begin{proof}
    Radical ideals in $E^+$ are prime, so $\mathfrak p := \sqrt{pE^+}$ is automatically prime. We first claim that for any $f\in E^+\setminus\mathfrak p$, $f^{-1}$ is power bounded. A direct calculations shows that for all $n \in \mathbb Z_+$,
    \[ pf^{-n} = (p^{-1/n}f)^{-n} \in E^+. \]
    
    On the other hand, for $f \in E^\circ$, if $f \notin E^+$, then $f^{-1} \in E^+$. It follows that $f$ is invertible in $E^\circ$, so $f^{-1} \notin E^{\circ\circ} = \sqrt{pE^+}$. Therefore, $f = (f^{-1})^{-1} \in E^+_\mathfrak p$.
\end{proof}

We now define geometric valuation rings.

\begin{defi}\label{defi:geo val ring}
    A \emph{geometric valuation ring} over $O$ is a $p$-complete valuation ring $\calK^+$ over $O$ such that
    \[\cOmega_{\calK^+/O}[\frac{1}{p}]\]
    is of finite dimension over $\calK=\calK^+$ (note that $\calK$ is a field by Theorem \ref{theo: almost finitely generated}).
\end{defi}

\begin{rmk}
    TBA. Explain the difference between our definition and He's definition.
\end{rmk}

Our next goal is to consider the geometric Sen theory of geometric valuation rings, which we will use to decide the essential image of the restriction functor.
Let us recall some calculation of He (cf. \cite{he2025padicgaloiscohomologyvaluation}).

Fix a geometric valuation ring $\calK^+/O$ with fractional field $\calK$. Recall the notation $\calK^\circ$ from Lemma \ref{lemma: more on val}. Define $\calK^{\circ\circ}$ as the ideal of topologically nilpotent elements.

We are interested in the Sen theory of $\calK^+$. First, we need to construct the normalized trace of a Kummer tower. Recall the following theorem from \cite{he2025padicgaloiscohomologyvaluation}.

\begin{theo}\label{theo: senable val}
    Let $t_1,t_2,\dots,t_d\in \calK^\circ$ be invertible elements and $\pi\in \calK^{\circ\circ}\setminus p^{1/2^d}\calK^\circ$ such that
    \[\dif t_1,\dif t_2,\dots,\dif t_d\]
    form a $\pi$-basis of $\cOmega_{\calK^\circ/O}$ (i.e. They form a basis of $\cOmega_{\calK^\circ/O}[\frac{1}{p}]$ and $\pi\cOmega_{\calK^\circ/O}\subseteq \sum_{j=1}^d\calK^\circ \dif t_j$). Then the following holds.
    \begin{enumerate}[label=(\arabic*)]
        \item For each $j$, $t_j$ is not a $p$-th power of any element of $\calK$.
        \item For any integers $n_1,n_2,\dots,n_d\geq 0$, let $\calK'$ be the finite extension of $\calK$ by adding $t_j^{1/p^{n_j}}$ for all $j$. Then $[\calK':\calK]=p^{n_1+\cdots+n_d}$,
        \[\pi^{2^d-1}\calK'^\circ\subseteq \calK^\circ[t_1^{1/p^{n_1}},\ldots,t^{1/p^{n_d}}] \subseteq \calK'^\circ, \]
        and
        \[\{\dif t_j^{1/p^{n_j}}:j=1,2,\dots,d\}\]
        form a $\pi^{2^d}\calK^{\circ\circ}$-basis of $\cOmega_{\calK'^\circ/O}$ for any choice of $t^{1/p^{n_j}}_j$.
        \item The algebra $\calK^+$ is senable over $O$. More precisely, for any $\pi \in \calK^{\circ\circ} \setminus p^{1/2^d}\calK^\circ$, there exists a set of invertible elements $t_1,\ldots,t_d \in \calK^+$ forming a $\pi$-basis; any for any such $\pi$-basis, they give a toric chart of $\calK^+$ (see Definition \ref{defi: toric chart}).
    \end{enumerate}
\end{theo}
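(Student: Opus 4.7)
The plan is to treat parts (1) and (2) as Kummer-theoretic statements from \cite{he2025padicgaloiscohomologyvaluation} and to focus the new work on part (3). For (1), the argument is immediate: if $t_j = s^p$, then $\dif t_j = p s^{p-1} \dif s$ would vanish in $\cOmega_{\calK^\circ/O}[\frac{1}{p}]$, contradicting that the $\dif t_i$ form a basis there. For (2), I will cite the quantitative near-\'etaleness of Kummer towers from He: the Kummer extension has degree $p^{n_1+\cdots+n_d}$, the integral subring generated by the $t_j^{1/p^{n_j}}$ captures $\calK'^\circ$ up to a $\pi^{2^d-1}$-denominator, and the new differentials form a $\pi^{2^d}\calK^{\circ\circ}$-basis (each Kummer step at worst squares the $\pi$-scale on the cotangent side).

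For part (3), I will first produce a $\pi$-basis of invertible elements by applying Theorem \ref{theo: almost finitely generated}(2) to $M = \cOmega_{\calK^+/O}$ with the generating set $X = \{\dif t : t \in (\calK^+)^\times\}$. This set topologically generates $M$ because every $t \in \calK^+$ is either a unit or lies in $\calK^{\circ\circ}$, in which case $1+t$ is a unit with $\dif(1+t) = \dif t$. The theorem will yield, for any $n$, elements $t_1,\ldots,t_d \in (\calK^+)^\times$ whose differentials form a basis of $M[\frac{1}{p}]$ with $p^{1/n}M \subseteq \sum_j \calK^+ \dif t_j$. Given $\pi \notin p^{1/2^d}\calK^\circ$, the valuation-ring dichotomy on $\calK^\circ$ gives $p^{1/2^d} \in \pi \calK^\circ$, so choosing $n = 2^d$ will produce a $\pi$-basis of $\cOmega_{\calK^\circ/O}$ via the almost-isomorphism of Lemma \ref{lemma: more on val}.

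Next, I will verify the three conditions of Definition \ref{defi: toric chart} for the map $f:\overline A\langle T_1,\ldots,T_d\rangle \to \calK^+$, $T_j \mapsto t_j$. Condition (1) is the basis statement after inverting $p$; condition (3) (invertibility of $f(T_j)$) is by construction; for condition (2) (locally perfectoid), I will invoke part (2): taking the filtered colimit over $(n_j)$ of the containments
\[ \pi^{2^d-1}(\calK'_{(n_j)})^\circ \subseteq \calK^\circ[t_1^{1/p^{n_1}},\ldots,t_d^{1/p^{n_d}}] \subseteq (\calK'_{(n_j)})^\circ, \]
and then $p$-completing, identifies $R_\infty$ up to a bounded discrepancy with the $p$-completion of $\bigcup_n (\calK'_n)^\circ$, which is a perfectoid valuation ring since all $p$-power roots of the $t_j$ are present and hence Frobenius is surjective modulo $p$. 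Inverting $p$ kills the $\pi^{2^d-1}$-discrepancy, so $R_\infty[\frac{1}{p}]$ is perfectoid. The main obstacle will be the bookkeeping in this last step: I must verify that the quantitative containments of part (2) survive both the filtered colimit and the $p$-adic completion. The saving grace is that the perfectoid check is only needed after inverting $p$, where the bounded discrepancy becomes invisible and no sophisticated almost mathematics is required beyond the direct computation.
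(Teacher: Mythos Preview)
Your overall strategy matches the paper's, but there are two genuine gaps in part (3) and a wrong argument for (1).

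For (1), your argument fails: if $t_j = s^p$ then $\dif t_j = p s^{p-1}\dif s$, which is a nonzero multiple of $\dif s$ and certainly does \emph{not} vanish in $\cOmega_{\calK^\circ/O}[\frac{1}{p}]$. The paper does not attempt a self-contained proof here and simply cites \cite[Proposition~7.3]{he2025padicgaloiscohomologyvaluation}.

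For the $\pi$-basis construction in (3), you have the divisibility backwards. From $\pi \notin p^{1/2^d}\calK^\circ$ you correctly get $p^{1/2^d} \in \pi\calK^\circ$, but this says $\pi M \supseteq p^{1/2^d}M$, not $\subseteq$; so $p^{1/2^d}$-annihilation of the cokernel does \emph{not} give $\pi$-annihilation. What you need instead is $n$ large enough that $\pi \in p^{1/n}\calK^\circ$, which exists because $\pi \in \calK^{\circ\circ}$. The paper takes a slightly different route: it picks a small $\alpha \in \dQ_+$ with $\pi' := p^{-\alpha}\pi$ still in $\calK^{\circ\circ}$, applies Theorem~\ref{theo: almost finitely generated} to obtain $t'_j \in \calK^+$ whose differentials have cokernel killed by $\pi'$, and then sets $t_j = 1 + p^\alpha t'_j$ to force units; the $p^\alpha$ lost in $\dif t_j = p^\alpha\dif t'_j$ is exactly absorbed by $\pi = p^\alpha\pi'$. (Also, your dichotomy ``unit or in $\calK^{\circ\circ}$'' is false for higher-rank $\calK^+$; the correct dichotomy is ``unit or in the maximal ideal'', which still makes $1+t$ a unit.)

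The more serious gap is the perfectoidness claim. Merely having all $p$-power roots of $t_1,\ldots,t_d$ does not show that Frobenius is surjective on $\calK'^\circ_\infty/p$; you must show that \emph{every} element becomes a $p$th power. The paper extracts this from part (2): for $x \in \calK'^\circ_n$, the $\pi^{2^d}\calK^{\circ\circ}$-basis property gives $\pi^{2^d}\dif x \in \sum_j \calK'^\circ_n\,\dif t_j^{1/p^n}$, so $\dif(\pi^{2^d}x) = 0$ in $\cOmega_{\calK'^\circ_{n+1}/O}/p$. Now $p$-Cartier smoothness of $\calK'^\circ_{n+1}/O$ (Theorem~\ref{theo: valuation rings are cartier smooth}) forces $\pi^{2^d}x$ to be a $p$th power modulo $p$ in $\calK'^\circ_{n+1}$, hence Frobenius is surjective on $\calK'^\circ_\infty/(\pi^{-2^d}p)$ and $\widehat{\calK'_\infty}$ is perfectoid. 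This differential-to-Frobenius step via Cartier smoothness is the missing idea in your sketch; once you have it, your identification of $R_\infty[\frac{1}{p}]$ with $\widehat{\calK'_\infty}$ via the $\pi^{2^d-1}$-bounded discrepancy goes through exactly as you describe.
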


\begin{proof}
    The claim (1) follows from \cite[Proposition 7.3]{he2025padicgaloiscohomologyvaluation}. For (2), by Equation (7.8.2) of the proof of \cite[Lemma 7.8]{he2025padicgaloiscohomologyvaluation}, we find that for any $x\in \calK'^\circ$,
    \begin{equation}\label{eq:explicit normalized trace of val rings}
        [\calK':\calK]^{-1} \Tr_{\calK'/\calK}(x)\in \pi^{1-2^{d}}\calK^\circ.
    \end{equation}

    We first show that for any set of integers $m_1,\ldots,m_d$, if $t_1^{m_1}\cdots t_d^{m_d}$ is a $p$th power of some element in $\calK$, then $m_i$ are all multiples of $p$. Assume otherwise, then we may assume that $m_1=1$. Now take $t'_1 = t_1^{m_1}\cdots t_d^{m_d}$ and $t'_i = t_i$ for $2 \le i \le d$, we may apply (1) to see that $t'_1$ is not a $p$th power of any element in $\calK$, and this is a contradiction.

    For any $1 \le j \le d$, define $S_j$ to be the set $\{p^{-n_j} k\in\dQ:\ k=0,1,\dots,p^{n_j} \}$, and fix a choice of $t_j^{1/p^{n_j}}$ (hence we can define $t_{j}^{k/p^{n_j}}$ for any $k\in \dZ$). It is evident that the set
    \[\biggl\{\prod_{j=1}^dt_j^{\alpha_j}:\alpha_j\in S_j\biggr\}\]
    form a $\calK$-generating set of $\calK'$. Let 
    \[x=\sum_{\alpha_1\in S_1,\alpha_2\in S_2,\dots,\alpha_d\in S_d} \lambda_{\alpha_1,\alpha_2,\dots, \alpha_d} \prod_{j=1}^dt_j^{\alpha_j}\]
    be an arbitrary element in $\calK'^\circ$, where all $\lambda_{{\alpha_1,\alpha_2,\dots, \alpha_d}}\in \calK$. By (\ref{eq:explicit normalized trace of val rings}) and a simple calculation of trace, we have
    \[ \lambda_{\alpha_1,\alpha_2,\dots, \alpha_d} = [\calK':\calK]^{-1} \Tr_{\calK'/\calK}(x\prod_{j=1}^dt_j^{-\alpha_j})\in \pi^{1-2^{d}}\calK^\circ, \]
    so $\prod_{j=1}^dt_j^{\alpha_j}$ indeed form a basis of $\calK' / \calK$, and $\pi^{2^d-1}x \in \calK^\circ[t_1^{1/p^{n_1}},\ldots,t_d^{1/p^{n_d}}]$, as desired.

    For the $\cOmega$ part, denote
    \[ A = \calK^\circ[T_1,\ldots,T_d] / (T_i^{p^{n_i}} - t_i). \]
    Now that $A$ is finite free over $\calK^\circ$, so $A$ is $p$-torsion-free. It is easy to see that $A \bigl[ \frac 1 p \bigr] \to \calK'$ is an isomorphism, so $A$ agrees with its image in $\calK'$, and the map
    \begin{equation}\label{eq: syntomic reduction of field extension}
        A \to \calK^\circ [t_1^{1/p^{n_1}},\ldots,t_d^{1/p^{n_d}}]
    \end{equation}
    is an isomorphism. Now, a direct calculations gives the cotangent complex of $A$ over $\calK^\circ$,
    \[ \cdL_{A/\calK^\circ} \cong \dL_{A/\calK^\circ} \cong (A/p^{n_1}) \oplus \cdots \oplus (A/p^{n_d}), \]
    so the map $A \cotimes_{K^\circ} \cdL_{K^\circ/O} \to \cdL_{A/O}$ is injective with cokernel killed by a power of $p$. In particular, $\cdL_{A/O}$ is of bounded $p^\infty$-torsion, so it is classically $p$-adically complete. By Lemma \ref{lem: p-complete module as classical completion}, $\cdL_{A/O}$ is topologically generated by elements of form $\dif x$ for $x \in A$, and a direct calculations shows that the map
    \[ A \dif T_1 \oplus \cdots \oplus A \dif T_d \to \cdL_{A/O} \]
    is injective with cokernel killed by $\pi$. Now, consider the map $\dif': \calK'^\circ \to \hol^0(\calK'^\circ \cotimes_A^\bL \cdL_{A/O})$ given by $x \mapsto \dif(\pi^{2^{d+1}-2}x)$. For $x,y \in \calK'^\circ$ we have
    \[ \dif'(xy) = \dif((\pi^{2^d-1}x)(\pi^{2^d-1}y)) = (\pi^{2^d-1}x)\dif(\pi^{2^d-1}y) + (\pi^{2^d-1}y) \dif(\pi^{2^d-1}x) = x\dif'(y)+y\dif'(x). \]
    This gives a map $\cOmega_{\calK'/O} \to \hol^0(\calK'^\circ \cotimes_A^\bL \cdL_{A/O})$ such that the composition
    \[ \hol^0(\calK'^\circ \cotimes_A^\bL \cdL_{A/O}) \to \cOmega_{\calK'^\circ/O} \to \hol^0(\calK'^\circ \cotimes_A^\bL \cdL_{A/O}) \]
    is equal to multiplication by $\pi^{2^{d+1}-2}$. Compose with the map $\cdL_{A/O} \to A \dif T_1 \oplus \cdots \oplus A \dif T_d$ given by multiplication by $\pi$, one sees that the map
    \[ \calK' \dif t_1^{1/p^{n_1}} \oplus \cdots \oplus \calK' \dif t_d^{1/p^{n_d}} \to \cOmega_{\calK'^\circ/O} \]
    is injective. It is easy to see that the cokernel of $\hol^0(\calK'^\circ \cotimes_A^\bL \cdL_{A/O}) \to \cOmega_{\calK'^\circ/O}$ is annihilated by $\pi^{2^d-1}$, so the cokernel of the map
    \[ \calK' \dif t_1^{1/p^{n_1}} \oplus \cdots \oplus \calK' \dif t_d^{1/p^{n_d}} \to \cOmega_{\calK'^\circ/O} \]
    is annihilated by $\pi^{2^d}$, as desired.

    At last, we prove (3).

    Given $\pi$, we first constuct the elements $t_1,\ldots,t_d$ as required. Pick $\alpha \in \dQ_+$ with $\pi' := p^{-\alpha} \pi \in \calK^{\circ\circ}$. By Theorem \ref{theo: almost finitely generated}, we may pick elements $t'_1,\ldots,t'_d \in \calK^+$ for which
    \[ \dif t'_1,\ldots,\dif t'_d \]
    form a basis of $\Omega := \cOmega_{\calK^+/O}\bigl[ \frac 1 p \bigr] = \cOmega_{\calK^\circ/O}\bigl[ \frac 1 p \bigr]$ (the equality follows from Lemma \ref{lemma: more on val}), and that the module
    \[ \cOmega_{\calK^\circ/O} / \langle \dif t'_1,\ldots, \dif t'_d \rangle \]
    is annihilated by $\pi'$. Now, put $t_j = 1+p^\alpha t'_j$, and it is easy to verify that $t_j \in (\calK^+)^\times$ form a $\pi$-basis.

    Now, assume that we are given a set of invertible elements $t_1,\ldots,t_d \in \calK^+$ forming a $\pi$-basis, we prove that they form a toric chart. Take $\calK'_n := \calK(t_1^{1/p^n},\ldots,t_d^{1/p^n})$ and $\calK'_\infty := \bigcup \calK'_n$. For each $x \in \calK'^\circ_n$, we may write
\[ \pi^{2^d} \dif x = \sum_{j=1}^d y_j \dif t_j^{1/p^{n_j}} \]
for some $y_j \in \calK'^\circ_n$. Therefore, $\pi^{2^d} \dif x$ equals to zero in $\cOmega_{\calK'^\circ_{n+1}/O} /p$. Since $\calK'^\circ_{n+1}/O$ is $p$-Cartier smooth by Theorem \ref{theo: valuation rings are cartier smooth}. It follows that $\pi^{2^d}x$, as an element in $\calK'^\circ_{n+1}/p$, is a $p$th power. We see that the Frobenius on $\calK'^\circ_\infty/(\pi^{-2^d}p)$ is surjective. A standard argument now shows that $\widehat{\calK'_\infty}$ is a perfectoid field.

By (2) and Lemma \ref{lemma: more on val}, the cokernel of the ring map (the first isomorphism follows from \eqref{eq: syntomic reduction of field extension})
\[ \colim_{n \in \mathbb Z_+} \calK^+ \otimes_{O\langle T_1,\ldots,T_d\rangle} O\langle T_1^{1/p^n},\ldots,T_d^{1/p^n} \rangle \cong \colim_{n \in \mathbb Z_+} \calK^+ [t_1^{1/p^n},\ldots,t_d^{1/p^n}] \to \calK'^\circ_\infty \]
is annihilated by $\pi^{2^d-1}\mathfrak m$. Take the completion, and we see that
\[ \calK^+ \cotimes_{O\langle T_1,\ldots,T_d\rangle} O\langle T_1^{1/p^\infty},\ldots,T_d^{1/p^\infty} \rangle \bigl[ \frac 1 p \bigr] \cong \widehat{\calK'_\infty} \]
and it is perfectoid.
\end{proof}

Let $\overline\calK$ be the algebraic closure of $\calK$ and $\widehat{\overline{\calK}}$ be its completion. The valuation associated to $\calK^+$ extends uniquely to a continuous valuation on $\widehat{\overline{\calK}}$. Let $G=\Gal(\overline{\calK}/\calK)$ be the absolute Galois group of $\calK$.

The ring $\calK^+/O$ is senable by Theorem \ref{theo: senable val}. Hence, for any $v$-vector bundle, we can define its Sen operator. The field $\widehat{\overline{\calK}}$ is perfectoid, hence by Theorem \ref{theo: v vector bundles=generalized representations}, the category of $v$-vector bundles on $\Spa(\calK,\calK^+)$ is equivalent to the category of $\mathbf{Rep}_G(\widehat{\overline{\calK}})$. Combining the above, we have the folloing theorem.

\begin{theo}\label{theo: Sen of val}
    Keep the notation, for any $M\in\mathbf{Rep}_G(\widehat{\overline{\calK}})$, there exists a canonical linear operator
    \[\theta:M\to M\cotimes_{\calK}\cOmega_{\calK^+/O}\bigl[\frac{1}{p}\bigr].\]
\end{theo}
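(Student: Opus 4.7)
The plan is to assemble the statement from three pieces already established in the paper, all specialised to the geometric valuation ring $\calK^+$. First I would invoke Theorem \ref{theo: senable val}~(3), which guarantees that $\calK^+$ is senable over $O$: for any $\pi\in\calK^{\circ\circ}\setminus p^{1/2^d}\calK^\circ$ we may pick invertible elements $t_1,\ldots,t_d\in\calK^+$ forming a $\pi$-basis of $\cOmega_{\calK^+/O}$, and these give a toric chart $O\langle T_1,\ldots,T_d\rangle\to\calK^+$ in the sense of Definition \ref{defi: toric chart}. In particular, all hypotheses of Proposition-Definition \ref{prop-defi: sen theory} are met with $A=\Ainf$ and $R=\calK^+$.

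Next I would identify the three categories that enter the statement. The functor $\mathrm L$ of Theorem \ref{theo: v vector bundle=perfect crystals} provides an equivalence
\[ \Vect\bigl((\calK^+/\Ainf)_\Prism^\perf,\overline\calO\bigl[\tfrac 1 p\bigr]\bigr)\;\xrightarrow{\cong}\;\Vect\bigl(\Spd(\calK,\calK^+)_v,\widehat\calO\bigr). \]
On the other side, with the Kummer tower produced by the toric chart, namely
\[ \calK'_n=\calK(t_1^{1/p^n},\ldots,t_d^{1/p^n}),\qquad \widehat{\calK'_\infty}\subseteq\widehat{\overline\calK}, \]
the proof of Theorem \ref{theo: senable val}~(3) together with Theorem \ref{theo: v vector bundles=generalized representations} (applied first to $(\widetilde\calK,\widetilde\calK^+)=(\widehat{\calK'_\infty},\widehat{\calK'^+_\infty})$ with group $\Gamma\cong\dZ_p^d$, then globalising to $\widehat{\overline\calK}$ with group $G=\Gal(\overline\calK/\calK)$) supplies the equivalence
\[ \Vect\bigl(\Spd(\calK,\calK^+)_v,\widehat\calO\bigr)\;\xrightarrow{\cong}\;\mathbf{Rep}_G(\widehat{\overline\calK}). \]

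With these identifications in hand, the construction of $\theta_M$ for $M\in\mathbf{Rep}_G(\widehat{\overline\calK})$ is immediate: let $\calE$ denote the perfect prismatic crystal corresponding to $M$, and define
\[ \theta_M=\theta_\calE:\calE\to\calE\otimes_{\calK^+[\frac 1 p]}\cdL_{\calK^+/O}\bigl[\tfrac 1 p\bigr]\{-1\} \]
by Proposition-Definition \ref{prop-defi: sen theory}. Since $\calK^+$ is quasi-l.c.i.\ with $\cOmega_{\calK^+/O}$ finite-dimensional over $\calK$ after inverting $p$ (Theorem \ref{theo: valuation rings are cartier smooth} and Definition \ref{defi:geo val ring}), the complete cotangent complex equals the module of differentials in this range, and after untwisting the Breuil--Kisin twist (which is trivialised by the choice of compatible roots of unity $\zeta_{p^n}$ fixed earlier in the section) we obtain the desired operator $\theta:M\to M\cotimes_\calK\cOmega_{\calK^+/O}\bigl[\tfrac 1 p\bigr]$.

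The only part that needs more care is the canonicity: a priori $\theta_\calE$ depends on the toric chart through the construction of Proposition-Definition \ref{prop-defi: sen theory}, but that proposition already asserts the independence of the chart (verified there via the argument of \cite[Theorem 4.8]{heuer2024padicsimpsoncorrespondencesmooth} plus the vanishing criterion on trivial crystals), so no further work is needed here. The main obstacle, as ever when transporting the smooth story to valuation rings, is controlling the constants $\pi^{2^d-1}$ in the almost-isomorphism $\calK^+[t_1^{1/p^{n_1}},\ldots,t_d^{1/p^{n_d}}]\subseteq\calK'^\circ_n$; but this has already been absorbed into Theorem \ref{theo: senable val}, so for the present statement only a formal assembly is required.
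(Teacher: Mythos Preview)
Your proposal is correct and follows exactly the paper's approach: the paper states the theorem as an immediate consequence of the paragraph preceding it, which records that $\calK^+/O$ is senable by Theorem~\ref{theo: senable val} (so Proposition-Definition~\ref{prop-defi: sen theory} furnishes the Sen operator), and that $v$-vector bundles on $\Spa(\calK,\calK^+)$ are equivalent to $\mathbf{Rep}_G(\widehat{\overline\calK})$ via Theorem~\ref{theo: v vector bundles=generalized representations}. Your write-up simply unpacks these references with more detail (the explicit toric chart, the trivialisation of the Breuil--Kisin twist, the chart-independence); no genuinely new ingredient is needed beyond what the paper already invokes.
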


Consider the image of $\cOmega_{\calK^+/O}$ in $\cOmega_{\calK^+/O}\bigl[\frac{1}{p}\bigr]$, define
\[T_{\calK^+/O}:=\{\partial\in \cOmega_{\calK^+/O}[\tfrac{1}{p}]^\vee:\partial(\cOmega_{\calK^+/O})\subseteq\calK^+\}.\]

\begin{defi}
    Let $M\in\mathbf{Rep}_G(\widehat{\overline{\calK}})$ with Sen operator $\theta$. For any $a\in \mathbf{C}^\times$, call $\theta$ \emph{strictly $a$-topologically nilpotent} if there exists $\alpha\in\dQ_{>0}$ such that for any $\partial\in T_{\calK^+/O}$, the endomorphism $p^{-\alpha}a^{-1}\partial\circ\theta$ of $M$ is topologically nilpotent.
\end{defi}

Later, we will see that generalized representations with strictly topologically nilpotent Sen operators.

\begin{lemma}
    Let $M\in \Rep_G(\widehat{\overline{\calK}})$ be a generalized representation with $a$-strictly topologically nilpotent Sen operator $\theta$ for some $a\in\mathbf{C}^\times$. There exists $t_1,t_2,\dots,t_d\in \calK^+$ such that the following holds.
    \begin{enumerate}
        \item The induced map
        \[O\langle T_1,T_2,\dots,T_d \rangle\to \calK^+\]
        is a toric chart;
        \item let $\partial_i$ be the linear function in $\cOmega_{\calK^+/O}[\frac{1}{p}]^\vee$ sending $\dif T_j$ to $\delta_{ij}$. Then the operator $a^{-1}\partial_i\circ \theta$ is topologically nilpotent.
    \end{enumerate}
\end{lemma}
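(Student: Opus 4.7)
The plan is to construct the toric chart using Theorem \ref{theo: senable val}(3) applied to a sufficiently small $\pi$, and then to deduce the topological nilpotency of $a^{-1}\partial_i \circ \theta$ from the strict $a$-topological nilpotency of $\theta$ by a standard eigenvalue argument. The quantitative input will be to arrange $\pi$ small enough so that the dual functionals $\partial_i$, after multiplication by an appropriate rational power of $p$, lie in $T_{\calK^+/O}$ with room to spare.

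First, let $\alpha \in \dQ_{>0}$ be an exponent witnessing strict $a$-topological nilpotency, so $p^{-\alpha}a^{-1}\partial \circ \theta$ is topologically nilpotent for every $\partial \in T_{\calK^+/O}$. Since $\mathbf{C}$ is algebraically closed, I will fix an integer $N$ with $N > 2^d$ and $N \geq 2/\alpha$, and set $\pi := p^{1/N} \in \calO_{\mathbf{C}} \subseteq \calK^+$. Then $v(\pi) = v(p)/N$ lies strictly between $0$ and $v(p)/2^d$, so $\pi \in \calK^{\circ\circ} \setminus p^{1/2^d}\calK^\circ$, and Theorem \ref{theo: senable val}(3) produces invertible elements $t_1,\dots,t_d \in \calK^+$ for which the $\dif t_j$ form a $\pi$-basis of $\cOmega_{\calK^\circ/O}$ and the induced map $O\langle T_1,\dots,T_d\rangle \to \calK^+$ is a toric chart. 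This settles item (1).

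For item (2), the goal is to show that $p^{\beta}\partial_i \in T_{\calK^+/O}$ for $\beta := 2/N$. Given $\omega \in \cOmega_{\calK^+/O}$, view it in $\cOmega_{\calK^\circ/O}$ via the (flat, hence injective) map and write $\pi \omega = \sum_j \lambda_j \dif t_j$ with $\lambda_j \in \calK^\circ$, so $\pi\partial_i(\omega) = \lambda_i$. By Lemma \ref{lemma: more on val}, the inclusion $\calK^+ \hookrightarrow \calK^\circ$ is an almost isomorphism with respect to $\sqrt{p\calK^+}$, and a short valuation computation shows that $p^\beta \pi^{-1}$ lies in $\sqrt{p\calK^+}$ since $(p^\beta \pi^{-1})^N = p$; a further direct check gives $\sqrt{p\calK^+} \cdot \calK^\circ \subseteq \calK^+$, whence $p^\beta \partial_i(\omega) = p^\beta \pi^{-1} \lambda_i \in \calK^+$. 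Applying strict $a$-topological nilpotency to $p^\beta \partial_i \in T_{\calK^+/O}$ then gives that $p^{\beta - \alpha}(a^{-1}\partial_i \circ \theta)$ is topologically nilpotent on $M$. Since $\beta \le \alpha$, the scalar $p^{\beta-\alpha}$ has absolute value at least $1$; and since on a finite-dimensional vector space over $\widehat{\overline{\calK}}$ topological nilpotency is equivalent to all eigenvalues having absolute value strictly less than $1$, dividing out this scalar only shrinks eigenvalues, so $a^{-1}\partial_i \circ \theta$ is itself topologically nilpotent.

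The main obstacle is the fine distinction between $\calK^+$ and $\calK^\circ$: Theorem \ref{theo: senable val}(3) produces a $\pi$-basis of $\cOmega_{\calK^\circ/O}$ rather than of $\cOmega_{\calK^+/O}$, so $\pi \partial_i$ only takes $\cOmega_{\calK^+/O}$ into $\calK^\circ$. Compensating for this via the almost-isomorphism forces the exponent $\beta = 2/N$ rather than $\beta = 1/N$, which is the reason for imposing $N \geq 2/\alpha$ rather than merely $N \geq 1/\alpha$; once this bookkeeping is in place, the rest is a routine spectral radius argument in finite dimensions.
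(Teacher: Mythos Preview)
Your proof is correct and follows the same strategy as the paper's: invoke Theorem~\ref{theo: senable val}(3) for a sufficiently small $\pi$, then use the $\pi$-basis property to place the $\partial_i$ inside a controlled multiple of $T_{\calK^+/O}$. The paper simply shrinks $\alpha$ below $2^{-d}$, sets $\pi = p^\alpha$, and asserts $\partial_i \in p^{-\alpha}T_{\calK^+/O}$ directly, so that strict nilpotency applied to $p^\alpha\partial_i$ yields the conclusion with no leftover scalar. Your version is more explicit about the $\calK^+$ versus $\calK^\circ$ distinction (the $\pi$-basis is literally a basis for $\cOmega_{\calK^\circ/O}$, so a priori $\pi\partial_i$ only lands in $\calK^\circ$), introducing an extra factor $p^{1/N}$ via the almost isomorphism of Lemma~\ref{lemma: more on val} and then disposing of the resulting scalar $p^{\beta-\alpha}$ by a spectral-radius argument in finite dimensions over $\widehat{\overline{\calK}}$. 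This final step could be avoided by choosing exponents so that the scalar cancels exactly (e.g.\ halve $\alpha$ at the outset), but your route is equally valid.
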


\begin{proof}
    By definition, there exists $\alpha\in\dQ_{>0}$ such that for any $\partial\in T_{\calK^+/O}$, the endomorphism $p^{-\alpha}a^{-1} \partial\circ\theta$ of $M$ is topologically nilpotent. Without losing generality, assume that $\alpha<2^{-d}$. By Theorem \ref{theo: senable val}, we may find elements $t_1,\ldots,t_d \in (\calK^+)^\times$ such that they form a $p^\alpha$-basis of $\calK$, and that they give a toric chart. It is easy to verify that $a^{-1} \partial_i \circ \theta$ is topologically nilpotent, since $\partial_i \in p^{-\alpha} T_{\calK^+/O}$.
\end{proof}

We now apply the idea of Corollary \ref{cor: fully faithfulness on small crystals} to the case of valuation rings.

\begin{theo}\label{theo: prism simpson val ring}
    Let $(\calK,\calK^+)$ be a geometric valuation ring over $O$, and $A:= \AAinf(O), I = \ker \theta$. Then, the map
    \[ \Vect \bigl( (\calK^+/A)_\Prism, \overline\calO\bigl[\frac 1 p \bigr]\bigr) \to \Vect \bigl( (\calK^+/A)_\Prism^\perf, \overline\calO\bigl[\frac 1 p \bigr]\bigr) \]
    restricts to an equivalence of categories on the full subcategories
    \[ \bigcup_{n \in \mathbb Z_+} \Vect^{\sm{p^{1/n}}} \bigl( (\calK^+/A)_\Prism, \overline\calO\bigl[\frac 1 p \bigr]\bigr)  = \bigcup_{n \in \mathbb Z_+} \Vect^{\smet{p^{1/n}}} \bigl( (\calK^+/A)_\Prism, \overline\calO\bigl[\frac 1 p \bigr]\bigr)  \subseteq \Vect \bigl( (\calK^+/A)_\Prism, \overline\calO\bigl[\frac 1 p \bigr]\bigr) \]
    and the full subcategory $\calC'$ of strictly $(\zeta_p-1)$-topologically nilpotent representations on
    \[ \Vect \bigl( (\calK^+/A)_\Prism^\perf, \overline\calO\bigl[\frac 1 p \bigr]\bigr) \cong \Rep_G(\widehat{\overline{\calK}}). \]
\end{theo}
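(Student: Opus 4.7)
The plan is to apply Corollary \ref{cor: fully faithfulness on small crystals} to $R = \calK^+$ using the family of toric charts produced by Theorem \ref{theo: senable val}, and then unpack the resulting inclusions. First I would verify the hypotheses: $\calK^+$ is $p$-torsion-free as a characteristic-zero valuation ring; Theorem \ref{theo: valuation rings are cartier smooth} ensures that $\cdL_{\calK^+/O}$ is concentrated on degree $0$ and $p$-completely flat (hence $p$-torsion-free); and the definition of a geometric valuation ring gives that $\cdL_{\calK^+/O}\bigl[\tfrac{1}{p}\bigr]$ is finite projective over $\calK$. For each sufficiently large $n \in \dZ_+$, Theorem \ref{theo: senable val}(3) produces invertible elements $t_1,\ldots,t_d \in \calK^+$ forming a $p^{1/n}$-basis and giving a toric chart $O\langle T_1,\ldots,T_d\rangle \to \calK^+$; the cokernel of $\bigoplus_i \calK^+\dif t_i \to \cdL_{\calK^+/O}$ is then annihilated by $p^{1/n}$, so Corollary \ref{cor: fully faithfulness on small crystals} applies with $J = p^{1/n}\calK^+$.

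The corollary immediately gives fully faithfulness of $\Res_{\Spf(\calK^+)}$ on $\Vect^{\smet{p^{1/n}}}$; since these categories are increasing in $n$, fully faithfulness extends to $\bigcup_n \Vect^{\smet{p^{1/n}}}$, as any finite diagram lies in a common piece. For the essential image, the corollary supplies a subcategory $\calC_n$ together with the containments
\[
\Res(\calC_n \cap \Vect^{\sm{J'}}) \supseteq \Vect^{\sm{p^{1/n}J'}}_\perf, \qquad \Res(\Vect^{\smet{p^{1/n}J'}}) \subseteq \Vect^{\sm{J'}}_\perf,
\]
for any Cartier divisor $J'$ with $(\calK^+/J')\bigl[\tfrac{1}{p}\bigr] = 0$. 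For each $k$ and $n > k$, setting $J' = p^{1/k-1/n}\calK^+$ yields $\Vect^{\sm{p^{1/k}}}_\perf \subseteq \Res(\calC_n \cap \Vect^{\sm{p^{1/k-1/n}}})$, and the latter category is contained in $\bigcup_m \Vect^{\sm{p^{1/m}}}$. Taking the union over $k$ and recognizing that strict $(\zeta_p-1)$-topological nilpotency of the Sen operator is by definition the union $\bigcup_k \Vect^{\sm{p^{1/k}}}_\perf$, this gives the essential image calculation in one direction. Conversely, for $\calE \in \Vect^{\smet{p^{1/k}}}$, choosing $n$ and $m$ with $1/n + 1/m \le 1/k$ places $\calE$ in $\Vect^{\smet{p^{1/n+1/m}}}$, whose image lies in $\Vect^{\sm{p^{1/m}}}_\perf \subseteq \calC'$.

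It remains to identify $\bigcup_n \Vect^{\sm{p^{1/n}}}$ with $\bigcup_n \Vect^{\smet{p^{1/n}}}$, and this is the main technical step. The inclusion $\subseteq$ is tautological. For the reverse, I would bootstrap from the preceding steps: given $\calE \in \Vect^{\smet{p^{1/k}}}$, the image $\Res(\calE)$ lies in some $\Vect^{\sm{p^{1/m}}}_\perf$, and essential surjectivity produces $\calF \in \Vect^{\sm{p^{1/m'}}}$ with $\Res(\calF) \cong \Res(\calE)$; enlarging the parameter so that $\calE$ and $\calF$ both lie in a common $\Vect^{\smet{p^{1/k''}}}$ and invoking fully faithfulness there forces $\calE \cong \calF$, placing $\calE$ in $\bigcup_n \Vect^{\sm{p^{1/n}}}$. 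The main obstacle is precisely this bootstrap: the equivalence of $\sm$ and $\smet$ on the union is not a formal consequence of local-to-global descent but depends essentially on the previously established $v$-side classification, which dictates the order in which the three steps must be carried out.
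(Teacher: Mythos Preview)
Your proposal is correct and takes essentially the same approach as the paper, which simply states that the result is a direct consequence of Theorem~\ref{theo: senable val} and Corollary~\ref{cor: fully faithfulness on small crystals}. You have correctly unpacked what that one-line reference entails: verifying the hypotheses on $\calK^+$ and its cotangent complex, producing for each sufficiently large $n$ a toric chart with $J = p^{1/n}\calK^+$ via Theorem~\ref{theo: senable val}(3), and then reading off fully faithfulness and the essential image from the inclusions in Corollary~\ref{cor: fully faithfulness on small crystals}; your bootstrap for the equality $\bigcup_n \Vect^{\sm{p^{1/n}}} = \bigcup_n \Vect^{\smet{p^{1/n}}}$ is the right way to extract it from those inclusions.
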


\begin{proof}
This is a direct consequence of Theorem \ref{theo: senable val} (4) and Corollary \ref{cor: fully faithfulness on small crystals}.
\end{proof}

\subsection{Prismatic non-abelian Hodge theory of pre-smooth algebras}
In this section, we will apply the three Corollaries \ref{cor: simpson correspondence etale}, \ref{cor: fully faithfulness on small crystals}, \ref{cor: HT comparison revisited} to pre-smooth algebras.

\begin{defi}[Koszul regular sequence, see for example {\cite[\href{https://stacks.math.columbia.edu/tag/062D}{Tag 062D}]{stacks-project}}]
Let $A$ be a ring and $x_1,\ldots,x_n$ be a sequence in $A$. We say that $x_1,\ldots,x_n$ is Koszul regular, if the Koszul complex
\[ \Kos(A; x_1,\ldots,x_n) \cong A \otimes_{A[y_1,\ldots,y_n]}^\bL A \]
is concentrated on degree 0 (where the two maps $A[y_1,\ldots,y_n] \to A$ are given by $y_i \mapsto 0$ and $y_i \mapsto x_i$).

It is clear that for the simplicial ring $A \otimes_{A[y_1,\ldots,y_n]}^\bL A$, we have
\[ A \otimes_{A[y_1,\ldots,y_n]}^\bL A =0 \Leftrightarrow \pi_0(A \otimes_{A[y_1,\ldots,y_n]}^\bL A) =0 \Leftrightarrow (x_1,\ldots,x_n) = A, \]
so if $x_1,\ldots,x_n$ generates the unit ideal, then it is Koszul regular.

Note that if $A$ is derived $I$-adically complete for some finitely generated module, then the Koszul complex is always $I$-complete, and in this case $\hol^0 \Kos(A;x_1,\ldots,x_n)\cong A/(x_1,\ldots,x_n)$ is always derived $I$-adically complete by Lemma \ref{lem: t-structure on complete modules}.
\end{defi}

\begin{defi}[c.f. {\cite[\href{https://stacks.math.columbia.edu/tag/07D0}{Tag 07D0}]{stacks-project}}]\label{p-completely lci}
Let $R/A$ be $p$-complete rings of bounded $p^\infty$-torsion. We say that $R$ is $p$-completely l.c.i over $A$, if (p-completely) Zariski locally on $R$, there exists a presentation
\[ R \cong A \langle T_1,\ldots,T_n \rangle / (f_1,\ldots,f_m), \]
where $f_1,\ldots,f_m$ is a Koszul regular sequence in $A \langle T_1,\ldots,T_n \rangle$.

In this case, we see that $\cdL_{R/A}$ is a perfect complex over $R$. Indeed, if
\[ R \cong A \langle T_1,\ldots,T_n \rangle / (f_1,\ldots,f_m), \]
then by the cotangent sequence, then $\cdL_{R/A}$ has a presentation
\[ R^{\oplus m} \to R^{\oplus n} \]
with the map given by the Jacobian matrix of $f_1,\ldots,f_m$. 
\end{defi}

\begin{prop}\label{prop: p-completely syntomic are p-completely lci}
Let $S/A$ be $p$-complete rings of bounded $p^\infty$-torsion such that $S/A$ is $p$-completely syntomic (i.e. $S \otimes_A^\bL A/p$ is syntomic over $A/p$). Then, $S/A$ is $p$-completely l.c.i.
\end{prop}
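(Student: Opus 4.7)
The plan is to work $p$-completely Zariski-locally, lift a mod-$p$ syntomic presentation of $\overline S := S/pS$ to a presentation of $S$ over $A\langle T_1,\ldots,T_n\rangle$, and then promote the Koszul regularity of the defining sequence from the reduction mod $p$ back to the integral level using derived Nakayama. First, since $\Spf(S)$ and $\Spec(\overline S)$ share the same underlying topological space, any Zariski cover of $\overline S$ on which $\overline S/\overline A$ admits a Koszul-regular presentation $\overline A[T_1,\ldots,T_n]/(\overline f_1,\ldots,\overline f_m)$ lifts to a $p$-completely Zariski cover of $S$, so I may reduce to a global situation where such a mod-$p$ presentation is given.

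Next, lift representatives of $T_i$ to elements $s_i \in S$ to obtain a continuous ring map $\varphi : A\langle T_1,\ldots,T_n\rangle \to S$; it is surjective modulo $p$, hence surjective by classical $p$-adic Nakayama. Set $I := \ker \varphi$, which is closed and hence $p$-adically complete by Lemma \ref{lem: closed submodule}. The $p$-complete flatness of $S$ and $A\langle T\rangle$ over $A$ ensures that $0 \to I \to A\langle T\rangle \to S \to 0$ remains short exact after $-\otimes_A^{\bL} A/p$, identifying $I/pI$ with the ideal $(\overline f_1,\ldots,\overline f_m) \subseteq \overline A[T]$. Lifting each $\overline f_j$ to $f_j \in I$ and arguing by a convergent Nakayama iteration yields $I = (f_1,\ldots,f_m)$, so $S \cong A\langle T\rangle/(f_1,\ldots,f_m)$.

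The crucial step is to promote Koszul regularity to the integral level: I claim $K := \Kos(A\langle T\rangle; f_1,\ldots,f_m)$ is concentrated in degree zero. As a bounded complex of finite free modules over the derived $p$-complete ring $A\langle T\rangle$, the complex $K$ is derived $p$-complete, and by hypothesis $K \otimes_A^{\bL} A/p = \Kos(\overline A[T]; \overline f_1,\ldots,\overline f_m)$ is concentrated in degree zero. Setting $H := \tau^{\le -1}(K)$, which remains derived $p$-complete by Lemma \ref{lem: t-structure on complete modules}, I would apply $-\otimes_A^{\bL} A/p$ to the fibre sequence $H \to K \to \hol^0(K) = S$: the essential input is that $S \otimes_A^{\bL} A/p = S/pS$ is also concentrated in degree zero by the $p$-complete flatness of $S$, and the induced map $K \otimes_A^{\bL} A/p \to S \otimes_A^{\bL} A/p$ is the identity $S/pS \to S/pS$, so a brief chase in the long exact sequence of cohomology forces $H \otimes_A^{\bL} A/p = 0$, and derived Nakayama delivers $H = 0$. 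The main obstacle is precisely this last step: derived Nakayama cannot be applied to $K$ directly (which never vanishes mod $p$), and the saving grace is that $p$-complete flatness of $S$ prevents extraneous Tor contributions from $\hol^0(K)$ when tensoring with $A/p$, thereby isolating the truncated complex $H$ as the only piece whose vanishing needs to be verified.
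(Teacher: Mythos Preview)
Your argument is correct and follows essentially the same route as the paper: lift a mod-$p$ Koszul-regular presentation and invoke derived Nakayama on the Koszul complex. The paper's version is marginally more economical in that it applies derived Nakayama directly to the map $\Kos(A\langle T\rangle; f_1,\ldots,f_m) \to S$, which becomes an isomorphism modulo $p$ and hence is a quasi-isomorphism---this yields $\hol^0(K)\cong S$ and the vanishing of higher cohomology in one stroke, so your separate classical Nakayama iteration establishing $I=(f_1,\ldots,f_m)$ is redundant (though harmless).
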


\begin{proof}
Denote $\overline A = A/p$ and $\overline S = S \otimes_A^\bL A/p$. By definition we see that $S$ is $p$-complete and $p$-completely flat over $A$.

For each prime ideal $pS \subseteq \mathfrak p \subseteq S$, by {\cite[\href{https://stacks.math.columbia.edu/tag/07D0}{Tag 07D0}]{stacks-project}}, we may find $s \in S \setminus \mathfrak p$ such that $\overline S[s^{-1}]$ has a presentation for some $0 \le m \le n$
\[ \overline S[s^{-1}] = \overline A[x_1,\ldots,x_n]/(\overline f_1,\ldots,\overline f_m), \]
such that $\overline f_1,\ldots,\overline f_m$ is a Koszul regular sequence. This gives a map $\overline A[x_1,\ldots,x_n] \to \overline S[s^{-1}]$, and we may lift it to a map
\[ \varphi: A\langle x_1,\ldots,x_n\rangle \to \widehat{S[s^{-1}]}. \]
Denote the cofibre of $\varphi$ by $C$, then $C \otimes_A^\bL \overline A \in D^{\le -1}(\overline A)$. It follows that
\[ C = \widehat C = \lim C \otimes_A^\bL A/p^nA \in D^{\le -1}, \]
so $\varphi$ is surjective.

Lift the elements $\overline f_1,\ldots,\overline f_m$ to $f_1,\ldots,f_m \in \ker \varphi$. We get a natural map
\[ \Kos(A \langle x_1,\ldots,x_n \rangle; f_1,\ldots, f_m) \to \hol^0  \Kos(A \langle x_1,\ldots,x_n \rangle; f_1,\ldots, f_m) \to \widehat{S[s^{-1}]}. \]
This map between $p$-complete complexes is a quasi-isomorphism after taking tensor product $-\otimes_A^\bL \overline A$, and we see that it is a quasi-isomorphism by derived Nakayama's lemma.
\end{proof}

\begin{ex}\label{ex: semistable are presmooth}
Let $A$ be a $p$-complete ring of bounded $p^\infty$-torsion and $a \in A$ be any element, $n \le m$ be positive integers, then the ring
\[ R := A \langle x_1,\ldots,x_m \rangle / (x_1 \cdots x_n - a) \]
is $p$-completely syntomic, hence $p$-completely l.c.i over $A$.

If $A$ is $p$-torsion-free, $a \in A \bigl[ \frac 1 p \bigr]^\times$, and $d_i$ is not divisible by $p$ for all $i$, we claim that $R$ and $\cdL_{R/A}$ is also $p$-torsion-free, and $\cdL_{R/A} \bigl[ \frac 1 p \bigr]$ is free over $R \bigl[ \frac 1 p \bigr]$ of basis
\[ \dif x_2,\ldots,\dif x_m. \]
In fact, since $R$ is $p$-completely syntomic (hence $p$-complete and $p$-completely flat) over $A$, it is automatically $p$-torsion-free by Corollary \ref{cor: tensor conc on degree 0}. Now that $a$ is invertible in $R \bigl[ \frac 1 p \bigr]$, so all $x_i$ are invertible in $R \bigl[ \frac 1 p \bigr]$. The module $\cdL_{R/A}$ is calculated as the quotient
\[ F / (z_1 \dif x_1 + \cdots + z_n \dif x_n), \]
where $z_i := x_i^{-1} a$ for $a = x_1 \cdots x_n \in R$, and $F$ is free with basis
\[ \dif x_1, \ldots, \dif x_m. \]
The basis part is evident since all $z_i$ are invertible in $R\bigl[ \frac 1 p \bigr]$, and we still have to show that $\cdL_{R/A}$ is $p$-torsion-free.

Consider the ring
\[ R_0 = A \langle x_2^{\pm 1}, \ldots, x_n^{\pm 1}, x_{n+1},\ldots,x_m \rangle. \]
Now we may consider the map $g: R \to R_0$, sending $x_1$ to $ax_2^{-1} \cdots x_n^{-1}$. We claim that this map is injective. In fact, if $f \in A \langle x_1,\ldots,x_m \rangle$ is sent to $0$ by $g$, write
\[ f = \sum_{\underline k \in \mathbb N^m} c_{\underline k} \underline x^{\underline k}, \]
and define $c_{\underline k} =0$ if any index of $\underline k$ is negative. Then, for any multi-index $k = (k_1,k_2,\ldots,k_m)$, we have
\[ \sum_{j=-\infty}^\infty a^j c_{\underline k + j \underline s} =0, \]
where $\underline s := (\underbrace{1,\ldots,1}_n, 0,\ldots,0)$. This easily implies that the series
\[ \sum_{j=-\infty}^\infty c_{\underline k + j \underline s} T^j \in A\langle T \rangle [T^{-1}] \]
is divisible by $T-a$. Combine them together, and we are able to write $f$ as a multiple of $x_1 \cdots x_n - a$. It is clear that the essential image of $g$ is all power series
\[ f = \sum_{\underline k \in \mathbb Z^{\{2,\ldots,n\}} \times \mathbb N^{\{n+1,\ldots,m\}}} c_{\underline k} \underline x^{\underline k} \]
such that all $a^{-h(\underline k)} c_{\underline k}$ are in $A$, and they form a zero sequence in the cofinite sense (where $h(\underline k) := \max \{ 0, -k_2, -k_3, \ldots, -k_n \}$).

Now, if we are given elements $r_1,\ldots,r_n \in R$ with $x_1r_1 = \cdots = x_nr_n$, then we may apply $g$ to the element $r := r_i/z_i$ (indepedent of $i$). Write
\[ g(r) = \sum_{\underline \ell \in \mathbb Z^{\{2,\ldots,n\}} \times \mathbb N^{\{n+1,\ldots,m\}}} c_{\underline \ell} \underline x^{\underline \ell} \in R_0 \bigl[ \frac 1 p \bigr]. \]
Now that all $g(z_i) g(r)$ are in $R_0$. For $\underline \ell \in \mathbb Z^{\{2,\ldots,n\}} \times \mathbb N^{\{n+1,\ldots,m\}}$, if all indices of $\underline \ell$ are non-negative, then $g(z_1) g(r) \in R_0$ implies that $c_{\underline \ell} \in A$. Otherwise, take $2 \le i \le n$ such that $\ell_i$ is the minimal index of $\ell$, then $g(z_i)g(r) \in R_0$ implies that $a^{-\ell_i-1} (ac_{\underline \ell}) \in A$, and we get $a^{-\ell_i}c_{\underline \ell} \in A$. Therefore, $a^{-h(\underline \ell)} c_{\underline \ell} \in A$ for any $\underline \ell \in \mathbb Z^{\{2,\ldots,n\}} \times \mathbb N^{\{n+1,\ldots,m\}}$, and we conclude that $g(r)$ lies in $g(R)$, and $g$ injective implies that $r \in R$. Therefore, if $u = r_1 \dif x_1 + \cdots + r_m \dif x_m \in F$ and $pu \in \langle z_1 \dif x_1 + \cdots + z_n \dif x_n \rangle$, we see that $px_1r_1 = \cdots = px_nr_n$ and $pr_i=0$ for $i>n$. Since $R$ is $p$-torsion-free, we get $x_1r_1 = \cdots = x_nr_n$ and $r_i=0$ for $i>n$. Take $r=r_i/z_i \in R \bigl[ \frac 1 p \bigr]$ (independent of $i$), so $r \in R$, and we see that $u = r(z_1 \dif x_1 + \cdots + z_n \dif x_n)$. This shows that
\[ \cdL_{R/A} \cong F / (z_1 \dif x_1 + \cdots + z_n \dif x_n) \]
is $p$-torsion-free.
\end{ex}

\begin{prop}\label{prop: composition of p-completely lci are p-completely lci}
For $p$-complete rings $A,B,C$ of bounded $p^\infty$-torsion, if $A\to B$ and $B \to C$ are $p$-completely l.c.i, then their composition $A \to C$ is also $p$-completely l.c.i.
\end{prop}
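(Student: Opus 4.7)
The plan is to work $p$-completely Zariski locally on $C$, combining compatible presentations of $B/A$ and $C/B$ into a single presentation of $C/A$. Fix a prime of $C$ containing $p$; its image in $B$ is a prime containing $p$. Shrinking $B$ first (around the image) and then $C$ compatibly, I may assume there exist presentations
\[ B \cong A\langle T_1,\ldots,T_n\rangle / (f_1,\ldots,f_m), \quad C \cong B\langle S_1,\ldots,S_k\rangle/(g_1,\ldots,g_\ell), \]
with $(f_i)$ Koszul regular in $A\langle T\rangle$ and $(g_j)$ Koszul regular in $B\langle S\rangle$. The first step will be to identify $B\langle S\rangle$ with $A\langle T,S\rangle/(f_1,\ldots,f_m)$: using that $A\langle T\rangle \to A\langle T,S\rangle = A\langle T\rangle \cotimes_A A\langle S\rangle$ is $p$-completely flat and $B$ is classically $p$-complete of bounded $p^\infty$-torsion over $A\langle T\rangle$, Corollary \ref{cor: tensor conc on degree 0} gives $B \cotimes_{A\langle T\rangle} A\langle T,S\rangle \cong B\langle S\rangle$. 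Lifting each $g_j$ to $\tilde g_j \in A\langle T,S\rangle$, one obtains the candidate presentation
\[ C \cong A\langle T,S\rangle/(f_1,\ldots,f_m,\tilde g_1,\ldots,\tilde g_\ell). \]

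The core of the argument is showing that the concatenated sequence $(f_1,\ldots,f_m,\tilde g_1,\ldots,\tilde g_\ell)$ is Koszul regular in $A\langle T,S\rangle$. I would invoke the tensor product decomposition of Koszul complexes as dg algebras,
\[ \Kos(A\langle T,S\rangle; f, \tilde g) \cong \Kos(A\langle T,S\rangle; f) \otimes_{A\langle T,S\rangle}^\bL \Kos(A\langle T,S\rangle; \tilde g). \]
Since each term of a Koszul complex is free, $p$-completely flat base change along $A\langle T\rangle \to A\langle T,S\rangle$ transfers the Koszul regularity of $(f)$ from $A\langle T\rangle$ to $A\langle T,S\rangle$, yielding $\Kos(A\langle T,S\rangle;f) \simeq A\langle T,S\rangle/(f) \cong B\langle S\rangle$. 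Substituting and using Koszul regularity of $(g)$ in $B\langle S\rangle$,
\[ \Kos(A\langle T,S\rangle; f, \tilde g) \simeq B\langle S\rangle \otimes_{A\langle T,S\rangle}^\bL \Kos(A\langle T,S\rangle; \tilde g) \simeq \Kos(B\langle S\rangle; g) \simeq C, \]
which is concentrated in degree zero. Hence $(f, \tilde g)$ is Koszul regular, exhibiting the required local presentation of $C$ over $A$.

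The main technical obstacle is bookkeeping around the $p$-completions: one must verify that the identification $B\langle S\rangle \cong A\langle T,S\rangle/(f)$ holds (so that one is really combining the presentations rather than patching up an uncompleted polynomial ring), and that Koszul regularity is genuinely preserved under the $p$-completed polynomial extension $A\langle T\rangle \to A\langle T,S\rangle$. Both reduce to the observation that Koszul complexes are bounded complexes of finite free modules, so that derived and underived tensor products agree and $p$-completely flat base change acts as ordinary flat base change on them; together with the bounded $p^\infty$-torsion of $B$ (guaranteed by $p$-complete flatness over $A\langle T\rangle$), this lets the finished proof proceed without further derived subtleties.
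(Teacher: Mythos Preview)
Your proposal is correct and follows essentially the same approach as the paper: localize on $C$ to obtain compatible presentations of $B/A$ and $C/B$, identify $B\langle S\rangle$ with $A\langle T,S\rangle/(f)$ via completed tensor product, lift the $g_j$, and conclude that the concatenated sequence is Koszul regular. The only difference is that you spell out the Koszul regularity argument via the tensor decomposition $\Kos(R;f,\tilde g) \simeq \Kos(R;f) \otimes_R^\bL \Kos(R;\tilde g)$, whereas the paper simply cites \cite[\href{https://stacks.math.columbia.edu/tag/0669}{Tag 0669}]{stacks-project} for the same fact.
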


\begin{proof}
Assume that
\[ \widehat{B[s^{-1}]} \cong A\langle X_1,\ldots,X_n \rangle / (f_1,\ldots,f_m) \]
and
\[ \widehat{C[t^{-1}]} \cong B\langle Y_1,\ldots,Y_{n'} \rangle / (g_1,\ldots,g_{m'}). \]
Then we get a presentation through completed tensor product
\[ \widehat{C[(st)^{-1}]} \cong \Kos\bigl(\widehat{B[s^{-1}]}\langle Y_1,\ldots,Y_{n'} \rangle; g_1,\ldots,g_{m'}\bigr), \]
hence
\[ \widehat{C[(st)^{-1}]} \cong A \langle X_1,\ldots,X_n,Y_1,\ldots,Y_{n'} \rangle / (f_1,\ldots,f_m, h_1,\ldots,h_{m'}), \]
where $h_i$ is a lifting of $g_i$. By {\cite[\href{https://stacks.math.columbia.edu/tag/0669}{Tag 0669}]{stacks-project}}, we see that $f_1,\ldots,f_m,h_1,\ldots,h_{m'}$ is Koszul regular.
\end{proof}

We now restate Corollaries \ref{cor: simpson correspondence etale}, \ref{cor: fully faithfulness on small crystals}, \ref{cor: HT comparison revisited} in the case of l.c.i algebras.

\begin{theo}[Special case of Corollary \ref{cor: simpson correspondence etale}]\label{theo: simpson correspondence etale lci case}
Let $(A,I)$ be a bounded prism and $R/\overline A$ be a $p$-completely l.c.i algebra, such that $R$ is $p$-torsion-free, $\cdL_{R/\overline A}$ is concentrated on degree 0 and $p$-torsion-free. Let $P/A$ be a $\delta$-algebra satisfying Condition \ref{cond: flat + tor 0}. Fix a Cartier divisor $J \to R$ of $R$ such that $(R/J) \bigl[ \frac 1 p \bigr] =0$. Assume that we are given a map of $A$-algebras $P \to R$ such that the induced map $R \otimes_{\overline P}\cOmega_{\overline P/\overline A} \to \cdL_{R/\overline A}$ is an isomorphism after inverting $p$, and its cokernel is annihilated by $J$. Then, the functor
\[ \Higgs^\tn\bigl(R \bigl[ \frac 1 p \bigr], R \cotimes_{\overline P} \cOmega_{\overline P/\overline A} \{-1\}\bigr) \to \Vect\bigl( (R/A)_\Prism, \overline \calO \bigl[ \frac 1 p \bigr] \bigr) \]
given in Proposition \ref{prop: Higgs to crystal} is fully faithful, and for any Cartier divisor $J' \subseteq R$ with $(R/J') \bigl[ \frac 1 p \bigr] =0$, the essential image $\calC_{J'}$ of the composition
\[ \Higgs^\tn\bigl(R \bigl[ \frac 1 p \bigr], J' \cotimes_{\overline P} \cOmega_{\overline P/\overline A} \{-1\}\bigr) \to \Higgs^\tn\bigl(R \bigl[ \frac 1 p \bigr], R \cotimes_{\overline P} \cOmega_{\overline P/\overline A} \{-1\}\bigr) \to \Vect\bigl( (R/A)_\Prism, \overline \calO \bigl[ \frac 1 p \bigr] \bigr) \]
satisfies
\[ \Vect^{\smet{JJ'}}\bigl( (R/A)_\Prism, \overline \calO \bigl[ \frac 1 p \bigr] \bigr) \subseteq \calC_{J'} \subseteq \Vect^{\sm{J'}}\bigl( (R/A)_\Prism, \overline \calO \bigl[ \frac 1 p \bigr] \bigr). \]
\end{theo}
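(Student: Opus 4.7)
The plan is to reduce the statement to Corollary \ref{cor: simpson correspondence etale}: the two statements share all of their hypotheses concerning $P$, $J$, the map $\overline P \to R$, the $p$-torsion-freeness of $R$ and $\cdL_{R/\overline A}$, and the concentration of $\cdL_{R/\overline A}$ in degree zero, as well as identical conclusions. The only discrepancy is that Corollary \ref{cor: simpson correspondence etale} requires $R/\overline A$ to satisfy Condition \ref{cond: tor -1,0}, whereas here we are given the more concrete assumption that $R/\overline A$ is $p$-completely l.c.i. Consequently, the entire proof reduces to checking that a $p$-completely l.c.i algebra $R/\overline A$ (together with the other stated assumptions on $R$) satisfies Condition \ref{cond: tor -1,0}.

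For this verification, the bounded $p^\infty$-torsion and $p$-adic completeness clauses of Condition \ref{cond: tor -1,0} are immediate from the stated assumption that $R$ is $p$-torsion-free and $p$-adically complete. The remaining requirement is that $\cdL_{R/\overline A}$ have $p$-complete $\Tor$-amplitude in $[-1,0]$ over $R$. This is a $p$-completely Zariski-local condition on $R$ (by Proposition-Definition \ref{prop-defi: complete flatness}, it can be verified after applying $-\otimes_R^\bL R/p$ and then checked on an open cover), so it suffices to treat the case of a presentation
\[ R \cong \overline A \langle T_1, \ldots, T_n \rangle / (f_1, \ldots, f_m), \]
with $(f_1, \ldots, f_m)$ a Koszul-regular sequence. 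In this case the cotangent sequence together with the Koszul-regularity of $(f_i)$ supplies the explicit two-term free presentation
\[ R^{\oplus m} \xrightarrow{\mathrm{Jac}(f_\bullet)} R^{\oplus n} \to \cdL_{R/\overline A} \]
in $D(R)$, concentrated in cohomological degrees $-1$ and $0$; such a complex automatically has $\Tor$-amplitude in $[-1, 0]$, as required.

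With Condition \ref{cond: tor -1,0} established, all hypotheses of Corollary \ref{cor: simpson correspondence etale} are met, and the asserted fully-faithfulness together with the two inclusions
\[ \Vect^{\smet{JJ'}}\bigl( (R/A)_\Prism, \overline \calO \bigl[\tfrac 1 p \bigr] \bigr) \subseteq \calC_{J'} \subseteq \Vect^{\sm{J'}}\bigl( (R/A)_\Prism, \overline \calO \bigl[\tfrac 1 p \bigr] \bigr) \]
follow verbatim. There is no substantive obstacle to overcome: the only step of any content is the routine reduction of the $p$-completely l.c.i hypothesis to the $\Tor$-amplitude bound, which is handled by the local Koszul presentation above.
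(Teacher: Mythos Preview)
Your proposal is correct and matches the paper's approach: the paper gives no separate proof and simply labels the theorem a ``Special case of Corollary \ref{cor: simpson correspondence etale}'', relying on the observation (already recorded in Definition \ref{p-completely lci}) that a $p$-completely l.c.i algebra has $\cdL_{R/\overline A}$ locally given by a two-term complex of finite free modules, hence satisfies Condition \ref{cond: tor -1,0}. You have spelled out exactly this reduction.
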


\begin{theo}[cf. Corollary \ref{cor: fully faithfulness on small crystals}]\label{theo: fully faithfulness on small crystal l.c.i case}
Let $(K,K^+)/(\mathbb Q_p^\cyc,\mathbb Z_p^\cyc)$ be a perfectoid field and put $A = \dA_{\inf}(K^+)$ and $I = \ker \theta$. Let $R/K^+$ be a $p$-completely l.c.i algebra satisfying Condition \ref{cond: tor -1,0}, such that $R$ is $p$-torsion-free, $\cdL_{R/K^+}$ is concentrated on degree 0 and $p$-torsion-free. Fix a Cartier divisor $J \subseteq R$ of $R$ such that $(R/J) \bigl[ \frac 1 p \bigr] =0$.

Assume that there exists elements $x_1,\ldots,x_d \in R$ for which the map $R^{\oplus d} \to \cdL_{R/\overline A}$ given by $( \dif x_i )$ is injective with cokernel annihilated by $J$, then the composition
\[ \Vect^{\smet J}\bigl( (R/A)_\Prism, \overline \calO \bigl[ \frac 1 p \bigr] \bigr) \to \Vect\bigl( (R/A)_\Prism, \overline \calO \bigl[ \frac 1 p \bigr] \bigr) \xrightarrow{\Res_{\Spf(R)}} \Vect\bigl( (R/A)_\Prism^\perf, \overline \calO \bigl[ \frac 1 p \bigr] \bigr) \]
is fully faithful.

Moreover, denote $\cdL_{R/\overline A}^\vee$ the classical $R$-module $\Hom_R(\cdL_{R/\overline A},R)$ as in Corollary \ref{cor: fully faithfulness on small crystals}. For any Cartier divisor $J' \subseteq R$ with $(R/J') \bigl[ \frac 1 p \bigr]=0$, denote
\[ \Vect^{\sm{J'}} \bigl( (R/A)_\Prism^\perf, \overline \calO \bigl[ \frac 1 p \bigr] \bigr) \]
the category of all vector bundles $\calE \in \Vect\bigl( (R/A)_\Prism^\perf, \overline \calO \bigl[ \frac 1 p \bigr] \bigr)$ for which the Sen action of $(J')^{-1} (\zeta_p-1)^{-1}\cdL_{R/\overline A}^\vee \{1\}$ is topologically nilpotent \emph{(do not forget the $(\zeta_p-1)$-factor)}. Then, there exists a full subcategory $\calC \subseteq \Vect\bigl( (R/A)_\Prism, \overline \calO \bigl[ \frac 1 p \bigr] \bigr)$ such that for any Cartier divisor $J'$ of $R$ with $(R/J') \bigl[ \frac 1 p \bigr]=0$, we have
\begin{align*}
\Res_{\Spf(R)} \bigl( \calC \cap \Vect^{\sm{J'}} \bigl( (R/A)_\Prism, \overline \calO \bigl[ \frac 1 p \bigr] \bigr) \bigr) & \supseteq \Vect^{\sm{JJ'}} \bigl( (R/A)_\Prism^\perf, \overline \calO \bigl[ \frac 1 p \bigr] \bigr), \\
\Res_{\Spf(R)} \bigl( \Vect^{\smet{JJ'}} \bigl( (R/A)_\Prism, \overline \calO \bigl[ \frac 1 p \bigr] \bigr) \bigr) & \subseteq \Vect^{\sm{J'}} \bigl( (R/A)_\Prism^\perf, \overline \calO \bigl[ \frac 1 p \bigr] \bigr).
\end{align*}
\end{theo}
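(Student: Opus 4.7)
The plan is to reduce to Corollary~\ref{cor: fully faithfulness on small crystals} by constructing toric charts $p$-completely Zariski locally on $\Spf(R)$, then descending. Both the fully faithfulness claim on $\Vect^{\smet J}$ and the essential image inclusions are $p$-completely Zariski local on $R$: fully faithfulness descends along Zariski covers by a standard equalizer argument, and the hypotheses (the $p$-completely l.c.i.\ condition, Condition~\ref{cond: tor -1,0}, and the $(\dif x_i)$-condition) are all stable under Zariski localization. The full subcategory $\calC$ will be defined globally as the essential image of the functor from Proposition~\ref{prop: Higgs to crystal} applied to $\Higgs^\tn\bigl(R[\tfrac{1}{p}], (R\dif x_1\oplus\cdots\oplus R\dif x_d)\{-1\}\bigr)$, and its essential image properties descend via the stacky nature of $\Higgs^\tn$ (Lemma~\ref{lem: Higgs is fpqc stack}).

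Locally, we first pass to a $p$-completely Zariski cover where $R$ admits a Koszul regular presentation $R = K^+\langle Y_1,\ldots,Y_n\rangle/(f_1,\ldots,f_m)$; the hypotheses on $\cdL_{R/K^+}$ then force $n-m=d$. We next refine by the Zariski cover $\Spf(R) = D(x_i) \cup D(1+x_i)$ (valid since $(x_i, 1+x_i) = R$) to arrange that each $y_i := x_i + c_i$ (for some $c_i\in\{0,1\}$) is a unit in $R$, with $\dif y_i = \dif x_i$. Define $f \colon K^+\langle T_1,\ldots,T_d\rangle \to R$ by $T_i \mapsto y_i$; we verify that this is a toric chart in the sense of Definition~\ref{defi: toric chart}. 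Conditions (1) and (3) are immediate. For condition (2), by Theorem~\ref{theo: lci locally perfectoid} it suffices to show that the induced map $g \colon K\langle T_1,\ldots,T_d\rangle \to R[\tfrac 1 p]$ is \'etale in the sense of \cite[Definition 1.6.5]{Huber_1996}. Finite presentation of $g$ follows from the l.c.i.\ presentation of $R$, and $\cdL_{R[\frac 1 p]/K\langle T\rangle} = 0$ follows from the cotangent exact triangle
\[ R[\tfrac 1 p]^d \xrightarrow{\dif T_i \mapsto \dif y_i} \cdL_{R[\frac 1 p]/K} \to \cdL_{R[\frac 1 p]/K\langle T\rangle} \xrightarrow{+1} \]
together with the fact that $(\dif x_i)$ induces an isomorphism $R[\tfrac 1 p]^d \xrightarrow{\cong} \cdL_{R[\frac 1 p]/K}$ after inverting $p$ (since $J$ becomes a unit ideal there).

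With toric charts in hand on each piece, Corollary~\ref{cor: fully faithfulness on small crystals} applies locally, yielding both fully faithfulness and the essential image inclusions. Descent then completes the proof: for fully faithfulness, the equalizer description of $\Hom$ in a stack reduces the global statement to the local one; for the inclusion $\Res_{\Spf(R)}\bigl(\calC\cap\Vect^{\sm J'}\bigr) \supseteq \Vect^{\sm JJ'}$, given $V\in \Vect^{\sm JJ'}$ on $(R/A)_\Prism^\perf$, its local restrictions come from Higgs bundles in $\calC\cap\Vect^{\sm J'}$, and these glue via Lemma~\ref{lem: Higgs is fpqc stack} to a global Higgs bundle realizing $V$; the reverse inclusion is similar. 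The main technical obstacle is the \'etaleness verification for $g$, which as described above reduces to the cotangent complex computation once the l.c.i.\ hypothesis is used to guarantee finite presentation.
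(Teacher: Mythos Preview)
Your proposal is correct and follows essentially the same approach as the paper: define $\calC$ as the essential image of the Higgs functor from Proposition~\ref{prop: Higgs to crystal}, Zariski-localize to make the $x_i$ (or a unit translate) invertible, verify the toric chart via Theorem~\ref{theo: lci locally perfectoid} by showing $K\langle T\rangle \to R[\tfrac 1 p]$ is \'etale, then apply Corollary~\ref{cor: fully faithfulness on small crystals} and descend. The only presentational difference is in the \'etaleness step: the paper establishes that $K^+\langle T\rangle \to R$ is $p$-completely l.c.i.\ (via Proposition~\ref{prop: composition of p-completely lci are p-completely lci}) and then invokes the Jacobian criterion \cite[Proposition 1.7.1]{Huber_1996} directly, whereas you argue abstractly via vanishing of $\cdL_{R[1/p]/K\langle T\rangle}$---both are valid, but you should make explicit that the l.c.i.\ structure forces $\cdL$ to be the two-term Jacobian complex, so its vanishing gives an invertible Jacobian and hence \'etaleness in Huber's sense.
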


\begin{proof}
Put $P = A \langle T_1,\ldots,T_d \rangle$. By Corollary \ref{cor: simpson correspondence etale}, the following map
\[ \Higgs^\tn \bigl( R \bigl[ \frac 1 p \bigr], R \cotimes_{\overline P} \cOmega_{\overline P/\overline A} \{-1\} \bigr) \to \Vect\bigl( (R/A), \overline\calO\bigl[ \frac 1 p \bigr] \bigr) \]
is fully faithful. Denote $\calC$ its essential image. More generally, for any Cartier divisor $J' \subseteq R$ with $(R/J') \bigl[ \frac 1 p \bigr] =0$, we denote $\calC_{J'}$ the essential image of the composition
\[ \Higgs^\tn \bigl( R \bigl[ \frac 1 p \bigr], J' \cotimes_{\overline P} \cOmega_{\overline P/\overline A} \{-1\} \bigr) \to \Higgs^\tn \bigl( R \bigl[ \frac 1 p \bigr], R \cotimes_{\overline P} \cOmega_{\overline P/\overline A} \{-1\} \bigr) \to \Vect \bigl( (R/A), \overline\calO\bigl[ \frac 1 p \bigr] \bigr). \]
By Corollary \ref{cor: simpson correspondence etale}, we see that for any Cartier divisor $J' \subseteq R$ with $(R/J') \bigl[ \frac 1 p \bigr] =0$, we have
\[ \Vect^{\smet{JJ'}} \bigl( (R/A), \overline\calO\bigl[ \frac 1 p \bigr] \bigr) \subseteq \calC_{J'} \subseteq \calC \cap \Vect^{\sm{J'}} \bigl( (R/A), \overline\calO\bigl[ \frac 1 p \bigr] \bigr). \]
We only need to prove that $\Res_{\Spf(R)}$ is fully faithful on $\calC$, and for any $J'$ as above, we have
\[ \Vect^{\sm{JJ'}} \bigl( (R/A)^\perf, \overline\calO\bigl[ \frac 1 p \bigr] \bigr) \subseteq \Res_{\Spf(R)} (\calC_{J'}) \subseteq \Vect^{\sm{J'}} \bigl( (R/A)^\perf, \overline\calO\bigl[ \frac 1 p \bigr] \bigr). \]
Now, everything involved are $p$-completely \'etale stacks by Corollary \ref{cor: simpson correspondence etale} (and Lemma \ref{lem: Higgs is fpqc stack}), so we may check this $p$-completely \'etale locally.

Decompose $R$ into Zariski opens cut out by elements of form $y_1\cdots y_d$ (with $y_i \in \{ x_i, x_i-1 \}$), and we may assume that one of $x_i$ and $x_i-1$ is invertible for each $i$. Replace $x_i$ by $x_i-1$ if necessary, and we may assume that all $x_i$ are invertible in $R$.

By the arguments in Corollary \ref{cor: fully faithfulness on small crystals}, we only need to prove that the map
\[ A \langle T_1,\ldots,T_d \rangle \to R \]
sending $T_i$ to $x_i$ is a toric chart (Definition \ref{defi: toric chart}). In other words, we need to show that the ring
\[ \bigl( \overline A \langle T_1^{1/p^\infty}, \ldots, T_d^{1/p^\infty} \rangle \cotimes_{\overline A \langle T_1,\ldots,T_d \rangle} R \bigr) \bigl[ \frac 1 p \bigr] \]
is locally perfectoid. 

By Theorem \ref{theo: lci locally perfectoid}, we only need to show that the map
\[ K^+ \langle T_1,\ldots,T_d \rangle \to R \]
is \'etale (in the sense of \cite[Definition 1.6.5]{Huber_1996}) after inverting $p$. By writing it as the composition
\[ K^+ \langle T_1,\ldots,T_d \rangle \to R \langle T_1,\ldots,T_d \rangle \to R, \]
one sees that it is $p$-completely l.c.i. By our assumptions on $x_i$, we see that the completed cotangent complex
\[ \cdL_{R/K^+ \langle T_1,\ldots,T_d \rangle} \cong \cdL_{R/K^+} / (R \dif x_1 \oplus \cdots \oplus R \dif x_d) \]
is $J$-torsion. Zariski locally on $R$, we may write (with $\mathrm D_d^+ := K^+ \langle T_1,\ldots,T_d \rangle$)
\[ \widehat{R[s^{-1}]} \cong \Kos( \mathrm D_d^+ \langle T'_1,\ldots,T'_n \rangle ; f_1,\ldots,f_m). \]
Examine the Euler characteristic of the perfect complex, we see that $m=n$, and the Jacobian $\frac{\partial (f_1,\ldots,f_n)}{\partial (T'_1,\ldots,T'_n)}$ is annihilated by $J^n$. Therefore, Proposition \cite[Proposition 1.7.1]{Huber_1996} shows that the map
\[ K^+ \langle T_1,\ldots,T_d \rangle \bigl[ \frac 1 p \bigr] \to R \bigl[ \frac 1 p \bigr] \]
is \'etale.
\end{proof}

As a direct corollary, we have:
\begin{cor}\label{cor: fully faithfulness on small crystal l.c.i case revisited}
Let $\mathbf C$ be an algebraically closed $p$-adic non-archimedean field, and $\calO_{\mathbf C}$ be the ring of integers of $\mathbf C$. Put $\Ainf$ the prism $(\AAinf(\calO_{\mathbf C}), \ker \theta)$. Let $R$ be a $p$-completely l.c.i algebra over $\calO_{\mathbf C}$ such that $R$ is pre-smooth over $\calO_{\mathbf C}$ (see Definition \ref{intro: pre smooth}). Denote $\cdL_{R/\calO_{\mathbf C}}^\vee$ the classical $R$-module $\Hom_R(\cdL_{R/\calO_{\mathbf C}},R)$ as in Corollary \ref{cor: fully faithfulness on small crystals}. For any Cartier divisor $J' \subseteq R$ with $(R/J') \bigl[ \frac 1 p \bigr]=0$, denote
\[ \Vect^{\sm{J'}} \bigl( (R/\Ainf)_\Prism^\perf, \overline \calO \bigl[ \frac 1 p \bigr] \bigr) \]
the category of all vector bundles $\calE \in \Vect\bigl( (R/\Ainf)_\Prism^\perf, \overline \calO \bigl[ \frac 1 p \bigr] \bigr)$ for which the Sen action of $(J')^{-1} (\zeta_p-1)^{-1}\cdL_{R/\overline A}^\vee \{1\}$ is topologically nilpotent \emph{(do not forget the $(\zeta_p-1)$-factor)}.

Then, there exists $a \in \mathbf C^\times \cap \calO_{\mathbf C}$ such that the restriction functor
\[ \Res_{\Spf(R)}: \Vect((R/\Ainf)_\Prism, \overline \calO \bigl[ \frac 1 p \bigr]) \to \Vect(\Spd(R,R^+)_v, \widehat \calO) \]
is fully faithful on $\Vect^{\smet a} ((R/\Ainf)_\Prism, \overline \calO \bigl[ \frac 1 p \bigr])$, and for any Cartier divisor $J'$ as above, we have (note that we only use $\Vect^{\smet{J'}}$ instead of $\Vect^{\sm{J'}}$ here)
\begin{align*}
\Vect^{\sm{a^2J'}} \bigl( (R/\Ainf)_\Prism^\perf, \overline \calO \bigl[ \frac 1 p \bigr] \bigr) & \subseteq \Res_{\Spf(R)} \bigl( \Vect^{\smet{aJ'}} \bigl( (R/\Ainf)_\Prism, \overline \calO \bigl[ \frac 1 p \bigr] \bigr) \bigr) \\
& \subseteq \Vect^{\sm{J'}} \bigl( (R/\Ainf)_\Prism^\perf, \overline \calO \bigl[ \frac 1 p \bigr] \bigr).
\end{align*}
\end{cor}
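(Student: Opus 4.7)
By Definition \ref{intro: pre smooth}, pre-smoothness furnishes a Zariski cover $(f_1,\ldots,f_n)$ of $\Spf(R)$ and, for each $i$, an $\calO_{\mathbf C}$-algebra map $g_i\colon \calO_{\mathbf C}\langle T_1,\ldots,T_{d_i}\rangle\to \widehat{R[f_i^{-1}]}$ such that $\dif x_j^{(i)}$, with $x_j^{(i)}:=g_i(T_j)$, freely generate $\cdL_{\widehat{R[f_i^{-1}]}/\calO_{\mathbf C}}\bigl[\tfrac 1 p\bigr]$. Since $\cdL_{\widehat{R[f_i^{-1}]}/\calO_{\mathbf C}}$ is $p$-torsion-free, concentrated in degree $0$, and (being $p$-completely l.c.i., hence $p$-completely perfect) topologically finitely generated, the map $\widehat{R[f_i^{-1}]}^{\oplus d_i}\to \cdL_{\widehat{R[f_i^{-1}]}/\calO_{\mathbf C}}$ is injective with cokernel annihilated by some $p^{e_i}$. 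The cover being finite, I would take $a=p^{\max_i e_i}\in \calO_{\mathbf C}\cap \mathbf C^\times$ --- this is the element asserted by the corollary --- and set $J:=aR$, a Cartier divisor with $(R/J)\bigl[\tfrac 1 p\bigr]=0$.

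Each $\widehat{R[f_i^{-1}]}$ is still $p$-completely l.c.i.\ over $\calO_{\mathbf C}$ (by Proposition \ref{prop: composition of p-completely lci are p-completely lci}), $p$-torsion-free, and satisfies Condition \ref{cond: tor -1,0} inherited from $R$. Applying Theorem \ref{theo: fully faithfulness on small crystal l.c.i case} on each $\widehat{R[f_i^{-1}]}$ with $(K,K^+)=(\mathbf C,\calO_{\mathbf C})$, Cartier divisor $a\widehat{R[f_i^{-1}]}$, and local coordinates $x_j^{(i)}$, I obtain on every piece: (i) fully faithfulness of $\Res$ on $\Vect^{\smet a}$, and (ii) for every Cartier divisor $J'$ with $(R[f_i^{-1}]/J')\bigl[\tfrac 1 p\bigr]=0$, the inclusions of the corollary restricted to the piece. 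Concretely, the left inclusion $\Vect^{\sm{a^2 J'}}(\cdot^\perf)\subseteq\Res(\Vect^{\smet{a J'}}(\cdot))$ comes from substituting $J'\mapsto aJ'$ into the theorem's first line (noting that $\Vect^{\smet{a\cdot aJ'}}\subseteq \calC\cap \Vect^{\sm{aJ'}}$ sits inside the theorem's internal structure), and the right inclusion comes directly from its second line.

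To globalize, I would invoke Zariski descent. Both $\Vect^{\smet{aJ'}}$ (an étale stack by Definition \ref{defi: fake J-small crystal globalised}, hence a Zariski stack) and $\Vect((R/\Ainf)_\Prism^\perf, \overline\calO[\tfrac 1 p])\cong\Vect(\Spd(R,R^+)_v,\widehat\calO)$ (an fpqc stack via Theorem \ref{theo: v vector bundle=perfect crystals}) descend along the cover $\{R\to \widehat{R[f_i^{-1}]}\}$. Fully faithfulness of $\Res$ on $\Vect^{\smet a}$ is then immediate. For the essential image chain, membership in $\Vect^{\sm{J'}}(\cdot^\perf)$ is a local condition on the Sen action, hence Zariski-local, so restriction to pieces preserves it; and given $\calF\in\Vect^{\sm{a^2 J'}}(\cdot^\perf)$, its local lifts to $\Vect^{\smet{aJ'}}$ on each $\widehat{R[f_i^{-1}]}$ (provided by step (ii)) glue via the Zariski-stack property, the overlap isomorphisms being furnished uniquely by the globally established full faithfulness on $\Vect^{\smet a}\supseteq\Vect^{\smet{aJ'}}$.

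The chief technical delicacy is this last gluing step: one must verify that the Zariski-local lifts into $\Vect^{\smet{aJ'}}$, constructed piece-by-piece, actually cohere into a global object. The uniqueness of the comparison isomorphisms on double overlaps $\widehat{R[(f_if_j)^{-1}]}$ (coming from the already-established global full faithfulness on $\Vect^{\smet a}$) forces automatic cocycle compatibility on triple overlaps, but one must track carefully that the intermediate $J'$-smallness layer is preserved by the gluing and that the stacky structure of $\Vect^{\smet{aJ'}}$ (ultimately ensured by the stack property of $\Vect^{\fJsm{aJ'}}$ from Theorem \ref{theo: fake J-small connection}) is compatible with the local applications of Theorem \ref{theo: fully faithfulness on small crystal l.c.i case}. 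This is where I expect most of the bookkeeping effort to sit.
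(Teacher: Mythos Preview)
Your proposal is correct and follows essentially the same route as the paper: pick a Zariski cover from pre-smoothness, choose $a$ to annihilate all the local cokernels, apply Theorem~\ref{theo: fully faithfulness on small crystal l.c.i case} on each piece, then globalize by Zariski descent using the already-established full faithfulness to glue. One small point: your parenthetical justification for the left inclusion is slightly off---the relevant fact is simply that $\calC\cap\Vect^{\sm{aJ'}}\subseteq\Vect^{\sm{aJ'}}\subseteq\Vect^{\smet{aJ'}}$, so the first line of the theorem with $J'\mapsto aJ'$ already gives $\Vect^{\sm{a^2J'}}(\cdot^\perf)\subseteq\Res(\Vect^{\smet{aJ'}})$ locally.
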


\begin{proof}
Take a $p$-completely Zariski cover
\[ R \to \prod_{i=1}^N \widehat{R[s_i^{-1}]} \]
such that we have maps of $\calO_{\mathbf C}$-algebras
\[ \calO_{\mathbf C} \langle T_1,\ldots,T_{d_i} \rangle \to \widehat{R[s_i^{-1}]} \]
for which the images of $\dif T_i$ freely generate $\cdL_{\widehat{R[f_i^{-1}]}/\calO_{\mathbf C}} \bigl[ \frac 1 p \bigr] \cong \widehat{R[f_i^{-1}]} \cotimes_R \cdL_{R/\calO_{\mathbf{C}}} \bigl[\frac{1}{p}\bigr]$. We may take $a \in \mathbf C^\times \cap \calO_{\mathbf C}$ such that for all $i$, the cokernel of
\[ \widehat{R[f_i^{-1}]} \dif T_1 \oplus \cdots \oplus \widehat{R[f_i^{-1}]} \dif T_{d_i} \to \cdL_{\widehat{R[f_i^{-1}]}/\calO_{\mathbf C}} \bigl[ \frac 1 p \bigr] \]
is annihilated by $a$. We claim that this $a$ is exactly what we need.

Note that the categories
\[ \Vect^{\smet{J'}} \bigl( (R/\Ainf)_\Prism, \overline \calO \bigl[ \frac 1 p \bigr] \bigr), \quad \Vect^{\sm{J'}} \bigl( (R/\Ainf)_\Prism^\perf, \overline \calO \bigl[ \frac 1 p \bigr] \bigr) \]
satisfy descent of the $p$-completely Zariski topology. Therefore, using Theorem \ref{theo: fully faithfulness on small crystal l.c.i case}, we may first argue $p$-completely Zariski locally to show that $\Res_{\Spf(R)}$ is fully faithful on $\Vect^{\smet a} ((R/\Ainf)_\Prism, \overline \calO \bigl[ \frac 1 p \bigr])$. In particular
\[ \Res_{\Spf(R)} \bigl( \Vect^{\smet{aJ'}} \bigl( (R/\Ainf)_\Prism, \overline \calO \bigl[ \frac 1 p \bigr] \bigr) \bigr) \]
is a $p$-completely Zariski stack on $R$, and we may argue $p$-completely Zariski locally again to get the result for the essential image.
\end{proof}

\begin{theo}[Special case of Corollary \ref{cor: HT comparison revisited}]\label{cor: HT comparison revisited lci}
Let $(A,I)$ be a bounded prism and $R / \overline A$ be a $p$-completely l.c.i algebra. Assume that $R$ and $\cdL_{R/\overline A}$ are concentrated on degree 0 and $p$-torsion-free, such that $\cdL_{R/\overline A} \bigl[ \frac 1 p \bigr]$ is finite projective over $R \bigl[ \frac 1 p \bigr]$. Then,
\begin{enumerate}[label=(\arabic*)]
\item for any $M \in \widehat D(R)$ and integer $n \ge 0$, the map
\[ M \otimes_R^\bL \bigwedge\nolimits_R^n(\cdL_{R/\overline A}) \to M \cotimes_R^\bL \cbigwedge_R^n(\cdL_{R/\overline A}) \]
is an isomorphism; moreover, if $M$ is concentrated on degree 0 of bounded $p^\infty$-torsion, then there exists $k \ge 0$ independent of $n$ and $M$, such that
\[ p^{nk}\hol^i \bigl( M \otimes_R^\bL \bigwedge\nolimits_R^n(\cdL_{R/\overline A}) \bigr) =0, \forall i \ne 0 \]
and, for $M[p^\infty] = M[p^{n_0}]$,
\[ \hol^0 \bigl( M \otimes_R^\bL \bigwedge\nolimits_R^n(\cdL_{R/\overline A}) \bigr) [p^\infty] = \hol^0 \bigl( M \otimes_R^\bL \bigwedge\nolimits_R^n(\cdL_{R/\overline A}) \bigr) [p^{nk+n_0}]. \]

\item the colimit in $D(R)$
\[ \colim_n \Fil_n(\overline\Prism_{R/A}) \]
is already derived $p$-adically complete; therefore, the Hodge--Tate filtration on $\overline\Prism_{R/A}$ is exhaustive in $D(R)$;

\item if $\cdL_{R/\overline A}$ is finitely generated over $R$, the Hodge--Tate filtration on the $\dE_\infty$-ring $\overline \Prism_{R/A} \bigl[ \frac 1 p \bigr]$ induces an isomorphism of cohomology rings
\[ \hol^*\bigl( \overline \Prism_{R/A} \bigl[ \frac 1 p \bigr] \bigr) \cong \bigwedge\nolimits^*_{R[\frac 1 p]} \bigl(\cdL_{R/\overline A} \bigl[ \frac 1 p \bigr] \bigr)\]
\end{enumerate}
\end{theo}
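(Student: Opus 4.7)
The plan is straightforward: Corollary \ref{cor: HT comparison revisited lci} should be derived as an immediate specialization of Corollary \ref{cor: HT comparison revisited}, since all three conclusions are essentially verbatim the same. The only genuine verification required is that the hypothesis of being $p$-completely l.c.i, together with the further explicit assumptions, implies the hypotheses of Corollary \ref{cor: HT comparison revisited}; in particular, that Condition \ref{cond: tor -1,0} holds.

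First I would verify that $p$-completely l.c.i implies $p$-complete $\Tor$-amplitude in $[-1,0]$ for $\cdL_{R/\overline A}$. By Definition \ref{p-completely lci}, we may work $p$-completely Zariski locally and write $R \cong A\langle T_1,\ldots,T_n\rangle/(f_1,\ldots,f_m)$ with $(f_1,\ldots,f_m)$ Koszul regular. The transitivity triangle for $\overline A \to \overline A\langle T_1,\ldots,T_n\rangle \to R$, combined with Koszul regularity of $(f_i)$, gives a local two-term finite free presentation $R^{\oplus m} \to R^{\oplus n}$ of $\cdL_{R/\overline A}$. By Proposition-Definition \ref{prop-defi: complete flatness}, $p$-complete $\Tor$-amplitude can be tested after $- \otimes_R^\bL R/p$, hence it is local in the $p$-completely Zariski topology, and the bound $[-1,0]$ propagates from the local to the global situation. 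Combined with $R$ being $p$-complete of bounded $p^\infty$-torsion (which is already in the hypotheses, using that $R$ is assumed concentrated on degree 0 and $p$-torsion-free), this gives Condition \ref{cond: tor -1,0}.

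Second, the remaining hypotheses of Corollary \ref{cor: HT comparison revisited} — $R$ and $\cdL_{R/\overline A}$ concentrated on degree 0 and $p$-torsion-free, and $\cdL_{R/\overline A}\bigl[\frac 1 p\bigr]$ finite projective over $R\bigl[\frac 1 p\bigr]$ — are explicitly assumed in the statement of \ref{cor: HT comparison revisited lci}. Consequently, parts (1) and (2) transfer verbatim from parts (1) and (2) of \ref{cor: HT comparison revisited}. For (3), the statement of the cohomology ring isomorphism is literally the same as that in \ref{cor: HT comparison revisited}(3), induced by the exhaustive Hodge--Tate filtration on $\overline\Prism_{R/A}\bigl[\frac 1p\bigr]$ from (2); the auxiliary hypothesis that $\cdL_{R/\overline A}$ is finitely generated over $R$ (which, by the perfectness noted in Definition \ref{p-completely lci}, holds $p$-completely Zariski locally and is a clean global condition to add) only serves to ensure that each graded piece $\bigwedge\nolimits^n_{R[\frac 1p]}(\cdL_{R/\overline A}[\frac 1p])$ is finite projective, so that the exterior algebra on the right-hand side is well-understood term-by-term.

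There is no substantial obstacle: the heavy lifting is already contained in Corollary \ref{cor: HT comparison revisited} (which in turn rests on Proposition \ref{prop: factor-through connection} and the filtered de Rham realization of Theorem \ref{theo: main dR realization revisit}). The only mildly delicate point in this proposal is the Zariski-local-to-global step for the $\Tor$-amplitude verification, but this is formal after reduction modulo $p$ since the cover is $p$-completely faithfully flat.
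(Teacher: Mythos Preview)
Your proposal is correct and matches the paper's approach: the paper gives no proof and simply labels the theorem as a ``Special case of Corollary \ref{cor: HT comparison revisited}'', so the only content is verifying that $p$-completely l.c.i.\ implies Condition \ref{cond: tor -1,0}, which you handle correctly via the local two-term free presentation and Zariski descent of $p$-complete $\Tor$-amplitude after reduction mod $p$.

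One small remark on your discussion of (3): the extra hypothesis ``$\cdL_{R/\overline A}$ is finitely generated over $R$'' is in fact superfluous for the conclusion, since the general Corollary \ref{cor: HT comparison revisited}(3) does not require it and $\cdL_{R/\overline A}\bigl[\tfrac 1 p\bigr]$ is already assumed finite projective (so its exterior powers are automatically finite projective). Your rationalization of why the authors included it is speculative; but this does not affect correctness, as an additional hypothesis only weakens the statement.
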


\subsection{Prismatic non-abelian Hodge theory of irrational disks}\label{sect: irrational disks}
Denote $\bfC$ the completion of the algebraic closure of $\dQ_p$. Equip $\bfC$ with the standard norm in which $|p| = p^{-1}$. Fix a coherent system of powers of $p$ ($p^r$ for each $r \in \mathbb Q$) in $\bfC$.

For $r \in \dR$, we define the irrational disk over $\calO_{\bfC}$ as follows:
\[ \calO_{\bfC} \bigl\{ \frac{T}{p^r} \bigr\} = \left\{ \sum_{n=0}^\infty a_n \in \bfC [[T]] \middle| p^{-nr}|a_n| \le 1, \lim_{n \to \infty} p^{-nr}|a_n| \to 0 \right\}, \]
which is $p$-complete and $p$-completely flat over $\calO_{\bfC}$. For $r \in \dQ$, it is the completed polynomial ring with parameter $T/p^r$, and for arbitrary $r \in \dR$, we have (note that each term of the colimit is complete, so Lemma \ref{lem: p-complete module as classical completion} guarantees that we can take the classical completion)
\[ \calO_{\bfC} \bigl\{ \frac{T}{p^r} \bigr\} = \ccolim_{s\le r, s \in \mathbb Q} \calO_{\bfC} \bigl\{ \frac{T}{p^s} \bigr\}. \]
In particular, for all $R = \calO_{\bfC} \bigl\{ \frac{T}{p^r} \bigr\}$, the ring $R$ is $p$-complete and $p$-completely flat over $\calO_{\bfC}$, and the module $\cdL_{R/\calO_{\bfC}}$ is $p$-completely flat over $R$.

We will apply our machinaries (Corollary \ref{cor: fully faithfulness on small crystals}) to this case, and we have the following theorem.

\begin{theo}\label{theo: fully faithfulness on small crystals irrational disk}
For $(A,I) = (\dA_{\inf}(\calO_{\bfC}), \ker \theta)$ and $R = \calO_{\bfC} \bigl\{ \frac{T}{p^r} \bigr\}$, the restriction map
\[ \Vect\bigl((R/A)_\Prism, \overline\calO \bigl[ \frac 1 p \bigr]\bigr) \to \Vect\bigl((R/A)_\Prism^\perf, \overline\calO \bigl[ \frac 1 p \bigr]\bigr) \]
induces an equivalence on the full subcategories
\[ \bigcup_{n >0} \Vect^{\sm{p^{1/n}}} \bigl( (R/A)_\Prism, \overline\calO \bigl[ \frac 1 p \bigr] \bigr) = \bigcup_{n >0} \Vect^{\smet{p^{1/n}}} \bigl( (R/A)_\Prism, \overline\calO \bigl[ \frac 1 p \bigr] \bigr) \subseteq \Vect \bigl( (R/A)_\Prism, \overline\calO \bigl[ \frac 1 p \bigr] \bigr) \]
and the full subcategory $\calC'$ of vector bundles $\calE \in \Vect\bigl((R/A)_\Prism^\perf, \overline\calO \bigl[ \frac 1 p \bigr]\bigr)$ for which the Sen action of $p^{-1/n} (\zeta_p-1)^{-1} \cOmega_{R/\overline A}^\vee$ is topologically nilpotent for some $n \in \mathbb Z_+$ (where $\cOmega_{R/\overline A}^\vee$ is defined as the classical $R$-module $\Hom_R(\cOmega_{R/\overline A},R)$).
\end{theo}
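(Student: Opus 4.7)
The plan is to reduce to Corollary~\ref{cor: fully faithfulness on small crystals} via a Zariski cover of $\Spa(R[\tfrac 1 p], R^\circ)$. I would first verify the basic hypotheses: $R = \calO_{\bfC}\{T/p^r\}$ is $p$-torsion-free, satisfies Condition~\ref{cond: tor -1,0}, and $\cdL_{R/\calO_{\bfC}}$ is $p$-completely flat, concentrated on degree~$0$, $p$-torsion-free, of rank one. These properties are inherited from the $p$-completed colimit description $R = \ccolim_{s \in \dQ,\, s < r} R_s$ with $R_s = \calO_{\bfC}\{T/p^s\}$ each $p$-completely smooth over $\calO_{\bfC}$. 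A direct computation along this colimit further shows that the submodule $R \cdot \dif T \subseteq \cOmega_{R/\calO_{\bfC}}$ has cokernel annihilated precisely by $p^r \in \calO_{\bfC}$ (using the fixed system of powers of $p$), but by no smaller $p^\gamma$.

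Second, I would produce suitable toric charts. The obvious candidate map $\Ainf \langle T_1 \rangle \to R,\ T_1 \mapsto T,$ satisfies conditions (1) and (2) of Definition~\ref{defi: toric chart}: a Frobenius-on-mod-$p$ calculation shows that the completed pushout $R \cotimes_{\Ainf \langle T_1 \rangle} \Ainf \langle T_1^{1/p^\infty} \rangle \cong \calO_{\bfC}\{T^{1/p^\infty}/p^r\}$ is integral perfectoid. Condition (3) however fails since $T \notin R^\times$. I would circumvent this by covering $\Spa(R[\tfrac 1 p], R^\circ)$ by rational subdomains of two types: annular pieces $U_{c,s,s'} = \{|p^{s'}| \le |T - c| \le |p^s|\}$ for centers $c \in \calO_{\bfC}$ with $|c| \le p^r$ and rationals $s' < s$ close to $r^-$ from below, together with their central complements $V_{c,s'} = \{|T - c| \le |p^{s'}|\}$. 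On each $U_{c,s,s'}$ the element $T - c$ is invertible of controlled norm, and the same Frobenius argument applied to $T - c$ in place of $T$ shows that the chart $T_1 \mapsto T - c$ satisfies all three conditions of Definition~\ref{defi: toric chart}. Each $V_{c,s'}$, after the translation $T \mapsto T - c$, is a rational subdisk of strictly smaller radius; for rational $s'$ these are $p$-completely smooth Tate algebras, treated by the smooth-case version of the theorem, while for the irrational boundary one applies the argument recursively.

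Applying Corollary~\ref{cor: fully faithfulness on small crystals} on each piece of the cover yields local fully-faithfulness and the essential image characterization, together with an explicit Cartier divisor $J = (p^\gamma)$. By refining the cover (taking $s, s' \to r^-$ and using a sufficiently dense set of centers $c$) the exponent $\gamma$ can be made arbitrarily close to $0$. Therefore, on the perfectoid side the union of local images is $\bigcup_{\gamma>0} \Vect^{\sm{p^\gamma}}(\text{perf})$, which is exactly $\calC'$ in the statement; on the prismatic side the union of local sources is the full subcategory $\bigcup_n \Vect^{\sm{p^{1/n}}}$. The local equivalences then glue to the global one by the $p$-complete fpqc descent satisfied by both sides, the perfectoid side being identified with $v$-vector bundles on $\Spa(R[\tfrac 1 p], R^\circ)$ via Theorem~\ref{theo: v vector bundle=perfect crystals}. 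The main obstacle will be the bookkeeping of Cartier divisors across the cover and checking that the smallness subcategories satisfy the requisite descent, which I expect to reduce to routine manipulations with the Hodge--Tate filtration and the explicit shape of the cover.
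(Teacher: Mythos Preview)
Your approach is more complicated than necessary and has a genuine gap. The paper does not cover $\Spa(R[\tfrac 1 p], R^\circ)$ at all: instead it constructs a \emph{single global} toric chart on $R$ by taking $P = A\langle X\rangle$ with $\delta(X)=0$ and sending $X \mapsto 1 + p^{-t}T$ for a rational $t$ with $0 < r - t < \min\{1,s\}$. Since one may pick a rational $u$ with $t < u < r$, the element $p^{-t}T = p^{u-t}\cdot(T/p^u)$ is topologically nilpotent in $R$, so $1 + p^{-t}T$ is a unit and condition (3) of Definition~\ref{defi: toric chart} holds directly on all of $R$. The perfectoidness of the Kummer tower is then checked by an explicit manipulation with $\calO_{\bfC}\langle X^{1/p^\infty}\rangle$. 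Because $\dif X = p^{-t}\,\dif T$ generates $\cOmega_{R/\calO_{\bfC}}$ up to a cokernel annihilated by $p^s$, Corollary~\ref{cor: fully faithfulness on small crystals} applies once with $J = (p^s)$; letting $s \to 0^+$ (equivalently $t \to r^-$) produces the union over all $p^{1/n}$. No cover, no recursion.

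The gap in your plan is the descent step. Your cover is by rational subdomains of the adic generic fiber, but Corollary~\ref{cor: fully faithfulness on small crystals}, the categories $\Vect^{\sm J}$ and $\Vect^{\smet J}$, and the stack property established for $\Vect^{\fJsm J}$ just before Definition~\ref{defi: fake J-small crystal globalised} are all formulated for $p$-complete rings and for $p$-complete \'etale or quasi-syntomic covers of $\Spf(R)$. An annulus $U_{c,s,s'}$ is not of the form $\Spf\bigl(\widehat{R[f^{-1}]}\bigr)$ for any $f \in R$, so neither the local application of Corollary~\ref{cor: fully faithfulness on small crystals} to its integral model nor the gluing of the smallness subcategories across such a cover is available from the paper's toolkit. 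Extending the theory to analytic covers would be substantial extra work, whereas the single-chart trick $X = 1 + p^{-t}T$ sidesteps the issue entirely.
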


\begin{proof}
Fix $s \in \dQ_+$.

Take $t \in \dQ$ with $0<r-t<\min \{ 1,s \}$. Take $P = A \langle X \rangle$ with $\delta(X)=0$, and $P \to R$ be defined by $X \mapsto 1+ p^{-t}T$. We claim that the map $P \to R$ is a toric chart for $R$, such as the cofibre of
\[ \calO_{\bfC} \bigl\{ \frac T {p^r} \bigr\} \dif \bigl( \frac T{p^t} \bigr) \to \cdL_{\calO_{\bfC} \{ \frac T {p^r} \}/\calO_{\bfC}} \]
is concentrated on degree 0 and annihilated by $p^s$. Once we have proved this, the theorem would follow from Corollary \ref{cor: fully faithfulness on small crystals}.

Note that $1+p^{-t}T$ is invertible in $R$, so we only need to show that for the ring
\[ R' := R \cotimes_{\calO_{\bfC}\langle X \rangle} \calO_{\bfC} \langle X^{1/p^\infty} \rangle \]
(with $X \mapsto 1+p^{-t}T$), the Tate ring $R' \bigl[ \frac 1 p \bigr]$ is perfectoid. Rewrite $T = p^t(X-1)$, the ring $R'$ may be rewritten as the ring (here we use that $\calO_{\bfC}\langle T/p^{u+t} \rangle \cong \Kos(\calO_{\bfC}\langle T',T/p^t \rangle; p^uT'-T/p^t )$)
\[ \ccolim_{u \in \dQ, u\le r-t} \Kos( \calO_{\bfC}\langle X^{1/p^\infty},T' \rangle; p^uT'-(X-1)). \]

Denote the perfectoid ring
\[ A' = \calO_{\bfC} \langle X^{1/p^\infty}\rangle. \]
Denote $Y = X^\flat - 1 \in (A')^\flat$, and we see that $Y^\sharp - (X-1) \in pA'$. Observe that $u \le r-t < 1$, so the ring $\Kos( \calO_{\bfC}\langle X^{1/p^\infty},T' \rangle; p^uT'-(X-1))$ is the same as
\[ \Kos(A'\langle T' \rangle; p^uT'- Y^\sharp ). \]
Taking the colimit, and we see that
\[ R' \cong \Kos \bigl( A' \bigl\{ \frac{T'}{p^{r-t}} \bigr\}; T' - Y^\sharp \bigr). \]
Note that the map
\[ \calO_{\bfC} \langle Z^{1/p^\infty} \rangle \to A' \]
sending $Z^\flat$ to $Y$ is an isomorphism. Therefore, we see that
\[ R' \cong \Bigl( \colim_{u \in \dQ, u \le r-t} \calO_{\bfC} [Z^{1/p^\infty}][Z/p^u] \Bigr)^\land. \]
We construct the ring
\[ R'' = \Bigl( \colim_{u \in \dQ, u \le r-t} \calO_{\bfC}[(Z/p^u)^{1/p^\infty}] \Bigr)^\land = \ccolim_{u \in \dQ, u \le r-t} \calO_{\bfC} \langle (Z/p^u)^{1/p^\infty} \rangle, \]
and we see that the cokernel of $R' \to R''$ is annihilated by $p^k$ (where $k \in \dQ$ is any rational number greater or equal than $r-t$). Therefore $R' \bigl[ \frac 1 p \bigr] \cong R'' \bigl[ \frac 1  p \bigr]$. Now, $R''$ is a completed colimit of integral perfectoid rings, so $R''$ is integral perfectoid (because $\Frob: R''/p^{1/p} \to R''/p$ is an isomorphism).

By the above calculations, we see that $P \to R$ is a toric chart for $R$. Observe that for $s' \in \dQ$ and $t < s'\le r$, the cofibre of
\[ \calO_{\bfC} \bigl\{ \frac T {p^{s'}} \bigr\} \dif \bigl( \frac T{p^t} \bigr) \cong \calO_{\bfC} \bigl\{ \frac T {p^{s'}} \bigr\} \cotimes_{\calO_{\bfC} \{ \frac T {p^t} \}} \cdL_{\calO_{\bfC} \{ \frac T {p^t} \} / \calO_{\bfC}} \to \cdL_{\calO_{\bfC} \{ \frac T {p^{s'}} \}/\calO_{\bfC}} \]
is concentrated on degree 0 and annihilated by $p^s$ (since $s>r-t$). Taking the colimit, and we see that the cofibre of
\[ \calO_{\bfC} \bigl\{ \frac T {p^r} \bigr\} \dif \bigl( \frac T{p^t} \bigr) \to \cdL_{\calO_{\bfC} \{ \frac T {p^r} \}/\calO_{\bfC}} \]
is concentrated on degree 0 and annihilated by $p^s$. We are done Corollary \ref{cor: fully faithfulness on small crystals}.
\end{proof}

\appendix
\section{Appendix: Descent theory of $p$-completely faithfully flat ring maps --- elementary proof}
\newcommand\bAlg{\widehat\Alg^\bd}
\newcommand\bMod[1]{\widehat\Mod^\bd_{#1}}
\newcommand\aMod[1]{\Mod^\ad_{{#1}}}

In this appendix, we are going to prove Theorem \ref{thm: descent theory} in an elementary way --- in the case that we are interested in. Also, this appendix contains some useful lemmas that we will frequently use, such as Lemma \ref{lem: basic properties of derived complete modules} and Proposition \ref{prop: finitely generated adic modules}.

Throughout the appendix, denote $\bAlg$ the category of classically $p$-complete commutative rings of bounded $p^\infty$-torsion. For any ring $R \in \bAlg$, we denote $\bMod R$ the category of classically $p$-adically complete $R$-modules of bounded $p^\infty$-torsion, and $\aMod R$ the category of adic modules over $R \bigl[ \frac 1 p \bigr]$ with morphisms being continuous maps (for notational convenience, we use $\aMod R$ instead of $\aMod{R[1/p]}$). A sequence in $\bMod R$ or $\aMod R$ is called exact, if and only if it is exact when regarded as a sequence of $R$-modules.

We will make use of the following lemma:

\begin{lemma}\label{lem: basic properties of derived complete modules}\mbox{}
\begin{enumerate}[label=(\alph*)]
\item Let $C \in D(\mathbb Z)$ be a complex, then $C$ is derived $p$-adically complete if and only if all cohomology groups $\hol^i(C)$ are derived $p$-adically complete;
\item Assume that an abelian group $C$ is derived $p$-adically complete, and $C \bigl[ \frac 1 p \bigr] =0$, then $C$ is annihilated by a power of $p$;
\item Assume that an abelian group $C$ is derived $p$-adically complete, and $C=pC$, then $C=0$.
\end{enumerate}
\end{lemma}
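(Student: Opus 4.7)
Part (a) is immediate from Lemma \ref{lem: t-structure on complete modules}(1): the exact sequences exhibited there express $\hol^\bullet T_p(C)$ in terms of $T_p$ applied to the individual cohomology groups of $C$, so $T_p(C) \cong 0$ if and only if $T_p(\hol^i(C)) = 0$ for each $i$. For (c), given $x \in C$ use $C = pC$ to inductively choose $x_0 = x, x_1, x_2, \ldots$ with $p x_{n+1} = x_n$. This coherent sequence determines a $\mathbb Z$-linear map $\mathbb Z[1/p] \to C$, hence an element of $\Hom_{\mathbb Z}(\mathbb Z[1/p], C) \cong \hol^0 T_p(C)$. Since $C$ is derived $p$-complete, $T_p(C) \cong 0$, so this element vanishes, giving $x = x_0 = 0$ and hence $C = 0$.

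For (b), my plan proceeds in two main steps. In the first step, let $D_{\max} \subseteq C$ denote the maximal $p$-divisible subgroup; it exists because the sum of divisible subgroups is again divisible. Being divisible, $D_{\max}$ is injective in $\mathrm{Ab}$, hence a direct summand of $C$, and is therefore derived $p$-complete (as a direct summand of a derived complete module). Applying (c) to $D_{\max}$ yields $D_{\max} = 0$. For $p^\infty$-torsion $C$, a standard pure-subgroup argument (coherently constructing $p$-power roots using the torsion hypothesis) identifies $D_{\max}$ with $\bigcap_n p^n C$, so this intersection vanishes and $C$ is $p$-adically Hausdorff.

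The second step is to upgrade this Hausdorffness to a uniform torsion bound $p^N C = 0$, and this constitutes the main obstacle. The plan is to argue by contradiction: assuming $C$ has unbounded $p$-power torsion, choose elements $a_n \in C$ of order exactly $p^{n+1}$. The elements $b_n := p^n a_n \in (C[p] \cap p^n C) \setminus \{0\}$ are then non-zero $p$-torsion elements lying arbitrarily deep in the $p$-adic filtration of $C$, obstructing the Mittag--Leffler property of the tower $(\cdots \xleftarrow{p} C \xleftarrow{p} C)$. Exploiting these ``deeply divisible'' torsion elements, one constructs a sequence $(y^{(m)}) \in \prod_m C$ whose $p$-torsion orders tend to infinity but for which the equations $x^{(m)} - p x^{(m+1)} = y^{(m)}$ admit no solution $(x^{(m)}) \in \prod_m C$. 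The resulting non-vanishing class in $\hol^1 T_p(C)$ contradicts the derived-completeness hypothesis $T_p(C) \cong 0$, completing the proof. Making this obstruction class completely explicit (in particular, choosing the $y^{(m)}$ so that the ``defect'' from the $b_n$ accumulates out of $\prod_m C$) is the technical heart of the argument.
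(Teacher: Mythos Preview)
Your treatment of (a) and (c) is correct and matches the paper's approach: the paper obtains (a) by citing the Stacks Project (the same content is recorded in Lemma~\ref{lem: t-structure on complete modules}, which you invoke), and for (c) gives exactly your argument via $\Hom(\mathbb Z[1/p],C)=0$.

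For (b), the paper simply cites \cite[\href{https://stacks.math.columbia.edu/tag/0CQY}{Tag 0CQY}]{stacks-project} and gives no direct argument, whereas you attempt one. Your plan has a genuine gap. In Step~1, the identification $D_{\max}=\bigcap_n p^nC$ is \emph{not} a standard fact for $p$-primary torsion groups and is in fact false in general: reduced abelian $p$-groups of Ulm length $\ge 2$ have $D_{\max}=0$ yet $\bigcap_n p^nC\neq 0$. The ``coherent $p$-power roots'' heuristic fails because from $x=p^{n}x_n$ one only obtains $p\,x_{n+1}-x_n\in C[p^n]$, and there is no mechanism to absorb these defects without further input. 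Since you have not used derived completeness at that point (beyond killing $D_{\max}$), Hausdorffness is unjustified. In Step~2 you acknowledge that the explicit obstruction class in $\hol^1 T_p(C)$ is the ``technical heart'' and leave it unconstructed; note also that failure of Mittag--Leffler does not by itself force $R^1\lim\neq 0$, so the sketch is not yet a proof. Given that the paper is content to cite the Stacks Project here, you should either do the same or consult Tag~0CQY directly for a complete argument.
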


\begin{proof}
The item (a) is (1) and (5) in \cite[\href{https://stacks.math.columbia.edu/tag/091P}{Tag 091P}]{stacks-project}.

The item (b) is \cite[\href{https://stacks.math.columbia.edu/tag/0CQY}{Tag 0CQY}]{stacks-project}.

As for (c), if $C$ is derived $p$-adically complete, then $\Hom (\mathbb Z[1/p],C) =0$. Since $C=pC$, it is easy to see that any group homomorphism $f: \mathbb Z \to C$ extends to a group homomorphism $\mathbb Z[1/p] \to C$. Therefore, $\Hom_\mathbb Z(\mathbb Z,C) =0$, and we see that $C=0$.
\end{proof}

\begin{lemma}\label{lem: left exactness}
The additive categories $\bMod R$ and $\aMod R$ have kernels. Moreover, a complex
\[ 0 \to X \to Y \to Z \]
in either of these categories is left exact, if and only if $X \to \ker ( Y \to Z )$ is an isomorphism.
\end{lemma}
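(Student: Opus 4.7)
The plan is to handle the two categories separately, constructing kernels at the level of $R$-modules and then verifying that the resulting objects sit inside $\bMod R$ or $\aMod R$ with the correct universal property. The final sentence of the lemma, about when the canonical map $X \to \ker(Y \to Z)$ is an isomorphism, then reduces to showing that an $R$-module isomorphism between objects of $\bMod R$ (resp.\ continuous $R\bigl[\frac{1}{p}\bigr]$-module bijection between objects of $\aMod R$) is automatically an isomorphism in the respective category.

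First, for $\bMod R$: given $f\colon Y\to Z$, the set-theoretic kernel $K=\ker(f)$ is a closed $R$-submodule of $Y$ (closed because $f$ is continuous, as every $R$-linear map between $p$-adically complete modules sends $p^nY$ into $p^nZ$). Since $Y/K$ injects into $Z$ and $Z$ has bounded $p^\infty$-torsion, so does $Y/K$. Lemma~\ref{lem: closed submodule}(1) then tells us that $K$ is classically $p$-adically complete and that its induced topology coincides with the $p$-adic topology; $K$ inherits bounded $p^\infty$-torsion as a subgroup of $Y$, so $K\in\bMod R$. Any $W\in\bMod R$ admitting an $R$-linear map $W\to Y$ killed by $f$ factors uniquely through $K$, and since every $R$-linear map between $p$-adically complete modules is automatically continuous for the $p$-adic topology, the factorization lives in $\bMod R$.

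For $\aMod R$ the argument is parallel: $K=\ker(f)$ is closed in $Y$ by continuity of $f$, and Lemma~\ref{lem: closed submodule}(2) exhibits $K$ as an adic module over $R\bigl[\frac{1}{p}\bigr]$ with the subspace topology. The universal property of $K$ as a kernel in $\aMod R$ is again immediate, since any continuous map $W\to Y$ killed by $f$ restricts to a continuous map $W\to K$ for the subspace topology, which is the adic topology on $K$.

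Finally, for the equivalence at the end of the lemma, note that the canonical map $X\to K$ is always continuous (it is obtained by factoring the continuous map $X\to Y$), and exactness of $0\to X\to Y\to Z$ as a sequence of $R$-modules is exactly the statement that $X\to K$ is bijective. In $\bMod R$, any $R$-linear bijection between $p$-adically complete $R$-modules of bounded $p^\infty$-torsion is automatically a topological isomorphism (both it and its inverse are automatically continuous), so there is nothing more to check. In $\aMod R$, the delicate step—and the only real obstacle—is upgrading a continuous $R\bigl[\frac{1}{p}\bigr]$-linear bijection to a topological isomorphism; this is precisely what the Banach open mapping theorem (Theorem~\ref{theo: banach open mapping}) provides, since a continuous surjection between adic modules is open and hence a homeomorphism when it is also injective.
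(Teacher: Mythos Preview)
Your proof is correct and follows essentially the same approach as the paper. The only minor variation is that for $\bMod R$ you invoke Lemma~\ref{lem: closed submodule}(1) (using that $Y/K$ has bounded $p^\infty$-torsion) to see $K$ is classically $p$-complete, whereas the paper cites \cite[\href{https://stacks.math.columbia.edu/tag/091P}{Tag 091P}]{stacks-project} to get derived $p$-completeness directly; both routes land in the same place, and the remaining steps (including the appeal to Banach's open mapping theorem for the $\aMod R$ case) are identical.
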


\begin{proof}
First we work in $\bMod R$. Let $f: Y \to Z$ be a map in $\bMod R$, and $X = \ker f$. By \cite[\href{https://stacks.math.columbia.edu/tag/091P}{Tag 091P}]{stacks-project}, we see that $X$ is derived $p$-adically complete. As $X \subseteq Y$, we see that $X$ is of bounded $p^\infty$-torsion, and $X \in \bMod R$. Since $\bMod R \to \Mod_R$ is fully faithful, it detects limits, and we see that $X$ is indeed the kernel of $f$. The `if and only if' statement also follows from the fully faithfulness.

Now we work in $\aMod R$. Let $f: Y \to Z$ be a map in $\aMod R$. We may choose appropriate modules of definition $Y_0 \subseteq Y$ and $Z_0 \subseteq Z$, such that $f(Y_0) \subseteq Z_0$. Take $X_0 = \ker f \cap Y_0$ and $X = \ker f$, and we see that
\[ X = X_0 \bigl[ \frac 1 p \bigr]. \]
Since $Z$ is $p$-torsion free, we see that $p^n X_0 = p^n Y_0 \cap X_0$ for all $n$. Equip $X \subseteq Y$ with the closed subspace topology. By the above discussions, we see that $X$ is adic with module of definition $X_0$. It is easy to see that $X$ is the kernel of $Y \to Z$ in $\aMod R$.

To prove the last statement, we only need to prove that the forgetful functor $\aMod R \to \Mod_R$ is conservative (i.e. detects isomorphisms). This is a direct consequence of Banach's open mapping theorem (Theorem \ref{theo: banach open mapping}).
\end{proof}

\begin{lemma}\label{lem: tensor exact sequence}
For $A \in \bAlg$, $C \in D(A)$ and $M \in \bMod A$, if $M$ is $p$-completely flat over $A$, then we have short exact sequences
\[ 0 \to \hol^0 ( M \cotimes_A^\bL \hol^n(C)) \to \hol^n( M \cotimes_A^\bL C) \to \hol^{-1} ( M \cotimes_A^\bL \hol^{n+1}(C) ) \to 0. \]

For any $A$-module $X$, we have the short exact sequence
\[ 0 \to \clim 1 M \otimes_A (X[p^n]) \to \hol^0 ( M \cotimes_A^\bL X) \to \clim 0 M \otimes_A X/p^n X \to 0. \]
\end{lemma}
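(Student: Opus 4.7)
The plan is to prove the second (concrete) statement directly from the derived completion formula together with the Milnor short exact sequence for $\lim$, and then deduce the first (abstract) statement from it via the Postnikov filtration of $C$ once we know that $M \cotimes_A^\bL(-)$ has $p$-complete Tor-amplitude $[-1,0]$.

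For the second statement, I would unwind the definition to write
\[ M \cotimes_A^\bL X \cong \lim_n\bigl(M \otimes_A^\bL X \otimes_{\mathbb Z}^\bL \mathbb Z/p^n\bigr), \]
and recognise $X \otimes_\mathbb Z^\bL \mathbb Z/p^n$ as the two-term Koszul complex $[X \xrightarrow{p^n} X]$, with cohomology $X[p^n]$ in degree $-1$ and $X/p^n X$ in degree $0$. Both of these groups are $p^n$-torsion, so the $p$-complete flatness of $M$ (via Proposition-Definition \ref{prop-defi: complete flatness}) gives
\[ M \otimes_A^\bL (X/p^n X) = M \otimes_A (X/p^n X), \qquad M \otimes_A^\bL (X[p^n]) = M \otimes_A (X[p^n]), \]
each concentrated in degree $0$. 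The long exact sequence for the triangle $X[p^n][1] \to [X \xrightarrow{p^n} X] \to X/p^n X$ then identifies the cohomology of $M \otimes_A^\bL[X \xrightarrow{p^n} X]$ as $M \otimes_A X[p^n]$ in degree $-1$ and $M \otimes_A X/p^n X$ in degree $0$. Feeding this into the Milnor short exact sequence
\[ 0 \to \clim 1 \hol^{k-1}(-) \to \hol^k(\lim(-)) \to \clim 0 \hol^k(-) \to 0 \]
at $k=0$ for the inverse system indexed by $n$ yields exactly the claimed second short exact sequence. The same Milnor sequence read for $k \ge 1$ also shows that $M \cotimes_A^\bL X$ is concentrated in degrees $[-1,0]$, since $\hol^{\ge 1}$ of each $M \otimes_A^\bL [X \xrightarrow{p^n} X]$ vanishes and $\clim 1$ of the surjective system $\{M \otimes_A X/p^n X\}$ vanishes; hence $M \cotimes_A^\bL(-)$ sends $D^{[a,b]}(A)$ into $D^{[a-1,b]}(A)$.

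For the first statement, I would apply $M \cotimes_A^\bL(-)$ to the three truncation triangles
\[ \tau^{\le n-1} C \to C \to \tau^{\ge n} C, \quad \hol^n(C)[-n] \to \tau^{\ge n} C \to \tau^{\ge n+1} C, \quad \hol^{n+1}(C)[-n-1] \to \tau^{\ge n+1} C \to \tau^{\ge n+2} C, \]
and read off the associated long exact sequences in cohomology. The amplitude estimate from the preceding paragraph forces $\hol^j\bigl(M \cotimes_A^\bL \tau^{\le n-1} C\bigr) = 0$ for $j \ge n$ (since $\tau^{\le n-1} C \in D^{\le n-1}$) and $\hol^j\bigl(M \cotimes_A^\bL \tau^{\ge n+2} C\bigr) = 0$ for $j \le n$ (since $\tau^{\ge n+2} C \in D^{\ge n+2}$). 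Splicing the resulting isomorphisms would collapse the chase into the desired
\[ 0 \to \hol^0\bigl(M \cotimes_A^\bL \hol^n(C)\bigr) \to \hol^n\bigl(M \cotimes_A^\bL C\bigr) \to \hol^{-1}\bigl(M \cotimes_A^\bL \hol^{n+1}(C)\bigr) \to 0. \]
The main obstacle here is purely bookkeeping: one must verify in each of the three long exact sequences that the outer terms vanish in the correct degrees so that the final answer is genuinely short exact rather than a fragment of a longer sequence. In particular, the vanishing of $\hol^{n+1}\bigl(M \cotimes_A^\bL \hol^n(C)[-n]\bigr) = \hol^1\bigl(M \cotimes_A^\bL \hol^n(C)\bigr)$ is exactly the upper end of the Tor-amplitude bound just established.
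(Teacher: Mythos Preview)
Your proposal is correct and follows essentially the same approach as the paper: derive the second statement from the derived completion formula plus the Milnor $\lim$ exact sequence, extract from it the $p$-complete Tor-amplitude bound $[-1,0]$ for $M\cotimes_A^\bL(-)$, and then use truncation triangles to obtain the first statement. The only organizational difference is that the paper first truncates $C$ to $D^{[0,1]}$ (after shifting to $n=0$) using the amplitude bound and then applies a single triangle $\hol^0(C)\to C\to \hol^1(C)[-1]$, whereas you keep $C$ arbitrary and splice three triangles; this is bookkeeping, not a genuinely different route.
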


\begin{proof}
Let $X$ be any $A$-module. We have the expression
\[ M \cotimes_A^\bL X = \lim \bigl( M \otimes_A^\bL \cofib(X \xrightarrow{p^n} X) \bigr). \]
It follows that $M \cotimes_A^\bL X$ is concentrated on $\left[ -1,1 \right]$. Also,
\[ \hol^1 (M \cotimes_A^\bL X) \cong \clim 1 M \otimes_A X/p^nX =0 \]
since the transition maps are surjective. Therefore, $M \cotimes_A^\bL X$ is concentrated on degree $\left[ -1,0 \right]$. The spectral sequence for $\lim$ gives the short exact sequence
\[ 0 \to \clim 1 M \otimes_A (X[p^n]) \to  \hol^0 ( M \cotimes_A^\bL X) \to \clim 0 M \otimes_A X/p^n X \to 0. \]

For the first part, assume that $n=0$. note that $M \cotimes_A^\bL X$ is concentrated on degree $[-1,0]$ for each $X$. Similarly argue, and we see that for each $-\infty \le a \le b \le \infty$ and $C \in D^{[a,b]} (A),$ we have
\[ M \cotimes_A^\bL C \in D^{[a-1,b]}(A). \]
Therefore we may take the truncation, and assume that $C \in D^{[0,1]}(A).$ Consider the exact triangle
\[ \hol^0(C) \to C \to \hol^1(C)[-1] \to \hol^0(C)[1] \]
It induces a long exact sequence
\begin{gather*}
\hol^{-1}(M \cotimes_A^\bL \hol^1(C)[-1]) \to \hol^0 (M \cotimes_A^\bL \hol^0(C)) \to \hol^0 (M \cotimes_A^\bL C) \\[3pt]
\to \hol^0(M \cotimes_A^\bL \hol^1(C)[-1]) \to \hol^1(M \cotimes_A^\bL \hol^0(C)).
\end{gather*}
By the restrictions on degrees, the modules on both ends are zero, and we get our result as desired.
\end{proof}

\begin{lemma}\label{lem: exactness of flat base change}
Assume that $A \in \bAlg$ and $M \in \bMod A$ is $p$-completely flat. Assume that $X \to Y \to Z \to W$ is exact in $\bMod A$ or $\aMod A$, then its base change
\[ M \cotimes_A X \to M \cotimes_A Y \to M \cotimes_A Z \to M \cotimes_A W\]
is exact \emph{at $M \cotimes_A Y$}.
\end{lemma}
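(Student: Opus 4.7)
The plan is to decompose the hypothesis that $X \to Y \to Z \to W$ is exact at $Y$ and at $Z$ into two short exact sequences in $\bMod A$ (resp.\ $\aMod A$), then invoke $p$-complete flatness of $M$ to show that $M \cotimes_A -$ preserves these. Set $K := \ker(Y \to Z)$ and $K' := \ker(X \to K)$; by Lemma \ref{lem: left exactness} both lie in the category and coincide with the $A$-module-theoretic kernels. Exactness at $Y$ gives $\im(X \to Y) = K$ as $A$-modules, so
\[ 0 \to K' \to X \to K \to 0, \qquad 0 \to K \to Y \to Y/K \to 0 \]
are exact sequences of $A$-modules. The first lives in the category because $X/K' \cong K$, and the topology on $K$ inherited from $Y$ agrees with the $p$-adic (resp.\ adic) topology by Lemma \ref{lem: closed submodule}. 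For the second, exactness at $Z$ gives $Z/(Y/K) \hookrightarrow W$, so again by Lemma \ref{lem: closed submodule} (and its adic analogue), $Y/K$ is a closed submodule of $Z$ that belongs to our category.

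The next step is to apply $M \cotimes_A^\bL -$ to the above sequences. By Corollary \ref{cor: tensor conc on degree 0} and the $p$-complete flatness of $M$, for every $N \in \bMod A$ the object $M \cotimes_A^\bL N$ is concentrated in degree $0$ and equals $M \cotimes_A N$. Consequently, each short exact sequence above yields a short exact sequence
\[ 0 \to M \cotimes_A K' \to M \cotimes_A X \to M \cotimes_A K \to 0, \]
\[ 0 \to M \cotimes_A K \to M \cotimes_A Y \to M \cotimes_A (Y/K) \to 0. \]
The adic case follows from the $\bMod A$ case by inverting $p$, using Proposition-Definition \ref{prop-defi: base change of adic modules}(3) to identify $M \cotimes_A (-)$ on $\aMod A$ with the $p$-inversion of $M \cotimes_A (-)$ applied to a module of definition.

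Combining these, the first sequence shows that $M \cotimes_A X \twoheadrightarrow M \cotimes_A K$, so $\im(M \cotimes_A X \to M \cotimes_A Y)$ coincides with the image of $M \cotimes_A K$ in $M \cotimes_A Y$. The second sequence identifies this image with $\ker(M \cotimes_A Y \to M \cotimes_A (Y/K))$, which in turn equals $\ker(M \cotimes_A Y \to M \cotimes_A Z)$ once we know $M \cotimes_A (Y/K) \to M \cotimes_A Z$ is injective. That injectivity is obtained by repeating the construction on $0 \to Y/K \to Z \to Z/(Y/K) \to 0$. The main technical obstacle is the bookkeeping around topologies: verifying that each kernel and quotient really lies in $\bMod A$ or $\aMod A$ with the correct $p$-adic or Tate topology. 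In every case this reduces to a combination of Lemma \ref{lem: left exactness}, Lemma \ref{lem: closed submodule}, and (for the adic case) Banach's open mapping theorem (Theorem \ref{theo: banach open mapping}), which guarantees that the continuous $A$-linear bijection $X/K' \to K$ is a homeomorphism.
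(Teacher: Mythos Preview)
Your approach is correct but differs from the paper's. You decompose the four-term exact sequence into three short exact sequences
\[
0 \to K' \to X \to K \to 0,\qquad 0 \to K \to Y \to Y/K \to 0,\qquad 0 \to Y/K \to Z \to Z/(Y/K) \to 0,
\]
verify that every term lies in $\bMod A$ (resp.\ $\aMod A$) via Lemma~\ref{lem: left exactness} and Lemma~\ref{lem: closed submodule}, and then invoke Corollary~\ref{cor: tensor conc on degree 0} to see that $M \cotimes_A^\bL(-)$ sends each of these to a short exact sequence; reassembling gives exactness at $M \cotimes_A Y$. In the adic case you reduce each short exact sequence separately to $\bMod A$ via compatible modules of definition (Lemma~\ref{lem: closed submodule}(2)) and then invert $p$.

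The paper instead treats the full complex $C = [X \to Y \to Z \to W]$ at once: after reducing the adic case to a $\bMod A$ complex with $\hol^1(C)=0$ and only $\hol^2(C)[1/p]=0$, it applies the hypercohomology-style short exact sequence of Lemma~\ref{lem: tensor exact sequence},
\[
0 \to \hol^0\bigl(M \cotimes_A^\bL \hol^1(C)\bigr) \to \hol^1\bigl(M \cotimes_A^\bL C\bigr) \to \hol^{-1}\bigl(M \cotimes_A^\bL \hol^2(C)\bigr) \to 0,
\]
using that $\hol^2(C)$ is annihilated by a power of $p$ (Lemma~\ref{lem: basic properties of derived complete modules}(b)), so both outer terms vanish. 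Your route is more elementary and only needs Corollary~\ref{cor: tensor conc on degree 0}, but pays for it with careful bookkeeping that each intermediate kernel and quotient really lies in the category; the paper's route handles the adic reduction in one stroke without ever needing $Z/(Y/K)$ to sit in $\bMod A$, at the cost of the slightly heavier Lemma~\ref{lem: tensor exact sequence}.
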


\begin{proof}
In case we are dealing with $\aMod A$, we may choose appropriate modules of definitions $Y_0,Z_0,W_0$ of $Y,Z,W$, and take $X_0 = \ker( Y_0 \to Z_0 ).$ Therefore, we only need to show that if the complex (where $X,Y,Z,W \in \bMod A$, $X$ on degree 0)
\[ C = \left[ \cdots \to 0 \to X \to Y \to Z \to W \to \cdots \right] \]
satisfies the condition that $\hol^1(C) = 0$ and $\hol^2(C) \bigl[ \frac 1 p \bigr] =0,$ then
\[ \hol^1( M \cotimes_A^\bL C) =0. \]
By Lemma \ref{lem: basic properties of derived complete modules} (b), we see that $\hol^2(C)$ is annihilated by $p^N$ for some $N \in \mathbb Z_+$. Using the exact sequence as in Lemma \ref{lem: tensor exact sequence}
\[ 0 \to \hol^0 ( M \cotimes_A^\bL \hol^n(C)) \to \hol^n( M \cotimes_A^\bL C) \to \hol^{-1} ( M \cotimes_A^\bL \hol^{n+1} (C)) \to 0 \]
for $n=1$, we get our desired result.
\end{proof}

\begin{lemma}\label{lem: right exactness implies cokernel}
Assume that $R \in \bAlg$ and
\[ X \to Y \to Z \to 0 \]
be a complex in $\aMod R$. Then, if the complex is exact, then $Z$ is the cokernel of $X \to Y$ in $\aMod R$. Moreover, $\aMod R$ has cokernels, and the cokernel of $f: X \to Y$ is $Y / \overline{\im f}$.

If $Z$ is the cokernel of $X \to Y$ in $\aMod R$, and $M \in \aMod R$ is any module, then $M \cotimes_{R[1/p]} Z$ is the cokernel of
\[ M \cotimes_{R[1/p]} X \to M \cotimes_{R[1/p]} Y \]
in $\aMod R$. Moreover, if
\[ X \to Y \to Z \to 0 \]
is exact, and $M = M' \bigl[ \frac 1 p \bigr]$ for some $p$-complete and $p$-completely flat module $M' / R$, then the sequence
\[ M' \cotimes_R X \to M' \cotimes_R Y \to M' \cotimes_R Z \to 0 \]
is exact.
\end{lemma}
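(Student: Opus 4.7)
The plan is to handle the four assertions in turn, with the last being the most substantive.

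For parts 1 and 2, I would use the Banach open mapping theorem (Theorem \ref{theo: banach open mapping}) together with Lemma \ref{lem: closed submodule}. If $X \to Y \to Z \to 0$ is exact in $\aMod R$, then $Y \to Z$ is continuous and surjective on underlying modules, hence open; thus $Z$ carries the quotient topology from $Y$, and any continuous $Y \to W$ vanishing on $\im(X \to Y) = \ker(Y \to Z)$ descends continuously to $Z$, giving the universal property. For part 2, given an arbitrary continuous $f: X \to Y$, Lemma \ref{lem: closed submodule} applied to the closed submodule $\overline{\im f} \subseteq Y$ shows that $Y / \overline{\im f}$ is adic; any continuous $Y \to W$ vanishing on $\im f$ also vanishes on $\overline{\im f}$ by continuity, so it factors uniquely and continuously through $Y/\overline{\im f}$.

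For part 3, the point is that by Proposition-Definition \ref{prop-defi: base change of adic modules}, the functor $M \cotimes_{R[1/p]} (-)$ on $\aMod R$ is left adjoint to $\underline{\Hom}^\ad(M, -)$ and therefore preserves all colimits; since cokernels exist in $\aMod R$ by part 2, completed tensor product preserves them.

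For part 4, the strategy is to move to modules of definition, where derived tensor product with a $p$-completely flat $M'$ is well-behaved. Pick a module of definition $Y_0 \subseteq Y$; then $Z_0 := g(Y_0)$ is a module of definition of $Z$ by the Banach open mapping theorem, and $K := \ker g$ is closed in $Y$, hence adic with module of definition $K_0 := Y_0 \cap K$ by Lemma \ref{lem: closed submodule}. This gives a short exact sequence $0 \to K_0 \to Y_0 \to Z_0 \to 0$ of classically $p$-complete, $p$-torsion-free $R$-modules. Reducing modulo $p^n$ preserves exactness (since $Z_0$ is $p$-torsion-free, $Y_0[p^n] = K_0[p^n]$), and applying $M' \otimes_R^\bL -$, which is exact on $p^\infty$-torsion modules by $p$-complete flatness of $M'$, yields short exact sequences
\[ 0 \to M' \otimes_R K_0/p^n \to M' \otimes_R Y_0/p^n \to M' \otimes_R Z_0/p^n \to 0. \]
Taking the inverse limit over $n$ (Mittag-Leffler holds since all transition maps are surjective) and using Corollary \ref{cor: tensor conc on degree 0} to identify the limit with the completed tensor product gives a short exact sequence
\[ 0 \to M' \cotimes_R K_0 \to M' \cotimes_R Y_0 \to M' \cotimes_R Z_0 \to 0 \]
in $\bMod R$. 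Inverting $p$ (and using the last part of Proposition-Definition \ref{prop-defi: base change of adic modules}) gives the analogous short exact sequence in $\aMod R$ with $K, Y, Z$ in place of $K_0, Y_0, Z_0$. Finally, since the surjection $X \twoheadrightarrow K$ carries the quotient topology by Banach's theorem, $K$ is the cokernel in $\aMod R$ of $\ker(X \to K) \hookrightarrow X$, so by part 3 the induced map $M' \cotimes_R X \to M' \cotimes_R K$ is a continuous surjection. Composing with the short exact sequence above yields the desired right exactness of $M' \cotimes_R X \to M' \cotimes_R Y \to M' \cotimes_R Z \to 0$.

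The main obstacle will be the bookkeeping in part 4: one must track the interplay between the topologies on $X, Y, Z, K$ and their respective modules of definition, verify that the chosen $0 \to K_0 \to Y_0 \to Z_0 \to 0$ is genuinely short exact, and pass correctly from derived to classical (completed) tensor products. The Banach open mapping theorem is used repeatedly and is essential; $p$-complete flatness of $M'$ is precisely what makes the $p^n$-reduction step left-exact as well as right-exact, and Corollary \ref{cor: tensor conc on degree 0} is what ensures that $M' \cotimes_R -$ of a $p$-complete bounded-torsion module can be computed as the classical $p$-adic completion of the ordinary tensor product, so that the $\clim 0$ reconstruction terminates in the expected module.
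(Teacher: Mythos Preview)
Your treatment of parts 1--3 matches the paper's approach essentially verbatim: Banach's open mapping theorem for part 1, the quotient by the closure for part 2, and the tensor--$\underline{\Hom}^\ad$ adjunction of Proposition-Definition~\ref{prop-defi: base change of adic modules} for part 3.

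For part 4 the paper does something much shorter: it simply cites Lemma~\ref{lem: exactness of flat base change}, applied once to the four-term sequence $X \to Y \to Z \to 0$ (exactness at $M' \cotimes_R Y$) and once to $Y \to Z \to 0 \to 0$ (exactness at $M' \cotimes_R Z$). Your hands-on argument via modules of definition and inverse limits of the mod-$p^n$ sequences is in effect reproving that lemma in this special case; it works, but you should be aware that the paper has already packaged this.

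There is one genuine, if minor, gap in your argument. At the end you invoke part 3 to conclude that $M' \cotimes_R X \to M' \cotimes_R K$ is a ``continuous surjection''. Part 3 only says that $M' \cotimes_R K$ is the \emph{categorical} cokernel in $\aMod R$, i.e.\ the quotient by the \emph{closure} of the image; this gives dense image, not surjectivity on underlying modules. To finish your argument you would need to either repeat your inverse-limit step for the surjection $X \twoheadrightarrow K$ (choose compatible modules of definition $X_0 \twoheadrightarrow K_0$, reduce mod $p^n$, tensor, pass to the limit), or---more economically---invoke Lemma~\ref{lem: exactness of flat base change} directly on $X \to K \to 0 \to 0$. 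Once you notice that lemma handles everything, the whole of part 4 collapses to a one-line citation, as in the paper.
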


\begin{proof}
We first prove the first part. Denote $f: X \to Y$ and $g: Y \to Z$, then $\im f = \ker g$ is closed in $Y$. Form the topological module $Z' = Y / \im f$. By Banach's open mapping theorem (Theorem \ref{theo: banach open mapping}), we see that $g$ induces an isomorphism $Z' \to Z$. It is now easy to prove that $Z$ is the cokernel of $X \to Y$. We omit the proof that $Y/\overline{\im f}$ is the cokernel of $f: X \to Y$.

For the second part, recall that by Proposition-Definition \ref{prop-defi: base change of adic modules}, we have
\[ \Hom_{\aMod R} (M \cotimes_{R[1/p]} N, X) \cong \Hom_{\aMod R} (N, \underline{\Hom}_{R[\frac 1 p]}^\ad (M,X)), \]
and this gives us the desired exact sequence.

The last part is a direct consequence of Lemma \ref{lem: exactness of flat base change}.
\end{proof}

\begin{prop-defi}\label{prop-defi: k-finiteness}
Let $A$ be a commutative ring and $M$ be an $A$-module. We say that $M$ is $k$-finite ($k \ge 0$), if and only if there exists an exact sequence
\[ F_{k-1} \to \cdots \to F_1 \to F_0 \to M, \]
where $F_i$ are finite projective $A$-modules. Therefore, all modules are $0$-finite, $M$ is $1$-finite if and only if it is finitely generated, and $M$ is $2$-finite if and only if it is finitely presented.

Fix an integer $k \ge 1$. In an exact sequence
\[ 0 \to M' \to M \to M'' \to 0, \]
\begin{enumerate}[label=(\alph*)]
\item if $M'$ and $M''$ are $k$-finite, then $M$ is $k$-finite;
\item if $M$ and $M''$ are $k$-finite, then $M'$ is $(k-1)$-finite;
\item if $M$ is $k$-finite and $M'$ is $(k-1)$-finite, then $M''$ is $k$-finite;
\item if the exact sequence is split, then $M$ is $k$-finite if and only if $M', M''$ are all $k$-finite.
\end{enumerate}
\end{prop-defi}

\begin{proof}
For (a), pick free resolutions $F'_\bullet \to M'$ and $F''_\bullet \to M''$, such that $F'_i$ and $F''_i$ are finite free for $i < k$. Then, the map $M'' \to M'[1]$ gives a map $F''_\bullet \to F'_\bullet[1]$ of complexes, and we get a quasi-isomorphism
\[ M \cong \cone ( F''_\bullet [-1] \to F'_\bullet) = C_\bullet, \]
where $C_n \cong F'_n \oplus F''_n$. It follows that we have an exact sequence
\[ C_{k-1} \to \cdots \to C_0 \to M \to 0 \]
with all $C_i$ being finite free. It follows that $M$ is $k$-finite.

For (c), pick free resolutions $F_\bullet \to M$ and $F'_\bullet \to M'$, such that $F_i$ are finite free for $i<k$, and $F'_i$ are finite free for $i<k-1.$ The map $M' \to M$ gives a map $F'_\bullet \to F_\bullet$ of complexes, and we get a quasi-isomorphism
\[ M'' \cong \cone(F'_\bullet \to F_\bullet) = C_\bullet. \]
We have $C_n = F'_{n-1} \oplus F_n,$ and we see that $C_n$ are finite free for $n<k$. Now, the exact sequence
\[ C_{k-1} \to \cdots \to C_0 \to M'' \to 0 \]
shows that $M''$ is $k$-finite.

For (b), pick free resolutions $F_\bullet \to M$ and $F''_\bullet \to M''$, such that $F_i$ and $F''_i$ are finite free for $i< k$. Then, the map $M \to M''$ gives a map $F_\bullet \to F''_\bullet$ of complexes, and we get a quasi-isomorphism
\[ M' \cong \cone (F_\bullet \to F''_\bullet) [-1] = C_\bullet. \]
We have
\[ C_n = F''_{n+1} \oplus F_n. \]
We get an exact sequence
\[ C_{k-2} \to \cdots \to C_0 \to C_{-1} \oplus M, \]
hence $C_{-1} \oplus M$ is $(k-1)$-finite. By (c) applied to
\[ 0 \to C_{-1} \to C_{-1} \oplus M \to M \to 0, \]
we see that $M$ is $(k-1)$-finite.

As for (d), by (a), we only need to prove the inverse implication. We proceed by induction on $k$. The case $k=1$ is evident, since direct summands of finitely generated modules are finitely generated. Assume that we have proved this for $k-1$, we now prove this for $k$ ($k \ge 2$). Now $M'$ and $M''$ are both $(k-1)$-finite. By (c) applied to the sequences $0 \to M' \to M \to M'' \to 0$ and $0 \to M'' \to M \to M' \to 0$, we see that both $M'$ and $M''$ are $k$-finite.
\end{proof}

\begin{defi}\label{defi: finiteness of adic modules}
Assume that $R \in \bAlg$ and $M \in \aMod R$. We say that $M$ is finitely generated (resp. finitely presented, $k$-finite), if it is finitely generated (resp. finitely presented, $k$-finite) as an abstract $R\bigl[ \frac 1 p \bigr]$-module.
\end{defi}

We know that $\bMod R \to \Mod_R$ is fully faithful. Although $\aMod R \to \Mod_R$ is in general not full, but we have the following proposition:

\begin{prop}\label{prop: finitely generated adic modules}
For any $R \in \bAlg$ and $M,N \in \aMod R$, if $M$ is finitely generated, then the map
\[ \Hom_{\aMod R} (M,N) \to \Hom_{\Mod_R} (M,N) \]
is an isomorphism.

Recall the inner Hom functor in Proposition-Definition \ref{prop-defi: base change of adic modules}. If $M \to N \to Q \to 0$ is an exact sequence of in $\aMod R$, then we have an exact sequence for any $X \in \aMod R$
\[ 0 \to \underline{\Hom}^\ad_{R[\frac 1 p]}(Q,X) \to \underline{\Hom}^\ad_{R[\frac 1 p]}(N,X) \to \underline{\Hom}^\ad_{R[\frac 1 p]}(M,X). \]

If $M$ is finite projective, $R \to S$ is $p$-completely flat in $\bAlg$, and $N$ be as above, then the natural map
\[ S \cotimes_R \underline{\Hom}_{R[\frac 1 p]}^\ad(M,N) \to \underline{\Hom}_{S[\frac 1 p]}^\ad(S \cotimes_R M,S \cotimes_R N) \]
is an isomorphism.
\end{prop}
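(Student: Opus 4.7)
For (1), the key idea is to realize $M$ as a topological quotient of a free adic module, so that continuity of an $R$-linear map out of $M$ can be tested after composing with a surjection from $R\bigl[\frac{1}{p}\bigr]^n$, where it is automatic. Concretely, pick a finite $R\bigl[\frac{1}{p}\bigr]$-generating set $x_1,\ldots,x_n \in M$ and choose a module of definition $M_0 \subseteq M$; after rescaling by a power of $p$, we may assume $x_i \in M_0$ for all $i$. Then the $R$-linear map $\pi \colon R\bigl[\frac{1}{p}\bigr]^n \to M$ sending $e_i \mapsto x_i$ carries $R^n$ into $M_0$, so it is bounded and hence continuous by Lemma~\ref{lem: continuous equiv to bounded}. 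Since $\pi$ is surjective, Banach's open mapping theorem (Theorem~\ref{theo: banach open mapping}) says that $\pi$ is a topological quotient. For any $R$-linear $f \colon M \to N$, the composition $f \circ \pi$ sends $R^n$ into the $R$-span of the $f(x_i)$, which lies in any suitably rescaled module of definition of $N$ containing those elements; hence $f \circ \pi$ is bounded and continuous, and therefore $f$ is continuous by the universal property of the quotient topology.

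For (2), I invoke the characterization of the cokernel in $\aMod R$ from Lemma~\ref{lem: right exactness implies cokernel}: exactness of $M \xrightarrow{f} N \to Q \to 0$ identifies $Q$ with $N/\overline{\im f}$ equipped with the quotient topology. Given $\varphi \in \underline{\Hom}^\ad_{R[\frac{1}{p}]}(N, X)$, the composition $\varphi \circ f$ vanishes iff $\varphi$ vanishes on $\im f$, iff (by continuity of $\varphi$ and separatedness of $X$) $\varphi$ vanishes on $\overline{\im f}$, iff $\varphi$ factors through $Q$; the induced factorization $Q \to X$ is automatically continuous by the quotient property. This identifies the kernel of $\underline{\Hom}^\ad(N, X) \to \underline{\Hom}^\ad(M, X)$ with the image of $\underline{\Hom}^\ad(Q, X)$, and injectivity of $\underline{\Hom}^\ad(Q, X) \to \underline{\Hom}^\ad(N, X)$ follows from the surjectivity of $N \to Q$. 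Exactness as abelian groups is what "exact in $\aMod R$" asks for.

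For (3), I argue by d\'evissage on $M$. When $M = R\bigl[\frac{1}{p}\bigr]^k$ is finite free, the left-hand side is $S \cotimes_R \underline{\Hom}^\ad(R\bigl[\frac{1}{p}\bigr]^k, N) = S \cotimes_R N^k \cong (S \cotimes_R N)^k$, using that $\cotimes$ commutes with finite direct sums (a direct consequence of the universal property in Proposition-Definition~\ref{prop-defi: base change of adic modules}); the right-hand side equals $\underline{\Hom}^\ad\bigl(S\bigl[\frac{1}{p}\bigr]^k, S \cotimes_R N\bigr) \cong (S \cotimes_R N)^k$, and a straightforward unwinding of the universal properties shows the natural map between them is the identity. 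Both functors $A \mapsto S \cotimes_R \underline{\Hom}^\ad(A, N)$ and $A \mapsto \underline{\Hom}^\ad(S \cotimes_R A, S \cotimes_R N)$ are additive in $A$, so they respect direct summands; for finite projective $M$ written as a retract of some $R\bigl[\frac{1}{p}\bigr]^k$, the isomorphism in the free case restricts to an isomorphism on the $M$-factor. The only subtle point is verifying that the naturally induced isomorphism obtained after d\'evissage agrees with the map asserted in the statement, which is a mechanical diagram chase through the adjunction $M' \cotimes_{R[\frac{1}{p}]} - \dashv \underline{\Hom}^\ad(M', -)$.
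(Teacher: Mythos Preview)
Your proof is correct and follows essentially the same strategy as the paper's. The only differences are cosmetic: for part~(1) the paper argues slightly more directly by observing that the $R$-submodule $M_0 \subseteq M$ generated by $m_1,\ldots,m_n$ is already a module of definition (openness via Banach open mapping), so $f(M_0) \subseteq p^k N_0$ gives continuity without passing through a quotient; and for part~(2) the paper uses a Yoneda-style argument (tensor with $Y$, apply $\Hom^\ad(-,X)$, invoke the tensor--Hom adjunction from Proposition-Definition~\ref{prop-defi: base change of adic modules}) rather than your direct element-level computation---note in your version that $\im f = \ker(N \to Q)$ is already closed, so the closure step is superfluous.
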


\begin{proof}
For the first part, we only need to prove that any module map $f: M \to N$ is continuous. Choose a finite set of generators
\[ m_1,\ldots,m_n \]
of $M$, and they generate a finitely generated $A$-submodule $M_0 \subseteq M$. It is easy to see that $M_0$ is open in $M$ by Banach's open mapping theorem (Theorem \ref{theo: banach open mapping}), hence is a module of definition. Now, pick a module of definition $N_0 \subseteq N$. For each $1 \le i \le n$, pick $k_i \in \mathbb Z$ with $f(m_i) \in p^{k_i} N_0$, and we see that
\[ f(M_0) \subseteq p^{\min \left\{ k_i \right\}} N_0, \]
hence $f$ is continuous, as desired.

For the second part, by Lemma \ref{lem: right exactness implies cokernel}, for any module $Y \in \aMod R$ we have an exact sequence
\[ 0 \to \Hom^\ad_{R[\frac 1 p]}(Q \cotimes_{R[\frac 1 p]} Y,X) \to \Hom^\ad_{R[\frac 1 p]}(N \cotimes_{R[\frac 1 p]} Y,X) \to \Hom^\ad_{R[\frac 1 p]}(M \cotimes_{R[\frac 1 p]} Y,X), \]
giving the exact sequence as desired.

For the last part, write $M$ as a direct summand of a finite free $R\bigl[ \frac 1 p \bigr]$-module $F$, we only need to prove the isomorphism for $F$, which is evident.
\end{proof}

From now on, we will consider descent theory.

Let $A \to B$ be a $p$-completely faithfully flat ring map in $\bAlg.$ We also have the category of descent data over $B/A$. Namely, we will denote the category of descent data over $B/A$ for $\bMod{}$ and $\aMod{}$ by $\bMod{B/A}$ and $\aMod{B/A}.$ Elements inside them are pairs $(M,\phi),$ where $M \in \bMod B$ (resp. $M \in \aMod B$) and
\[ \phi: (B \cotimes_A B) \cotimes_{\delta_0^1, B} M \to (B \cotimes_A B) \cotimes_{\delta_1^1, B} M \]
is an isomorphism satisfying the cocycle identity and reduces to $M \to M$ under $B \cotimes_{B\cotimes_A B} -$. We will also denote
\[ B(n) = B^{\cotimes(n+1)/A} \]
the cosimplicial ring associated to $A \to B$. It is usually more convenient to identify the elements in $\bMod{B/A}$ (resp. $\aMod{B/A}$) with `crystals' of $\bMod{}$ (resp. $\aMod{}$) on $B(\bullet),$ i.e. a cosimplicial module $M(\bullet)$ where $M(n) \in \bMod{B(n)}$ (resp. $M(n) \in \aMod{B(n)}$) compatible with the cosimplicial structure, and for each $[m] \to [n]$ the maps
\[ B(n) \cotimes_{B(m)} M(m) \to M(n) \]
are isomorphisms.

\begin{lemma}[Theorem \ref{thm: descent theory} (1)]\label{lem: acyclic cosimplicial}
Let $A \to B$ be a $p$-completely faithfully flat ring map in $\bAlg$ and $M$ be a module in $\bMod A$ or $\aMod A.$ Then, the map
\[ M \to B(\bullet) \cotimes_A M \]
is a quasi-isomorphism.
\end{lemma}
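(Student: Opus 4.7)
Interpret the claim as the acyclicity of the augmented \v{C}ech-type cochain complex
\[
C^\bullet: \quad 0 \to M \to M \cotimes_A B(0) \to M \cotimes_A B(1) \to \cdots
\]
(with alternating-sum differentials) in $\cD(A)$ when $M \in \bMod A$, or, analogously, after inverting $p$ when $M \in \aMod A$. The strategy is the classical Amitsur/\v{C}ech descent argument, adapted to the completed setting.

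\emph{Step 1 (adic to integral).} For $M \in \aMod A$, fix a module of definition $M_0 \subseteq M$. By Proposition-Definition \ref{prop-defi: base change of adic modules}(3), each term $M \cotimes_A B(n)$ is canonically $(M_0 \cotimes_A B(n))[1/p]$, compatibly with all coface maps. Since $(-)[1/p]$ is exact, the adic statement reduces to the integral one, and we may assume $M \in \bMod A$.

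\emph{Step 2 (contracting homotopy over $B$).} Because $B$ is $p$-completely flat and each $C^n$ lies in $\bMod A$, the base-changed complex $B \cotimes_A^\bL C^\bullet$ agrees term-wise with $B \cotimes_A C^\bullet$. The latter is precisely the completed Amitsur complex of the first-factor inclusion $\iota_1 : B \to B \cotimes_A B$ with coefficient $M$. The multiplication $\mu: B \cotimes_A B \to B$ is a $B$-linear retraction of $\iota_1$, yielding the standard contracting cochain homotopy
\[
h^n(m \otimes b_0 \otimes b_1 \otimes \cdots \otimes b_{n+1}) = m \otimes \mu(b_0 \otimes b_1) \otimes b_2 \otimes \cdots \otimes b_{n+1},
\]
which is continuous in the $p$-adic topology and hence extends to $\cotimes$. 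Consequently $B \cotimes_A C^\bullet$ is strictly acyclic as an abstract cochain complex of $A$-modules.

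\emph{Step 3 (faithful-flatness reflection).} View $C^\bullet$ in $\cD(A)$, and let $H^i(C^\bullet) \in \cD(A)^\heartsuit$ denote its cohomologies, which are automatically derived $p$-complete. By $t$-exactness of the flat base change,
\[
B \cotimes_A^\bL H^i(C^\bullet) \cong H^i \bigl( B \cotimes_A^\bL C^\bullet \bigr),
\]
and the right-hand side, as the derived completion of the (vanishing) abstract cohomology from Step 2, is zero. Applying $- \cotimes_A^\bL A/p$ and using that $B \cotimes_A^\bL A/p$ is faithfully flat over $A/p$ (Proposition-Definition \ref{prop-defi: complete flatness}) yields $H^i(C^\bullet) \cotimes_A^\bL A/p = 0$, so in particular $H^i(C^\bullet) = p \, H^i(C^\bullet)$; derived $p$-completeness then forces $H^i(C^\bullet) = 0$ via Lemma \ref{lem: basic properties of derived complete modules}(c). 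This is the desired quasi-isomorphism in $\cD(A)$, and the $\aMod A$-case follows from Step 1.

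\emph{Main obstacle.} The delicate point is that, in the completed setting, the cohomologies of $C^\bullet$ must be computed in the heart $\cD(A)^\heartsuit$ (as derived completions) rather than as bare $A$-modules: a general subquotient of $p$-complete modules need not be derived complete, whereas the faithfulness clause of Proposition-Definition \ref{prop-defi: complete flatness} and derived Nakayama are phrased for derived complete (or torsion) objects. Once this distinction is kept in mind, the contracting homotopy over $B$ descends to $A$ without incident.
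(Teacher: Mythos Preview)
Your Steps 1 and 2 are fine and match the paper's reduction to $\bMod A$. The gap is in Step 3: the claimed ``$t$-exactness of the flat base change'' is false. For a $p$-completely flat $B$ and a classical $A$-module $X$, the completed tensor $B \cotimes_A^\bL X$ is concentrated in degrees $[-1,0]$, not in degree $0$ alone --- this is exactly the content of Lemma \ref{lem: tensor exact sequence}, which gives a short exact sequence
\[
0 \to \hol^0(B \cotimes_A^\bL \hol^i(C^\bullet)) \to \hol^i(B \cotimes_A^\bL C^\bullet) \to \hol^{-1}(B \cotimes_A^\bL \hol^{i+1}(C^\bullet)) \to 0.
\]
So you cannot conclude $B \cotimes_A^\bL \hol^i(C^\bullet) \cong \hol^i(B \cotimes_A^\bL C^\bullet)$. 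There is also a second looseness: identifying $B \cotimes_A^\bL C^\bullet$ (derived tensor of the unbounded object) with the termwise complex $D^\bullet$ from Step~2 requires an argument, since $C^\bullet$ is not bounded above.

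Both issues are easily repaired. The cleanest fix is to bypass the spectral-sequence reasoning entirely and apply Lemma \ref{lem: faithfully flat descent of exactness} to each three-term piece $C^{i-1} \to C^i \to C^{i+1}$: Step~2 gives exactness of $B \cotimes_A C^{i-1} \to B \cotimes_A C^i \to B \cotimes_A C^{i+1}$, and that lemma (whose proof does not use the present statement) reflects this to exactness at $C^i$. Alternatively, truncate to $\sigma^{\le N} C^\bullet$, observe that on a bounded complex with $B \cotimes_A^\bL C^n$ in degree $0$ the derived and termwise tensors agree, then use Lemma \ref{lem: tensor exact sequence} to extract $\hol^0(B \cotimes_A^\bL \hol^i(C^\bullet)) = 0$; the second short exact sequence in that lemma then gives $B \otimes_A (\hol^i(C^\bullet)/p) = 0$, and your Nakayama step finishes.

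The paper itself takes a different route: rather than base-changing to $B$, it reduces modulo $p$ first by tensoring with the perfect complex $A \otimes_{\mathbb Z}^\bL \mathbb F_p$ (which commutes with $\lim_\simp$), landing in honest faithfully flat descent for connective $\dE_\infty$-rings, handled by a Postnikov-tower reduction to the classical discrete case. Your contracting-homotopy approach is more elementary and more in the spirit of the appendix, but does need the repair above.
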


\begin{proof}
For $M \in \aMod A,$ we may choose a module of definition $M_0 \subseteq M$, reducing to the case of $\bMod A.$

By Derived Nakayama's Lemma (see for example \cite[\href{https://stacks.math.columbia.edu/tag/0G1U}{Tag 0G1U}]{stacks-project}), denote the perfect complex $A' = A \otimes_\mathbb Z^\bL \mathbb F_p$, we only need to prove that
\[ A' \otimes_A^\bL M \to \lim_\simp A' \otimes_A^\bL (B(\bullet) \otimes_A^\bL M ) \]
is an isomorphism in $D(A')$. For this, denote $B'(\bullet) = A' \otimes_A^\bL B(\bullet)$ and $M' = A' \otimes_A^\bL M$, and we see that the above map is isomorphic to
\[ M' \to \lim_\simp B'(\bullet) \otimes_{A'}^\bL M'. \]
This is the faithfully flat descent for the faithfully flat connective $\mathbb \dE_\infty$-ring map $A' \to B'$. A simple proof uses the Postnikov tower\footnote{$M'\cong \lim \tau^{\ge -n} M'$ and $B'(m) \otimes_{A'}^\bL \tau^{\ge -n} M' \cong \tau^{\ge -n} (B'(m) \otimes_{A'}^\bL M')$} to reduce to the case for $M' \in D(A')$ discrete, then we are reduced to the classical faithfully flat descent.
\end{proof}

\begin{prop}\label{prop: descent is fully faithful}
Let $A \to B$ be a $p$-completely faithfully flat ring map in $\bAlg$. Then, the functors $\bMod A \to \bMod{B/A}$ and $\aMod A \to \aMod{B/A}$ are fully faithful.
\end{prop}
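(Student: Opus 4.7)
The plan is to reduce fully faithfulness to an equalizer statement about $\Hom$'s, and then deduce the equalizer from the acyclicity statement in Lemma \ref{lem: acyclic cosimplicial}. Unwinding the definition of morphisms in $\bMod{B/A}$, for any $M, N \in \bMod A$ the set $\Hom_{\bMod{B/A}}(M \cotimes_A B, N \cotimes_A B)$ is the equalizer of the two natural maps
\[ \Hom_{B}(M \cotimes_A B, N \cotimes_A B) \rightrightarrows \Hom_{B \cotimes_A B}\bigl((M \cotimes_A B) \cotimes_B (B \cotimes_A B), (N \cotimes_A B) \cotimes_B (B \cotimes_A B)\bigr). \]
Using the base change adjunction for completed tensor products (which applies because each $N \cotimes_A B(n)$ is classically $p$-adically complete, so every $A$-linear map $M \to N \cotimes_A B(n)$ is automatically continuous in the $p$-adic topology), the $n$th term above is naturally identified with $\Hom_A(M, N \cotimes_A B(n))$. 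Therefore, our goal reduces to showing that
\[ 0 \to \Hom_A(M, N) \to \Hom_A\bigl(M, N \cotimes_A B(0)\bigr) \to \Hom_A\bigl(M, N \cotimes_A B(1)\bigr) \]
is left exact.

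For this, I would apply Lemma \ref{lem: acyclic cosimplicial}: the augmented cosimplicial object $N \to N \cotimes_A B(\bullet)$ is a quasi-isomorphism, which gives in particular exactness in $\bMod A$ of
\[ 0 \to N \to N \cotimes_A B(0) \to N \cotimes_A B(1) \]
(the alternating-sum-of-coface-maps being the second differential). Since the forgetful functor $\bMod A \to \Mod_A$ is fully faithful and detects kernels (Lemma \ref{lem: left exactness}), this sequence is also exact in abstract $A$-modules. Applying the left-exact functor $\Hom_A(M,-)$ now yields the required equalizer, finishing the $\bMod$ case.

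For $\aMod$, the strategy is the same, but with $\Hom$ interpreted as continuous $R\bigl[\tfrac 1 p\bigr]$-linear maps. The crucial inputs are: (i) Lemma \ref{lem: acyclic cosimplicial} for $\aMod$, which again gives left-exactness of $0 \to N \to N \cotimes_A B(0) \to N \cotimes_A B(1)$ in $\aMod A$; (ii) Lemma \ref{lem: left exactness}, which shows that kernels in $\aMod$ agree with abstract kernels equipped with the subspace topology; and (iii) the universal property of the completed tensor product of adic modules (Proposition-Definition \ref{prop-defi: base change of adic modules}), which gives $\Hom_{\aMod{B(n)}}(M \cotimes_A B(n), N \cotimes_A B(n)) = \Hom_{\aMod A}(M, N \cotimes_A B(n))$. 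With these in hand, applying the left-exact functor $\Hom_{\aMod A}(M,-)$ to the above left-exact sequence produces the desired equalizer.

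The step requiring the most care is (iii): one must verify that the adjunction between $-\cotimes_A B(n)$ and the restriction-of-scalars functor really does identify the continuous $B(n)$-linear Hom with the continuous $A$-linear Hom into a $B(n)$-module. This is immediate from the universal property once one observes that an $A$-linear map $M \to Y$ with $Y \in \aMod{B(n)}$ is continuous if and only if it is bounded, which in turn follows by pairing a module of definition of $M$ with one of $Y$. No serious obstacle arises beyond bookkeeping of the topologies.
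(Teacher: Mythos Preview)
Your proof is correct and follows essentially the same approach as the paper: both arguments rest on Lemma \ref{lem: acyclic cosimplicial} to obtain the exact sequence $0 \to N \to B \cotimes_A N \to B(1) \cotimes_A N$, and then leverage this to identify morphisms. The only difference is organizational: you package faithfulness and fullness together as an equalizer statement for $\Hom$'s (using the tensor--restriction adjunction to rewrite $\Hom_{B(n)}(B(n) \cotimes_A M, -)$ as $\Hom_A(M,-)$), whereas the paper first recovers $M$ as a kernel to get faithfulness, and then argues fullness separately by observing that the image of $M$ topologically generates $B(n) \cotimes_A M$ as a $B(n)$-module. Your formulation is slightly more streamlined, but the mathematical content is the same.
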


\begin{proof}
Lemma \ref{lem: acyclic cosimplicial} identifies a module $M \in \bMod A$ (resp. $M \in \aMod A$) with the kernel of
\[ B \cotimes_A M \xrightarrow{\delta_0^1 - \delta_1^1} B(1) \cotimes_A M. \]
This shows that the map is faithful. It is full because whenever we are given a map
\[ \varphi: B(\bullet) \cotimes_A M \to B(\bullet) \cotimes_A N, \]
we are given a map $\varphi_0: M \to N$ which fits into the commutative diagram
\[\xymatrix{
M \ar[r] \ar[d] & N \ar[d] \\
B(\bullet) \cotimes_A M \ar[r]_\varphi & B(\bullet) \cotimes_A N
}\]
It follows that the map $M \xrightarrow{\varphi_0} N \to B(n) \cotimes_A N$ agrees with the map $M \to B(n) \cotimes_A M \xrightarrow{\varphi(n)} B(n) \cotimes_A N.$ Since $M$ generates $B(n) \cotimes_A M$ as a $B(n)$-module, we see that the maps
\[ \id \otimes \varphi_0, \varphi(n): B(n) \cotimes_A M \to B(n) \cotimes_A N \]
agree. Therefore, $\varphi = \id \otimes \varphi_0,$ showing that the functor is full.
\end{proof}

\begin{cor}\label{cor: ff is conservative}
Let $A \to B$ be a $p$-completely faithfully flat ring map in $\bAlg$. Then, for any map $M \to N$ in $\bMod A$ or $\aMod A$, it is an isomorphism if and only if $B \cotimes_A M \to B \cotimes_A N$ is an isomorphism.
\end{cor}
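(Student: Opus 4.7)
The plan is to reduce this to the fully faithfulness statement in Proposition \ref{prop: descent is fully faithful}, exploiting the general fact that a fully faithful functor between ordinary categories is conservative. The forward implication is automatic, so the entire content is the reverse direction: assuming $B \cotimes_A f$ is an isomorphism, show $f: M \to N$ is an isomorphism in $\bMod A$ or $\aMod A$.

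First I would record the general fact. If $F: \calC \to \calD$ is a fully faithful functor of (ordinary) categories and $F(f)$ has an inverse $g' \in \Mor_\calD(F(N), F(M))$, then by fullness $g' = F(g)$ for some $g: N \to M$, and faithfulness applied to $F(gf) = F(\id_M)$ and $F(fg) = F(\id_N)$ forces $gf = \id_M$ and $fg = \id_N$. Thus any fully faithful functor is conservative. Proposition \ref{prop: descent is fully faithful} gives exactly such a fully faithful functor $\bMod A \to \bMod{B/A}$ (resp. $\aMod A \to \aMod{B/A}$), so it suffices to show that the image of $f$ is an isomorphism in the category of descent data.

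Next I would verify the following routine claim: a morphism $(f, \text{compat.}) : (M, \phi_M) \to (N, \phi_N)$ in $\bMod{B/A}$ (or $\aMod{B/A}$) whose underlying map $f: M \to N$ is an isomorphism in $\bMod B$ (resp. $\aMod B$) is automatically an isomorphism of descent data, because the compatibility identity
\[ \phi_N \circ \bigl( \id \cotimes_{\delta_0^1} f \bigr) = \bigl( \id \cotimes_{\delta_1^1} f \bigr) \circ \phi_M \]
may be rearranged to
\[ \phi_M \circ \bigl( \id \cotimes_{\delta_0^1} f^{-1} \bigr) = \bigl( \id \cotimes_{\delta_1^1} f^{-1} \bigr) \circ \phi_N, \]
which says that $f^{-1}$ is also a morphism of descent data. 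By hypothesis, the underlying $\id_B \cotimes f$ of the image of our original $f$ in $\bMod{B/A}$ is an isomorphism, so this claim applies.

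Combining the three ingredients finishes the proof. The only slightly subtle point, and essentially the only ``obstacle,'' is being careful about which category we invert the underlying map in: we need $\id_B \cotimes f$ invertible in $\bMod B$ (resp. $\aMod B$) rather than merely in the category of abstract modules, but this is exactly the content of the assumption, since a map in $\bMod B$ (resp. $\aMod B$) being an isomorphism there is equivalent to it being an isomorphism (here Banach's open mapping theorem, Theorem \ref{theo: banach open mapping}, is what ensures that a bijective continuous $B$-linear map of adic modules is automatically a homeomorphism, as already used in Lemma \ref{lem: left exactness}).
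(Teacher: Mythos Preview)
Your proof is correct and takes essentially the same approach as the paper: both factor $\bMod A \to \bMod B$ (resp.\ $\aMod A \to \aMod B$) as the fully faithful functor into descent data followed by the forgetful functor, observe that each piece is conservative, and conclude. You simply spell out in more detail why fully faithful implies conservative and why the forgetful functor from descent data is conservative (the inverse is automatically compatible with the descent datum), whereas the paper states these in a single sentence.
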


\begin{proof}
This amounts to saying that $\bMod A \to \bMod B$ and $\aMod A \to \aMod B$ are conservative\footnote{functors which detect isomorphisms}. It is the composition of a fully faithful functor $\bMod A \to \bMod{B/A}$ (resp. $\aMod A \to \aMod{B/A}$) with a conservative functor (the forgetful functor) $\bMod{B/A} \to \bMod B$ (resp. $\aMod{B/A} \to \aMod B$), as desired.
\end{proof}

\begin{lemma}\label{lem: faithfully flat descent of exactness}
Assume that $A \to B$ is a $p$-completely faithfully flat map in $\bAlg$, and that $X \to Y \to Z$ is a complex in $\bMod A$ or $\aMod A.$ Assume that $B \cotimes_A X \to B \cotimes_A Y \to B \cotimes_A Z$ is exact at $B \cotimes_A Y$, then $X \to Y \to Z$ is exact at $Y$.
\end{lemma}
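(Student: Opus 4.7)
The plan is to reduce the exactness statement to the following surjectivity statement: for a map $f \colon X' \to K'$ in $\bMod A$ or $\aMod A$, surjectivity of $B \cotimes_A f$ as abstract $B$-modules implies surjectivity of $f$. Take $K = \ker(Y \to Z)$ in the appropriate category (which exists by Lemma \ref{lem: left exactness}); the complex condition provides a factorization $X \to K \hookrightarrow Y$. Applying the argument in the proof of Lemma \ref{lem: exactness of flat base change} to the four-term sequence $0 \to K \to Y \to Z$ (whose first two cohomologies vanish, since $K \to Y$ is injective and $K$ is the kernel of $Y \to Z$), one obtains that $B \cotimes_A K \to B \cotimes_A Y$ is injective. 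The hypothesis gives $\im(B \cotimes_A X \to B \cotimes_A Y) = \ker(B \cotimes_A Y \to B \cotimes_A Z)$, and since $\im(B \cotimes_A K \to B \cotimes_A Y)$ lies between these, both containments are equalities; the injectivity of $B \cotimes_A K \to B \cotimes_A Y$ then forces $B \cotimes_A X \to B \cotimes_A K$ to be surjective.

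For the integral case ($X', K' \in \bMod A$), since $B$ is $p$-completely flat one has $B \cotimes_A M \cong B \cotimes_A^\bL M$ for $M \in \bMod A$, and a direct derived tensor product calculation gives $(B \cotimes_A M)/p \cong B/p \otimes_{A/p} M/pM$. Hence the surjectivity of $B \cotimes_A f$ descends modulo $p$, and the faithful flatness of $A/p \to B/p$ yields surjectivity of $X'/pX' \to K'/pK'$. A Mittag-Leffler argument, using the classical $p$-adic completeness of $X'$, then lifts any $y \in K'$ iteratively modulo increasing powers of $p$ to obtain a convergent series $x = \sum_{i \ge 0} p^i x_i \in X'$ with $f(x) = y$.

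For the adic case, reduce to the integral case via modules of definition. Choose compatible modules of definition $X'_0 \subseteq X'$ and $K'_0 \subseteq K'$ with $f(X'_0) \subseteq K'_0$. By the Banach open mapping theorem (Theorem \ref{theo: banach open mapping}) applied to the continuous surjection $B \cotimes_A f$, the image $\im(B \cotimes_A X'_0)$ contains $p^N(B \cotimes_A K'_0)$ for some $N \ge 0$; equivalently, $\coker(B \cotimes_A f_0)$ is annihilated by $p^N$. Applying the mod-$p$ descent from the integral case iteratively to the elements $p^N y$ (which lie in the image of $B \cotimes_A f_0$ for every $y \in K'_0$) and tracking residues, one shows that $\coker(f_0 \colon X'_0 \to K'_0)$ is $p^\infty$-torsion; inverting $p$ then yields the desired surjectivity of $X' \to K'$.

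The main obstacle will be the adic case, particularly controlling the iterative Mittag-Leffler lifting when only elements $p^N y$ (rather than $y$ itself) are known to lie in the completed image; ensuring the iteration terminates with a finite $p$-power bound on the residues requires carefully combining the uniform $p^N$-torsion property of $\coker(B \cotimes_A f_0)$ with the mod-$p$ descent at each step.
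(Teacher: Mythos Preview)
Your overall strategy is sound and the integral case is correct, but the adic case has a genuine gap in the step you flag as the main obstacle. The phrase ``applying the mod-$p$ descent from the integral case iteratively to the elements $p^N y$'' does not work as written: since $N \ge 1$, reducing $p^N y$ modulo $p$ gives zero, so the mod-$p$ statement carries no information. What you actually need is to reduce modulo $p^{N+1}$ (or modulo $p^n$ for all $n$). Concretely: from $\coker(B \cotimes_A f_0)$ being $p^N$-torsion, reducing mod $p^{N+1}$ gives that $(Q_0/p^{N+1}) \otimes_{A/p^{N+1}} (B/p^{N+1})$ is $p^N$-torsion (where $Q_0 = \coker f_0$); faithful flatness of $A/p^{N+1} \to B/p^{N+1}$ then forces $Q_0/p^{N+1}$ to be $p^N$-torsion, i.e.\ $p^N y \in \im(f_0) + p^{N+1}K'_0$ for every $y$. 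Now the Mittag-Leffler iteration goes through cleanly: $p^N y = f_0(x_0) + p\cdot p^N y_1$, repeat with $y_1$, and sum the convergent series in $X'_0$ to get $p^N y \in \im(f_0)$. So $Q_0$ is $p^N$-torsion and $f$ is surjective after inverting $p$.

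For comparison, the paper takes a more uniform route that avoids passing through $K = \ker(Y \to Z)$ entirely. It works directly with the three-term complex $C = [X \to Y \to Z]$ (or modules of definition $C_0$ in the adic case), observes that $\hol^0(B \cotimes_A^\bL C)$ vanishes (respectively, is killed by inverting $p$), and then uses Lemma~\ref{lem: tensor exact sequence} together with the derived-completeness criterion of Lemma~\ref{lem: basic properties of derived complete modules}(c) to deduce $\hol^0(C) = 0$ (respectively, $p^N \hol^0(C_0) = 0$). Your approach trades that abstract Nakayama-type step for an explicit lifting argument; both are valid, but the paper's version handles the integral and adic cases with essentially the same mechanism and does not require the Banach open mapping theorem.
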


\begin{proof}
We first prove this in $\bMod A.$ Consider the complex in $\cD(A)$
\[ C = \left[ \cdots \to 0 \to X \to Y \to Z \to 0 \to \cdots \right], \]
where $Y$ is on the $0$th degree. By our assumption, $B \cotimes_A^\bL C$ is exact on the $0$th degree, i.e.
\[ \hol^0 (B \cotimes_A^\bL C) =0. \]
Using the short exact sequences in Lemma \ref{lem: tensor exact sequence}, we see that $\clim 0 B \otimes_A \hol^0(C)/p^n \hol^0(C) =0.$ Since the transition maps are surjective and $A \to B$ is $p$-completely faithfully flat, we get $\hol^0(C) = p\hol^0(C)$. By Lemma \ref{lem: basic properties of derived complete modules} (a), we see that $\hol^0(C)$ is derived $p$-adically complete, and (c) of the same lemma now shows that $\hol^0(C)=0$, as desired.

We now prove this in $\aMod A.$ Choose appropriate modules of definition $X_0 \subseteq X, Y_0 \subseteq Y, Z_0 \subseteq Z,$ and we may again consider the complex in $\cD(A)$
\[ C_0 = \left[ \cdots \to 0 \to X_0 \to Y_0 \to Z_0 \to 0 \to \cdots \right], \]
By Proposition-Definition \ref{prop-defi: base change of adic modules}, we see that
\[ \hol^0( B \cotimes_A^\bL C_0) \bigl[ \frac 1 p \bigr] =0. \]
By Lemma \ref{lem: basic properties of derived complete modules} (a) and (b), we see that the module $\hol^0( B \cotimes_A^\bL C_0)$ is annihilated by a power of $p$, say, $p^N$. Using the exact sequences Lemma \ref{lem: tensor exact sequence} again, we see that $\hol^0( B \cotimes_A^\bL \hol^0(C_0))$ is annihilated by $p^N$, hence
\[ \clim 0 B \otimes_A \hol^0(C_0) / p^n \hol^0(C_0) \]
is also annihilated by $p^N$. It follows that, since the transition maps are surjective, all $B \otimes_A \hol^0(C_0) / p^n \hol^0(C_0)$ are annihilated by $p^N$. As $B/A$ is $p$-completely faithfully flat, we see that all modules $\hol^0(C_0) / p^n \hol^0(C_0)$ are all annihilated by $p^N$. In particular, $p^N \hol^0(C_0) = p^{N+1} \hol^0(C_0)$. By Lemma \ref{lem: basic properties of derived complete modules} (a), $\hol^0(C_0)$ is derived $p$-adically complete. Apply Lemma \ref{lem: basic properties of derived complete modules} (a) to the complex
\[ \hol^0(C_0) \to \hol^0(C_0) / p^N\hol^0(C_0), \]
we see that $p^N \hol^0(C_0)$ is derived $p$-adically complete. By Lemma \ref{lem: basic properties of derived complete modules} (c) applied to $p^N \hol^0(C_0)$, we get $p^N \hol^0(C_0)=0$. Therefore, $\hol^0\bigl(C_0\bigl[ \frac 1 p \bigr] \bigr) =0$, as desired.
\end{proof}

\begin{cor}\label{cor: finiteness descends along ff}
Let $A \to B$ be a $p$-completely faithfully flat ring map in $\bAlg$, and $M \in \aMod A$ be an adic module. Then,
\begin{enumerate}[label=(\alph*)]
\item if $B \cotimes_A M$ is finitely generated, then so is $M$,
\item if $B \cotimes_A M$ is $k$-finite, then so is $M$.
\end{enumerate}

Lemma \ref{lem: exactness of flat base change} shows the converse of the implications above.
\end{cor}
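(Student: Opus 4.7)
The plan is to establish (a) first by a topological Nakayama-type argument, and then deduce (b) by induction on $k$.

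For (a), suppose $B \cotimes_A M$ is generated over $B\bigl[\frac 1 p\bigr]$ by $\xi_1,\ldots,\xi_n$. The $B\bigl[\frac 1 p\bigr]$-linear surjection $\phi\colon B\bigl[\frac 1 p\bigr]^n\to B\cotimes_A M$, $e_i\mapsto \xi_i$, is bounded (the $\xi_i$ all lie in a common $p^{-k}\cdot(\text{module of definition})$), hence continuous by Lemma \ref{lem: continuous equiv to bounded}, and open by Banach's theorem (Theorem \ref{theo: banach open mapping}). Consequently $N_0 := \langle \xi_1,\ldots,\xi_n\rangle_B$ is a module of definition of $B\cotimes_A M$. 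I would then fix a module of definition $M_0\subseteq M$ and use the density of $B\otimes_A M_0$ in $B\cotimes_A M_0$ to find approximations $\xi_i' = \sum_j b_{ij}\otimes m_{ij}$ (a finite sum with $m_{ij}\in M_0$) satisfying $\xi_i - \xi_i' \in pN_0$. Collecting the finitely many $m_{ij}$'s as $\mu_1,\ldots,\mu_K\in M$, I would set $M'' := A\bigl[\frac 1 p\bigr]\cdot\{\mu_k\}$ and $C := M/\overline{M''}$ in $\aMod A$.

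By Corollary \ref{cor: ff is conservative}, it suffices to show that $Q := B\cotimes_A C = 0$. By the last part of Lemma \ref{lem: right exactness implies cokernel}, $Q$ is the cokernel of $B\cotimes_A M''\to B\cotimes_A M$ in $\aMod B$, and the image $Q_0$ of $N_0$ in $Q$ is a module of definition of $Q$, which is $p$-adically complete by Lemma \ref{lem: closed submodule}(1) (the kernel of $N_0 \to Q$ is closed in $N_0$). The image of each $\xi_i'$ in $Q$ vanishes since $\xi_i' \in B\otimes_A M''$, so the image of each $\xi_i$ lies in $pQ_0$; yet these images $B$-span $Q_0$ since the $\xi_i$'s $B$-span $N_0$. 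Thus $Q_0 = B\cdot pQ_0 = pQ_0$, and Lemma \ref{lem: basic properties of derived complete modules}(c) forces $Q_0 = 0$, whence $C = 0$.

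For (b), induct on $k$, with (a) supplying the case $k = 1$. For $k \ge 2$, (a) provides a surjection $\pi\colon A\bigl[\frac 1 p\bigr]^n\to M$ whose kernel $K$ is adic by Lemma \ref{lem: left exactness}, giving a short exact sequence $0 \to K \to A\bigl[\frac 1 p\bigr]^n \to M \to 0$ in $\aMod A$. By $p$-complete flatness of $B/A$, base change to $B$ preserves exactness: the left exactness at $B\cotimes_A K$ and at $B\bigl[\frac 1 p\bigr]^n$ follows from two applications of Lemma \ref{lem: exactness of flat base change}, while right exactness at $B\cotimes_A M$ follows from the last part of Lemma \ref{lem: right exactness implies cokernel}. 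Proposition-Definition \ref{prop-defi: k-finiteness}(b) then makes $B\cotimes_A K$ $(k-1)$-finite; by the induction hypothesis $K$ is $(k-1)$-finite, and Proposition-Definition \ref{prop-defi: k-finiteness}(c) concludes that $M$ is $k$-finite.

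The principal difficulty is the topological Nakayama step in (a): Banach's open mapping theorem is essential for arranging $N_0$ to be simultaneously a module of definition \emph{and} the $B$-span of the chosen generators, so that the image $Q_0$ inherits both finite $B$-generation by the classes of the $\xi_i$'s and $p$-adic completeness, which together collapse $Q_0 = pQ_0$ down to zero.
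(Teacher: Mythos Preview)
Your argument for (b) is correct and matches the paper's induction exactly.

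There is, however, a genuine gap in your proof of (a). You define $C := M/\overline{M''}$ as the cokernel of $F := A\bigl[\frac 1 p\bigr]^K \to M$ in $\aMod A$, and your Nakayama argument correctly shows $Q = B\cotimes_A C = 0$, hence $C = 0$. But by Lemma \ref{lem: right exactness implies cokernel}, the cokernel in $\aMod A$ is $M/\overline{\im}$, so $C = 0$ only says that $M''$ is \emph{dense} in $M$. Definition \ref{defi: finiteness of adic modules} requires $M$ to be finitely generated as an \emph{abstract} $A\bigl[\frac 1 p\bigr]$-module, i.e.\ $M'' = M$, and density does not give this in general. The step where you invoke completeness of $Q_0$ is exactly where the closure enters: you need the kernel of $N_0 \to Q$ to be closed, and that closure is what loses the distinction between ``dense image'' and ``surjective''.

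The paper closes this gap by the iteration you sidestepped. From $\xi_i = \xi_i' + r_i$ with $\xi_i' \in P_0 := B\cdot\{1\otimes\mu_k\}$ and $r_i \in pN_0 = pB\cdot\{\xi_j\}$, write $r_i = p\sum_j c_{ij}\xi_j$ with $c_{ij} \in B$; then $(I - pC)\xi = \xi'$ in $(B\cotimes_A M)^n$, and since $I - pC$ is invertible in $M_n(B)$ (geometric series in the $p$-complete ring $B$), one gets $\xi_i \in P_0$. Hence $N_0 \subseteq P_0$, so $B\cotimes_A F \to B\cotimes_A M$ is genuinely surjective, and Lemma \ref{lem: faithfully flat descent of exactness} applied to $F \to M \to 0$ then gives surjectivity of $F \to M$. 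Your setup is essentially right; you just need to run this iteration (or the equivalent matrix inversion) \emph{before} passing to a quotient, rather than after taking the closure.
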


\begin{proof}
We first prove (a). Take a finite set of generators
\[ n_1,\ldots,n_\ell \]
of $B \cotimes_A M$. Choose a module of definition $M_0 \subseteq M$ such that $n_i \in B \cotimes_A M_0$ for all $i$. Let $M'_0 \subseteq B \cotimes_A M_0$ be the $B$-submodule generated by all $n_i$, then it is a module of definition for $B \cotimes_A M$. Write
\[ n_i = \sum_{j \in J_i} b_{ij} \otimes m_{ij} + n'_i, \]
where $J_i$ is finite, $b_{ij} \in B$ and $n'_i \in pM'_0$. Iterating, and we see that all $n_i$ can be written as the linear combination of $m_{ij}$ with coefficients in $B$. Let $M''_0$ be the submodule of $B \cotimes_A M_0$ generated by all $m_{ij}$ with coefficients in $B$, and we see that
\[ M'_0 \subseteq M''_0 \subseteq B \cotimes_A M_0. \]
It follows that $\left\{ m_{ij} \right\}$ is also a generating set for $B \cotimes_A M$ over $B \bigl[ \frac 1 p \bigr]$. Let $F$ be a free module over $A \bigl[ \frac 1 p \bigr]$ with basis $e_{ij}$ ($1 \le i \le \ell, j \in J_i$). Consider the map
\[ \varphi: F \to M \]
given by $e_{ij} \mapsto m_{ij}.$ It follows that the base change
\[ \id \otimes \varphi: B \cotimes_A F \to B \cotimes_A M \]
is surjective. By Lemma \ref{lem: faithfully flat descent of exactness} applied to $F \to M \to 0$, we see that $\varphi$ is surjective.

We now prove (b). We proceed by induction on $k$. The cases $k=0$ are evident. Assume that we proved (b) for $k-1$, we now prove it for $k$ ($k \ge 1$). By (a), we may take a finite free module $F$ over $A \bigl[ \frac 1 p \bigr]$ and a surjection $F \to M$. Take $K$ to be the kernel of the surjection in $\aMod A$. Its existence is guaranteed by Lemma \ref{lem: left exactness}. Now, by Lemma \ref{lem: exactness of flat base change}, we see that the sequence
\[ 0 \to B \cotimes_A K \to B \cotimes_A F \to B \cotimes_A M \to 0 \]
is exact. Therefore, by Proposition-Definition \ref{prop-defi: k-finiteness}, we see that $B \cotimes_A K$ is $(k-1)$-finite over $B \bigl[ \frac 1 p \bigr]$. It follows that $K$ is $(k-1)$-finite by inductive hypothesis, and therefore $M$ is $k$-finite by Proposition-Definition \ref{prop-defi: k-finiteness}.
\end{proof}

\begin{prop}\label{prop: descent is essentially surjective}
Let $A \to B$ be a $p$-completely faithfully flat ring map in $\bAlg$. Then, the functors $\bMod A \to \bMod{B/A}$ and $\aMod A \to \aMod{B/A}$ are essentially surjective. Combining Proposition \ref{prop: descent is fully faithful}, we see that the functors are equivalences of categories.
\end{prop}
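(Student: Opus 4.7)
Given a descent datum $(M,\phi) \in \bMod{B/A}$ (the case of $\aMod{B/A}$ is parallel), where $\phi: B \cotimes_{A, \delta_0} M \to B \cotimes_{A, \delta_1} M$ is the descent isomorphism, I would define
\[ N := \ker\bigl(d_0 - d_1 : M \to B \cotimes_A M\bigr), \]
with $d_1(m) = 1 \otimes m$ and $d_0(m) = \phi(1 \otimes m)$ (equivalently, $N = \{m \in M : \phi(1 \otimes m) = 1 \otimes m\}$). Both $d_0$ and $d_1$ are $A$-linear and continuous, so Lemma \ref{lem: left exactness} provides the kernel in $\bMod A$ (resp. $\aMod A$), and Lemma \ref{lem: closed submodule} ensures it has the right topological structure. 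The goal is to show that the canonical map $\eta : B \cotimes_A N \to M$, arising from the inclusion $N \hookrightarrow M$ and the $B$-module structure, is an isomorphism; the descent datum on $B \cotimes_A N$ induced by faithfully flat descent will then automatically agree with $\phi$ because both are characterized by the same pair of face maps into $B \cotimes_A M$.

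To construct an inverse $\xi : M \to B \cotimes_A N$, I would send $m \mapsto \phi(1 \otimes m)$, interpreted as an element of $B \cotimes_A M$, and argue that it actually lies in the subobject $B \cotimes_A N \subseteq B \cotimes_A M$. The latter inclusion is the key identification: since $B/A$ is $p$-completely flat, applying Lemma \ref{lem: exactness of flat base change} to the left-exact sequence
\[ 0 \to N \to M \xrightarrow{d_0 - d_1} B \cotimes_A M \]
shows that $B \cotimes_A N$ is identified with the kernel of the base change $B \cotimes_A M \to B \cotimes_A(B \cotimes_A M) = B(1) \cotimes_A M$. That $\phi(1 \otimes m)$ satisfies this equalizer condition is precisely a reformulation of the cocycle identity for $\phi$ (expanded in $B(1) \cotimes_A M$ using the three face maps $B \to B(1)$). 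The two compositions $\xi \circ \eta$ and $\eta \circ \xi$ unwind into the identities $\phi(1 \otimes n) = 1 \otimes n$ for $n \in N$ (definitional) and the multiplication $B \cotimes_A M \to M$ reducing $\phi(1 \otimes m)$ to $m$ (from the normalization that $\phi$ reduces to the identity under $B \cotimes_A B \to B$), respectively.

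\textbf{Main obstacle.} The delicate step is the identification $B \cotimes_A N = \ker(B \cotimes_A M \to B \cotimes_A(B \cotimes_A M))$. Lemma \ref{lem: exactness of flat base change} gives exactness at the middle term, but we also need that $B \cotimes_A N \to B \cotimes_A M$ remains injective after completed base change — for which I would exploit that the quotient $M/N$ embeds into $B \cotimes_A M$, so that $M/N$ has bounded $p^\infty$-torsion by Lemma \ref{lem: closed submodule}; combining this with the $p$-completely flat behavior of $B$ over $A$ (Proposition \ref{prop: p-completely flat}) then yields the injectivity, and similarly handles the $\aMod$ case via modules of definition. Once $\eta$ is shown to be an isomorphism, the conclusion — that $N$ lies in $\bMod A$ (resp. $\aMod A$) with the prescribed descent datum — follows immediately, and essential surjectivity is established.
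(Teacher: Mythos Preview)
Your overall strategy---define $N$ as the equalizer of $d_0,d_1$ and show $B \cotimes_A N \cong M$---is the right one, and coincides with the paper's. But you have misapplied Lemma~\ref{lem: exactness of flat base change}. That lemma, given a four-term exact sequence $X \to Y \to Z \to W$, yields exactness of the base change \emph{at the second term} $B \cotimes_A Y$. Applied to $(0,N,M,B\cotimes_A M)$ it therefore gives you injectivity of $B \cotimes_A N \to B \cotimes_A M$---precisely the statement you flag as a separate ``main obstacle''. What you actually need (and what you claim the lemma provides) is exactness at $B \cotimes_A M$: that the image of $B \cotimes_A N$ equals the kernel of $B \cotimes_A M \to B \cotimes_A (B \cotimes_A M)$. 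For that you would have to apply the lemma with $Y=M$, i.e.\ to a four-term sequence $N \to M \to B \cotimes_A M \to (?)$ which is exact at the third spot as well. You do not have such a fourth term. Your bounded-torsion argument for $M/N$ together with Corollary~\ref{cor: tensor conc on degree 0} does give injectivity of $B \cotimes_A N \to B \cotimes_A M$ (so it is correct but duplicates what Lemma~\ref{lem: exactness of flat base change} already gives); it does \emph{not} show that $B \cotimes_A (M/N) \to B \cotimes_A (B\cotimes_A M)$ is injective, since the quotient $(B \cotimes_A M)/(M/N)$ need not have bounded $p^\infty$-torsion.

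The paper closes this gap with a cosimplicial shift. Writing $M(\bullet)$ for the crystal attached to $(M,\phi)$, one observes that the shifted object $M(\bullet+1)$ can be identified with $B \cotimes_A M(\bullet)$, and simultaneously with $B(\bullet) \cotimes_A M$ (the latter via $q_0^{n+1}$). Lemma~\ref{lem: acyclic cosimplicial} then says $B(\bullet) \cotimes_A M$ is acyclic in positive degrees, hence so is $B \cotimes_A M(\bullet)$; reflecting via Lemma~\ref{lem: faithfully flat descent of exactness} gives that $M(\bullet)$ itself is acyclic in positive degrees. This produces the long exact sequence $0 \to N \to M(0) \to M(1) \to M(2) \to \cdots$, so Lemma~\ref{lem: exactness of flat base change} now applies with $Y = M(0) = M$ and fourth term $M(2)$, yielding the kernel identification you want. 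From there the paper finishes as you do, checking that $B(\bullet) \cotimes_A N \to M(\bullet)$ is an isomorphism via conservativity of base change (Corollary~\ref{cor: ff is conservative}). Your cocycle computation that $\phi(1 \otimes m)$ lands in the kernel is fine, but without the kernel identification it does not yet give you a map into $B \cotimes_A N$.
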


\begin{proof}
Assume that we are given a descent data $(M,\phi)$, where $M$ is in $\bMod B$ or $\aMod B$ and $\phi: B(1) \cotimes_{\delta_0^1,B} M \to B(1) \cotimes_{\delta_1^1,B} M$ is a descent data on $M$. Define $M_0$ to be the equaliser of the maps
\[ 1 \cotimes \id, \phi \circ (1 \cotimes \id): M \to B(1) \cotimes_{\delta_1^1, B} M. \]
Therefore we get a map $M_0 \to M(\bullet).$

For each map $f: [n] \to [m]$, we denote $f_*: [n+1] \to [m+1]$ by
\[ f_*(0) =0, \quad f_*(i+1) = f(i)+1, \forall i \in [n].\]
Therefore, $M(\bullet + 1)$ can be realised as a simplicial object with transition maps given by $f_*$. Under the identification
\[ B \cotimes_A M(n) \cong B(n+1) \cotimes_{\delta_0^n, B(n)} M(n) \cong M(n+1) \cong B(n+1) \cotimes_{q_0^n, B(0)} M \cong B(n) \cotimes_A M, \]
we see that $B \cotimes_A M(\bullet)$ is identified with $B(\bullet) \cotimes_A M$. Lemma \ref{lem: acyclic cosimplicial} now shows that the cosimplicial object $B \cotimes_A M(\bullet)$ has cohomologies concentrated on degree 0. By Lemma \ref{lem: faithfully flat descent of exactness}, we see that the cohomologies of $M(\bullet)$ are concentrated on degree 0.

By Lemma \ref{lem: exactness of flat base change}, we see that the exact sequence
\[ 0 \to M_0 \to M(0) \to M(1) \to \cdots \]
gives rise to an exact sequence
\[ 0 \to B \cotimes_A M_0 \to B \cotimes_A M(0) \to B \cotimes_A M(1) \to \cdots . \]
The natural isomorphism $B \cotimes_A M(\bullet) \cong B(\bullet) \cotimes_A M$ and Lemma \ref{lem: acyclic cosimplicial} gives rise to a natural isomorphism $B \cotimes_A M_0 \cong M$. Now consider the natural map $B(\bullet) \cotimes_A M_0 \to M(\bullet)$ given by $M_0 \to M(\bullet)$. Base change by $A \to B$, and the base change is identified with the isomorphism
\[ B(\bullet) \cotimes_A M \to B \cotimes_A M(\bullet) \]
as above. Therefore, $B \cotimes_A B(\bullet) \cotimes_A M_0 \to B \cotimes_A M(\bullet)$ is an isomorphism \emph{(not merely a quasi-isomorphism!!!)} of cosimplicial modules. It follows that, by Corollary \ref{cor: ff is conservative}, that the map
\[ B(\bullet) \cotimes_A M_0 \to M(\bullet) \]
is an isomorphism of cosimplicial modules. Therefore, $(M,\phi)$ is isomorphic to the canonical descent data on $B \cotimes_A M_0$, as desired.
\end{proof}

We now prove that the maps $\bMod A \to \bMod{B/A}$ and $\aMod A \to \aMod{B/A}$ induces an equivalence of categories on their full subcategories of finite projective modules. More precisely,
\begin{prop}\label{prop: finite projective descent}
Let $A \to B$ be a $p$-completely faithfully flat ring map in $\bAlg$. Then,
\begin{enumerate}[label=(\alph*)]
\item if $M \in \bMod A$ and $B \cotimes_A M$ is finite projective over $B$, then $M$ is finite projective over $A$;
\item if $M \in \aMod A$ and $B \cotimes_A M$ is finite projective over $B \bigl[ \frac 1 p \bigr]$, then $M$ is finite projective over $A\bigl[ \frac 1 p \bigr]$.
\end{enumerate}
\end{prop}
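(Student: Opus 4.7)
The plan for both (a) and (b) is the same: reduce to the problem of splitting a surjection $F_0 \twoheadrightarrow M$ from a finite free module, and then descend the splitting from $B$. First I would show that $M$ is finitely presented. In case (b), this is exactly Corollary \ref{cor: finiteness descends along ff} (since a finite projective module is $k$-finite for every $k$). In case (a), one establishes the analogue for $\bMod A$: classical faithfully flat descent along $A/p \to B/p$ implies that $M/pM$ is finitely presented over $A/p$, and then a convergent-series / Nakayama argument using the $p$-adic completeness of $M$ (together with Lemma \ref{lem: basic properties of derived complete modules}(c) applied to the cokernel formed in $\bMod A$) lifts this to show $M$ itself is finitely presented. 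Either way, one obtains a presentation $F_1 \xrightarrow{g} F_0 \xrightarrow{f} M \to 0$ with $F_i$ finite free.

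The key construction is the post-composition map
\[ \alpha: \underline{\Hom}^\ad(M, F_0) \to \underline{\Hom}^\ad(M, M), \quad s \mapsto f \circ s \]
(or its analogue using $\Hom_A(-,-)$ in case (a), which is automatically continuous on $\bMod A$). Let $Q$ be the cokernel of $\alpha$, which exists in $\aMod A$ (resp. $\bMod A$) by Lemma \ref{lem: right exactness implies cokernel}. A section $s:M \to F_0$ of $f$ exists iff $\id_M$ lies in the image of $\alpha$, and this will follow from $Q = 0$. To prove the latter, apply $\underline{\Hom}^\ad(-, N)$ for $N \in \{F_0, M\}$ to the presentation: the left-exact sequence provided by Proposition \ref{prop: finitely generated adic modules} survives $p$-completely flat base change along $A \to B$, and the last part of the same Proposition gives base change isomorphisms on the two finite free entries. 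A five-lemma argument then yields a natural isomorphism $B \cotimes_A \underline{\Hom}^\ad(M, N) \cong \underline{\Hom}^\ad(B \cotimes_A M, B \cotimes_A N)$, identifying $B \cotimes_A \alpha$ with its base change $\alpha_B$. Since $B \cotimes_A M$ is finite projective, the surjection $B \cotimes_A F_0 \twoheadrightarrow B \cotimes_A M$ splits, so $\alpha_B$ is module-theoretically surjective; Banach's open mapping theorem (Theorem \ref{theo: banach open mapping}) then forces the cokernel of $\alpha_B$ in $\aMod B$ to be zero. Combining this with the base-change-of-cokernel assertion in Lemma \ref{lem: right exactness implies cokernel} gives $B \cotimes_A Q = 0$, and Corollary \ref{cor: ff is conservative} applied to the map $0 \to Q$ forces $Q = 0$, producing the desired section and completing the argument.

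The main obstacle I anticipate is the five-lemma step above, namely verifying that $\underline{\Hom}^\ad$ (resp. $\Hom_A$) commutes with base change along $A \to B$ for finitely presented $M$. The finite-projective building block is given as the last part of Proposition \ref{prop: finitely generated adic modules}, and Lemma \ref{lem: exactness of flat base change} handles exactness at the middle of four-term sequences after base change, but upgrading to preservation of injectivity, which is what the five-lemma requires at the left end of the Hom sequence, uses the stronger fact that $p$-completely flat base change is $t$-exact on $p$-complete modules (a direct consequence of Proposition-Definition \ref{prop-defi: complete flatness} combined with Corollary \ref{cor: tensor conc on degree 0}). A secondary but routine technicality is the finite-presentation descent in case (a), for which one combines classical faithfully flat descent modulo $p$ with a Nakayama-style lifting in $\bMod A$ exactly as indicated in Step 1.
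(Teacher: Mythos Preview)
Your overall strategy for (b) --- descending a splitting of $F_0 \to M$ by showing the post-composition map on $\underline{\Hom}^\ad$ is surjective --- is exactly the paper's approach. However, you misidentify the key technical obstacle. Lemma \ref{lem: exactness of flat base change} applied to $0 \to \underline{\Hom}(M,N) \to \underline{\Hom}(F_0,N) \to \underline{\Hom}(F_1,N)$ already gives injectivity at the left end (take $X=0$); what it does \emph{not} give is exactness at $B\cotimes_A\underline{\Hom}(F_0,N)$, because that requires a fourth term. Your proposed fix via $t$-exactness does not close this gap: while Corollary \ref{cor: tensor conc on degree 0} shows $B \cotimes_A^\bL(-)$ lands in degree $0$ on $\bMod A$, it does not follow that $B \cotimes_A -$ preserves kernels of maps in $\bMod A$, since the module-theoretic cokernel $\underline{\Hom}(F_1,N)/\im$ need not have bounded $p^\infty$-torsion, and so $\hol^{-1}\bigl(B \cotimes_A^\bL(-)\bigr)$ applied to it can be nonzero.

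The paper repairs this in two moves. First, it uses that $M$ is $3$-finite (not merely finitely presented), taking $F_2 \to F_1 \to F_0 \to M \to 0$. Second --- and this is the step your outline misses --- the four-term $\underline{\Hom}$-sequence over $A$ is not exact at $\underline{\Hom}(F_1,N)$ a priori (that would already require projectivity of $M$); instead one first observes that over $B$ the split projectivity of $B \cotimes_A M$ makes $\underline{\Hom}_B(B\cotimes_A F_0,-) \to \underline{\Hom}_B(B\cotimes_A F_1,-) \to \underline{\Hom}_B(B\cotimes_A F_2,-)$ exact, and then \emph{descends} this middle exactness to $A$ via Lemma \ref{lem: faithfully flat descent of exactness}. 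Only then does Lemma \ref{lem: exactness of flat base change} apply to the resulting four-term exact sequence over $A$ to yield the base-change isomorphism for $\underline{\Hom}(M,N)$, after which the surjectivity of $\alpha$ follows as you describe.

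For (a), the paper takes a different and more direct route: classical faithfully flat descent along $A/p^n \to B/p^n$ shows each $M/p^nM$ is finite projective over $A/p^nA$, and one then lifts a section of a surjection $F \to M$ inductively through the tower $M/p^nM$, using projectivity at each finite stage to extend. Your unified $\underline{\Hom}$-descent approach could be made to work for (a) as well with the same two fixes above, but the paper's argument avoids that machinery entirely.
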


\begin{proof}
We first prove (a). By classical descent theory, all modules $M/p^n M$ are finite projective over $A/p^n A$. Pick a finite free module $F$ over $A$, and a surjection $\overline \pi: F \to M/pM$. Since $F$ is free, we may lift $\overline \pi$ to a map $\pi: F \to M$ of $A$-modules. Since $F,M$ are classically $p$-adically complete, we see that $\pi$ is surjective as a map of $A$-modules. Take a section $e_1: M/pM \to F/pF$ of $\overline \pi$. Assume that we have a section of $\pi_n: F/p^nF \to M/p^n M$:
\[ e_n: M/p^n M \to F/p^n F \]
we may then consider the composition
\[ F/p^{n+1} F \times_{F/p^nF, e_n} M/p^n M \to F/p^{n+1} F \xrightarrow{\pi_{n+1}} M/p^{n+1} M, \]
which is a surjection between $A/p^{n+1} A$-modules by Nakayama's Lemma for nilpotent ideals (it surjects onto $M/p^nM$). Since $M/p^{n+1}M$ is projective over $A/p^{n+1}A$, we may take a section to the above morphism. Compose it with the inclusion
\[ F/p^{n+1} F \times_{F/p^nF, e_n} M/p^n M \to F/p^{n+1} F, \]
and we get a section $e_{n+1}: M/p^{n+1}M \to F/p^{n+1}F$ of $\pi_{n+1}$. It is easy to verify that $e_{n+1}$ lifts $e_n$. Continue this lifting process, and since $M$ and $F$ are classically complete, we get a section $e$ of $\pi$. Therefore, $M$ is a direct summand of $F$, hence $M$ is finite projective over $A$.

We now prove (b). By Corollary \ref{cor: finiteness descends along ff}, we see that $M$ is 3-finite over $A \bigl[ \frac 1 p \bigr]$. We claim that for any $N \in \aMod A,$ the natural map
\[ B \cotimes_A \underline{\Hom}_{A[\frac 1 p]}^\ad(M,N) \to \underline{\Hom}_{B[\frac 1 p]}^\ad(B \cotimes_A M, B \cotimes_A N) \]
is an isomorphism (for this particular ring map $A \to B$, not any map of algebras). Pick a presentation
\[ F_2 \to F_1 \to F_0 \to M \to 0 \]
where $F_i$ are finite free over $A \bigl[ \frac 1 p \bigr].$ By Proposition \ref{prop: finitely generated adic modules}, we see that the maps
\[ B \cotimes_A \underline{\Hom}_{A[\frac 1 p]}^\ad(F_i,N) \to \underline{\Hom}_{B[\frac 1 p]}^\ad(B \cotimes_A F_i, B \cotimes_A N) \]
are isomorphisms. Now consider the exact sequence given by Lemma \ref{lem: exactness of flat base change}
\[ B \cotimes_A F_2 \to B \cotimes_A F_1 \to B \cotimes_A F_0 \to B \cotimes_A M \to 0. \]
Since $B \cotimes_A M$ is finite projective over $B \bigl[ \frac 1 p \bigr]$, we see that the exact sequence splits, and we have an exact sequence
\[ \underline{\Hom}_{B[\frac 1 p]}^\ad(B \cotimes_A F_0, B \cotimes_A N) \to \underline{\Hom}_{B[\frac 1 p]}^\ad(B \cotimes_A F_1, B \cotimes_A N) \to \underline{\Hom}_{B[\frac 1 p]}^\ad(B \cotimes_A F_2, B \cotimes_A N). \]
Therefore, using the isomorphism in Proposition \ref{prop: finitely generated adic modules}, the sequence
\[ B \cotimes_A \underline{\Hom}_{A[\frac 1 p]}^\ad(F_0,N) \to B \cotimes_A \underline{\Hom}_{A[\frac 1 p]}^\ad(F_1,N) \to B \cotimes_A \underline{\Hom}_{A[\frac 1 p]}^\ad(F_2,N) \]
is exact. By Lemma \ref{lem: faithfully flat descent of exactness}, we have an exact sequence
\[ \underline{\Hom}_{A[\frac 1 p]}^\ad(F_0,N) \to \underline{\Hom}_{A[\frac 1 p]}^\ad(F_1,N) \to \underline{\Hom}_{A[\frac 1 p]}^\ad(F_2,N). \]
Combine this with the exact sequence obtained from $F_1 \to F_0 \to M$ (see Proposition \ref{prop: finitely generated adic modules}), we get an exact sequence
\[ 0 \to \underline{\Hom}_{A[\frac 1 p]}^\ad(M,N) \to \underline{\Hom}_{A[\frac 1 p]}^\ad(F_0,N) \to \underline{\Hom}_{A[\frac 1 p]}^\ad(F_1,N) \to \underline{\Hom}_{A[\frac 1 p]}^\ad(F_2,N). \]
Lemma \ref{lem: exactness of flat base change} now shows that the sequence
\[ 0 \to B \cotimes_A \underline{\Hom}_{A[\frac 1 p]}^\ad(M,N) \to B \cotimes_A \underline{\Hom}_{A[\frac 1 p]}^\ad(F_0,N) \to B \cotimes_A \underline{\Hom}_{A[\frac 1 p]}^\ad(F_1,N) \]
is exact. Therefore, we have a commutative diagram with exact rows:
\[\xymatrix{
0 \ar[r] & B \cotimes_A \underline{\Hom}_{A[\frac 1 p]}^\ad(M,N) \ar[r] \ar[d] & B \cotimes_A \underline{\Hom}_{A[\frac 1 p]}^\ad(F_0,N) \ar[r] \ar[d] & B \cotimes_A \underline{\Hom}_{A[\frac 1 p]}^\ad(F_1,N) \ar[d] \\
0 \ar[r] & \underline{\Hom}_{B[\frac 1 p]}^\ad(B \cotimes_A M,B \cotimes_A N) \ar[r] & \underline{\Hom}_{B[\frac 1 p]}^\ad(B \cotimes_A F_0,B \cotimes_A N) \ar[r] & \underline{\Hom}_{B[\frac 1 p]}^\ad(B \cotimes_A F_1,B \cotimes_A N)
}\]
The two vertical morphisms on the right-hand side are isomorphisms by Proposition \ref{prop: finitely generated adic modules}, hence the arrow on the left-hand side
\[ B \cotimes_A \underline{\Hom}_{A[\frac 1 p]}^\ad(M,N) \to \underline{\Hom}_{B[\frac 1 p]}^\ad(B \cotimes_A M, B \cotimes_A N) \]
is an isomorphism\footnote{Another possible approach is to show that $\underline{\Hom}_{B(\bullet)[\frac 1 p]}^\ad(B(\bullet) \cotimes_A M, B(\bullet) \cotimes_A N)$ is a `crystal' on $B(\bullet)$, and use the descent theory as in Proposition \ref{prop: descent is essentially surjective}.}.

Pick a finite free module $F$ over $A \bigl[ \frac 1 p \bigr]$ and a surjection
\[ \varphi: F \to M. \]
Since $B \cotimes_A M$ is finite projective over $B \bigl[ \frac 1 p \bigr]$, there exists a section $B \cotimes_A M \to B \cotimes_A F$ of $\id \otimes \varphi$, and we see that the map induced by $\id \otimes \varphi$
\[ \Hom_{\aMod B}( B \cotimes_A M, B \cotimes_A F) \to \Hom_{\aMod B} (B \cotimes_A M, B \cotimes_A M) \]
is surjective. Consider the map
\[ \underline{\Hom}_{B[\frac 1 p]}^\ad(B \cotimes_A M, B \cotimes_A F) \to \underline{\Hom}_{B[\frac 1 p]}^\ad(B \cotimes_A M, B \cotimes_A B), \]
where the connecting map is induced by $\id \otimes \varphi$. By the universal property, the underlying module of the above map is exactly the map $\Hom_{\aMod B}( B \cotimes_A M, B \cotimes_A F) \to \Hom_{\aMod B} (B \cotimes_A M, B \cotimes_A M)$, and it is a surjection of adic modules. Now, by the previous paragraph, we see that the above map is identified with
\[ B \cotimes_A \underline{\Hom}_{A[\frac 1 p]}^\ad(M,F) \to B \cotimes_A \underline{\Hom}_{A[\frac 1 p]}^\ad(M,M), \]
and Lemma \ref{lem: faithfully flat descent of exactness} shows that the map
\[ \underline{\Hom}_{A[\frac 1 p]}^\ad(M,F) \to \underline{\Hom}_{A[\frac 1 p]}^\ad(M,M) \]
induced by $\varphi: F \to M$ is surjective. By the universal property, taking the underlying modules of the above map gives the surjection
\[ \Hom_{\aMod A}(M,F) \to \Hom_{\aMod A}(M,M). \]
Take an inverse image $s$ of $\id_M$, and it is a section to $\varphi: F \to M$, as desired.
\end{proof}

Combine Proposition \ref{prop: descent is essentially surjective} and Proposition \ref{prop: finite projective descent}, we get the following theorem:

\begin{theo}[Theorem \ref{thm: descent theory} (2)]\label{theo: completed descent}
Let $A,B$ be $p$-complete rings of bounded $p^\infty$-torsion, and $A \to B$ be $p$-completely faithfully flat. Then, finite projective modules descends along $A \to B(\bullet)$ and $A \bigl[ \frac 1 p \bigr] \to B(\bullet) \bigl[ \frac 1 p \bigr]$, where the cosimplicial ring $B(\bullet)$ is given by
\[ B(n) = B^{\cotimes(n+1)/A}. \]
\end{theo}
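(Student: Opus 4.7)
The plan is to combine the two main preceding propositions: Proposition \ref{prop: descent is essentially surjective}, which gives full equivalences $\bMod A \cong \bMod{B/A}$ and $\aMod A \cong \aMod{B/A}$, and Proposition \ref{prop: finite projective descent}, which tells us that finite projectivity descends along $A \to B$. Since the descent statement to be proven is precisely the restriction of the first equivalence to the full subcategories of finite projective modules, the content reduces to checking that those subcategories match on the two sides.

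First, I would unfold the limit $\lim_\simp \Vect(B(\bullet))$ (resp.\ $\lim_\simp \Vect(B(\bullet)[\tfrac 1 p])$) as the groupoid of descent data $(M,\phi)$, where $M$ is a finite projective $B$-module (resp.\ $B[\tfrac 1 p]$-module) and $\phi$ is a cocycle isomorphism $B(1) \otimes_{\delta_0^1,B} M \cong B(1) \otimes_{\delta_1^1,B} M$. A key observation is that for finite projective modules, the completed tensor product $\cotimes$ and the ordinary tensor product $\otimes$ agree (since finite projective modules are direct summands of finite free modules, and $B \cotimes_A A^n = B^n$); therefore, this groupoid sits as a full subcategory of $\bMod{B/A}$ (resp.\ $\aMod{B/A}$) in a canonical way.

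Next, I would verify that under the equivalences of Proposition \ref{prop: descent is essentially surjective}, the full subcategory of finite projective $A$-modules (resp.\ $A[\tfrac 1 p]$-modules) corresponds exactly to the full subcategory of descent data with finite projective underlying module. The forward direction is standard: if $M_0/A$ is finite projective, then $M_0$ is a direct summand of a finite free $A$-module, and applying $B \cotimes_A -$ preserves this. The reverse direction---that if $B \cotimes_A M_0$ is finite projective over $B$ (resp.\ $B[\tfrac 1 p]$) then $M_0$ is finite projective over $A$ (resp.\ $A[\tfrac 1 p]$)---is exactly parts (a) and (b) of Proposition \ref{prop: finite projective descent}.

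There is no real obstacle here: both components of the argument are already established. The only care needed is bookkeeping, namely making sure the descent diagram on the $B(\bullet)$-side really is the one extracted from $\bMod{B/A}$ or $\aMod{B/A}$ by forgetting everything except the underlying module structure. In the rational case, one should also invoke Proposition \ref{prop: finitely generated adic modules} to see that morphisms of finite projective modules over $A[\tfrac 1 p]$ (which are automatically continuous) are detected in the same way in $\aMod A$ and as abstract $A[\tfrac 1 p]$-module maps, so the notion of ``equivalence'' matches on both sides. Putting everything together then yields the claimed descent equivalences
\[
\Vect(A) \xrightarrow{\cong} \lim_\simp \Vect(B(\bullet)), \qquad
\Vect\bigl(A\bigl[\tfrac 1 p\bigr]\bigr) \xrightarrow{\cong} \lim_\simp \Vect\bigl(B(\bullet)\bigl[\tfrac 1 p\bigr]\bigr).
\]
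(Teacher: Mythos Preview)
Your proposal is correct and matches the paper's approach exactly: the paper's proof is simply the one-line statement ``Combine Proposition \ref{prop: descent is essentially surjective} and Proposition \ref{prop: finite projective descent}.'' Your additional bookkeeping remarks (on $\cotimes$ versus $\otimes$ for finite projectives, and on continuity via Proposition \ref{prop: finitely generated adic modules}) are reasonable sanity checks that the paper leaves implicit.
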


\bibliographystyle{alpha}
\bibliography{ref}

\end{document}